\newcommand{\eqmathbox}[2][N]{\eqmakebox[#1]{$\displaystyle#2$}}
\numberwithin{equation}{section}
\newcounter{listcounter}
\definecolor{references}{rgb}{.7,.1,.6}
\tikzstyle directed=[postaction={decorate,decoration={markings,
    mark=at position #1 with {\arrow{>}}}}]
\tikzstyle rdirected=[postaction={decorate,decoration={markings,
    mark=at position #1 with {\arrow{<}}}}]
\tikzset{
    anchorbase/.style={baseline={([yshift=-0.5ex]current bounding box.center)}},
    tinynodes/.style={font=\tiny,text height=0.75ex,text depth=0.15ex},
    smallnodes/.style={font=\scriptsize,text height=0.75ex,text depth=0.15ex},
    >={Latex[length=1mm, width=1.5mm]},
    rd/.style={red, line width=.5mm},
    bl/.style={blue, line width=.5mm},
    gr/.style={green, line width=.5mm},
    pu/.style={purple, line width=.5mm},
    or/.style={orange, line width=.5mm},
    web/.style={very thick},
    dweb/.style={double},
    wh/.style={white,line width=.15cm}
  }
\newcommand{\pr}{to [out=0,in=180]}
\colorlet{green}{black!30!green}
\definecolor{CQG}{RGB}{0,153,76}
\definecolor{FS}{RGB}{0,76,153} 
\newcommand{\googlebooks}[1]{(preview at \href{https://books.google.com/books?id=#1}{google books})}
\newcommand{\numdam}[1]{}
\def\emph#1{{\sl #1\/}}
\let\hat=\widehat
\let\tilde=\widetilde
\newcommand*{\hte}{\stackrel{\text{h.e.}}{\simeq}}
\newcommand*{\htc}{\sim}
\renewcommand{\to}{\rightarrow}
\newcommand{\To}{\Rightarrow}
\renewcommand{\k}{k}
\def\N{\mathbb{N}}
\def\Q{\mathbb{Q}}
\def\R{\mathbb{R}}
\def\T{\mathbb{T}}
\def\Z{\mathbb{Z}}
\def\cA{\mathcal A}\def\cB{\mathcal B}\def\cC{\mathcal C}\def\cD{\mathcal D}
\def\cE{\mathcal E}\def\cF{\mathcal F}\def\cH{\mathcal H}
\def\cI{\mathcal I}\def\cK{\mathcal K}\def\cL{\mathcal L}
\def\cM{\mathcal M}\def\cO{\mathcal O}\def\cP{\mathcal P}
\def\cQ{\mathcal Q}\def\cR{\mathcal R}\def\cS{\mathcal S}
\def\cV{\mathcal V}\def\cW{\mathcal W}\def\cX{\mathcal X}
\def\cY{\mathcal Y}\def\cZ{\mathcal Z}
\def\mbbK{\mathbb{K}}
\def\fS{\mathfrak S}
\def\mbbZ{\mathbb{Z}}
\renewcommand{\phi}{\varphi}
\renewcommand{\theta}{\vartheta}
\newcommand{\Sbimcl}{\mathrm{Sbim}}
\newcommand{\Sbim}{\mathrm{Sbim}} 
\newcommand{\SBim}{\Sbim}
\newcommand{\BSbimcl}{\mathrm{BSbim}}
\newcommand{\BSbim}{\mathrm{BSbim}}
\newcommand{\BSBim}{\mathrm{BSbim}}
\newcommand{\BSbimp}{\mathrm{BSbim}}
\newcommand{\Set}{\mathrm{Set}}
\newcommand{\op}{\mathrm{op}}
\newcommand{\ev}{\mathrm{ev}}
\newcommand{\F}{{\sf{F}}}
\newcommand{\id}{\mathrm{id}}
\newcommand{\Id}{\mathrm{Id}}
\newcommand{\one}{\mathbf{1}}
\newcommand{\hatCat}{\hat{\Cat}}
\newcommand{\End}{\mathrm{End}}
\newcommand{\Hom}{\mathrm{Hom}}
\newcommand{\eHom}{\underline{\Hom}}
\newcommand{\Braidg}{\operatorname{Br}}
\newcommand{\Ind}{\operatorname{Ind}}
\newcommand{\rk}{\operatorname{\rk}}
\newcommand{\inv}{^{-1}}
\newcommand{\vcomp}{\circ_2}
\newcommand{\Rouq}{F} 
\newcommand{\ub}{\underline{\beta}} 
\newcommand{\Ag}{\sigma} 
\newcommand{\cabledcross}{X}
\newcommand{\swap}{\mathrm{swap}}
\newcommand{\slide}{\mathrm{slide}}
\newcommand{\RR}{\mathbf{R}}
\newcommand{\hcomp}{\circ_1}
\theoremstyle{plain}
\newtheorem{thm}{Theorem}[subsection]
\newtheorem{theorem}[thm]{Theorem}
\newtheorem{maintheorem}{Theorem}[section]
\newtheorem{prop}[thm]{Proposition}
\newtheorem{proposition}[thm]{Proposition}
\newtheorem{cor}[thm]{Corollary}
\newtheorem{corollary}[thm]{Corollary}
\newtheorem{lemma}[thm]{Lemma}
\theoremstyle{definition}
\newtheorem{definition}[thm]{Definition}
\newtheorem{construction}[thm]{Construction}
\newtheorem{rem}[thm]{Remark}
\newtheorem{remark}[thm]{Remark}
\newtheorem{obs}[thm]{Observation}
\newtheorem{observation}[thm]{Observation}
\newtheorem{warning}[thm]{Warning}
\newtheorem{example}[thm]{Example}
\newtheorem{nota}[thm]{Notation}
\newtheorem{notation}[thm]{Notation}
\newtheorem*{exa-nono}{Example}
\newcommand{\mylabel}[2]{#2\def\@currentlabel{#2}\label{#1}}
\def\AA{\mathbb A}
\def\EE{\mathbb E}
\def\NN{\mathbb N}
\def\RR{\mathbb R}\def\TT{\mathbb T}
\def\VV{\mathbb V}\def\WW{\mathbb W}
\def\bD{\mathbf D}
\def\bK{\mathbf K}
\def\bN{\mathbf N}
\def\bW{\mathbf W}
\newcommand{\bit}[1]{\textit{#1}}
\newcommand{\conn}{\textup{-}\mathrm{conn.}}
\newcommand{\trunc}{\textup{-}\mathrm{trunc.}}
\newcommand{\surj}{\textup{-}\mathrm{surj.}}
\newcommand{\faith}{\textup{-}\mathrm{faithful.}}
\newcommand{\CatInfty}[1]{\Cat_{(\infty, {#1})}}
\newcommand{\Catnk}[2]{\Cat_{({#1}, {#2})}}
\newcommand{\kHom}{\mathrm{kHom}}
\newcommand{\D}{\bD}
\newcommand{\K}{{\bK}}
\newcommand{\oldD}{\mathrm{D}}
\newcommand{\oldK}{\mathrm{K}}
\newcommand{\Chb}[1]{\mathrm{Ch}^b(#1)}
\newcommand{\Kb}{\K^b}
\newcommand{\Kbloc}{\Kb_{\mathrm{loc}}}
\newcommand{\oldDb}[1]{\oldD^b(#1)}
\newcommand{\oldKb}[1]{\oldK^b(#1)}
\newcommand{\Cat}{\mathrm{Cat}}
\newcommand{\Ab}{{\sf Ab}}
\newcommand{\MoritakZ}{\mathrm{Mor}^{\mathrm{flat}, \mathrm{gr-proj}}(\mathrm{mod}_{k}^{Z})}
\newcommand{\Morita}{\mathrm{Morita}}
\newcommand{\Moritacp}{\Morita^{\cp}}
\newcommand{\Moritac}{\Morita^{\mrc}}
\newcommand{\DMoritakZ}{\DerMor^{\mathrm{flat}, \mathrm{gr-perf}}(\mathrm{mod}_{k}^{Z})}
\newcommand{\DerMor}{\mathrm{DMor}}
\newcommand{\Hloc}{H_{\mathrm{loc}}}
\newcommand{\MoritaS}{\mathrm{Mor}^{\mathrm{flat}, \mathrm{gr-proj}}(\mathrm{mod}_{k}^{\mathbb{Z}})}
\newcommand{\DMoritaS}{\DerMor^{\mathrm{flat}, \mathrm{gr-perf}}(\mathrm{mod}_{k}^{\mathbb{Z}})} 
\newcommand{\addkZloc}[1]{\mathrm{Lin}_{k, \mathrm{loc}}^{\mbbZ} (#1)}
\newcommand{\Lin}{\mathrm{Lin}}
\newcommand{\DMorPoly}{\mathrm{DMor}^{\mathrm{poly}, \mathrm{gr-perf}}(\mod_k^{\mbbZ})}
\newcommand{\MorPoly}{\mathrm{Mor}^{\mathrm{poly}, \mathrm{gr-proj}}(\mod_k^{\mbbZ})}
\newcommand{\pt}{{\sf pt}}
\renewcommand{\hom}{\Hom}
\newcommand{\ra}{\rightarrow} 
\newcommand{\xra}{\xrightarrow}
\newcommand{\xhookra}{\xhookrightarrow}
\newcommand{\longra}{\longrightarrow}
\newcommand{\xlongra}[1]{\stackrel{#1}{\longra}}
\providecommand{\leftsquigarrow}{%
  \mathrel{\mathpalette\reflect@squig\relax}%
}
\newcommand{\reflect@squig}[2]{%
  \reflectbox{$\m@th#1\rightsquigarrow$}%
}
\newcommand{\hookra}{\hookrightarrow}
\newcommand{\Fun}{\mathrm{Fun}} 
\newcommand{\FunL}{\mathrm{Fun^L}} 
\newcommand{\FunR}{\mathrm{Fun^R}} 
\newcommand{\Alg}{\mathrm{Alg}} 
 \newcommand{\Braid}{\mathrm{Braid}}
\newcommand{\Funex}{\mathrm{Fun}^{\mathrm{ex}}}
\newcommand{\Funadd}{\mathrm{Fun}^{\mathrm{add}}}
\newcommand{\Mod}{\mathrm{Mod}}
\newcommand{\BMod}{\mathrm{BMod}}
\newcommand{\RMod}{\mathrm{RMod}}
\newcommand{\Derived}{\cD}
\newcommand{\Deltaop}{\Delta^{\mathrm{op}}}
\newcommand{\AlgCat}{\Alg_{\Cat}}
\newcommand{\loc}{\mathrm{loc}}
\newcommand{\LMod}{\mathrm{LMod}}
\newcommand{\Op}{\mathrm{Op}}
\newcommand{\fin}{\mathrm{fin}}
\newcommand{\Fin}{\mathrm{Fin}}
\newcommand{\Fact}{\mathrm{Fact}}
\newcommand{\CProj}{\mathrm{CProj}}
\renewcommand\mod{\mathrm{mod}}
\newcommand{\add}{\mathrm{add}}
\newcommand{\addR}{\add_{\mbbK}}
\newcommand{\addRG}{\addK^{B\Monoid}}
\newcommand{\addK}{\addR}
\newcommand{\catZ}{\mathrm{Cat}_{\infty}^{B\Z}}
\newcommand{\SetZ}{\Set^{B\Z}}
\newcommand{\addkZ}{\add_{\k}^{B\Z}}
\newcommand{\addkBZ}{\addkZ}
\newcommand{\modkZ}{\mod_{k}^Z}
\newcommand{\grmod}{\mathrm{grmod}}
\newcommand{\st}{\mathrm{st}}
\newcommand{\stR}{\st_{\mbbK}}
\newcommand{\stRG}{\st_{\mbbK}^{B\Monoid}}
\newcommand{\stk}{\st_{\k}}
\newcommand{\stkZ}{\st^{B\Z}_{\k}}
\newcommand{\stkBZ}{\stkZ}
\newcommand{\mrL}{\mathrm{L}}
\newcommand{\mrPr}{\mathrm{Pr}}
\newcommand{\PrL}{\mrPr^\mrL}
\newcommand{\PrLc}{\mrPr^{\mrL,\mrc}}
\newcommand{\PrLcp}{\mrPr^{\mrL,\cp}}
\newcommand{\PrLst}{{\mrPr^{\mrL}_{\st}}}
\newcommand{\PrLstR}{{\mrPr^{\mrL}_{{\stR}}}}
\newcommand{\Monoid}{\cZ}
\newcommand{\PrLstRG}{{\mrPr^{\mrL}_{{\Mod_{\mbbK}^{\Monoid}}}}}
\newcommand{\PrLstc}{\mrPr^{\mrL, \mrc}_{\st}}
\newcommand{\PrLstRGc}{\mrPr^{\mrL, \mrc}_{\Mod_{\mbbK}^{\Monoid}}}
\newcommand{\PrLadd}{{\mrPr^{\mrL}_{\add}}}
\newcommand{\PrLaddR}{{\mrPr^{\mrL}_{{\add_{\mbbK}}}}}
\newcommand{\PrLaddRG}{\mrPr^{\mrL}_{{\Mod_{\mbbK}^{\geq 0,\Monoid}}}}
\newcommand{\PrLaddcp}{\mrPr^{\mrL, \cp}_{\add}}
\newcommand{\PrLaddRGcp}{\mrPr^{\mrL, \cp}_{{\Mod_{\mbbK}^{\geq 0,\Monoid}}}}
\newcommand{\Spectra}{\mathrm{Sp}}
\newcommand{\ConnSpectra}{\Spectra_{\geq 0}}
\newcommand{\Spaces}{\cS}
\newcommand{\cat}{\mathrm{Cat}_{\infty}}
\newcommand{\largecat}{\widehat{\mathrm{Cat}}_{\infty}}
\newcommand{\idem}{\mathrm{idem}}
\newcommand{\catidem}{\cat^\idem}
\newcommand{\catprod}{\cat^{\sqcup, \idem}}
\newcommand{\catrex}{\cat^{\mathrm{rex}, \idem}}
\newcommand{\Map}{\mathrm{Map}}
\newcommand{\Perf}{\mathrm{Perf}}
\newcommand{\Pres}{\cP}
\newcommand{\PresSigma}{\cP^{\Sigma}}
\newcommand{\CAlg}{\mathrm{CAlg}}
\newcommand{\mrc}{\mathrm{c}}
\newcommand{\cp}{\mathrm{cp}}
\newcommand{\conep}{\mrc1\mathrm{p}}
\newcommand{\arrow}{\mathrm{ar}}
\newcommand{\Triv}{\mathrm{Triv}}
\newcommand{\Mul}{\mathrm{Mul}}
\renewcommand{\Im}{\mathrm{Im}}
\newcommand{\PreBraid}{\mathrm{PreBraid}}
\newcommand{\PreBraidid}{\mathrm{Braid}}
\newcommand{\faithful}{\tiny{\text{f}}\,}
\renewcommand{\Funadd}{\mathrm{Fun}^{\sqcup}}
\newcommand{\adj}{\dashv}
\newcommand{\Comm}{\text{Comm}}
\newcommand{\fs}{\text{f.s.}}
\newcommand{\fgt}{{\textup{fgt}}}
\newcommand{\codisc}{{\textup{codisc}}}
\newcommand{\gpd}{{\textup{gpd}}}
\renewcommand{\Bar}{{\textup{Bar}}}
\newcommand{\coCart}{{\textup{coCart}}}
\newcommand{\Cart}{{\textup{Cart}}}
\newcommand{\uno}{\mathbbm{1}}
\newcommand{\Assoc}{{\textup{Assoc}}} 
\newcommand{\lax}{{\textup{lax}}} 
\newcommand{\CRoverd}[3]{{#1}_{\small{/^{#2}}{#3}}}
\newcommand{\colim@}[2]{%
  \vtop{\m@th\ialign{##\cr
    \hfil$#1\operator@font colim$\hfil\cr
    \noalign{\nointerlineskip\kern1.5\ex@}#2\cr
    \noalign{\nointerlineskip\kern-\ex@}\cr}}%
}
\newcommand{\colim}{%
  \mathop{\mathpalette\colim@{\rightarrowfill@\scriptscriptstyle}}\nmlimits@
}
\renewcommand{\varprojlim}{%
  \mathop{\mathpalette\varlim@{\leftarrowfill@\scriptscriptstyle}}\nmlimits@
}
\renewcommand{\varinjlim}{%
  \mathop{\mathpalette\varlim@{\rightarrowfill@\scriptscriptstyle}}\nmlimits@
}
\renewcommand{\colim}{\mathrm{colim}}
\newcommand{\cabledcrossfig}{
    \vcenter{\hbox{\includegraphics{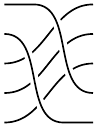}}}   
        \, , \quad
        \vcenter{\hbox{\includegraphics{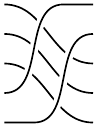}}}   
        \, , \quad
        \vcenter{\hbox{\includegraphics{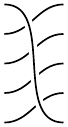}}}   
        \, , \quad
        \vcenter{\hbox{\includegraphics{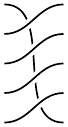}}}   
}
\newcommand{\slidefig}{
    \vcenter{\hbox{\includegraphics{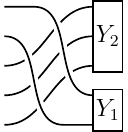}}} 
    \;\;
    \xrightarrow{\slide_{Y_1,Y_2}}
    \;\;
    \vcenter{\hbox{\includegraphics{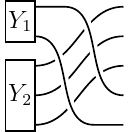}}} 
}
\newcommand{\FigSlide}{
    \vcenter{\hbox{\includegraphics{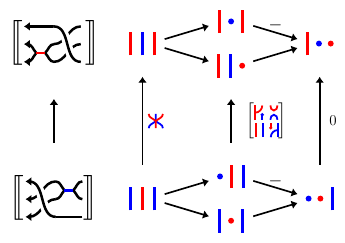}}}
}
\newcommand{\FigSlidee}{
    \vcenter{\hbox{\includegraphics{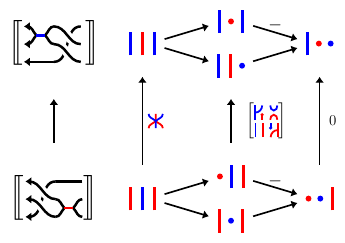}}}
}
\newcommand{\FigSlidei}{
    \vcenter{\hbox{\includegraphics{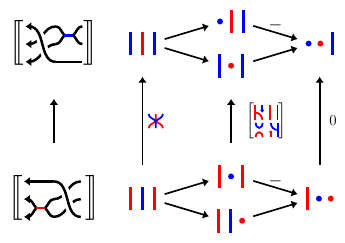}}}
}
\newcommand{\FigSlideei}{
   \vcenter{\hbox{\includegraphics{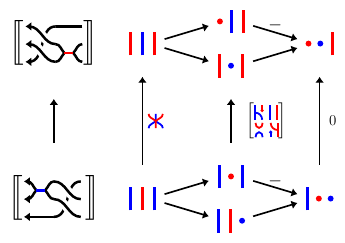}}}
}
\newcommand{\FigSlidec}{
    \vcenter{\hbox{\includegraphics{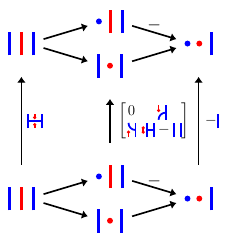}}}
}
\newcommand{\FigSlideh}{
    \vcenter{\hbox{\includegraphics{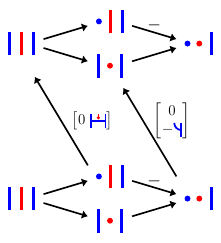}}}
}
\newcommand{\twoproofs}{
    \vcenter{\hbox{\includegraphics{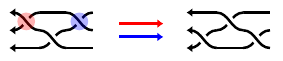}}}
}
\newcommand{\genlist}{
    \vcenter{\hbox{\includegraphics{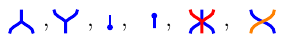}}}
}
\newcommand{\diagcxs}{
    \vcenter{\hbox{\includegraphics{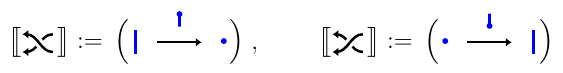}}}
}
\def\l@subsection{\@tocline{2}{0pt}{4pc}{5pc}{}}
\author{Yu Leon Liu}
\address{Harvard University, Department of Mathematics, 
1 Oxford Street
Cambridge, MA 02138 USA
\href{https://leon2k2k2k.github.io}{leon2k2k2k.github.io}}
\email{yuleonliu@math.harvard.edu}
\author{Aaron Mazel-Gee}
\address{Caltech - The Division of Physics, Mathematics and Astronomy
1200 E California Blvd, Pasadena CA 91125, USA}
\email{aaron@etale.site}
\author{David Reutter}
\address{Fachbereich Mathematik, Universit\"at Hamburg, 
Bundesstra{\ss}e 55, 
20146 Hamburg, Germany
\href{https://www.davidreutter.com/}{davidreutter.com}}
\email{david.reutter@uni-hamburg.de}
\author{Catharina Stroppel}
\address{Mathematisches Institut, Universit\"at Bonn, Endenicher Allee 60, 53115 Bonn, Germany
\href{http://www.math.uni-bonn.de/ag/stroppel/}{catharina.stroppel.de}}
\email{stroppel@math.uni-bonn.de}
\author{Paul Wedrich}
\address{Fachbereich Mathematik, Universit\"at Hamburg, 
Bundesstra{\ss}e 55, 
20146 Hamburg, Germany
\href{https://paul.wedrich.at/}{paul.wedrich.at}}
\email{paul.wedrich@uni-hamburg.de}
\title{A braided monoidal \texorpdfstring{$(\infty,2)$}{infinity-2}-category of Soergel bimodules}
\begin{document}

\maketitle
\vspace{-25pt}
\begin{abstract}
The Hecke algebras for all symmetric groups taken together form a braided
monoidal category that controls all quantum link invariants of type A and, by
extension, the standard canon of topological quantum field theories in dimension
3 and 4. Here we provide the first categorification of this Hecke braided monoidal category, which takes the
form of an $\EE_2$-monoidal $(\infty,2)$-category whose hom-$(\infty,1)$-categories are $k$-linear, stable, idempotent-complete, and equipped with $\mathbb{Z}$-actions. This categorification is designed to control homotopy-coherent link homology theories and to-be-constructed topological quantum field theories in dimension 4 and 5. 

Our construction is based on chain complexes of Soergel bimodules, with monoidal structure given by parabolic induction and braiding
implemented by Rouquier complexes, all modelled homotopy-coherently. This is part of a framework which allows to transfer the
toolkit of the categorification literature into the realm of $\infty$-categories
and higher algebra.
Along the way, we develop families of factorization systems for $(\infty,n)$-categories, enriched $\infty$-categories, and $\infty$-operads, which may be of independent interest.

As a service aimed at readers less familiar with homotopy-coherent mathematics, we include a brief introduction to the necessary $\infty$-categorical technology in the form of an appendix. 
\end{abstract}



\tableofcontents

\newcommand\oldKbloc[1]{\mathrm{K}^b_{\mathrm{loc}}(#1)}
\newcommand{\hincl}{h_1 K_{\mathrm{loc}}}

\newcommand{\Gr}{{\textup{Gr}}}
\newcommand{\shuffle}{{\textup{sh}}}

\newcommand{\blueref}{ {\color{blue}[refs]}~}
\section{Introduction}

\subsection{Overview}
\label{subsection.intro.overview}

Braided monoidal categories of representations, particularly quantum group representations, serve as a powerful framework for understanding invariants of knots, links, and tangles.
Via skein theory \cite{kw:tqft,MR1797619,MR2978449} or (relatedly) factorization homology \cite{MR3431668,MR3818096}, such braided monoidal categories are also at the heart of 3- and 4-dimensional topological quantum field theories, such as those by Witten--Reshetikhin--Turaev \cite{MR990772,MR1091619} and Crane--Yetter--Kauffman \cite{MR1452438}.

Following the paradigm of \cite{MR1295461}, our work is motivated by the desire to
construct higher-dimensional TQFTs by \emph{categorifying} these theories.
To this end, we will provide a construction of a ``categorified braided monoidal category'', which we believe encompasses all the necessary data to form the basis for future constructions of 4- and 5-dimensional analogs, respectively, of Witten--Reshetikhin--Turaev and Crane--Yetter--Kauffman theories of Lie type A, see also \cite{MWW, ICM} and references therein.

We begin our discussion in the decategorified context. The Reshetikhin--Turaev link invariants \cite{MR1036112} give an interpretation of the Jones polynomial \cite{MR766964} in terms of morphisms in the braided monoidal category of representations of quantum $\mathfrak{sl}_2$.
In fact, all of the Reshetikhin--Turaev invariants of type A are controlled via Schur--Weyl duality by a single braided monoidal category $H$, 
just as the $\mathfrak{sl}_N$ link polynomials are specializations of the HOMFLYPT link invariant. Recall for $n \in \NN_0 \coloneqq \{ 0 , 1, 2, \ldots \}$ the \textit{Hecke algebra} $H_n$, which is a quotient of the group algebra over $\Z[q^{\pm 1}]$ of the Artin braid group $\Braidg_n$ for the symmetric group $S_n$.
The braided monoidal category $H$ consists of the following data:

\begin{enumerate}

\item

The objects are given by natural numbers $n \in \NN_0$.

\item

The endomorphism algebra of each object $n \in H$ is $H_n$. All other hom-sets are trivial.

\item The monoidal structure is given on objects by addition, i.e. $m \otimes n
\coloneqq m + n$, and on morphism as the map $H_m \times H_n
\to H_{m+n}$ corresponding to the parabolic subgroup $S_m\times
S_n\hookra S_{m+n}$.

\item

The braiding---a natural isomorphism $m \otimes n \xra{\sim} n \otimes m$ for each $m,n \in H$---is given by the image in $\End_H(m+n) \coloneqq H_{m+n}$ of the positive $(m,n)$-shuffle braid in $\Braidg_{m+n}$.
\end{enumerate}

In the pioneering work \cite{Kho}, Khovanov defined a homology theory for knots and links that categorifies the Jones polynomial, leading to a plethora of categorifications of many other polynomials invariants; see e.g. \cite{MR2275632} for a survey.
Despite the diverse flavors and contexts of these constructions (e.g. involving perverse or coherent sheaves, matrix factorizations, symplectic geometry, Lie theory, and diagrammatic calculus), these categorifications are all---whether explicitly, implicitly, or a posteriori---shadows of a universal categorification involving \emph{Soergel bimodules}~\cite{Soergel}; see \cite{ICM}. Specifically, there is a monoidal additive category $\Sbim_n$ of Soergel bimodules, which categorifies the Hecke algebra $H_n$, i.e. whose split Grothendieck ring is $H_n$. Concretely, $\Sbim_n$ is a certain full additive subcategory of the category of graded bimodules of the polynomial algebra $R_n \coloneqq k[x_1, \ldots, x_n]$ over a $\Q$-algebra $k$ with each $x_i$ in degree $2$. The $\mathbb{Z}$-action by grading shift categorifies the $\mathbb{Z}[q^{\pm 1}]$-action on $H_n$. 
The image of braids in $H_n$ can only be categorified to objects in the (monoidal) bounded chain homotopy category $\oldKb{\Sbim_n}$ of $\Sbim_n$, the so called \emph{Rouquier complexes}~\cite{0409593}, see \cref{subsec:Rouquier}.

The natural next step is to assemble these categorifications into a ``categorified braided monoidal category'', in which categorified braid invariants can be seen as morphisms.
Our main Theorems \ref{thm:main-M} and~\ref{intro.maintheorem}  yield this as a  consequence. Namely, they provide a braided monoidal 2-category $\cH$---more precisely, an $\EE_2$-monoidal $(2,2)$-category---which decategorifies to $H$ upon taking Grothendieck groups.

\begin{enumerate}

\item

The objects are given by natural numbers $n \in \NN_0$.

\item

The endomorphism monoidal category of each object $n \in \cH$ is $\oldKb{\Sbim_n}$. All other hom-categories are trivial.

\item The monoidal structure is given on objects by addition, i.e. $m \otimes n
\coloneqq m + n$, and on morphism by a parabolic induction functor $\oldKb{\Sbim_m}\times \oldKb{\Sbim_n} \to \oldKb{\Sbim_{m+n}}$.

\item

The braiding isomorphism $m \otimes n \xra{\sim} n \otimes m$ is given by the Rouquier
complex in $\oldKb{\Sbim_{m+n}}$ corresponding to the $(m,n)$-shuffle braid in
$\Braidg_{m+n}$.

\end{enumerate}
\vspace{-1mm}

In fact, we go much further: we work in a homotopy-theoretic context based on higher algebra in the sense of Lurie \cite{HTT,HA} and
construct a \textit{braided monoidal $(\infty,2)$-category}\footnote{We refer the reader to \Cref{sec:appendix-recollections} for a brief introduction to $\infty$-categories.} $\Kbloc(\Sbim)$;
a truncation of which recovers the braided monoidal 2-category $\cH$ described
above. Explicitly, the hom triangulated categories $\oldKb{\Sbim_n}$ are replaced by
stable $\infty$-categories $\Kb(\Sbim_n)$ of chain complexes, chain maps and chain homotopies with
the full hierarchy of higher homotopies. 
In contrast to $\oldKb{\Sbim_n}$, using the stable $\infty$-categories $\Kb(\Sbim_n)$ has an essentially advantage, which we crucially use in the construction of $\Kbloc(\SBim)$ and its braided monoidal structure. Namely, $\Kb$ has a universal property: It constructs the free idempotent-complete stable $\infty$-category on an idempotent-complete additive category, see~\cref{subsec:additive-to-stable}.

The braided monoidal $(\infty,2)$-category $\Kbloc(\Sbim)$  yields, in the spirit of Rouquier
\cite{MR3611725}, a suitable derived setting for defining invariants not just of braids but also of braid cobordisms with their isotopies and higher isotopies. Hints of such (coherent) braided monoidal 2-categories from link homology have appeared in the literature, see e.g.~\cite{ManRou}\footnote{Indeed, as mentioned in~\cite{ManRou}, such a construction ``requires working in a suitable homotopical setting ($\mathbb{A}_{\infty}$- or $\infty$-categories), and this creates technical complications for the full construction of a braided monoidal $2$-categorical structure'', also see~\cite{RouESI}.}.

A further motivation for such a derived setting, and for working $\infty$-categorically comes from TQFTs: passing from the triangulated categories $\oldKb{\Sbim_n}$ to their underlying stable $\infty$-categories $\Kb(\Sbim_n)$ yields better finiteness properties. These finiteness properties can be essential for extending TQFTs to manifolds of higher dimensions, and for ensuring that the resulting invariants are small enough to have well-defined decategorifications. See \cite[Thm. 1.5 and \S~4.7]{MR4589588} for an example of this phenomenon.

\subsection{Monoidality theorem}
Our first result is an upgrade of  $\cH$ to a monoidal $(\infty,2)$-category whose hom-$\infty$-categories are $k$-linear, stable, idempotent-complete, and equipped with a $\mbbZ$-action given by the grading shift action on $\Kb(\SBim_n)$.

To formulate the result, we let $\st_k$ denote the $\infty$-category of \textit{small stable idempotent-complete $k$-linear $\infty$-categories}\footnote{Such $\infty$-categories can be modelled by small pretriangulated idempotent-complete $k$-linear dg-categories~\cite{Cohn}.}. Let $\stkBZ$ be the $\infty$-category $\Fun(B\mbbZ, \st_k)$ of such $\infty$-categories equipped with an additional compatible $\mbbZ$-action. 
Then, Day convolution\footnote{We stress the distinction from the pointwise symmetric monoidal structure; Day convolution uses the symmetric monoidal structure of $B\Z$.
}
induces a symmetric monoidal structure on $\stkBZ$, and in turn on the $\infty$-category $\Cat[\stkBZ]$ of small $\infty$-categories enriched in $\stkBZ$ in the sense of Gepner-Haugseng~\cite{GH13}.

\begin{maintheorem}[{\cref{prop:Kbloc-Sbim-explicit}}]
\label{thm:main-M}
There is a monoidal $(\infty,2)$-category $\Kbloc(\SBim)$ with objects labelled by natural numbers $n \in \mathbb{N}_0$ and whose endomorphim $\infty$-categories are the $k$-linear, stable, idempotent-complete $\infty$-categories $\Kb(\SBim_n)$ of chain complexes of Soergel bimodules, with a $\mbbZ$-action by grading shift. More precisely, $\Kbloc(\Sbim)$ defines an $\EE_1$-algebra in the symmetric monoidal $\infty$-category $\Cat[\stkBZ]$.
\end{maintheorem}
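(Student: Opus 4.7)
The plan is to construct $\Kbloc(\SBim)$ in two stages, exploiting the universal property of $\Kb$ recalled in \cref{subsec:additive-to-stable}: first I assemble the Soergel-bimodule data into a monoidal enriched $\infty$-category in the additive setting; second I transport it along a symmetric monoidal enhancement of $\Kb$.

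\textbf{Step 1.} Each $\SBim_n$ is an idempotent-complete additive $k$-linear category equipped with a strict $\mathbb{Z}$-action by grading shift, hence an object of $\addkBZ$. Parabolic induction provides $k$-bilinear additive bifunctors $\SBim_m \boxtimes \SBim_n \to \SBim_{m+n}$ that are $\mathbb{Z}$-equivariant and strictly associative and unital; together with $\SBim_0 \simeq \mod_k^{\mathbb{Z}}$ this is, at the classical level, a strict monoidal 2-category enriched over $\addkBZ$, with $\mathbb{N}_0$ as object set and trivial homs between distinct objects. Applying a rectification result in the Gepner--Haugseng framework (of the kind recalled in the appendix), I would promote this strict data to an $\mathbb{E}_1$-algebra $\SBim \in \Alg_{\mathbb{E}_1}(\Cat[\addkBZ])$ whose endomorphism enrichments are precisely the $\SBim_n$.

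\textbf{Step 2.} By its universal property, $\Kb$ is left adjoint to a lax symmetric monoidal forgetful functor between presentably symmetric monoidal $\infty$-categories, and therefore carries a symmetric monoidal structure itself. Passing to $B\mathbb{Z}$-indexed diagrams with Day convolution preserves symmetric monoidality, yielding a symmetric monoidal functor $\Kb \colon \addkBZ \to \stkBZ$ that sends the grading shift of an additive category to the grading shift of its bounded homotopy $\infty$-category. Functoriality of Gepner--Haugseng enriched $\infty$-categories under change of enriching base then supplies an induced symmetric monoidal functor $\Cat[\addkBZ] \to \Cat[\stkBZ]$, which preserves $\mathbb{E}_1$-algebras. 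Applying it to $\SBim$ from Step 1 yields $\Kbloc(\SBim)$ as an $\mathbb{E}_1$-algebra in $\Cat[\stkBZ]$; its objects are natural numbers and its endomorphism $\infty$-category at $n$ is $\Kb(\SBim_n)$ with the induced $\mathbb{Z}$-action, as claimed.

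\textbf{Main obstacle.} The bulk of the difficulty is not the classical input but the $\infty$-categorical plumbing: establishing that change of enriching base along a symmetric monoidal functor $\Kb \colon \addkBZ \to \stkBZ$ induces a symmetric monoidal functor between $\Cat[-]$, so that applying $\Kb$ pointwise on hom-objects respects composition, units, associators, and all higher coherences of a monoidal enriched $\infty$-category. A secondary subtlety is ensuring that the Day convolution on $\stkBZ$ correctly encodes the diagonal $\mathbb{Z}$-action on tensor products, matching the equivariance of parabolic induction. Once these points are settled by the factorization-system and enriched-$\infty$-operad technology advertised in the abstract, the theorem reduces to the formal fact that symmetric monoidal functors preserve $\mathbb{E}_1$-algebras.
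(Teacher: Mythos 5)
Your Step 2 matches the paper: the paper also defines $\Kbloc \coloneqq \Cat[\Kb]$ by change of enriching base along the symmetric monoidal left adjoint $\Kb\colon \addkBZ \to \stkBZ$, and the "main obstacle" you flag there is handled by standard Gepner--Haugseng/Lurie technology (functoriality of $\Cat[-]$ on $\CAlg(\PrL)$), so that part of your plan is sound and is not where the difficulty lies.

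The genuine gap is in Step 1. You assert that parabolic induction and relative tensor product give a \emph{strict} monoidal $2$-category enriched in $\addkBZ$ and then invoke an unnamed "rectification result in the Gepner--Haugseng framework" to promote it to an $\EE_1$-algebra in $\Cat[\addkBZ]$. Neither half of this is available: tensor products $\otimes_{R_n}$ and $\otimes_k$ are associative and unital only up to coherent isomorphism, and the interchange between horizontal composition and the external product is likewise non-strict, so the input is not strict data; and even granting a weak monoidal bicategory, there is no rectification theorem in the literature producing from it an $\EE_1$-algebra in $\Cat[\addkBZ]$ compatibly with the local $k$-linearity and $\mathbb{Z}$-actions --- indeed the paper remarks (\cref{rem:monbicat}) that even the bicategorical monoidal structure on Soergel bimodules has not appeared in full detail. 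Your appeal to rectification therefore hides exactly the content of the theorem. (A smaller slip: $\SBim_0$ is the category of finitely generated graded projective $k$-modules, not all of $\mod_k^{\mathbb{Z}}$.)

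The paper's route avoids rectification entirely: it first builds, purely from Lurie's higher algebra of modules and bimodules, a symmetric monoidal $\addkBZ$-enriched Morita category $\MorPoly$ whose coherences come for free (\cref{sec:morita-categories}); it then produces the faithful monoidal functor $\BSbimp \to \MorPoly$ by \emph{uniquely lifting} the hand-made ordinary monoidal functor $h_1\BSbimp \to h_1\MorPoly$, using the theorem that $(n-1)$-faithful functors into an $(\infty,k)$-category are equivalent to such functors into its homotopy $n$-category (\cref{cor:alg-equivalence-of-over-categories}, applied via \cref{cor:uniqueBSBim}); and finally it defines $\SBim$ as the factorization of $\addkZloc{\BSbimp}\to\MorPoly$ with respect to a (surjective-on-objects-and-dominant-on-$1$-morphisms, faithful) factorization system (\cref{def:SBim}), identifying its hom-categories with the classical $\SBim_n$ in \cref{prop:sbimiscorrect}. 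If you want to repair your proposal, you need either this kind of ambient-receptacle-plus-lifting argument or a genuine coherence/rectification theorem for locally graded-linear monoidal bicategories, which you would have to prove.
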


The $(\infty,2)$-category $\Kbloc(\Sbim)$ from \cref{thm:main-M} is  constructed in \cref{sec:SBim} via certain enriched variants of Morita $(\infty,2)$-categories developed in \cref{sec:morita-categories} following~\cite{HA}, see also \cite{ HaugsengMor, FrS}.

\subsection{Braiding theorem}
\label{subsection.intro.braiding.thm}

To motivate our main theorem, recall that braided categories of quantum group representations do not exist in isolation: Rather, they come equipped with forgetful \textit{fiber functors} to the category of vector spaces, carrying representations to their underlying vector spaces. Likewise, our monoidal $(\infty,2)$-category $\Kbloc(\Sbim)$ from \cref{thm:main-M} comes equipped with a monoidal `fiber functor', see~\eqref{eq:fiber-functor}, more precisely with a morphism in $\Alg_{\EE_1}(\Cat[\stkBZ])$ of the form
\[\Hloc \colon \Kbloc(\Sbim) \to \stkBZ.
\]
In particular, here $\stk$ plays the role of a $2$-categorical version of the category of vector spaces. Hence $\stkBZ = \Fun(B \mbbZ, \st_k)$ may be thought of as $2$-vector spaces equipped with a $\mbbZ$-action. 
The Day convolution symmetric monoidal structure on $\stkBZ$ may be interpreted as being the natural convolution structure induced by thinking of these as $2$-vector spaces graded by the abelian Lie group $U(1) \simeq B\mbbZ$.\footnote{This is directly analogous to the classical situaton in which $\mathrm{Rep}(\mathbb{Z}) \simeq \Fun(U(1), \mathrm{Vec})$ has two canonical symmetric monoidal structures: a pointwise one, and one arising from Day convolution.}

Homwise,  the functor $\Hloc$ is induced by the functors $\Kb(\SBim_n) \to \D({}_{R_n} \mathrm{grbmod}_{R_n})$ which send chain complexes of Soergel bimodules to their corresponding objects in the derived $\infty$-category of the abelian category of graded $R_n$-bimodules. They then act as morphisms in $\stkBZ$ between certain stable $\infty$-categories of graded $R_n$-modules. 

\begin{maintheorem}[{\cref{cor:main-corollary}}]
\label{intro.maintheorem}
There exists a unique braided monoidal (i.e., $\EE_2$-algebra) structure on $\Kbloc(\Sbim) \in \Cat[\stkBZ]$ that enhances its monoidal structure and satisfies the following conditions.
\begin{enumerate}

\item\label{intro.maintheorem.fiberfctr.is.braided}

The fiber functor $\Hloc\colon \Kbloc(\Sbim) \ra \stkBZ$ is braided monoidal.

\item\label{intro.maintheorem.braiding.is.Rouquiercx}

The braiding $1 \otimes 1 \xra{\sim} 1 \otimes 1$ in $\Kbloc(\Sbim)$ admits an equivalence with the Rouquier complex $F(\sigma) \in  \End_{\Kbloc(\Sbim)}(2) \coloneqq \Kb(\Sbim_2)$ corresponding to the braid group generator $ \sigma \in \Braidg_2$.
\end{enumerate}
\end{maintheorem}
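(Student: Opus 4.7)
The plan is to construct the $\EE_2$-structure on $\Kbloc(\Sbim)$ as a consequence of the Morita-style $(\infty,2)$-categorical framework of \cref{sec:morita-categories}, and then use the fiber functor together with a rigidity argument to pin down the structure uniquely.

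For existence, my starting point is that $\stkBZ$ with Day convolution is symmetric monoidal (hence in particular $\EE_2$-monoidal) and that the ambient $(\infty,2)$-category of graded $R_n$-bimodules carries a compatible symmetric monoidal structure. Applying the Morita $(\infty,2)$-categorical machinery from \cref{sec:morita-categories}---which, by its construction from factorization-system data, naturally produces $\EE_2$-algebras from suitable symmetric monoidal input---yields an $\EE_2$-refinement of the $\EE_1$-structure on $\Kbloc(\Sbim)$ from \cref{thm:main-M}. I would then identify the braiding on $1 \otimes 1$ with the Rouquier complex $F(\sigma) \in \Kb(\Sbim_2)$ by direct computation inside this framework; condition~(i) holds automatically because $\Hloc$ is induced from the same symmetric monoidal input via the $\EE_2$-Morita construction.

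For uniqueness, I would analyze the space of $\EE_2$-refinements of the given $\EE_1$-structure satisfying conditions (i)--(ii). Fixing the $\EE_2$-monoidal enhancement of $\Hloc$ and the braiding on $1 \otimes 1$ as $F(\sigma)$ reduces the remaining ambiguity to a space of higher coherences controlled by mapping spaces in $\Kb(\Sbim_n)$. Using the universal property of $\Kb$ as the free stable idempotent-complete $\infty$-category on an additive one, this further reduces to a computation at the level of $\Sbim$, where contractibility follows from rigidity of the shuffle-braid data and the freeness/generating nature of the generator $1 \in \mathbb{N}_0$.

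The main obstacle will be verifying the full tower of $\EE_2$-coherences homotopy-coherently. While the braid relations (hexagon, Yang--Baxter) are known up to explicit homotopy in $\Kb(\Sbim_n)$, the entire hierarchy of higher coherences in the $(\infty,2)$-categorical setting must be assembled compatibly. My expectation is that the factorization-systems technology developed in the paper is precisely the right tool: it reduces the $\EE_2$-coherence data to generating shuffle-braid data on small $n$, turning the classical ``braid relations up to coherent homotopy'' input into a genuine $\EE_2$-algebra structure on $\Kbloc(\Sbim)$ without a hand-built tower of coherences.
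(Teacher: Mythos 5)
There is a genuine gap, and it is in your existence step. You propose to obtain the $\EE_2$-structure on $\Kbloc(\Sbim)$ by inheriting it from the symmetric monoidal (Day convolution / Morita) structure on the ambient bimodule world. This cannot work: any braiding obtained by restriction from the symmetric monoidal structure of $\DMorPoly$ (or $\stkBZ$) would be \emph{symmetric}, whereas the braiding you need is given by Rouquier complexes, which are genuinely non-symmetric in $\Kb(\Sbim_2)$ (the full twist is not homotopic to the identity). Moreover the subcategory does not even see the symmetric braiding: the permutation bimodules $R_{\circlearrowleft w}$ implementing it are \emph{not} objects of $\Sbim_n$ (\cref{rem:Rouquierswap}), and the Rouquier complexes agree with them only up to quasi-isomorphism, not chain homotopy. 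This is precisely why the fiber functor $\Hloc$ is braided but \emph{not} faithful --- a faithful braided fiber functor to a symmetric target would force $\Kbloc(\Sbim)$ to be symmetric (\cref{subsubsection.fiber.fctr}). So no amount of ``applying the Morita machinery to symmetric monoidal input'' produces the desired braiding; the existence requires genuinely new data, namely the explicit 1-categorical prebraiding on $h_1\BSbim \to h_1\oldKbloc{\Sbim}$ over $h_1\DMorPoly$, built diagrammatically from the slide chain maps for Bott--Samelson bimodules through cabled-crossing Rouquier complexes (\cref{thm:prebraidingSbimcl}, \cref{cor:Rouquier-correct-prebraiding}). Also, the Morita construction of \cref{sec:morita-categories} is not ``built from factorization-system data''; the factorization systems serve a different purpose (controlling faithful functors by homotopy categories and lifting prebraidings).

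Your uniqueness sketch is closer in spirit to the paper --- reduction via the universal property of $\Kb$, generation by Bott--Samelson objects, and passage to homotopy 1-categories all occur --- but it is missing the concepts that make the reduction actually close. The paper's \cref{thm:main-theorem-last-sec} identifies the \emph{space} of braidings on $\Kbloc(\Sbim)$ over $\stkBZ$ with the \emph{set} of classical prebraidings on $h_1\BSbim \to h_1\oldKbloc{\Sbim}$, via: (a) the operadic notion of prebraiding ($\TT_2$- and $\AA_2\otimes\EE_1$-structures, centralizers), with the key truncatedness input (\cref{cor:collection}) that the relevant restricted mapping operad is a $2$-operad, so that $\AA_2\otimes\EE_1$- and $\EE_2$-structures agree (\cref{cor:2operadA2}); (b) transport of prebraidings along the adjunctions $\Kb \dashv \mathrm{forget}$ and $\Lin_k(-\times\mbbZ) \dashv \mathrm{forget}$ (\cref{cor:step0}); and (c) the $(n$-surjective, $n$-faithful$)$ factorization systems and \cref{cor:inf-n-pullbackfullyfaithful} to descend to homotopy 1-categories, using that $\Sbim \to \stkBZ$ (unlike $\Kbloc(\Sbim)\to\stkBZ$) is faithful. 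Note also that the 1-categorical input needed is a \emph{prebraiding}, which does not include the braid relation as data (\cref{rem:noRthree}); your framing of the input as ``braid relations known up to explicit homotopy'' misidentifies what has to be supplied and what the machinery produces. As written, your argument would need both a new existence mechanism and the prebraiding/truncatedness apparatus to establish uniqueness.
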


\noindent We emphasize that the uniqueness statement of \Cref{intro.maintheorem} must be interpreted in the $\infty$-categorical sense, where the notion of ``unique up to unique isomorphism'' from classical category theory is generalized to the notion of being parametrized by a contractible $\infty$-groupoid.

We prove \Cref{intro.maintheorem} in \emph{obstruction-theoretic} terms, as explained further in \Cref{subsection.key.ideas.for.braiding.thm}. In fact, \Cref{intro.maintheorem} is a special case of a more general result, namely \Cref{thm:main-theorem-last-sec}, that applies to a general homwise additive 2-category (in place of $\Sbim$) and a general $\stkBZ$-enriched $\infty$-category (in place of $\stkBZ$).

The braided monoidal $2$-category $\cH$ is recovered in \cref{rmk:finally-cH} from the braided monoidal $(\infty,2)$-category $\Kbloc(\SBim)$ by taking its homotopy $2$-category $h_2(\Kbloc(\SBim))$ obtained by quotienting out all $3$-morphisms, see \cref{subsec:hom-cat-of-inf-k-cat}, i.e.\! by applying $H_0$ to its $2$-hom-objects (which are objects of the derived $\infty$-category of $k$). This is the sense in which \cref{thm:main-M} and \cref{intro.maintheorem} enhance the $2$-categorical statement from \cref{subsection.intro.overview}.

\subsection{Context and proof outline for the braiding theorem}
\label{subsection.key.ideas.for.braiding.thm}

Here we further contextualize \Cref{intro.maintheorem} while discussing key aspects of its setup and proof.

\subsubsection{Braided monoidal 2-categories and their generalizations}
\label{subsubsection.brmon2cat}

The problem of finding the right definition of braided monoidal (even ordinary) $2$-categories has a long history, going back to~\cite{MR1266348,KapVoe,MR1402727}, see~\cite{schommer-pries-thesis} for a survey. One motivation~\cite{MR1355899} was the construction of surface invariants in $4$-manifolds, in particular $2$-knots, just like braided monoidal $1$-categories are related to link invariants. 
A challenge hereby was the specification of the \emph{extra data} required for a braiding on a $2$-category, for example with respect to the naturality of the braiding---which is \emph{not a property}, but \emph{extra structure}.

An even larger challenge is the construction of interesting concrete examples of braided monoidal $2$-categories, at least as rich as the theory of braided monoidal categories built from quantum groups. 
First hints of such a landscape came into view with the invention of Khovanov homology and its various associated invariants of tangles and tangle cobordisms \cite{MR2171235}. However, the idea of extracting a braided monoidal $2$-category from these invariants, see e.g. \cite{MR2752518}, turned out to be hard.
Indeed, this idea drove the study of functoriality properties of categorified link and tangle invariants with respect to tangle cobordisms. At the state of the art, the most well-behaved link homology theories assign homotopy classes of chain maps to isotopy classes of tangle cobordisms. As discussed in \cite[\S~6]{MWW}, this is sufficient to satisfy the (classical) axioms for a braided monoidal $2$-category in a toy model that is combinatorial and discrete up to the level of $1$-morphisms and truncated at the level of $2$-morphisms.

It is however desirable to go beyond such a toy model, so that the 2-hom-objects are no longer just sets or vector spaces, but are of a more homological nature: \emph{higher} homotopies give invariants of \emph{higher} isotopies between braids and links. Indeed, systematically incorporating such higher homotopies in link homology theories is also highly relevant beyond the goal of constructing an enhancement of a braided monoidal $2$-category, e.g. towards skein algebra categorification \cite[Discussion after Conjecture 1.8]{1806.03416}, cabling operations \cite[\S~1.3]{GHW}, as well as wrapping and flatting functors \cite[\S~6.4]{MR4526088}, \cite{elias2018gaitsgorys,MR4669324}.

To formulate answers to such questions, the classical axioms for braided monoidal $2$-categories are no longer sufficient: For example the Rouquier complexes corresponding to braid group generators satisfy the braid relation up to homotopy equivalence, \cite{0409593}. These equivalences must be provided as additional data, which then should be subject to further coherence conditions, requiring higher homotopies ad infinitum. Even worse, the compatibilities to be checked at each level quickly explode---in both complexity and in number,---and hence become unfeasible beyond the first two well-known stages of Reidemeister moves and Carter--Saito movie moves.

To be able to address these problems, we find it essential to work within the homotopy-theoretic context of $(\infty,2)$-categories, as explained further in the following. 

The modern formalism of higher algebra, as in~\cite{HA}, offers a robust answer to the problem of modelling braided monoidal structures in such a homotopical context: Namely in terms of the notion of an \bit{$\EE_2$-algebra}~\cite{MR420609}.
 This notion applies in any symmetric monoidal $\infty$-category $\cV$, and in the $(2,1)$-category of ordinary categories recovers the notion of a braided monoidal category by a folklore result that we recover in \cref{rem:classicalbraidingisEtwo}. Namely, an $\EE_2$-algebra structure on an object $V \in \cV$ consists of a suitably compatible system of maps $\mathrm{Conf}_n(\R^2) \ra \Hom_\cV(V^{\otimes n},V)$ from configuration spaces of points in $\R^2$. For example, a chosen basepoint of $\mathrm{Conf}_2(\R^2)$ selects a multiplication map $V \otimes V \xra{\mu} V$, and the generator of $\pi_1(\mathrm{Conf}_2(\R^2)) \simeq \Z$ selects a braiding isomorphism $\mu \xra{\sim} \mu \circ \tau$ (where $\tau$ denotes the symmetry isomorphism in $\cV$). More generally, the requisite compatibilites as $n$ varies collectively encode the homotopy coherent associativity of $\mu$ as well as its compatibility with the braiding. 

To apply this formalism of $\EE_2$-algebras to describe a braiding on $\Kbloc(\Sbim)$, we use $\cV= \Cat[\stkBZ]$. Spelled out, $\Cat[\stkBZ]$ is the $\infty$-category of $\stkBZ$-enriched $\infty$-categories, i.e. $(\infty,2)$-categories whose hom-$(\infty,1)$-categories are stable, idempotent complete, and equipped with a $k$-linear structure and an action by $\mathbb{Z}$, all appropriately compatible with the composition operations. Indeed, a $\stkBZ$-enriched $\infty$-category $\cC \in \Cat[\stkBZ]$ has composition morphisms $\Hom_\cC(c_0,c_1) \otimes \Hom_\cC(c_1,c_2) \ra \Hom_\cC(c_0,c_2)$ in $\stkBZ$, where $\otimes$ denotes the Day convolution symmetric monoidal structure on $\stkBZ$, rather than merely functors $\Hom_\cC(c_0,c_1) \times \Hom_\cC(c_1,c_2) \ra \Hom_\cC(c_0,c_2)$: The tensor product in $\stkBZ$ implicitly enforces our desired compatibility.

\cref{thm:main-M} and \Cref{intro.maintheorem} construct $\Kbloc(\Sbim)$ as an $\EE_2$-algebra\footnote{Although $\Cat[\stkBZ]$ can be seen as an $(\infty,3)$-category (in fact it is self-enriched), the notion of an $\EE_2$-algebra therein only makes references to its underlying $(\infty,1)$-category.} object in $\Cat[\stkBZ]$.  In particular, its monoidal structure and braiding are automatically homotopy-coherently compatible with the $k$-linearity and $\Z$-actions on its hom-$(\infty,1)$-categories.

To construct this $\EE_2$-algebra structure, we take inspiration from the homotopy-theoretic machinery of \emph{obstruction theory}, in which context one often finds it (somewhat paradoxically) easier to prove a stronger theorem. Specifically, we prove \Cref{intro.maintheorem} by establishing not just the existence of an $\EE_2$-algebra structure on the $(\infty,2)$-category $\Kbloc(\SBim)$, but also its \emph{homotopy-theoretic uniqueness along with its compatibility} with $k$-linearity, $\mathbb{Z}$-action, and with the fiber functor $\Hloc \colon \Kbloc(\Sbim) \to \stkBZ$.

\subsubsection{Higher-categorical notions of faithfulness and homotopy categories}
\label{subsubsection.faithful.htpycats.factsys}

To prove our main theorems, we develop machinery to reduce the construction of algebraic structures on $(\infty,2)$-categories to corresponding structures on their underlying ordinary homotopy categories, as well as higher-dimensional variants. We outline some of these results, which  might be of independent interest.

As a toy case, observe that given an $\infty$-category $\cC$, a full subcategory $\cC'$ is determined entirely by the subset $h_0\cC'$ of the set $h_0\cC$ of equivalences classes of objects  in $\cC$ that are contained in $\cC'$. Furthermore, if $\cC$ is endowed with some multiplicative structure (e.g.\! monoidal, braided monoidal, or symmetric monoidal), then $\cC'$ inherits such a structure if and only if $h_0\cC'$ inherits the resulting structure from $h_0\cC$.

Given an $(\infty,2)$-category $\cC$, we write $h_1\cC$ for the ordinary category obtained by first discarding its noninvertible 2-morphisms and then passing to the homotopy category, i.e.\! taking $\pi_0$ of its hom-spaces. Moreover, we say that a functor between $(\infty,2)$-categories is \textit{faithful} if it is fully faithful on hom-$(\infty,1)$-categories.\footnote{Beware that these are \textit{not} generally inclusions of subobjects, i.e.\! monomorphisms, in $\Cat_{(\infty,2)},$ see \cref{warn:nfaithful-neq-ntruncated}.} 
Then, analogously to the above situation, we show that a faithful functor $\cC' \ra \cC$ is determined entirely by their corresponding faithful functor $h_1\cC' \ra h_1\cC$. Moreover, we show that if $\cC$ is braided monoidal equipped with the faithful functor $\cC' \ra \cC$, then endowing $\cC'$ together  with a (compatible) braided monoidal structure is equivalent to endowing $h_1\cC'$ and $h_1\cC' \ra h_1\cC$ with such a structure, see \cref{cor:alg-equivalence-of-over-categories}. The task of defining an $\EE_2$-algebra structure on $\cC'$ therefore reduces in such a situation to the task of defining a braiding (in the classical sense!) on its homotopy 1-category $h_1\cC'$.

In fact, we prove the above results in much broader generality: In \cref{sec:inf-n-cats}, we study the notion of $n$-faithfulness for functors between $(\infty,k)$-categories, and show in \cref{subsec:fact-for-inf-k-cat} that they define the right classes in factorization systems on $\Cat_{(\infty,k)}$, whose corresponding left classes are given by the $n$-surjective functors, which are surjective on objects, and on parallel morphisms up to level $(n+1)$, see \cref{def:nsurj-and-nfaith}.
Using the iterative definition of higher categories,  $\Cat_{(\infty,k)} \coloneqq \Cat[\Cat_{(\infty,k-1)}]$, we deduce this from general results that we prove regarding factorization systems on enriched $\infty$-categories in \cref{subsec:fs-for-Vcats}, also see \cite{haugseng2023tensor} for related recent result.
Generalizing the above, we establish in \cref{cor:equivalence-of-over-categories} that for any $n$ and any $k$, $(n-1)$-faithful functors to an $(\infty,k)$-category $\cC$ are equivalently determined by $(n-1)$-faithful functors to its homotopy $n$-category.

\subsubsection{The fiber functor}
\label{subsubsection.fiber.fctr}

The fiber functor $\Hloc \colon \Kbloc(\Sbim) \to \stkBZ$ on which \Cref{intro.maintheorem} is built, can be viewed as a categorified and graded analog of the forgetful functor from a category of quantum group representations to $\mathrm{Vec}$. Since its target $\stkBZ$ is braided monoidal (in fact symmetric monoidal), it is tempting to apply the machinery of \cref{subsubsection.faithful.htpycats.factsys} to obtain a braiding on $\Kbloc(\Sbim)$. 

However, this does not work because the analogy with the classical situation breaks down in an important way. In the classical setting, the fiber functor is \emph{faithful} and monoidal, but \emph{in general not braided}. By contrast, our categorified fiber functor will be \emph{braided} monoidal but  \emph{not faithful} (in the sense of \Cref{subsubsection.faithful.htpycats.factsys}). This lack of faithfulness prevents us from directly using the reduction results from \Cref{subsubsection.faithful.htpycats.factsys}. In \Cref{subsubsection.intro.discuss.prebraidings}, we will discuss a restricted version of the fiber functor that \textit{is} faithful, which allows us to leverage the results indicated in \Cref{subsubsection.faithful.htpycats.factsys}. 

In the end, the non-faithfulness turns out to be an essential feature of our fiber functor. This feature is what allows $\Hloc$ and its source category $\Kbloc(\Sbim)$ to be equipped with a non-symmetrically braided monoidal structure, even though the target category $\stkBZ$ is symmetric monoidal. A faithful braided fiber functor would force the source category to be symmetric!

In the classical situation, the existence of a fiber functor   arises from the fact that quantum group representations are ultimately categories of modules for a quasitriangular Hopf algebra, which can be recovered from the fiber functor via Tannakian reconstruction. We would be very interested to see an application of Tannakian reconstruction to our categorified fiber functor.

We expect our fiber functor to be important for future applications and computations. For instance, the fact that it is braided may give a means of recursively computing all the algebraic data implicit in the braided monoidal structure on $\Kbloc(\Sbim)$ on a cell-by-cell basis.

\subsubsection{The prebraiding}
\label{subsubsection.intro.discuss.prebraidings}
Another key ingredient of our proof of \Cref{intro.maintheorem} is the following: $\Kbloc(\Sbim)$ is generated, in an appropriate sense, by a monoidal sub-2-category, restricted to which the fiber functor to  $\stkBZ$ \emph{is} faithful.

We begin by considering the sub-2-category\footnote{The reader should be warned that the inclusion $\Sbim \to \Kbloc(\Sbim)$ is merely a faithful functor but not a monomorphism in $\Cat_{(\infty,2)}$, see~\cref{warn:nfaithful-neq-ntruncated}.} $\Sbim \subset \Kbloc(\Sbim)$ described as follows: it contains all the same objects, but on hom-objects we pass to the subcategories $\Sbim_n \subseteq \Kb(\Sbim_n)$ of Soergel bimodules (seen as complexes concentrated in degree 0). This is a monoidal sub-2-category: Soergel bimodules are closed under parabolic induction.

Consider now the \emph{restricted fiber functor}, i.e. the composite $\Sbim \hookra \Kbloc(\Sbim) \ra \stkBZ$ of the inclusion followed by the fiber functor. Here, we arrive at an interesting tension. While the fiber functor $\Kbloc(\Sbim) \ra \stkBZ$ is not faithful but (will be) braided, this composite \emph{is} faihtful  but not braided: The braiding of $\Kbloc(\Sbim)$ does not restrict to one on $\Sbim$. Indeed, Rouquier complexes are typically genuine complexes, not concentrated in degree 0.

However, we now make a key observation: the $(\infty,2)$-category $\Kbloc(\Sbim)$ is obtained by applying the left adjoint $\Kb$ to the hom-objects of the $(2,2)$-category $\Sbim$. In this sense, $\Sbim$ generates $\Kbloc(\Sbim)$, which suggests that the braiding of $\Kbloc(\Sbim)$ might be uniquely determined by its values on $\Sbim$. In fact, for each $m$, there is a full additive monoidal subcategory $\BSbim_m \subseteq \Sbim_m$ of \bit{Bott--Samelson bimodules}~\eqref{eq:BS} which generates $\Sbim_m$ under sums, retracts, and grading shifts. These Bott--Samelson bimodules  assemble into a sub-2-category $\BSbim \subset \Sbim$, which suggests that the braiding of $\Kbloc(\Sbim)$ might even be uniquely determined by its values on $\BSbim$.

In order to explore this idea further, let us imagine constructing the braiding on the monoidal $(\infty,2)$-category $\Kbloc(\Sbim)$ of \Cref{intro.maintheorem} by hand. This certainly requires, for every pair of objects $m,n \in \N_0$, a braiding isomorphism $m \otimes n \xra{\sim} n \otimes m$. Up to equivalence, this 1-morphism is already determined by condition \eqref{intro.maintheorem.braiding.is.Rouquiercx} of \Cref{intro.maintheorem}: it must be given by the Rouquier complex for the positive $(m,n)$-shuffle braid in $\Braidg_{m+n}$ (see also \cref{fig:cabledcross}). Of course, the braiding has to be \textit{natural}: any complex $C \in \Kb(\Sbim_m)$, seen as an endomorphism of $m \in \Kbloc(\Sbim)$, should ``slide'' along the $m$ parallel strands through the braiding by means of a specified homotopy equivalence (see \cref{fig:slide}), and likewise for any $C' \in \Kb(\Sbim_n)$. In fact, the discussion in the previous paragraph suggests that it suffices to specify these ``slide'' homotopy equivalences merely for objects $C \in \BSbim_m \subseteq \Kb(\Sbim_m)$. 

We formalize such a specification through the key notion of a \bit{prebraiding} on the monoidal functor $\BSbim \hookra \Kbloc(\Sbim)$. We first describe such data in the simple setting of ordinary categories, i.e.\! in the $(2,1)$-category $\Cat$: a prebraiding on a monoidal functor $F \colon \cA \ra \cB$ between ordinary categories consists of equivalences $F(x) \otimes F(y) \xra{\sim} F(y) \otimes F(x)$ that are natural in $x,y \in \cA$ and satisfy two appropriate analogs of the hexagon axioms for braidings; see \cref{def:prebraiding}. In particular, a prebraiding on the identity functor $\id_\cA$ is equivalent to a braiding on $\cA$. On the other hand, as we will see in \cref{subsubsection.intro.prebraiding.via.inf.op}, in the $\infty$-categorical context, a prebraiding is a much sparser structure.
Still, a prebraiding on a monoidal functor between $(\infty,2)$-categories involves a substantial amount of coherence data.

Inspired by the results of \Cref{subsubsection.faithful.htpycats.factsys}, as a core input to our proof of \Cref{intro.maintheorem} we construct a prebraiding on the ordinary monoidal functor $h_1\BSbim \hookra h_1\Kbloc(\SBim)$ in \cref{sec:prebraiding} whose prebraiding equivalence is given by the homotopy equivalence classes of the Rouquier complexes of positive $(m,n)$-shuffle braids. This is based on explicit computations using the diagrammatic formulation of Soergel bimodules, \cite{EW, EK}. Furthermore, this prebraiding is compatible with the fiber functor $h_1\Hloc\colon h_1\Kbloc(\Sbim) \to \stkBZ$.

Finally, having constructed such a prebraiding at the level of homotopy categories, it remains to show that it lifts uniquely to a fully homotopy coherent \emph{braiding} on $\Kbloc(\Sbim)$ and on its fiber functor $\Hloc\colon \Kbloc(\Sbim) \to \stkBZ$.

\subsubsection{Prebraidings via $\infty$-operads}\label{subsubsection.intro.prebraiding.via.inf.op}

To lift our $1$-categorical prebraiding to braided monoidal structures on $(\infty,2)$-categories, we first generalize prebraidings themselves to this $(\infty,2)$-categorical setting. 

We implement the notion of a prebraiding in the context of $\infty$-operads as indicated in the diagram 
\[
\TT_2 \otimes \EE_1
\longra
\AA_2 \otimes \EE_1
\longra
\EE_1 \otimes \EE_1
\xlongra{\sim}
\EE_2
\]
thereof, as we now explain. For more details see \cref{appendix:operads} and \cref{sec:prebraidings-to-E2-str}. 

\begin{itemize}

\item As indicated in \Cref{subsubsection.brmon2cat}, we formalize the notion of a braided monoidal structure via the notion of an $\EE_2$-algebra (with $k$-ary operations parametrized by $\mathrm{Conf}_k(\R^2)$).

\item Much simpler is the notion of an $\EE_1$-algebra, which has $k$-ary operations parametrized by $\mathrm{Conf}_k(\R^1)$. These spaces are discrete, and this $\infty$-operad is equivalent to the ordinary associative operad.

\item The equivalence $\EE_1 \otimes \EE_1 \xra{\sim} \EE_2$ is an instance of \textit{Dunn additivity}.  Hence, an $\EE_2$-algebra structure is equivalent to two compatible $\EE_1$-algebra structures: $\Alg_{\EE_2}(\cV) \simeq \Alg_{\EE_1}(\Alg_{\EE_1}(\cV))$.

\item Whereas $\EE_1$ parametrizes (homotopy-coherently) associative and unital binary operations, by forgetting associativity one arrives at the operad $\AA_2$, which parametrizes binary operations that are merely unital.

\item The $\infty$-operad $\TT_2$ is a two-colored ordinary operad, and parametrizes pairs of pointed objects $\cA,\cB \in \Alg_{\EE_0}(\cV)$\footnote{A pointed object $\cA$ is an object together with a morphism $\eta_{\cA} \colon \uno_{\cV} \to \cA$, equivalently, an $\EE_0$-algebra. A pointed morphism is a morphism that respect that pointing, equivalently, an $\EE_0$-algebra map.} together with pointed morphisms $\cA \xra{F} \cB$, and $\cA \otimes \cA \xra{\mu} \cB$, along with pointed homotopies $\mu(\eta_\cA \otimes (-)) \simeq F \simeq \mu((-) \otimes \eta_\cA)$ in $\Hom_{\Alg_{\EE_0}(\cV)}(\cA,\cB)$. 
In particular, we view a $\TT_2$-algebra as having not an underlying object but having an underlying \textit{morphism}, namely the morphism $F$. The map of $\infty$-operads $\TT_2 \ra \AA_2$ corepresents the operation carrying an $\AA_2$-algebra to the evident $\TT_2$-algebra with $\cA = \cB$ and $F= \id_{\cA}$.

\end{itemize}

We define a prebraiding on an $\EE_1$-algebra morphism $F$ in a symmetric monoidal $\infty$-category $\cV$ to be a $\TT_2$-algebra structure on the correponding morphisms in $\Alg_{\EE_1}(\cV)$, or equivalently a lift of the corresponding $[1] \otimes \EE_1$-algebra in $\cV$ to a $\TT_2 \otimes \EE_1$-algebra, see \cref{subsec:space-of-braidings}. In the context of ordinary categories, we show in \cref{ex:prebraidon1cat} that this notion coincides with the one described in \Cref{subsubsection.intro.discuss.prebraidings}.

\subsubsection{From prebraidings to braidings}
\label{subsubsection.intro.from.prebraidings.to.braidings}

In \Cref{subsubsection.intro.discuss.prebraidings} we outlined the construction of a prebraiding on the monoidal functor $h_1\BSbim \hookra h_1\Kbloc(\SBim)$ between ordinary monoidal $1$-categories, which is compatible with the fiber functor $h_1 \Hloc: h_1\Kbloc(\SBim) \to h_1 \stkBZ$. To complete the proof of \Cref{intro.maintheorem}, it remains to show that this admits a unique lift to a braiding on $\Kbloc(\SBim)$ such that the fiber functor is braided. 

We prove this in steps, combining the machinery described above, as follows, see \cref{sec:main-theorem}. 

Applying a version of the techniques described in \Cref{subsubsection.faithful.htpycats.factsys}, it follows that our prebraiding on $h_1\BSbim \ra h_1\Kbloc(\Sbim)$ over $h_1\stkBZ$ lifts uniquely to a prebraiding on the $(\infty,2)$-functor $\BSbim \ra \Kbloc(\Sbim)$ over $\stkBZ$.

In order to proceed, we note a crucial property of prebraidings, see \cref{cor:step0}: given an adjunction $F \colon \cC \rightleftarrows \cD \colon G$ in which the left adjoint is symmetric monoidal, the data of a prebraiding on an $\EE_1$-algebra morphism $c \ra G(d)$ is equivalent to the data of a prebraiding on its adjunct $F(c) \ra d$.
We use this to extend our prebraiding over $\stkBZ$ from one on $\BSbim \hookra \Kbloc(\Sbim)$ to one on $\Sbim \hookra \Kbloc(\Sbim)$, and then we use it again to extend the latter to one on the defining equivalence $\Kbloc(\Sbim) \xra{\sim} \Kbloc(\Sbim)$.

So, we have obtained a prebraiding, i.e. a $\TT_2$-structure on the identity morphism of $\Kbloc(\Sbim) \in \Alg_{\EE_1}(\Cat[\stkBZ]_{/\stkBZ})$.
Because the identity morphism is invertible, this is equivalent to an $\AA_2 \otimes \EE_1$-structure on $\Kbloc(\Sbim) \in \Cat[\stkBZ]_{/\stkBZ}$.

A final task is to lift this $\AA_2 \otimes \EE_1$-structure to an $\EE_2 \simeq \EE_1 \otimes \EE_1$ structure. Recall that prebraidings on identity functors between ordinary monoidal $1$-categories, i.e.  $\AA_2\otimes \EE_1$-structures in $\Cat_1$, are precisely the same as braidings, i.e. $\EE_2$-structures. More generally, we show in \cref{subsec:from-A2-E1-to-E-2} that $\AA_2 \otimes \EE_1$- and $\EE_2$-algebras agree in general $(2,1)$-operads.

This observation applies to our situation due to a crucial truncatedness result regarding the endomorphism $\infty$-operad of the object $\Kbloc(\Sbim) \in \Cat[\stkBZ]_{/\stkBZ}$. Namely, while we expect it to be quite complicated in general, we prove in \cref{cor:collection} that the maximal sub-$\infty$-operad in the image of its $\EE_1$-structure is in fact just a $(2,1)$-operad. This suffices for our purposes since the map of $\infty$-operads $\EE_1 \to \EE_2$ is surjective on path components of mapping spaces.

Thus, our prebraiding on $\Kbloc(\Sbim)$ over $\stkBZ$ extends uniquely to an $\EE_2$-algebra structure establishing our main goal.

After now having introduced the key ideas and concepts, we finish this
introduction by giving an outline of the organization of the paper.

\subsection{Organization of the paper}
In \cref{sec:2} we start by reviewing the basics of Soergel bimodules,
Bott--Samelson bimodules, and their diagrammatics. Next we proceed to the
bounded homotopy category of Soergel bimodules and discuss Rouquier complexes
for braids. The homotopy equivalence classes of bounded chain complexes of
Soergel bimodules can then be organized into a monoidal $1$-category
$h_1\oldKbloc{\Sbimcl}$. In \cref{sec:prebraiding}, we define the notion of a
prebraiding and construct such a prebraiding on the functor $h_1\BSBim \to h_1\oldKbloc{\Sbimcl}$ that includes
isomorphism classes of Bott--Samelson bimodules into $h_1\oldKbloc{\Sbimcl}$.
This requires explicit diagrammatic computations. Finally, we relate the notion
of prebraiding to centers and centralizers in \cref{thm:classbraidings}, which will be important for
$\infty$-categorical aspects later on.

From \cref{sec:stableLA} we move into the world of $\infty$-categories. Section \cref{sec:stableLA} mostly recalls and collects results from \cite{HA}. We begin in \Cref{sec:presentable} by reviewing various colimit completion procedures, such as the $\Ind$-completion under filtered and the $\PresSigma$-completion under sifted colimits, and symmetric monoidal structures on $\infty$-categories with certain colimits (and functors preserving those colimits). Next, we recall the notion of compactly/projectively generated $\infty$-categories in \cref{subsec:compact-gen-and-ind}. Furthermore, in \cref{prop:PrLcp}, 
we show that $\Ind$ gives an equivalence between small idempotent-complete $\infty$-categories with finite colimits, and presentable compactly generated $\infty$-categories. Similarly, $\PresSigma$ gives an equivalence between small idempotent-complete $\infty$-categories with coproducts, and presentable projectively generated $\infty$-categories.
We review the theory of additive and stable $\infty$-categories in \cref{sec:stable} and give an $\infty$-categorical interpretation of a process that is ubiquitous in the categorification literature: passing from an additive category  to its bounded chain homotopy category, in \cref{subsec:additive-to-stable}. In particular we show as \cref{cor:universalch} that for a small idempotent-complete additive $1$-category $\cA$, the stable $\infty$-category $\Kb(\cA)$ of chain complexes in $\cA$ is the free stable $\infty$-category on $\cA$.
We introduce the $\infty$-category of graded $k$-modules and review their Day convolution symmetric monoidal structures in \cref{subsec:Z-graded-k-linear-modules}. Lastly, in \cref{subsec:derived-inf-cat-graded-mod} we review the theory of derived $\infty$-categories developed in \cite[\S~1.3]{HA} and prove as \cref{prop:gradedmodules} that the module $\infty$-category of a discrete graded algebra is the derived $\infty$-category of graded modules.

In \cref{sec:morita-categories}, we define the $\infty$-categories $\addkBZ$ and $\stkBZ$ of additive and stable $k$-linear $\infty$-categories with a $\mbbZ$-action, as well as their self-enrichment. Furthermore, we construct full symmetric monoidal enriched subcategories $\MoritakZ$ and $\DMoritakZ$ of $\addkBZ$ and $\stkBZ$ respectively, whose objects are labelled by flat discrete graded $k$-algebras, and whose enriched homs are given by (chain complexes) of discrete graded bimodules satisfying suitable finiteness condition. These Morita categories facilitate our homotopy coherent
construction of the Soergel $(2,2)$-category in \cref{sec:SBim}.  
We begin in \cref{subsec:presentable-enriched-infty-cats} with a review of morphism objects in various module categories as well as the self-enrichment of presentably symmetric monoidal $\infty$-categories. 
In \cref{z-graded-k-linear-cats}, we define  $\addkBZ$ and $\stkBZ$ as well as the symmetric monoidal left adjoint $\Kb$ in the graded-linear context. 
Concerning gradings,
we prove in \cref{subsubsec:gradingstoactions} an
$\infty$-categorical version of the familiar equivalence between
categories enriched in graded modules and categories with an action of a
monoid.\footnote{On the level of ordinary $1$-categories, this statement is
usually implicitly assumed in the literature on Bott-Samelson and
Soergel bimodules and crucial when passing between algebraic or Lie
theoretic definitions and diagrammatical versions of these important
categories.} 
To finish the section, we  construct the desired enriched Morita categories $\MoritakZ$ and $\DMoritakZ$ in \cref{subsec:morita-cat-for-discrete-alg}.

In \cref{sec:inf-n-cats}, we enter the world of $(\infty, k)$-categories. 
The purpose of
this section is to introduce and study various factorization systems on the
$\infty$-category of $(\infty, k)$-categories. We
prove as \cref{thm:nsurj-nfaithful-fs-for-infty-k-cats} the existence of the ($n$-surjective, $n$-faithful) factorization system on $\CatInfty{k}$, generalizing the familiar (surjective-on-objects, fully-faithful) factorization
system on $1$-categories. Furthermore, we define the homotopy $n$-category functor and prove as \cref{thm:inf-n-fancy-pullback} that $(n-1)$-faithful functors into a $(\infty, k)$-category $\cC$ are controlled by its homotopy $n$-category $h_n \cC$. This is crucial in both the construction of the Soergel $(2,2)$-category in \cref{sec:SBim} as well as reducing braidings to prebraidings in \cref{sec:main-theorem}.
We begin in \cref{subsec:basics-of-inf-k-cat} with a quick recollection on the basics of $(\infty, k)$ category theory. In \cref{subsec:trun-and-conn}, we recall the familiar ($n$-connected, $n$-truncated) factorization system on the $\infty$-category of spaces. We then proceed in \cref{subsec:fact-for-inf-k-cat} to inductively define the classes of ($n$-surjective, $n$-faithful)-morphism and prove as \cref{thm:nsurj-nfaithful-fs-for-infty-k-cats} that they form factorization system. Lastly, we define the $n$-homotopy category functor in \cref{subsec:hom-cat-of-inf-k-cat}, state \cref{thm:inf-n-fancy-pullback} (regarding faithful functors and homotopy categories) and its consequences in \cref{subsec:n-1-faithful-and-hom-n-cat}, and prove \cref{thm:inf-n-fancy-pullback} in \cref{subsec:proof-factorization}.

In \cref{sec:SBim}, we define the relevant higher categories of Bott--Samelson bimodules and (chain complexes of) Soergel bimodules.
We construct the monoidal $(2,2)$-categories $\BSbimp$ in \cref{subsec:definining-BSBim} and $\SBim$ in \cref{subsec:definining-SBim}, whose hom categories are the categories of type A Bott--Samelson and Soergel bimodules, respectively. By construction, $\SBim$ carries all the desired extra structure such as local 
$\k$-linearity and $\mathbb{Z}$-action. In \cref{subsec:checking-SBim}, we verify that the hom-categories of $\SBim$ indeed agree with the categories $\SBim_n$. To finish the section, and to prove \cref{thm:main-M}, 
we construct in \cref{sec:defKSBim} the monoidal $(\infty, 2)$-category $\Kbloc(\SBim)$ whose hom categories are stable $\infty$-categories of chain complexes of type A Soergel bimodules, together with all its structure, and define the fiber functor $\Hloc$ from $\Kbloc(\SBim)$ to $\stkBZ$ in \cref{subsec:fiber-functor-on-KbSBim}.

In \cref{sec:prebraidings-to-E2-str}, we introduce the operadic machinery necessary for constructing the braiding on the monoidal $(\infty, 2)$-category $\Kbloc(\SBim)$. In particular, we formulate prebraidings using the language of $\infty$-operads and prove as \cref{cor:2operadA2} that braidings and prebraidings coincide when the ambient symmetric monoidal $\infty$-category (and more generally $\infty$-operad) is suitably truncated, generalizing the well-known statement that an $\EE_2$-algebra structure on a 1-category is the same as a braided monoidal structure. We extensively use Lurie's theory of $\infty$-operads \cite[\S~2]{HA} and refer the reader to \cref{appendix:operads} for an overview.

After a quick recollection on unital $\infty$-operads in \cref{subsec:operad-recollection}, we construct the $\T_2$ and $\AA_2$ operads, which are the main players of this section, in \cref{subsec:A2-T2}. After defining a relative version of $\T_2$ structure in \cref{subsec:relative-T2},
we review the $\infty$-categorical versions of centralizers and center \cite[\S~5.2]{HA} in \cref{subsec:centralizers-and-centers}.
Using the theory of centralizers, we prove as \cref{cor:classicalbraidings} that $\T_2$-structures are the $\infty$-categorical generalization of the notion of prebraiding. We construct the ($n$-surjective, $n$-faithful) factorization system on $\infty$-operads in \cref{subsec:fact-sys-on-operads} and prove a surjectivity result in \cref{subsec:lifting-operadic-structure}. In \cref{subsec:from-A2-E1-to-E-2} we prove  \cref{cor:2operadA2} (concerning prebraidings and braidings), which is the main result of this section. Finally, in \cref{subsec:lifting-maps-of-algebras} we end with an easy but useful result regarding lifting maps of algebras.

In \cref{sec:main-theorem}, we finally prove \cref{intro.maintheorem}. 
After defining the spaces of braidings and prebraidings in \cref{subsec:space-of-braidings}, in \cref{subsec:statement-of-main-thm} we state our main  \cref{thm:main-theorem-last-sec}, an abstract theorem about lifting prebraidings on homotopy $1$-categories to braidings, assuming the existence of a fiber functor. We expect this theorem can be used to build braidings on other interesting $(\infty,2)$-categories. 
Applying \cref{thm:main-theorem-last-sec} to the prebraiding on $h_1\BSBim \to h_1\oldKbloc{\SBim}$ constructed in \cref{sec:2}, we prove a precise version of \cref{intro.maintheorem} as \cref{cor:main-corollary}.
The remainder of the section is concerned with the proof of \cref{thm:main-theorem-last-sec}. 
The space of braidings is shown to be
equivalent, through a series of reduction steps, to spaces of prebraidings in
progressively less-structured situations, terminating in a set (rather than
a space) of prebraidings between the homotopy $1$-categories.
Assuming various truncatedness conditions, we prove those reduction steps are equivalences in \cref{subsec:from-prebraiding-to-braiding}.  Finally, in \cref{subsec:proof-of-main-thm} we prove those truncatedness hypothesis and complete the proof of \cref{thm:main-theorem-last-sec}.

In \cref{sec:appendix-recollections}, we provide a leasurely introduction to various aspects of $\infty$-categories and higher algebra. 

In \cref{app:f-s-for-enriched-cats}, we review the theory of factorization systems on $\infty$-categories and 
prove a number of ways of obtaining new factorization systems from existing ones, with a focus
on presentable $\infty$-categories. 
In particular, we prove two important results for constructing 
factorization systems, \Cref{thm:fs-and-alg} (concerning algebras over
$\infty$-operads) in \Cref{subsec:fs-for-algebras-over-opds} and
\Cref{thm:fs_and_enriched_cat} (concerning enriched $\infty$-categories) in \Cref{subsec:fs-for-Vcats}. 

\vspace{10pt}
\noindent \textbf{Acknowledgements.}
We would like to thank Markus Zetto for helpful comments on an early draft of this paper. YLL would like to thank 
David Ben-Zvi,
Tom Gannon,
Rune Haugseng,
Theo Johnson-Freyd,
Sam Raskin,
Tomer Schlank, and
Reuben Stern
for helpful discussions, and Mike Hopkins for his continuous support and encouragement.
AMG gratefully acknowledges inspiration from
David Ayala,
Clark Barwick,
David Ben-Zvi,
Justin Campbell,
Tom Gannon,
Tyler Lawson,
Sam Raskin,
Nick Rozenblyum,
Matt Stoffregen,
and
Mike Willis.
DR would like to thank Christopher Douglas, Theo Johnson-Freyd, and Kevin Walker for many illuminating discussions on higher categories, link homologies and TQFTs. 
PW would like to acknowledge the pioneering work of Scott Morrison and Kevin Walker towards TQFTs based on link homology theories and thank them for many illuminating conversations on the topic. 
CS would like to thank Gustavo Jasso and Stefan Schwede for many very useful discussions around higher categories.

\vspace{10pt}

\noindent \textbf{Funding.}
YLL is supported by the Simons Collaboration on Global Categorical Symmetries. 
AMG acknowledges the support of NSF grant DMS-2105031.
DR acknowledges support by the Emmy Noether program of the  Deutsche Forschungsgemeinschaft (DFG, German Research Foundation) – 493608176. 
DR and PW acknowledge support from the Deutsche
Forschungsgemeinschaft (DFG, German Research Foundation) under Germany's
Excellence Strategy - EXC 2121 ``Quantum Universe'' - 390833306 and the Collaborative Research Center - SFB 1624 ``Higher structures, moduli spaces and integrability''.
CS is supported by the Gottfried Wilhelm Leibniz Prize of the German Research Foundation. 

Finally, the authors acknowledge the important role of the Spring 2020 MSRI programs ``Higher Categories and Categorification'' and ``Quantum Symmetries'' supported by the National Science Foundation grant DMS-1440140 and the 2019 Erwin--Schr\"{o}dinger institute workshop ``Categorification in quantum topology
and beyond'' in catalyzing their collaboration.

\renewcommand{\bN}{\mathbb{N}}
\section{A prebraiding on the homotopy category of Soergel bimodules}
\label{sec:2}
\subsection{Review of Soergel bimodules and diagrammatics}
\label{sec:diagrammatics}

We let $\k$ denote the rationals $\Q$ or, more generally, a commutative
$\Q$-algebra. We consider the $\k$-linear monoidal categories
$\Sbimcl_n$ of Soergel bimodules for the symmetric group $S_n$ acting on its
natural representation. 
In this section, if not specified otherwise,  \emph{categories} mean ordinary
categories (in contrast to $\infty$-categories used later) and \emph{functors}
mean ordinary functors. For a fixed nonnegative integer $n$, let
$R_n=\k[x_1,x_2,\ldots, x_n]$ denote the polynomial ring over $\k$ in $n$
variables viewed as polynomial functions on $\mathfrak{h}^*=(\k^n)^*$ in the
standard way. Permuting the basis vectors of $\k^n$ induces a left action of the
symmetric group $W=S_n$ on $R_n$ such that the simple transposition
$s_i=(i,i+1)$ acts by swapping the variables $x_i$ and $x_{i+1}$. Denote
$\check\alpha_i=x_{i}-x_{i+1}$ for $1\leq i\leq n-1$. Then restriction to the
span of the $\check\alpha_i$'s gives the usual geometric representation of $W$
viewed as the Coxeter group generated by the simple transpositions. For any subgroup
$G$ of $W$ let $R_n^G$ be the subalgebra of $G$-invariants in $R_n$. In case
$G=\langle s_i\rangle$ for some $1\leq i\leq n-1$ we abbreviate $R_n^G=R_n^i$.
We will view $R_n$ as a graded (by which we mean $\Z$-graded) algebra by putting
the generators $x_i$ in degree $2$. Note that $R_n^i$ is a graded subalgebra and we
have a canonical, grading-preserving decomposition 
\begin{equation}
    \label{eq:decomp}
    R_n= R_n^i\oplus \check\alpha_iR_n^i\simeq R_n^i\oplus R_n^i\langle 2\rangle
\end{equation} as
graded $R_n^i$-bimodules. Here and in the following we denote for $j\in \Z$ and
a graded (bi)module $M=\oplus_{i\in\Z} M_i$ by $M\langle j\rangle$ the graded
(bi)module which equals $M$ as (bi)module but with the grading shifted up by
$j$, i.e. $M\langle j\rangle_i=M_{i-j}$. The grading shifting functors $\langle
j\rangle$, $j\in\Z$ equip the category of graded $(R_n,R_m)$-bimodules for fixed
$n,m$ with an action of the group $\Z$. 

By a graded $\k$-linear category\footnote{Note that this is not the same concept as a
$\k$-linear category that is also graded by a group.} we mean a category
enriched in $\Z$-graded $\k$-modules. As an example we can take as objects
graded $R_n$-bimodules with all $R_n$-bimodule maps, denoted
$\Hom^{\mathrm{gr}}$. In this case, the grading shift functors are compatible
with the grading on morphisms as follows:
\begin{equation}
    \label{eq:grading}
    \Hom^{\mathrm{gr}}(M\langle k\rangle,N\langle l\rangle) = \Hom^{\mathrm{gr}}(M,N)\langle l-k\rangle
\end{equation} 

\begin{definition}
    \label{def:BSbimclcl}
The \emph{graded $\k$-linear category of Bott--Samelson bimodules} for $R_n$ is the graded $\k$-linear full
subcategory $\BSbimcl^{\mathrm{gr}}_n$ of $R_n$-bimodules given by all graded $R_n$-bimodules 
of the form:
\begin{equation}
\label{eq:BS}
B_{\mathbf{i}}\langle
j\rangle:=R\otimes_{R^{s_{i_k}}}R\otimes_{R^{s_{i_{k-1}}}}\cdots\otimes_{R^{s_{i_1}}}R\langle
j-k\rangle
\end{equation} 
for $R=R_n$, $j\in\Z$ and some $\mathbf{i}=(i_k,i_{k-1},\ldots, i_1)\in
\{1,2,\ldots n-1\}^k$ with $k \in \N_0$, including the bimodules $R\langle j\rangle$ in case $k=0$. In the case $k=1$ we also abbreviate:
\[
B_i:=     R\otimes_{R^{s_{i}}}R\langle-1\rangle
\]
\end{definition}

Using this shorthand, \eqref{eq:BS} may also be expressed as:
\begin{equation}
    \label{eq:BSB}
    B_{\mathbf{i}}\langle
    j\rangle \simeq B_{i_k} \otimes_{R} B_{i_{k-1}}\otimes_R \cdots\otimes_R B_{i_1}\langle
    j\rangle
    \end{equation} 

    \begin{definition}
        \label{def:BSbimn}
There are two common variations of \cref{def:BSbimclcl}:
\begin{eqnarray}
    \label{eq:BSBimn}
\BSbimcl_n\quad{\leftsquigarrow}  
&\BSbimcl^{\mathrm{gr}}_n&
{\rightsquigarrow} \quad
\overline{\BSbimcl}^{\mathrm{gr}}_n
\end{eqnarray}
\begin{itemize}
    \item Namely, $\BSbimcl_n$ is the $\k$-linear (but no longer graded
    $\k$-linear) category obtained by restricting to the degree zero part of the
    morphism spaces. We call this the \emph{degree zero subcategory}. (In the
    language of enriched category theory, this is the \emph{underlying category}
    of the category enriched in graded vector spaces; it inherits the
    linear structure.) By remembering the $\Z$-action by grading shift functors,
    all other homogeneous components of morphism spaces in
    $\BSbimcl^{\mathrm{gr}}_n$ can be recovered from \eqref{eq:grading}. 
    \item Alternatively, one can consider the graded $\k$-linear full
    subcategory $\overline{\BSbimcl}^{\mathrm{gr}}_n$  on \emph{unshifted} Bott--Samelson bimodules $B_{\mathbf{i}}$,
    i.e. where $j=0$ in \eqref{eq:BS}. From this category one can reconstruct the morphism
    spaces between shifted Bott--Samelsons, that is all objects in $\BSbimcl^{\mathrm{gr}}_n$, again via \eqref{eq:grading}. 
\end{itemize}
We refer to \cite[(2.1)]{MazStr} for a discussion of these essentially
equivalent ways of handling graded $\k$-linear categories. 
\end{definition}
 
The version $\BSbimcl^{\mathrm{gr}}_n$ in Definition~\ref{eq:decomp} is the most
flexible one, but with one caveat: when making statements about isomorphism,
idempotents, and categorical constructions such as (co)products, we tacitly
require that the structure morphisms are of degree zero, see e.g.
\eqref{eq:decomp}, i.e. we work in the underlying category. For this reason, we
will henceforth almost exclusively work with $\BSbim_n$. The only exception is
\cref{sec:prebraiding}, where we use the version
$\overline{\BSbimcl}^{\mathrm{gr}}_n$ to connect to the diagrammatic Hecke
category.

\begin{remark}
    \label{rem:projective}
As defined, $\BSbimcl_n$ is a monoidal full subcategory of the $\k$-linear
category of graded $R_n$-bimodules and grading-preserving bimodule maps,
with tensor product $-\otimes_{R_n}-$. More precisely, since each $B_i$ is free
of rank $2$ as a graded $R_n$-module from the left and from the right, all
objects of $\BSbimcl_n$ are finitely generated
graded-projective\footnote{\label{ftn:graded-projective}By a finitely generated graded-projective $A$-module
for a graded algebra $A$ we mean an $A$-module that is a retract of a finite
direct sum of grading shifts of the rank 1 free $A$-module along grading
preserving maps.} $R_n$-modules from both sides and the tensor product coincides
with the derived tensor product. 
\end{remark}

\begin{definition}
    \label{def:Sbimcl}
The monoidal $\k$-linear category $\Sbimcl_n$ of \emph{Soergel bimodules for
$R_n$} is the Karoubian closure, that is the smallest additive idempotent-complete full subcategory of graded $R_n$-bimodules containing $\BSbimcl_n$.
\end{definition}

For later use, we also record how Bott--Samelson and Soergel bimodules for
various $n$ can be related.

\begin{definition}
    \label{def:parind}
Given $a,b,c\in\N_0$ let $j_{a|c}=j_{a|c}^b\colon R_b\hookrightarrow R_{a+b+c}$ be the algebra
homomorphism given by $x_i\mapsto x_{i+a}$. Given an $R_m$-bimodule $M$ and an
$R_n$-bimodule $N$, the tensor product $M\otimes_\k N$ is an $R_m\otimes
R_n$-bimodule, hence an $R_{m+n}$-bimodule via the isomorphism $j_{0|n}\otimes
j_{m|0}$. We call this functorial operation {\it parabolic induction}. 
\end{definition}

It is straightforward to see directly from Definition~\ref{def:BSbimclcl} that Bott--Samelson bimodules are sent to (bimodules isomorphic to) Bott--Samelson bimodules under parabolic induction. To distinguish the two kinds of tensor product, we will use the convention:
\begin{align*}
    \hcomp := \otimes_{R_n} \colon \BSbimcl_n\times \BSbimcl_n&\to \BSbimcl_n\\
    \boxtimes := \otimes_{\k} \colon \BSbimcl_m\times \BSbimcl_n&\to \BSbimcl_{m+n}
\end{align*}
and write $f\vcomp g \colon M\rightarrow P$ for the composition of morphisms
$f\colon M\rightarrow N$, $g\colon N\rightarrow P$ in $\BSbimcl_n$. We use the
same notation for $\Sbimcl_n$. 

The symbols $\vcomp$, $\hcomp$ and $\boxtimes$ are meant to foreshadow that
these operations should form the $2$- and $1$-morphism composition and the tensor
product in a monoidal bicategory, see \cref{rem:monbicat}.

\subsection{Review of Rouquier complexes}
\label{subsec:Rouquier}

\begin{nota}
    \label{conv:oldKb}
    For any $\k$-linear  category $\cC$ with zero object, we write $\Chb{\cC}$ for the
    $\k$-linear category of \emph{bounded (on both sides) chain complexes} in $\cC$, with chain
    maps as morphisms. If $\cC$ is additive (and thus has a zero object) or equipped with
    a $\Z$-action, then so is $\Chb{\cC}$. If $\cC$ is additive and equipped
    with a monoidal structure compatible with $\oplus$, then this is
    inherited by $\Chb{\cC}$. There is a natural notion of homotopy between
    chain maps and the nullhomotopic chain maps form a $\k$-linear (monoidal)
    ideal. 

    \begin{definition}\label{def:classicalK}
    The quotient of $\Chb{\cC}$ by the nullhomotopic chain maps is the 
    \emph{chain homotopy category}  $\oldKb{\cC}$. An isomorphism between objects of
    $\oldKb{\cC}$ is called a \emph{chain homotopy equivalence}. \footnote{The chain homotopy category $\oldKb{\cC}$ is often just called `homotopy category'; we here reserve the latter term for the more general construction of a $1$-category from a higher category, see \Cref{def:homotopy-cat-definition}.}
    \end{definition} 
\end{nota}

\begin{definition} For  $n\geq 2$ we denote by $\Braidg_n$ the braid group with
(Artin) generators  $\Ag_i$, $1\leq i\leq n-1$.  Given a generator or its
inverse, we will consider the following complexes in $\Chb{\Sbimcl_n}$:
\begin{equation}
    \label{eqn:Rouq-alg}
    \Rouq(\Ag_i) 
    := \left( 0 \xrightarrow{} \uwave{B_i} \xrightarrow{m} R\langle-1\rangle \xrightarrow{} 0 \right)\, , \quad
    \Rouq(\Ag_i^{-1}) 
    := \left( 0 \xrightarrow{} R\langle 1\rangle \xrightarrow{\Delta} \uwave{B_i} \xrightarrow{} 0 \right)
\end{equation}
Here the $\uwave{\textup{underlined}}$ part is in homological degree zero, $m$ is induced by the multiplication map $B_i=R \otimes_{R^{s_i}} R \langle
-1 \rangle \to  R\langle-1\rangle$, and $\Delta$ is the bimodule map determined
by $1 \mapsto x_i\otimes 1 - 1\otimes x_{i+1}$.
An expression
$\ub=\Ag_{i_1}^{\epsilon_1}\cdots\Ag_{i_r}^{\epsilon_r}$ with
$\epsilon_j\in\{\pm\}$  is called a \emph{braid word} with corresponding
braid element $\beta\in \Braidg_n$. The word is \emph{positive} if $\epsilon_j=1$ for
$1\leq j\leq r$. Given  $\ub$ define
\begin{equation*}
    \label{eqn:Rouq-algb}
    \Rouq(\ub) 
    := 
    \Rouq(\Ag_{i_1}^{\epsilon_1}) \hcomp \cdots \hcomp \Rouq(\Ag_{i_r}^{\epsilon_r})
\end{equation*}
where we make use of the horizontal composition $\Chb{\Sbimcl_n}$ (given by the obvious extension of $\hcomp=\otimes_R$). By convention, the empty braid word gives $\Rouq(\emptyset)=R$. 
\end{definition}
The complexes  $ \Rouq(\ub) $ are called \emph{Rouquier complexes}. They
were first thoroughly studied by Rouquier who proved in
\cite{0409593} that,  up to canonical homotopy equivalence, these complexes are
independent of the chosen braid word representing $\beta$. More precisely the
following holds:

\begin{thm}[Rouquier canonicity]
    \label{thm:Rouquier-canonicity}
    Let $\ub_1$ and $\ub_2$ be braid words representing the same braid $\beta$,
    then there exist homotopy equivalences
    \[\psi_{\ub_1,\,\ub_2}\colon \Rouq(\ub_1) \to \Rouq(\ub_2)\] which form a
transitive system, i.e. if $\ub_3$ is a third braid word representing the same
braid, then 
\[\psi_{\ub_2,\,\ub_3}\vcomp \psi_{\ub_1,\,\ub_2} \htc \psi_{\ub_1,\,\ub_3}.\]
If moreover $\ub_1'$, $\ub_2'$ are braid words representing another braid $\beta'$ then we have
\begin{equation}
\label{eqn:Rouquier-can-tensor}
  \psi_{\ub_1\ub_1',\ub_2\ub_2'} \htc \psi_{\ub_1,\ub_2}\hcomp \psi_{\ub_1',\ub_2'}.
\end{equation}
\end{thm}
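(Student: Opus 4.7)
My plan is to reduce the statement to explicit homotopy equivalences associated with the Artin relations of the braid group $\Braidg_n$, and then address coherence. Recall that $\Braidg_n$ admits the presentation with generators $\Ag_i,\Ag_i^{-1}$ subject to: (i) the Artin relation $\Ag_i\Ag_{i+1}\Ag_i = \Ag_{i+1}\Ag_i\Ag_{i+1}$, (ii) far commutativity $\Ag_i\Ag_j = \Ag_j\Ag_i$ for $|i-j|\geq 2$, and (iii) the invertibility relations $\Ag_i\Ag_i^{-1} = 1 = \Ag_i^{-1}\Ag_i$. Any two words $\ub_1,\ub_2$ representing the same $\beta\in\Braidg_n$ are connected by a finite sequence of such local moves applied inside larger words, and I aim to build $\psi_{\ub_1,\ub_2}$ by composing explicit equivalences realizing each elementary move.

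The first step is therefore the construction of explicit chain homotopy equivalences in $\Chb{\Sbimcl_n}$ realizing each of (i)--(iii). For (iii), the decomposition \eqref{eq:decomp} of $R$ as an $R^i$-bimodule produces Gaussian-elimination-type deformation retracts of $\Rouq(\Ag_i)\hcomp \Rouq(\Ag_i^{-1})$ and $\Rouq(\Ag_i^{-1})\hcomp \Rouq(\Ag_i)$ onto $R$. For (ii), there is a canonical bimodule isomorphism $B_i \hcomp B_j \simeq B_j \hcomp B_i$ for $|i-j|\geq 2$, which extends termwise to an isomorphism of the two two-term tensor-product complexes. For (i), both $\Rouq(\Ag_i\Ag_{i+1}\Ag_i)$ and $\Rouq(\Ag_{i+1}\Ag_i\Ag_{i+1})$ are explicit four-term complexes of Bott--Samelson bimodules, and a direct calculation using the Frobenius extension structure $R^{s_i} \subset R$ exhibits chain maps in both directions along with explicit homotopies for the two composites. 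These are finite computations, accessible either by hand or, more systematically, via the diagrammatic Hecke category reviewed in \cref{sec:diagrammatics}.

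With the elementary equivalences in hand, one chooses for each pair $(\ub_1,\ub_2)$ a finite sequence of moves $\ub_1 \squigra \ub_2$ and composes the corresponding equivalences (horizontally with identities on the unaffected portions of the word) to obtain $\psi_{\ub_1,\ub_2}$. The main obstacle is that this construction a priori depends on the chosen sequence, so the transitivity and tensor-compatibility properties are not automatic. I would handle this through a rigidity statement for Rouquier complexes: every degree-zero endomorphism of $\Rouq(\ub)$ in $\oldKb{\Sbimcl_n}$ that is a chain homotopy equivalence acts as a scalar on the extremal (say, highest homological degree) Bott--Samelson summand, so any two homotopy equivalences $\Rouq(\ub_1) \to \Rouq(\ub_2)$ differ by an element of $\k^\times$. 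Normalizing each elementary equivalence so that its induced map on this extremal summand is the identity makes any two composites associated to different paths agree up to the scalar and forces that scalar to be trivial, yielding the transitive system up to homotopy. The tensor compatibility \eqref{eqn:Rouquier-can-tensor} is then immediate from the construction: a sequence $\ub_1 \squigra \ub_2$ on the left and $\ub_1' \squigra \ub_2'$ on the right assemble into a sequence $\ub_1\ub_1' \squigra \ub_2\ub_2'$ acting factorwise, and bifunctoriality of $\hcomp$ together with the above normalization exhibits the resulting composite as homotopic to $\psi_{\ub_1,\ub_2} \hcomp \psi_{\ub_1',\ub_2'}$.
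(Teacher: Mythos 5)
The paper does not prove this theorem; it defers entirely to Rouquier's original paper \cite{0409593} and to \cite[Prop.~2.19]{elias2017categorical}, remarking that the stated version is a ``slight strengthening'' of the latter (the strengthening being the tensor compatibility~\eqref{eqn:Rouquier-can-tensor}). Your outline does follow the standard route taken in that literature: realize the braid relations by explicit homotopy equivalences, then appeal to a rigidity statement for Rouquier complexes to obtain the transitive system. The construction of the elementary equivalences in steps (i)--(iii) is fine.

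The gap is in the coherence argument. First, the rigidity you invoke --- that any two chain homotopy equivalences $\Rouq(\ub_1)\to\Rouq(\ub_2)$ differ by a unit in $\k$ --- is genuinely a theorem, not an observation: one must show that $\Hom_{\oldKb{\Sbim_n}}(\Rouq(\ub_1),\Rouq(\ub_2))$ vanishes in negative homological degree and is a free $\k$-module of rank one in bidegree $(0,0)$. Seeing that an endomorphism acts by a scalar \emph{on one extremal term} does not imply the endomorphism is a scalar multiple of the identity \emph{as a map of complexes}; the difference could be a nonzero map supported away from that term, and ruling this out is exactly the content of the Hom-vanishing theorem. Second, and more concretely, your proposed normalization is not well-defined across all moves: the invertibility relation $\Ag_i\Ag_i^{-1}\rightsquigarrow\emptyset$ changes the homological span of the complex (e.g.\ $\Rouq(\Ag_i\Ag_i^{-1})$ is supported in degrees $-1,0,1$ while $\Rouq(\emptyset)=R$ sits in degree $0$), so the source and target have no common ``highest homological degree summand'' on which to demand the identity. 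For positive words of the same length the top term is always $R\langle-\ell\rangle$ and your normalization makes sense, but the invertibility moves are unavoidable once you compare words of different lengths, and they are the place where the argument has to do real work. Fixing this requires either a global normalization independent of homological position (e.g.\ via the quasi-isomorphism to the standard bimodule $R_{\circlearrowleft w}\langle j\rangle$ of Remark~\ref{rem:Rouquierswap}, which is what Rouquier actually uses) or a separate argument that loops in the braid word graph compose to a trivial scalar; neither is addressed in your sketch.
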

This rephrasing of Rouquier's results from \cite{0409593} is a slight strengthening of \cite[Prop.~2.19]{elias2017categorical}.
As a consequence we may abuse notation and write $\Rouq(\beta)$ instead of $\Rouq(\ub)$.

\begin{corollary}
    The Rouquier complexes $\Rouq(\beta)$ are invertible objects in the monoidal
    category $\oldKb{\Sbimcl_n}$.
\end{corollary}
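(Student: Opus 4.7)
The plan is to use Rouquier canonicity directly to exhibit $\Rouq(\beta^{-1})$ as a two-sided inverse to $\Rouq(\beta)$ in $\oldKb{\Sbimcl_n}$. Concretely, fix a braid word $\ub = \Ag_{i_1}^{\epsilon_1}\cdots\Ag_{i_r}^{\epsilon_r}$ representing $\beta$ and let $\ub'$ denote the reversed word with inverted letters, $\ub' := \Ag_{i_r}^{-\epsilon_r}\cdots\Ag_{i_1}^{-\epsilon_1}$, which represents $\beta^{-1}$. From the very definition of $\Rouq$ on braid words as iterated horizontal compositions, one has the equalities $\Rouq(\ub\cdot\ub') = \Rouq(\ub)\hcomp\Rouq(\ub')$ and $\Rouq(\ub'\cdot\ub) = \Rouq(\ub')\hcomp\Rouq(\ub)$ in $\Chb{\Sbimcl_n}$, and hence also in the homotopy category $\oldKb{\Sbimcl_n}$.

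Now both concatenated words $\ub\cdot\ub'$ and $\ub'\cdot\ub$ represent the identity braid $e \in \Braidg_n$, as does the empty braid word $\emptyset$, with $\Rouq(\emptyset) = R$ (the monoidal unit). Rouquier canonicity (\cref{thm:Rouquier-canonicity}) therefore supplies chain homotopy equivalences
\[
  \psi_{\ub\cdot\ub',\,\emptyset}\colon \Rouq(\ub)\hcomp\Rouq(\ub') \xrightarrow{\,\sim\,} R
  \qquad\text{and}\qquad
  \psi_{\ub'\cdot\ub,\,\emptyset}\colon \Rouq(\ub')\hcomp\Rouq(\ub) \xrightarrow{\,\sim\,} R.
\]
Writing $\Rouq(\beta) := \Rouq(\ub)$ and $\Rouq(\beta^{-1}) := \Rouq(\ub')$ as per the corollary's abuse of notation, these equivalences display $\Rouq(\beta^{-1})$ as a two-sided inverse of $\Rouq(\beta)$ in the monoidal category $\oldKb{\Sbimcl_n}$, which is precisely the assertion of invertibility.

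No genuine obstacle remains at this stage: all of the substantive technical work has already been absorbed into the preceding Rouquier canonicity theorem, and the corollary is an immediate unwinding of definitions. For readers preferring a more concrete verification, one could alternatively compute $\Rouq(\Ag_i)\hcomp\Rouq(\Ag_i^{-1}) \simeq R$ directly from the explicit complexes in \eqref{eqn:Rouq-alg}, using the decomposition \eqref{eq:decomp} to cancel the contractible summand, and then iterate across the word; but the canonicity argument above is cleaner and avoids any redundant calculation.
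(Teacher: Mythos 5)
Your argument is correct and matches the paper's intent: the corollary is stated as an immediate consequence of Rouquier canonicity, obtained exactly as you do by applying the canonical homotopy equivalences to the words $\ub\cdot\ub'$ and $\ub'\cdot\ub$, both representing the trivial braid, with $\Rouq(\emptyset)=R$. No further comment is needed.
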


\begin{rem}
    \label{rem:Rouquierswap}
    For $n \geq 2$ and $R=R_n$ as above and $w\in S_n$ we let $R_{\circlearrowleft w}$ denote the
    graded $R$-bimodule which is isomorphic to $R$ as left $R$-module and with
    right-action twisted by $w$: i.e. $r\in R$ acts on $R_w$ from the right as
    multiplication by $w(r)$. We emphasize that for non-trivial $w$, this $R_n$-bimodule $R_{\circlearrowleft w}$ is \emph{not} an object of $\SBim_n$.

   However, for $1\leq i\leq n-1$, the bimodule morphism $R_{\circlearrowleft s_i}\langle
    1\rangle \to B_i$ determined by $1\mapsto x_i\otimes 1 - 1\otimes x_i$
    induces a quasi-isomorphism $R_{\circlearrowleft s_i}\langle 1\rangle \to F(\sigma_i)$.
    Likewise, the multiplication map $B_i \to R_{\circlearrowleft s_i}\langle -1 \rangle$
    determined by $1\otimes 1 \mapsto 1$ induces a quasi-isomorphism
    $F(\sigma^{-1})\to R_{\circlearrowleft s_i}\langle -1\rangle$.     

    Up to a grading shift, the generating Rouquier complexes, and more
    generally, the Rouquier complexes of positive resp. negative permutation
    braids, can hence be identified with permutation bimodules upon proceeding
    to the derived category $\oldDb{{}_R\mathrm{grbmod}_{R}}$ of graded
    $R$-$R$-bimodules. To obtain an interesting (non-symmetric) braiding, it is thus essential to work up-to-chain-homotopy, rather than up-to-quasi-isomorphism. Nevertheless, the comparison with permutation bimodules is important in this paper and the grading shifts in the following definition are motivated by it.
\end{rem}

\begin{definition}
\label{def:cabledcross}
For $m,n\geq 0$, we define the \emph{braiding complexes}:
\begin{align}
    \cabledcross_{m,n}&:=\Rouq( (\Ag_{n}\cdots\Ag_{1})\cdots (\Ag_{i+n-1}\cdots\Ag_{i}) \cdots (\Ag_{m+n-1}\cdots\Ag_{m}) )\langle -m n \rangle\\
    \cabledcross'_{m,n}&:=\Rouq( (\Ag\inv_{n}\cdots\Ag\inv_{m+n-1}) \cdots (\Ag\inv_{i}\cdots\Ag\inv_{i+m-1}) \cdots (\Ag\inv_{1}\cdots\Ag\inv_{m}) )\langle m n \rangle
\end{align}
The braiding complexes $\cabledcross_{m,n}$
(resp. $\cabledcross'_{m,n}$) will be called positive (resp. negative)
\emph{cabled crossings complexes} or just  \emph{cabled crossings}, since the underlying braids are cabled crossings.

The braids appearing in the special cabled crossings $\cabledcross_{m,1}$,
$\cabledcross'_{m,1}$, $\cabledcross_{1,n}$, and $\cabledcross'_{1,n}$ will be
called \emph{Coxeter braids}, since they are braid versions of Coxeter
words, see the illustrations in \Cref{fig:cabledcross} (as for functions, we compose functors and read diagrams from right to left. The Artin generator $\Ag_{i}$ acts on the $i$-th and $(i+1)$-th strand from the bottom.).
\end{definition}

\begin{figure}[ht]
    \[
\cabledcrossfig
    \]
    \caption{Cabled crossings $\cabledcross_{2,3}$, $\cabledcross'_{3,2}$, and 
    Coxeter braids $\cabledcross_{1,4}$, $\cabledcross'_{1,4}$.}
    \label{fig:cabledcross}
    \end{figure}

\begin{lemma}
    \label{lem:cabled-crossing-from-coxeter}
    Cabled crossings are built from Coxeter braids. For $m,n\geq 0$, we have 
\begin{align*}
\cabledcross_{m,n} 
&\simeq 
(\cabledcross_{1,n}\boxtimes \one_{m-1}) 
\hcomp \cdots \hcomp 
(\one_{m-1-i} \boxtimes \cabledcross_{1,n}\boxtimes\one_{i})
\hcomp \cdots \hcomp
(\one_{m-1}\boxtimes \cabledcross_{1,n})\\
& \hte
(\one_{n-1}\boxtimes \cabledcross_{m,1})
\hcomp \cdots \hcomp 
(\one_{i} \boxtimes \cabledcross_{m,1}\boxtimes\one_{n-1-i})
\hcomp \cdots \hcomp
(\cabledcross_{m,1}\boxtimes \one_{n-1} )
\\
\cabledcross'_{m,n} 
&\simeq
(\one_{n-1}\boxtimes \cabledcross'_{m,1})
\hcomp \cdots \hcomp 
(\one_{i} \boxtimes \cabledcross'_{m,1}\boxtimes\one_{n-1-i})
\hcomp \cdots \hcomp
(\cabledcross'_{m,1}\boxtimes \one_{n-1} )\\
& \hte
(\cabledcross'_{1,n}\boxtimes \one_{m-1}) 
\hcomp \cdots \hcomp 
(\one_{m-1-i} \boxtimes \cabledcross'_{1,n}\boxtimes\one_{i})
\hcomp \cdots \hcomp
(\one_{m-1}\boxtimes \cabledcross'_{1,n})
\end{align*}
\end{lemma}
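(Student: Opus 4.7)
The plan is to reduce each of the four displayed equivalences to an identity of braid words in $\Braidg_{m+n}$, and then invoke the definition of Rouquier complexes together with Rouquier canonicity (\Cref{thm:Rouquier-canonicity}) and its compatibility with horizontal composition~\eqref{eqn:Rouquier-can-tensor}. A first useful bookkeeping observation is that parabolic induction commutes with $\Rouq$ in the expected way: for any braid word $\ub = \sigma_{j_1}^{\epsilon_1}\cdots \sigma_{j_k}^{\epsilon_k}$ in $\Braidg_b$ and nonnegative integers $a,c$, the complex $\one_a \boxtimes \Rouq(\ub) \boxtimes \one_c$ in $\Kb(\SBim_{a+b+c})$ is isomorphic as a chain complex to $\Rouq(\sigma_{j_1+a}^{\epsilon_1}\cdots \sigma_{j_k+a}^{\epsilon_k})$ with the same grading shift. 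This follows directly from applying the definition of $\boxtimes$ via the embedding $j_{a|c}$ of \Cref{def:parind} to the generating complexes~\eqref{eqn:Rouq-alg}.

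For the two $\simeq$ statements, I would observe that the right-hand side, unpacked using the above, is literally equal as a chain complex to the Rouquier complex defining the left-hand side. For the positive case, the general factor $\one_{m-1-i}\boxtimes \cabledcross_{1,n}\boxtimes \one_i$ rewrites as $\Rouq(\sigma_{m+n-1-i}\cdots \sigma_{m-i})\langle -n\rangle$; composing these as $i$ runs from $m-1$ (leftmost) down to $0$ (rightmost) recovers exactly the braid word $(\sigma_n\cdots \sigma_1)(\sigma_{n+1}\cdots \sigma_2)\cdots (\sigma_{m+n-1}\cdots \sigma_m)$ with additive total grading shift $\langle -mn\rangle$, as in \Cref{def:cabledcross}. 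The negative $\simeq$ statement is analogous, unpacking $\cabledcross'_{m,1}$ into ascending runs of inverse generators.

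For the two $\hte$ statements, the two sides have different underlying braid words, but both present the same positive (respectively negative) $(m,n)$-cabled crossing in $\Braidg_{m+n}$. In the positive case, the right-hand side rewrites as
\[
\Rouq\bigl((\sigma_n\sigma_{n+1}\cdots \sigma_{m+n-1})(\sigma_{n-1}\sigma_n\cdots \sigma_{m+n-2})\cdots (\sigma_1\sigma_2\cdots \sigma_m)\bigr)\langle -mn\rangle,
\]
which is a second reduced expression, of common length $mn$, for the positive permutation braid lifting the $(m,n)$-shuffle. The required equality of the two braid words in $\Braidg_{m+n}$ can be proved by induction on $m$ (or $n$), peeling off one strand at a time using commutations $\sigma_i\sigma_j=\sigma_j\sigma_i$ for $|i-j|\geq 2$ and the braid relation $\sigma_i\sigma_{i+1}\sigma_i=\sigma_{i+1}\sigma_i\sigma_{i+1}$; pictorially this is a planar isotopy of the cabled crossings in \Cref{fig:cabledcross}. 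Given this braid identity, \Cref{thm:Rouquier-canonicity} combined with~\eqref{eqn:Rouquier-can-tensor} then produces the desired chain homotopy equivalence, with matching total grading shifts on both sides. The negative $\hte$ statement follows by the same argument applied to inverse words, or alternatively by taking monoidal inverses in $\oldKb{\SBim_{m+n}}$ of the already-established positive identity.

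The only nontrivial ingredient is the classical braid identity underlying the $\hte$ statements; everything else is bookkeeping in indices and grading shifts. I anticipate no substantial obstacle, as this identity is a well-known feature of positive permutation braids, and the desired symmetry between the two decompositions is already visually encoded in the cabled-crossing pictures of \Cref{fig:cabledcross}.
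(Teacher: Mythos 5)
Your proposal is correct and takes essentially the same approach as the paper's (very terse) proof: for the $\simeq$ statements, both sides reduce to the same Rouquier complex after unpacking $\boxtimes$ and re-associating $\hcomp$; for the $\hte$ statements, the underlying braid words differ but represent the same element of $\Braidg_{m+n}$, so \Cref{thm:Rouquier-canonicity} together with~\eqref{eqn:Rouquier-can-tensor} gives the desired chain homotopy equivalence. The one minor caveat is your alternative suggestion of deducing the negative $\hte$ by taking monoidal inverses: the inverse of the braid word for $\cabledcross_{m,n}$ is not literally the braid word defining $\cabledcross'_{m,n}$, and the ordering and grouping of the factors gets reversed, so while this can be made to work it requires additional massaging; your primary argument (redo the bookkeeping with inverse generators) is cleaner and is what is implicitly meant in the paper.
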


\begin{proof} The isomorphisms hold by associativity of $\hcomp$. The
homotopy equivalences come from applying braid relations, see
\Cref{thm:Rouquier-canonicity}.
\end{proof}

\subsection{Isomorphism classes of Soergel bimodules}\label{subsec:h1BSbimcl} We
have already seen concrete hints that Soergel bimodules for symmetric groups
form a monoidal bicategory and in \cref{sec:prebraiding} we will see elements of
a braiding on the homotopy category. The rigorous construction of the braiding
will be carried out in an $\infty$-categorical setting later, but it requires a
coupling to the classical setting here to enable a few critically important
computations. The optimal handover point between these two worlds turns out to
be one categorical dimension lower than we have been working in so far. In this
section, we prepare the descent to this common ground from the classical side.

\begin{definition}
    \label{def:oldhoneBSbim}
    We let $h_1\BSbimcl$ denote the  $1$-category, whose set of object
    is $\bN_0$ and whose morphism sets between objects $n, m$ are
    \begin{equation}
        \Hom_{h_1\BSbimcl}(n, m) = \left\{
            \begin{array}{ll}
                h_0\BSbimcl_n & \quad n = m \\
              \{0\}  & \quad n \neq m
            \end{array}
        \right.
    \end{equation}
    where $h_0 \BSbimcl_n$ denotes the set of isomorphism classes of objects in $\BSbimcl_n$, and whose composition of morphisms $n \to n$ is induced by the monoidal structure $\otimes_{R_n}$ of $\BSbimcl_n$. 
    The category $h_1\BSbimcl$ admits a monoidal structure  with monoidal product $\boxtimes\colon h_1\BSbimcl \times h_1\BSbimcl \to h_1\BSbimcl$ defined on objects by $n \boxtimes m = n+m$ and on morphisms using parabolic induction:
    \begin{equation}
       h_0 \boxtimes \colon h_0\BSbimcl_n \times h_0\BSbimcl_m \to h_0\BSbimcl_{n+m}.
    \end{equation}
\end{definition}

The currently ad-hoc notation $h_1$ will be justified later in \cref{cor:uniqueBSBim} when we pass to $\infty$-categories.

\begin{definition}
    \label{def:oldhoneKbloc} Let $h_1\oldKbloc{\Sbimcl}$ be the monoidal
    $1$-category whose objects are $n \in \bN_0$ and the morphisms between objects
    $n, m$ are
    \begin{equation}\label{def:h1BSbimcl}
        \Hom_{h_1\oldKbloc{\Sbimcl}}(n, m) = \left\{
            \begin{array}{ll}
                h_0\oldKb{\Sbimcl_n} & \quad n = m \\
                \{0\} & \quad n \neq m
            \end{array}
        \right.
    \end{equation}
    The monoidal product is induced by parabolic induction on chain complexes of
    Soergel bimodules and again we have $\Z$-actions on the morphism sets,
    inherited from grading shifts of bimodules. 
    Since the inclusion $\BSbimcl_n
    \to\oldKb{\Sbimcl_n}$ of Bott--Samelson bimodules as chain complexes
    concentrated in homological degree zero is compatible with parabolic induction, this
    defines a monoidal functor 
    \begin{equation}\label{eq:defh1K}
        \hincl \colon h_1\BSbimcl \to h_1\oldKbloc{\Sbimcl}.
    \end{equation}
    In fact, this intertwines the $\Z$-actions on morphism sets.
    \end{definition}
    In \cref{cor:h1KSBim}, we match \eqref{eq:defh1K} with its $\infty$-categorical version.

\begin{remark}\label{rem:monbicat} We leave it to the reader to check that the
   involved categories are monoidal as claimed. The conceptual reason behind
   this is that these categories are the shadow of monoidal bicategories in the
   sense of \cite{Benabou} (the objects are the same, but morphism categories
   are given for instance by $\oldKb{\Sbimcl_n}$ instead of the sets
   $h_0 \oldKb{\Sbimcl_n}$  in \eqref{def:h1BSbimcl}). Although these monoidal
   bicategories play an important conceptual role in represention theory and
   quantum topology, see e.g. \cite{EW,HRW1,HRW2}, the construction of the
   monoidal structure has not yet appeared in full detail in the literature. In
   the world of $\infty$-categories we will obtain an analogous construction in
   \cref{sec:SBim}.
\end{remark}

\begin{definition}
    \label{def:oldhoneMorita}
    Given a graded algebra $A$, we call an object of the derived category $\oldD(\mathrm{grmod}_A)$ of graded $A$-modules \emph{graded-perfect} if it is quasi-isomorphic to a finite chain complex of finitely generated graded-projective $A$-modules (see \cref{ftn:graded-projective}). 
    
    Let $h_1\DMorPoly$ be the symmetric monoidal $1$-category whose objects are the graded algebras $R_n= k[x_1, \ldots, x_n]$ for $n \in \bN_0$ and whose morphism sets between algebras $R_n$ and $R_m$ are given by the set 
    \[h_0 \oldD\left( {}_{R_n} \mathrm{grbmod}_{R_m}\right)^{\mathrm{gr-perf}}
    \]
    of isomorphism classes of objects in the derived category of graded $R_n$--$R_m$ bimodules which are graded-perfect as right (i.e. $R_m$-)modules; composition is the derived graded  tensor product over the respective polynomial algebras. 
   Similar to \cref{def:parind}, the monoidal structure is given by the derived graded  tensor product $\otimes_k^L$ over the ground ring $k$, under the identification $R_n \otimes^{L}_k R_m \simeq R_n \otimes_k R_m \simeq R_{n+m}$. \end{definition}

\newcommand{\honeloc}{h_1 H_{\loc}}

\begin{definition}
We define the monoidal functor 
\begin{equation}
    \label{eq:h1Hloc} 
    h_1 H_{\loc}\colon
h_1 \oldKbloc{\Sbimcl}  \to h_1\DMorPoly,
\end{equation} which takes an object $n$ to the polynomial algebra $R_n$ and a chain homotopy equivalence class of a chain complex of Soergel bimodules to the corresponding quasi-isomorphism class of complexes of graded bimodules.\end{definition} 
The notation $h_1 H_{\loc}$ will be justified in~\cref{handover2}, the notation $H_{\loc}$ indicates `taking homology at $1$-morphism level'.

Our first main result is the construction of a \emph{prebraiding structure} on \eqref{eq:defh1K}.

\begin{remark}\label{rk:flattarget}
In the $\infty$-categorical setting, we will replace $h_1\DMorPoly$ with  a more natural target category with less restrictions on objects.  Namely, the category $h_1\DMorPoly$ is a full symmetric monoidal subcategory of the category $h_1\DMoritaS$ whose objects are arbitrary \emph{flat} graded algebras, and morphims are isomorphism classes of right-graded-perfect derived bimodules between them. By passing to module categories over these algebras, this can in turn be realized as a full subcategory of the category $h_1 \stkBZ$ of stable $k$-linear categories with a $\mbbZ$-action and equivalence classes of $k$-linear exact $\mbbZ$-equivariant functors between them.  In the next sections, we will lift the composite functor $h_1 \oldKbloc\SBim \to h_1 \DMorPoly \to h_1\stkBZ$ to a functor of $(\infty,2)$-categories.
    \end{remark}
 
\subsection{Prebraidings}\label{sec:prebraidCat}

As we will see, the cabled crossing complexes from Definition~\ref{def:cabledcross} supply part of the data of a braiding on (an $\infty$-categorical version of) chain complexes of Soergel bimodules. What these complexes themselves do not yet encode is the {\it naturality} of the braiding---informally speaking, how chain complexes of Soergel bimodules \emph{slide through} cabled crossings up to coherent homotopy. To capture the naturality of the braiding, we start with a threefold simplified situation: we only aim to slide bimodules (instead of complexes thereof!) through cabled crossings and in fact only Bott--Samelson bimodules, and even simpler, it will be enough to do this on the level of isomorphism classes. To this end, we introduce the crucial notion of a prebraiding.

\begin{definition}
    \label{def:prebraiding}
Let $\cA$ and $\cB$ be monoidal $1$-categories, with monoidal product denoted by
$\boxtimes$ in both cases and with associators $b_{x,y,z}$ in $\cB$. A \emph{prebraiding} $\beta$ on a monoidal functor $F\colon \cA \to \cB$ consists of the data
of isomorphisms
\[ F(x)\boxtimes F(y) \xrightarrow{\beta_{x,y}} F(y) \boxtimes F(x)\qquad
\forall x,y\in \cA \] that form a natural transformation $\boxtimes\circ (F\times
F) \Rightarrow \boxtimes^{\mathrm{op}}\circ (F\times F)$ and satisfy the
following two \emph{hexagon axioms} 
 for all $x,y,z\in \cA$:
    \begin{equation}
    \label{eq:fakehexagon}
        \begin{tikzcd}[scale cd=.75]
       (F(x)\boxtimes F(y))\boxtimes F(z) \arrow[d,"b"] \arrow[r,"\beta_{x,y} \boxtimes \id"]
       & (F(y)\boxtimes F(x))\boxtimes F(z) \arrow[r,"b"] 
       &  F(y)\boxtimes (F(x)\boxtimes F(z)) \arrow[r,"\id\boxtimes \beta_{x,z}"] 
       &   F(y)\boxtimes (F(z)\boxtimes F(x)) \arrow[d,"b^{-1}"]
       \\ 
       F(x)\boxtimes (F(y)\boxtimes F(z))\arrow[r,"\simeq"]
        & F(x)\boxtimes F(y\boxtimes z) \arrow[r,"\beta_{x,y\boxtimes z}"]
        & F(y \boxtimes z)\boxtimes F(x) \arrow[r,"\simeq"] 
        & (F(y)\boxtimes F(z)) \boxtimes F(x)
        \\
       F(x)\boxtimes (F(y)\boxtimes F(z)) \arrow[d,"b^{-1}"] \arrow[r,"\id\boxtimes \beta_{y,z}"]
       & F(x)\boxtimes (F(z)\boxtimes F(y)) \arrow[r,"b^{-1}"] 
       &  (F(x)\boxtimes F(z))\boxtimes F(y) \arrow[r," \beta_{x,z}\boxtimes\id"] 
       &   (F(z)\boxtimes F(x))\boxtimes F(y) \arrow[d,"b"]
       \\ 
       (F(x)\boxtimes F(y))\boxtimes F(z)\arrow[r,"\simeq"]
        & (F(x\boxtimes y))\boxtimes F(z) \arrow[r,"\beta_{x\boxtimes y,z}"]
        & F(z) \boxtimes F(x\boxtimes y) \arrow[r,"\simeq"] 
        & F(z)\boxtimes (F(x) \boxtimes F(y))
        \end{tikzcd} 
     \end{equation}
     where the isomorphisms $\simeq$ are part of the data of $F$. (Supressing them provides the hexagon shapes.) 
\end{definition}

We have the following trivial observation:

\begin{corollary}\label{cor:prebraidisbraid}
    Let $\cA$ be a monoidal $1$-category. A prebraiding $\beta$ on the identity functor $\Id\colon \cA \to \cA$ is a braided monoidal structure on $\cA$ in the sense of \textup{\cite[Def.~8.1.1.]{EGNO}}.
\end{corollary}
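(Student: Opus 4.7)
The plan is to observe that this is essentially a matter of unpacking definitions. First, I would specialize the data of \cref{def:prebraiding} to the case $F = \Id_{\cA}$. The underlying natural isomorphisms $\beta_{x,y} \colon F(x) \boxtimes F(y) \to F(y) \boxtimes F(x)$ become natural isomorphisms $\beta_{x,y} \colon x \boxtimes y \to y \boxtimes x$, which is exactly the data of a braiding in the sense of \cite[Def.~8.1.1]{EGNO}. Naturality of the prebraiding with respect to morphisms in $\cA$ translates directly to naturality of the braiding.

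Next, I would argue that the two hexagon axioms of \cref{eq:fakehexagon} collapse to the two standard hexagon axioms of EGNO. The monoidal structure maps $F(y) \boxtimes F(z) \xrightarrow{\simeq} F(y \boxtimes z)$ (labelled ``$\simeq$'' in \eqref{eq:fakehexagon} and coming as part of the data of $F$) are, for $F = \Id_\cA$ equipped with its canonical strict monoidal structure, identity morphisms. Hence each of the two diagrams in \eqref{eq:fakehexagon} reduces to a hexagon involving only the associator $b$ of $\cA$, the braiding $\beta$, and tensor products of identity morphisms with $\beta_{x,y}$. Comparing side by side with the two hexagons in \cite[Def.~8.1.1]{EGNO}, one sees that they coincide on the nose.

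Conversely, given a braided monoidal structure on $\cA$ in the sense of EGNO, the natural isomorphisms $\beta_{x,y}$ together with the (now trivial) monoidal coherence data of $\Id_\cA$ tautologically satisfy the requirements of \cref{def:prebraiding}. These two constructions are manifestly mutually inverse, so they establish the claimed identification.

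The main (minor) subtlety is simply to make sure that $\Id_\cA$ is treated as a \emph{strict} monoidal functor, so that the ``$\simeq$'' arrows in \eqref{eq:fakehexagon} are in fact identities; once this is noted, there is no real content beyond reading off the definitions.
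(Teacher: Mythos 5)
Your proposal is correct and coincides with the paper's (implicit) argument: the corollary is stated as a ``trivial observation'' with no written proof, precisely because specializing \cref{def:prebraiding} to $F=\Id_\cA$ with its identity monoidal structure maps reduces the naturality and the two diagrams of \eqref{eq:fakehexagon} verbatim to the naturality and hexagon axioms of \cite[Def.~8.1.1]{EGNO}. Your note about treating $\Id_\cA$ as strictly monoidal so the ``$\simeq$'' arrows are identities is exactly the only point worth flagging.
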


\begin{rem}
\label{rem:noRthree}
Observe however that a prebraiding is not (!) required to satisfy an analog of the
braid relation (a.k.a. third Reidemeister move) of the form 
\begin{equation}
\begin{gathered}\label{braid}
b_{F(z),F(y),F(x)}\circ (\beta_{y,z}\boxtimes \id)\circ  b^{-1}_{F(y),F(z),F(x)}\circ(\id \boxtimes \beta_{x,z}) \circ b_{F(y),F(x),F(z)}\circ (\beta_{x,y}\boxtimes \id)\\
=
(\id \boxtimes \beta_{x,y})\circ  b_{F(z),F(x),F(y)}\circ(\beta_{x,z}\boxtimes \id) \circ b^{-1}_{F(x),F(z),F(y)}\circ (\id \boxtimes \beta_{y,z})\circ b_{F(x),F(y),F(z)}
\end{gathered}
\end{equation}
\end{rem}
\begin{rem}
    \label{rem:onecatprebraid}
 In the situation of Corollary~\ref{cor:prebraidisbraid}, the braid relation \eqref{braid}
    holds, because it can be proven using the naturality of $\beta$. There are in
    fact two distinct proofs, namely by
    sliding either of the two highlighted crossings under the remaining strand:
        \[
        \twoproofs
        \]        
    In a higher-categorical version of a prebraiding, these two witnesses for the braid relation need not be realized by the same 2-morphism. However, in the axiomatics of braided monoidal 2-categories, the cells witnessing these two proofs are equated by the so-called $S_+=S_-$ relation of \cite{MR1402727} (which was omitted in \cite{KapVoe}). For a monoidal higher category, a prebraiding on the identity functor therefore does not imply the braid relations \eqref{braid}, see also \Cref{rem:twoproofs}, and in particular does not encode a braided monoidal (i.e. $\EE_2$-)structure. In \cref{sec:prebraidings-to-E2-str}, we will revisit this point and show that a prebraiding on the identity functor always encodes an $\AA_2 \otimes \EE_1$-structure, which differ in general from $\EE_2$-structures. 
    \end{rem}

We also want to introduce a relative notion of prebraiding, over a braided monoidal $1$-category:
\begin{definition} \label{def:relative-prebraiding}
    Let $\cD$ be a braided monoidal $1$-category. Assume $\cC_1$ is a monoidal $1$-category, and $\cC_2$ is a monoidal
    category \emph{over} $\cD$, i.e. equipped with a monoidal functor 
    $g\colon \cC_2\to \cD$. 
 Let now $F\colon \cC_1 \to \cC_2$ be a monoidal functor. Then we can consider $C_1$ as a monoidal $1$-category over $\cD$, namely with respect to $f \coloneqq g\circ F \colon \cC_1 \to \cD$, and $F$
    becomes a monoidal functor \emph{over} $\cD$.  
    
    A \emph{prebraiding over $\cD$ on $F \colon \cC_1 \to \cC_2$}
    is then defined to be a prebraiding on $F$ as in Definition~\ref{def:prebraiding}, satisfying the additional condition that $g$ maps the prebraiding isomorphisms in $\cC_2$ to the given braiding isomorphisms in $\cD$.
   \end{definition} 

 \begin{remark} More explicitly, with $\cD,\cC_1,\cC_2,F,g,f$ as in \Cref{def:relative-prebraiding}, a prebraiding on $F$ with components $\beta_{x,y}$ is a \emph{prebraiding on $F$ over $\cD$} if the isomorphism \[g \circ \beta_{x,y}\colon  g(F(x))
    \boxtimes g(F(y))\simeq g(F(x) \boxtimes F(y)) \to g(F(y) \boxtimes F(x))
    \simeq g(F(y)) \boxtimes g(F(x))\]  coincides with the given braiding isomorphism on $\cD$, i.e. with $f(x)
    \boxtimes f(y) \to f(y)\boxtimes f(x)$ for all pairs of objects $x,y\in\cC_1$. 
 \end{remark}
     \begin{corollary}\label{cor:prebraidisbraid2}
    Let $\cA$ with $g \colon \cA\rightarrow \cD$  be a monoidal $1$-category over $\cD$. Then a
    prebraiding over $\cD$ on the identity functor $\Id\colon \cA \to \cA$ is a braided monoidal structure on $\cA$ with the property that $g$ is braided monoidal in the sense of \textup{\cite[Def.~8.1.7.]{EGNO}}.
\end{corollary}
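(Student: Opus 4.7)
The plan is to deduce this directly from Corollary~\ref{cor:prebraidisbraid} by unpacking the extra compatibility with $g$ imposed by \Cref{def:relative-prebraiding}. First, since the underlying data of a prebraiding over $\cD$ on $\Id_{\cA}$ is just a prebraiding on $\Id_{\cA}$ in the sense of \Cref{def:prebraiding}, Corollary~\ref{cor:prebraidisbraid} immediately produces a braided monoidal structure on $\cA$ with braiding isomorphisms $\beta_{x,y}\colon x \boxtimes y \xra{\sim} y \boxtimes x$.

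Next I would translate the relativity condition into a statement about $g$. By \Cref{def:relative-prebraiding}, applied to $F=\Id_\cA$ (so $f = g$), for every $x,y \in \cA$ the image $g(\beta_{x,y})$ must agree, under the monoidal coherence isomorphisms of $g$, with the braiding isomorphism of $\cD$ between $g(x) \boxtimes g(y)$ and $g(y) \boxtimes g(x)$. This is precisely the defining condition for $g$ to be a braided monoidal functor in the sense of \cite[Def.~8.1.7.]{EGNO}: a monoidal functor whose structure isomorphisms intertwine the braidings on source and target.

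Conversely, given a braided monoidal structure on $\cA$ such that $g$ is braided monoidal, the braiding isomorphisms $\beta_{x,y}$ assemble into a prebraiding on $\Id_\cA$ (via Corollary~\ref{cor:prebraidisbraid} again, in reverse), and the fact that $g$ is braided monoidal yields the required compatibility with the braiding of $\cD$. Thus the two notions are identified.

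No genuine obstacle is expected here, as both directions are formal unwindings of definitions; the only thing worth being careful about is keeping track of the monoidal coherence isomorphisms of $g$ when comparing $g(\beta_{x,y})$ with the braiding on $g(x) \boxtimes g(y)$, but these are already built into \Cref{def:relative-prebraiding}.
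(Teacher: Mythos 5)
Your unwinding is correct and matches the paper, which states this corollary without proof as an immediate consequence of Corollary~\ref{cor:prebraidisbraid} together with the compatibility condition in \Cref{def:relative-prebraiding} (spelled out in the remark following it), exactly as you argue. The identification of that condition with the braided-monoidal-functor axiom of \cite[Def.~8.1.7]{EGNO} is the whole content, so nothing further is needed.
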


\begin{definition}\label{def:notationprebraiding}
For a monoidal functor $F\colon \cA \to \cB$, we denote by $\PreBraid(F)$ the set of prebraidings of $F$ and for a monoidal category $\cA$, we denote by $\Braid(\cA)\coloneqq\PreBraid(\Id\colon \cA \to \cA)$ the set of compatible braidings on $\cA$. The relative versions of \Cref{def:relative-prebraiding} are denoted by $\PreBraid_{/\cD}(F\colon \cA \to \cB)$ and $\PreBraidid_{/\cD}(\cA)$ respectively.
\end{definition}

\subsection{Prebraiding for Soergel bimodules}\label{sec:prebraiding} We are now
prepared to construct a prebraiding on the functor $\hincl \colon h_1 \BSbimcl  \to
h_1 \oldKbloc{\Sbimcl} $ from \eqref{eq:defh1K}.

\begin{definition}
\label{const:prebraid}
For $m,n\in \N$ let
$\beta_{m,n}\colon m+n\to n+m$ be the morphism in $h_1 \oldKbloc{\Sbimcl} $ given by the chain homotopy class of the shifted Rouquier complex $(X_{m,n})$ defining the cabled crossing in \Cref{def:cabledcross}. 
\end{definition}

\begin{theorem}[Prebraiding for Soergel bimodules]
\label{thm:prebraidingSbimcl}
    The family of morphisms $\beta_{m,n}$ from
    \Cref{const:prebraid}  constitute a prebraiding on the functor
    $\hincl\colon h_1 \BSbimcl  \to h_1 \oldKbloc{\Sbimcl} $.
\end{theorem}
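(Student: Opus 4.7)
The plan is to verify in turn the three pieces of data/axioms in \cref{def:prebraiding}: that each $\beta_{m,n}$ is an isomorphism, naturality in morphisms of $h_1\BSbimcl$, and the two hexagon axioms. The first is essentially immediate from the invertibility of Rouquier complexes; the third follows from Rouquier canonicity after recognising that both sides of each hexagon are built from braid words representing the same underlying braid; and the second is the substantive diagrammatic content.

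For invertibility, each $\beta_{m,n}$ is represented by the Rouquier complex $X_{m,n}$ of a positive cabled crossing, which is invertible in $\oldKb{\Sbimcl_{m+n}}$ by the corollary to \cref{thm:Rouquier-canonicity}; an explicit inverse is provided by $X'_{n,m}$. For the hexagon axioms, I would first observe that since the monoidal product on objects is addition of natural numbers and $F$ acts as the identity on objects, the associators appearing in \eqref{eq:fakehexagon} reduce to identities. The hexagons then amount to identities in $h_0\oldKb{\Sbimcl_{m+n+p}}$ of the form
\[
X_{m,n+p} \htc (\id_n \boxtimes X_{m,p}) \hcomp (X_{m,n} \boxtimes \id_p),
\]
together with the mirror version for the second hexagon. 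The underlying braid words on both sides represent the same braid in $\Braidg_{m+n+p}$ (the $(m,n+p)$-cabled crossing, viewed as a composite), so \cref{thm:Rouquier-canonicity} together with the tensor-compatibility \eqref{eqn:Rouquier-can-tensor} supplies the required chain homotopy equivalence; the grading shifts match by $-m(n+p) = -mn -mp$.

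The main work is naturality: given $Y_1 \in \BSbimcl_m$ and $Y_2 \in \BSbimcl_n$, viewed as endomorphism morphisms in $h_1\BSbimcl$, I must show
\[
(Y_2 \boxtimes Y_1) \hcomp X_{m,n} \htc X_{m,n} \hcomp (Y_1 \boxtimes Y_2)
\]
in $\oldKb{\Sbimcl_{m+n}}$. Since $\hcomp$ is bilinear and $\boxtimes$ is compatible with $\hcomp$, and since every Bott--Samelson bimodule is a $\hcomp$-composite of single generators $B_i$ and grading shifts (and naturality in grading shifts is automatic since shifts commute with everything), it suffices to verify the slide when one of $Y_1,Y_2$ equals a generator $B_i$ and the other is an identity. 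Using the decomposition of cabled crossings into Coxeter braids provided by \cref{lem:cabled-crossing-from-coxeter}, this reduces further to the slide of a single generator $B_i$ through a Coxeter braid complex $X_{1,n}$ or $X_{m,1}$. This last step is a concrete computation in the diagrammatic Hecke category of \cite{EW,EK}: I would exhibit explicit chain maps implementing the slide using degree-zero Soergel morphisms, and verify chain-homotopy invertibility via a finite list of Soergel calculus identities.

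The hard part is precisely this last diagrammatic verification. Conceptually the slide is transparent: after applying $h_1H_{\loc}$ to pass to the derived category of graded $R$-bimodules, the Rouquier complex $X_{m,n}$ becomes (up to grading shift) a permutation bimodule $R_{\circlearrowleft w_{m,n}}$ by \cref{rem:Rouquierswap}, and Bott--Samelson bimodules slide through permutation bimodules by sheer functoriality. However, this only delivers the slide up to \emph{quasi-isomorphism}, whereas the prebraiding requires the finer equality up to \emph{chain homotopy equivalence}. Bridging this gap is what forces the detailed diagrammatic Soergel calculus computation.
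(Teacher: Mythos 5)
Your proposal follows essentially the same route as the paper: invertibility and the hexagon axioms are deduced from \cref{thm:Rouquier-canonicity}, and naturality is reduced, via generators and \cref{lem:cabled-crossing-from-coxeter}, to explicit slide chain maps in the diagrammatic Hecke category, exactly as in the paper's proof via \cref{prop:sliding-objects} (and your remark that the quasi-isomorphism argument through $h_1H_{\loc}$ only gives the slide up to quasi-isomorphism matches \cref{rem:Rouquierswap}). The one piece you defer rather than execute is the paper's final reduction, by far-commutativity, from Coxeter braids to the atomic slides $\slide_{\one_1,B_1}$ and $\slide_{B_1,\one_1}$ of \cref{lem:atomicslide}, whose explicit chain maps and null-homotopy are what make the remaining diagrammatic check finite and uniform in the number of strands.
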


Considering $\hincl\colon h_1 \BSbimcl  \to h_1 \oldKbloc{\Sbimcl} $ as a functor over $h_1\DMorPoly$ via the functor $h_1H_{\loc}\colon h_1 \oldKbloc{\Sbimcl} \to h_1\DMorPoly$, we obtain the following refined version:
\begin{corollary}[Relative prebraiding on $\hincl$]\label{cor:Rouquier-correct-prebraiding}
    The Rouquier complexes of cabled crossings define a prebraiding on
    $\hincl\colon h_1 \BSbimcl  \to h_1 \oldKbloc{\Sbimcl} $ over $h_1\DMorPoly$.
\end{corollary}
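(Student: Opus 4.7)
The plan is to build directly on \cref{thm:prebraidingSbimcl}: once we know that the chain homotopy classes $\beta_{m,n}$ of the cabled crossings define a prebraiding on $\hincl$, only one additional fact is required, namely the explicit identification of the \emph{images} of these morphisms under $h_1 H_{\loc}$ with the canonical braiding of $h_1\DMorPoly$ in the sense of \cref{def:relative-prebraiding}. So the argument naturally splits into two steps: identify the braiding on $h_1\DMorPoly$, and identify $h_1 H_{\loc}(\beta_{m,n})$.

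For the first step, I would unwind the symmetric monoidal structure on $h_1\DMorPoly$ from \cref{def:oldhoneMorita}. Its monoidal product is the derived graded tensor product over $k$, with $R_m \otimes_k^L R_n \simeq R_{m+n}$ via parabolic induction $j_{0|n}\otimes j_{m|0}$. The canonical braiding $R_m \otimes_k R_n \xrightarrow{\sim} R_n \otimes_k R_m$ is the flip of tensor factors. Under the two identifications with $R_{m+n}$ given by $j_{0|n}\otimes j_{m|0}$ on the source and $j_{0|m}\otimes j_{n|0}$ on the target, this flip corresponds to the bimodule endomorphism of $R_{m+n}$ whose right action is twisted by the $(m,n)$-shuffle permutation $w_{m,n}\in S_{m+n}$. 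In the language of \cref{rem:Rouquierswap}, this is precisely the permutation bimodule $R_{\circlearrowleft w_{m,n}}$ (with no grading shift).

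For the second step, I would compute $h_1 H_{\loc}(\beta_{m,n})$ by iterating \cref{rem:Rouquierswap}. That remark identifies, in the derived category, $\Rouq(\sigma_i) \simeq R_{\circlearrowleft s_i}\langle 1\rangle$ for each Artin generator. The cabled crossing $\cabledcross_{m,n}$ is the Rouquier complex of a \emph{positive} reduced word for the shuffle $w_{m,n}$, composed horizontally $\ell(w_{m,n})$ times, and then shifted by $\langle -mn\rangle$. Since the underlying braid word is reduced of length $\ell(w_{m,n})=mn$ and the tensor product over $R_{m+n}$ of the corresponding bimodules $R_{\circlearrowleft s_{i_j}}$ is $R_{\circlearrowleft w_{m,n}}$, the iterated quasi-isomorphism assembles into
\[
h_1 H_{\loc}(\beta_{m,n}) \;=\; [\cabledcross_{m,n}] \;\simeq\; R_{\circlearrowleft w_{m,n}}\langle mn\rangle\langle -mn\rangle \;=\; R_{\circlearrowleft w_{m,n}},
\]
matching the braiding on $h_1\DMorPoly$ from step one. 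The only subtlety here is that tensor products of Rouquier complexes might a priori introduce only a quasi-isomorphism rather than an isomorphism of bimodules; but since we are working inside $h_1\DMorPoly$, where morphisms are quasi-isomorphism classes, this causes no issue.

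The main (and only substantial) obstacle is the bookkeeping in step two: verifying that the iterated application of \cref{rem:Rouquierswap} across the horizontal composition defining $\cabledcross_{m,n}$ is compatible with parabolic induction, and that the grading shift $\langle -mn\rangle$ is indeed what cancels the accumulated $\langle mn\rangle$ from the $\ell(w_{m,n})=mn$ generators. Once those identifications are made explicit, the relative compatibility condition of \cref{def:relative-prebraiding} holds automatically, and combining with \cref{thm:prebraidingSbimcl} yields the corollary.
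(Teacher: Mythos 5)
Your proposal is correct and takes essentially the same route as the paper: the paper's proof invokes \cref{rem:Rouquierswap} to identify the braiding complexes with permutation bimodules in the derived category, then observes these implement the symmetric braiding on $h_1\DMorPoly$. Your version simply fills in the bookkeeping the paper elides, in particular the explicit identification of the canonical braiding on $h_1\DMorPoly$ with the twisted bimodule $R_{\circlearrowleft w_{m,n}}$ and the cancellation of the grading shift $\langle mn\rangle\langle -mn\rangle$, which is precisely the point flagged at the end of \cref{rem:Rouquierswap} as motivating the shifts in \cref{def:cabledcross}.
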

\begin{proof}
By Remark~\ref{rem:Rouquierswap}, the braiding complexes are quasi-isomorphic to
the associated permutation bimodules, concentrated in homological degree $0$. 
As the permutation bimodules implement the symmetric braiding, this means that
the braiding complexes constructed as (shifted) Rouquier complexes of the cabled
crossings become the canonical symmetric braiding in $h_1\DMorPoly$. 
\end{proof}

The following result is the crucial naturality part for the proof of \Cref{thm:prebraidingSbimcl}. The result is in fact stronger than needed, since it is a statement on the chain level.
\begin{theorem}
    \label{prop:sliding-objects}
    For any Bott--Samelson bimodule of the form $Y=Y_1\boxtimes Y_2$ in $\BSbimcl_m\boxtimes \BSbimcl_n\subset \Sbimcl_{m+n}$, there are
    homotopy equivalences of chain complexes in $\Chb{\Sbimcl_{m+n}}$:
    \begin{align*}
    \slide_{Y_1,Y_2}\colon & \cabledcross_{m,n} \hcomp Y \longrightarrow \swap_{m,n}(Y)\hcomp \cabledcross_{m,n} 
    \end{align*}
    \end{theorem}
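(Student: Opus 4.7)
\textbf{Proof strategy for \cref{prop:sliding-objects}.}

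The plan is to build $\slide_{Y_1,Y_2}$ by reducing to two base cases and then exploiting multiplicativity. First, since $\hcomp$ is associative and distributes across the Bott--Samelson factorizations of $Y_1$ and $Y_2$, I would construct the slide map inductively on the length of a chosen Bott--Samelson word $Y_1 = B_{\mathbf{i}}\langle j\rangle$, $Y_2 = B_{\mathbf{j}}\langle \ell\rangle$. Grading shifts commute with everything, so it suffices to handle each generator $B_i$ acting on the appropriate strand block. Second, I would invoke \cref{lem:cabled-crossing-from-coxeter} to reduce sliding through the general cabled crossing $\cabledcross_{m,n}$ to sliding through the Coxeter braids $\cabledcross_{1,n}$ and $\cabledcross_{m,1}$. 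Concretely, this amounts to constructing, for every $i$ and every sequence of horizontal identities boxed in, a slide $\cabledcross_{1,n} \hcomp (B_i \boxtimes \one_n) \to (\one_n \boxtimes B_i) \hcomp \cabledcross_{1,n}$ and its mirror for $\cabledcross_{m,1}$ (and for identity strands on the other side of $B_i$, the slide is essentially trivial). Assembling these generator-level slides into a slide for arbitrary $Y_1\boxtimes Y_2$ proceeds by induction, using the functoriality of $\hcomp$.

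For the base case I would work in the diagrammatic Hecke category of \cite{EW, EK}, which identifies the morphism spaces of $\overline{\BSbimcl}^{\mathrm{gr}}_n$ with spaces of planar diagrams modulo local relations. In these diagrammatics, the relevant Rouquier complex $\cabledcross_{1,n}$ is a well-studied explicit complex whose terms are indexed by subsets of the crossings, and $B_i$ corresponds to a single trivalent cap. The slide of $B_i$ past a single crossing in $\cabledcross_{1,n}$ is implemented by a chain map built from a dot/trivalent vertex relation, and iterating across the $n$ crossings in the Coxeter braid produces the desired chain map. One could alternatively express this algebraically using the bimodule maps $R_{\circlearrowleft s_i}\langle 1\rangle \to B_i$ and $B_i\to R_{\circlearrowleft s_i}\langle -1\rangle$ from \cref{rem:Rouquierswap}: tensored with a cabled crossing these produce the quasi-isomorphisms to and from shifted permutation bimodules, and the slide map on the target side is \emph{tautological} for permutation bimodules, which is a natural conjugation isomorphism by the underlying permutation.

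To upgrade the resulting chain maps to chain homotopy equivalences, I would construct the reverse slide by the symmetric procedure (sliding back, using the inverse braids, or equivalently sliding $B_i$ the other way through the diagrammatic picture). Composing the slide with its reverse yields a chain endomorphism of $\cabledcross_{m,n}\hcomp Y$ which becomes the identity on the associated graded permutation bimodule; since $\cabledcross_{m,n}$ is invertible in $\oldKb{\Sbimcl_{m+n}}$, Rouquier canonicity (\cref{thm:Rouquier-canonicity}) then forces this endomorphism to be chain homotopic to the identity, giving the equivalence. The coherence statement \eqref{eqn:Rouquier-can-tensor} ensures that choosing different Bott--Samelson words for $Y$ yields chain-homotopic slide maps, which will matter for the naturality applications in \cref{sec:prebraiding}.

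The main obstacle will be Step 2: producing the explicit chain-level slide of a single $B_i$ through a Coxeter braid and verifying that its composition with the reverse slide is genuinely nullhomotopic rather than only a quasi-isomorphism. Much of the work occurs at the chain level rather than in the derived category, because the eventual prebraiding on $h_1\BSbimcl \to h_1\oldKbloc{\Sbimcl}$ in \cref{thm:prebraidingSbimcl} demands chain homotopy equivalences; the nontrivial Reidemeister-II homotopy in the generating Rouquier complexes \eqref{eqn:Rouq-alg} is precisely what makes the verification nontautological and requires the diagrammatic calculus.
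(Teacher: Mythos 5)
Your overall reduction scheme matches the paper's: decompose $Y_1\boxtimes Y_2$ into generators $B_i$ acting on one side, reduce $\cabledcross_{m,n}$ to Coxeter braids via \cref{lem:cabled-crossing-from-coxeter}, and work in the diagrammatic Hecke category. But there are two substantive gaps.

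First, the more serious one: your proposed argument for upgrading the slide chain maps to chain homotopy equivalences does not work. You want to compose $\slide$ with a ``reverse slide'' and conclude, from the fact that the composite becomes the identity on the associated permutation bimodule and from invertibility of $\cabledcross_{m,n}$ plus Rouquier canonicity, that it must be homotopic to $\id$. Rouquier canonicity (\cref{thm:Rouquier-canonicity}) compares Rouquier complexes $F(\ub_1), F(\ub_2)$ for two braid words representing the same braid; it says nothing about chain endomorphisms of $\cabledcross_{m,n}\hcomp Y$ for a general Bott--Samelson $Y$, which is not a Rouquier complex. Invertibility of $\cabledcross_{m,n}$ also does not help: invertibility only identifies $\End_{\oldKb{\Sbimcl}}(\cabledcross_{m,n}\hcomp Y)$ with $\End_{\oldKb{\Sbimcl}}(Y)$, which is large (it contains polynomial multiplication), so there is plenty of room for an endomorphism that is a quasi-isomorphism but not homotopic to the identity. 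And ``becoming the identity on the permutation bimodule'' is a statement in the derived category $\oldDb{{}_R\mathrm{grbmod}_R}$, where (as \cref{rem:Rouquierswap} explicitly warns) the distinction between chain homotopy equivalence and quasi-isomorphism collapses; you cannot import a homotopy equivalence back from there. You correctly identify in your last paragraph that ``verifying that its composition with the reverse slide is genuinely nullhomotopic rather than only a quasi-isomorphism'' is the hard part, but the argument you propose to close this gap is precisely the one that fails. The paper closes it instead by a direct diagrammatic computation: \cref{lem:atomicslide} writes down explicit slide chain maps $\slide_{\one_1,B_1}$ and $\slide_{B_1,\one_1}$, explicit candidates for their inverses, and an explicit nullhomotopy of the difference $\slide^{-1}\circ\slide - \id$. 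There is no shortcut around exhibiting that nullhomotopy.

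Second, a smaller but relevant omission: after reducing to sliding a single generator $B_i$ through a Coxeter braid $\cabledcross_{1,n}$, you propose to ``iterate across the $n$ crossings.'' The paper makes one further reduction, using far-commutativity to conjugate the problem down to the case $n=2$, i.e.\ a single crossing. This matters precisely because the homotopy-equivalence verification is done by hand at the chain level; doing it once for the two-strand case $\slide_{\one_1,B_1}$ (and its mirror) is manageable, whereas verifying nullhomotopies after iterating through $n$ crossings would be much harder, and your strategy gives no tool to propagate a homotopy equivalence through an $n$-fold composite other than already knowing each atomic step is one. Reducing all the way to the single-crossing atomic case is what makes the explicit computation feasible, so that reduction is not cosmetic.
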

    We also use the notation $\slide_{Y}\coloneqq\slide_{Y_1,Y_2}$, when $m$ and
    $n$ are clear from the context. 

\begin{figure}[ht]
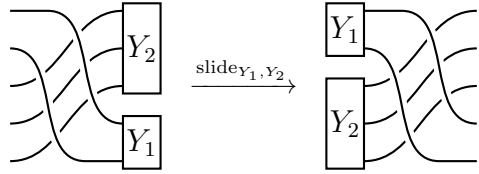

    \[
    \slidefig
    \]
    \caption{Graphical illustration of 
$\slide_{Y_1,Y_2}$---here in case $(m,n)=(2,3)$.}
\label{fig:slide}
\end{figure}

\begin{proof} [Proof of \cref{thm:prebraidingSbimcl}]
    \cref{prop:sliding-objects} indeed implies \cref{thm:prebraidingSbimcl}. Namely it follows from Theorem~\ref{thm:Rouquier-canonicity} that the $\beta_{m,n}$
    are invertible and satisfy the hexagon axioms \eqref{eq:fakehexagon} and thus form the components of a natural
    transformation $\boxtimes\circ (\hincl\times \hincl) \Rightarrow
    \boxtimes^{\mathrm{op}}\circ (\hincl\times \hincl)$ by \Cref{prop:sliding-objects}.
\end{proof}

       To establish \cref{prop:sliding-objects} we construct, after some preparation, the chain maps, which we then call \emph{slide maps}, explicitly. For this we work with  the \emph{Hecke category} $\mathcal{DS}_n$, i.e.~the diagrammatical presentation of the monoidal $1$-category $\BSbimcl_n$ from \cite{EW}, \cite{EK}. The construction of the chain maps $\slide_{Y_1,Y_2}$ 
       proceeds in two steps. The first step is specific to the setting of Bott--Samelson and
Soergel bimodules and uses  $\mathcal{DS}$. It establishes the existence of
\emph{atomic slide chain maps}, namely $\slide_{\one_1,B_1}$ for $(m,n)=(1,2)$
and $\slide_{B_1,\one_1}$ for $(m,n)=(2,1)$; see Lemma~\ref{lem:atomicslide}.
The second step uses that every Bott--Samelson bimodule is a  composition
(monoidal and horizontal) of Bott--Samelson bimodules on two strands, and so
knowing the atomic slide chain maps is sufficient to construct general slide
maps along the following scheme:
  \[
    \begin{tikzpicture}[anchorbase,xscale=-.5,yscale=.5]
        \draw[thick] (1,2) \pr (4,0);
        \draw[thick] (1,3) \pr (4,1);
        \draw[thick] (1,4) \pr (4,2);
        \draw[wh] (1,1) \pr (3,4) to  (4,4);
        \draw[thick] (1,1) \pr (3,4) to (4,4);
        \draw[wh] (1,0) to (2,0) \pr (4,3);
        \draw[thick] (1,0) to (2,0) \pr (4,3);
        \draw[thick] (0,-.2) rectangle (1,1.2);
        \draw[thick] (0,1.8) rectangle (1,4.2);
        \node at (.5,.5) {$Y_1$};
        \node at (.5,3) {$Y_2$};
    \end{tikzpicture}
    \;\; \leftarrow \;\;
    \begin{tikzpicture}[anchorbase,xscale=-.5,yscale=.5]
        \draw[thick] (1,2) \pr (4,0);
        \draw[thick] (1,3) \pr (4,1);
        \draw[thick] (1,4) \pr (4,2);
        \draw[wh] (1,1) \pr (3,4) to  (4,4);
        \draw[thick] (0,1) to (1,1) \pr (3,4) to (4,4);
        \draw[wh] (2,0) \pr (4,3);
        \draw[thick, gray] (0,0) to (1,0) to (2,0) \pr (4,3);
        \draw[thick] (1,1.8) rectangle (0,4.2);
        \node at (.5,3) {$B_i$};
    \end{tikzpicture}
    \;\; \leftarrow \;\;
    \begin{tikzpicture}[anchorbase,xscale=-.5,yscale=.5]
        \draw[thick] (1,2) \pr (4,0);
        \draw[thick] (1,3) \pr (4,1);
        \draw[thick, gray] (0,4) to (1,4) \pr (4,2);
        \draw[wh] (1,1) \pr (3,4) to  (4,4);
        \draw[thick] (0,1) to (1,1) \pr (3,4) to (4,4);
        \draw[wh] (2,0) \pr (4,3);
        \draw[thick, gray] (0,0) to (1,0) to (2,0) \pr (4,3);
        \draw[thick] (1,1.8) rectangle (0,3.2);
        \node at (.5,2.5) {$B_i$};
    \end{tikzpicture}
\]
Essentially the same argument would work for any monoidal bicategory generated
by a single object and one endomorphism of its tensor square.

To formulate the statements, we need at least a rough description of the Hecke category $\mathcal{DS}_n$ and the fact that it is equivalent to $\overline{\BSbimcl}^{\mathrm{gr}}_n$ as graded monoidal $\k$-linear category. For details we refer to \cite{EW}, \cite{EK}. Each simple reflection $s_i\in S_n$ is encoded by a colour. Objects in $\mathcal{DS}_n$ are finite ordered sequences of such colours and they encode the Bott--Samelson bimodules in the form \eqref{eq:BSB} (with $j=0$ since we consider $\overline{\BSbimcl}^{\mathrm{gr}}_n$). For instance if $n=2$ and we encode $s_1$ as red and $s_2$ as blue, then the Bott--Samelson bimodule $B_{\mathbf{i}}$ from \eqref{eq:BSB} is encoded as a sequence of colors red and blue according to ${\mathbf{i}}$, for instance $
{\mathbf{i}}=(1,1,2,1,1)$ corresponds to the object given by the color sequence $(red, red, blue, red, red)$.
Morphisms in $\mathcal{DS}_n$ are $k$-linear combinations of isotopy classes of certain decorated
graphs embedded in the plane.  A morphism from  $B_{\mathbf{i}}$ to
$B_{\mathbf{i}'}$ will have the colour sequence for ${\mathbf{i}}$ as bottom
boundary and that for ${\mathbf{i}}'$ at the top boundary; for instance the
first two diagrams in \eqref{eq:gens} represent morphism from $(2,2)$ to $(2)$ and
vice versa, the third goes from the unit to $(2)$, etc. 

The monoidal structure is given on objects by concatenating sequences and, on morphisms, by (the bilinear extension of) placing diagrams horizontally next to each
other. The composition of morphisms is, likewise, given by (the bilinear extension of) stacking diagrams on top of each other. The empty
sequence is the unit object. 

Apart from multiplication with polynomials, the generating morphisms (in the monoidal sense) are exactly the following, where blue represents any color/number which is neighbored to red and not neighbored to orange.
\begin{equation}\label{eq:gens}
\genlist
\end{equation}

For the rest of this section we identify  $\BSbimcl_n$ with $\mathcal{DS}_n$ as
monoidal $1$-categories (via $\overline{\BSbimcl}^{\mathrm{gr}}_n$) and perform
computations using the diagrammatic calculus.

The Rouquier complexes \eqref{eqn:Rouq-alg} are translated into the diagrammatics as 
\begin{eqnarray}\label{eq:diagcxs}
\diagcxs
\end{eqnarray}
where we encode $s_i$ by blue. (Instead of remembering the grading shifts from
\eqref{eqn:Rouq-alg}, it is more convenient in the diagrammatic setting to
consider the maps as homogeneous of degree one). To keep track of the monoidal
unit (corresponding to $R$) appearing in the complexes, we mostly indicate them
by a colored dot as shown in \eqref{eq:diagcxs}. 

        \begin{lemma}\label{lem:atomicslide} 
      There are slide chain maps 
\[\slide_{\one_1,B_1}:= \hspace{-.5cm}\FigSlide,\quad \slide_{B_1,\one_1}:=\hspace{-.5cm} \FigSlidee \] 
which are invertible up to homotopy. The inverses are given by the chain maps
\[\slide^{-1}_{\one_1,B_1}:= \hspace{-.5cm}\FigSlidei, \quad
\slide^{-1}_{B_1,\one_1}:= \hspace{-.5cm}\FigSlideei \] 
\end{lemma}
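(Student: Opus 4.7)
The plan is to establish \cref{lem:atomicslide} by direct diagrammatic computation in the Hecke category $\mathcal{DS}_3$. Both slide maps act on complexes in $\Chb{\BSbimcl_3}$: for $\slide_{\one_1,B_1}$ the source $\cabledcross_{1,2} \hcomp (\one_1 \boxtimes B_1)$ and target $(B_1 \boxtimes \one_1) \hcomp \cabledcross_{1,2}$ are totalizations of small bicomplexes assembled from the single-crossing Rouquier complexes in \eqref{eq:diagcxs}, with differentials built from colored dots and trivalent (co)multiplication vertices. The diagrams in the statement prescribe, in each homological degree, an explicit bimodule map between these totalizations, and the task is to verify that (i) each such prescription is a chain map and (ii) the listed inverses are indeed homotopy inverses.

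First, I would unpack each of the four proposed morphisms degree-by-degree and record the differentials of the source and target complexes in diagrammatic form. This step is purely bookkeeping but pinpoints the finite list of squares whose commutativity needs to be established.

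Second, I would verify that each proposed formula is a chain map by checking the commutativity of each such square in $\mathcal{DS}_3$. The required identities all reduce to standard Elias--Williamson relations: the unit (dot) and Frobenius relations for a single color, the $6$-valent braid relation for the adjacent colors $s_1,s_2$ of $S_3$ for squares involving both colors, the barbell/polynomial absorption rules, and the compatibility of dots with the crossing differentials visible from \eqref{eqn:Rouq-alg}. Third, I would verify homotopy invertibility by composing the two candidate maps in both orders and reducing via the same relations; any discrepancy from the identity will be an explicit coboundary realized by a homotopy built from a trivalent vertex capped by a dot (a fork joining the two crossing strands). The case $\slide_{B_1,\one_1}$ is a mirror of $\slide_{\one_1,B_1}$ under horizontal reflection of diagrams, which swaps the two Rouquier complexes in \eqref{eq:diagcxs}, so only one verification is essentially needed.

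The main obstacle is not conceptual but combinatorial: there are a substantial number of squares to check, and each candidate slide map has several nonzero components to track. However, because the complexes involved have only a few nonzero positions and the relevant hom-spaces of Bott--Samelson bimodules on three strands in low internal degrees are tightly controlled by the diagrammatic presentation, every individual verification is a short rewrite with no new structural input required beyond \cite{EW,EK}. Once \cref{lem:atomicslide} is in place, the general $\slide_{Y_1,Y_2}$ can be assembled from these atomic slides by horizontal composition following the scheme indicated in the schematic picture before the lemma, which is what will enter the proof of \cref{prop:sliding-objects}.
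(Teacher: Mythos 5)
Your proposal matches the paper's proof in approach: the paper establishes \cref{lem:atomicslide} by exactly this kind of explicit diagrammatic calculation in the Elias--Williamson calculus, checking the chain-map property directly and showing the composites differ from the identity by an explicit null-homotopy (given as $[d,h]$ for a diagrammatically specified $h$), with the second case obtained from the first by the colour-swap symmetry rather than a separate computation. Your outline is correct and adds only bookkeeping detail beyond what the paper records.
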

\begin{proof}
The proof is given by an explicit calculation. As an example (the
remaining cases are checked analogously) we show that
$\slide^{-1}_{\one_1,B_1}\circ\; \slide_{\one_1,B_1}$ is homotopic to the
identity by computing their difference and exhibiting an explicit null-homotopy:
\[\slide^{-1}_{\one_1,B_1}\circ\; \slide_{\one_1,B_1} - \Id = \left(\FigSlidec\right) = \left[d, \FigSlideh \right]\] 
  \end{proof}      
  Observe that the relevant chain complexes and chain maps are for the two cases
  are related by swapping the colours red and blue. 
  \begin{remark} Readers familiar with the chain maps between Rouquier complexes
      associated to a Reidemeister III move will recognize the atomic slide chain maps
      as filtrations-preserving pieces of the former, see e.g. \cite[(3.3) and (3.4)]{MWW}.
  \end{remark}

Now that we have obtained the atomic slide chain maps in \cref{lem:atomicslide}, we can construct all remaining slide chain maps in an essentially formal way.
        
\begin{proof}[Proof of \Cref{prop:sliding-objects}]
     We will focus on the version for the positive cabled crossing,
     since the other one is analogous. First, we reduce to the case when the
     object $Y=Y_1\boxtimes Y_2$ is a generating object of $\BSbimcl_m\boxtimes
     \BSbimcl_n$. Otherwise, we can decompose into generators:
    \[ Y_1\boxtimes Y_2= (Y_1\boxtimes \one) \hcomp (\one \boxtimes Y_2) =
    (B_{i_1}\boxtimes \one) \hcomp \cdots \hcomp (B_{i_a}\boxtimes \one) \hcomp (\one
    \boxtimes B_{j_1})\hcomp \cdots \hcomp (\one
    \boxtimes B_{j_b}) \] 
    and define 
    \begin{align*}
        \slide_{Y_1,\one}&:= 
        (\id_{\swap_{m,n}(B_{i_1}\hcomp \cdots \hcomp B_{i_{a-1}} \boxtimes \one)}\hcomp \slide_{B_{i_{a}},\one}) 
        \vcomp \cdots \vcomp
        (\slide_{B_{i_{1}},\one} \hcomp \id_{B_{i_1}\hcomp \cdots \hcomp B_{i_{a-1}} \boxtimes \one})
        \\
        \slide_{\one,Y_2}&:= 
        (\id_{\swap_{m,n}(\one\boxtimes B_{j_1}\hcomp \cdots \hcomp B_{j_{b-1}})}\hcomp \slide_{\one, B_{j_{b}}}) 
        \vcomp \cdots \vcomp
        (\slide_{\one, B_{j_{1}}} \hcomp \id_{\one\boxtimes B_{j_1}\hcomp \cdots \hcomp B_{j_{b-1}}})
        \\
        \slide_{Y_1,Y_2} &:= (\id_{\swap_{m,n}(Y_1\boxtimes \one)} \hcomp \slide_{\one,Y_2})  \vcomp (\slide_{Y_1,\one} \hcomp \id_{Y_2}) 
    \end{align*}

    Now we turn to defining $\slide_{B,\one_n}$ and $\slide_{\one_m,B}$, where
    $B$ is one of the generating Bott--Samelson bimodules. Here we place subscripts to distinguish the
    identity bimodules. 
    We first consider the latter situation and reduce it to the case
    $m=1$, where the cabled crossing is a Coxeter braid. Indeed, suppose that
    $m>1$, then we use the first equality from
    \Cref{lem:cabled-crossing-from-coxeter} to define $\slide_{\one_m,B}$ to be
    the composite:
    \begin{gather}
        \nonumber
        \big((\slide_{\one_1,B}\boxtimes \id_{\one_{m-1}})
        \hcomp \cdots \hcomp 
        \id_{\one_{m-1-i} \boxtimes \cabledcross_{1,n}\boxtimes\one_{i}}
        \hcomp \cdots \hcomp
        \id_{\one_{m-1}\boxtimes \cabledcross_{1,n}}\big)
        \vcomp \cdots
        \\
        \label{eqn:slidecabledfromcoxeter}
        \vcomp \big(\id_{\cabledcross_{1,n}\boxtimes \one_{m-1}}
        \hcomp \cdots \hcomp 
        (\id_{\one_{m-1-i}} \boxtimes \slide_{\one_1,B}\boxtimes\id_{\one_{i}})
        \hcomp \cdots \hcomp
        \id_{\one_{m-1}\boxtimes \cabledcross_{1,n}}\big)
        \vcomp \cdots\\ \nonumber
        \vcomp \big(\id_{\cabledcross_{1,n}\boxtimes \one_{m-1}}
        \hcomp \cdots \hcomp 
        \id_{\one_{m-1-i} \boxtimes \cabledcross_{1,n}\boxtimes\one_{i}}
        \hcomp \cdots \hcomp
        (\id_{\one_{m-1}}\boxtimes \slide_{\one_1,B}) \big)
    \end{gather}
For the other case, we first choose chain maps $\phi$ and $\phi^{-1}$ realising
 the first homotopy equivalence in \Cref{lem:cabled-crossing-from-coxeter}.
 Then we define $\slide_{B,\one_n}$ as the composition:
 \begin{gather*}
    \nonumber
    \phi^{-1} \vcomp \big((\one_{n-1}\boxtimes \slide_{B,\one_1})
    \hcomp \cdots \hcomp 
    \id_{\one_{i} \boxtimes \cabledcross_{m,1}\boxtimes\one_{n-1-i}}
    \hcomp \cdots \hcomp
    \id_{\cabledcross_{m,1}\boxtimes \one_{n-1}}\big)
    \vcomp \cdots
    \\
    \vcomp \big(\id_{\one_{n-1}\boxtimes \cabledcross_{m,1}}
    \hcomp \cdots \hcomp 
    (\id_{\one_{i}} \boxtimes \slide_{B,\one_1}\boxtimes\id_{\one_{n-1-i}})
    \hcomp \cdots \hcomp
    \id_{\cabledcross_{m,1}\boxtimes \one_{n-1}}\big)
    \vcomp \cdots
    \\ \nonumber
    \vcomp \big(\id_{\one_{n-1}\boxtimes \cabledcross_{m,1}}
    \hcomp \cdots \hcomp 
    \id_{\one_{i} \boxtimes \cabledcross_{m,1}\boxtimes\one_{n-1-i}}
    \hcomp \cdots \hcomp
    (\slide_{B,\one_1}\boxtimes \one_{n-1}) \big)\vcomp \phi 
 \end{gather*}
It remains to construct $\slide_{\one_1,B_i}$ and $\slide_{B_j,\one_1}$ where
$B_i$ is a generating object of $\BSbimcl_n$ and $B_j$ is a generating object of
$\BSbimcl_m$. Now we reduce this problem to the cases when $n=2$ and $m=2$
respectively. We define $\slide_{\one_1,B_i}$ as the composite:
\begin{align*}
    \Rouq(\Ag_{n}\cdots\Ag_{1}) \hcomp B_i =& \Rouq(\Ag_{n}\cdots\Ag_{i+1})\hcomp \Rouq(\Ag_{i}\Ag_{i-1})\hcomp \Rouq(\Ag_{i-2}\cdots\Ag_{1})\hcomp B_i \\
    \to 
    &\Rouq(\Ag_{n}\cdots\Ag_{i+1})\hcomp \Rouq(\Ag_{i}\Ag_{i-1}) \hcomp B_i \hcomp\Rouq(\Ag_{i-2}\cdots\Ag_{1}) 
   \\ \xrightarrow{\slide}
    &\Rouq(\Ag_{n}\cdots\Ag_{i+1})\hcomp B_{i-1} \hcomp \Rouq(\Ag_{i}\Ag_{i-1}) \hcomp\Rouq(\Ag_{i-2}\cdots\Ag_{1}) 
\\ \to&
 B_{i-1} \hcomp \Rouq(\Ag_{n}\cdots\Ag_{i+1})\hcomp \Rouq(\Ag_{i}\Ag_{i-1}) \hcomp\Rouq(\Ag_{i-2}\cdots\Ag_{1})\\ 
    =& B_{i-1} \hcomp \Rouq(\Ag_{n}\cdots\Ag_{1})
\end{align*}
where the unlabelled maps are far-commutativity isomorphisms and the labelled
arrow is given by $\id \hcomp \slide_{\one_1,B_1} \hcomp \id $, which is
determined by the $n=2$ case. The reduction of $\slide_{B_j,\one_1}$ to the case
$m=2$ is completely analogous. 

Thus we reduced the problem to the statement from  \Cref{lem:atomicslide}. By
construction, all slide maps constructed in this proof are homotopy
equivalences.
\end{proof}

\begin{rem}
\label{rem:twoproofs} In Remarks~\ref{rem:noRthree} and \ref{rem:onecatprebraid}
we have observed that a prebraiding need not satisfy the braid
relation, and that the braid relation in prebraidings on identity functors can
be verified in two ways using naturality. On the level of the homotopy category
of Soergel bimodules, this is reflected in the fact, that the chain maps
implementing the homotopy equivalence corresponding to a braid relation live in
a 2-dimensional space \cite[Section 3, Reidemeister 3 generators]{MR2721032}.
One dimension is encoding an overall non-zero scaling. Even after fixing this,
there exists however a 1-dimensional affine subspace of representatives of the
same homotopy class of chain maps. Two distinct (and thus spanning) points in
this subspace can be built from slide chain maps, analogously to the two ways of
establishing the braid relation in \Cref{rem:onecatprebraid}. 
\end{rem}

\begin{remark}\label{rk:prebraidDavidswish} We used Rouquier canonicity to
    ensure here the independence (up to canonical isomorphism) of our choices in
    the construction of the prebraiding on $\hincl$ in
    \Cref{thm:prebraidingSbimcl}. For the rest of the paper it would be enough
    to make one of the choices and establish the prepraiding using only the
    monoidality of $\hincl$. The desired independence of choices (and in fact also Rouqiuer canonicity itself) could then be
    deduced from the existence statement in \cref{cor:main-corollary}.
    \end{remark}

\subsection{Centralizers and prebraidings}
In the following, given a monoidal $1$-category $\cC$, we will identify  $1\boxtimes x$ and $x\boxtimes 1$ with $x$ for any object $x\in\cC$, see \cite[Rk. 2.2.9]{EGNO}, and will suppress writing associators, as they can be recovered from context.

\begin{definition}\label{def:centralizer}
Let $\cA$ and $\cB$ be monoidal $1$-categories and let
$F\colon\cA\rightarrow\cB$ be a monoidal functor. The \emph{centralizer} $Z(F)$
of $F$ is the following category: 
\begin{itemize}
\item 
Its objects are pairs $(b,\gamma)$ of an object $b\in\cB$ and a natural
isomorphism \[\gamma=\gamma_-\colon b\boxtimes F(-)\rightarrow F(-)\boxtimes
b\] of functors from $\cA$ to $\cB$, called \emph{half-braiding}, which satisfies
the following two compatibility conditions with respect to the monoidal structure of $\cA$.
Firstly, for any $a_1,a_2\in\cA$, the isomorphism $\gamma_{a_1 \boxtimes a_2} \colon b \boxtimes F(a_1 \boxtimes a_2) \to F(a_1 \boxtimes a_2) \boxtimes b$ equals the composite
\[b\boxtimes F(a_1\boxtimes a_2)\xrightarrow{\simeq}
b\boxtimes  F(a_1)\boxtimes F(a_2)\xrightarrow{(\id\boxtimes \gamma_{a_2})\circ
(\gamma_{a_1}\boxtimes\id )} F(a_1)\boxtimes F(a_2)\boxtimes
b\xrightarrow{\simeq} F(a_1\boxtimes a_2) \boxtimes b,\]
and secondly \[\gamma_{1_{\cA}} = \left(b\boxtimes
F(1_{\cA})\xrightarrow{\simeq} b\boxtimes 1_{\cB}=1_{\cB}\boxtimes b\xrightarrow{\simeq}F(1_{\cA})\boxtimes b\right).\] 
\item Its morphisms are morphisms in $\cB$ that are compatible with the
half-braidings as follows: 
$$\Hom_{Z(F)}((b,\gamma),(b',\gamma'))=\{f\in\Hom_{\cB}(b,b')\mid \gamma'_a\circ (f\boxtimes\id)=(\id\boxtimes f)\circ\gamma_a
\colon b\boxtimes F(a)\rightarrow F(a)\boxtimes b\}.$$ 
\item The composition is inherited from $\cB$.
\end{itemize}
\end{definition}
The category $Z(F)$ is monoidal with $(b,\gamma)\boxtimes (b,\gamma')\coloneqq (b\boxtimes b',\gamma\boxtimes\id\circ\id\boxtimes\gamma')$ on objects, and with the tensor product from $\cB$ on morphisms.  The unit object $1_{Z(F)}\in Z(F)$  is $(1_{\cB}\in\cB, \gamma)$ with $\gamma_a\colon 1\boxtimes F(a)=F(a)=F(a)\boxtimes 1$. 
We leave the coherence isomorphisms and their compatibility to the reader. 

\begin{remark}
    \label{rk:Drinfeldcenter}
    In case $\cA=\cB$ and $F= \id_{\cA}$ is the identity functor, the centralizer $Z(\id_{\cA})$ is the \emph{Drinfeld center} $Z(A)$ of $\cA$, \cite[Def.~7.13.1]{EGNO}. The generalized hexagon axiom above turns then into the familiar hexagon diagram \cite[(7.41)]{EGNO}. Centralizers, as generalizations of Drinfeld centers, appear already in \cite[\S3]{MR1151906}. 
\end{remark}

\begin{definition} \label{def:evaluation}
For a monoidal functor $F\colon \cA \to \cB$, we define the monoidal \emph{evaluation functor} 
\[\ev\colon Z(F) \times \cA \to \cB
\]
to send an object $((b,\gamma),a)$ to 
$b\boxtimes F(a)$, and a morphism $(f,g)$ to $f\boxtimes F(g)$. The monoidal structure isomorphisms $$\ev\left(((b,\gamma),a)\boxtimes((b',\gamma'),a')\right)\simeq 
\ev\left((b,\gamma),a\right)\boxtimes \ev\left((b',\gamma'),a')\right) \quad(\text{for } a,a'\in\cA,(b,\gamma),(b',\gamma')\in Z(F))$$ are given by 
$b\boxtimes b'\boxtimes F(a\boxtimes a')\simeq b\boxtimes b'\boxtimes F(a)\boxtimes F(a')
\xrightarrow{\id\boxtimes\gamma'_a\boxtimes\id}
b\boxtimes  F(a)\boxtimes b'\boxtimes F(a')$. 
The defining properties of  $\gamma'$ and the monoidality of $F$ ensure that the
necessary compatibilities hold, so that we indeed get a monoidal functor. 
\end{definition}

Together with the unit $1_{Z(F)} \in Z(F)$ and $F \colon \cA \to \cB$, the evaluation functor fits into the following commuting diagram of monoidal functors:
\[
        \begin{tikzcd}
            & Z(F) \times \cA  \ar[dr, "\ev"] & \\ 
            \{* \} \times \cA \ar[ru, "1_{Z(F)} \times \id_\cA"] \ar[r, "="] & \cA \ar[r, "F"]   &\cB
        \end{tikzcd}
   \]
   
   In \cref{exm:universal-property-centralizer}, we will discuss the universal property satisfied by $Z(F)$ with its monoidal evaluation functor. 
   
The functor $\ev$ is completely determined by the monoidal functor 
 \[\ev_{1_{\cA}} \coloneqq \ev(-, 1_{\cA}) \colon Z(F) \to \cB,\]
which sends an object $(b, \gamma)\in Z(F)$ to the underlying object $b$ and a morphism in $Z(F)$ to the underlying morphism in $\cB$. In these terms, $\ev(-, ?) = \ev_{1_{\cA}}(-) \boxtimes F(?)$.

We obtain a classification of prebraidings on a monoidal functor $F$, Definitions~\ref{def:prebraiding} and \ref{def:notationprebraiding}, in terms of the centralizer $Z(F)$ of $F$:
\begin{theorem}\label{thm:classbraidings}
Let $F\colon  \cA\rightarrow\cB$ be a monoidal functor between monoidal $1$-categories. Then the following are equivalent:
\begin{enumerate}
\item\label{itm:prebraid} the set $\PreBraid(F)$ of prebraidings on $F$; 
\item\label{itm:strictlifts} the set of \emph{strict monoidal factorizations of $F$ through $\ev_{1_{\cA}} \colon Z(F) \to \cB$}, i.e. the set of monoidal functors $s\colon \cA\rightarrow Z(F)$ such that $\ev_{1_{\cA}} \circ s=F$. 
\item \label{itm:weaklifts}the 1-groupoid of \emph{weak monoidal factorizations of $F$ through $\ev_{1_{\cA}} \colon Z(F) \to \cB$}, i.e. the groupoid whose objects are pairs $(s,\eta)$ of a monoidal functor $s \colon \cA \to Z(F)$ and a monoidal natural isomorphism $\eta \colon \ev_{1_{\cA}} \circ s \To F$ and whose morphisms  $(s, \eta) \to (s', \eta')$ are monoidal natural isomorphisms $\mu \colon s\To s'$ such that \[\left( \ev_{1_{\cA}} \circ s \xRightarrow{\ev_{1_{\cA}} \circ \mu} \ev_{1_{\cA}} \circ s' \xRightarrow{\eta'} F \right)=\left( \ev_{1_{\cA}} \circ s \xRightarrow{\eta}F\right).\]
\end{enumerate}
\end{theorem}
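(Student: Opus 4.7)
The plan is to prove (1) $\Leftrightarrow$ (2) by directly matching the data of a prebraiding with the data of a strict monoidal lift through $\ev_{1_{\cA}}$, and then show that the 1-groupoid (3) is equivalent to the set (2) by a transport-of-structure argument.

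First I would establish the bijection between (1) and (2). Given a prebraiding $\beta$ on $F$, define a functor $s_\beta \colon \cA \to Z(F)$ on objects by
\[
s_\beta(a) \coloneqq (F(a),\, \beta_{a,-}),
\]
where $\beta_{a,-} \colon F(a) \boxtimes F(-) \Rightarrow F(-) \boxtimes F(a)$ is obtained from $\beta$ by fixing its first argument. On morphisms, set $s_\beta(g) \coloneqq F(g)$; naturality of $\beta$ in its first argument ensures $F(g)$ is a morphism in $Z(F)$. The first hexagon in \eqref{eq:fakehexagon} (concerning $\beta_{x, y \boxtimes z}$) is precisely the condition from \Cref{def:centralizer} that $\beta_{a,-}$ is a half-braiding; the second hexagon (concerning $\beta_{x \boxtimes y, z}$) is precisely the compatibility of $s_\beta$ with the monoidal products. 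Conversely, from a strict lift $s$ with $s(a) = (F(a), \gamma^a)$, extract $\beta^s_{a,a'} \coloneqq \gamma^a_{a'}$. Functoriality of $s$ combined with the defining condition on $Z(F)$-morphisms yields naturality of $\beta^s$ in its first argument; naturality in the second argument and the two hexagon axioms reassemble from the half-braiding axioms and the monoidality of $s$. These assignments are manifestly mutually inverse.

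Next I would show that (3) collapses onto (2). The inclusion (2) $\hookrightarrow$ (3) sending $s \mapsto (s, \id)$ is clear. For the converse, given $(s, \eta) \in (3)$ with $s(a) = (b_a, \gamma^a)$ and $\eta_a \colon b_a \xrightarrow{\sim} F(a)$, I transport each half-braiding along $\eta_a$ to obtain a strict lift $s'$ by $s'(a) \coloneqq (F(a), \eta_*\gamma^a)$, where
\[
(\eta_*\gamma^a)_{a'} \coloneqq (\id \boxtimes \eta_a) \circ \gamma^a_{a'} \circ (\eta_a^{-1} \boxtimes \id).
\]
Monoidality of $\eta$ as a monoidal natural transformation ensures that $s'$ is a monoidal functor into $Z(F)$, and $\eta$ itself assembles into a morphism $(s,\eta) \to (s', \id)$ in (3). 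To see that (3) is actually equivalent to the discrete set (2), observe that any morphism $\mu \colon (s, \eta) \to (s', \eta')$ must satisfy $\eta' \circ (\ev_{1_{\cA}} \circ \mu) = \eta$, so its underlying $\cB$-valued natural transformation is forced to be $(\eta')^{-1} \circ \eta$; since morphisms in $Z(F)$ are $\cB$-morphisms subject to a condition, this determines $\mu$ itself uniquely. Hence every hom-set in (3) has at most one element, so (3) is equivalent to its set of isomorphism classes, which is in bijection with (2).

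The main obstacle will be the tedious bookkeeping of associators, unitors, and the monoidality coherence data of $F$ when translating axioms: the hexagon axioms of \Cref{def:prebraiding} are written with the full associator data made explicit, whereas \Cref{def:centralizer} streamlines this via the conventions announced at the start of the subsection. Once this dictionary is carefully established for the half-braiding axiom and for the monoidal-section axiom, all remaining verifications (unit conditions, naturality in both arguments, compatibility with $\eta$, and the preservation of prebraidings under the inverse construction) follow the same pattern.
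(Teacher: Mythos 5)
Your proposal is correct and essentially coincides with the paper's proof: the $(1)\Leftrightarrow(2)$ dictionary $s(x)=(F(x),\beta_{x,-})$, $s(g)=F(g)$, with the two hexagons supplying the half-braiding axiom and the monoidality of $s$, is exactly the paper's argument, and your transport-of-structure plus uniqueness-of-morphisms argument for $(2)\Leftrightarrow(3)$ is precisely the content of the paper's appeal to $\ev_{1_{\cA}}\colon Z(F)\to\cB$ being a faithful monoidal isofibration, just carried out by hand (lifting the isomorphisms $\eta_a$ is the isofibration property, and the forced value $(\eta')^{-1}\circ\eta$ of any comparison $2$-cell is faithfulness). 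One small remark: your matching --- first hexagon $\leftrightarrow$ the half-braiding condition on $\beta_{x,-}$ (decomposition in the second argument), second hexagon $\leftrightarrow$ monoidality of $s$ (decomposition in the first argument) --- is the natural and correct one, whereas the paper's prose states this correspondence the other way around.
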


\begin{proof}
For the equivalence between \eqref{itm:prebraid} and \eqref{itm:strictlifts}, note that a factorization $s$ must send, on the level of objects, $x$ to $(F(x),\gamma)$ for some $\gamma$, and on the level of morphisms $f$ to $F(f)$. The isomorphisms used for a prebraiding $\beta$ uniquely define the isomorphisms encoded in a possible $\gamma$. The second  hexagon axiom from prebraidings \eqref{eq:fakehexagon} translates into the required properties of $\gamma$, whereas the first hexagon translates into the monoidality of $s$. 

The equivalence between \eqref{itm:strictlifts} and \eqref{itm:weaklifts} follows from abstract-nonsense: Recall that a monoidal functor $F:\cX \to \cZ$ between monoidal $1$-categories is called an \emph{isofibration} if for all isomorphisms $\gamma \colon z \to z'$ in $\cZ$ and $x \in \cX$ with $F(x) = z$, there exists an isomorphism $\mu \colon x\to x'$ with $F(\mu) = \gamma$. It is then an exercise to show that if $F \colon \cX \to \cZ$ is a monoidal functor which is a faithful isofibration and $G\colon \cY \to \cZ$ is another monoidal functor, the groupoid of weak monoidal factorizations, i.e. of pairs $(s, \eta)$ of a monoidal functor $s \colon \cY \to \cX$ and a monoidal natural isomorphism $F \circ s \simeq G$ is equivalent to a discrete groupoid isomorphic to the set of strict monoidal factorizations, i.e. the set of monoidal functors $s$ such that $F \circ s = G$. The equivalence between \eqref{itm:strictlifts} and \eqref{itm:weaklifts} then follows since  $\ev_{1_{\cA}} \colon Z(f) \to \cB$ is indeed a faithful isofibration.
\end{proof}
\begin{corollary}
The special case $F=\id_\cA$ for a monoidal category $\cA$ gives a bijection
\begin{eqnarray*} 
\PreBraidid(\cA)
&\simeq&
\{\text{Monoidal sections }\cA \rightarrow Z(\cA) \text{ of }\ev_{1_{\cA}}\colon Z(\cA)\to\cA\}.
\end{eqnarray*}
\end{corollary}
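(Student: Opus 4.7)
The plan is to obtain this statement as a direct specialization of \cref{thm:classbraidings} to the case $F = \id_\cA$, where $\cB = \cA$. By \cref{rk:Drinfeldcenter}, the centralizer in this case is the Drinfeld center $Z(\id_\cA) = Z(\cA)$, and the evaluation functor $\ev_{1_\cA}\colon Z(\cA) \to \cA$ is the forgetful functor sending $(a,\gamma) \mapsto a$. By \cref{def:notationprebraiding}, we have $\PreBraidid(\cA) = \PreBraid(\id_\cA \colon \cA \to \cA)$, so the equivalence between items \eqref{itm:prebraid} and \eqref{itm:strictlifts} of \cref{thm:classbraidings} specializes to a bijection between $\PreBraidid(\cA)$ and the set of strict monoidal factorizations $s\colon \cA \to Z(\cA)$ of $\id_\cA$ through $\ev_{1_\cA}$.

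It only remains to observe that a strict monoidal factorization of $\id_\cA$ through $\ev_{1_\cA}$ is the same as a monoidal section of $\ev_{1_\cA}$, which is immediate from the defining equation $\ev_{1_\cA}\circ s = \id_\cA$. No calculation is required; the content of this corollary is entirely contained in the preceding theorem.

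There is no real obstacle here beyond a brief sanity check that the bijection of \cref{thm:classbraidings}\eqref{itm:prebraid}$\leftrightarrow$\eqref{itm:strictlifts}, when applied to $F = \id_\cA$, indeed recovers the classical correspondence between braidings and monoidal sections into the Drinfeld center (compare \cite[Prop.~8.5.1]{EGNO}). Under this identification, a prebraiding $\beta$ with components $\beta_{x,y}\colon x\boxtimes y \to y\boxtimes x$ corresponds to the section sending $x \mapsto (x,\beta_{x,-})$, where the half-braiding is given by the collection $\{\beta_{x,y}\}_{y\in\cA}$; the two hexagon axioms of \cref{def:prebraiding} translate respectively into the monoidality of $s$ and the compatibility condition required of a half-braiding in \cref{def:centralizer}.
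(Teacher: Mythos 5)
Your proposal is correct and matches the paper's intent exactly: the corollary is stated without proof precisely because it is the immediate specialization of \cref{thm:classbraidings} to $F=\id_\cA$, using \cref{rk:Drinfeldcenter} to identify $Z(\id_\cA)$ with the Drinfeld center and observing that strict factorizations of $\id_\cA$ through $\ev_{1_\cA}$ are monoidal sections. Your concluding sanity check of the explicit correspondence is a fine (optional) addition but adds nothing beyond the theorem.
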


\section{Stable linear algebra}
\label{sec:stableLA}
This section introduces the $\infty$-categorical foundations of our work.
Throughout, we adopt standard conventions and notations in higher category theory, which we recall in detail in \cref{sec:appendix-recollections}. 
 We follow~\cite{HTT} and use Grothendieck universes --- here called \emph{small}, \emph{large}, and \emph{huge} ---  to deal with set-theoretic issues (see~\cref{subsec:set-theory-Grothendieck-universes} for more details).  In particular, we write $\cat$ for the large $\infty$-category of small $\infty$-categories and $\Spaces$  for the (large) $\infty$-category of small spaces, both of which are objects of the huge $\infty$-category $\widehat{\Cat}_{\infty}$ of large $\infty$-categories.

\subsection{Completions of \texorpdfstring{$\infty$}{infinity}-categories} \label{sec:presentable}
\subsubsection{The Yoneda embedding}

Any small $\infty$-category $\cC$ has a Yoneda embedding into its $\infty$-category of ($\Spaces$-valued) presheaves $\Pres(\cC)=\Fun(\cC^\op, \Spaces)$, \cite[Prop.~5.1.3.1]{HTT}, see also \cite{Cisinski}. It is characterized by the universal property that $\Pres(\cC)$ has all small colimits (i.e. is \emph{cocomplete})~\cite[Cor. 5.1.2.4]{HTT},  and that for any cocomplete $\infty$-category $\cD$ the restriction along the Yoneda embedding induces an equivalence
\begin{equation}
    \label{eq:Yoneda}
    \FunL(\Pres(\cC),\cD) \to \Fun(\cC,\cD)
\end{equation}
where $\FunL$ denotes the full subcategory of the $\infty$-category of functors on those functors which preserve all small colimits (i.e. the \emph{cocontinuous functors})~\cite[Thm.~5.1.5.6]{HTT}, see also  \cite[Thm.~6.3.13]{Cisinski}.
An $\infty$-category $\cC$ is called \emph{idempotent complete} if its image under the Yoneda embedding $\cC \to \Pres(\cC)$ is closed under retracts (see~\cite[Proof of Prop.~5.1.4.2]{HTT}). We refer to \cite[\S~4.4.5]{HTT} for a discussion of retracts and idempotents in $\infty$-categories.

\begin{notation} \label{not:smallcats}
We write
\begin{itemize}
    \item $\catidem$ for the full subcategory of $\cat$ on the idempotent complete small $\infty$-categories,
    \item $\catprod$ for the subcategory of $\catidem$ on the idempotent complete small $\infty$-categories that admit finite coproducts and functors which preserve finite coproducts, and
    \item $\catrex$ for the subcategory of $\catprod$ on the idempotent complete small $\infty$-categories that admit finite colimits and functors which preserve finite colimits.
\end{itemize}
\end{notation}

\subsubsection{Presentable \texorpdfstring{$\infty$}{infinity}-categories}
In general, the presheaf category $\Pres(\cC)$ of a small $\infty$-category $\cC$ is a large category. The sense in which $\Pres(\cC)$ is nevertheless still controlled by a small amount of data is formalized by the notion of a presentable $\infty$-category. We recall the definition and some basic facts from {\cite[Sec. 5.4 and 5.5]{HTT}}.

\begin{definition}
Let $\cD$ be a (possibly large) $\infty$-category.
\begin{enumerate}
\item Let $\cK$ be a collection of $\infty$-categories and $S$ a small set of objects of $\cD$. Then $\cD$ is \emph{generated by $S$ under $\cK$-indexed colimits} if $\cD$ has all colimits indexed by categories in $\cK$ and is the smallest full subcategory of $\cD$ which contains the objects in $S$ and is closed under $\cK$-indexed colimits.
\item Let $\kappa$ be an infinite regular cardinal and assume $\cD$ admits $\kappa$-filtered colimits. Then an object $d\in \cD$ is called \emph{$\kappa$-compact} if the functor $\Hom_{\cD}(d,-)\colon \cD \to \Spaces$ preserves $\kappa$-filtered colimits. 
\item The $\infty$-category $\cD$ is called \emph{accessible} if it is locally small and there exists a regular cardinal $\kappa$ and a small set $S$ of $\kappa$-compact objects in $\cC$ that generates $\cC$ under $\kappa$-filtered colimits.
\item The $\infty$-category $\cD$ is called \emph{presentable} if it has all small colimits and is accessible. 
\end{enumerate}
\end{definition}

\begin{example}
By Simpson's characterisation of presentable $\infty$-categories as localizations of presheaf categories, {\cite[Thm. 5.5.1.1]{HTT}}, we obtain presentability of $\Pres(\cC)$ for any small $\infty$-category $\cC$~\cite[Ex. 5.4.2.7,  Ex. 5.5.1.8.]{HTT}, and more generally the presentability of $\Fun(\cC, \cD)$ for a small $\infty$-category $\cC$ and a presentable $\infty$-category $\cD$.
\end{example}

A main application of the notion of presentable $\infty$-category is the \emph{adjoint functor theorem}:

\begin{proposition}[{\cite[Cor.~5.5.2.9 and Rem.~5.5.2.10]{HTT}}]\label{adjfuncthm} A functor from a presentable $\infty$-category to a locally small $\infty$-category preserves small colimits if and only if it is a left adjoint. 
\end{proposition}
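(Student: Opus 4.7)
The plan is to prove the two directions separately, with the forward (left adjoint implies cocontinuous) direction being essentially formal and the reverse direction requiring the presentability hypothesis in an essential way.

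For the forward direction, suppose $F \colon \cC \to \cD$ admits a right adjoint $G$. I would unpack the adjunction equivalence $\Hom_\cD(F(c), d) \simeq \Hom_\cC(c, G(d))$, natural in both variables, and then use the fact that for any fixed $d \in \cD$, the functor $\Hom_\cC(-, G(d)) \colon \cC^\op \to \Spaces$ sends colimits in $\cC$ to limits in $\Spaces$ (this is essentially the definition of colimit in an $\infty$-category, via the mapping-space criterion). Combined with the Yoneda lemma in $\cD$, which says that colimits in $\cD$ are detected on mapping spaces out, this forces $F$ to preserve small colimits.

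For the reverse direction, assume $F \colon \cC \to \cD$ is cocontinuous with $\cC$ presentable and $\cD$ locally small. The idea is to construct the right adjoint pointwise. For each object $d \in \cD$, define a presheaf
\[
\Phi_d \colon \cC^\op \to \Spaces, \qquad c \mapsto \Hom_\cD(F(c), d),
\]
which is well-defined as a $\Spaces$-valued functor precisely because $\cD$ is locally small. The key step is to show that $\Phi_d$ sends colimits in $\cC$ to limits in $\Spaces$: this follows from $F$ preserving colimits together with the fact that $\Hom_\cD(-, d)$ sends colimits in $\cD$ to limits in $\Spaces$. Then I would invoke the representability criterion for presentable $\infty$-categories — namely, that a limit-preserving presheaf on a presentable $\infty$-category is representable (a version of Brown representability in the $\infty$-categorical setting). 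This produces, for each $d$, an object $G(d) \in \cC$ together with an equivalence $\Hom_\cC(-, G(d)) \simeq \Phi_d$.

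The final step is to upgrade the pointwise assignment $d \mapsto G(d)$ into an actual functor $G \colon \cD \to \cC$ and to produce a coherent adjunction. One clean way is to package the family of presheaves $\{\Phi_d\}_{d \in \cD}$ as a single functor $\cD \to \Pres(\cC)$ that factors through the Yoneda embedding $\cC \hookrightarrow \Pres(\cC)$; the resulting lift is $G$, and the adjunction data is read off from the defining equivalences $\Hom_\cC(c, G(d)) \simeq \Hom_\cD(F(c), d)$ via the characterization of adjunctions in terms of natural equivalences of mapping spaces.

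The main obstacle is the representability criterion used in the reverse direction. Proving it from scratch would require using Simpson's theorem to identify $\cC$ as an accessible localization of a presheaf category, or equivalently as $\Ind_\kappa(\cC^\kappa)$ for a suitable regular cardinal $\kappa$, and then reducing representability of $\Phi_d$ to the behavior of its restriction to the small $\infty$-category $\cC^\kappa$ of $\kappa$-compact objects. Given the substantial technical weight of this criterion, I would cite it from \cite{HTT} rather than reprove it.
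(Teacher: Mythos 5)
Your argument is correct and is essentially the standard proof from the cited source: the paper itself gives no proof, quoting the statement directly from \cite{HTT} (Cor.~5.5.2.9 and Rem.~5.5.2.10), whose argument proceeds exactly as you describe, via the representability criterion for limit-preserving presheaves on a presentable $\infty$-category (HTT Prop.~5.5.2.2) applied pointwise, followed by the standard packaging of the pointwise right adjoints into a functor. So your proposal matches the intended proof; citing the representability criterion rather than reproving it is exactly what is expected here.
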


\begin{nota} 
We denote by
\begin{itemize} 
\item $\PrL$ the
$\infty$-category of presentable $\infty$-categories and small colimit preserving functors, i.e left adjoint functors by the adjoint functor theorem.
\item $\FunL(\cC,\cD)$, for $\cC, \cD\in \PrL$, the full subcategory of $\Fun(\cC, \cD)$ of left adjoint (equivalently cocontinuous) functors.
Dually, full subcategories of right adjoint functors will be denoted $\FunR(-,-)$.
\item When denoting an adjunction 
\begin{tikzcd}[column sep=1.5cm]
\cC
\arrow[yshift=0.9ex]{r}{L}
\arrow[leftarrow, yshift=-0.9ex]{r}[yshift=-0.2ex]{\bot}[swap]{R}
&
\cD
\end{tikzcd}
between $\infty$-categories, we use the convention that the top arrow is the left adjoint and the bottom arrow the right adjoint.
\end{itemize}
\end{nota}
For more details, we refer to \cite[\S~5.5.3]{HTT} and \cite[\S~7]{Cisinski}.

\subsubsection{The monoidal structure on \texorpdfstring{$\PrL$}{PrL} and presentably symmetric monoidal categories}
\begin{proposition}[{\cite[Prop.~4.8.1.15, Prop.~4.8.1.10]{HA}}]
\label{prop:prlsym}
    The $\infty$-category $\PrL$ can be equipped with a symmetric monoidal structure for which the Yoneda embedding defines a symmetric monoidal functor $$\Pres\colon \cat \to \PrL$$ where $\cat$ is equipped with its Cartesian symmetric monoidal structure.
\end{proposition}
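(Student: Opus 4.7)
The plan is to produce this symmetric monoidal structure via its universal property: the tensor product $\cC \otimes \cD$ should corepresent the functor on $\PrL$ sending $\cE$ to the $\infty$-category of functors $\cC \times \cD \to \cE$ that preserve small colimits separately in each variable. Concretely, I would model $\cC \otimes \cD$ as the full subcategory $\FunR(\cC^\op, \cD) \subset \Fun(\cC^\op, \cD)$ of limit-preserving functors, equivalently as the full subcategory of $\Fun(\cC^\op \times \cD^\op, \Spaces)$ of functors continuous separately in each variable. Presentability of this subcategory is an accessibility argument: it is closed in a presheaf category under small limits and $\kappa$-filtered colimits for some $\kappa$ depending on accessibility constants of $\cC$ and $\cD$, hence itself presentable.

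Next, I would promote this binary bifunctor to a full $\EE_\infty$-structure on $\PrL$. The cleanest route is to observe that $\PrL$ is a reflective subcategory of $\widehat{\Cat}_\infty$ along an accessible localization (mapping $\cC$ to its ``presentification''), that the tensor product above is compatible with this localization, and then to apply the general machinery for transporting symmetric monoidal structures along accessible localizations whose local equivalences are closed under the tensor product (the operadic analogue of Day convolution / Bousfield localization of symmetric monoidal categories). Associativity, symmetry, and unitality can be read off the corresponding properties of iterated presheaf categories, with unit $\Spaces = \Pres(\pt)$.

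To show $\Pres\colon \cat \to \PrL$ is symmetric monoidal with $\cat$ Cartesian, it suffices to produce natural equivalences $\Pres(\cC \times \cD) \simeq \Pres(\cC) \otimes \Pres(\cD)$ together with $\Pres(\pt) \simeq \Spaces$, and verify hexagon/pentagon coherence. By the universal property \eqref{eq:Yoneda}, for any cocomplete $\cE$,
\[
\FunL(\Pres(\cC \times \cD), \cE) \simeq \Fun(\cC \times \cD, \cE),
\]
while by the defining universal property of $\otimes$ in $\PrL$,
\[
\FunL(\Pres(\cC) \otimes \Pres(\cD), \cE) \simeq \FunL(\Pres(\cC), \FunL(\Pres(\cD), \cE)) \simeq \Fun(\cC, \Fun(\cD, \cE)) \simeq \Fun(\cC \times \cD, \cE),
\]
again by two applications of \eqref{eq:Yoneda}. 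Yoneda then yields the required equivalence, and its naturality, together with the evident $\Spaces \simeq \Pres(\pt)$, upgrades $\Pres$ to a symmetric monoidal functor.

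The main obstacle is not any of the above manipulations but the purely higher-categorical bookkeeping: producing a genuine $\infty$-operadic structure on $\PrL$ (as opposed to a homotopy-coherent binary operation) and verifying symmetric monoidality of $\Pres$ as a morphism of $\EE_\infty$-monoids, rather than merely a collection of compatible equivalences. As indicated by the references \cite[Prop.~4.8.1.15, Prop.~4.8.1.10]{HA}, this is carried out in Lurie by realising $\PrL$ inside a larger category of correspondences on which the Cartesian structure is tractable, and then identifying the resulting symmetric monoidal structure with the one characterised above.
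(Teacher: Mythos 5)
The statement you are proving is one the paper itself does not prove: it is quoted verbatim from Lurie (\cite[Prop.~4.8.1.15, 4.8.1.10]{HA}), so the relevant comparison is with Lurie's construction. Your binary-level analysis is fine: the characterization of $\cC\otimes\cD$ by bi-cocontinuous functors, the model $\FunR(\cC^\op,\cD)$ (with the caveat that one needs accessibility, i.e.\ right adjoints rather than merely limit-preserving functors, which is exactly \cite[Prop.~4.8.1.17]{HA} and the paper's~\eqref{eq:prtensor}), and the computation showing $\Pres(\cC\times\cD)\simeq\Pres(\cC)\otimes\Pres(\cD)$ via the universal property~\eqref{eq:Yoneda} all match the standard facts.

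The genuine gap is in your proposed route to the coherent $\EE_\infty$-structure. $\PrL$ is \emph{not} a reflective subcategory of $\largecat$: it is not even a full subcategory (its morphisms are the cocontinuous functors), and even the full subcategory of $\largecat$ spanned by presentable $\infty$-categories admits no ``presentification'' left adjoint (there is no universal presentable $\infty$-category receiving, say, a large discrete category, and presheaves on a large category are not presentable). Consequently the machinery for transporting a symmetric monoidal structure along a compatible reflective localization (\cite[Prop.~2.2.1.9]{HA}, cf.\ \cref{subsubsection:loczns-of-O-monoidal-cats}) has nothing to apply to, and this is the step on which your whole coherence argument rests. Lurie's actual argument --- the one the paper invokes --- instead constructs by hand a symmetric monoidal $\infty$-category $\widehat{\Cat}_\infty(\cK)^{\otimes}$ of large $\infty$-categories admitting $\cK$-indexed colimits, with multimorphism spaces the functors preserving $\cK$-colimits separately in each variable (the nontrivial input being the corepresentability statement \cite[Prop.~4.8.1.3, Cor.~4.8.1.4]{HA}); symmetric monoidality of $\Pres = \cP^{\cK}$ is \cite[Prop.~4.8.1.10, Rem.~4.8.1.8]{HA}, and $\PrL$ then inherits the structure because it is closed under the tensor product and contains the unit $\Spaces$ \cite[Prop.~4.8.1.15]{HA}. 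This is neither a localization of the Cartesian structure on $\largecat$ nor a ``category of correspondences'' argument, so the final deferral in your last paragraph does not accurately describe the construction that would have to fill the hole. Also note that your chain $\FunL(\Pres(\cC)\otimes\Pres(\cD),\cE)\simeq\FunL(\Pres(\cC),\FunL(\Pres(\cD),\cE))$ quietly uses closedness of the not-yet-constructed monoidal structure; it is harmless only because the same identification follows directly from the defining universal property for functors cocontinuous in each variable.
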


The tensor unit of this symmetric monoidal structure is given by the presentable $\infty$-category $\Spaces$ of spaces. 
The tensor product $\cC_1\otimes \cC_2$ of two presentable $\infty$-categories $\cC_1, \cC_2$ comes equipped with a 
functor  $\cC_1 \times \cC_2 \to \cC_1\otimes \cC_2$ which preserves small colimits separately in both variables and is characterized by the universal property that for any presentable $\infty$-category $\cD$, the induced functor $$\FunL(\cC_1\otimes \cC_2, \cD) \to \mathrm{Fun^{L \times L}}(\cC_1 \times \cC_2, \cD)$$
is an equivalence. Here, $\mathrm{\Fun^{L \times L}}(\cC_1 \times \cC_2,\cD)$ denotes the full subcategory of $\Fun(\cC_1\times \cC_2, \cD)$ on those functors which preserve small colimits separately in both variables.
Abusing notation, given objects $c_1\in \cC_1$ and $c_2\in \cC_2$ we denote the image of $(c_1, c_2) \in \cC_1 \times \cC_2$ under the functor $\cC_1\times \cC_2 \to \cC_1\otimes \cC_2$ by $c_1\boxtimes c_2\in \cC_1 \otimes \cC_2$ and call it their \emph{external tensor product}.

It follows from~\cite[Prop.~4.8.1.17, Prop.~4.8.1.16]{HA} after taking adjoints, that the tensor product of presentable $\infty$-categories can be expressed as the following functor category (which is in particular presentable): 
\begin{equation}
\label{eq:prtensor}
\cC \otimes \cD \simeq \FunL(\cD, \cC^\op)^\op = \FunR(\cC^\op, \cD) 
\end{equation}

The symmetric monoidal structure on $\PrL$ allow us to study (commutative) algebras therein, see \cref{appendix:CAlg}. 
\begin{definition} A \emph{presentably symmetric monoidal $\infty$-category} is a commutative algebra object in $\PrL$.  
\end{definition}

More explicitly, a presentably symmetric monoidal $\infty$-category is a symmetric monoidal $\infty$-category whose underlying $\infty$-category $\cC$ is presentable and so that the tensor product functor $-\otimes-\colon \cC \times \cC \to \cC$ preserves small colimits separately in both variables. 

If $A$ is a commutative algebra object in a symmetric monoidal $\infty$-category $\cC$, consider the $\infty$-category $\Mod_A(\cC)$ of left $A$-modules in $\cC$ (see \cref{subsec:module-cats}). 
\begin{proposition}\label{prop:algprl-new}
    Given $\cC, \cD \in \CAlg(\PrL)$, together with $A, B \in \CAlg(\cC)$.
    \begin{enumerate}
        \item 
        \label{item-prop:algprl1}
        The relative tensor product $-\otimes_A - \colon \Mod_A(\cC) \times \Mod_A(\cC) \to \Mod_A(\cC)$ defines a presentably symmetric monoidal structure on $\Mod_A(\cC)$. Moreover, there is an equivalence $\CAlg(\Mod_A(\cC)) \simeq  \CAlg(\cC)_{A/}$.
        \item 
        \label{item-prop:algprl2}
        Any algebra homomorphism $f\colon A\to B$ in $\CAlg(\cC)$ induces a symmetric monoidal induction functor $-\otimes_A B\colon \Mod_A(\cC) \to \Mod_B(\cC)$ that is left adjoint to the restriction functor along $f$, and hence a morphism in $\CAlg(\PrL)$. 
        \item 
        \label{item-prop:algprl3}
        For any algebra homomorphism $A\to B$, it follows from (2) that we can view $B$  as an object in $\CAlg(\Mod_A(\cC))$. Forgetting the $A$-action induces a symmetric monoidal equivalence:
        \[
         \Mod_B(\Mod_A(\cC)) \xrightarrow{\simeq} \Mod_B(\cC)
         \]
        \item    
        \label{item-prop:algprl4}

        Any functor  $F \colon \cC \to \cD$ in $\CAlg(\PrL)$ induces a functor $\CAlg(\cC) \to \CAlg(\cD)$ on commutative algebra objects, which we will also simply denote by $F$. Moreover, it induces a functor 
\[
            \Mod_A(F) \colon \Mod_A(\cC) \to \Mod_{F(A)}(\cD)
\]
        in $\CAlg(\PrL)$.
        \item  
        \label{item-prop:algprl5}
        Any functor $F \colon \cC \to \cD$ in $\CAlg(\PrL)$ induces an equivalence in $\CAlg(\PrL)$
\[
            \cD \otimes_{\cC}(\Mod_A(\cC)) \simeq \Mod_{F(A)}(\cD),
\]
where $-\otimes_{\cC}-$ denotes the pushout in $\CAlg(\PrL)$, whose underlying presentable $\infty$-category is given by the relative tensor product in $\PrL$ \cite[Prop.~3.2.4.10]{HA}, hence the notation. 
    \end{enumerate}
\end{proposition}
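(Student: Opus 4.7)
The plan is to work through the five parts in order, primarily by assembling standard results from \cite[\S~4.5, \S~4.8]{HA}, and exhibiting the minor arguments needed to combine them.

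For \eqref{item-prop:algprl1}, I would invoke \cite[Thm.~4.5.2.1]{HA}, which equips $\Mod_A(\cC)$ with a symmetric monoidal structure having $-\otimes_A -$ as tensor product and the universal property that modules for a commutative algebra inherit a symmetric monoidal structure. To see that this is presentably symmetric monoidal, note that $\Mod_A(\cC)$ is presentable (it is a limit of presentable categories indexed by the bar construction, or alternatively a localization of the presentable category $\Mod_A^{\otimes}(\cC)$), and the relative tensor product preserves small colimits separately in each variable because $-\otimes -$ does and the relative tensor product is a geometric realization of a two-sided bar construction, see \cite[Prop.~4.4.2.8]{HA}. The identification $\CAlg(\Mod_A(\cC)) \simeq \CAlg(\cC)_{A/}$ is \cite[Cor.~3.4.1.7]{HA}.

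For \eqref{item-prop:algprl2}, the algebra homomorphism $f \colon A \to B$ exhibits $B$ as an object in $\CAlg(\Mod_A(\cC))$ via \eqref{item-prop:algprl1}, and then base change along $f$ is the symmetric monoidal induction functor $-\otimes_A B$, left adjoint to restriction along $f$; this is \cite[\S~4.5.3]{HA}. For \eqref{item-prop:algprl3}, viewing $B \in \CAlg(\Mod_A(\cC))$, the forgetful functor $\Mod_B(\Mod_A(\cC)) \to \Mod_B(\cC)$ is a symmetric monoidal equivalence by \cite[Cor.~3.4.1.9, Thm.~4.8.4.6]{HA} (or by an explicit inverse functor constructed from the relative tensor product and the monadicity of $\Mod_B$). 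Part \eqref{item-prop:algprl4} is functoriality of the module construction: since $F$ is symmetric monoidal, it carries $A \in \CAlg(\cC)$ to $F(A) \in \CAlg(\cD)$, and since $F$ preserves colimits, the induced functor $\Mod_A(F)$ preserves the relative tensor product and all small colimits, hence lies in $\CAlg(\PrL)$; see \cite[Rem.~4.5.3.2]{HA}.

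The most substantive item is \eqref{item-prop:algprl5}. The universal property of the pushout $\cD \otimes_\cC \Mod_A(\cC)$ in $\CAlg(\PrL)$ together with \eqref{item-prop:algprl4} produces a canonical comparison morphism
\[
\Phi \colon \cD \otimes_\cC \Mod_A(\cC) \longrightarrow \Mod_{F(A)}(\cD)
\]
in $\CAlg(\PrL)$. To check $\Phi$ is an equivalence, I would appeal to \cite[Thm.~4.8.4.6]{HA}, which describes the behavior of module categories under relative tensor products in $\PrL$ and identifies $\cD \otimes_\cC \Mod_A(\cC)$ with modules over the image of $A$ once $\cC \to \cD$ has been made suitably commutative; equivalently, one can verify the universal property directly by noting that both sides corepresent the functor sending $\cE \in \CAlg(\PrL)$ to the space of pairs consisting of a symmetric monoidal left adjoint $\cD \to \cE$ together with an $F(A)$-module structure on the unit. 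I expect the only subtlety here to be keeping track of the distinction between the tensor product of underlying presentable categories and the pushout in $\CAlg(\PrL)$; this is precisely the content of \cite[Prop.~3.2.4.10]{HA}, which the statement already invokes. No genuinely new computation is required.
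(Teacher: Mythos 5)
Your proposal is correct and takes essentially the same route as the paper: assemble the standard citations from \cite[\S\S~3.3--4.8]{HA} for parts (1)--(4), and for part (5) produce the comparison morphism out of the pushout via functoriality of the $\Mod$ construction, then conclude it is an equivalence by checking the underlying functor is one via \cite[Thm.~4.8.4.6]{HA}. The only minor difference is in precisely which corollaries of HA are cited for the first four parts, and that the paper makes the commuting square in $\CAlg(\PrL)$ explicit before invoking the pushout's universal property, whereas you leave it implicit; neither difference affects the substance.
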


\begin{proof}
    The first two statements follow from~\cite[Prop.~3.4.1.3, Cor. 4.2.3.7, Thm. 4.5.3.1]{HA}, the third statement follows from~\cite[Cor. 3.4.1.9]{HA}. The existence of the symmetric monoidal functor $\Mod_A(F)$ in part \eqref{item-prop:algprl4} follows from the functoriality of the $\Mod$ construction in ~\cite[\S~3.3.3]{HA}. Furthermore, $\Mod_A(F)$ preserves colimits by \cite[Cor. 4.2.3.5]{HA}. 
Functoriality of the construction of modules induces a commuting square in $\CAlg(\PrL)$
\[
        \begin{tikzcd}
            \cC \simeq \Mod_{1_{\cC}}(\cC) \ar[rr, "F \simeq \Mod_{1_\cC}(F)"] \ar[d] && \cD \simeq \Mod_{1_{\cD}}(\cD) \ar[d]    \\         \Mod_A(\cC) \ar[rr, "\Mod_A(F)"] &&  \Mod_{F(A)}(\cD)
        \end{tikzcd}
\]
and hence a morphism in $\CAlg(\PrL)$ from the pushout $\cD \otimes_{\cC} \Mod_{A}(\cC) \to \Mod_{F(A)}(\cD)$. This is an equivalence in $\CAlg(\PrL)$ because its underlying functor is one by \cite[Thm. 4.8.4.6]{HA}. 
\end{proof}
Since $\Spaces$ is the tensor unit of the symmetric monoidal structure on $\PrL$, it is initial in $\CAlg(\PrL)$ and any $\cC \in \CAlg(\PrL)$ comes equipped with a unique  symmetric monoidal left adjoint functor $ \iota_{\cC} \colon \Spaces \to \cC$.  
\begin{cor}\label{obs:algspaces}
For $\Monoid \in \CAlg(\Spaces)$, \cref{prop:algprl-new}.\eqref{item-prop:algprl5} implies that there is an equivalence in $\CAlg(\PrL)$: \[\cC \otimes \Mod_\Monoid(\Spaces)
\simeq
\cC \otimes_{\Spaces}\Mod_{\Monoid}(\Spaces)
\simeq
\Mod_{\iota_{\cC}(\Monoid)}(\cC \otimes_\Spaces \Spaces)
\simeq 
\Mod_{\iota_{\cC}(\Monoid)}(\cC).
\]
\end{cor}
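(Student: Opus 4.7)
The statement is essentially an immediate application of \cref{prop:algprl-new}.\eqref{item-prop:algprl5}, together with the fact that $\Spaces$ is the tensor unit of the symmetric monoidal structure on $\PrL$. My plan is to verify each of the three displayed equivalences in turn; no step requires any genuinely new input beyond what is already stated in the excerpt.

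First, to obtain $\cC \otimes \Mod_\Monoid(\Spaces) \simeq \cC \otimes_{\Spaces} \Mod_\Monoid(\Spaces)$, I would appeal to the general fact that for a commutative algebra $R$ in a symmetric monoidal $\infty$-category with unit $\mathbb{1}$ and for any objects $M, N$ in $\Mod_R$ with $R = \mathbb{1}$, the relative tensor product over $\mathbb{1}$ coincides with the ambient tensor product. Since $\Spaces$ is the tensor unit of $\PrL$, it is initial in $\CAlg(\PrL)$, so every presentable symmetric monoidal $\infty$-category is canonically a module over $\Spaces$ in $\CAlg(\PrL)$, and the relative tensor product $-\otimes_{\Spaces}-$ in $\CAlg(\PrL)$ recovers the ambient tensor product $-\otimes-$.

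Second, to obtain $\cC \otimes_{\Spaces} \Mod_\Monoid(\Spaces) \simeq \Mod_{\iota_\cC(\Monoid)}(\cC \otimes_\Spaces \Spaces)$, I would directly apply \cref{prop:algprl-new}.\eqref{item-prop:algprl5} to the unique morphism $\iota_\cC \colon \Spaces \to \cC$ in $\CAlg(\PrL)$ and to the algebra object $\Monoid \in \CAlg(\Spaces)$. Finally, to obtain $\Mod_{\iota_\cC(\Monoid)}(\cC \otimes_\Spaces \Spaces) \simeq \Mod_{\iota_\cC(\Monoid)}(\cC)$, I would use again that $\Spaces$ is the tensor unit of $\PrL$, so that $\cC \otimes_\Spaces \Spaces \simeq \cC$ symmetric monoidally, and then invoke the functoriality of the $\Mod$-construction (cf.\ \cref{prop:algprl-new}.\eqref{item-prop:algprl4}) to translate this equivalence into one of module $\infty$-categories.

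There is no real obstacle here: the whole point of the corollary is to extract a clean, repeatedly useful consequence of \cref{prop:algprl-new}. The only mild subtlety is interpreting $\cC \otimes \Mod_\Monoid(\Spaces)$ --- which lives in $\PrL$ --- as the pushout $\cC \otimes_\Spaces \Mod_\Monoid(\Spaces)$ in $\CAlg(\PrL)$; this is legitimate precisely because the left-hand tensor product inherits a canonical symmetric monoidal structure from its factors (both being objects of $\CAlg(\PrL)$) and $\Spaces$ is initial in $\CAlg(\PrL)$.
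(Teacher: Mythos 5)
Your proposal is correct and follows exactly the route the paper intends: the corollary's chain of equivalences is justified precisely by $\Spaces$ being the unit of $\PrL$ (so the relative tensor product over $\Spaces$ agrees with the absolute one and $\cC \otimes_{\Spaces}\Spaces \simeq \cC$), together with \cref{prop:algprl-new}.\eqref{item-prop:algprl5} applied to $\iota_{\cC}\colon \Spaces \to \cC$ and $\Monoid \in \CAlg(\Spaces)$. Your handling of the mild subtlety — interpreting $\cC \otimes \Mod_{\Monoid}(\Spaces)$ as the pushout in $\CAlg(\PrL)$, which uses that $\Mod_{\Monoid}(\Spaces) \in \CAlg(\PrL)$ by \cref{prop:algprl-new}.\eqref{item-prop:algprl1} — matches the paper's implicit argument.
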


\subsubsection{Adjoining colimits}

\begin{notation}
\label{nota:catk}
For a small set $\cK$ of simplicial sets, let $\cat^{\cK}$ denote the subcategory of $\cat$ on those small $\infty$-categories which admit colimits of diagrams indexed by elements of $\cK$, and those functors which preserve such colimits. 
\end{notation}
The $\infty$-categories $\catidem, \catprod$ and $\catrex$ from  \cref{not:smallcats} are instances of $\cat^\cK$ for $\cK$ consisting of the `walking idempotent' of \cite[\S~4.4.5]{HTT}, or the walking idempotent together with the set of finite (discrete) sets or the set of finite simplicial sets, respectively.

Just like the presheaf $\infty$-category $\Pres(\cC)$ is the free completion of a small $\infty$-category $\cC$ under small colimits, we may complete under other classes of colimits. The following combines {\cite[Lem.~4.8.4.2, Rem.~4.8.1.8]{HA} and \cite[Cor. 5.3.6.10]{HTT}}:

\begin{prop}
\label{prop:adjoiningcolims} Let $\cK$ be a small set of simplicial sets.
\begin{enumerate}
\item \label{item-prop:adjoiningcolims-1}
The $\infty$-category $\cat^{\cK}$ is presentable and admits a presentably symmetric monoidal structure, which can be characterized as follows: If $\cC, \cD \in \cat^{\cK}$, the tensor product $\cC \otimes \cD$ is equipped with a functor $\cC \times \cD \to \cC \otimes \cD$ which preserves $\cK$-colimits separately in both variables and which induces for all $\cE \in \cat^{\cK}$ an equivalence
$$\Fun^{\cK}(\cC \otimes \cD, \cE) \to \mathrm{\Fun^{\cK \times \cK}}(\cC \times \cD, \cE),$$
where $\Fun^{\cK}(\cC\otimes \cD, \cE)$ denotes the full subcategory of $\Fun(\cC \otimes \cD,\cE)$ on those functors which preserve $\cK$-colimits and where $\Fun^{\cK \times \cK}(\cC \times \cD, \cE)$ denotes the full subcategory of $\Fun(\cC \times \cD, \cE)$ on those functors which preserve $\cK$-colimits separately in both variables.
\item 
\label{item-prop:K-symmetric-monoidal}
Let $\cK'$ be a small set of simplicial sets with containing $\cK$. Then the subcategory inclusion $\cat^{\cK'} \to \cat^{\cK}$ admits a
symmetric monoidal left adjoint 
$$\Pres_{\cK}^{\cK'}\colon \cat^{\cK} \to \cat^{\cK'}$$
whose unit  $\cC \to \Pres_{\cK}^{\cK'}(\cC)$ for $\cC\in \cat^{\cK'}$  is a fully faithful functor.
\end{enumerate}
\end{prop}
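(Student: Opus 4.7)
The plan is to deduce the two parts from Lurie's framework of $\infty$-categories with prescribed colimits and the corresponding tensor product developed in \cite[\S~4.8]{HA}, together with the cocompletion results in \cite[\S~5.3]{HTT}. I would not try to reinvent the machinery but only to glue together the standard pieces in the right order.

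For part \eqref{item-prop:adjoiningcolims-1}, the first step is presentability. This is the content of \cite[Lem.~4.8.4.2]{HA}, which exhibits $\cat^{\cK}$ as an accessible localization of an appropriate slice of $\cat$ at the class of functors that force diagrams indexed by simplicial sets in $\cK$ to become colimit diagrams. Accessibility of the localizing class, plus cocompleteness of $\cat$, gives presentability. The second step is to obtain the symmetric monoidal structure: \cite[Rem.~4.8.1.8]{HA} constructs $\otimes^{\cK}$ by localizing the Cartesian symmetric monoidal structure on $\cat$ at the class of functors multilinear with respect to $\cK$-colimits; compatibility of this localization with tensor products in the sense of \cite[Prop.~4.1.7.4]{HA} is built in, which simultaneously yields the universal property characterizing $\Fun^\cK(\cC \otimes \cD, \cE)$ as the full subcategory $\Fun^{\cK \times \cK}(\cC \times \cD, \cE)$. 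Presentable symmetric monoidality is then immediate: colimit preservation in each variable follows from the mapping-space characterization combined with the fact that $\Fun^\cK(-, \cE)$ sends colimits to limits.

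For part \eqref{item-prop:K-symmetric-monoidal}, I would first observe that the inclusion $\cat^{\cK'} \hookrightarrow \cat^{\cK}$ preserves small limits (limits in both $\cat^{\cK'}$ and $\cat^{\cK}$ are computed in $\cat$, and $\cK'$-colimits in a limit are still computed pointwise) and is accessible; hence by the adjoint functor theorem \cref{adjfuncthm} it admits a left adjoint $\Pres_{\cK}^{\cK'}$. Symmetric monoidality of the left adjoint follows from the compatibility of the two successive localizations: both $\cat^{\cK}$ and $\cat^{\cK'}$ are obtained from $\cat$ by localizations compatible with $\times$, so the universal property in part \eqref{item-prop:adjoiningcolims-1} upgrades $\Pres_{\cK}^{\cK'}$ to a symmetric monoidal left adjoint.

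The main obstacle, and the step I would handle most carefully, is full faithfulness of the unit $\cC \to \Pres_{\cK}^{\cK'}(\cC)$. This is precisely the content of \cite[Cor.~5.3.6.10]{HTT}, which constructs the universal $\cK'$-cocompletion of $\cC \in \cat^{\cK}$ as the smallest full subcategory of $\Pres(\cC)$ containing the essential image of the Yoneda embedding and closed under $\cK'$-colimits and under the $\cK$-colimits already present in $\cC$. Since the Yoneda embedding is fully faithful by \cite[Prop.~5.1.3.1]{HTT}, and the construction produces exactly the left adjoint to the inclusion, full faithfulness of the unit follows. I would conclude by noting that this identification with a subcategory of presheaves is also what makes the universal property of $\Pres_{\cK}^{\cK'}$ verifiable in practice, which is how we will use it in the sequel.
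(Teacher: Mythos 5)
Your proposal uses exactly the same three references the paper cites for this proposition — \cite[Lem.~4.8.4.2, Rem.~4.8.1.8]{HA} and \cite[Cor.~5.3.6.10]{HTT} — and fleshes out how they combine in the same way. The paper gives no explicit proof beyond ``The following combines [those results]'', so your proposal is just a more detailed account of the same approach.
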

The second statement of \cref{prop:adjoiningcolims} implies that $ \cC \hookrightarrow \Pres_{\cK}^{\cK'}(\cC)$ may be thought of as a generalized Yoneda embedding: It is the free cocompletion of $\cC$ under $\cK'$-shaped colimits subject to the relation that $\cK$-shaped colimits in $\cC$ are preserved. 

\subsection{Compact generation and ind-completion}\label{subsec:compact-gen-and-ind}
\subsubsection{Compact and projectively generated \texorpdfstring{$\infty$}{infinity}-categories}
Many presentable $\infty$-categories are generated by $\omega$-compact objects, where $\omega$ is the cardinality of the natural numbers. We will henceforth refer to $\omega$-filtered diagrams simply as \emph{filtered diagrams} and to $\omega$-compact objects as \emph{compact objects}. Hence, an object $c$ of an $\infty$-category $\cC$ with filtered colimits is compact if $\Hom_{\cC}(c,-)\colon \cC \to \Spaces$ preserves filtered colimits.

Similarly, we recall the following definitions:
\begin{definition}\label{def:proj-and-cmpt-proj}
    \begin{enumerate}
        \item An object $c$ of an $\infty$-category $\cC$ with geometric realizations (i.e. colimits indexed by $\Delta^{\op}$) is called \emph{projective} \cite[Def. 5.5.8.18]{HTT} if $\Hom_{\cC}(c,-) \colon \cC \to \Spaces$ preserves geometric realizations.
        \item An object $c$ of an $\infty$-category $\cC$ with sifted colimits ~\cite[Def. 5.5.8.1]{HTT} is called \emph{compact-projective} if $\Hom_{\cC}(c,-) \colon \cC \to \Spaces$ preserves sifted colimits ~\cite[Rem. 5.5.8.20]{HTT}.
    \end{enumerate}
\end{definition}

\begin{observation}
    Since sifted colimits are generated by filtered colimits and geometric realizations ~\cite[Cor. 5.5.8.17]{HTT}, an object is compact-projective if and only if it is compact and projective.
\end{observation}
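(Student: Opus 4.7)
The plan is to prove both implications directly using the cited Corollary 5.5.8.17 of \cite{HTT}, which asserts that the class of sifted colimits is generated by filtered colimits and colimits of simplicial objects (geometric realizations), in the sense that an $\infty$-category admits sifted colimits iff it admits both filtered colimits and geometric realizations, and a functor between such $\infty$-categories preserves sifted colimits iff it preserves filtered colimits and geometric realizations.

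For the forward direction, suppose $c \in \cC$ is compact-projective, so $\Hom_{\cC}(c,-) \colon \cC \to \Spaces$ preserves sifted colimits. Both filtered diagrams and $\Delta^{\op}$ are sifted (see \cite[Ex.~5.5.8.3]{HTT}), so $\Hom_{\cC}(c,-)$ in particular preserves filtered colimits and geometric realizations. Hence $c$ is both compact and projective in the senses of \cref{def:proj-and-cmpt-proj}.

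For the reverse direction, assume $c$ is compact and projective, so $\Hom_{\cC}(c,-)$ preserves filtered colimits and geometric realizations. Since $\cC$ admits sifted colimits by assumption, and since $\Spaces$ admits all small colimits and in particular sifted ones, we may directly apply \cite[Cor.~5.5.8.17]{HTT} to conclude that $\Hom_{\cC}(c,-)$ preserves sifted colimits; that is, $c$ is compact-projective.

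The argument is entirely formal and the only substantive input is the cited corollary from \cite{HTT}; no obstacle is anticipated, and the statement is appropriately labelled as an observation.
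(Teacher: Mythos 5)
Your argument is correct and is exactly the one-line justification the paper intends: the forward direction because filtered categories and $\Delta^{\op}$ are sifted, and the reverse by applying \cite[Cor.~5.5.8.17]{HTT} to the functor $\Hom_{\cC}(c,-)$. Nothing further is needed.
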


We refer to \cref{def:compact-1-generation} and \cref{exm:compact-1-proj-abelian} for a comparison with classical notions of compactness and projectivity in ordinary (abelian) categories.

\begin{definition}
    \label{def:cpgen} We will use the following terminology.
\begin{enumerate}
\item A \emph{compactly generated} $\infty$-category is an $\infty$-category with small colimits, for which there exists a small set of compact objects which generates $\cC$ under small colimits. 

\item A \emph{projectively generated} $\infty$-category is an $\infty$-category with small colimits, for which there exists a small set of compact-projective objects which generates $\cC$ under small colimits.
\end{enumerate}
\end{definition}
\begin{nota}
Let $\PrLc$ (resp. $\PrLcp$) denote the subcategory of $\PrL$ on the compactly (resp. projectively) generated presentable $\infty$-categories and the cocontinuous functors which preserve compact (resp. compact-projective) objects.
\end{nota}

\begin{lemma}\label{lem:PrLcpsubcat} Consider an adjunction   between $\infty$-categories
\[\begin{tikzcd}[column sep=1.5cm]
\cC
\arrow[yshift=0.9ex]{r}{L}
\arrow[leftarrow, yshift=-0.9ex]{r}[yshift=-0.2ex]{\bot}[swap]{R}
&
\cD
\end{tikzcd}.
\]
\begin{enumerate}
\item 
\label{item-lem:compact-generated-left-adjoint-and-filter-right}
If $\cC$ is compactly generated and $\cD$ has filtered colimits, then the left adjoint $L$ preserves compact objects if and only if the right adjoint $R$ preserves filtered colimits.
\item 
\label{item-lem:projective-generated-left-adjoint-and-sifted-right}
If $\cC$ is projectively generated and $\cD$ has sifted colimits, then the left adjoint $L$ preserves compact-projective objects if and only if the right adjoint $R$ preserves sifted colimits.
\end{enumerate}
\end{lemma}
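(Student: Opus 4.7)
The plan is to prove both parts by the same argument, differing only in the shape of colimits (filtered versus sifted) and the class of generators (compact versus compact-projective). The two key ingredients are the adjunction equivalence $\Hom_{\cD}(L(c), d) \simeq \Hom_{\cC}(c, R(d))$, natural in both variables, and the fact that in a compactly (resp.\ projectively) generated $\infty$-category, a small set of compact (resp.\ compact-projective) generators jointly detects equivalences: a morphism $f$ in $\cC$ is an equivalence iff $\Hom_{\cC}(c, f)$ is an equivalence for every $c$ in such a generating set (cf.\ \cite[Cor.~5.5.2.9]{HTT}). I fix once and for all a small set $S$ of compact (resp.\ compact-projective) generators of $\cC$.

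For the forward direction of (1), assume that $L$ preserves compact objects, and let $d_\bullet \colon I \to \cD$ be a filtered diagram. To show the canonical map $\colim_I R(d_\bullet) \to R(\colim_I d_\bullet)$ is an equivalence, it suffices to check this after applying $\Hom_{\cC}(c, -)$ for each $c \in S$. This follows from the chain
\begin{equation*}
\Hom_{\cC}(c, \colim R(d_\bullet)) \simeq \colim \Hom_{\cC}(c, R(d_\bullet)) \simeq \colim \Hom_{\cD}(L(c), d_\bullet) \simeq \Hom_{\cD}(L(c), \colim d_\bullet) \simeq \Hom_{\cC}(c, R(\colim d_\bullet)),
\end{equation*}
using successively the compactness of $c \in \cC$, the adjunction, the assumed compactness of $L(c) \in \cD$, and the adjunction once again. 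For the reverse direction of (1), assume that $R$ preserves filtered colimits. For each $c \in S$, the analogous reading of the same chain from right to left shows that $\Hom_{\cD}(L(c), -)$ preserves filtered colimits, so $L(c)$ is compact. Since compact objects in $\cD$ are closed under finite colimits and retracts, and every compact object of $\cC$ is a retract of a finite colimit of objects of $S$ (a standard consequence of compact generation, cf.\ \cite[\S5.3.5]{HTT}), and since $L$ preserves all colimits as a left adjoint, $L$ preserves compact objects.

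Part (2) is proved by the identical argument, replacing filtered colimits by sifted colimits, compact objects by compact-projective objects, and "finite colimits" by "finite coproducts" in the final density step (in a projectively generated $\infty$-category every compact-projective object is a retract of a finite coproduct of objects from a given compact-projective generating set, cf.\ \cite[Prop.~5.5.8.25 and its proof]{HTT}). There is no genuine obstacle; the only mild subtlety, common to both parts, is the density step in the reverse direction ensuring that preservation of compactness on $S$ upgrades to preservation on all compact(-projective) objects of $\cC$, and this is handled uniformly by the closure of these classes under the relevant finite colimits and retracts.
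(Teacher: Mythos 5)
Your proof is correct and follows essentially the same argument as the paper: the same two chains of hom-space equivalences obtained from the adjunction, compactness (resp.\ compact-projectivity) of objects of $\cC$, and the generation hypothesis used to detect the equivalence $\colim_I R(d_\bullet) \simeq R(\colim_I d_\bullet)$. The only difference is that in the direction where $R$ is assumed to preserve filtered (resp.\ sifted) colimits, your chain already shows $L(c)$ is compact (resp.\ compact-projective) for \emph{every} such $c \in \cC$, not just for $c \in S$, so the extra density step via retracts of finite colimits (resp.\ coproducts) of generators --- which also quietly presumes those finite colimits exist in $\cD$, though they do, being images under $L$ --- is superfluous rather than wrong.
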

\begin{proof}
We prove the first statement; the proof of the second statement is analogous. Suppose $R$ preserves filtered colimits and $c\in \cC$ is compact. Then, for every  filtered diagram $d\colon I \to \cD$ we have
\begin{gather*}
    \Hom_{\cC}(Lc, \colim_i d_i) \simeq \Hom_{\cD}(c, R\colim_i d_i) \simeq \Hom_{\cD}(c,\colim_i Rd_i)
    \\ \hspace{3cm}\simeq \colim_i \Hom_{\cD}(c, Rd_i) \simeq \colim_i \Hom_{\cD}(Lc, d_i)
\end{gather*}
and hence $Lc$ is compact. Conversely, suppose that $L$ preserves compact
objects. It follows that for a compact object $c\in \cC$ and a filtered diagram $d\colon I
\to \cD$, we have 
\begin{gather*}
\Hom_{\cC}(c, R(\colim_i d_i))\simeq \Hom_{\cD}(Lc, \colim_i d_i) \simeq \colim_i \Hom_{\cD}(Lc, d_i)
\\
 \hspace{3cm}  
 \simeq \colim_i \Hom_{\cC}(c, Rd_i)\simeq
\Hom_{\cC}(c, \colim_i Rd_i).
\end{gather*}
Since $\cC$ is compactly generated, every object in $\cC$ is a small colimit
of compact objects, and thus $\colim_i Rd_i \simeq R\colim_i d_i$.
\end{proof}

\begin{corollary}\label{cor:PrLcp-subcategory} The $\infty$-category $\PrLcp$ is a subcategory of $\PrLc$. 
\end{corollary}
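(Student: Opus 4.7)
The plan is to verify both the object-level and the morphism-level inclusions, using \Cref{lem:PrLcpsubcat} together with the observation that compact-projectivity implies compactness.

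For objects: suppose $\cC \in \PrLcp$, so there exists a small set $S$ of compact-projective objects generating $\cC$ under small colimits. Since every compact-projective object is compact (filtered colimits are a special case of sifted colimits, as noted right after \Cref{def:proj-and-cmpt-proj}), the set $S$ is also a small set of compact objects generating $\cC$ under small colimits, witnessing $\cC \in \PrLc$.

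For morphisms: let $L \colon \cC \to \cD$ be a morphism in $\PrLcp$, with right adjoint $R$. Since $\cC$ is projectively generated and $\cD$, being presentable, admits sifted colimits, \Cref{lem:PrLcpsubcat}\eqref{item-lem:projective-generated-left-adjoint-and-sifted-right} implies that $R$ preserves sifted colimits, hence in particular filtered colimits. By the object-level inclusion just established, $\cC$ is also compactly generated, so \Cref{lem:PrLcpsubcat}\eqref{item-lem:compact-generated-left-adjoint-and-filter-right} applies to conclude that $L$ preserves compact objects. Thus $L$ is a morphism in $\PrLc$, as required.

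There is no real obstacle here: the statement is essentially a direct two-step application of \Cref{lem:PrLcpsubcat} combined with the elementary fact that sifted colimits refine filtered colimits. The only minor subtlety is the order of application---one must first transfer the preservation condition across the adjunction (going from $L$ preserving compact-projectives to $R$ preserving sifted, hence filtered, colimits), and then transfer it back (going from $R$ preserving filtered colimits to $L$ preserving compacts)---but this is automatic once one has both parts of \Cref{lem:PrLcpsubcat} at hand.
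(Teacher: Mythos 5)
Your proof is correct and follows essentially the same route as the paper: compact-projectives are compact (so projectively generated implies compactly generated), and then the two directions of \Cref{lem:PrLcpsubcat} are used to pass from $L$ preserving compact-projectives to $R$ preserving sifted (hence filtered) colimits and back to $L$ preserving compacts. No gaps.
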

\begin{proof}
A projectively generated presentable $\infty$-category is also compactly
generated since compact-projective objects are in particular compact. 
Thus, we only need to show that a left adjoint
functor $L$ between projectively generated presentable $\infty$-categories, which
preserves compact-projective objects, also preserves compact objects. Indeed, by
\cref{lem:PrLcpsubcat}.\eqref{item-lem:projective-generated-left-adjoint-and-sifted-right}, $L$ has a right adjoint which preserves sifted
colimits and hence preserves filtered colimits. Applying the reverse direction of 
\cref{lem:PrLcpsubcat}.\eqref{item-lem:compact-generated-left-adjoint-and-filter-right} now shows that $L$ preserves compact objects.
\end{proof}

\begin{obs}\label{obs:fullsub-of-compact-and-compact-projective-objects}
    Let $\cC$  be a cocomplete $\infty$-category. 
    \begin{enumerate}
        \item \label{item-obs:fullsub-of-compact}
        The full
        subcategory $\cC^{\mrc}$ of compact objects is closed under retracts and \emph{finite
        colimits}~\cite[Cor. 5.3.4.15 and Rem. 5.3.4.16]{HTT} and hence yields an
        object $\cC^\mrc \in \catrex$. This defines a functor $(-)^{\mrc} \colon \PrLc \to \catrex$.
        \item \label{item-obs:fullsub-of-compact-proj}
        The full subcategory $\cC^{\cp}$ of compact-projective
        objects is closed under retracts and \emph{finite coproducts}~\cite[Rem.
        5.5.8.19]{HTT} and hence defines an object $\cC^{\cp} \in \catprod$. This  defines a functor $(-)^{\cp} \colon \PrLcp \to \catprod$.
    \end{enumerate}
\end{obs}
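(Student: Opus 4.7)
The plan is to split the observation into its two constituent parts, since the arguments are strictly parallel. Each part rests on three ingredients: (i) the closure properties of the relevant full subcategory, already supplied by the citations to \cite{HTT}; (ii) essential smallness of the subcategory, needed in order to land in $\catrex$ or $\catprod$ (which consist of \emph{small} $\infty$-categories); and (iii) functoriality of the construction, which is essentially a definition-chase once morphisms in $\PrLc$ and $\PrLcp$ have been spelled out.

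For \eqref{item-obs:fullsub-of-compact}, I would first note that any cocomplete $\infty$-category is automatically idempotent complete (retracts being a particular countable sequential colimit, cf.\ \cite[\S~4.4.5]{HTT}); combined with the cited closure of $\cC^{\mrc}$ under retracts, this makes $\cC^{\mrc}$ idempotent complete in its own right. Next, I would establish essential smallness of $\cC^{\mrc}$ by invoking the canonical equivalence $\cC \simeq \Ind(\cC^{\mrc})$ available whenever $\cC$ is compactly generated (\cite[Prop.~5.5.7.10]{HTT}): since $\Ind$ is defined on small $\infty$-categories, $\cC^{\mrc}$ must be essentially small. Together with the cited closure under finite colimits, this gives $\cC^{\mrc} \in \catrex$. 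Functoriality is then immediate: a morphism $F \colon \cC \to \cD$ in $\PrLc$ is by definition a cocontinuous functor that preserves compact objects, so it restricts to $\cC^{\mrc} \to \cD^{\mrc}$; as a left adjoint, it preserves all small colimits and hence finite colimits in particular, so the restriction is a morphism in $\catrex$. Functoriality of the assignment $\cC \mapsto \cC^{\mrc}$ follows since restriction along the inclusion of a full subcategory is compatible with composition.

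For \eqref{item-obs:fullsub-of-compact-proj}, the strategy is formally identical. Essential smallness of $\cC^{\cp}$ is obtained from the equivalence $\cC \simeq \PresSigma(\cC^{\cp})$ available when $\cC$ is projectively generated (\cite[Prop.~5.5.8.22, Prop.~5.5.8.25]{HTT}), again by smallness of the input to $\PresSigma$. Idempotent completeness follows from closure under retracts as before. For functoriality, morphisms in $\PrLcp$ are by definition cocontinuous functors preserving compact-projective objects; being left adjoints they automatically preserve finite coproducts, so they restrict to morphisms in $\catprod$.

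I do not anticipate any genuine obstacle: the entire observation is a bookkeeping exercise resting on substantive HTT inputs that are explicitly cited. The only subtle point worth highlighting is that one must use a non-trivial structural theorem ($\cC \simeq \Ind(\cC^{\mrc})$, respectively $\cC \simeq \PresSigma(\cC^{\cp})$) to conclude that the relevant subcategory of a large presentable $\infty$-category is in fact essentially small, so that the functors actually land in the small-category targets $\catrex$ and $\catprod$.
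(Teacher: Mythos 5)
Your proposal is correct and follows essentially the same route as the paper, which treats this as an observation resting on exactly the HTT citations you use (closure under retracts and finite colimits, resp.\ finite coproducts) plus the definitional fact that morphisms in $\PrLc$ (resp.\ $\PrLcp$) preserve compact (resp.\ compact-projective) objects and, being left adjoints, finite colimits (resp.\ finite coproducts). The extra points you spell out --- idempotent completeness of the subcategory and essential smallness of $\cC^{\mrc}$, $\cC^{\cp}$ (standard for accessible $\infty$-categories, and implicit in the paper) --- are routine and correctly handled.
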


In \cref{prop:PrLcp} we will show that these functors are in fact equivalences.

\subsubsection{Ind-completion}\label{subsubsec:ind-completion}
The \emph{ind-completion} $\Ind(\cC)$ of a small $\infty$-category $\cC$ is
defined to be the smallest full subcategory of $\Pres(\cC)$ which contains the
image of the Yoneda embedding and is closed under filtered colimits,
\cite[Rem.~5.3.5.2, Prop.~5.3.5.3]{HTT}. Then,
$\Ind(\cC)$ has filtered colimits and the inclusion $\cC \to \Ind(\cC)$ is
characterized by the universal property that for any $\infty$-category $\cD$
with filtered colimits, it induces an equivalence
\begin{equation}
    \label{eq:YonedaOmega}
    \Fun^{\omega}(\Ind(\cC),\cD) \to \Fun(\cC,\cD),
\end{equation}
where $\Fun^{\omega}$ denotes the full subcategory of functors which preserve filtered colimits. 

If $\cC$ moreover has finite colimits, then $\Ind(\cC)$ is equivalent to the
full subcategory of $\Pres(\cC)$ on those functors $\cC^{\op} \to \Spaces$ which
send finite colimits in $\cC$ to finite limits of spaces by \cite[Cor.  5.3.5.4]{HTT}. In this case,
$\Ind(\cC)$ is presentable, the inclusion $\cC \to \Ind(\cC)$ preserves finite colimits, and for
any presentable $\infty$-category $\cD$, the induced functor 
$$\FunL(\Ind(\cC),\cD) \to \Fun^{\mathrm{rex}}(\cC, \cD)$$
is an equivalence by \cite[Cor.  5.3.5.10]{HTT}, where
$\Fun^{\mathrm{rex}}$ denotes the full subcategory of functors which preserve
finite colimits. For more details see \cite[Section 5.3]{HTT}.

Similarly, the \emph{$\PresSigma$-completion} $\PresSigma(\cC)$ of a small $\infty$-category
$\cC$ is the smallest full subcategory of $\Pres(\cC)$ that contains the image
of the Yoneda embedding and is closed under sifted colimits. The
$\infty$-category $\PresSigma(\cC)$ has sifted colimits and the inclusion $\cC
\to \PresSigma(\cC)$ is characterized by the universal property that for any
$\infty$-category $\cD$ with sifted colimits, it induces an equivalence
\[
    \Fun^{\Sigma}(\PresSigma(\cC),\cD) \to \Fun(\cC,\cD),
\]
where $\Fun^{\Sigma}$ denotes the full subcategory of functors which preserve sifted colimits, \cite[Prop.~5.5.8.15]{HTT}.

If $\cC$ moreover has finite coproducts, then $\PresSigma(\cC)$ is equivalent to
the full subcategory of $\Pres(\cC)$ on those functors $\cC^{\op} \to \Spaces$
which send finite coproducts in $\cC$ to finite products of spaces~\cite[Def.
5.5.8.8 and Rem. 5.5.8.16.(1)]{HTT}. In this case, $\PresSigma(\cC)$ is
presentable, the inclusion $\cC \to \PresSigma(\cC)$ preserves finite coproducts and for any
presentable $\infty$-category $\cD$, the induced functor
$$\FunL(\PresSigma(\cC), \cD) \to \Fun^{\sqcup}(\cC, \cD)$$ is an equivalence,
where $\Fun^{\sqcup}$ denotes the full subcategory of functors which preserve
finite coproducts~\cite[Rem. 5.5.8.16(iii)]{HTT}, see ~\cite[\S~5.5.8]{HTT}
for details.

\begin{prop}
\label{prop:PrLcp} The following hold.
 \begin{enumerate}
\item \label{item-prop:PrLcp-1}
The $\Ind$-completion restricts to an equivalence $\Ind\colon \catrex \to
\PrLc$ inverse to the functor $(-)^{\mrc}$ from \cref{obs:fullsub-of-compact-and-compact-projective-objects}.\eqref{item-obs:fullsub-of-compact}.
\item \label{item-prop:PrLcp-2}
The $\PresSigma$-completion restricts to an equivalence $\PresSigma\colon
\catprod \to \PrLcp$ inverse to the functor $(-)^{\cp}$ from \cref{obs:fullsub-of-compact-and-compact-projective-objects}.\eqref{item-obs:fullsub-of-compact-proj}.
\item \label{item-prop:PrLcp-3}
The composite $\catprod \simeq \PrLcp \to \PrLc \simeq
\catrex$ is left adjoint to the subcategory inclusion $\catrex \to \catprod$. 
\item \label{item-prop:PrLcp-4}
The $\infty$-categories $\PrLc$ and $\PrLcp$ are presentable and the
inclusion functor $\PrLcp\to \PrLc$ is cocontinuous, i.e. a morphism in $\PrL$.
\end{enumerate}
\end{prop}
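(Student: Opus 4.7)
\medskip

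The plan is to prove parts (1) and (2) as direct applications of results in \cite[\S~5.3.5, \S~5.5.7, \S~5.5.8]{HTT}, then deduce (3) from the universal properties of $\Ind$ and $\PresSigma$, and finally derive (4) by transport of structure together with (3).

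For (1), I would first observe that if $\cC \in \catrex$, then $\Ind(\cC)$ is presentable \cite[Cor.~5.3.5.4]{HTT}, and the Yoneda embedding $\cC \hookrightarrow \Ind(\cC)$ identifies $\cC$ with the essential image of the compact objects (using that $\cC$ is idempotent-complete, the compact objects of $\Ind(\cC)$ are precisely the retracts of objects in $\cC$, hence agree with $\cC$ \cite[Lem.~5.4.2.4]{HTT}). In particular, $\Ind(\cC)$ lies in $\PrLc$ and the unit $\cC \to \Ind(\cC)^{\mrc}$ is an equivalence. Conversely, for $\cD \in \PrLc$, the counit $\Ind(\cD^{\mrc}) \to \cD$ is an equivalence by \cite[Prop.~5.3.5.11, Prop.~5.5.7.8]{HTT}. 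The functoriality and adjointness are tracked using the universal property \eqref{eq:YonedaOmega}. Part (2) is entirely parallel, using $\PresSigma$ and the theory of projectively generated presentable $\infty$-categories from \cite[\S~5.5.8]{HTT}, with \cite[Prop.~5.5.8.22, Prop.~5.5.8.25]{HTT} playing the analogous role.

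For (3), I want to identify the composite $\cA \mapsto \PresSigma(\cA)^{\mrc}$ with the left adjoint $L \colon \catprod \to \catrex$ of the subcategory inclusion (which exists \emph{a priori} since $\catrex,\catprod$ are presentable; alternatively, $L$ can be constructed directly). Given $\cA \in \catprod$ and $\cB \in \catrex$, I would string together
\[
\Fun^{\mathrm{rex}}(\PresSigma(\cA)^{\mrc}, \cB)
\simeq
\FunL(\Ind(\PresSigma(\cA)^{\mrc}), \Ind(\cB))
\simeq
\FunL(\PresSigma(\cA), \Ind(\cB))
\simeq
\Fun^{\sqcup}(\cA, \Ind(\cB)),
\]
using part (1) for the first equivalence, part (2) together with the identification $\Ind(\PresSigma(\cA)^{\mrc}) \simeq \PresSigma(\cA)$ for the second, and the universal property of $\PresSigma$ for the third. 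It then remains to check that under this chain the subspace $\Fun^{\sqcup}(\cA, \cB) \subseteq \Fun^{\sqcup}(\cA, \Ind(\cB))$ corresponds precisely to the subspace of functors $\PresSigma(\cA)^{\mrc} \to \cB$, equivalently that the extension $\PresSigma(\cA) \to \Ind(\cB)$ of a coproduct-preserving $\cA \to \cB$ carries compact objects into $\cB \subseteq \Ind(\cB)$. This holds because the full subcategory of $\PresSigma(\cA)$ mapping into $\cB$ contains $\cA$ and is closed under finite colimits and retracts, hence contains all of $\PresSigma(\cA)^{\mrc}$.

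For (4), presentability of $\catrex$ and $\catprod$ is the content of \cref{prop:adjoiningcolims}.\eqref{item-prop:adjoiningcolims-1} applied to suitable small sets $\cK$; transporting along the equivalences of (1) and (2) gives presentability of $\PrLc$ and $\PrLcp$. Cocontinuity of $\PrLcp \to \PrLc$ follows because, under the equivalences of (1) and (2), it corresponds to the left adjoint $L$ of (3) and hence preserves all small colimits. I expect the main obstacle to be the subtlety in (3), namely carefully verifying that the extension of a coproduct-preserving functor to $\Ind(\cB)$ takes compact-projectives to compacts lying in $\cB$; this requires unpacking the characterization of compact objects in $\PresSigma(\cA)$ as retracts of objects built by iterated finite colimits from $\cA$ and checking stability of the ``lands in $\cB$'' condition under these operations.
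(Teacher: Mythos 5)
Your treatment of (1), (2) and (4) is essentially the paper's: (1) is the content of the cited results of Lurie, (2) is unpacked from HTT~\S 5.5.8 using the very same propositions (5.5.8.22 and 5.5.8.25), and (4) is obtained, as in the paper, by transporting presentability of $\catprod$ and $\catrex$ (from \cref{prop:adjoiningcolims}) along the equivalences and observing that the inclusion $\PrLcp \to \PrLc$ corresponds to the left adjoint of (3). The genuine difference is in (3). The paper identifies the composite $\catprod \simeq \PrLcp \to \PrLc \simeq \catrex$, $\cA \mapsto \PresSigma(\cA)^{\mrc}$, with the free cocompletion functor $\Pres_{\cK}^{\cK'}$ of \cref{prop:adjoiningcolims} (finite coproducts plus idempotents inside finite colimits plus idempotents), which is already known to be left adjoint to the forgetful functor; you instead verify the adjunction directly by computing mapping categories through the universal properties of $\Ind$ and $\PresSigma$, reducing everything to the check that the colimit-preserving extension of a coproduct-preserving functor $\cA \to \cB$ carries compact objects of $\PresSigma(\cA)$ into $\cB$, which you correctly handle via the generation of compacts under finite colimits and retracts (\cref{lem:generation}.\eqref{item-lem:generation-c}) together with closure of $\cB \subseteq \Ind(\cB)$ under these operations. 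Your route avoids the (tacit) identification $\PresSigma(\cA)^{\mrc} \simeq \Pres_{\cK}^{\cK'}(\cA)$ at the cost of this extra closure argument; the paper's route makes (3) a one-line consequence of \cref{prop:adjoiningcolims}. One local imprecision: the first link of your displayed chain, $\Fun^{\mathrm{rex}}(\PresSigma(\cA)^{\mrc},\cB) \simeq \FunL(\Ind(\PresSigma(\cA)^{\mrc}),\Ind(\cB))$, is false as written, since an arbitrary left adjoint need not preserve compacts; the right-hand side is $\Fun^{\mathrm{rex}}(\PresSigma(\cA)^{\mrc},\Ind(\cB))$. Your closing paragraph (identifying on both sides the full subcategories of functors landing in $\cB$) is exactly the needed correction, so the argument is sound once the chain is restated with target $\Ind(\cB)$ throughout (or with $\Fun^{\mathrm{L},\mrc}$ in the middle terms); you should also add a word on naturality in $\cA$ and $\cB$, e.g.\ by exhibiting the unit $\cA \to \PresSigma(\cA)^{\cp} \subseteq \PresSigma(\cA)^{\mrc}$, to conclude the adjunction rather than just an objectwise equivalence of mapping categories.
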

\begin{proof} The first statement is \cite[Lem.~5.3.2.9]{HA} for
$\kappa=\omega$, also see~\cite[Prop.~5.5.7.8]{HTT}.

To prove the second statement, we note that \cite[Cor. 5.3.6.10, Rem.
5.5.8.16]{HTT} implies that $\PresSigma(-)$ defines a functor from $\catprod$ to the
huge $\infty$-category $\widehat{\Cat}_{\infty}^{\mathrm{cocpl}}$ of large
$\infty$-categories which admit all small colimits and colimit preserving
functors. By~\cite[Prop.~5.5.8.10]{HTT},  $\PresSigma(\cC)$ is an accessible
localization of $\Pres(\cC)$ and hence is presentable, so that $\PresSigma$
factors through the full subcategory $\PrL$ of
$\widehat{\Cat}_{\infty}^{\mathrm{cocpl}}$. By~\cite[Prop.~5.5.8.22]{HTT}, every
object in the image of the Yoneda embedding $\cC \hookrightarrow
\PresSigma(\cC)$ is compact-projective, and since $\PresSigma(\cC)$ is a
localization of $\Pres(\cC)$, it is generated under small colimits by objects in
$\cC$; hence $\PresSigma(\cC)$ is projectively generated. Moreover,
by~\cite[Prop.~5.5.8.25]{HTT}, the compact-projective objects of
$\PresSigma(\cC)$ are precisely the objects in (the essential image of) $\cC$
(note: this uses that $\cC$ is idempotent complete). Therefore, for any morphism
$f\colon  \cC \to \cD$ in $\catprod$ the cocontinuous functor $\PresSigma(f)\colon
\PresSigma(\cC) \to \PresSigma(\cD)$ preserves compact-projectives; hence
$\PresSigma\colon \catprod \to \PrL$ factors through the subcategory $\PrLcp \to
\PrL$. To show that it factors as an equivalence, notice that it is fully
faithful since for $\cC, \cD \in \catprod$,  $$\Fun^{\sqcup}(\cC, \cD) \simeq
\Fun^{\sqcup}(\cC, \PresSigma(\cD)^{\cp}) \simeq\mathrm{\Fun^{L, cp}} (\PresSigma(\cC),
\PresSigma(\cD)) $$ where the first equivalence uses that $\cD \simeq
\PresSigma(\cD)^{\cp}$ and the second equivalence uses that for any presentable
$\cE$, the map $\FunL(\PresSigma(\cC), \cE) \to \Fun^{\sqcup}(\cC, \cE)$ is an
isomorphism (this follows e.g. from~\cite[Prop.~5.5.8.10]{HTT}) and the fact
that $\cC \simeq \PresSigma(\cC)^{\cp}$. Lastly, surjectivity on objects follows
since by~\cite[Prop.~5.5.8.25]{HTT} any projectively generated presentable
$\infty$-category $\cD$ is equivalent to $\PresSigma(\cC)$ where $\cC$ is the
smallest full subcategory of $\cD$ spanned by finite coproducts of objects in
the set $S$ of compact-projective generators.

The third statement follows since the induced composite $\catprod \to \catrex$
sends $\cC$ to $\PresSigma(\cC)^{\mathrm{c}}$, which in the notation
of \cref{prop:adjoiningcolims} is equivalent to $\cP_{\cK}^{\cK'}(\cC)$ for $\cK$ the
collection of finite sets together with the `walking idempotent' $\mathrm{Idem}$
of \cite[Sec. 4.4.5]{HTT} and $\cK'$ the collection of finite categories
together with $\mathrm{Idem}$. Hence, by \cref{prop:adjoiningcolims}.\eqref{item-prop:K-symmetric-monoidal} this functor
$\catprod \to \catrex$ is left adjoint  to the forgetful functor $\catrex \to
\catprod$.
The fourth statement then follows from the previous ones and \cref{prop:adjoiningcolims}.\eqref{item-prop:adjoiningcolims-1}, since $\catprod$ and $\catrex$ are presentable and since the functor $\PrLcp \to \PrLc$ is equivalent to $\catprod \to \catrex$ and hence a left adjoint. 
\end{proof}

Given a presentable $\infty$-category projectively/compactly generated by a small set $S$ of objects, then the objects in this set also generate the full subcategories $\cC^{\cp}$, or $\cC^\mrc$, respectively:
\begin{lemma}
\label{lem:generation} The following hold.
\begin{enumerate}
\item \label{item-lem:generation-cp}
Let $\cC \in \PrLcp$ and $S$ a small set of compact-projective generators. Then 
the set $S\subseteq \cC^{\cp}$ generates the full subcategory $\cC^{\cp}$ under retracts and finite coproducts. In particular, every compact-projective object in $\cC$ is a retract of a finite coproduct of objects in $S$. 
\item \label{item-lem:generation-c}
Let $\cC \in \PrLc$ and $S$ a small set of compact generators. 
 Then the set $S\subseteq \cC^{c}$ generates the full subcategory $\cC^{c}$ under retracts and finite colimits. In particular, every compact object in $\cC$ is a retract of an iterated finite colimit of objects in $S$. 
\end{enumerate}
\end{lemma}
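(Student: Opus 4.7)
The plan is to reduce both statements to the equivalences between small and presentable $\infty$-categories supplied by \cref{prop:PrLcp}, which identify $\PrLc \simeq \catrex$ via $\Ind$ and $(-)^{\mrc}$, and $\PrLcp \simeq \catprod$ via $\PresSigma$ and $(-)^{\cp}$.

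For part~\eqref{item-lem:generation-c}, I would let $\cD \subseteq \cC$ denote the smallest full subcategory containing $S$ that is closed, inside $\cC$, under retracts and finite colimits. By \cref{obs:fullsub-of-compact-and-compact-projective-objects}.\eqref{item-obs:fullsub-of-compact}, the full subcategory $\cC^{\mrc}$ is itself closed under retracts and finite colimits, so the hypothesis $S \subseteq \cC^{\mrc}$ forces $\cD \subseteq \cC^{\mrc}$. Moreover, by construction $\cD$ is idempotent complete and admits finite colimits computed in $\cC$, hence defines an object of $\catrex$, and the inclusion $\cD \hookrightarrow \cC^{\mrc}$ is a morphism in $\catrex$. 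Applying the equivalence $\Ind \colon \catrex \xra{\sim} \PrLc$ of \cref{prop:PrLcp}.\eqref{item-prop:PrLcp-1}, I obtain a fully faithful cocontinuous functor $\Ind(\cD) \hookra \Ind(\cC^{\mrc}) \simeq \cC$. Its essential image contains $S$ and is closed under all small colimits, so by the hypothesis that $S$ generates $\cC$ under small colimits, the functor is also essentially surjective and hence an equivalence. Transporting back through $(-)^{\mrc}$ identifies $\cD$ with $\cC^{\mrc}$, which is exactly the claim.

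Part~\eqref{item-lem:generation-cp} will follow by the same argument with $\Ind$ replaced by $\PresSigma$, $\catrex$ by $\catprod$, finite colimits by finite coproducts, and $(-)^{\mrc}$ by $(-)^{\cp}$, invoking \cref{prop:PrLcp}.\eqref{item-prop:PrLcp-2} and \cref{obs:fullsub-of-compact-and-compact-projective-objects}.\eqref{item-obs:fullsub-of-compact-proj} in place of their $\mrc$-counterparts.

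I do not expect a serious obstacle; the only point that requires comment is that the inclusion of a full subcategory closed under the relevant finite colimits induces a fully faithful functor after applying $\Ind$ or $\PresSigma$. This is immediate from the description of objects of either completion as presheaves on the source satisfying a finite (co)limit-preservation condition, so that the restriction functor along a fully faithful inclusion is essentially surjective on the respective subcategory.
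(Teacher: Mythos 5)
Your argument is correct, but it takes a genuinely different route from the paper: the paper disposes of part~\eqref{item-lem:generation-cp} by directly citing \cite[Prop.~5.5.8.25.(2).(iii)]{HTT} and declares the compact case \eqref{item-lem:generation-c} ``analogous,'' whereas you rederive both statements from the equivalences $\Ind\colon \catrex \simeq \PrLc$ and $\PresSigma\colon \catprod \simeq \PrLcp$ of \cref{prop:PrLcp}, by comparing $\cC$ with the completion of the closure $\cD$ of $S$ under retracts and finite (co)limits. This is internally consistent (no circularity: \cref{prop:PrLcp} and \cref{obs:fullsub-of-compact-and-compact-projective-objects} precede the lemma and do not use it), it treats the two cases uniformly instead of waving at an analogy, and it makes visible exactly which formal properties are used; the cost is that it is longer than the paper's one-line citation, and that the same external input (HTT~5.5.8.25) is still lurking in the background, since the paper's proof of \cref{prop:PrLcp}.\eqref{item-prop:PrLcp-2} invokes it. Two small points deserve tightening. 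First, your closing justification that $\Ind$ and $\PresSigma$ carry the fully faithful inclusion $\cD \hookrightarrow \cC^{\mrc}$ (resp.\ $\cD \hookrightarrow \cC^{\cp}$) to a fully faithful functor is garbled as stated; the correct reason is the standard one, e.g.\ \cite[Prop.~5.3.5.11]{HTT} for $\Ind$, or in both cases the formula $\Hom(\colim_i d_i, \colim_j d'_j) \simeq \lim_i \colim_j \Hom(d_i, d'_j)$ valid because objects of $\cD$ remain compact (resp.\ compact-projective) in the completion, combined with full faithfulness of $\cD \to \cC^{\mrc}$. Second, you should record that $\cD$ is essentially small (it sits inside the essentially small $\cC^{\mrc}$, resp.\ $\cC^{\cp}$) and idempotent complete (being closed under retracts in the idempotent complete $\cC$), so that it genuinely defines an object of $\catrex$, resp.\ $\catprod$, to which \cref{prop:PrLcp} applies. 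With those two remarks added, the argument is complete.
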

\begin{proof} 
The first statement is~\cite[Prop.~5.5.8.25.(2).(iii)]{HTT}, the proof of the
second statement is analogous. 
\end{proof}

\begin{prop}\label{prop:prlcpresentable} 
The symmetric monoidal structure of
$\PrL$ restricts to \emph{presentably} symmetric monoidal structures on the
subcategories $\PrLcp$ and $\PrLc$ of $\PrL$.

Together with the symmetric monoidal left adjoint of the forgetful functor $\catrex \to \catprod$ from \cref{prop:adjoiningcolims}.\eqref{item-prop:K-symmetric-monoidal}, the symmetric monoidal subcategory inclusion $\PrLcp \to \PrLc$ assemble into a commutative diagram in $\CAlg(\PrL)$:
    \begin{equation}\label{eq:comm-square-in-CALg-PRL}
        \begin{tikzcd} \PrLcp \arrow[r]& \PrLc\\ \catprod \arrow[u, "\PresSigma",
"\simeq"'] \arrow[r] & \catrex \arrow[u, "\Ind", "\simeq"']
\end{tikzcd}
    \end{equation}

\end{prop}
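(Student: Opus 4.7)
The plan is to transport the presentably symmetric monoidal structures on $\catrex$ and $\catprod$, furnished by \cref{prop:adjoiningcolims}.\eqref{item-prop:adjoiningcolims-1}, across the equivalences $\Ind\colon\catrex\simeq\PrLc$ and $\PresSigma\colon\catprod\simeq\PrLcp$ of \cref{prop:PrLcp}.\eqref{item-prop:PrLcp-1}-\eqref{item-prop:PrLcp-2}, and to identify the resulting structures with the restrictions of the symmetric monoidal structure on $\PrL$ from \cref{prop:prlsym}.

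The crucial step is to observe that $\Ind$ (and analogously $\PresSigma$) is symmetric monoidal as a functor into $\PrL$. Indeed, for $\cC,\cD\in\catrex$ and any $\cE\in\PrL$, chaining the defining universal properties of $-\otimes_{\PrL}-$, of $\Ind$-completion from \eqref{eq:YonedaOmega}, and of $-\otimes_{\catrex}-$ yields natural equivalences
\[\FunL(\Ind(\cC)\otimes_{\PrL}\Ind(\cD),\cE)\simeq\Fun^{\mathrm{rex}\times\mathrm{rex}}(\cC\times\cD,\cE)\simeq\FunL(\Ind(\cC\otimes_{\catrex}\cD),\cE),\]
and Yoneda then gives $\Ind(\cC)\otimes_{\PrL}\Ind(\cD)\simeq\Ind(\cC\otimes_{\catrex}\cD)$. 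In particular, tensor products in $\PrL$ of compactly generated presentable $\infty$-categories are again compactly generated. For morphisms, if $F_i\in\PrLc$ with restrictions $F_i^{\mrc}\in\catrex$ to compact objects, then $F_1\otimes_{\PrL}F_2$ is identified under the above equivalence with $\Ind(F_1^{\mrc}\otimes_{\catrex}F_2^{\mrc})$, which preserves compact objects since any $\Ind$ of a morphism in $\catrex$ does (the essential image of the Yoneda embedding consists of compact generators). Hence the symmetric monoidal structure on $\PrL$ restricts to $\PrLc$, and $\Ind\colon\catrex\to\PrLc$ becomes a symmetric monoidal equivalence, so $\PrLc$ inherits a presentably symmetric monoidal structure. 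The same argument---with $\PresSigma$ in place of $\Ind$, sifted colimits in place of filtered colimits, finite coproducts in place of finite colimits, and compact-projective in place of compact---handles $\PrLcp$.

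For commutativity of \eqref{eq:comm-square-in-CALg-PRL} in $\CAlg(\PrL)$, the inclusion $\PrLcp\hookrightarrow\PrLc$ is symmetric monoidal because both sides inherit their structure from $\PrL$, and the bottom arrow $\catprod\to\catrex$ is symmetric monoidal by \cref{prop:adjoiningcolims}.\eqref{item-prop:K-symmetric-monoidal}; so both composites $\catprod\to\PrLc$ are symmetric monoidal left adjoints. They agree on underlying $\infty$-categories by \cref{prop:PrLcp}.\eqref{item-prop:PrLcp-3}. The main obstacle is to promote this to an equivalence in $\CAlg(\PrL)$; this follows because the symmetric monoidal structures on both composites are built from the same symmetric monoidal identifications $\Ind(\cC)\otimes_{\PrL}\Ind(\cD)\simeq\Ind(\cC\otimes_{\catrex}\cD)$ (together with their $\PresSigma$-analogues) established above, applied naturally in $\cC,\cD$ via the underlying equivalence of \cref{prop:PrLcp}.\eqref{item-prop:PrLcp-3}, which upgrades the commutativity of underlying functors to commutativity as symmetric monoidal left adjoints.
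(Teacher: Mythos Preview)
Your approach is essentially the same as the paper's: transport the presentably symmetric monoidal structures from $\catprod$ and $\catrex$ across the equivalences $\PresSigma$ and $\Ind$. Where you give a direct Yoneda argument that $\Ind\colon\catrex\to\PrL$ is symmetric monoidal, the paper simply cites \cite[Lem.~5.3.2.11]{HA}; your argument is a correct reproof of that lemma.

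The organizational difference matters for your final paragraph, though. The paper first transports the \emph{entire} square from the known morphism $\catprod\to\catrex$ in $\CAlg(\PrL)$ using \cref{prop:PrLcp}.\eqref{item-prop:PrLcp-3}, so commutativity in $\CAlg(\PrL)$ holds by construction; only afterward does it verify that the transported structure on $\PrLc$ agrees with the restriction from $\PrL$ (via the HA citation), and then $\PrLcp\to\PrL$ is symmetric monoidal as the composite $\PrLcp\to\PrLc\to\PrL$. You instead establish compatibility with $\PrL$ first, separately for $\PrLc$ and $\PrLcp$, and are then left to argue that the square commutes in $\CAlg(\PrL)$. Your justification---that the coherence data are ``built from the same symmetric monoidal identifications\ldots applied naturally''---is not a proof: agreement of underlying functors does not in general force agreement as symmetric monoidal functors, and you have not exhibited the required homotopy between the two monoidal structures on the composite $\catprod\to\PrLc$. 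This can likely be completed by carefully tracing universal properties, but the paper's ordering sidesteps the issue entirely.
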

\begin{proof}
Recall from \cref{prop:adjoiningcolims} the functor $\catprod \to \catrex$ in $\CAlg(\PrL)$  which is left adjoint to the forgetful functor. By \cref{prop:PrLcp}.\eqref{item-prop:PrLcp-3},  this functor is equivalent to the composite $
\catprod \simeq \PrLcp \to \PrLc \simeq \catrex$ for the subcategory inclusion $\PrLcp \to \PrLc$. Hence, this induces presentably symmetric monoidal structures on $\PrLcp$ and $\PrLc$ and on the inclusion $\PrLcp \to \PrLc$. 

It remains to show that the subcategory inclusion $\PrLc \to \PrL$ is symmetric monoidal which follows since the composite $ \catrex \xrightarrow{\Ind} \PrLc\to
\PrL$ is~\cite[Lem.~5.3.2.11]{HA}.
\end{proof}

Recall  that any presentably symmetric monoidal $\infty$-category  $\cC$ comes equipped with a unique symmetric monoidal left adjoint functor $\iota_{\cC}\colon \Spaces \to \cC$. We now show that the unique symmetric monoidal left adjoint $\iota_{\PrLcp} \colon \cS \to \PrLcp$ sends a space $X$ to the presheaf category $\Pres(X)$. 

\begin{lemma}\label{lem:unit} 
The symmetric monoidal functor $\Pres(-)\colon \cS\hookrightarrow \cat \to \PrL$ factors through the symmetric monoidal subcategory $\PrLcp \to \PrL$. The induced symmetric monoidal functor $\Spaces \to \PrLcp$ is a left adjoint, and hence is the unit $\iota_{\PrLcp}\colon \Spaces \to \PrLcp$ of the presentably symmetric monoidal $\infty$-category $\PrLcp$.
\end{lemma}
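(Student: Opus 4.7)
The plan has three steps: (a) show that $\Pres$ factors through $\PrLcp$ as a symmetric monoidal functor $\iota\colon \cS \to \PrLcp$; (b) show that $\iota$ is cocontinuous, hence a left adjoint; and (c) identify $\iota$ with the unit $\iota_{\PrLcp}$ using the initiality of $\cS$.

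For step (a), let $X$ be a space. Then $\Pres(X) = \Fun(X^{\op}, \cS) \simeq \Fun(X, \cS)$ has pointwise colimits, so evaluation at any $x \in X$ is cocontinuous. By the Yoneda lemma this functor is corepresented by the representable $y(x)$, so each $y(x)$ is compact-projective. The representables generate $\Pres(X)$ under small colimits by Yoneda density, hence $\Pres(X) \in \PrLcp$. For a morphism $f\colon X \to Y$ in $\cS$, the left adjoint $\Pres(f) = f_!$ sends $y(x)$ to $y(f(x))$, which is compact-projective; by \cref{lem:generation}.\eqref{item-lem:generation-cp} every compact-projective in $\Pres(X)$ is a retract of a finite coproduct of representables, so $\Pres(f)$ preserves compact-projectives (using that it preserves retracts and finite coproducts). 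Hence we obtain the required factorization; symmetric monoidality of $\iota$ follows from that of the inclusion $\PrLcp \to \PrL$ via \cref{prop:prlcpresentable}.

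For step (b), the inclusion $i\colon \PrLcp \hookrightarrow \PrL$ is cocontinuous by \cref{prop:PrLcp}.\eqref{item-prop:PrLcp-4}, and I claim it is also conservative: any equivalence in $\PrL$ between two objects of $\PrLcp$ has a cocontinuous inverse which preserves compact-projectives (since compact-projectivity is invariant under equivalence of $\infty$-categories), hence itself lies in $\PrLcp$. A cocontinuous conservative functor reflects colimits---given a cocone in $\PrLcp$ whose $i$-image is a colimit cocone, compare it with the honest colimit cocone in $\PrLcp$; both map to colimits in $\PrL$, so the comparison of cone points is an equivalence in $\PrL$ and hence in $\PrLcp$ by conservativity. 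Therefore cocontinuity of $\iota$ follows from cocontinuity of the composite $\Pres = i \circ \iota\colon \cS \to \PrL$, which is the unit $\iota_{\PrL}$. The adjoint functor theorem \cref{adjfuncthm}, together with the presentability of $\PrLcp$ from \cref{prop:PrLcp}.\eqref{item-prop:PrLcp-4}, then produces a right adjoint.

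Step (c) is immediate: $\iota$ is a symmetric monoidal left adjoint from the tensor unit $\cS$ of $\PrL$ to $\PrLcp$, so by the initiality of $\cS$ in $\CAlg(\PrL)$ it must agree with the unit $\iota_{\PrLcp}$. The principal obstacle in this plan is step (b): one must verify that the non-full subcategory inclusion $\PrLcp \hookrightarrow \PrL$ reflects colimits, which reduces to the conservativity check above.
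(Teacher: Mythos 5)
Your overall strategy is sound and genuinely different from the paper's. Step (a) is fine: the paper argues for arbitrary small $\cC$ (not just spaces) that the representables in $\Pres(\cC)$ are tiny, that $\Pres(F)$ preserves tiny objects, and then runs the adjunction argument of \cref{cor:PrLcp-subcategory} via \cref{lem:PrLcpsubcat}; your argument via \cref{lem:generation}.\eqref{item-lem:generation-cp} (representables go to representables, and every compact-projective is a retract of a finite coproduct of representables) is an equally valid, more hands-on alternative. The real divergence is in the adjointness step: the paper never needs to know anything about colimits in $\PrL$ --- it composes $\cat\to\PrLcp$ with the equivalence $(-)^{\cp}\colon\PrLcp\simeq\catprod$ of \cref{prop:PrLcp} and identifies the result with the known left adjoint $\cat\to\catprod$ of the forgetful functor, then precomposes with the left adjoint $\cS\hookrightarrow\cat$. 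You instead prove that $\PrLcp\hookrightarrow\PrL$ is conservative, deduce that it reflects colimits, transfer cocontinuity from $\Pres|_{\cS}\colon\cS\to\PrL$, and invoke the adjoint functor theorem; the conservativity and reflection arguments themselves are correct, as is the final identification with $\iota_{\PrLcp}$ via initiality of $\cS$ in $\CAlg(\PrL)$.

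However, as written, step (b) rests on two colimit-preservation claims that are not justified. First, you cite \cref{prop:PrLcp}.\eqref{item-prop:PrLcp-4} for cocontinuity of the inclusion $\PrLcp\hookrightarrow\PrL$, but that item only asserts cocontinuity of $\PrLcp\to\PrLc$; nowhere does the paper show that $\PrLcp\to\PrL$ (or $\PrLc\to\PrL$) preserves colimits. Second, you justify cocontinuity of $\Pres|_{\cS}\colon\cS\to\PrL$ by calling it ``the unit $\iota_{\PrL}$'', but $\PrL$ is not presentable, so it is not an object of $\CAlg(\PrL)$ and the paper's unit formalism (with its automatic left adjointness) does not apply to it. Both facts are true --- one can compute colimits in $\PrL$ as limits along the diagrams of right adjoints, e.g. $\Fun(\colim_i X_i,\cS)\simeq\lim_i\Fun(X_i,\cS)$, and argue similarly for the subcategory inclusion --- but they need proofs of their own and are exactly what the paper's route avoids. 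A cleaner repair inside your framework is to prove cocontinuity of $\cS\to\PrLcp$ directly: by your retract argument a cocontinuous functor out of $\Pres(X)$ preserves compact-projectives if and only if it sends representables into $\cD^{\cp}$, so by the universal property \eqref{eq:Yoneda} one gets $\Hom_{\PrLcp}(\Pres(X),\cD)\simeq\Hom_{\cS}\bigl(X,(\cD^{\cp})^{\simeq}\bigr)$ naturally in $X$ and $\cD$, which turns colimits of spaces into limits and hence exhibits $\Pres(\colim_i X_i)$ as the colimit in $\PrLcp$; alternatively, simply read off left adjointness from the equivalence $\PrLcp\simeq\catprod$ as the paper does.
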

\begin{proof}
For any small $\infty$-category $\cC$, the presheaf category $\Pres(\cC)$ is generated by a small set of \emph{tiny} objects, i.e. objects $c\in \cC$ for which $\Hom_{\cC}(c,-)\colon \cC \to \Spaces$ preserves all small colimits; such a small set of tiny objects is for example provided by the objects of $\cC$ itself. Moreover, any functor $F\colon \cC \to \cD$ induces a left adjoint functor $\Pres(F)\colon \Pres(\cC) \to \Pres(\cD)$ which preserves tiny objects. By an argument entirely analogous to the proof of \cref{cor:PrLcp-subcategory}, it follows that $\Pres(\cC)$ is in particular projectively generated, and that $\Pres(F)$ preserves compact-projectives. Hence, the symmetric monoidal functor $\Pres\colon \cat \to \PrL$ factors through the symmetric monoidal subcategory $\PrLcp$ from \cref{prop:prlcpresentable}.
Moreover, the functor $\cat \to \PrLcp$ is a left adjoint since after composing with the equivalence $\PrLcp \simeq \catprod$ from \cref{prop:PrLcp} it becomes equivalent to the left adjoint $\cat \to \catprod$ of the forgetful functor. Since moreover $\Spaces \to \cat$ is a left adjoint, so is the composite $\Spaces \to \cat \to \PrLcp$. 
 \end{proof}

It  follows from \cref{lem:generation} that for $\cC, \cD \in \CAlg(\PrLc)$ (resp. in $\CAlg(\PrLcp)$), the compact (resp. compact-projective) objects of the tensor product $\cC \otimes \cD$ are retracts of iterated finite colimits (retracts of finite coproducts) of external tensor products of compact (resp. compact-projective) objects. 

Explicitly, a commutative algebra $\cC \in \CAlg(\PrLc)$ (and analogously for
$\PrLcp$) therefore amounts to  a presentably symmetric monoidal $\infty$-category $\cC$, whose
underlying presentable $\infty$-category is compactly generated, so that its unit is compact, and if $c,
d$ are compact objects in $\cC$, then  $c\otimes d$ is also compact.

\begin{lemma}\label{lem:Modpresentable} 
Given $\cC \in \CAlg(\PrLcp)$ and $A \in \Alg(\cC)$, the following hold: 
\begin{enumerate}
    \item \label{item-lem:Modpresentable-5} The $\infty$-category $\RMod_A(\cC)$ of right $A$-modules (see \cref{subsubsec:left-modules}) is in $\PrLcp$.
    \item \label{item-lem:Modpresentable-6} The  action functor $\cC \otimes \RMod_A(\cC) \to \RMod_A(\cC)$ is in $\PrLcp$, thus $\RMod_A(\cC) \in \Mod_{\cC}(\PrLcp)$.
    \item \label{item-lem:Modpresentable-7} If $S$ is a set of compact projective generators of $\cC$, then the free modules $\{c \otimes A_A\}_{c \in S}$ are compact projective generators of $\RMod_A(\cC)$.
    \item  \label{item-lem:Modpresentable-8} Furthermore, if $\cC \in \CAlg(\PrLcp)$, then $\Mod_A(\cC)$ together with its symmetric monoidal relative tensor product is in $\CAlg(\PrLcp)$. 
    \setcounter{listcounter}{\value{enumi}}
\end{enumerate}
Given $\cC \in \CAlg(\PrLc)$ and $A \in \Alg(\cC)$, the following hold: 
\begin{enumerate}
\setcounter{enumi}{\value{listcounter}}
        \item \label{item-lem:Modpresentable-1}The $\infty$-category $\RMod_A(\cC)$ of right $A$-modules (see \cref{subsubsec:left-modules}) is in $\PrLc$.
    \item \label{item-lem:Modpresentable-2}The action functor $\cC \otimes \RMod_A(\cC) \to \RMod_A(\cC)$ is in $\PrLc$, thus $\RMod_A(\cC) \in \Mod_{\cC}(\PrLc)$.
    \item  \label{item-lem:Modpresentable-3}If $S$ is a set of compact generators of $\cC$, then the free modules $\{c \otimes A_A\}_{c \in S}$ are compact generators for $\RMod_A(\cC)$.
    \item \label{item-lem:Modpresentable-4}Furthermore, if $A \in \CAlg(\cC)$, then $\Mod_A(\cC)$ together with its symmetric monoidal relative tensor product is in  $\CAlg(\PrLc)$. 
\end{enumerate}

 \end{lemma}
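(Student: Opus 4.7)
The plan is to prove the compactly generated case (items \eqref{item-lem:Modpresentable-1}--\eqref{item-lem:Modpresentable-4}) and the projectively generated case (items \eqref{item-lem:Modpresentable-5}--\eqref{item-lem:Modpresentable-8}) in parallel, using that the arguments differ only in replacing ``filtered'' by ``sifted'' and ``finite colimits'' by ``finite coproducts'' throughout. In what follows let $F \colon \cC \rightleftarrows \RMod_A(\cC) \colon U$ denote the standard free/forgetful adjunction, where $F(c) = c \otimes A_A$.

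For items \eqref{item-lem:Modpresentable-1}, \eqref{item-lem:Modpresentable-3}, \eqref{item-lem:Modpresentable-5}, \eqref{item-lem:Modpresentable-7}, presentability of $\RMod_A(\cC)$ is \cite[Cor.~4.2.3.7]{HA}. By \cite[Cor.~4.2.3.5]{HA} (or the analogous statement for filtered colimits), the forgetful functor $U$ is conservative and preserves sifted (resp.\! filtered) colimits. Applying \cref{lem:PrLcpsubcat} to the adjunction $F \dashv U$, it follows that $F$ preserves compact-projective (resp.\! compact) objects. Since $U$ is conservative and preserves all small colimits, if $S \subseteq \cC$ is a set of compact-projective (resp.\! compact) generators, then $\{F(c)\}_{c \in S}$ generates $\RMod_A(\cC)$ under small colimits; hence $\RMod_A(\cC) \in \PrLcp$ (resp.\! $\PrLc$) with the claimed generators.

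For items \eqref{item-lem:Modpresentable-6} and \eqref{item-lem:Modpresentable-2}, the action functor $a\colon \cC \otimes \RMod_A(\cC) \to \RMod_A(\cC)$ is cocontinuous by construction of the monoidal structure on $\PrL$. It remains to show it preserves compact-projective (resp.\! compact) objects. By \cref{lem:generation}, compact-projective objects in $\cC \otimes \RMod_A(\cC)$ are retracts of finite coproducts of external tensor products $c \boxtimes F(c')$ with $c, c' \in S$; the analogous statement with ``retracts and finite colimits'' holds in the compactly generated case. Since $a(c \boxtimes F(c')) \simeq c \otimes F(c') \simeq F(c \otimes c')$, and $c \otimes c'$ is compact-projective (resp.\! compact) in $\cC$ (as $\cC \in \CAlg(\PrLcp)$, resp.\! $\CAlg(\PrLc)$), applying $F$ yields a compact-projective (resp.\! compact) object of $\RMod_A(\cC)$. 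Closure under retracts and finite coproducts (resp.\! finite colimits) from \cref{obs:fullsub-of-compact-and-compact-projective-objects} concludes this step.

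For items \eqref{item-lem:Modpresentable-8} and \eqref{item-lem:Modpresentable-4}, \cref{prop:algprl-new}.\eqref{item-prop:algprl1} equips $\Mod_A(\cC)$ with a presentably symmetric monoidal structure with tensor product $\otimes_A$ and unit $A = F(1_{\cC})$, which is compact-projective (resp.\! compact) since $1_{\cC}$ is and $F$ preserves these objects. To promote $\Mod_A(\cC)$ to an object of $\CAlg(\PrLcp)$ (resp.\! $\CAlg(\PrLc)$), it remains to verify that $\otimes_A$ preserves compact-projective (resp.\! compact) objects in each variable. Using \cref{lem:generation} as above, it suffices to check this on free modules on generators of $\cC$, where it follows from the identity $F(c) \otimes_A F(c') \simeq F(c \otimes c')$ (this is where the main obstacle lies, and it is handled by the projection formula / standard bar complex argument, using that $F$ is symmetric monoidal when $A \in \CAlg(\cC)$) combined with the fact that $c \otimes c'$ is compact-projective (resp.\! compact) in $\cC$ and $F$ preserves these classes. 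The main technical point is thus the verification of the projection formula $F(c) \otimes_A F(c') \simeq F(c \otimes c')$; this is immediate from unpacking $F(c) \otimes_A F(c') \simeq (c \otimes A) \otimes_A (c' \otimes A) \simeq c \otimes c' \otimes A$ in $\Mod_A(\cC)$.
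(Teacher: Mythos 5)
Your argument is correct, but it is a genuinely different (more self-contained) route than the paper's: the paper simply cites \cite[Cor.~7.1.4.14]{HA} for the projectively generated statements, notes that the last item is an immediate consequence, and remarks that the compactly generated statements follow analogously (or from the proof of \cite[Lem.~5.3.2.12(3)]{HA}). What you have written out is essentially the standard argument underlying those citations, but phrased internally to the paper via \cref{lem:PrLcpsubcat}, \cref{lem:generation} and \cref{obs:fullsub-of-compact-and-compact-projective-objects}, which makes the two cases (filtered/finite colimits vs.\ sifted/finite coproducts) visibly parallel; the cost is that you must supply details Lurie has already packaged. Two small points deserve a sentence each in a polished write-up. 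First, the claim that ``$U$ conservative and colimit-preserving'' implies that the free modules $\{F(c)\}_{c\in S}$ generate $\RMod_A(\cC)$ under colimits is not a formal one-liner: the clean justification is that these hypotheses give monadicity of $F\dashv U$ (Barr--Beck--Lurie), so every module is the geometric realization of its bar resolution $|F(UF)^{\bullet}U(M)|$, whose terms are free modules and hence lie in the colimit closure of $F(S)$ because $F$ preserves colimits and $S$ generates $\cC$. Second, in the reduction ``it suffices to check $\otimes_A$ on free modules on generators'' (and likewise for the action functor), one should note that $\otimes_A$ is cocontinuous separately in each variable, so one first fixes a generator in one slot and expands the other over retracts and finite colimits (resp.\ coproducts), then repeats in the first slot, using closure of compact (resp.\ compact-projective) objects under these operations. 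With these points made explicit, the proof is complete and matches the intended statement (including reading item \eqref{item-lem:Modpresentable-8} with the hypothesis $A\in\CAlg(\cC)$, as in item \eqref{item-lem:Modpresentable-4}).
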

\begin{proof} Statements \eqref{item-lem:Modpresentable-5} to \eqref{item-lem:Modpresentable-7} are proven in~\cite[Cor. 7.1.4.14]{HA}, statement \eqref{item-lem:Modpresentable-8} is an immediate consequence.
Statements \eqref{item-lem:Modpresentable-1} to \eqref{item-lem:Modpresentable-4} can be proven analogously (and also follow from the proof of~\cite[Lem.~5.3.2.12 (3)]{HA}).

\end{proof}

\subsection{Additive and stable \texorpdfstring{$\infty$}{infinity}-categories}
\label{sec:stable}
Here, we briefly review the theory of additive and stable $\infty$-categories. 
For more details, we refer to~\cite{HA}, ~\cite{BZFN} and~\cite{MR3450758}.

\subsubsection{Definitions}

An $\infty$-category is called \emph{zero-pointed} if it has an
initial and a terminal object and if the unique morphism from the initial to the
terminal object is an isomorphism. In this
case, we call the initial/terminal object a \emph{zero object}. 
 A zero-pointed
$\infty$-category is called \emph{semi-additive} if it furthermore has finite
products and finite coproducts and if the canonical morphism $x\sqcup y \to x \times y
$ is an isomorphism. In this case, we write the product/coproduct as $x\oplus y$
and refer to it as a \emph{direct sum}. A semi-additive $\infty$-category is
called \emph{additive} if furthermore the shear map $( \pi_1, \nabla)\colon x\oplus x
\to x \oplus x$ is an isomorphism, where $\pi_1\colon x \oplus x \to x$ denotes the
projection to the first factor (using that $x \oplus x$ is a product) and $\nabla\colon x \oplus x \to x$ is the fold map (using that $x \oplus x$ is a coproduct).
A functor between
additive $\infty$-categories is called \emph{additive} if it preserves finite coproducts. We denote the $\infty$-category of additive functors
between two additive $\infty$-categories $\cA, \cB$ by $\Funadd(\cA,\cB)$.
This $\infty$-category is itself an additive $\infty$-category~\cite[Cor. 2.9]{MR3450758}. The notion of an additive $\infty$-category is a
direct generalization of the ordinary $1$-categorical notion, and indeed an ordinary $1$-category is additive in the usual sense if and only if it(s nerve) is additive in the
$\infty$-categorical sense. Conversely, a semi-additive $\infty$-category $\cC$ is
additive if and only if its homotopy category $h_1\cC$ is additive as an ordinary $1$-category \cite[Prop.~2.8]{MR3450758}.

A zero-pointed $\infty$-category is called
\emph{stable} if it admits finite colimits and if any square is a pullback
square if and only if it is a pushout square. A functor between
stable $\infty$-categories is called \emph{exact} if it preserves finite
colimits. Given two stable $\infty$-categories $\cC, \cD$, we denote the $\infty$-category of
exact functors between them by $\Funex(\cC,\cD)$. This $\infty$-category
$\Funex(\cC,\cD)$ is itself stable since it is a full subcategory of $\Fun(\cC, \cD)$ (which is stable by \cite[Prop.~1.1.3.1]{HA}) that contains the zero object and is stable under forming fibers and cofibers, as a straightforward computation shows. In what follows, we will only consider idempotent complete stable
categories.

\begin{nota} We use the following notation:
\begin{itemize}
\item $\add$ for the full $\infty$-subcategory of $\catprod$ consisting of  additive, idempotent complete,  small $\infty$-categories.
    \item 
$\st$ for the full $\infty$-subcategory of $\catrex$ consisting of  stable, idempotent
complete, small $\infty$-categories. 
\end{itemize}
\end{nota}

Since  stable $\infty$-categories are additive and exact functors preserve finite coproducts, there is a forgetful functor $\st \to \add$. 

\begin{warning}
\label{warn:idem}
 All additive and stable $\infty$-categories will be implicitly assumed to be idempotent complete. In particular, we have defined $\add$ and $\st$  as full subcategories of $\catprod$ and $\catrex$.  
\end{warning}
As in \cref{sec:presentable} (in particular \cref{prop:PrLcp}), it will be useful to characterize small additive or stable $\infty$-categories in terms of projectively resp. compactly generated presentable $\infty$-categories.

\begin{nota} Following \cref{def:cpgen}, we use the following notations:
\begin{itemize}
\item $\PrLst$ for the full subcategory of $\PrL$ on the stable, presentable $\infty$-categories and $\PrLadd$ for the full subcategory on the additive, presentable $\infty$-categories.
\item $\PrLstc$ for the full subcategory of $\PrLc$ on the stable presentable $\infty$-categories which are compactly generated as $\infty$-categories, and $\PrLaddcp$ for the full subcategory of $\PrLcp$ on the additive presentable $\infty$-categories which are projectively generated as $\infty$-categories.
\end{itemize}
\end{nota}

\begin{prop} \label{prop:indst} The following hold.
\begin{enumerate}
\item \label{item-prop:indst-1}
The equivalence $\PresSigma\colon \catprod \to \PrLcp$ restricts to an equivalence between full subcategories 
$$\PresSigma\colon\add  \xrightarrow{\simeq}  \PrLaddcp.$$
Its inverse is $(-)^{\cp}$ which takes a projectively generated additive presentable $\infty$-category $\cC$ to its full subcategory $\cC^{\cp}$ on the compact-projective objects.
\item \label{item-prop:indst-2}
The equivalence $\Ind\colon \catrex \to \PrLc$  restricts to an equivalence between  full subcategories 
$$\Ind\colon\st   \xrightarrow{\simeq}\PrLstc.$$
Its inverse is $(-)^{\mrc}$ which takes a compactly generated stable presentable $\infty$-category $\cC$ to its full subcategory $\cC^{\mrc}$ on the compact objects.
\end{enumerate}
\end{prop}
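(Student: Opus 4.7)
The plan is to prove each statement by separately verifying two inclusions: that the equivalence sends subcategory to subcategory, and similarly for the inverse functor. Since \cref{prop:PrLcp} already establishes that $\PresSigma$ and $\Ind$ are equivalences inverse to $(-)^{\cp}$ and $(-)^{\mrc}$ respectively (and each restricts to a fully faithful functor on any full subcategory), it suffices to check that these functors carry objects of the claimed source subcategories to objects of the claimed target subcategories, and vice versa.

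For part (2) (the stable case), the forward direction is essentially Lurie's \cite[Prop.~1.1.3.6 and Thm.~1.1.3.6]{HA}: if $\cC \in \st$, then $\Ind(\cC)$ is stable because $\cC \hookrightarrow \Ind(\cC)$ preserves finite colimits and $\Ind(\cC)$ is generated under filtered colimits from $\cC$; combined with \cref{prop:PrLcp}.\eqref{item-prop:PrLcp-1} saying $\Ind(\cC) \in \PrLc$, we conclude $\Ind(\cC) \in \PrLstc$. For the converse, given $\cD \in \PrLstc$, the subcategory $\cD^{\mrc}$ is closed under retracts and finite colimits by \cref{obs:fullsub-of-compact-and-compact-projective-objects}.\eqref{item-obs:fullsub-of-compact}, and inherits the zero object and the property that pushouts agree with pullbacks from $\cD$ (since finite colimits and finite limits in a stable $\infty$-category agree, and the inclusion $\cD^{\mrc} \hookrightarrow \cD$ is fully faithful and preserves finite colimits). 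Hence $\cD^{\mrc} \in \st$.

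For part (1) (the additive case), the argument is parallel but requires some additional care. For the forward direction, given $\cC \in \add$, we know $\PresSigma(\cC) \in \PrLcp$ by \cref{prop:PrLcp}.\eqref{item-prop:PrLcp-2}, so it remains to show it is additive. The Yoneda embedding $\cC \hookrightarrow \PresSigma(\cC)$ preserves finite coproducts, and $\PresSigma(\cC)$ is presentable hence has all finite products. Using the characterization that a presentable $\infty$-category with finite products is semi-additive iff every object is canonically a commutative monoid object (equivalently, by \cite[Prop.~2.8]{MR3450758}, a semi-additive $\infty$-category is additive iff its homotopy $1$-category is additive), we reduce to checking the corresponding statement on $h_1\PresSigma(\cC)$. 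Since $\PresSigma(\cC)$ is generated under sifted colimits by $\cC$, and sifted colimits preserve finite products, additivity of $\cC$ propagates: the full subcategory of $\PresSigma(\cC)$ on semi-additive/additive behavior is closed under sifted colimits and contains $\cC$, hence equals $\PresSigma(\cC)$. Conversely, if $\cD \in \PrLaddcp$ then $\cD^{\cp}$ is closed under retracts and finite coproducts by \cref{obs:fullsub-of-compact-and-compact-projective-objects}.\eqref{item-obs:fullsub-of-compact-proj}, and inherits additivity from $\cD$ as a full subcategory.

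The main technical obstacle will be the additivity step for $\PresSigma$. The cleanest route is to leverage the universal property of $\PresSigma$: the identity functor $\cC \to \cC \hookrightarrow \PresSigma(\cC)$ factors uniquely through a sifted-colimit-preserving functor, and additivity can be detected via mapping into additive targets. Alternatively one can invoke the fact that the subcategory $\PrLadd \hookrightarrow \PrL$ is reflective, with reflector compatible with the projective generation, which ensures $\PresSigma(\cC)$ lies in $\PrLadd$ as soon as $\cC$ does. Either approach avoids ad hoc verification of the shear map isomorphism in $\PresSigma(\cC)$ directly.
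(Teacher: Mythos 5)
Your overall strategy coincides with the paper's: reduce to showing that the equivalences of \cref{prop:PrLcp} carry the relevant full subcategories into each other in both directions. Part (2) and the reverse direction of part (1) (that $(-)^{\cp}$ sends $\PrLaddcp$ into $\add$) are correct and match the paper. However, there is a genuine gap in your forward direction of part (1), namely the additivity of $\PresSigma(\cC)$.

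Your ``propagation via sifted colimits'' argument is not well-formed as written: additivity is a property of an $\infty$-category, not of its individual objects, so phrases like ``the full subcategory of $\PresSigma(\cC)$ on semi-additive/additive behavior is closed under sifted colimits'' do not parse. (There is a legitimate argument lurking nearby --- showing that the comparison $x \sqcup y \to x \times y$ is an equivalence for all $x, y$ by noting that this property of a \emph{pair} of objects is closed under sifted colimits in each variable and holds for pairs from $\cC$ --- but you would also need to establish the shear-map condition, and you never carry either through.) The two alternatives you then sketch do not repair the gap: the ``identity functor $\cC \to \cC \hookrightarrow \PresSigma(\cC)$ factors through a sifted-colimit-preserving functor'' phrasing is not meaningful (a small $\cC$ does not admit all sifted colimits), and the reflectivity of $\PrLadd \hookrightarrow \PrL$ cannot be applied to $\cC$ since a small additive $\infty$-category is not an object of $\PrL$. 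You flag this step yourself as ``the main technical obstacle,'' and indeed it is not resolved.

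The paper closes the gap directly: since $\cC$ is idempotent complete with finite coproducts, $\PresSigma(\cC)$ is identified with $\Fun^{\times}(\cC^{\op}, \Spaces)$, the $\infty$-category of finite-product-preserving presheaves, and this is additive whenever $\cC$ is by~\cite[Cor. 2.9]{MR3450758}. This single citation replaces your entire chain of reductions and avoids any interaction between sifted colimits and the additivity axioms. The paper also proves only part (1) and defers part (2) to~\cite[Lem.~2.20]{BGT}, whereas you prove (2) from scratch; your argument there is fine and essentially what that reference contains.
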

\begin{proof}
We will prove part  \eqref{item-prop:indst-1}, the proof of part \eqref{item-prop:indst-2} is entirely analogous and can
for example be found in~\cite[Lem.~2.20]{BGT}. Recall from \cref{prop:PrLcp} that $\PresSigma\colon \catprod \to \PrLcp$ is an
equivalence, whose inverse is $(-)^{\cp}$. To prove statement (1), it therefore suffices to show that
the essential image of the composite $\add \hookrightarrow \catprod \simeq
\PrLcp$ is the full subcategory $\PrLaddcp$. 

If $\cC$ is a small additive $\infty$-category, then $\PresSigma(\cC) \simeq \Funadd(\cC^{\op}, \Spaces)$ is additive by~\cite[Cor. 2.9]{MR3450758}. 
On the other hand, if $\cD$ is any projectively generated additive presentable category, then the full subcategory on its compact-projective objects is closed under finite coproducts and hence is again additive. Therefore, $\cD$ is in the image of $\add \hookrightarrow \PrLcp$. 
\end{proof}

\subsubsection{Symmetric monoidal structure}

The universal example of an stable presentable $\infty$-category is
the $\infty$-category $\Spectra$ of spectra. Likewise, the universal example of an
additive presentable $\infty$-category is the $\infty$-category
 $\Spectra_{\geq 0}$ of connective spectra, equivalent to the
$\infty$-category $\mathrm{Grp}_{\mathbb{E}_{\infty}}(\Spaces)$ of grouplike
$\mathbb{E}_{\infty}$-spaces, see~\cite{MR3450758}. Both  $\Spectra$ and
$\ConnSpectra$ are \emph{idempotent algebras} in $\PrL$, i.e. commutative algebras
$A\in \CAlg(\PrL)$ so that the multiplication $A\otimes A \to A$ is an
isomorphism. It is shown in~\cite[Prop.~4.8.2.18]{HA}
and~\cite[Cor. 4.8]{MR3450758} that the full subcategories
$\Mod_{\Spectra}(\PrL)$ and $\Mod_{\ConnSpectra}(\PrL)$ of $\PrL$ are equivalent
to $\PrLst$ and $\PrLadd$, respectively. As categories of modules of a
commutative algebra, this induces symmetric monoidal structures on $\PrLst$
and $\PrLadd$ respectively by  \cref{prop:algprl-new}.\eqref{item-prop:algprl1}.

By ~\cite[Prop.~1.4.3.7]{HA}, the $\infty$-category $\Spectra$ is compactly generated (by the single object $\mathbb{S}$, the sphere spectrum).
It follows from \cref{lem:generation} that the compact objects in $\Spectra$ are finite spectra, i.e. finite colimits of the sphere spectrum (note that a retract of a finite spectrum is again finite).
However, $\Spectra$ is not projectively generated (its only projective object is the zero spectrum, cf.~\cite[Rem.~7.2.2.5]{HA}).
On the other hand, $\ConnSpectra$ is projectively generated by the sphere spectrum~\cite[Cor. 7.1.4.13]{HA}. It therefore follows from \cref{lem:generation} that the compact-projective objects in $\ConnSpectra$ are finite sums of the sphere spectrum (note that a retract of a finite sum of sphere spectra is again a finite sum of sphere spectra). 

\begin{lemma}
\label{lem:prlstviamod} The following hold. 
\begin{enumerate}
\item The equivalence $\Mod_{\ConnSpectra}(\PrL) \xrightarrow{\simeq}   \PrLadd$ restricts to an equivalence between the subcategories $\Mod_{\ConnSpectra}(\PrLcp) \xrightarrow{\simeq}   \PrLaddcp$.
\item The equivalence $\Mod_{\Spectra}(\PrL)\xrightarrow{\simeq}   \PrLst$ restricts to an equivalence between the subcategories $\Mod_{\Spectra}(\PrLc)\xrightarrow{\simeq}   \PrLstc$.
\end{enumerate}  
\end{lemma}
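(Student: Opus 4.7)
The plan is to mimic at the level of compactly (resp.\ compactly-projectively) generated categories the proof that $\Mod_{\Spectra}(\PrL)\simeq \PrLst$ uses idempotence. Recall from earlier in the paragraph that $\Spectra$ is an idempotent algebra in $\PrL$, so by \cite[Prop.~4.8.2.9]{HA}, $\Mod_{\Spectra}(\PrL)\hookrightarrow\PrL$ is the inclusion of the full subcategory on those $\cC$ for which the action $\Spectra\otimes\cC\to\cC$ is an equivalence; the same holds with $\ConnSpectra$ in place of $\Spectra$. I would lift this observation to $\PrLc$ and $\PrLcp$.

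First I would verify that $\Spectra\in\CAlg(\PrLc)$ and $\ConnSpectra\in\CAlg(\PrLcp)$. For $\Spectra$ this uses that $\mathbb{S}$ is a compact generator and that finite spectra are closed under smash product, as already noted in the text immediately before \cref{lem:prlstviamod}. For $\ConnSpectra$, $\mathbb{S}$ is a compact-projective generator and tensor products of finite coproducts of $\mathbb{S}$ are again finite coproducts of $\mathbb{S}$, hence compact-projective. By \cref{prop:prlcpresentable} the inclusions $\PrLc\hookrightarrow\PrL$ and $\PrLcp\hookrightarrow\PrL$ are symmetric monoidal; in particular they preserve equivalences, so the idempotence of $\Spectra$ in $\PrL$ (i.e.\ $\Spectra\otimes\Spectra\xrightarrow{\sim}\Spectra$) transports to idempotence in $\PrLc$, and similarly for $\ConnSpectra$ in $\PrLcp$.

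Applying \cite[Prop.~4.8.2.9]{HA} now inside $\PrLc$ and $\PrLcp$, the forgetful functors $\Mod_{\Spectra}(\PrLc)\hookrightarrow\PrLc$ and $\Mod_{\ConnSpectra}(\PrLcp)\hookrightarrow\PrLcp$ are fully faithful inclusions of full subcategories. It remains to identify their essential images. An object $\cC\in\PrLc$ lies in $\Mod_{\Spectra}(\PrLc)$ iff $\Spectra\otimes\cC\to\cC$ is an equivalence in $\PrLc$; since any equivalence in $\PrL$ between objects of $\PrLc$ is automatically an equivalence in $\PrLc$ (equivalences of underlying $\infty$-categories preserve compactness), this condition reduces to $\cC\in\Mod_{\Spectra}(\PrL)=\PrLst$. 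Thus the essential image is exactly $\PrLc\cap\PrLst=\PrLstc$. The same argument with $\ConnSpectra$, $\PrLcp$, $\PrLadd$ in place of $\Spectra$, $\PrLc$, $\PrLst$ yields $\PrLaddcp$.

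The only step requiring mild care—and what I'd call the main obstacle—is the claim that the action map's being an equivalence in $\PrLc$ (resp.\ $\PrLcp$) is the same condition as its being an equivalence in $\PrL$. This is where I would be explicit: one direction is the compatibility of the subcategory inclusion with equivalences, and the other uses that an inverse in $\PrL$ of a morphism between compactly generated (resp.\ projectively generated) categories automatically preserves compact (resp.\ compact-projective) objects and hence lives in the subcategory. Once this is settled, matching morphisms is automatic from full faithfulness of the forgetful functors on both sides, and the desired restricted equivalences follow.
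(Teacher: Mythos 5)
Your proof is correct, but it follows a genuinely different route from the paper. The paper argues directly: it unpacks $\Mod_{\ConnSpectra}(\PrLcp)$ as the subcategory of $\Mod_{\ConnSpectra}(\PrL)$ consisting of modules whose underlying $\infty$-category is projectively generated and whose action functor preserves compact-projectives (and module functors doing likewise), observes this gives a fully faithful functor to $\PrLaddcp$, and then checks essential surjectivity by verifying that for $\cC\in\PrLaddcp$ the action $\ConnSpectra\times\cC\to\cC$ sends pairs of compact-projectives to compact-projectives, which is done via the fact that every compact-projective of $\ConnSpectra$ is a retract of a finite coproduct of copies of the sphere (the paper's generation lemma). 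You instead import the idempotence of $\Spectra$ and $\ConnSpectra$ into $\PrLc$ and $\PrLcp$ using symmetric monoidality of the inclusions into $\PrL$, apply Lurie's idempotent-algebra theory internally so that both module categories become full subcategories cut out by a local condition, and then reduce that condition to the corresponding one in $\PrL$ via the observation that equivalences of $\infty$-categories automatically preserve compact (resp.\ compact-projective) objects. What your approach buys is that you never have to check the action functor on generators at the essential-image step: everything is absorbed into $\Spectra\in\CAlg(\PrLc)$, $\ConnSpectra\in\CAlg(\PrLcp)$ (which is where the generation-by-$\mathbb{S}$ fact still secretly enters) together with formal properties of equivalences; what the paper's approach buys is that it stays elementary, using only the already-established generation lemma and the explicit description of module categories, without invoking the smashing/idempotent machinery inside the subcategories. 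Two cosmetic points: the local condition characterizing the essential image for an idempotent algebra is most cleanly stated as the unit map $\cC\to\Spectra\otimes\cC$ (rather than an action map, which presupposes a module structure) being an equivalence, though your conclusion is unaffected; and you should double-check the precise numbering of the idempotent-module statement in \cite{HA} you cite.
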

\begin{proof}
We prove the first statement, the second is analogous. The $\infty$-category
$\Mod_{\ConnSpectra}(\PrLcp)$ may be understood as the subcategory of
$\Mod_{\ConnSpectra}(\PrL)$ on those presentable $\ConnSpectra$-module
$\infty$-categories $\cC$ whose underlying $\infty$-category is projectively
generated and for which the action functor $\ConnSpectra \otimes \cC \to \cC$
preserves compact-projectives, and those cocontinuous $\ConnSpectra$-module
functors $\cC \to \cD$ for which the underlying functor preserves compact
projectives. In particular, the equivalence  $\Mod_{\ConnSpectra}(\PrL) \to
\PrLadd$ restricts to a fully faithful functor $\Mod_{\Spectra_{\geq 0}}(\PrLcp)
\to \PrLaddcp$. It therefore suffices to verify that for an additive presentable
$\infty$-category $\cC$, the action $\Spectra_{\geq 0} \times \cC \to \cC$ sends
a pair of compact-projective objects $(a, b) \in \Spectra_{\geq 0}^{\cp} \times
\cC^{\cp}$ to a compact-projective of $\cC$. This follows since any compact
projective in $\ConnSpectra$ is generated under finite coproducts and retracts
by the unit object $\mathbb{S}$; see \cref{lem:generation}.\end{proof}

Using the theory of commutative algebras in presentable categories, we
immediately obtain the following stable and additive analogues of the first half of \cref{prop:prlcpresentable} concerning symmetric monoidal structures
on subcategories of $\PrL$. The passage from $\PrLaddcp \simeq \add$ to $\PrLstc
\simeq \st$ will be treated in the next section.

\begin{cor} The following hold.
\begin{enumerate}
\item  The symmetric monoidal structure of $\PrLadd$ restricts to a presentably
symmetric monoidal structure on the subcategory $\PrLaddcp$, which induces a
presentably symmetric monoidal structure on the $\infty$-category $\add$ via the
equivalence $\PresSigma\colon \add \xrightarrow{\simeq}   \PrLaddcp$.
\item The symmetric monoidal structure of $\PrLst$ restricts to a presentably
symmetric monoidal structure on the subcategory $\PrLstc \to \PrLst$, which
induces a presentably symmetric monoidal structure on the $\infty$-category
$\st$ via the equivalence $\Ind\colon \st \xrightarrow{\simeq}   \PrLstc$.
\end{enumerate}
\end{cor}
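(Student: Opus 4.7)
The strategy is to realize $\PrLaddcp$ and $\PrLstc$ as module $\infty$-categories over appropriate idempotent algebras inside the presentably symmetric monoidal $\infty$-categories $\PrLcp$ and $\PrLc$ of \cref{prop:prlcpresentable}. Concretely, I will combine the presentably symmetric monoidal structures on $\PrLcp$ and $\PrLc$ with the module formalism of \cref{prop:algprl-new} and the identifications of \cref{lem:prlstviamod}.

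The first step is to observe that $\ConnSpectra \in \CAlg(\PrLcp)$ and $\Spectra \in \CAlg(\PrLc)$. Both are presentably symmetric monoidal, and one must verify that their tensor products (the smash product) preserve compact-projective or compact objects, respectively. This reduces to the fact that $\mathbb{S} \otimes \mathbb{S} \simeq \mathbb{S}$ together with \cref{lem:generation}: the compact-projective objects of $\ConnSpectra$ are retracts of finite coproducts of $\mathbb{S}$, and the compact objects of $\Spectra$ are finite spectra, both classes manifestly closed under smash product. Applying \cref{prop:algprl-new}.\eqref{item-prop:algprl1} in $\PrLcp$ with $A = \ConnSpectra$ (resp.\! in $\PrLc$ with $A = \Spectra$) then equips $\Mod_{\ConnSpectra}(\PrLcp)$ (resp.\! $\Mod_{\Spectra}(\PrLc)$) with a presentably symmetric monoidal structure. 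Via the equivalences of \cref{lem:prlstviamod}, these transport to presentably symmetric monoidal structures on $\PrLaddcp$ and $\PrLstc$.

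Next, I need to check that these structures agree with the restrictions of the symmetric monoidal structures on $\PrLadd$ and $\PrLst$. Recall that the latter arise, by the same argument, from $\PrLadd \simeq \Mod_{\ConnSpectra}(\PrL)$ and $\PrLst \simeq \Mod_{\Spectra}(\PrL)$, with the tensor product given by relative tensor product over the respective idempotent algebras. Applying \cref{prop:algprl-new}.\eqref{item-prop:algprl4} to the symmetric monoidal subcategory inclusion $\PrLcp \hookrightarrow \PrL$ (provided by \cref{prop:prlcpresentable}) yields a symmetric monoidal functor $\Mod_{\ConnSpectra}(\PrLcp) \to \Mod_{\ConnSpectra}(\PrL)$ which under the equivalences of \cref{lem:prlstviamod} corresponds precisely to the inclusion $\PrLaddcp \hookrightarrow \PrLadd$. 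The same argument works in the stable case, confirming that the symmetric monoidal structure indeed restricts as claimed.

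Finally, the presentably symmetric monoidal structures on $\PrLaddcp$ and $\PrLstc$ are transported along the equivalences $\PresSigma \colon \add \xrightarrow{\simeq} \PrLaddcp$ and $\Ind \colon \st \xrightarrow{\simeq} \PrLstc$ from \cref{prop:indst} to give the desired structures on $\add$ and $\st$. The main technical point to watch out for is the first step---verifying that the smash product preserves the relevant generator classes so that $\ConnSpectra$ and $\Spectra$ genuinely lie in $\CAlg(\PrLcp)$ and $\CAlg(\PrLc)$ respectively; once this is secured, the remainder is a formal assembly of the module-theoretic results established earlier in the section.
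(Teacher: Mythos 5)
Your proposal is correct and follows essentially the same route as the paper: both realize $\PrLaddcp \simeq \Mod_{\ConnSpectra}(\PrLcp)$ and $\PrLstc \simeq \Mod_{\Spectra}(\PrLc)$ via \cref{lem:prlstviamod}, apply \cref{prop:algprl-new}.\eqref{item-prop:algprl1} inside the presentably symmetric monoidal $\infty$-categories of \cref{prop:prlcpresentable} to obtain the presentably symmetric monoidal structures, deduce compatibility with $\PrLadd$ and $\PrLst$ from symmetric monoidality of the inclusions $\PrLcp \to \PrL$ and $\PrLc \to \PrL$, and transport along the equivalences of \cref{prop:indst}. The only minor caveat is your use of \cref{prop:algprl-new}.\eqref{item-prop:algprl4} for the comparison step, which as stated requires the inclusion $\PrLcp \hookrightarrow \PrL$ to be a morphism in $\CAlg(\PrL)$ (i.e.\ cocontinuous), whereas the paper only needs (and only invokes) its symmetric monoidality; since $\ConnSpectra$ and $\Spectra$ are idempotent algebras this step is harmless either way.
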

\begin{proof}
Since $\PrLaddcp$ and $\PrLstc$ are module
categories by \cref{lem:prlstviamod}, they inherit via \cref{prop:algprl-new}.\eqref{item-prop:algprl1}
presentably symmetric monoidal structures from the presentably symmetric monoidal categories
$\PrLc$ and $\PrLcp$ (see \cref{prop:prlcpresentable}), respectively. Symmetric
monoidality of the functors $\PrLstc \to \PrLst$ and $\PrLaddcp \to \PrLadd$
follows from symmetric monoidality of $\PrLc \to \PrL$ and $\PrLcp \to \PrL$.
\end{proof}

Tracing through the proof, the symmetric monoidal structures on $\add$ respectively $\st$ may be characterized as follows (c.f.~\cite[Prop.~4.4]{BZFN}): 
For $\cC, \cD\in \add$ the tensor product $\cC\otimes \cD$ is equipped with a functor $\cC \times \cD \to \cC \otimes \cD$, additive in both variables, and satisfies the universal property that for any $\cE \in \add$ the induced functor 
\[\Fun^{\mathrm{add}}(\cC \otimes \cD, \cE) \to \Fun^{\mathrm{add}\times\mathrm{add}}(\cC \times \cD, \cE)\]
is an equivalence, where $ \Fun^{\mathrm{add}\times\mathrm{add}}(\cC \times \cD, \cE)$ denotes the full subcategory of $\Fun(\cC \times \cD, \cE)$ on the functors which are additive in both variables (i.e. which preserve finite coproducts separately in either variable).

For $\cC, \cD \in \st$, the tensor product $\cC \otimes \cD$ is characterized analogously in terms of functors $\cC \times \cD \to \cE$ which are exact in both variables (i.e. which preserve finite colimits separately in both variables).

\begin{warning} As in \cref{warn:idem}, the $\infty$-categories $\add$ and $\st$ are the $\infty$-categories of additive, resp. stable, \emph{idempotent complete} $\infty$-categories. In particular, the tensor product of additive/stable  idempotent complete $\infty$-categories we consider here is automatically idempotent complete. \end{warning}

\subsection{From additive to stable \texorpdfstring{$\infty$}{infinity}-categories}\label{subsec:additive-to-stable}
Given an ordinary additive $1$-category $\cA$, one may form a stable $\infty$-category $\Kb(\cA)$ of bounded (in both directions) chain complexes, chain homomorphisms, and (higher) chain homotopies between these. In this section, we review this construction and prove that it satisfies a universal property: the stable $\infty$-category $\Kb(\cA)$ is the free stable $\infty$-category on the additive category $\cA$.

\subsubsection{The \texorpdfstring{$\infty$}{infinity}-category of chain complexes}
Given an ordinary additive $1$-category $\cA$, the $\infty$-category $\Kb(\cA)$ can  be defined, see \cite[\S~1.3.1]{HA}, using the technology of \emph{dg nerves} as follows.

\begin{definition}[{\cite[Cons. 1.3.1.6 and Rem. 1.3.2.2]{HA}}]\label{def:chaincx}
For an ordinary additive $1$-category $\cA$, we let $\Kb(\cA):=N_{\mathrm{dg}}(\mathrm{Ch}^b(\cA))$ denote the dg nerve of
the dg category of bounded chain complexes in $\cA$. 
\end{definition}

Viewing $\cA$ as an additive $\infty$-category,  there is a canonical additive functor $\cA \to \Kb(\cA)$ induced from the functor that interprets objects of $\cA$ as chain complexes concentrated in degree zero.

\begin{prop} \label{prop: Kbstable} For an ordinary additive $1$-category $\cA$, the dg nerve $\Kb(\cA)= N_{\mathrm{dg}}(\mathrm{Ch}^b(\cA))$ is a \emph{stable} $\infty$-category. 
\end{prop}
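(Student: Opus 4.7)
The plan is to invoke the general fact that the dg nerve of a pretriangulated dg category is a stable $\infty$-category (cf.\ \cite[Prop.~1.3.2.10]{HA}). Once this is cited, the proof reduces to verifying that the dg category $\mathrm{Ch}^b(\cA)$ of bounded chain complexes in $\cA$ is pretriangulated in the appropriate sense.

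First I would verify the three defining conditions of pretriangulation. The zero chain complex provides a zero object in $\mathrm{Ch}^b(\cA)$. Since $\cA$ is additive, the shift functor $[\pm 1]$ on chain complexes is defined, and it preserves boundedness, so $\mathrm{Ch}^b(\cA)$ is closed under shifts. Finally, given a chain map $f \colon X \to Y$ between objects of $\mathrm{Ch}^b(\cA)$, the usual mapping cone $\mathrm{Cone}(f)$, computed degreewise in $\cA$ out of $X[1] \oplus Y$, is again bounded and thus lies in $\mathrm{Ch}^b(\cA)$. Together these exhibit $\mathrm{Ch}^b(\cA)$ as a pretriangulated dg category.

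With pretriangulation established, the cited general result yields that $\Kb(\cA) = N_{\mathrm{dg}}(\mathrm{Ch}^b(\cA))$ is zero-pointed, admits finite colimits, and has the property that pushout and pullback squares coincide. Under this identification, the cofiber of a morphism $f$ in $\Kb(\cA)$ is represented by the dg mapping cone of any chain-level lift of $f$, while the fiber is represented by its shift $\mathrm{Cone}(f)[-1]$.

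The only real content is absorbed into the general theorem about dg nerves, which we use as a black box; the main obstacle — that dg mapping cones actually compute both pushouts \emph{and} pullbacks in the dg nerve — is precisely the statement of that theorem, and the verification for $\mathrm{Ch}^b(\cA)$ amounts to the straightforward closure properties checked above.
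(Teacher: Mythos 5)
Your proof is correct, and the substantive check is the same as in the paper: a zero object, closure under shifts, and closure under mapping cones for bounded complexes. The only difference is in how the black box is packaged. The paper does not invoke a general ``dg nerve of a pretriangulated dg category is stable'' theorem, because \cite[Prop.~1.3.2.10]{HA} as literally stated concerns $N_{\mathrm{dg}}(\mathrm{Ch}(\cA))$ for an additive category $\cA$ (unbounded complexes), not arbitrary pretriangulated dg categories; so your citation is a mild extrapolation of that result rather than a direct application. The paper instead applies \cite[Prop.~1.3.2.10]{HA} verbatim to the unbounded category, and then passes to the bounded complexes as a full subcategory closed under translations and mapping cones, using \cite[Lem.~1.1.3.3]{HA} (a full subcategory of a stable $\infty$-category containing $0$ and closed under shifts and cofibers is stable) together with the identification, discussed after the proof of \cite[Prop.~1.3.2.10]{HA}, of cofibers in the dg nerve with dg mapping cones. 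Your route is fine provided you either supply a reference that actually states the pretriangulated version (e.g.\ deduce it as the paper does, or cite a source where it is proved) or reproduce the paper's two-step argument; the closure properties you verify for $\Chb{\cA}$ are exactly what either packaging needs.
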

\begin{proof}
The dg nerve of the dg category of (unbounded) chain complexes is an $\infty$-category by \cite[Prop.~1.3.1.10]{HA} and stable by \cite[Prop.~1.3.2.10]{HA}. The full dg subcategory of bounded chain complexes is closed under shifts and formation of mapping cones, and thus its dg nerve $\Kb(\cA)$ is itself a stable $\infty$-category, by \cite[Lem.~1.1.3.3]{HA} and the discussion after \cite[Proof of Prop.~1.3.2.10]{HA}.
\end{proof}

\begin{remark}\label{handover3}
By \cite[Rem.~1.3.1.11]{HA}, the homotopy 1-category $h_1\Kb(\cA)$, recalled in~\cref{subsubsec:basic}, is the chain homotopy category
$\oldKb{\cA}$ in the sense of \cref{def:classicalK}. 
Note that, unlike the notion of derived category, which can only be defined for abelian categories, the stable $\infty$-category $\Kb(\cA)$ is defined for any (possibly non-abelian) additive category $\cA$. 
\end{remark}

In \cref{cor:universalch}, we will prove that $\Kb(\cA)$ is the universal stable $\infty$-category associated to $\cA$. To do so, we express $\Kb$ in terms of the functors constructed in the previous sections.

\subsubsection{The free stable category on an additive category}
\label{sec:freestable}

Recall that $\PrLcp \to \PrLc$ is a symmetric monoidal subcategory, and hence that $\ConnSpectra\in \CAlg(\PrLcp)$ may also be considered an algebra in $\CAlg(\PrLc)$. Notice also that the full subcategory inclusion $\ConnSpectra \to \Spectra$ is symmetric monoidal, has a right adjoint (namely the  $0$-th connective cover functor $\tau_{\geq 0}$) and sends compact objects in $\ConnSpectra$ to compact objects in $\Spectra$, as the sphere spectrum compactly generates $\ConnSpectra$ and $\Spectra$.

\begin{construction}\label{const:inductionSp}
Applying \cref{prop:algprl-new}.\eqref{item-prop:algprl2} and \eqref{item-prop:algprl4} to the full subcategory inclusion $\ConnSpectra \to \Spectra$ in $\CAlg(\PrLc)$ and to the subcategory inclusion $\PrLcp \to \PrLc$ in $\CAlg(\PrL)$ (see \cref{prop:prlcpresentable}), we construct the following composite morphism in $\CAlg(\PrL)$:
\[
\PrLaddcp \simeq \Mod_{\ConnSpectra}(\PrLcp)  \xrightarrow{\Mod_{\ConnSpectra}\left( \PrLcp \to \PrLc\right)} \Mod_{\ConnSpectra}(\PrLc)  \xrightarrow{-\otimes_{\ConnSpectra} \Spectra} \Mod_{\Spectra}(\PrLc) \simeq \PrLstc
\]
\end{construction}
As $\ConnSpectra$ is an idempotent algebra in $\PrL$, the second functor $-\otimes_{\ConnSpectra} \Spectra$ here 
is equivalent to the composite 
\begin{equation}
\label{eq:relativetensorSp}
\Mod_{\ConnSpectra}(\PrLc)  \xrightarrow{\mathrm{forget}} \PrLc  \xrightarrow{-\otimes \Spectra} \Mod_{\Spectra}(\PrLc).
\end{equation}

Recall now the symmetric monoidal left adjoint functor from \cref{const:inductionSp} and the equivalences $ {\PresSigma}$ and $(-)^{\mrc}$ from \cref{prop:indst}. Then the following holds.

\begin{prop}\label{prop:Ksymmetric} The composite 
\begin{equation}
\label{eq:definition-fin}
(-)^{\fin} \colon \add \xrightarrow{\PresSigma} \PrLaddcp \xrightarrow{\mathrm{Const.}~\ref{const:inductionSp}} \PrLstc  \xrightarrow{(-)^{\mrc}} \st\end{equation}
defines a morphism in $\CAlg(\PrL)$ which is the left adjoint to the forgetful functor $\st \to \add$. \\
For $\cC \in \add$, the unit $\cC \to \cC^{\fin}$ of the adjunction is a fully faithful additive functor. 
\end{prop}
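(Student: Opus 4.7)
My approach is to address, in sequence, the three components of the statement: that $(-)^{\fin}$ is a morphism in $\CAlg(\PrL)$, its left adjointness to the forgetful functor $\st \to \add$, and the full faithfulness of the unit. The first is essentially immediate: each of the three factors is a morphism in $\CAlg(\PrL)$ by results already established, namely the equivalence $\PresSigma$ by \cref{prop:prlcpresentable} together with \cref{prop:indst}, the functor from \cref{const:inductionSp} by its very construction, and the equivalence $(-)^{\mrc} \simeq \Ind^{-1}$ again by \cref{prop:indst}.

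For the adjunction, my plan is to verify the universal mapping property $\Hom_{\st}(\cC^{\fin}, \cS) \simeq \Hom_{\add}(\cC, \cS)$ directly via a chain of natural equivalences. Starting from the left-hand side and applying the equivalence $\Ind\colon \st \xrightarrow{\sim} \PrLstc$, this becomes $\Hom_{\PrLstc}(\PresSigma(\cC) \otimes_{\ConnSpectra} \Spectra, \Ind(\cS))$. I would then apply the base-change adjunction $-\otimes_{\ConnSpectra} \Spectra \dashv \mathrm{res}$ in $\Mod_{\ConnSpectra}(\PrL)$ to rewrite this in terms of $\ConnSpectra$-linear cocontinuous functors $\PresSigma(\cC) \to \Ind(\cS)$ preserving compact objects (in order to match the $\PrLstc$-compact-preservation condition). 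The universal property of $\PresSigma(\cC)$ as the free projectively generated presentable additive $\infty$-category on $\cC$ then identifies these with additive functors $\cC \to \Ind(\cS)$ whose image lies in the compact objects $\cS \simeq \Ind(\cS)^{\mrc}$, giving $\Hom_{\add}(\cC, \cS)$ as desired. The delicate step is matching compact-preservation on both sides: I would verify that the image of $\cC \subseteq \PresSigma(\cC)^{\cp}$ under the base change $\PresSigma(\cC) \to \PresSigma(\cC) \otimes_{\ConnSpectra} \Spectra$ consists of compact objects of the target, which follows because \cref{const:inductionSp} factors through $\PrLc$ and hence preserves compact generators.

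For the full faithfulness of the unit, my plan is to use the identification $\PresSigma(\cC) \otimes_{\ConnSpectra} \Spectra \simeq \Funadd(\cC^{\op}, \Spectra)$, which follows by base change along $\ConnSpectra \to \Spectra$ from the familiar identification $\PresSigma(\cC) \simeq \Funadd(\cC^{\op}, \ConnSpectra)$ valid for additive $\cC$. Under this equivalence, the unit $\cC \to \cC^{\fin}$ corresponds on objects to the spectrum-valued additive Yoneda embedding $c \mapsto \Hom_{\cC}(-, c)$, where each connective mapping spectrum is viewed as a spectrum via the fully faithful inclusion $\ConnSpectra \hookrightarrow \Spectra$. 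Full faithfulness then follows directly from Yoneda applied in $\Funadd(\cC^{\op}, \Spectra)$ combined with the full faithfulness of $\ConnSpectra \hookrightarrow \Spectra$. The main obstacle in the overall proof is the careful bookkeeping of the various compact-preservation conditions in the adjunction step; in particular, correctly tracking how the implicit identifications between module categories (notably $\Mod_{\ConnSpectra}(\PrLcp) \hookrightarrow \Mod_{\ConnSpectra}(\PrLc)$) interact with the base-change adjunction, and confirming that the restriction-of-scalars direction transports the $\PrLstc$-compact-preservation constraint to the condition that additive functors $\cC \to \Ind(\cS)$ land in $\cS$.
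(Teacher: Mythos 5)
Your proposal is correct, but it proves the statement by a genuinely different route than the paper. For the adjunction, the paper never verifies the universal mapping property directly: it places the functor of \cref{const:inductionSp} in a commuting square in $\CAlg(\PrL)$ over the inclusion $\PrLcp \to \PrLc$, passes to right adjoints, and uses \cref{prop:PrLcp}.\eqref{item-prop:PrLcp-3} together with \cref{prop:indst} to recognize the bottom composite as the forgetful functor $\catrex \to \catprod$, so that the top right adjoint must be $\st \to \add$. For full faithfulness of the unit, the paper then invokes uniqueness of left adjoints to identify $(-)^{\fin}$ with the Elmanto--Sosnilo construction inside $\Fun^{\times}(\cC^{\op},\Spectra)$ and cites their result that the Yoneda embedding $\cC \to \Fun^{\times}(\cC^{\op},\Spectra)$ is fully faithful. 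You instead check the mapping-space equivalence $\Hom_{\st}(\cC^{\fin},\cS)\simeq \Hom_{\add}(\cC,\cS)$ by hand, using the base-change adjunction $-\otimes_{\ConnSpectra}\Spectra \dashv \mathrm{res}$ (i.e.\ \cref{prop:algprl-new}.\eqref{item-prop:algprl2}), the fact that $\ConnSpectra$-linearity is a property for additive presentables, and the universal property of $\PresSigma$; and you reprove the Elmanto--Sosnilo full-faithfulness statement in-line via the connective-spectrum-valued Yoneda embedding and full faithfulness of $\ConnSpectra \hookrightarrow \Spectra$ on functor categories. Both approaches are sound; the paper's is shorter because it delegates the identification of the right adjoint to structural results already proved and outsources the Yoneda statement to a citation, whereas yours is more self-contained and makes the unit and the identification $\cC^{\fin}\simeq$ (finite cell modules in $\Fun^{\times}(\cC^{\op},\Spectra)$) explicit, at the cost of the compactness bookkeeping you flag. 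On that point, the cleanest formulation of the transported condition is not "preserves all compact objects of $\PresSigma(\cC)$" but "sends objects of $\cC$ to compact objects of $\Ind(\cS)$"; these are in fact equivalent, because the unit $\PresSigma(\cC)\to \PresSigma(\cC)\otimes_{\ConnSpectra}\Spectra$ preserves compacts and the images of objects of $\cC$ generate the compacts of the target under finite colimits and retracts (\cref{lem:generation}), which is exactly the verification your plan calls for. You should also note that the step $\PresSigma(\cC)\otimes_{\ConnSpectra}\Spectra \simeq \Fun^{\times}(\cC^{\op},\Spectra)$ silently uses the standard identification $\PresSigma(\cC)\otimes \cD \simeq \Fun^{\times}(\cC^{\op},\cD)$ (together with the fact that product-preserving space-valued presheaves on an additive category are canonically connective-spectrum valued), which is not proved in the paper but is standard and harmless to invoke.
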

\begin{proof}
We show that the composite $(-)^{\fin}$ is indeed left adjoint to the forgetful functor. 
By construction, we have a commutative diagram in $\CAlg(\PrL)$
\[\begin{tikzcd} \PrLaddcp \arrow[rr, "-\otimes_{\ConnSpectra} \Spectra"]&& \PrLstc\\ \PrLcp \arrow[u, " - \otimes \ConnSpectra"] \arrow[rr] && \PrLc \arrow[u, "- \otimes \Spectra"]
\end{tikzcd},
\]
where the top horizontal morphism is the functor from \cref{const:inductionSp}, and the bottom horizontal functor is the subcategory inclusion (which is a symmetric monoidal left adjoint by \cref{prop:prlcpresentable}). 
Taking right adjoints, the middle square of functors in the following diagram commutes:
\[\begin{tikzcd} \eqmathbox[A]{\add} \arrow[d, hook] \ar[draw=none]{r}[sloped, auto=false]{\text{\scalebox{2}[1]{$\simeq$}}} & \eqmathbox[C]{\PrLaddcp}  \arrow[d, hook] &  \eqmathbox{\PrLstc} \arrow[d, hook] 
   \arrow[l] & \ar[draw=none]{l}[sloped, auto=false]{\text{\scalebox{2}[1]{$\simeq$}}} \eqmathbox[A]{\st} \arrow[d, hook] 
   \\
  \eqmathbox[A]{ \catprod} \ar[draw=none]{r}[sloped, auto=false]{\text{\scalebox{2}[1]{$\simeq$}}} &  \eqmathbox[C]{\PrLcp}  & \eqmathbox{\PrLc} \arrow[l] &\ar[draw=none]{l}[sloped, auto=false, shorten >=1.5ex]{\text{\scalebox{2}[1]{$\simeq$}}}  \eqmathbox[A]{\catrex} \end{tikzcd},
\]
The left and right square commute by \cref{prop:indst}. 
By \cref{prop:PrLcp}.\eqref{item-prop:PrLcp-3}, the bottom horizontal composite is  the forgetful functor $\catrex \to \catprod$. The top horizontal functor is the right adjoint to $(-)^{\fin}$ and hence agrees with the forgetful functor  $\st \to \add$.

We next prove fully faithfulness of the unit: Since both are left adjoints of the forgetful functor, the functor~\eqref{eq:definition-fin}  is equivalent to the functor $(-)^{\mathrm{fin}}\colon \add \to \st$ constructed in~\cite[Def. 2.1.17]{MR4504902}
   which sends an additive, idempotent-complete $\infty$-category $\cC$ to the smallest full stable subcategory of $\Fun^{\times}(\cC^\op, \Spectra)$ (the category of functors taking finite coproducts in $\cC$ to finite products in $\Spectra$) containing the image of the Yoneda embedding.  In~\cite[Cor. 2.1.5]{MR4504902}, it is shown that the inclusion $\cC \to \Fun^{\times}(\cC^{\op}, \Spectra)$ is fully faithful, and hence so is the inclusion $\cC \to \cC^{\fin}$. 
\end{proof}

\begin{remark} As used in the proof of \cref{prop:Ksymmetric}, the functor \eqref{eq:definition-fin}  is equivalent to the functor $(-)^{\mathrm{fin}}\colon \add \to \st$ constructed in~\cite[Def. 2.1.17]{MR4504902}
    taking an additive, idempotent-complete $\infty$-category $\cC$ to the stable, idempotent-complete $\infty$-category $\cC^{\mathrm{fin}}$ of \emph{finite cell $\cC$-modules}, explicitly defined to be the smallest full stable subcategory
    of $\Fun^{\times}(\cC^\op, \Spectra)$ (the category of functors taking finite coproducts in $\cC$ to finite products in $\Spectra$) containing the image of the Yoneda embedding. The inclusion $\cC \hookrightarrow \cC^{\fin}$ is induced by the Yoneda embedding. 
\end{remark}

\subsubsection{\texorpdfstring{$\Kb$}{Kb} as a left adjoint}
\label{sec:Kbasfreestable}

We now show that for an ordinary additive idempotent-complete $1$-category $\cA$, the universal stable $\infty$-category $\cA^{\mathrm{fin}}$ from \cref{prop:Ksymmetric} is equivalent to $\Kb(\cA)$.

Consider the symmetric monoidal left adjoint $(-)^{\mathrm{fin}} \colon \add \to \st$ of the forgetful functor from \cref{prop:Ksymmetric}.  
Categories in the image of $(-)^{\mathrm{fin}}$ carry so-called \emph{weight structures}, which were originally introduced independently by Bondarko in~\cite{MR2746283} and (under the name of co-t-structures) by Pauksztello in \cite{Pauk}, and afterwards adopted to the $\infty$-categorical setting by Elmanto and Sosnilo~\cite{MR4504902}, whose exposition we closely follow. 
\begin{remark}
For a representation theoretic point of view on (classical) weight structures in the context of Soergel bimodules we refer to \cite{MR4368481}. Soergel bimodules appear in there as Springer motives attached to Bott--Samelson resolutions of Schubert varieties in the full flag variety and form an additive idempotent complete coheart, see \cite[Ex.~2.2]{MR4368481} and compare with \cref{exm:chainweight}.  
\end{remark}

By \cite[Def.~2.2.1]{MR4504902} a \emph{weight structure} on an idempotent complete stable $\infty$-category $\cD$ is a pair $(\cD_{\leq 0}, \cD_{\geq 0})$ of two full idempotent complete subcategories fulfilling the following conditions:
    \begin{enumerate}
        \item $\Sigma \cD_{\geq 0} \subset \cD_{\geq 0}, \Sigma^{-1} \cD_{\leq 0} \subset \cD_{\leq 0}$. We write $\cD_{\geq n} = \Sigma^n \cD_{\geq 0} $, $\cD_{\leq n}  = \Sigma^n \cD_{\leq 0}$.
        \item For $x \in \cD_{\leq 0}$ and $y \in \cD_{\geq 1}$, we have
$
            \pi_0(\Hom_{\cD}(x,y)) \simeq 0.
$
        \item For any object $x$, there is a  fiber sequence          $  x_{\leq 0} \to x \to x_{\geq 1}$
        with $x_{\leq 0} \in \cD_{\leq 0}, x_{\geq 1} \in \cD_{\geq 1}$. 
    \end{enumerate}
    Note that condition (3) merely requires the existence of such a fiber
    sequence, neither is it unique nor functorially associated to $x$. A weight
    structure is called \emph{bounded} if  $\cD = \bigcup_{n}(\cD_{\geq -n} \cap \cD_{\leq n})$. For any weight structure, the
    \emph{weight heart} $\cD^{\heartsuit} \coloneqq \cD_{\geq 0} \cap \cD_{\leq 0}$ is
    additive and idempotent complete. 

 Just like t-structures, weight structures only depend on
 and may be constructed in terms of the underlying (triangulated) homotopy
 category of $\cD$.

    \begin{example}\label{exm:chainweight} Given an ordinary
    additive, idempotent complete $1$-category $\cA$, the $\infty$-category $\Kb(\cA) $
    has a canonical bounded weight structure with $\Kb(\cA)_{\geq 0}$ the (dg nerve on the) full
    subcategory of chain complexes supported in non-negative homological degrees. 
    The weight heart $\Kb(\cA)^\heartsuit \simeq \cA$ recovers the original additive
    $1$-category and its inclusion is the canonical functor $\cA \to \Kb(\cA)$. \end{example}
    
   A functor $F\colon\cC \to \cD$ between stable, idempotent-complete $\infty$-categories with weight structure is
   \emph{weight exact} if it is exact and the restriction of $F$ to the full subcategory $\cC_{\geq 0} \subseteq \cC$ factors through the full subcategory $\cD_{\geq 0} \subseteq \cD$  and the restriction of $F$ to $\cC_{\leq 0} \subseteq \cC$ factors through $\cD_{\leq 0} \subseteq \cD$. Weight
   exact functors $F \colon \cC \to \cD$ restrict to additive functors $F^{\heartsuit}\colon \cC^{\heartsuit} \to \cD^{\heartsuit}$ between the weight hearts.  

   \begin{notation}
   Let $\st^{bw}$ denote the $\infty$-category of idempotent complete stable categories equipped with bounded
   weight structures and weight exact functors. 
   \end{notation}
   A key result of~\cite{MR4504902} is that $(-)^{\mathrm{fin}}\colon\add \to \st$ factors as an equivalence $\add \to \st^{bw}$ followed by the functor $\st^{bw} \to \st$
which forgets the weight structure, see \cite[Const.~2.2.7]{MR4504902}. An inverse of this equivalence $ \add \to
\st^{bw}$ is given by the functor $(-)^{\heartsuit}\colon \st^{bw} \to \add$ taking
the weight heart, see \cite[Theorem 2.2.9]{MR4504902}. The following corollary is a consequence of this theorem:
\begin{corollary}
\label{cor:universalch} 
Let $\cA$ be an ordinary additive, idempotent-complete $1$-category. We have an equivalence of $\infty$-categories $\cA^{\mathrm{fin}} \simeq \Kb(\cA)$.
In particular, for any stable, idempotent-complete $\infty$-category $\cB$, the inclusion of degree-zero chain complexes $\cA \to \Kb(\cA)$ induces an equivalence 
\[\Funex(\Kb(\cA), \cB) \to \Funadd(\cA, \cB).\]
\end{corollary}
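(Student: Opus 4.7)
The plan is to leverage the Elmanto--Sosnilo equivalence cited just before the corollary: the functor $(-)^{\mathrm{fin}}\colon \add \to \st$ factors through an equivalence $\add \xrightarrow{\simeq} \st^{bw}$, with inverse given by the weight-heart functor $(-)^{\heartsuit}\colon \st^{bw} \to \add$. Our task then reduces to exhibiting $\Kb(\cA)$ as an object of $\st^{bw}$ whose weight heart recovers $\cA$.

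First I would verify that $\Kb(\cA)$ is a legitimate object of $\st$, i.e. that it is stable (this is \cref{prop: Kbstable}) and idempotent-complete (a standard fact for bounded chain complexes over an idempotent-complete additive category, since a retract of a bounded complex can be built degree-wise using the idempotent-completeness of $\cA$). Next, I would invoke \cref{exm:chainweight}, which supplies $\Kb(\cA)$ with a canonical \emph{bounded} weight structure whose nonnegative part consists of complexes supported in nonnegative homological degrees, and whose weight heart is canonically identified with $\cA$ via the inclusion $\cA \hookrightarrow \Kb(\cA)$ of chain complexes concentrated in degree zero. So $\Kb(\cA) \in \st^{bw}$ with $\Kb(\cA)^{\heartsuit}\simeq \cA$, and applying the inverse equivalence yields $\cA^{\mathrm{fin}} \simeq \Kb(\cA)$, compatibly with the units $\cA \hookrightarrow \cA^{\mathrm{fin}}$ and $\cA \hookrightarrow \Kb(\cA)$.

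For the second assertion, I would simply combine this identification with the adjunction $(-)^{\mathrm{fin}} \dashv \mathrm{forget}$ from \cref{prop:Ksymmetric}: for $\cB \in \st$,
\[
\Funex(\Kb(\cA),\cB) \simeq \Funex(\cA^{\mathrm{fin}},\cB) \simeq \Funadd(\cA,\cB),
\]
and the equivalence is implemented by restriction along the unit $\cA \hookrightarrow \Kb(\cA)$.

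The only mildly nontrivial step is checking that the equivalence $\cA^{\mathrm{fin}}\simeq \Kb(\cA)$ intertwines the two canonical embeddings of $\cA$ (so that the adjunction equivalence is really implemented by restriction along $\cA \hookrightarrow \Kb(\cA)$). This is essentially automatic from the fact that both units are, in the corresponding weight-structured picture, the identification of $\cA$ with the weight heart; however, one should note that the equivalence $(-)^{\mathrm{fin}} \cong \Kb(-)$ and the identification of units are naturally compatible because both are characterized by the universal property with respect to $\st^{bw}$. I expect no substantive obstacle beyond bookkeeping.
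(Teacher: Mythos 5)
Your proposal is correct and follows essentially the same route as the paper: both identify $\Kb(\cA)$ as a stable, idempotent-complete $\infty$-category carrying a bounded weight structure with weight heart $\cA$ (\cref{prop: Kbstable}, \cref{exm:chainweight}) and then invoke the Elmanto--Sosnilo characterization of $(-)^{\mathrm{fin}}$ via $\st^{bw}$, with the functorial statement coming from the adjunction in \cref{prop:Ksymmetric}. Your extra remarks on idempotent-completeness of $\Kb(\cA)$ and on compatibility of the two embeddings of $\cA$ are fine details the paper leaves implicit.
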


\begin{proof} By~\cite[Thm. 2.2.9]{MR4504902}, for any additive, idempotent complete $\infty$-category $\cA$, the $\infty$-category $\cA^{\mathrm{fin}}$ is uniquely characterized by being a stable, idempotent-complete $\infty$-category with bounded weight structure with weight heart $\cA$. 
Since for an ordinary, additive, idempotent-complete $1$-category $\cA$, the $\infty$-category  $\Kb(\cA)$ is a stable, idempotent-complete $\infty$-category with weight structure and weight heart $\cA$, see \cref{prop: Kbstable},  \cref{exm:chainweight}. The result follows. 
\end{proof}

Extending \cref{cor:universalch}, we think of $(-)^{\mathrm{fin}}:\add \to \st$
 as the correct 
generalization of $\Kb(\cA)$ from ordinary additive, idempotent-complete $1$-categories to additive, idempotent-complete $\infty$-categories $\cA$.

\begin{notation}\label{nota:K-as-fin}
    Abusing notation, we will henceforth write $\Kb(-) \coloneqq (-)^{\mathrm{fin}}\colon \add \to \st$ for the left adjoint to the forgetful functor $\st \to \add$, even when applied to additive $\infty$-categories.
\end{notation}

\subsection{\texorpdfstring{$\infty$}{infinity}-categories of graded modules}
\label{subsec:Z-graded-k-linear-modules}
\renewcommand\mod{\mathrm{mod}}

The categories appearing in this paper will not just be additive or stable, but will typically be enriched in chain complexes of $k$-modules for a commutative ring $k$, equipped with an additional $\mathbb{Z}$-grading.
In this section, we recall the necessary technical machinery to address this coherently. 
This machinery will apply more generally to  $\EE_{\infty}$-ring spectra, i.e. commutative algebra objects in $\Spectra$. In \S\ref{subsec:derived-inf-cat-graded-mod}, we relate these structures with possibly more familiar variants of derived categories.
Similar definitions are discussed in~\cite{SAG}.

\subsubsection{\texorpdfstring{$\mbbK$}{K}-modules}
\label{Kmod}
\begin{notation} For $k$ an ordinary commutative ring, we let $\mod_k$ denote the ordinary symmetric monoidal $1$-category of $k$-modules.
\end{notation}

We now discuss the $\infty$-categorical analog of $\mod_k$.  It follows from \cref{lem:Modpresentable}.\eqref{item-lem:Modpresentable-4} that for an $\EE_{\infty}$-ring spectrum $\mbbK \in \CAlg(\Spectra)$, the $\infty$-category $\Mod_{\mbbK}(\Spectra)$ is a compactly generated stable, presentably symmetric monoidal category, i.e. $\Mod_{\mbbK}(\Spectra) \in \CAlg(\PrLstc)$. 

\begin{nota}
\label{nota:modk}
 For $\mbbK \in \CAlg(\Spectra)$, we write $\Mod_{\mbbK}$ for the category of \emph{$\mbbK$-modules} $\Mod_{\mbbK}(\Spectra) \in \CAlg(\PrLstc)$ and $\Perf_{\mbbK}$ for the category of \emph{perfect $\mbbK$-modules} $\Perf_{\mbbK} \coloneqq \Mod_{\mbbK}(\Spectra)^{\mrc} \in \CAlg(\st)$. 
\end{nota}
The symmetric monoidal equivalence $\Ind\colon \st \to \PrLstc$ transports $\Perf_{\mbbK}$ to $\Mod_{\mbbK}$ and vice versa.

\begin{example}
\label{exm:kmod}
 The main application of this paper will only be concerned with
the case that $\mbbK = Hk$ is an Eilenberg-MacLane spectrum of a classical commutative ring
$k$. In this case, $\Mod_{\mbbK}$ is equivalent to the
unbounded derived $\infty$-category $\Derived(\mod_k)$ of the abelian category $\mathrm{mod}_k$ of
$k$-modules~\cite[Thm 7.1.2.13]{HA} with symmetric monoidal structure given by the derived tensor product $-\otimes^L_k-$.  The $\infty$-category $\Perf_{\mbbK}$ is equivalent to its full subcategory on the perfect chain complexes, i.e. the chain complexes quasi-isomorphic to a bounded complex of finitely generated projective $k$-modules.
  \end{example}
 Since \cref{exm:kmod} is the situation relevant to our paper, the reader can safely view $\mbbK$ as a classical ring $k$ and $\Mod_{\mbbK}$ as $\Derived(\mod_k)$. The situation of \cref{exm:kmod} will be discussed in more detail in \S\ref{subsec:derived-inf-cat-graded-mod}.

\subsubsection{Compact-projective \texorpdfstring{$\mbbK$}{K}-modules}
As above, we will be concerned with the additive variants of the notions in \S\ref{Kmod}.
Let $\mbbK$ be a connective $\EE_{\infty}$-ring spectrum, i.e. a commutative algebra $\mbbK \in \CAlg(\ConnSpectra)$. Since $\ConnSpectra\in \CAlg(\PrLaddcp)$, it follows from \cref{lem:Modpresentable} that the category $\Mod_{\mbbK}(\ConnSpectra)\in \CAlg(\PrLaddcp)$.

\begin{nota} Fix $\mbbK\in \CAlg(\ConnSpectra)$, we  write $\Mod^{\geq 0}_{\mbbK}$ for the $\infty$-category of \emph{connective $\mbbK$-modules} $\Mod_{\mbbK}(\ConnSpectra) \in \CAlg(\PrLaddcp)$ and $\CProj_{\mbbK}$ for the category of \emph{compact-projective $\mbbK$-modules} $\CProj_{\mbbK} \coloneqq \Mod_{\mbbK}(\ConnSpectra)^{\cp} \in \CAlg(\add)$. 
\end{nota}
Note that $\Mod^{\geq 0}_{\mbbK}$ is a full subcategory of $\Mod_{\mbbK}$.

\begin{example}\label{exm:connected-modk} For $\mbbK = Hk$ an Eilenberg-MacLane spectrum of a classical commutative ring $k$, the $\infty$-category $\Mod^{\geq 0}_{Hk}$ is equivalent to the full subcategory $\Derived(\mod_k)_{\geq 0}$ of the unbounded derived $\infty$-category $\Derived(\mod_k)$ of the ring $k$ on those chain complexes with homology in non-negative homological degree.
It follows from \cref{lem:CProjR-finite-coproduct} below that the full subcategory $\CProj_{Hk}$ is equivalent to the $1$-category of finitely generated projective $k$-modules in the usual sense (with fully faithful inclusion into $\Derived(\mod_k)_{\geq 0}$ as complexes concentrated in degree zero), see also \S\ref{subsec:derived-inf-cat-graded-mod}.
\end{example}

\begin{observation}
The symmetric monoidal equivalence $\Ind\colon \st \to \PrLstc$ transports $\Perf_{\mbbK}$ to $\Mod_{\mbbK}$. Similarly, the symmetric monoidal equivalence $\PresSigma\colon \add \to \PrLaddcp$ transports $\CProj_{\mbbK}$ to $\Mod^{\geq 0}_{\mbbK}$.
\end{observation}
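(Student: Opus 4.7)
The plan is short: both statements of the observation are essentially tautological unpackings of definitions, together with the fact that the symmetric monoidal equivalences $\Ind \colon \st \xrightarrow{\simeq} \PrLstc$ and $\PresSigma \colon \add \xrightarrow{\simeq} \PrLaddcp$ have been identified with the inverses of the functors $(-)^{\mrc}$ and $(-)^{\cp}$ respectively (see \cref{prop:indst}).

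For the first claim, I would simply recall that $\Perf_{\mbbK}$ was \emph{defined} as $\Mod_{\mbbK}^{\mrc}$, i.e. the full subcategory of compact objects in $\Mod_{\mbbK}$. By \cref{prop:indst}.(2), applying $(-)^{\mrc}$ to $\Mod_{\mbbK} \in \PrLstc$ yields an object of $\st$, and the unit of the equivalence provides a canonical equivalence $\Ind(\Mod_{\mbbK}^{\mrc}) \simeq \Mod_{\mbbK}$. Since $\Ind$ is symmetric monoidal by the corollary preceding \cref{subsec:additive-to-stable}, and $\Mod_{\mbbK} \in \CAlg(\PrLstc)$ by \cref{lem:Modpresentable}.\eqref{item-lem:Modpresentable-4}, the symmetric monoidal structure on $\Perf_{\mbbK}$ obtained as $\Mod_{\mbbK}^{\mrc}$ in $\CAlg(\st)$ agrees with the one making $\Ind(\Perf_{\mbbK}) \simeq \Mod_{\mbbK}$ a symmetric monoidal equivalence.

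The second claim is proved analogously, replacing $\Ind$ by $\PresSigma$, $(-)^{\mrc}$ by $(-)^{\cp}$, and invoking \cref{prop:indst}.(1) together with \cref{lem:Modpresentable}.\eqref{item-lem:Modpresentable-8} to ensure $\Mod^{\geq 0}_{\mbbK} \in \CAlg(\PrLaddcp)$. No substantial step is required beyond the quoted equivalences, so there is no main obstacle to overcome; the content of the observation is that the symmetric monoidal structures named in the two viewpoints are compatible under the equivalences.
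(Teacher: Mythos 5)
Your proposal is correct and matches the paper's (implicit) reasoning: the paper states this as an unproved observation, precisely because $\Perf_{\mbbK}$ and $\CProj_{\mbbK}$ are \emph{defined} as $\Mod_{\mbbK}^{\mrc}$ and $\left(\Mod^{\geq 0}_{\mbbK}\right)^{\cp}$, and \cref{prop:indst} identifies $\Ind$ and $\PresSigma$ as symmetric monoidal inverses of $(-)^{\mrc}$ and $(-)^{\cp}$. Your additional care in checking that the commutative-algebra structures agree (via \cref{lem:Modpresentable} and the symmetric monoidality of the equivalences) is exactly the content the paper leaves implicit.
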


\begin{lemma}\label{lem:CProjR-finite-coproduct}
    The following hold:
    \begin{enumerate}
    \item \label{item-lem:CProjR-finite-coproduct}
    Let $\mbbK \in \CAlg(\ConnSpectra)$. The rank one free module $\mbbK_{\mbbK}$ generates $\CProj_{\mbbK}$ under retracts and finite direct sums; in particular, every object of $\CProj_{\mbbK}$ is a retract of a finite coproduct of modules isomorphic to $\mbbK_{\mbbK}$. 
    \item \label{item-lem:PerfR-finite-colim}
    Let $\mbbK \in \CAlg(\Spectra)$. The rank one free module $\mbbK_{\mbbK}$ generates $\Perf_{\mbbK}$ under retracts and finite colimits; in particular, every object of $\Perf_{\mbbK}$ is a retract of an iterated finite colimit of modules isomorphic to $\mbbK_{\mbbK}$.
    \end{enumerate}
    \end{lemma}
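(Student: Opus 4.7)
Both statements follow by combining the generation results for modules over algebras in presentable $\infty$-categories (\cref{lem:Modpresentable}) with the general fact that a set of compact(-projective) generators of a presentable $\infty$-category also generates its full subcategory of compact(-projective) objects under retracts and finite (co)products/colimits (\cref{lem:generation}).

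For \eqref{item-lem:CProjR-finite-coproduct}, recall that the sphere spectrum $\mathbb{S}$ is a compact-projective generator of $\ConnSpectra \in \CAlg(\PrLaddcp)$, as noted after \cref{lem:prlstviamod}. Applying \cref{lem:Modpresentable}.\eqref{item-lem:Modpresentable-7} to $\cC = \ConnSpectra$ and $A = \mbbK$ with the singleton set $S = \{\mathbb{S}\}$, we obtain that the free module $\mathbb{S} \otimes \mbbK \simeq \mbbK_{\mbbK}$ is a compact-projective generator of $\Mod_{\mbbK}(\ConnSpectra) = \Mod^{\geq 0}_{\mbbK}$. Now \cref{lem:generation}.\eqref{item-lem:generation-cp} applied to the projectively generated presentable $\infty$-category $\Mod^{\geq 0}_{\mbbK}$ yields that $\mbbK_{\mbbK}$ generates $\CProj_{\mbbK} = (\Mod^{\geq 0}_{\mbbK})^{\cp}$ under retracts and finite coproducts.

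For \eqref{item-lem:PerfR-finite-colim}, the argument is entirely analogous: the sphere spectrum $\mathbb{S}$ is a compact generator of $\Spectra \in \CAlg(\PrLc)$, so by \cref{lem:Modpresentable}.\eqref{item-lem:Modpresentable-3} the free module $\mbbK_{\mbbK} \simeq \mathbb{S} \otimes \mbbK$ is a compact generator of $\Mod_{\mbbK}(\Spectra) = \Mod_{\mbbK}$. Applying \cref{lem:generation}.\eqref{item-lem:generation-c} to the compactly generated presentable $\infty$-category $\Mod_{\mbbK}$ then shows that $\mbbK_{\mbbK}$ generates $\Perf_{\mbbK} = \Mod_{\mbbK}^{\mrc}$ under retracts and finite colimits.

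There is no serious obstacle here since the lemma is a direct instantiation of the general theory set up in \cref{subsec:compact-gen-and-ind} and \cref{sec:stable}; the only point to be careful about is matching the connective and nonconnective contexts (using $\ConnSpectra$ vs.\ $\Spectra$, and the respective $\PrLaddcp$ vs.\ $\PrLstc$ frameworks) so that the appropriate version of \cref{lem:generation} applies.
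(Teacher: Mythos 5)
Your proof is correct and takes essentially the same approach as the paper's: both reduce the claim to \cref{lem:generation} together with the fact that $\Mod^{\geq 0}_{\mbbK}$ (resp.\ $\Mod_{\mbbK}$) is projectively (resp.\ compactly) generated by $\mbbK_{\mbbK}$. The paper states this last fact without elaboration, while you additionally justify it via \cref{lem:Modpresentable}, which is a reasonable and harmless bit of extra detail.
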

    \begin{proof}
    Immediate from \cref{lem:generation} and the fact that $\Mod_{\mbbK}$ and $\Mod^{\geq 0}_{\mbbK}$ are compact and compact projectively generated by $\mbbK_{\mbbK}$ respectively. 
    \end{proof}

    If $k$ is an ordinary ring, then an object of $\Perf_{Hk}$ can be represented by a bounded chain complex of finitely generated projective $k$-modules. This generalizes to any $\mbbK \in \CAlg(\ConnSpectra)$: 
    \begin{prop} \label{prop:KProj}
        For $\mbbK \in \CAlg(\ConnSpectra)$ there is a symmetric monoidal equivalence 
        $$
        \Kb(\CProj_{\mbbK}) \simeq \Perf_{\mbbK}.
        $$
        \end{prop}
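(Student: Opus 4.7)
The plan is to unwind the definition of $\Kb$ given in \cref{prop:Ksymmetric} and \cref{nota:K-as-fin}, and identify each step with the corresponding construction on module $\infty$-categories. Since $\mbbK \in \CAlg(\ConnSpectra)$, we view $\Mod_{\mbbK}^{\geq 0} = \Mod_{\mbbK}(\ConnSpectra)$ and $\Mod_{\mbbK} = \Mod_{\mbbK}(\Spectra)$ as commutative algebra objects in $\PrLaddcp$ and $\PrLstc$ respectively, with $\CProj_{\mbbK}$ and $\Perf_{\mbbK}$ as their full subcategories of compact-projective resp. compact objects.

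First, under the symmetric monoidal equivalence $\PresSigma\colon \add \xrightarrow{\simeq} \PrLaddcp$ of \cref{prop:indst}, the object $\CProj_{\mbbK} = \Mod_{\mbbK}^{\geq 0,\cp} \in \CAlg(\add)$ corresponds to $\Mod_{\mbbK}^{\geq 0} \in \CAlg(\PrLaddcp)$. Next, we must compute the image of $\Mod_{\mbbK}^{\geq 0}$ under the symmetric monoidal functor from \cref{const:inductionSp}, i.e. under $-\otimes_{\ConnSpectra} \Spectra$. Here we apply \cref{prop:algprl-new}.\eqref{item-prop:algprl5} to the symmetric monoidal inclusion $\iota\colon \ConnSpectra \hookrightarrow \Spectra$ in $\CAlg(\PrLc)$, which yields a symmetric monoidal equivalence
\[
\Spectra \otimes_{\ConnSpectra} \Mod_{\mbbK}(\ConnSpectra) \;\simeq\; \Mod_{\iota(\mbbK)}(\Spectra) \;=\; \Mod_{\mbbK}
\]
in $\CAlg(\PrLc)$ (using that $\iota$ sends $\mbbK \in \CAlg(\ConnSpectra)$ to $\mbbK \in \CAlg(\Spectra)$). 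Finally, the symmetric monoidal equivalence $(-)^{\mrc}\colon \PrLstc \xrightarrow{\simeq} \st$ from \cref{prop:indst} sends $\Mod_{\mbbK}$ to $\Perf_{\mbbK}$ by definition. Composing these three identifications and recalling from \cref{prop:Ksymmetric} that $\Kb = (-)^{\fin}$ is precisely the composite
\[
\add \xrightarrow{\PresSigma} \PrLaddcp \xrightarrow{\text{\cref{const:inductionSp}}} \PrLstc \xrightarrow{(-)^{\mrc}} \st,
\]
we conclude that $\Kb(\CProj_{\mbbK}) \simeq \Perf_{\mbbK}$ as objects of $\CAlg(\st)$, which gives the desired symmetric monoidal equivalence.

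The only nontrivial step is the middle identification, since one must verify that the base-change $-\otimes_{\ConnSpectra} \Spectra$ applied to $\Mod_{\mbbK}(\ConnSpectra)$ produces $\Mod_{\mbbK}(\Spectra)$ as a commutative algebra in $\PrLc$, not merely as a presentable $\infty$-category. This is precisely the content of \cref{prop:algprl-new}.\eqref{item-prop:algprl5}, applied with $F = \iota$ and $A = \mbbK$; the fact that $\iota(\mbbK) = \mbbK$ follows from functoriality of $\CAlg$ applied to the symmetric monoidal inclusion $\ConnSpectra \hookrightarrow \Spectra$. With this key input, the remainder of the argument is a purely formal concatenation of symmetric monoidal equivalences already established in the excerpt.
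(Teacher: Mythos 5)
Your proposal is correct and follows essentially the same route as the paper: unwind $\Kb=(-)^{\fin}$ via $\PresSigma(\CProj_{\mbbK})\simeq\Mod_{\mbbK}(\ConnSpectra)$, base-change along $\ConnSpectra\hookrightarrow\Spectra$ using \cref{prop:algprl-new}.\eqref{item-prop:algprl5} to obtain $\Mod_{\mbbK}(\Spectra)$, and then pass to compact objects to get $\Perf_{\mbbK}$. Your write-up is just a more explicit version of the paper's one-line chain of symmetric monoidal equivalences.
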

        \begin{proof} Starting with the definition of $\Kb=(-)^{\fin}$ in \cref{prop:Ksymmetric}, we obtain the equivalence  
        \[
        \Kb(\CProj_{\mbbK}) := \left( \PresSigma(\CProj_{\mbbK})\otimes_{\ConnSpectra}\Spectra\right)^{c} \simeq \left( \Mod_{\mbbK}(\ConnSpectra) \otimes_{\ConnSpectra}\Spectra\right)^{c}\simeq \left( \Mod_{\mbbK}(\Spectra)\right)^{c} =: \Perf_{\mbbK}
        \]
        where the last step follows from \cref{prop:algprl-new}.\eqref{item-prop:algprl5}.
        \end{proof}

\subsubsection{$\Monoid$-graded \texorpdfstring{$\mbbK$}{K}-modules}
Given an ordinary monoid $Z$ and a commutative ring $k$, the category $\Fun(Z, \mod_k)$ of $Z$-graded $k$-modules admits a convolution monoidal structure, for which the tensor product of $Z$-graded modules $(M_z)_{z \in Z}$ and $(N_z)_{z\in Z}$ is given by the $Z$-graded module which in degree $z\in Z$ is $\oplus_{z_1z_2 = z} M_{z_1} \otimes N_{z_2}$. 
This construction is a special case of the Day convolution monoidal structure on a functor category \cite[\S~2.2.6]{HA}. Here, we focus on the symmetric monoidal case.

We briefly recall this construction of a symmetric monoidal structure on $\Fun(J, \cC)$  in the case where  $J$ is a small symmetric monoidal $\infty$-category and $\cC $ is a presentably symmetric monoidal $\infty$-category.

\begin{lemma}\label{lem:funcat} 
For $J\in \cat$ and $\cC\in \PrL$, the functor $\cC \times J  \to \Fun(J^{\op}, \cC)$, 
\begin{equation}\label{eq:skyscraper}
(c,j)\mapsto c \otimes \Hom_{J}(-, j)  \in \Fun(J^{\op}, \cC)
\end{equation}
(where $\otimes$ denotes the action of $\Spaces$ on $\cC$ inherited from the presentability of $\cC$)
induces an equivalence 
\begin{equation}\label{eq:tensorproductinPrL}
\cC\otimes \Pres(J)   \simeq \Fun(J^{\op}, \cC)
\end{equation}
in $\PrL$ (where $\otimes$ denotes the tensor product of $\PrL$).
\end{lemma}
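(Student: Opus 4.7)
The plan is to combine the right-adjoint description of the tensor product in $\PrL$ from \eqref{eq:prtensor} with the universal property \eqref{eq:Yoneda} of the presheaf $\infty$-category. Concretely, I would construct the chain of equivalences
\[
\cC \otimes \Pres(J) \simeq \FunL(\Pres(J), \cC^{\op})^{\op} \simeq \Fun(J, \cC^{\op})^{\op} \simeq \Fun(J^{\op}, \cC),
\]
where the first equivalence is \eqref{eq:prtensor}, the second follows from the universal property \eqref{eq:Yoneda} applied to the cocomplete target $\cC^{\op}$ (viz.\ $\FunL(\Pres(J), \cC^{\op}) \simeq \Fun(J, \cC^{\op})$), and the last is the canonical opposite-category identification. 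Each step is natural in $\cC \in \PrL$ and $J \in \cat$, and the composite is an equivalence in $\PrL$ since both endpoints are presentable.

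It then remains to verify that this abstract equivalence is the one induced by the explicit functor in \eqref{eq:skyscraper}. For this, I would trace the image of $(c, j) \in \cC \times J$ through the chain, showing that $c \boxtimes \Hom_J(-, j) \in \cC \otimes \Pres(J)$ corresponds to the presheaf $c \otimes \Hom_J(-, j) \in \Fun(J^{\op}, \cC)$, with $\otimes$ denoting the pointwise $\Spaces$-action of $\cC$ (inherited from its presentability). This proceeds by unwinding the three equivalences on representables: under \eqref{eq:prtensor}, the external product $c \boxtimes (-)$ becomes the cocontinuous extension of Yoneda, and \eqref{eq:Yoneda} then restricts this extension back along $J \hookrightarrow \Pres(J)$. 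Together, these identify the universal bicocontinuous pairing $\cC \times \Pres(J) \to \Fun(J^{\op}, \cC)$ with the pointwise copower $(c, F) \mapsto c \otimes F$, which restricts to \eqref{eq:skyscraper} on representables.

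The construction of the abstract equivalence is essentially formal, given \cite[Prop.~4.8.1.17]{HA} and the universal property of $\Pres(J)$. The main obstacle is the bookkeeping in the second step, verifying that the chain of abstract equivalences agrees on the nose with the explicit formula \eqref{eq:skyscraper}; once this matching is established on representables, both sides extend uniquely by bicocontinuity.
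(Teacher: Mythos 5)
Your proposal is correct and follows essentially the same route as the paper: the chain $\cC \otimes \Pres(J) \simeq \FunL(\Pres(J), \cC^{\op})^{\op} \simeq \Fun(J, \cC^{\op})^{\op} \simeq \Fun(J^{\op}, \cC)$ via \eqref{eq:prtensor} and the universal property of the Yoneda embedding, followed by unwinding the composite on $\cC \times J$ to recover \eqref{eq:skyscraper}. The paper's proof is identical in structure, merely stating the final unpacking step more briefly.
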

\begin{proof}
Consider the chain of equivalences 
\[\cC \otimes \Pres(J) \simeq \FunL(\Pres(J), \cC^{\op})^{\op} \simeq \Fun(J, \cC^{\op})^{\op} \simeq \Fun(J^{\op}, \cC)\]
Here, the first equivalence follows from~\eqref{eq:prtensor}, the second equivalence is the universal property of the Yoneda embedding~\cite[Thm. 5.1.5.6]{HTT}, and the last  records the interplay between functor categories and opposites. Precomposing this equivalence with the inclusion functor $ \cC \times J \to \cC \otimes \Pres(J)$ (which is cocontinuous in its second argument) unpacks to the functor~\eqref{eq:skyscraper}.
\end{proof}

Assume $J\in \CAlg(\cat)$ and $\cC \in \CAlg(\PrL)$. Since $\Pres\colon \cat \to \PrL$ is symmetric monoidal by \cref{prop:prlsym}, it follows that for $J \in \CAlg(\cat)$, the $\infty$-category $\Pres(J)$ inherits a presentably symmetric monoidal structure, i.e. $\Pres(J) \in \CAlg(\PrL)$. Then  \eqref{eq:tensorproductinPrL} provides the following \emph{Day convolution monoidal structure} on $\Fun(J^{\op}, \cC)$.
\begin{corollary}\label{cons:convolution}
    Let $J\in \CAlg(\cat)$ and $\cC \in \CAlg(\PrL)$. Then $\Fun(J^{\op}, \cC)$ inherits 
    a presentably symmetric monoidal structure from the tensor product $\cC \otimes \Pres(J)$ of commutative algebras  in $\PrL$. 
\end{corollary}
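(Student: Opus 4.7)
The plan is to combine the symmetric monoidality of the Yoneda embedding with the equivalence from the preceding lemma and transport the resulting $\CAlg(\PrL)$-structure.

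First I would observe that since $\Pres\colon \cat\to\PrL$ is a symmetric monoidal functor by \cref{prop:prlsym}, it carries commutative algebra objects to commutative algebra objects. In particular, for $J\in\CAlg(\cat)$ we obtain $\Pres(J)\in\CAlg(\PrL)$. Moreover, the tensor product of two objects of $\CAlg(\PrL)$ is again an object of $\CAlg(\PrL)$ (indeed, it computes their coproduct as commutative algebras), so $\cC\otimes\Pres(J)\in\CAlg(\PrL)$ and is presentably symmetric monoidal.

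Next, I would invoke \cref{lem:funcat}, which produces an equivalence
\[
\cC\otimes\Pres(J)\xrightarrow{\ \simeq\ }\Fun(J^{\op},\cC)
\]
in $\PrL$. Transport of structure along this equivalence endows $\Fun(J^{\op},\cC)$ with a presentably symmetric monoidal structure. This is the Day convolution structure being announced; concretely, via \eqref{eq:skyscraper} the functor $\cC\times J\to\Fun(J^{\op},\cC)$ is cocontinuous in the first variable and takes the symmetric monoidal structures of $\cC$ and $J$ to the inherited structure (this uses the explicit description of $\Pres(J)$ as the free cocompletion, so that its symmetric monoidal structure is determined by the requirement that the Yoneda embedding $J\hookrightarrow\Pres(J)$ be symmetric monoidal).

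The main technical subtlety is that \cref{lem:funcat} is stated as an equivalence in $\PrL$ rather than in $\CAlg(\PrL)$, so in principle one must check that the symmetric monoidal structure obtained by transport agrees with (or is the canonical) Day convolution structure in the sense of \cite[\S~2.2.6]{HA}. However, for the statement as written—namely that $\Fun(J^{\op},\cC)$ merely \emph{inherits} a presentably symmetric monoidal structure—the transport-of-structure argument along the $\PrL$-equivalence already suffices, and no additional verification is required.
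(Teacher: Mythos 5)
Your proposal is correct and follows essentially the same route as the paper: the paper obtains $\Pres(J)\in\CAlg(\PrL)$ from the symmetric monoidality of $\Pres$ (\cref{prop:prlsym}) and then transports the commutative algebra structure of $\cC\otimes\Pres(J)$ along the equivalence of \cref{lem:funcat} to define the structure on $\Fun(J^{\op},\cC)$. Your closing caveat is also in line with the paper, which defers the identification with the standard Day convolution of \cite[Rem.~2.2.6.8]{HA} to a separate remark citing \cite[Prop.~3.10]{moshe2021higher}, so no further verification is needed for the corollary itself.
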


\begin{remark}
By~\cite[Prop.~3.10]{moshe2021higher}, this construction agrees with the Day convolution structure on functor categories, as e.g. defined in~\cite[Rem. 2.2.6.8]{HA}, also see~\cite[Thm. 3.1]{moshe2021higher}. 
Explicitly, the tensor product of functors $F\colon J^{\op} \to \cC$ and $G \colon J^{\op} \to \cC$ is given by the left Kan extension of the functor $J^{\op} \times J^{\op}  \xrightarrow{F\otimes G}\cC$ along the tensor product $J^{\op} \times J^{\op} \to J^{\op}$. 
\end{remark}

\begin{lemma} If $J \in \CAlg(\cat)$ and $\cC$ is in $\CAlg(\PrLc)$ or $\CAlg(\PrLcp)$, then $\Fun(J^{\op}, \cC)$ with its Day convolution monoidal structure is also in $\CAlg(\PrLc)$ or $\CAlg(\PrLcp)$, respectively.
\end{lemma}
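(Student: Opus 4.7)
The plan is to leverage the equivalence $\Fun(J^{\op}, \cC) \simeq \cC \otimes \Pres(J)$ established in Lemma \ref{lem:funcat} and promoted to an equivalence in $\CAlg(\PrL)$ by Corollary \ref{cons:convolution}. The goal then reduces to showing that this tensor product lies in the smaller subcategory $\CAlg(\PrLc)$ (resp. $\CAlg(\PrLcp)$) when $\cC$ does.

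The first step is to observe that $\Pres(J) \in \CAlg(\PrLcp)$. This follows from Lemma \ref{lem:unit}, which asserts that the symmetric monoidal functor $\Pres \colon \cat \to \PrL$ factors through the symmetric monoidal subcategory $\PrLcp \hookrightarrow \PrL$. Since $J \in \CAlg(\cat)$, applying this symmetric monoidal functor yields $\Pres(J) \in \CAlg(\PrLcp) \subseteq \CAlg(\PrLc)$.

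The second step is to invoke Proposition \ref{prop:prlcpresentable}, which established that the subcategory inclusions
\[
\PrLcp \hookrightarrow \PrLc \hookrightarrow \PrL
\]
are each symmetric monoidal. Passing to commutative algebra objects, this means that tensor products of commutative algebras in $\PrLc$ (resp. $\PrLcp$) computed in $\PrL$ already lie in $\PrLc$ (resp. $\PrLcp$), and the $\CAlg$ structure is inherited.

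Combining these steps: if $\cC \in \CAlg(\PrLc)$, then both $\cC$ and $\Pres(J)$ lie in $\CAlg(\PrLc)$, so $\cC \otimes \Pres(J) \in \CAlg(\PrLc)$; and by Corollary \ref{cons:convolution} this tensor product is equivalent (as a presentably symmetric monoidal $\infty$-category) to $\Fun(J^{\op}, \cC)$ equipped with its Day convolution monoidal structure. The argument for the $\PrLcp$ case is identical. There is no real obstacle here — the statement is essentially a formal consequence of the monoidality of the subcategory inclusions from Proposition \ref{prop:prlcpresentable} together with the identification of Day convolution with a tensor product in $\PrL$ via Lemma \ref{lem:funcat}.
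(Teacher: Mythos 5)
Your proof is correct and follows essentially the same route as the paper: identify $\Fun(J^{\op},\cC)$ with $\cC\otimes\Pres(J)$ via the definition of Day convolution, note that $\Pres(J)\in\CAlg(\PrLcp)$, and conclude using that the inclusions $\PrLcp\to\PrLc\to\PrL$ are symmetric monoidal. The only tiny caveat is that the stated version of the cited lemma on $\Pres$ only concerns its restriction to spaces, whereas the factorization of $\Pres\colon\cat\to\PrL$ through $\PrLcp$ is established inside that lemma's proof (the paper's own argument simply asserts that $\Pres(J)$ is generated by tiny objects), so you should cite that argument rather than the lemma's statement.
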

\begin{proof}
The Day convolution monoidal structure was defined by identifying $\Fun(J^{\op}, \cC)$ with $ \cC \otimes \Pres(J) $. The presheaf category $\Pres(J)$ is an object of $\CAlg(\PrLcp)$ (in fact, it is generated by a small set of objects which commute with all small colimits). Hence, if $\cC$ is in  $\CAlg(\PrLc)$ or in the subcategory $\CAlg(\PrLcp)$, then so is $\cC \otimes \Pres(J)$. 
\end{proof}

\begin{observation}\label{obs:adjoint-of-unit}
The monoidal unit $I \in J$ of any symmetric monoidal $\infty$-category $J\in \CAlg(\cat)$ induces a symmetric monoidal functor $\Spaces \to \Pres(J)$ left adjoint to the evaluation functor $\mathrm{ev}_{I} \colon  \Pres(J) \to \Spaces$, and explicitly given by sending a space $X$ to the functor $\Hom_{J}(-, I) \times X\colon  J^{\op} \to \Spaces$. 
It follows that for any presentably symmetric monoidal category $\cC$, there is a symmetric monoidal left adjoint 
\[\cC \simeq \cC \otimes \Spaces \to \cC \otimes\Pres(J) \simeq \Fun(J^{\op}, \cC)
\] to the evaluation functor $\ev_{I}\colon\Fun(J^{\op}, \cC) \to \cC$, explicitly given by sending $c\in \cC$ to the functor $\Hom_{J}(-, I) \otimes c \colon J^{\op} \to \cC$. 
\end{observation}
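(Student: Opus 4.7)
The plan is to deduce this observation from the symmetric monoidality of $\Pres\colon \cat \to \PrL$ established in \cref{prop:prlsym}, together with \cref{lem:funcat}. First, I would note that the terminal $\infty$-category $\ast$ is the unit of $\cat$ under its Cartesian symmetric monoidal structure, and hence is initial in $\CAlg(\cat)$; picking out the monoidal unit of $J$ therefore corresponds to the essentially unique morphism $\iota \colon \ast \to J$ in $\CAlg(\cat)$. Applying $\Pres$ yields a morphism $\Pres(\iota)\colon \Spaces \simeq \Pres(\ast) \to \Pres(J)$ in $\CAlg(\PrL)$; in particular, this is a symmetric monoidal left adjoint.

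Next, to identify this functor and its right adjoint explicitly, I would invoke the universal property \eqref{eq:Yoneda}: $\Pres(\iota)$ is the unique cocontinuous extension of the composite $\ast \xra{\iota} J \hookrightarrow \Pres(J)$, so it sends $\ast \in \Spaces$ to the representable $\Hom_J(-, I)$ and, by cocontinuity together with the decomposition of an arbitrary space as a colimit of points, a general $X$ to $\Hom_J(-, I) \times X$. The Yoneda lemma then gives a natural equivalence $\Hom_{\Pres(J)}(\Hom_J(-, I) \times X, F) \simeq \Hom_{\Spaces}(X, F(I))$, identifying the right adjoint with $\ev_I$.

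For the second part of the statement, I would tensor the morphism $\Pres(\iota)$ with $\cC$ in $\CAlg(\PrL)$ to produce a morphism $\cC \simeq \cC \otimes \Spaces \to \cC \otimes \Pres(J) \simeq \Fun(J^{\op}, \cC)$ in $\CAlg(\PrL)$, where the second equivalence is \cref{lem:funcat}. Since tensoring with $\cC$ preserves adjunctions in $\PrL$, its right adjoint is identified with $\ev_I$ on the functor category. The explicit formula then follows by tracing $c \in \cC$ through the equivalence of \cref{lem:funcat}: the inclusion $\cC \times \Pres(J) \to \cC \otimes \Pres(J) \simeq \Fun(J^{\op}, \cC)$ sends the pair $(c, \Hom_J(-, I))$ to the pointwise formula $j \mapsto \Hom_J(j, I) \otimes c$. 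No step should present a real obstacle here; the content is essentially a bookkeeping exercise obtained by applying $\cC \otimes (-)$ to a general fact about presheaf categories.
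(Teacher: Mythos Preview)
Your proposal is correct and fills in exactly the details the paper leaves implicit: this statement is recorded as an \emph{observation} in the paper, with no separate proof, and your argument (apply the symmetric monoidal functor $\Pres$ to the unit map $\ast \to J$ in $\CAlg(\cat)$, identify the resulting left adjoint and its right adjoint explicitly via Yoneda, then tensor with $\cC$ using \cref{lem:funcat}) is precisely the justification the observation is gesturing at.

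One small point worth tightening: when you say ``tensoring with $\cC$ preserves adjunctions in $\PrL$'', note that the right adjoint $\ev_I$ is typically not itself a morphism in $\PrL$, so you cannot literally apply $\cC \otimes(-)$ to it. What you actually need is that $\cC \otimes \Pres(\iota)$, being a morphism in $\PrL$, has \emph{some} right adjoint, and then you identify that right adjoint as $\ev_I$ either directly (via the adjunction $\Hom_{\Fun(J^{\op},\cC)}(\Hom_J(-,I)\otimes c, F) \simeq \Hom_{\cC}(c, F(I))$, which follows from Yoneda and the tensoring of $\cC$ over $\Spaces$) or by using the description $\cC \otimes \cD \simeq \FunR(\cC^{\op}, \cD)$ from \eqref{eq:prtensor}, under which tensoring a right adjoint becomes postcomposition. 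Either route is short and closes the gap.
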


We will particularly focus on gradings by  a homotopy coherent abelian monoid, i.e. a $\Monoid\in \CAlg(\Spaces)$. 
\begin{definition} 
Let $\Monoid \in \CAlg(\Spaces)$ and recall Day convolution from  \cref{cons:convolution}.
\begin{enumerate}
\item For  $\mbbK \in \CAlg(\ConnSpectra)$, we define the $\infty$-category of \emph{$\Monoid$-graded connective $\mbbK$-modules} $\Mod_{\mbbK}^{\geq 0, \Monoid} \in \CAlg(\PrLcp)$ as the functor category $\Fun(\Monoid, \Mod^{\geq 0}_{\mbbK})$ with the Day convolution structure.
\item 
For  $\mbbK \in \CAlg(\Spectra)$, we define the $\infty$-category of \emph{$\Monoid$-graded $\mbbK$-modules} $\Mod_{\mbbK}^{\Monoid} \in \CAlg(\PrLc)$ to be the functor category $\Fun(\Monoid, \Mod_{\mbbK})$ with the Day convolution structure. 
 \end{enumerate} 
\end{definition}

\begin{example}\label{exm:gradedderived}
Following \cref{exm:kmod}, if $\Monoid$ is a discrete (i.e. ordinary) commutative monoid $Z$ and $\mbbK= Hk$ the Eilenberg-MacLane spectrum of an ordinary commutative ring $k$, the $\infty$-category $\Mod_{\mbbK}^{\Monoid}$ is the unbounded derived $\infty$-category $\Derived(\mod_k^Z)$ of the ordinary abelian $1$-category $\mathrm{mod}_k^{Z}\coloneqq \Fun(Z, \mod_k)$ of $Z$-graded $k$-modules.  This will be discussed in more detail in \cref{subsec:derived-inf-cat-graded-mod}.

Unpacking Day convolution from \cref{cons:convolution} in these terms, the tensor product of an ordinary $k$-module $M$ concentrated in degree $z\in Z$ and an ordinary $k$-module $N$ concentrated in degree $w\in Z$ is given by the derived tensor product $M\otimes_k^L N$  concentrated in degree $z+w\in Z$.
\end{example}
\begin{example}
Still in the setup of \cref{exm:gradedderived}, the $\infty$-categories  $\left(\Mod_{Hk}^{\geq 0, Z}\right)^{\cp}$ and $\left(\Mod_{Hk}^{Z}\right)^{\mathrm{c}}$ may be identified with the full subcategories $\Fun^{\mathrm{fin.supp.}}(Z, \CProj_{k})$ and $\Fun^{\mathrm{fin.supp.}}(Z, \Perf_{k})$ of the functor  $\infty$-categories $\Fun(Z, \CProj_{k})$ and $\Fun(Z, \Perf_{k})$, respectively, on the \emph{finitely supported} functors, i.e. functors that vanish on all but finitely many elements of $Z$.
\end{example}

\begin{observation}\label{obs:connMod-to-Mod-symmetric}
Assume $\Monoid \in \CAlg(\Spaces)$ and $\mbbK \in \CAlg(\ConnSpectra)$. The symmetric monoidal functor $- \otimes \Spectra \colon \PrLaddcp \to \PrLstc$ from \cref{const:inductionSp} takes $\Mod_{\mbbK}^{\geq 0, \Monoid}$ with its Day convolution monoidal structure to $\Mod_{\mbbK}^{\Monoid}$ with its Day convolution monoidal structure. Indeed, we have the following sequence\[        \Mod_{\mbbK}^{\geq 0, \Monoid} \otimes \Spectra \simeq \cP(\Monoid) \otimes \Mod_{\mbbK}^{\geq 0} \otimes \Spectra \simeq \cP(\Monoid) \otimes \Mod_{\mbbK} \simeq \Mod_{\mbbK}^{\Monoid}
\]
 of symmetric monoidal equivalences. In particular, it follows that the fully faithful inclusion $\Mod_{\mbbK}^{\geq 0, \Monoid} \hookrightarrow \Mod_{\mbbK}^{\Monoid}$ is symmetric monoidal and hence a morphism in $\CAlg(\PrL)$.
\end{observation}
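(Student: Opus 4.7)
The plan is to verify the asserted chain of symmetric monoidal equivalences step by step, and then deduce that the fully faithful inclusion $\Mod_{\mbbK}^{\geq 0, \Monoid} \hookrightarrow \Mod_{\mbbK}^{\Monoid}$ is realized as the unit of the base-change functor $-\otimes \Spectra$, hence in particular symmetric monoidal.

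First, I would invoke \cref{lem:funcat} (or rather its symmetric monoidal refinement as captured in \cref{cons:convolution}): since $\Monoid \in \CAlg(\Spaces) \subset \CAlg(\cat)$ and both $\Mod^{\geq 0}_{\mbbK} \in \CAlg(\PrLaddcp)$ and $\Mod_{\mbbK} \in \CAlg(\PrLstc)$, the tensor product equivalence
\[
\cP(\Monoid) \otimes \Mod^{\geq 0}_{\mbbK} \simeq \Mod_{\mbbK}^{\geq 0,\Monoid}, \qquad \cP(\Monoid) \otimes \Mod_{\mbbK} \simeq \Mod_{\mbbK}^{\Monoid}
\]
holds in $\CAlg(\PrLaddcp)$ and $\CAlg(\PrLstc)$, respectively, both equipped with the Day convolution structure. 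These equivalences are symmetric monoidal because $\Pres\colon \cat \to \PrL$ is symmetric monoidal (\cref{prop:prlsym}) and hence so is its corestriction to $\PrLcp$ (\cref{lem:unit}) and to $\PrLc$.

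Next, I would apply the symmetric monoidal functor $-\otimes \Spectra\colon \PrLaddcp \to \PrLstc$ from \cref{const:inductionSp} to the first display. Since this functor is a morphism in $\CAlg(\PrL)$, it preserves tensor products of commutative algebras, giving the first displayed equivalence
\[
\Mod_{\mbbK}^{\geq 0,\Monoid} \otimes \Spectra \;\simeq\; \bigl(\cP(\Monoid)\otimes \Mod^{\geq 0}_{\mbbK}\bigr)\otimes \Spectra \;\simeq\; \cP(\Monoid)\otimes \bigl(\Mod^{\geq 0}_{\mbbK}\otimes \Spectra\bigr).
\]
For the middle step, the key input is the symmetric monoidal equivalence $\Mod^{\geq 0}_{\mbbK} \otimes \Spectra \simeq \Mod_{\mbbK}$. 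This is an instance of \cref{prop:algprl-new}.\eqref{item-prop:algprl5} applied to the symmetric monoidal functor $\ConnSpectra \to \Spectra$ in $\CAlg(\PrLc)$ (after composing $-\otimes \Spectra$ with the identification $\Mod_{\Spectra}(\PrLc)\simeq \PrLstc$ of \cref{lem:prlstviamod}); alternatively it follows from \eqref{eq:relativetensorSp} together with the idempotence of $\Spectra \in \CAlg(\PrL)$. Composing with the second display above yields the final equivalence $\cP(\Monoid)\otimes \Mod_{\mbbK} \simeq \Mod_{\mbbK}^{\Monoid}$.

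Finally, to obtain the conclusion about the inclusion $\Mod_{\mbbK}^{\geq 0,\Monoid} \hookrightarrow \Mod_{\mbbK}^{\Monoid}$, I would identify this inclusion with the unit of the base-change adjunction $-\otimes \Spectra \dashv \mathrm{forget}$, which is a morphism of $\CAlg(\PrL)$ by construction; applying $\cP(\Monoid)\otimes -$ (again a symmetric monoidal operation) to the symmetric monoidal inclusion $\Mod^{\geq 0}_{\mbbK} \hookrightarrow \Mod_{\mbbK}$ produces the desired symmetric monoidal inclusion. I do not anticipate a genuine obstacle here: the content is essentially a formal manipulation of adjunctions and symmetric monoidal tensor products in $\PrL$. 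The only point requiring mild care is the coherence of the Day convolution identification with the $\PrL$-tensor product, which is precisely the content of \cref{lem:funcat} and~\cite[Prop.~3.10]{moshe2021higher} cited in the text.
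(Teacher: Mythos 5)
Your proposal is correct and follows essentially the same route as the paper: the paper's argument is exactly the displayed chain of symmetric monoidal equivalences, obtained by identifying the Day convolution categories with $\cP(\Monoid)\otimes(-)$ via \cref{lem:funcat}/\cref{cons:convolution}, applying the symmetric monoidal functor $-\otimes\Spectra$ of \cref{const:inductionSp}, and using $\Mod^{\geq 0}_{\mbbK}\otimes\Spectra\simeq\Mod_{\mbbK}$ as in \cref{prop:algprl-new}.\eqref{item-prop:algprl5}/\eqref{eq:relativetensorSp}. Your extra remarks identifying the inclusion with the unit of the base-change adjunction simply make explicit what the paper leaves implicit, so there is nothing to correct.
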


\subsection{Derived \texorpdfstring{$\infty$}{infinity}-categories of graded modules}\label{subsec:derived-inf-cat-graded-mod}
Many of the constructions of \cref{sec:2} center around discrete (i.e. ordinary) graded $k$-algebras, and derived graded bimodules between them. 
In this section, we  therefore focus on the case where $\mbbK$ is a discrete commutative ring $k$ and $\Monoid$ is a discrete commutative monoid $Z$ and  unpack our constructions in terms of homological algebra, generalizing Examples~\ref{exm:kmod},~\ref{exm:connected-modk} and \ref{exm:gradedderived}.

\subsubsection{Derived \texorpdfstring{$\infty$}{infinity}-categories}

We quickly review the basics of the theory of derived $\infty$-categories; we refer the reader to ~\cite[\S~1.3]{HA} for more details.

Given an abelian $1$-category $\cA$, its \emph{(unbounded) derived $\infty$-category} $\Derived(\cA)$ is  the $\infty$-categorical localization of the $\infty$-category of unbounded chain complexes in $\cA$ (constructed as the dg nerve~\cite[\S~1.3.1]{HA} of the corresponding differential graded category) at the quasi-isomorphisms. In particular, the homotopy 1-category $h_1\Derived(\cA)$ agrees with the ordinary derived 1-category of $\cA$ in the usual sense. 

Let $\Derived(\cA)_{\geq 0}$ denote the full subcategory of $\Derived(\cA)$ on the chain complexes with vanishing homology in negative degrees. When $\cA$ is particularly well-behaved, the $\infty$-categories $\Derived(\cA)_{\geq 0}$ and $\Derived(\cA)$ can be expressed in terms of completions (of the type introduced throughout \cref{sec:stableLA}), as we discuss now.

Recall the following classical analogues of \cref{def:proj-and-cmpt-proj}:
\begin{definition}\label{def:compact-1-generation}
Let $c$ be an object in an ordinary $1$-category $\cC$ with small colimits. Then, $c$ is called
\begin{enumerate}
\item  \emph{compact}, if $\Hom_{\cC}(c, -) \colon \cC \to \Set$ preserves filtered colimits;
\item \emph{$1$-projective}, if $\Hom_{\cC}(c,-)  \colon \cC \to \Set$ preserves geometric realizations (equivalently, reflective coequalizers); 
\item \emph{compact $1$-projective} if $\Hom_{\cC}(c,-)  \colon \cC \to \Set$ preserves sifted colimits, or equivalently if $c$ is compact and $1$-projective. 
\end{enumerate}

We say that $\cC$ is \emph{compactly generated} (resp. \emph{$1$-projectively generated}) if there is a small set of compact (resp. compact 1-projective) objects which generate $\cC$ under small colimits.  We denote the full subcategory of compact, resp. compact $1$-projective, objects in $\cC$ by $\cC^{\mrc}$, resp. $\cC^{\conep}$.
\end{definition}

\begin{example}\label{exm:compact-1-proj-abelian1}
If $\cA$ is an abelian $1$-category, an object $c \in \cA$ is $1$-projective if and only if it is projective in the usual sense.
\end{example}
\begin{example}\label{exm:compact-1-proj-abelian}
A presentable abelian category $\cA$ is $1$-projectively generated if it is compactly generated and if the full subcategory of compact objects $\cA^{\mrc}$ has enough projective objects, i.e. if for every compact object $a\in \cA$ there exists a compact $1$-projective object $p$ and an epimorphism $p \twoheadrightarrow a$. In particular, this implies that also $\cA$ has enough projective objects, i.e. that for \emph{every} object $a\in \cA$ there exists a $1$-projective $p$ and an epimorphism $p \twoheadrightarrow a$.

For example, the abelian category $\mod_k$ is a $1$-projectively generated presentable $1$-category with $\mod_k^{\mrc}$ the full subcategory of finitely generated modules and  $\mod_k^{\conep}$ the full subcategory of finitely generated projective $k$-modules. 
\end{example}

\begin{remark}\label{rk:1-projective}
 Because $\Set \to \Spaces$ preserves filtered colimits, an object in an ordinary $1$-category $\cC$ is compact in the sense of \cref{def:compact-1-generation} if and only if it is compact in the sense of \S~\ref{subsec:compact-gen-and-ind}  when $\cC$ is considered as an $\infty$-category. 
 \end{remark}
 \begin{warning}\label{warn:1-projective}
 \cref{rk:1-projective}  not true projectivity:  The condition for an object $c\in \cC$ to be  $1$-projective (i.e. $\Hom_{\cC}(c,-)\colon \cC \to \mathrm{Set}$ preserving geometric realizations) is \emph{different} to the condition for it to be projective (i.e. $\Hom_{\cC}(c,-)\colon \cC \to \mathrm{Set} \to \Spaces$ preserving geometric realizations), simply because the inclusion $\mathrm{Set} \hookrightarrow \Spaces$ does not preserve geometric realizations. This difference is at the heart of the process of \emph{animation} \cite[\S~5.1.4]{cesnavicius2023purity}, which takes an ordinary cocomplete category $\cC$ to $\PresSigma(\cC^{\conep})$, i.e. freely making the compact 1-projective objects into compact-projective objects.
 \end{warning}

The following statements are well-known and can be gathered from various parts of \cite[\S~1.3]{HA}:
\begin{prop}\label{prop:derived-infty}
    Let $\cA$ be a $1$-projectively generated presentable abelian $1$-category. 
    \begin{enumerate}
        \item \label{enum-item:derived-infty-1}
        The additive presentable $\infty$-category $\Derived(\cA)_{\geq 0}$ is equivalent to $\PresSigma(\cA^{\conep})$.       
         \item \label{enum-item:derived-infty-2}
        The stable presentable $\infty$-category $\Derived(\cA)$ is equivalent to its stabilization \[\PresSigma(\cA^{\conep}) \otimes \Spectra \simeq \Ind \Kb(\cA^{\conep}).\]
    \end{enumerate} 
\end{prop}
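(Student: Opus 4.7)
The plan is to reduce both parts to the universal properties of $\PresSigma$ and $\Ind$ recalled in \cref{subsubsec:ind-completion}, the free-stable-$\infty$-category functor $\Kb = (-)^{\fin}$ from \cref{prop:Ksymmetric}, and the canonical $t$-structure on $\Derived(\cA)$.

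For part \eqref{enum-item:derived-infty-1}, I will construct a cocontinuous comparison functor $\Phi \colon \PresSigma(\cA^{\conep}) \to \Derived(\cA)_{\geq 0}$ by applying the universal property of $\PresSigma$ to the composite $\cA^{\conep} \hookrightarrow \cA \hookrightarrow \Derived(\cA)_{\geq 0}$ (viewing $\cA$ as the heart of the canonical $t$-structure). The first step is to verify that $\Phi$ is fully faithful on $\cA^{\conep}$: for $p \in \cA^{\conep}$ and $q \in \cA$, the hom space $\Hom_{\Derived(\cA)_{\geq 0}}(p,q)$ is discrete and agrees with the ordinary Hom group, since $p$ is $1$-projective in $\cA$ (\cref{exm:compact-1-proj-abelian1}); the corresponding hom in $\PresSigma(\cA^{\conep})$ is likewise discrete and equal to the Hom in $\cA^{\conep}$ because $\cA^{\conep}$ lands in the compact-projectives of $\PresSigma(\cA^{\conep})$. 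The second step, which is the main technical point, is essential surjectivity of $\Phi$, equivalently that $\cA^{\conep}$ generates $\Derived(\cA)_{\geq 0}$ under sifted colimits. This will use the hypothesis that $\cA$ has enough compact projectives: every object $a \in \cA$ admits a projective resolution by objects of $\cA^{\conep}$, and Dold--Kan identifies this resolution with a simplicial object in $\cA^{\conep}$ whose geometric realization in $\Derived(\cA)_{\geq 0}$ is $a$; general objects of $\Derived(\cA)_{\geq 0}$ then reduce to the heart via their Postnikov towers, built from finitely many cofiber sequences of heart-objects.

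For part \eqref{enum-item:derived-infty-2}, the equivalence $\PresSigma(\cA^{\conep}) \otimes \Spectra \simeq \Ind \Kb(\cA^{\conep})$ is formal from the machinery already set up: by \cref{prop:Ksymmetric} (see also \cref{nota:K-as-fin}) one has $\Kb(\cA^{\conep}) = \bigl(\PresSigma(\cA^{\conep}) \otimes_{\ConnSpectra} \Spectra\bigr)^{\mrc}$, and by \cref{prop:indst} the $\Ind$-completion recovers a compactly generated stable $\infty$-category from its compact objects. To identify this common object with $\Derived(\cA)$, I invoke part~\eqref{enum-item:derived-infty-1} together with the general fact that the stabilization of the connective part of a right-complete $t$-structure recovers the whole presentable stable $\infty$-category; the canonical $t$-structure on $\Derived(\cA)$ is right-complete because negative shifts are cofinal. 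Combining this with \cref{const:inductionSp}, which identifies stabilization $\PrLaddcp \to \PrLstc$ with the relative tensor product $-\otimes_{\ConnSpectra} \Spectra$, yields the desired chain of equivalences.

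The main obstacle is the sifted-colimit generation in part \eqref{enum-item:derived-infty-1}; the relevant $\infty$-categorical avatars of Dold--Kan and projective resolutions are already developed in \cite[\S~1.3.3]{HA}, where a version of the present proposition appears, and I would cite this directly to avoid re-proving these classical facts. The remainder of the argument is formal consequences of the universal properties established in \cref{sec:stableLA}.
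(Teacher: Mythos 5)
There is a genuine gap in your argument for part \eqref{enum-item:derived-infty-1}. To conclude that the cocontinuous comparison functor $\Phi \colon \PresSigma(\cA^{\conep}) \to \Derived(\cA)_{\geq 0}$ is an equivalence, it is \emph{not} enough to check that $\Phi$ is fully faithful on $\cA^{\conep}$ (with discrete mapping spaces into heart objects) and that $\cA^{\conep}$ generates the target under sifted colimits: full faithfulness of $\Phi$ on all of $\PresSigma(\cA^{\conep})$ requires, in addition, that the objects of $\cA^{\conep}$ be compact-projective \emph{in the $\infty$-categorical sense} in $\Derived(\cA)_{\geq 0}$, i.e.\ that their space-valued mapping functors preserve sifted colimits (this is the recognition criterion of \cite[Prop.~5.5.8.25]{HTT}). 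Your two checks would apply verbatim with target $\cA$ itself — the inclusion $\cA^{\conep}\hookrightarrow\cA$ is fully faithful with discrete homs, and $\cA$ is generated under sifted colimits by its compact $1$-projectives — yet $\PresSigma(\cA^{\conep})\not\simeq\cA$ in general; the discrepancy is exactly the distinction between $1$-projectivity and projectivity emphasized in \cref{warn:1-projective} (this is what animation is). So the real content of the statement is precisely the verification that compact $1$-projectives of $\cA$ become compact-projective in $\Derived(\cA)_{\geq 0}$, and this is where the Dold--Kan argument does its work; it cannot be bypassed by the universal property alone. Moreover, you cannot simply ``cite \cite[\S~1.3.3]{HA} directly'': as the paper notes, \cite[Prop.~1.3.3.14]{HA} concerns an abelian category with enough projectives and does not apply as stated, because the relevant small subcategory (the compact objects of $\cA$) need not be abelian. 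The paper instead reruns that proof for the bounded-below derived category $\Derived_-(\cA)$ of complexes of projectives and then uses that a $1$-projectively generated presentable abelian category is Grothendieck to identify $\Derived_-(\cA)_{\geq 0}$ with $\Derived(\cA)_{\geq 0}$ inside $\Derived(\cA)$ — a step absent from your outline.

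Two smaller points. In your essential-surjectivity sketch, a general connective object is not built from ``finitely many'' cofiber sequences of heart objects; one needs the (filtered colimit of the) whole truncation tower, which again leans on properties of the $t$-structure rather than being purely formal. For part \eqref{enum-item:derived-infty-2}, your route agrees with the paper's: the identification $\PresSigma(\cA^{\conep})\otimes\Spectra \simeq \Ind\Kb(\cA^{\conep})$ follows from \cref{prop:Ksymmetric} and \cref{cor:universalch}, and the passage from the connective part to $\Derived(\cA)$ uses right-completeness of the $t$-structure — but this should be justified by citing \cite[Prop.~1.3.5.21]{HA} (together with \cite[Ex.~4.8.1.23]{HA} for stabilization as tensoring with $\Spectra$) rather than by the slogan that ``negative shifts are cofinal'', which is not an argument.
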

\begin{proof} 
For the first statement, note that $\cA$ has enough projective objects (see \cref{exm:compact-1-proj-abelian}) and let  $\Derived_-(\cA)$ be the dg-nerve of the differential graded category of bounded-below chain complexes of $1$-projective objects (i.e. projective objects in the standard abelian sense). Let $\Derived_{-}(\cA)_{\geq 0}$ be the full subcategory on the chain complexes with vanishing homology in negative degrees. Entirely analogous\footnote{\cite[Prop.~1.3.3.14]{HA} does not apply directly since the full subcategory  $\cA^{\mrc}$ of compact objects in a presentable abelian category $\cA$ is not necessarily itself abelian.} to the proof of \cite[Prop.~1.3.3.14]{HA}, the Dold-Kan correspondence shows that $\Derived_{-}(\cA)_{\geq 0} \simeq \PresSigma(\cA^{\conep})$. 
Since any $1$-projectively generated presentable abelian $1$-category is Grothendieck abelian~ \cite[Def.~1.3.5.1]{HA}, it follows from~\cite[Prop.~1.3.5.24, Def.~1.3.5.8, Prop.~1.3.5.13]{HA} that there is a fully faithful embedding $\Derived_{-}(\cA) \to \Derived(\cA)$ with image  the chain complexes with bounded-below homology. In particular, this embedding identifies $\Derived_{-}(\cA)_{\geq 0}$ with $\Derived(\cA)_{\geq 0}$. 

For the second statement, since the $t$-structure $(\Derived(\cA)_{\leq 0}, \Derived(\cA)_{\geq 0})$ on $\Derived(\cA)$ is right-complete \cite[Prop.~1.3.5.21]{HA}, it follows that $\Derived(\cA)$ is the stabilization of $\Derived(\cA)_{\geq 0}$; since $\Derived(\cA)_{\geq 0} = \PresSigma(\cA^{\conep})$ is presentable this stabilization is given by tensoring with $\Spectra$ by \cite[Ex.~4.8.1.23]{HA}. The equivalence $\PresSigma(\cA^{\conep}) \otimes \Spectra \simeq \Ind \Kb(\cA^{\conep})$ follows then from the definition of  $(-)^{\mathrm{fin}}$ in \cref{prop:Ksymmetric} and its equivalence with $\Kb$ from \cref{cor:universalch}.
\end{proof}

\subsubsection{Derived \texorpdfstring{$\infty$}{infinity}-categories of graded modules}
We return to the main goal of this subsection to give a homological perspective on the constructions of the last sections. Let $\mbbK$ be a discrete commutative ring $k$ and $\Monoid$ a discrete commutative monoid $Z$. Recall the notation $\modkZ$ for the ordinary category of $Z$-graded $k$-modules. Throughout this subsection, we also fix an ordinary (not necessarily commutative) $Z$-graded $k$-algebra $A \in \Alg(\modkZ)$.

\begin{notation}We let $\grmod_A \coloneqq \RMod_A(\modkZ)$ denote the ordinary $1$-category of $Z$-graded right $A$-modules. 
\end{notation}
This category $\mathrm{grmod}_A$ is a $1$-projectively generated, in the sense of \cref{def:compact-1-generation}, presentable abelian $1$-category. A standard computation shows that its compact $1$-projective objects (i.e. its compact projective objects in the usual abelian sense) are precisely given by the graded-compact projective modules, defined as follows.

\begin{definition}\label{def:graded-compact-perfect}
     An (ordinary) $Z$-graded $A$-module $M \in \grmod_A$ is \emph{graded-compact-projective} if it is a retract of a finite direct sums of grading shifts of the free module $A$.
     Let $\grmod_A^{\mathrm{gr-cp}} \subset \grmod_A$ denote the full subcategory on the graded-compact-projective $A$-modules. 
     \end{definition}
In the notation of \cref{def:compact-1-generation}, $\grmod_A^{\mathrm{gr-cp}} = \left(\grmod_A\right)^{\conep}$.

Using \cref{prop:derived-infty}, we can identify the $\infty$-category $\RMod_{HA}(\Mod_{Hk}^{\geq 0, Z})$ as well as its various subcategories in terms of homological algebra:
\begin{prop} \label{prop:gradedmodules}
    Let $Z$ be a discrete monoid, $k$ a discrete commutative ring, and $A$ a discrete $Z$-graded (not necessarily commutative) $k$-algebra. 
    \begin{enumerate}
    \item \label{enum-prop:gradedmod-1} The $\infty$-category $\left(\RMod_{HA}(\Mod_{Hk}^{\geq 0, Z}) \right)^{cp}$ is equivalent to $\mathrm{grmod}_A^{\mathrm{gr-cp}}$. In particular, it is a $1$-category.
    \item \label{enum-prop:gradedmod-2} 
    The $\infty$-category $\left( \RMod_{HA}(\Mod_{Hk}^Z) \right)^{c}$ is equivalent to  the $\infty$-category $\Kb(\mathrm{grmod}_A^{\mathrm{gr-cp}})$.
    \item \label{enum-prop:gradedmod-3}  The $\infty$-category $\RMod_{HA}(\Mod_{Hk}^{\geq 0, Z})$ is equivalent to the $\infty$-category $\Derived(\grmod_A)_{\geq 0}$.
    \item \label{enum-prop:gradedmod-4} The $\infty$-category $\RMod_{HA}(\Mod_{Hk}^{Z})$ is equivalent to the (unbounded) derived $\infty$-category $\Derived(\mathrm{grmod}_A)$.
    \end{enumerate}
    \end{prop}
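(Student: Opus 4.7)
The plan is to prove all four statements together by identifying the $\infty$-categories $\RMod_{HA}(\Mod_{Hk}^{\geq 0, Z})$ and $\RMod_{HA}(\Mod_{Hk}^{Z})$ with $\PresSigma$ and $\Ind$ completions of the ordinary $1$-category $\grmod_A^{\mathrm{gr-cp}}$, and then apply \cref{prop:derived-infty} to recognize these as the derived $\infty$-categories of $\grmod_A$.

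I would start with part \eqref{enum-prop:gradedmod-1}, which is the only statement requiring a genuine computation. By \cref{lem:Modpresentable}, $\RMod_{HA}(\Mod_{Hk}^{\geq 0, Z}) \in \PrLaddcp$, and by part \eqref{item-lem:Modpresentable-7} together with \cref{lem:generation}.\eqref{item-lem:generation-cp}, its full subcategory of compact-projective objects is generated under retracts and finite coproducts by the free modules $Hk\langle n\rangle \otimes HA \simeq HA\langle n\rangle$ for $n \in Z$ (noting that the monoidal unit $Hk$ together with its grading shifts compactly-projectively generates $\Mod_{Hk}^{\geq 0, Z}$, by \cref{obs:adjoint-of-unit} and \cref{lem:generation}). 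The key computation is that the mapping spaces between these shifts are discrete: by the free--forgetful adjunction,
\[
\mathrm{Map}_{HA}(HA\langle n\rangle, HA\langle m\rangle) \simeq \mathrm{Map}_{\Mod_{Hk}^{\geq 0, Z}}(Hk\langle n\rangle, HA\langle m\rangle),
\]
and unpacking Day convolution together with the fact that $Hk, HA$ are Eilenberg--Mac\,Lane spectra, this mapping space identifies with the discrete abelian group $A_{m-n}$. Closing the full subcategory spanned by the $HA\langle n\rangle$ under finite coproducts and retracts then gives a fully faithful, essentially surjective functor from $\grmod_A^{\mathrm{gr-cp}}$ onto $(\RMod_{HA}(\Mod_{Hk}^{\geq 0, Z}))^{\cp}$, proving \eqref{enum-prop:gradedmod-1}.

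Parts \eqref{enum-prop:gradedmod-2}--\eqref{enum-prop:gradedmod-4} would then follow formally. Using the equivalence $\PresSigma \colon \add \xrightarrow{\simeq} \PrLaddcp$ from \cref{prop:indst}, statement \eqref{enum-prop:gradedmod-1} extends to an equivalence $\RMod_{HA}(\Mod_{Hk}^{\geq 0, Z}) \simeq \PresSigma(\grmod_A^{\mathrm{gr-cp}})$. Since $\grmod_A$ is a $1$-projectively generated presentable abelian $1$-category whose full subcategory of compact $1$-projective objects is precisely $\grmod_A^{\mathrm{gr-cp}}$ (see \cref{exm:compact-1-proj-abelian}), \cref{prop:derived-infty}.\eqref{enum-item:derived-infty-1} identifies the right-hand side with $\Derived(\grmod_A)_{\geq 0}$, yielding \eqref{enum-prop:gradedmod-3}. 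For \eqref{enum-prop:gradedmod-4}, \cref{obs:connMod-to-Mod-symmetric} gives $\Mod_{Hk}^Z \simeq \Mod_{Hk}^{\geq 0, Z} \otimes \Spectra$, and then \cref{prop:algprl-new}.\eqref{item-prop:algprl5} yields $\RMod_{HA}(\Mod_{Hk}^Z) \simeq \RMod_{HA}(\Mod_{Hk}^{\geq 0, Z}) \otimes \Spectra$; combined with \cref{prop:derived-infty}.\eqref{enum-item:derived-infty-2}, this gives the equivalence with $\Derived(\grmod_A)$. Finally, \eqref{enum-prop:gradedmod-2} follows by passing to compact objects in \eqref{enum-prop:gradedmod-4} and invoking \cref{prop:PrLcp}.\eqref{item-prop:PrLcp-1} together with the identification $\Ind \Kb(\grmod_A^{\mathrm{gr-cp}}) \simeq \Derived(\grmod_A)$ from \cref{prop:derived-infty}.\eqref{enum-item:derived-infty-2}.

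The main obstacle is the mapping-space computation in part \eqref{enum-prop:gradedmod-1}: one must carefully verify, using the explicit description of Day convolution and the discreteness of $Hk$ and $HA$, that the higher homotopy groups of $\mathrm{Map}_{HA}(HA\langle n\rangle, HA\langle m\rangle)$ vanish and that $\pi_0$ gives the correct graded piece of $A$. Once this is established, all remaining steps are formal consequences of the machinery built up in \cref{sec:stableLA}.
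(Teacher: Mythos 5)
Your proposal follows essentially the same route as the paper's proof: identify the compact-projective objects of $\RMod_{HA}(\Mod_{Hk}^{\geq 0, Z})$ with retracts of finite sums of the shifted free modules $HA\langle n\rangle$ (hence with $\grmod_A^{\mathrm{gr-cp}}$), deduce \eqref{enum-prop:gradedmod-3} via $\PresSigma$-completion and \cref{prop:derived-infty}, obtain \eqref{enum-prop:gradedmod-4} by stabilizing with $\Spectra$, and \eqref{enum-prop:gradedmod-2} by passing to compact objects; your explicit discreteness computation of the mapping spaces is a reasonable way to make precise what the paper leaves implicit in step \eqref{enum-prop:gradedmod-1}. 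One small caveat: \cref{prop:algprl-new}.\eqref{item-prop:algprl5} is stated for \emph{commutative} algebras, while $A$ need not be commutative, so for the base-change step in \eqref{enum-prop:gradedmod-4} you should invoke \cite[Thm.~4.8.4.6]{HA} directly (as the paper does), which applies to $\RMod$ over associative algebras and is the result underlying that proposition anyway.
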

    \begin{proof} 
    The $\infty$-category $\Mod_{Hk}^{\geq 0, Z}  = \Fun(Z, \Mod_{Hk}^{\geq 0})$ is generated by the set of compact $1$-projective objects $Hk[z]$ for $z\in Z$, i.e. the ground ring $k$ in homological degree zero, and grading-degree $z \in Z$. Hence, by \cref{lem:Modpresentable}.\eqref{item-lem:Modpresentable-3}, $\RMod_{HA}(\Mod_{Hk}^{\geq 0, Z})$ is generated by shifted-free modules $HA[z] = HA \otimes_{Hk} Hk[z]$ for $z\in Z$. By \cref{lem:generation}.\eqref{item-lem:generation-cp}, the compact-projective objects of $\RMod_{HA}(\Mod_{Hk}^{\geq 0, Z})$ are retracts of finite direct sums of such modules, and hence are precisely the graded-compact-projective modules. This proves \eqref{enum-prop:gradedmod-1}.
    
    For \eqref{enum-prop:gradedmod-3}, note that  $\RMod_{HA}(\Mod_{Hk}^{\geq 0, Z})$ is projectively generated (see \cref{lem:Modpresentable}), and hence equivalent to \[\PresSigma\left(\RMod_{HA}(\Mod_{Hk}^{\geq 0, Z})^{\cp}\right) = \PresSigma(\grmod_{A}^{\mathrm{gr-cp}}).\] Since $\grmod_A^{\mathrm{gr-cp}}$ is the full subcategory on the compact 1-projectives in the $1$-projectively generated presentable abelian category  $\grmod_A$, it follows from \cref{prop:derived-infty}.\eqref{enum-item:derived-infty-1} that this is equivalent to $\Derived(\grmod_A)_{\geq 0}$.
    
    Statement~\eqref{enum-prop:gradedmod-4} follows from \cref{prop:derived-infty}.\eqref{enum-item:derived-infty-2} since by \cite[Thm. 4.8.4.6]{HA}, $\RMod_{HA}(\Mod_{Hk}^{\geq 0, Z})  \otimes \Spectra \simeq \RMod_{HA}(\Mod_{Hk}^{\geq 0, Z} \otimes \Spectra) \simeq \RMod_{HA} (\Mod_{Hk}^Z).$
        
Statement~\eqref{enum-prop:gradedmod-2} then  follows since $\Derived(\grmod_A) \simeq \Ind(\Kb(\grmod_A^{\mathrm{gr-cp}}))$ by \cref{prop:derived-infty}.\eqref{enum-item:derived-infty-2}. 
 \end{proof}

Motivated by \cref{prop:gradedmodules}, we call the objects in the full subcategory $\Derived(\grmod_A)^{\mrc} \subseteq \Derived(\grmod_A)$ \emph{graded-perfect}.
\begin{remark}\label{rk:graded-perfect}
     Since  $\Derived(\grmod_A)^{\mrc}  \simeq \Kb(\grmod_A^{\mathrm{gr-cp}})$ an object is graded-perfect if it is quasi-isomorphic to a bounded (in either direction) chain complex of graded-compact-projective $A$-modules. 
\end{remark}
\begin{nota}\label{nota:graded-perfect}
We write $\Derived(\grmod_A)^{\mathrm{gr-perf}}\coloneqq \Derived(\grmod_A)^{\mrc}$ for the full subcategory of $\Derived(\grmod_A)$ on the graded-perfect modules.  
\end{nota}

\section{Graded-linear \texorpdfstring{$\infty$}{infinity}-categories and Morita theory}\label{sec:morita-categories}

The goal of this section is twofold: in the first half of this section we  introduce the relevant notions of graded and linear $\infty$-categories, and prove an $\infty$-categorical version of the familiar equivalence between categories enriched in graded modules and categories with an action. We have seen a concrete  $1$-categorical instance already in form of the categories $\overline{\BSbimcl}^{\mathrm{gr}}_n$ and $\BSbimcl_n$ in \cref{sec:2}. 
In the second half of this section, we introduce the Morita categories relevant for the construction of our monoidal $(2,2)$-category $\SBim$.

\subsection{Presentably enriched \texorpdfstring{$\infty$}{infinity}-categories}\label{subsec:presentable-enriched-infty-cats}
\subsubsection{Closed monoidal \texorpdfstring{$\infty$}{infinity}-categories and closed module \texorpdfstring{$\infty$}{infinity}-categories}

\begin{definition}[{\cite[Def. 4.2.1.28]{HA}}]\label{def:morphism-objects}
Let $\VV$ be a (possibly large) monoidal $\infty$-category, and $\cC$ a left $\VV$-module $\infty$-category. A \emph{morphism object} between objects $x,y \in \cC$ is an object $\eHom_{\cC}(x,y) \in \VV$ representing the presheaf $\Hom_{\cC}(- \otimes x, y) \colon \VV^{\op} \to \Spaces$, i.e. equipped with isomorphisms natural in $v\in \VV$
\[
    \Hom_{\VV}(v, \eHom_{\cC}(x,y)) \simeq \Hom_{\cC}(v \otimes x, y).
\]
A $\VV$-module category $\cC$ is \emph{closed} if a morphism object exists between every pair of objects $x, y \in \cC$. A \emph{closed monoidal $\infty$-category} is a monoidal $\infty$-category whose left action on itself is closed. 
\end{definition}

\begin{observation}
\label{prop:adjunction-and-hom} 
    Let $F \colon  \VV \to \WW$ be a monoidal functor from a monoidal $\infty$-category to a closed monoidal $\infty$-category $\WW$ which is left adjoint to a functor $G$. Then, the induced $\VV$-action on $\WW$ is closed with morphism object $G \eHom_{\WW}(w, w') \in \VV$ for $w, w' \in \WW$.
    If $\VV$ is also closed monoidal, then for $v, v' \in \VV$, the map of spaces 
$\Hom_{\VV}(v,v') \to \Hom_{\WW}(Fv, Fv')
$
lifts\footnote{Explicitly,  under the equivalence $\Hom_{\VV}(\eHom_{\VV}(v,v'), G\eHom_{\WW}(Fv, Fv')) \simeq \Hom_{\WW}(F\eHom_{\VV}(v,v'), \eHom_{\WW}(Fv, Fv')) \simeq \Hom_{\WW}(F\eHom_{\VV}(v,v') \otimes Fv, Fv') \simeq\Hom_{\WW}(F(\eHom_{\VV}(v,v')  \otimes v), Fv')$, the morphism becomes the $F$- image of the counit $\eHom_{\VV}(v, v') \otimes v \to v'$.} along $\Hom_{\VV}(I, -)\colon \VV \to \Spaces$ to a $\VV$-morphism
\[
    \eHom_{\VV}(v,v') \to G \eHom_{\WW}(Fv, Fv').
\]

\end{observation}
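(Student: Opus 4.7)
The plan is to verify both claims by a chain of natural equivalences of presheaves combined with the universal properties from \cref{def:morphism-objects}. For the first claim, I would fix $w,w' \in \WW$ and show that $G\eHom_{\WW}(w,w') \in \VV$ represents the presheaf $v \mapsto \Hom_{\WW}(Fv \otimes w, w')$ on $\VV^{\op}$, noting that $Fv \otimes w$ is by definition the induced $\VV$-action of $v$ on $w$. The required chain of natural equivalences is
\[
\Hom_{\VV}(v, G\eHom_{\WW}(w,w'))
\;\simeq\; \Hom_{\WW}(Fv, \eHom_{\WW}(w,w'))
\;\simeq\; \Hom_{\WW}(Fv \otimes w, w'),
\]
where the first equivalence is the $F \dashv G$ adjunction and the second is the closedness of $\WW$. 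Both are natural in $v$ by assumption, so by the $\infty$-categorical Yoneda lemma this exhibits $G\eHom_{\WW}(w,w')$ as a morphism object for the induced $\VV$-action, proving that $\WW$ is closed over $\VV$.

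For the second claim, I would use the universal property of $G\eHom_{\WW}(Fv, Fv')$ as a morphism object, which was just established: a $\VV$-morphism $\eHom_{\VV}(v,v') \to G\eHom_{\WW}(Fv, Fv')$ is the same data as a $\WW$-morphism $F\eHom_{\VV}(v,v') \otimes Fv \to Fv'$. Applying the monoidal coherence isomorphism of $F$ gives $F(\eHom_{\VV}(v,v') \otimes v) \simeq F\eHom_{\VV}(v,v') \otimes Fv$, so it suffices to provide a map $F(\eHom_{\VV}(v,v') \otimes v) \to Fv'$, which I would take to be the $F$-image of the evaluation counit $\eHom_{\VV}(v,v') \otimes v \to v'$ of the self-enrichment of $\VV$. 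To check that this morphism lifts the map $\Hom_{\VV}(v,v') \to \Hom_{\WW}(Fv, Fv')$ induced by $F$, I would apply $\Hom_{\VV}(I, -)$ to the constructed morphism and trace through the identifications: $\Hom_{\VV}(I, \eHom_{\VV}(v,v')) \simeq \Hom_{\VV}(v,v')$ and $\Hom_{\VV}(I, G\eHom_{\WW}(Fv, Fv')) \simeq \Hom_{\WW}(FI, \eHom_{\WW}(Fv, Fv')) \simeq \Hom_{\WW}(Fv, Fv')$, using that $F$ preserves the unit.

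The calculations themselves are formal manipulations with representability and adjunctions. The only point that requires some care is the coherence: the equivalences above are a priori levelwise in $v$, and one has to organize them naturally, which is automatic once everything is phrased in terms of presheaves and the Yoneda lemma. There is no real obstacle; both parts of the statement reduce to the universal properties already packaged in \cref{def:morphism-objects} together with the $F \dashv G$ adjunction and the monoidality of $F$.
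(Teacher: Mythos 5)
Your proposal is correct and matches the paper's own (very brief) argument: the first claim is the same adjunction-plus-closedness chain $\Hom_{\VV}(v, G\eHom_{\WW}(w,w')) \simeq \Hom_{\WW}(Fv, \eHom_{\WW}(w,w')) \simeq \Hom_{\WW}(Fv\otimes w, w')$, and the second claim is exactly the chain of equivalences in the paper's footnote, with the lift given by the $F$-image of the evaluation counit $\eHom_{\VV}(v,v')\otimes v \to v'$. Your extra check that applying $\Hom_{\VV}(I,-)$ recovers the action of $F$ on hom-spaces is a reasonable elaboration of what the paper leaves implicit.
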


\begin{example}\label{exm:presentable-are-closed}
    Let $\VV \in \Alg(\PrL)$ and $\cC \in \LMod_{\VV}(\PrL)$, i.e. $\cC$ is a presentable $\infty$-category with an action $- \otimes -\colon \VV\times \cC \to \cC$ by a presentable monoidal $\infty$-category $\VV$ which is cocontinuous in both variables. It follows from the adjoint functor theorem, \cref{adjfuncthm}, that $\Hom_{\cC}(- \otimes x, y)\colon \VV^{\op} \to \Spaces$ is representable for all $x, y \in \cC$, i.e. that the $\VV$-module category $\cC$ is closed. 
    In particular, any presentably monoidal $\infty$-category is closed monoidal. 
   \end{example}

\begin{example}\label{exm:internalhomcatk}
Let $\cK$ be a set of simplicial sets and recall from  $\cat^{\cK}$ the presentably symmetric monoidal $\infty$-category of $\infty$-categories with $\cK$-colimits and $\cK$-colimit preserving functors. 
For $\cC, \cD \in \cat^{\cK}$, the full subcategory $\Fun^{\cK}(\cC, \cD)$ of $\Fun(\cC, \cD)$ on the $\cK$-colimit preserving functors is closed under $\cK$-colimits~\cite[Rem.~4.8.4.14]{HA} and hence is an object of $\cat^{\cK}$. It follows directly from the characterization of the tensor product in $\cat^{\cK}$, see \cref{prop:adjoiningcolims}, that $\Fun^{\cK}(\cC, \cD) \in \cat^{\cK}$ is the morphism object between $\cC$ and $\cD$ in $\cat^{\cK}$ (cf. proof of~\cite[Lem.~4.8.4.2]{HA}). 
\end{example}

We generalize \cref{exm:internalhomcatk} to module categories using the following terminology.

\begin{notation}\label{nota:catVk}
Let $\cK$ be a small set of simplicial sets, $\VV \in \Alg(\cat^{\cK})$  and $\cC, \cD \in \LMod_{\VV}(\cat^{\cK})$. Let $\Fun_{\VV}(\cC, \cD)$ be the $\infty$-category of $\VV$-module functors~\cite[Def. 4.6.2.7]{HA} and $\Fun^{\cK}_{\VV}(\cC, \cD) \subset \Fun_{\VV}(\cC, \cD)$ the full subcategory on those module functors whose underlying functors preserve $\cK$-colimits. 
\end{notation}

By~\cite[Rem. 4.8.4.14]{HA}, $\Fun^{\cK}_{\VV}(\cC, \cD)$ is closed under $\cK$-colimits, thus an object of $\cat^{\cK}$. 

\begin{lemma} \label{lem:morphism-object-and-restriction}
Let $\cK$ be a small set of simplicial sets and let $\VV \in \Alg(\cat^{\cK})$. Cconsider the  right action of $\cat^{\cK}$ on $\LMod_{\VV}(\cat^{\cK})$. Let $\cC, \cD \in \LMod_{\VV}(\cat^{\cK})$. Then, the following hold.
\begin{enumerate}
\item\label{item-lem:morphism-object-1}
  $\Fun^{\cK}_{\VV}(\cC, \cD)$ is a morphism object in $\cat^{\cK}$ between $\cC, \cD \in \LMod_{\VV}(\cat^{\cK})$.
\item \label{item-lem:morphism-object-2} If $\VV$ is furthermore symmetric monoidal, then $\Fun^{\cK}_{\VV}(\cC, \cD)$ admits a  $\VV$-action which makes it into a  morphism object in $\Mod_{\VV}(\cat^{\cK})$. 
\end{enumerate}\end{lemma}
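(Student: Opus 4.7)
The plan is to verify both statements by direct computation, using the universal property of the tensor product in $\cat^{\cK}$ combined with the observation that imposing $\VV$-equivariance is compatible with currying.

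First, for (1), I would establish for every $\cE \in \cat^{\cK}$ a natural equivalence
\[
\Hom_{\cat^{\cK}}(\cE, \Fun^{\cK}_{\VV}(\cC, \cD)) \simeq \Hom_{\LMod_{\VV}(\cat^{\cK})}(\cC \otimes \cE, \cD),
\]
where $\cC \otimes \cE$ carries the $\VV$-module structure coming from the first factor. By the universal property of $\otimes$ in $\cat^{\cK}$ from \cref{prop:adjoiningcolims}.\eqref{item-prop:adjoiningcolims-1}, the right-hand side identifies with functors $\cC \times \cE \to \cD$ that preserve $\cK$-colimits separately in each variable and are compatible with the $\VV$-action on $\cC$. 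Currying in the $\cE$-slot then produces $\cK$-colimit preserving functors $\cE \to \Fun^{\cK}_{\VV}(\cC, \cD)$, giving the desired representability.

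Second, for (2), when $\VV$ is symmetric monoidal, the $\infty$-category $\Mod_{\VV}(\cat^{\cK})$ inherits a symmetric monoidal structure with relative tensor product $\otimes_{\VV}$ (by a $\cat^{\cK}$-internal version of \cref{prop:algprl-new}.\eqref{item-prop:algprl1}, following the machinery of \cite[\S~4.5]{HA}). This relative tensor product preserves $\cK$-colimits in each variable, so the argument of \cref{exm:internalhomcatk} applied internally to $\Mod_{\VV}(\cat^{\cK})$ produces a morphism object $\eHom^{\VV}(\cC, \cD) \in \Mod_{\VV}(\cat^{\cK})$. To identify its underlying $\cat^{\cK}$-object with $\Fun^{\cK}_{\VV}(\cC, \cD)$, I would apply \cref{prop:adjunction-and-hom} to the symmetric monoidal left adjoint $\VV \otimes - \colon \cat^{\cK} \to \Mod_{\VV}(\cat^{\cK})$, whose right adjoint is the forgetful functor. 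The equivalence $\cC \otimes_{\VV}(\VV \otimes \cE) \simeq \cC \otimes \cE$ in $\Mod_{\VV}(\cat^{\cK})$ together with the representing property from (1) then forces the underlying $\cat^{\cK}$-object of $\eHom^{\VV}(\cC, \cD)$ to agree with $\Fun^{\cK}_{\VV}(\cC, \cD)$ by Yoneda, simultaneously endowing the latter with the desired $\VV$-module structure.

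The principal obstacle will be in (1): making the adjunction fully coherent at the $\infty$-categorical level while keeping track of $\VV$-equivariance through the two-variable universal property. This is ultimately formal, but in practice requires formulating the statement using operadic Kan extensions and the theory of tensor products of module $\infty$-categories from \cite[\S~4.8]{HA}, so that the resulting equivalences automatically carry the needed naturality and monoidal coherences.
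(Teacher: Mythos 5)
Your proposal is correct and takes essentially the same route as the paper: part (1) is the same currying argument via the two-variable universal property of the $\cat^{\cK}$-action, with the coherence handled by the fibrational description of $\VV$-module functors and the pointwise description of $\cK$-colimits from \cite[\S~4.8]{HA} (the paper cites Lem.~4.8.4.12 and 4.8.4.13 for exactly the step you flag as the principal obstacle), and part (2) is precisely the paper's argument, namely \cref{prop:adjunction-and-hom} applied to the free--forgetful adjunction $\cat^{\cK} \rightleftarrows \Mod_{\VV}(\cat^{\cK})$ together with uniqueness of morphism objects.
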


\begin{proof}
We first prove statement \eqref{item-lem:morphism-object-1} for $\cK = \emptyset$.
Consider the locally coCartesian fibration $\cC^{\circledast} \to \VV^{\circledast}$ from  \cite[Not. 4.2.2.17, Lem.~ 4.2.2.20]{HA}  associated to a $\VV$-module category $\cC$. It follows from \cite[Lem.~ 4.8.4.12]{HA} that  $\Fun_{\VV}(\cC, \cD) \subset \Fun_{/\VV^{\circledast}}(\cC^{\circledast}, \cD^{\circledast})$ is the full subcategory on those  functors which preserve locally coCartesian morphisms, where for given functors  $F\colon \cA \to \cB \leftarrow \cC \colon G$ of $\infty$-categories, we let $\Fun_{/ \cB}(\cA, \cC) \coloneqq \Fun(\cA, \cC) \times_{\Fun(\cA, \cB)} \{F\}$ denote the  over-functor category. If $\cC, \cD \in \LMod_{\VV}(\cat)$ and $\cA \in \cat$, the evident equivalence
\[\Fun(\cA, \Fun_{/\VV^\circledast}(\cC^{\circledast}, \cD^{\circledast})) \simeq \Fun_{/\VV^{\circledast}}(\cA \times \cC^{\circledast} , \cD^{\circledast}) \simeq \Fun_{/\VV^{\circledast}}((\cA \times \cC)^{\circledast} , \cD^{\circledast}) 
\]
restricts to an equivalence 
\begin{equation}
\label{eq:wanted-equivalence-of-infty-cats}
\Fun(\cA, \Fun_{\VV}(\cC, \cD)) \simeq \Fun_{\VV}(\cA \times \cC, \cD)
\end{equation} which upon passing to maximal $\infty$-subgroupoids shows that $\Fun_{\VV}(\cC, \cD)$ is the morphism object for the action of $\cat$ on $\LMod_{\VV}(\cat)$. 

Now let $\cK$ be general. Let  $\cA \in \cat^{\cK}$ and $\cC, \cD \in \LMod_{\VV}(\cat^{\cK})$, and let $\otimes$ denote the action of $\cat^{\cK}$ on $\LMod_{\VV}(\cat^{\cK})$. By definition of the action, it induces an equivalence 
    \begin{equation}\label{eq:equivalence-1}
        \Fun^{\cK}_{\VV}(\cA \otimes \cC, \cD) \simeq \Fun^{\cK\times\cK}_{\VV}(\cA \times \cC, \cD),
    \end{equation} 
    where $\Fun^{\cK\times \cK}_{\VV}(\cA \times \cC, \cD) \subset \Fun_{\VV}(\cA \times \cC, \cD)$ denotes the full subcategory of $\VV$-linear functors whose underlying functor $\cA \times \cC \to \cD$ preserves $\cK$-index colimits separately in each variable.
On the other hand, 
    by the description of $\cK$-indexed colimits in $\Fun^{\cK}_{\VV}(\cC, \cD)$ \cite[Lem.~4.8.4.13]{HA}, the equivalence \eqref{eq:wanted-equivalence-of-infty-cats} restricts to an equivalence of full subcategories
    \begin{equation}\label{eq:equivalence-2}
        \Fun^{\cK}(\cA, \Fun^{\cK}_{\VV}(\cC, \cD)) \simeq  \Fun^{\cK, \cK}_{\VV}(\cA \times \cC, \cD).
     \end{equation} 
     Composing \eqref{eq:equivalence-1} and \eqref{eq:equivalence-2} exhibits $\Fun^{\cK}_{\VV}(\cC, \cD)$ as the morphism object of $\cC, \cD$ in $\cat^{\cK}$. This proves part \eqref{item-lem:morphism-object-1}. Part \eqref{item-lem:morphism-object-2} follows now with \cref{prop:adjunction-and-hom} applied to the (symmetric) monoidal left adjoint $\cat^{\cK} \to \Mod_{\VV}(\cat^{\cK})$.
     \end{proof}

\subsubsection{Presentably enriched $\infty$-categories}\label{subsubsec:presentable-enrichment}
    Let $\VV \in \Alg(\PrL)$ and $\cC \in \LMod_{\VV}(\PrL)$, i.e. $\cC$ is a presentable $\infty$-category with an action $- \otimes -\colon \VV\times \cC \to \cC$ by a presentable monoidal $\infty$-category $\VV$ which is cocontinuous in both variables. As in \cref{exm:presentable-are-closed}, it follows from the adjoint functor theorem that the action is closed, i.e. for any pair of objects $x, y \in \cC$, there exists a morphism object $\eHom_{\cC}(x, y ) \in \VV$.  It is shown in~\cite[Cor. 7.4.13]{GH13} that these morphism objects assemble $\cC$ into a $\VV$-enriched $\infty$-category with space of objects $\cC^{\simeq}$, and which we will also denote by $\cC$. By {\cite[Thm. 7.21, Thm. 1.2]{heine23}}, this construction is functorial and multiplicative in the following sense:

    \begin{proposition}\label{prop:heine-presentable-enrichement}
    Let $\VV \in \CAlg(\PrL)$ and let $\hatCat[\VV]$ denote the $\infty$-category of large $\VV$-enriched $\infty$-categories equipped with the enriched tensor product. The construction of an enriched $\infty$-category from a presentable module category then assembles into a lax symmetric monoidal faithful functor
    \[
        \Mod_{\VV}(\PrL) \to \hatCat[\VV].
\]
In particular, this induces a functor \begin{equation} \label{eq:symmetricenrichedfrompresentable} \CAlg(\Mod_{\VV}(\PrL)) \simeq \CAlg(\PrL)_{\VV/} \to \CAlg(\widehat{\Cat}[\VV]).\end{equation}
\end{proposition}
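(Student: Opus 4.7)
The plan is to invoke the results of Heine cited in the statement, but to outline the key structural features that make the proof work. I would proceed in four steps.

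First, I would recall the object-level construction: given $\cC \in \Mod_{\VV}(\PrL)$, the action $\VV \times \cC \to \cC$ is closed because of the adjoint functor theorem (this is \Cref{exm:presentable-are-closed}), so morphism objects $\eHom_{\cC}(x,y) \in \VV$ exist for all $x,y \in \cC$. By \cite[Cor. 7.4.13]{GH13} these assemble $\cC$ into a large $\VV$-enriched $\infty$-category with space of objects $\cC^{\simeq}$. So we have a well-defined map on the level of objects.

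Second, functoriality: lifting the pointwise construction to an $\infty$-functor $\Mod_{\VV}(\PrL) \to \hatCat[\VV]$ is the main technical content, and is precisely the output of \cite[Thm. 7.21]{heine23}. One imagines it by sending a $\VV$-linear cocontinuous functor $F\colon \cC \to \cD$ to the enriched functor whose action on morphism objects $\eHom_{\cC}(x,y) \to \eHom_{\cD}(F(x),F(y))$ is the lift produced by \Cref{prop:adjunction-and-hom} applied to the adjunction $F \dashv F^R$ of $\VV$-linear functors. Faithfulness at the mapping-space level is essentially the statement that a $\VV$-linear cocontinuous functor is determined up to contractible choice by its underlying functor of $\infty$-categories together with its compatibility with morphism objects; this again follows from the explicit formulas in \cite{heine23}.

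Third, the lax symmetric monoidality. The target $\hatCat[\VV]$ is symmetric monoidal via the enriched tensor product of Gepner--Haugseng \cite{GH13}. Given $\cC,\cD \in \Mod_{\VV}(\PrL)$, the universal property of $\cC \otimes_{\VV} \cD$ produces a canonical $\VV$-enriched functor $\cC \boxtimes \cD \to \cC \otimes_{\VV} \cD$ from the enriched tensor product to the underlying enriched category of the tensor product of modules, whose existence encodes the laxness; this is \cite[Thm. 1.2]{heine23}. The main point is that for $c_1,c_2 \in \cC$ and $d_1,d_2 \in \cD$, the natural map $\eHom_{\cC}(c_1,c_2) \otimes \eHom_{\cD}(d_1,d_2) \to \eHom_{\cC \otimes_{\VV} \cD}(c_1 \boxtimes d_1, c_2 \boxtimes d_2)$ makes sense and is compatible with composition. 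The unit constraint $\VV \to \mathrm{End}_{\VV}(\VV)$ uses that $\VV$ itself, as a module over itself, is self-enriched in the canonical way.

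Finally, the consequence for commutative algebras follows formally: a lax symmetric monoidal functor induces a functor on $\CAlg$, see \cite[Rem. 2.1.3.4]{HA}. Combined with the identification $\CAlg(\Mod_{\VV}(\PrL)) \simeq \CAlg(\PrL)_{\VV/}$ of \Cref{prop:algprl-new}.\eqref{item-prop:algprl1}, this yields the displayed functor. The main obstacle is genuinely the functoriality and monoidality in step two and three: the pointwise morphism-object construction is easy, but coherently packaging all the compositions and tensor-compatibility data into a single $\infty$-functor is nontrivial, which is precisely why we rely on the machinery developed in \cite{heine23}.
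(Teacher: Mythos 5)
Your proposal matches the paper's treatment: the paper does not give an independent proof but simply invokes \cite[Thm.~7.21, Thm.~1.2]{heine23} (together with \cite[Cor.~7.4.13]{GH13} for the pointwise construction and the identification $\CAlg(\Mod_{\VV}(\PrL)) \simeq \CAlg(\PrL)_{\VV/}$ from \cref{prop:algprl-new}) exactly as you do, with the CAlg consequence following formally from lax symmetric monoidality. Your additional commentary on how the functoriality, faithfulness, and laxness arise is consistent with the cited machinery, so this is essentially the same argument.
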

The functor~\eqref{eq:symmetricenrichedfrompresentable} will be our meain tool to construct symmetric monoidal enriched $\infty$-categories and symmetric monoidal enriched functors between them. In particular, if $\VV \in \CAlg(\PrL)$, then $\VV$ itself may be considered as self-enriched, i.e. $\VV \in \CAlg(\widehat{\Cat}[\VV])$. 

\begin{notation} We follow~\cite[\S~A.3]{mazelgee2021universal} and call a $\VV$-enriched $\infty$-category $ \cC\in \hatCat[\VV]$  \emph{presentably $\VV$-enriched} if its underlying $\infty$-category is presentable, admits tensors\footnote{A $\VV$-enriched $\infty$-category $\cC$ \emph{admits tensors} if the $\VV$-enriched functor $\eHom_{\VV}(v, \eHom_{\cC}(c, -)) \colon \cC \to \VV$ is corepresentable for all $v\in \VV$ and $c \in \cC$. Denoting the corepresenting objects by $v\otimes c \in \cC$, this induces an action of $\VV$ on the underlying $\infty$-category of $\cC$.}, and if moreover for every $v\in \VV$, the induced functor $v\otimes -\colon \cC \to \cC$ between the underlying $\infty$-categories preserves small colimits. 
\end{notation}

\begin{remark}
\label{rem:presentableenriched}
 Let $\Pr^L_{\VV}$ denote the (non-full) subcategory of $\widehat{\Cat}[\VV]$ on the presentably $\VV$-enriched $\infty$-categories $\cC$  and on those $\VV$-enriched functors that are left adjoint in the $\VV$-enriched sense,\footnote{Equivalently, by~\cite[Lem.~A.2.14]{mazelgee2021universal}, $\VV$-enriched functors whose underlying functor is left adjoint and  preserves the induced $\VV$-action. }see~\cite[Def. A.2.12]{mazelgee2021universal}. Then, it is shown in~\cite[Thm. A.3.8]{mazelgee2021universal} that the functor $\Mod_{\VV}(\PrL) \to \widehat{\Cat}[\VV]$ factors as an equivalence through $\Pr^L_{\VV}$.
 In particular, $\Mod_{\VV}(\PrL) \simeq \Pr^L_{\VV}$ is a \emph{subcategory} of  $\widehat{\Cat}[\VV]$; it is merely a property of large $\VV$-enriched categories and  $\VV$-enriched functors to be in the image of $\Mod_{\VV}(\PrL) \to \hatCat[\VV]$.
\end{remark}

\subsection{Graded linear \texorpdfstring{$\infty$}{infinity}-categories}
\label{z-graded-k-linear-cats}
To incorporate $\mbbK$-linearity for $\mbbK \in \CAlg(\Spectra)$ into the setup, we could define small $\mbbK$-linear $\infty$-categories as small $\infty$-categories enriched in the presentably symmetric monoidal $\infty$-category $\Mod_{\mbbK}$ from \cref{nota:modk}.
Due to \cref{prop:heine-presentable-enrichement} and \cref{rem:presentableenriched}, it is technically easier to work with presentably enriched $\infty$-categories instead, as these can be expressed purely in the language of module categories. Our  `presentable $\mbbK$-linear' terminology is justified by \cref{rem:enrichedlinearity}.

\subsubsection{$\mbbK$-linear \texorpdfstring{$\infty$}{infinity}-categories}
We start with some definitions which are crucial throughout the paper.

\begin{definition} 
\label{def:linearcats}
\newcounter{listcounterx}
For $\mbbK \in \CAlg(\ConnSpectra)$, we define
\begin{enumerate}
\item the $\infty$-category $\PrLaddR$ of \emph{additive presentable $\mbbK$-linear $\infty$-categories} as \[\PrLaddR:= \Mod_{\Mod^{\geq 0}_{\mbbK}}(\PrL);\]
\item the  $\infty$-category $\add_{\mbbK}$ of \emph{small additive, idempotent-complete  $\mbbK$-linear $\infty$-categories} as \[\add_{\mbbK}:=\Mod_{\CProj_{\mbbK}}(\add).\]
\setcounter{listcounterx}{\value{enumi}}
\end{enumerate}
For $\mbbK\in \CAlg(\Spectra)$, we define
\begin{enumerate}
\setcounter{enumi}{\value{listcounterx}}
\item
the $\infty$-category $\PrLstR$ of \emph{stable presentable $\mbbK$-linear $\infty$-categories} as 
\[\PrLstR:= \Mod_{\Mod_{\mbbK}}(\PrL);\]

\item
the  $\infty$-category $\stR$ of \emph{small stable, idempotent-complete  $\mbbK$-linear $\infty$-categories} to be \[\stR:= \Mod_{\Perf_{\mbbK}}(\st).\]
\end{enumerate}
\end{definition}
\begin{remark}
In other words, an additive/stable presentable $\mbbK$-linear $\infty$-category is an additive/stable presentable $\infty$-category $\cC$ with an action by $\Mod_{\mbbK}^{\geq 0}$/$\Mod_{\mbbK}$, so
that the action functor $\Mod_{\mbbK}^{(\geq 0)} \times \cC \to \cC$ 
preserves small colimits in both variables. A small additive/stable idempotent-complete  $\mbbK$-linear $\infty$-category is a small, additive/stable idempotent complete $\infty$-category $\cC$ with
an action by $\CProj_{\mbbK}$ or $\Perf_\mbbK$, respectively so that the action functor $\CProj_{\mbbK}
\times \cC \to \cC$ is additive in either variable, or so that the action functor $\Perf_{\mbbK} \times \cC \to \cC$ is exact in either variable, respectively.
\end{remark}

\begin{remark} \label{rem:enrichedlinearity}
Following \cref{rem:presentableenriched}, an additive presentable $\mbbK$-linear $\infty$-category is precisely a presentably $\Mod_{\mbbK}^{\geq 0}$-enriched $\infty$-category in the sense of \cref{rem:presentableenriched}, i.e. a $\Mod_{\mbbK}^{\geq 0}$-enriched $\infty$-category fulfilling certain presentability properties. Similarly, a stable presentable $\mbbK$-linear $\infty$-category is precisely a presentably $\Mod_{\mbbK}$-enriched $\infty$-category, i.e. a $\Mod_{\mbbK}$-enriched $\infty$-category fulfilling certain presentability properties. 
\end{remark}
The following justifies the terminology `stable/additive presentable $\mbbK$-linear' in \cref{def:linearcats}.
\begin{observation}\label{obs:equivalentklinear}
Since $\Mod_{\mbbK}$ is stable and $\Mod_{\mbbK}^{\geq 0}$ is additive, 
we obtain the following equivalences from \cref{prop:algprl-new}.\eqref{item-prop:algprl3}:
\begin{align*}
\PrLaddR&:=  \Mod_{\Mod^{\geq 0}_{\mbbK}}(\PrL)
\simeq \Mod_{\Mod^{\geq 0}_{\mbbK}}(\PrLadd)\\
\PrLstR & := \Mod_{\Mod_{\mbbK}}(\PrL)\simeq \Mod_{\Mod_{\mbbK}}(\PrLst)
\end{align*}
In particular, any presentably $\Mod_{\mbbK}$-enriched $\infty$-category is automatically stable, and any presentably $\Mod_{\mbbK}^{\geq 0}$-enriched $\infty$-category is automatically additive. 
Combining \cref{prop:algprl-new}.\eqref{item-prop:algprl3} with the equivalences from \S\ref{sec:presentable} and~\cref{sec:stable}, we obtain the analogous characterizations of their small variants:
\begin{align*}
\addR & :=\Mod_{\CProj_{\mbbK}}(\add) \simeq \Mod_{\Mod_{\mbbK}^{\geq 0}}(\PrLaddcp) \simeq \Mod_{\Mod_{\mbbK}^{\geq 0}}(\PrLcp) \simeq  \Mod_{\CProj_{\mbbK}}(\catprod)
\\
\stR &:=\Mod_{\Perf_{\mbbK}}(\st)\simeq \Mod_{\Mod_{\mbbK}}(\PrLstc) \simeq \Mod_{\Mod_{\mbbK}}(\PrLc)  \simeq  \Mod_{\Perf_{\mbbK}}(\catrex)
\end{align*}
\end{observation}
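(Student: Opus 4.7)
The plan is to prove the claimed chain of equivalences as purely formal consequences of the cited results. The main tool is \cref{prop:algprl-new}.\eqref{item-prop:algprl3}, which for an algebra homomorphism $A \to B$ in $\CAlg(\cC)$ gives a symmetric monoidal equivalence $\Mod_B(\Mod_A(\cC)) \simeq \Mod_B(\cC)$, together with the identifications $\PrLadd \simeq \Mod_{\ConnSpectra}(\PrL)$ and $\PrLst \simeq \Mod_{\Spectra}(\PrL)$ recalled in~\S\ref{sec:stable} (and their compactly-generated analogs in \cref{lem:prlstviamod}), and the symmetric monoidal equivalences $\PresSigma \colon \catprod \simeq \PrLcp$, $\Ind \colon \catrex \simeq \PrLc$, $\PresSigma \colon \add \simeq \PrLaddcp$, $\Ind \colon \st \simeq \PrLstc$ from Propositions~\ref{prop:PrLcp} and~\ref{prop:indst}.

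For the first equivalence, I would unpack
\[
    \Mod_{\Mod^{\geq 0}_{\mbbK}}(\PrLadd) \simeq \Mod_{\Mod^{\geq 0}_{\mbbK}}(\Mod_{\ConnSpectra}(\PrL)) \simeq \Mod_{\Mod^{\geq 0}_{\mbbK}}(\PrL) =: \PrLaddR,
\]
where the first equivalence uses the cited identification $\PrLadd \simeq \Mod_{\ConnSpectra}(\PrL)$ (valid since $\Mod^{\geq 0}_{\mbbK}$ is a commutative algebra in $\PrLadd$ by \cref{lem:Modpresentable}.\eqref{item-lem:Modpresentable-8}), and the second equivalence is \cref{prop:algprl-new}.\eqref{item-prop:algprl3} applied to the unit map $\ConnSpectra \to \Mod^{\geq 0}_{\mbbK}$. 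The analogous argument using $\PrLst \simeq \Mod_{\Spectra}(\PrL)$ gives $\PrLstR \simeq \Mod_{\Mod_{\mbbK}}(\PrLst)$.

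For the small additive case, the chain of equivalences unpacks as follows: since $\PresSigma \colon \add \to \PrLaddcp$ is a symmetric monoidal equivalence sending the commutative algebra $\CProj_{\mbbK}$ to $\Mod^{\geq 0}_{\mbbK}$ (by definition and the compatibilities in \S~\ref{subsec:Z-graded-k-linear-modules}), it induces an equivalence $\Mod_{\CProj_{\mbbK}}(\add) \simeq \Mod_{\Mod^{\geq 0}_{\mbbK}}(\PrLaddcp)$. Next, applying \cref{prop:algprl-new}.\eqref{item-prop:algprl3} to the algebra map $\ConnSpectra \to \Mod^{\geq 0}_{\mbbK}$ in $\PrLcp$ together with the equivalence $\PrLaddcp \simeq \Mod_{\ConnSpectra}(\PrLcp)$ of \cref{lem:prlstviamod} yields
\[
    \Mod_{\Mod^{\geq 0}_{\mbbK}}(\PrLaddcp) \simeq \Mod_{\Mod^{\geq 0}_{\mbbK}}(\Mod_{\ConnSpectra}(\PrLcp)) \simeq \Mod_{\Mod^{\geq 0}_{\mbbK}}(\PrLcp).
\]
Finally, using the symmetric monoidal equivalence $\PresSigma \colon \catprod \simeq \PrLcp$ from \cref{prop:PrLcp}, which sends $\CProj_{\mbbK}$ to $\Mod^{\geq 0}_{\mbbK}$, gives $\Mod_{\Mod^{\geq 0}_{\mbbK}}(\PrLcp) \simeq \Mod_{\CProj_{\mbbK}}(\catprod)$. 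The small stable case is handled by the same template, replacing $\PresSigma$ by $\Ind$, $\ConnSpectra$ by $\Spectra$, $\CProj_{\mbbK}$ by $\Perf_{\mbbK}$, and $\Mod^{\geq 0}_{\mbbK}$ by $\Mod_{\mbbK}$ throughout.

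There is no real obstacle here beyond bookkeeping: every step is a direct application of a cited symmetric monoidal equivalence or of the module-category base-change formula. The only subtle point is checking that $\Mod^{\geq 0}_{\mbbK}$ (resp. $\Mod_{\mbbK}$) is naturally a commutative algebra in the target $\infty$-category at each step, which is guaranteed by the module category results of \cref{lem:Modpresentable}. The last sentence of the observation, concerning automatic additivity/stability of presentably enriched categories, then follows immediately from the fact that any left module over an additive (resp. stable) presentable $\infty$-category in $\PrL$ is itself additive (resp. stable).
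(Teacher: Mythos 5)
Your proof is correct and follows essentially the same route the paper intends: the observation is just the base-change equivalence $\Mod_B(\Mod_A(\cC))\simeq\Mod_B(\cC)$ of \cref{prop:algprl-new}.\eqref{item-prop:algprl3} applied to the unit maps $\ConnSpectra\to\Mod^{\geq 0}_{\mbbK}$ and $\Spectra\to\Mod_{\mbbK}$ in $\PrL$, $\PrLcp$, $\PrLc$, combined with the identifications $\PrLadd\simeq\Mod_{\ConnSpectra}(\PrL)$, $\PrLst\simeq\Mod_{\Spectra}(\PrL)$, \cref{lem:prlstviamod}, and the symmetric monoidal equivalences $\PresSigma$, $\Ind$ transporting $\CProj_{\mbbK}$ to $\Mod^{\geq 0}_{\mbbK}$ and $\Perf_{\mbbK}$ to $\Mod_{\mbbK}$, exactly as you spell out. (Only a cosmetic remark: the fact that $\Mod^{\geq 0}_{\mbbK}$ is a commutative algebra over $\ConnSpectra$ is most directly \cref{prop:algprl-new}.\eqref{item-prop:algprl1}, i.e.\ $\CAlg(\Mod_A(\cC))\simeq\CAlg(\cC)_{A/}$, rather than \cref{lem:Modpresentable}.\eqref{item-lem:Modpresentable-8}, though both suffice.)
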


The next remark covers our main case of interest and connects to the framework from \cref{sec:2}. 
\begin{remark}In case $\mbbK= Hk$ for a field $k$ of characteristic zero,
    it follows from~\cite{Cohn} that $\st_k\coloneqq \st_{Hk}$ is the localization of the ordinary $1$-category $\mathrm{dgCat}^{\mathrm{idem}, \mathrm{pretriang}}_k$ of small idempotent-complete pretriangulated dg-categories at the quasi-equivalences, i.e. those dg-functors which induces triangulated equivalences on homotopy categories. Hence, the reader may consider $\st_k$ as our $\infty$-categorical stand-in for the theory of dg-categories. In practice, the localization functor $\mathrm{dgCat}^{\mathrm{idem}, \mathrm{pretriang}}_k \to \st_k$ provides an easy way
    to construct objects and morphisms of $\st_k$. 
    \end{remark}

\begin{observation}As $\infty$-categories of modules of commutative algebras in presentably symmetric monoidal $\infty$-categories, both $\addR$ and $\stR$ are presentably symmetric monoidal. The symmetric monoidal structure on $\add_{\mbbK}$ can be characterized as follows: for $\cC, \cD \in \add_{\mbbK}$,  there is a functor $\cC \times \cD \to \cC \otimes \cD$ which is additive and $\mbbK$-linear in either variable, and which for all $\cE \in \add_{\mbbK}$ induces an equivalence between the $\infty$-category of additive $\mbbK$-linear functors $\cC \otimes \cD \to \cE$ and the $\infty$-category of functors $\cC \times \cD \to \cE$ that are additive and $\mbbK$-linear in either variable. An analogous characterization with additive replaced by exact holds for $\st_{\mbbK}$.
\end{observation}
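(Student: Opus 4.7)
The plan is to derive both assertions from \cref{prop:algprl-new} applied to the descriptions $\add_{\mbbK} = \Mod_{\CProj_{\mbbK}}(\add)$ and $\st_{\mbbK} = \Mod_{\Perf_{\mbbK}}(\st)$ from \cref{def:linearcats}. The first step is to recognize that $\CProj_{\mbbK}$ and $\Perf_{\mbbK}$ are indeed commutative algebra objects in $\add$ and $\st$ respectively: under the symmetric monoidal equivalences $\PresSigma\colon \add \xrightarrow{\simeq} \PrLaddcp$ and $\Ind\colon \st \xrightarrow{\simeq} \PrLstc$ of \cref{prop:indst}, they correspond to $\Mod^{\geq 0}_{\mbbK}$ and $\Mod_{\mbbK}$, which are commutative algebras in $\PrLaddcp$ and $\PrLstc$ respectively. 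Since $\add$ and $\st$ were already shown to be presentably symmetric monoidal in \cref{sec:stable}, applying \cref{prop:algprl-new}.\eqref{item-prop:algprl1} immediately yields that $\add_{\mbbK}$ and $\st_{\mbbK}$ inherit presentably symmetric monoidal structures, with tensor products given by the respective relative tensor products $\otimes_{\CProj_{\mbbK}}$ and $\otimes_{\Perf_{\mbbK}}$.

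For the universal property, I would combine two layers of universal properties. For $\cC, \cD \in \add_{\mbbK}$, the canonical functor $\cC \times \cD \to \cC \otimes_{\CProj_{\mbbK}} \cD$ is built from the functor $\cC \times \cD \to \cC \otimes_{\add} \cD$ into the $\add$-tensor product (whose characterization in terms of biadditive functors was recorded just before the observation) by further passing to the $\CProj_{\mbbK}$-relative tensor product. The universal property of the relative tensor product in $\Mod_{\CProj_{\mbbK}}(\add)$, coming from its presentation as a bar construction, identifies additive $\mbbK$-linear (i.e., $\CProj_{\mbbK}$-module) functors $\cC \otimes_{\CProj_{\mbbK}} \cD \to \cE$ with $\CProj_{\mbbK}$-balanced biadditive functors $\cC \times \cD \to \cE$. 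Unwrapping the $\CProj_{\mbbK}$-balancing condition against the two commuting $\CProj_{\mbbK}$-actions on $\cC$ and $\cD$ shows that it is precisely the condition of being $\mbbK$-linear in each variable separately, which yields the claimed characterization.

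The stable case proceeds verbatim with $\st$, $\Perf_{\mbbK}$, and ``exact'' replacing $\add$, $\CProj_{\mbbK}$, and ``additive'' throughout. Since the argument consists entirely of chaining together universal properties already established in earlier sections, I do not expect a substantive obstacle; the one place where care is needed is in the explicit identification of a $\CProj_{\mbbK}$-balanced biadditive functor with one that is additive and $\mbbK$-linear in each variable separately, which is a direct translation between the module- and bilinearity-theoretic descriptions of the relative tensor product.
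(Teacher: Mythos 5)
Your derivation of the presentably symmetric monoidal structure via \cref{prop:algprl-new}.\eqref{item-prop:algprl1} applied to $\CProj_{\mbbK} \in \CAlg(\add)$ and $\Perf_{\mbbK} \in \CAlg(\st)$ is exactly right, and matches the intent of the observation (which the paper states without proof). However, your argument for the universal property has a genuine gap in the chain bar construction $\Rightarrow$ balanced $\Rightarrow$ bilinear.

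First, the bar construction for $\cC \otimes_{\CProj_{\mbbK}} \cD$ identifies the space of \emph{plain} additive functors out of the relative tensor product (into the underlying additive category of $\cE$) as a totalization over the bar resolution---which is what one would reasonably call $\CProj_{\mbbK}$\emph{-balanced biadditive functors}. It does \emph{not} directly compute the space of additive $\mbbK$-linear functors $\cC \otimes_{\CProj_{\mbbK}} \cD \to \cE$ as balanced biadditive functors; these are different hom-spaces, since the relative tensor product is not a free $\CProj_{\mbbK}$-module. Second, even granting that, ``balanced'' and ``bilinear'' are genuinely different conditions. Already in the $1$-categorical toy model $R = \mathbb{Z}[x]$, $M = N = P = R$: the map $(m,n) \mapsto (mn)(0)$ is $R$-balanced and biadditive but not $R$-bilinear, and the induced map $R = M \otimes_R N \to P$, $a \mapsto a(0)$, is additive but not $R$-linear. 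So ``unwrapping the balancing condition against the two commuting actions'' does not produce bilinearity. The two imprecisions happen to compensate to the true conclusion, but the route as written is not valid.

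A correct argument uses that $\add_{\mbbK}$ is a \emph{closed} presentably symmetric monoidal $\infty$-category, so that
\[
\Fun^{\mathrm{add}}_{\CProj_{\mbbK}}(\cC \otimes \cD, \cE) \simeq \Fun^{\mathrm{add}}_{\CProj_{\mbbK}}\bigl(\cC, \underline{\Fun}^{\mathrm{add}}_{\CProj_{\mbbK}}(\cD, \cE)\bigr),
\]
where $\underline{\Fun}^{\mathrm{add}}_{\CProj_{\mbbK}}(\cD,\cE)$ denotes the internal hom in $\add_{\mbbK}$ (the additive $\mbbK$-linear functor category with $\CProj_{\mbbK}$-action inherited from $\cE$). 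Currying twice then identifies the right-hand side with the $\infty$-category of functors $\cC \times \cD \to \cE$ that are additive and $\CProj_{\mbbK}$-linear in each variable, which is what the observation asserts. The stable case is the same with $\Perf_{\mbbK}$ and exact functors.
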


    \begin{prop}\label{prop:Klinear} 
        Let $\mbbK \in \CAlg(\ConnSpectra)$. Recall the functor $\Kb  \colon \st \to \add$ from \cref{nota:K-as-fin}.
        \begin{enumerate}
            \item 
        This functor induces a symmetric monoidal functor $\Kb \colon st_{\mbbK} \to \add_{\mbbK}$ which is left adjoint to the forgetful functor $\add_{\mbbK} \to \st_{\mbbK}$. 
        \item For $\cC \in \add_{\mbbK}$,  the unit of the adjunction $\cC \to \Kb(\cC)$ is fully faithful. 
           \end{enumerate}
        \end{prop}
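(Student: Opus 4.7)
The plan is to derive the $\mbbK$-linear adjunction from the non-linear one in \cref{prop:Ksymmetric} by applying the module-category functoriality of \cref{prop:algprl-new}.\eqref{item-prop:algprl4} to the commutative algebra $\CProj_{\mbbK}\in\CAlg(\add)$, using \cref{prop:KProj} to identify $\Kb(\CProj_{\mbbK})\simeq \Perf_{\mbbK}$.

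Concretely, I will proceed as follows. By \cref{prop:Ksymmetric}, the functor $\Kb\colon\add\to\st$ is a symmetric monoidal left adjoint to the forgetful functor $U\colon\st\to\add$, i.e., a morphism in $\CAlg(\PrL)$. Applying \cref{prop:algprl-new}.\eqref{item-prop:algprl4} to $\Kb$ and to the commutative algebra $\CProj_{\mbbK}\in\CAlg(\add)$ produces a morphism in $\CAlg(\PrL)$
\[
\Kb\colon \add_{\mbbK}=\Mod_{\CProj_{\mbbK}}(\add)\longrightarrow \Mod_{\Kb(\CProj_{\mbbK})}(\st)\simeq \Mod_{\Perf_{\mbbK}}(\st)=\st_{\mbbK},
\]
where the middle equivalence is \cref{prop:KProj}. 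Since it lies in $\CAlg(\PrL)$, this functor is symmetric monoidal and preserves all small colimits, hence is a left adjoint by the adjoint functor theorem \cref{adjfuncthm}.

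It then remains to identify its right adjoint with the forgetful functor $\st_{\mbbK}\to\add_{\mbbK}$. The right adjoint is induced by $U\colon\st\to\add$ together with restriction of scalars along the unit $\CProj_{\mbbK}\to U(\Perf_{\mbbK})$ of the adjunction $\Kb\dashv U$; since the underlying additive $\infty$-category of $\Perf_{\mbbK}$ receives $\CProj_{\mbbK}$ via the tautological inclusion of compact-projectives into perfect $\mbbK$-modules, this composite sends a stable $\mbbK$-linear $\infty$-category to its underlying additive idempotent-complete $\infty$-category equipped with the restricted $\CProj_{\mbbK}$-action, which is precisely the forgetful functor.

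For the second statement, the unit $\cC\to\Kb(\cC)$ in $\add_{\mbbK}$ has, as underlying additive functor, the unit of the adjunction $\Kb\dashv U$ from \cref{prop:Ksymmetric}, which is fully faithful there; since fully faithfulness is a property of the underlying functor between $\infty$-categories, this suffices. No step is particularly obstructive; the only care required is in matching the abstractly induced right adjoint on modules with the concrete ``forget stability and restrict scalars'' description of the forgetful functor, and this matching is automatic once one observes that $\CProj_{\mbbK}\to U(\Perf_{\mbbK})$ is the tautological inclusion.
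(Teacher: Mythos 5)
Your proposal is correct and follows essentially the same route as the paper: apply \cref{prop:algprl-new}.\eqref{item-prop:algprl4} to the symmetric monoidal left adjoint $\Kb\colon\add\to\st$ and the algebra $\CProj_{\mbbK}$, identify $\Kb(\CProj_{\mbbK})\simeq\Perf_{\mbbK}$ via \cref{prop:KProj}, and deduce fully faithfulness of the unit from \cref{prop:Ksymmetric}. Your extra check that the abstractly induced right adjoint agrees with the forgetful functor is a reasonable elaboration of a point the paper leaves implicit.
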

        \begin{proof}
        By \cref{prop:algprl-new}.\eqref{item-prop:algprl4}, the symmetric monoidal left adjoint $\Kb: \add\to \st$ induces a symmetric monoidal left adjoint functor $\add_{\mbbK} = \Mod_{\CProj_{\mbbK}}(\add) \to \Mod_{\Kb(\CProj_{\mbbK})} (\st)$. Composing with the equivalence $\Kb(\CProj_{\mbbK}) \simeq \Perf_{\mbbK}$ from \cref{prop:KProj} results in the desired functor proing the first part.
        Fully faithfulness of the unit of the adjunction follows from \cref{prop:Ksymmetric}.
        \end{proof}

\subsubsection{ \texorpdfstring{$\infty$}{infinity}-categories enriched in graded modules}

     \cref{rem:presentableenriched} motivates the following terminology:
    \begin{definition} 
Let $\Monoid$ be a homotopy coherent abelian monoid, i.e. $\Monoid \in \CAlg(\Spaces)$.\\
For $\mbbK\in \CAlg(\ConnSpectra)$, we define 
\begin{enumerate}
\item the $\infty$-category $\PrLaddRG$ of \emph{presentably $\Mod_{\mbbK}^{\geq 0, \Monoid}$-enriched $\infty$-categories},  
$$\PrLaddRG \coloneqq \Mod_{{\Mod_{\mbbK}^{\geq 0,\Monoid}}}(\PrL) ~;$$
\item the $\infty$-category $\PrLaddRGcp$ of \emph{projectively generated presentably $\Mod_{\mbbK}^{\geq 0, \Monoid }$-enriched $\infty$-categories},  
$$\PrLaddRGcp \coloneqq \Mod_{{\Mod_{\mbbK}^{\geq 0,\Monoid}}}(\PrLcp) ~. $$
\setcounter{listcounterx}{\value{enumi}}
\end{enumerate} 

For $\mbbK\in \CAlg(\Spectra)$, we define 
\begin{enumerate}
\setcounter{enumi}{\value{listcounterx}}
\item the $\infty$-category $\PrLstRG$ of \emph{presentably $\Mod_{\mbbK}^{ \Monoid}$-enriched $\infty$-categories}, 
$$\PrLstRG \coloneqq \Mod_{{\Mod_{\mbbK}^{\Monoid}}}(\PrL) ~;$$
\item the $\infty$-category $\PrLstRGc$ of \emph{compactly generated presentably $\Mod_{\mbbK}^{\Monoid}$-enriched $\infty$-categories}, 
$$\PrLstRGc \coloneqq \Mod_{{\Mod_{\mbbK}^{\Monoid}}}(\PrLc)~. $$
\end{enumerate} 
\end{definition}
    As $\infty$-categories of modules of commutative algebras in presentably symmetric monoidal categories, both, $\PrLaddRGcp$ and $\PrLstRGc$, are presentably symmetric monoidal $\infty$-categories.

\begin{remark}
\label{rem:RGequivalent}
As in \cref{obs:equivalentklinear}, any presentably $\Mod_{\mbbK}^{\geq 0,\Monoid}$-enriched $\infty$-category is additive and any presentably $\Mod_{\mbbK}^{\Monoid}$-enriched $\infty$-category is stable, and we have the following equivalences:
\begin{align*}
\PrLaddRG& \coloneqq  \Mod_{\Mod^{\geq 0,\Monoid}_{\mbbK}}(\PrL) \simeq \Mod_{\Mod^{\geq 0,\Monoid}_{\mbbK}}(\PrLadd)
\\
\PrLstRG & \coloneqq  \Mod_{\Mod^{\Monoid}_{\mbbK}}(\PrL)\simeq\Mod_{\Mod^{\Monoid}_{\mbbK}}(\PrLst)
\end{align*}
Similarly, we have the following equivalences for their small variants, abbreviating $M=\Mod_{\mbbK}^{\geq 0,  \Monoid}$:
\begin{align*}
\PrLaddRGcp & \coloneqq\Mod_{\Mod_{\mbbK}^{\geq 0, \Monoid}}(\PrLcp) \simeq \Mod_{M}(\PrLaddcp) \simeq \Mod_{M^{\mathrm{cp}}}(\add) \simeq  \Mod_{M^{\mathrm{cp}}}(\catprod) 
\\
\PrLstRGc &\coloneqq \Mod_{\Mod_{\mbbK}^{\Monoid}}(\PrLc) \simeq \Mod_{\Mod_{\mbbK}^{\Monoid}}(\PrLstc) \simeq \Mod_{\left(\Mod_{\mbbK}^{\Monoid}\right)^{\mathrm{c}}}(\st)\simeq \Mod_{\left(\Mod_{\mbbK}^{\Monoid}\right)^{\mathrm{c}}}(\catrex)\end{align*}
\end{remark}

\begin{example}
As discussed in \cref{exm:gradedderived}, if $\Monoid$ is a discrete (i.e. ordinary) commutative monoid $Z$, then $(\Mod_{\mbbK}^{\Monoid})^{\cp}$ and $(\Mod_{\mbbK}^{\Monoid})^c$ are the $\infty$-categories of \emph{finitely supported} functors $\Fun^{\text{fin.supp.}}(Z, \CProj_{\mbbK})$ and $\Fun^{\text{fin.supp.}}(Z, \Perf_{\mbbK})$, i.e. of functors that vanish on all but finitely many elements of $Z$. 
In particular, in the case of grading by a discrete monoid $Z$, we obtain the following equivalences:\begin{align*}
\mathrm{Pr}^{\mathrm{L},\cp}_{\mathrm{Mod}_{\mbbK}^{\geq 0, Z}} &\simeq \Mod_{\Fun^{\text{fin.supp.}}(Z, \CProj_{\mbbK})}(\add)
\\
\mathrm{Pr}^{\mathrm{L},\mathrm{c}}_{\mathrm{Mod}_{\mbbK}^{ Z}} &\simeq \Mod_{\Fun^{\text{fin.supp.}}(Z, \Perf_{\mbbK})}(\st)
\end{align*}
\end{example}

\subsection{From gradings to actions}
\label{subsubsec:gradingstoactions}

Given a ring $k$, and a $k$-linear category $\cC$ with an action by a discrete monoid $Z$, then the category $\cC$ is canonically enriched in the ordinary category $\mathrm{mod}_k^Z$ of $Z$-graded $k$-modules. Indeed, given objects $c, d\in \cC$ we define the $k$-module of degree-$z$ morphisms to be
\[
\hom_{\cC}(c,d)_z \coloneqq \hom_{\cC}(c, d[z]) 
\]
where $(-)[z] \colon \cC \to \cC$ denotes the action of $z\in Z$ on $\cC$. 
Conversely, if $\cC$ is a category enriched in $Z$-graded $k$-modules and if moreover for every $z\in Z$, the inner-hom functor $\eHom_{\cC}(c,-) \colon \cC \to \mathrm{mod}_k^Z$ is corepresentable (e.g. if $\cC$ is presentably enriched), then the enrichment arises from a $Z$-action on the underlying category.
These constructions provide an equivalence between the category of presentable $k$-linear categories with an $Z$-action and the category of categories presentably enriched in $Z$-graded $k$-modules.
In this section, we generalize these constructions to our $\infty$-categorical setting. 

 \begin{definition}\label{def:gradedcats}
     Let $J\in \CAlg(\cat)$.
     \begin{enumerate}
     \item For $\mbbK \in \CAlg(\ConnSpectra)$, we define the $\infty$-category $\add_{\mbbK}^J$ of \emph{$J$-graded additive, idempotent-complete $\mbbK$-linear categories} as \[\add_{\mbbK}^J\coloneqq \Fun(J^{\op}, \add_{\mbbK}).\]
     \item  For $\mbbK \in \CAlg(\Spectra)$, we define the $\infty$-category $\st_{\mbbK}^J$ of \emph{$J$-graded stable, idempotent-complete $\mbbK$-linear categories} as \[\st_{\mbbK}^J \coloneqq \Fun(J^{\op}, \stR).\]
     \end{enumerate} \end{definition}
     We need the following compatibilities of structures with the functor  $\Kb$ from \cref{prop:Klinear}. 
     \begin{prop}
         \label{prop:presentablestKI}
         Let $J\in \CAlg(\cat)$. 
         \begin{enumerate}
         \item Day convolution induces presentably symmetric monoidal  structures on $\add_{\mbbK}^J$ and on $\st_\mbbK^J$  for $\mbbK \in \CAlg(\ConnSpectra)$ and $\mbbK \in \CAlg(\Spectra)$, respectively.
         \item \label{item-prop:presentablestKI-1}
         Composing with the symmetric monoidal left adjoint $\Kb \colon  \add_{\mbbK} \to \st_{\mbbK}$  from \cref{prop:Klinear} induces a symmetric monoidal functor \[\Kb: \add_{\mbbK}^J = \Fun(J^{\op}, \add_{\mbbK}) \to \Fun(J^{\op}, \st_{\mbbK}) =\st_{\mbbK}^J\] left adjoint to the forgetful functor. 
         Moreover, for $\cC \in \add_{\mbbK}^J = \Fun(J^{\op}, \add_{\mbbK})$,  the unit of the adjunction $\cC\to \Kb(\cC)$ is pointwise (i.e. for every $j\in J$) fully faithful. 
         \end{enumerate}
         \end{prop}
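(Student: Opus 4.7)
For the first statement, both $\add_{\mbbK}$ and $\st_{\mbbK}$ are presentably symmetric monoidal, i.e.\ commutative algebra objects in $\PrL$, by construction (\cref{def:linearcats}, \cref{obs:equivalentklinear}). I would therefore invoke \cref{cons:convolution} directly: given any $J \in \CAlg(\cat)$ and any presentably symmetric monoidal $\cC \in \CAlg(\PrL)$, the Day convolution structure furnishes $\Fun(J^{\op}, \cC)$ with a presentably symmetric monoidal structure. Taking $\cC = \add_{\mbbK}$ and $\cC = \st_{\mbbK}$ gives the claimed structures on $\add_{\mbbK}^J$ and $\st_{\mbbK}^J$.

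For the second statement, the key observation from \cref{cons:convolution} (and the preceding \cref{lem:funcat}) is that the Day convolution structure on $\Fun(J^{\op}, \cC)$ coincides with the tensor product $\cC \otimes \Pres(J)$ in $\CAlg(\PrL)$. This identification is natural in morphisms of $\CAlg(\PrL)$ in the $\cC$-slot, so applying it to the symmetric monoidal left adjoint $\Kb \colon \add_{\mbbK} \to \st_{\mbbK}$ from \cref{prop:Klinear} (a morphism in $\CAlg(\PrL)$) produces a symmetric monoidal left adjoint
\[
\Kb \otimes \Pres(J) \colon \add_{\mbbK} \otimes \Pres(J) \longrightarrow \st_{\mbbK} \otimes \Pres(J)
\]
in $\CAlg(\PrL)$, which under the identification above is precisely postcomposition with $\Kb$. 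Its right adjoint, obtained by tensoring with $\Pres(J)$ the right adjoint from \cref{prop:Klinear}, is similarly identified with postcomposition with the forgetful functor $\st_{\mbbK} \to \add_{\mbbK}$; but this is by definition the forgetful functor $\st_{\mbbK}^J \to \add_{\mbbK}^J$.

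Finally, pointwise fully faithfulness of the unit follows formally from the fact that the adjunction is obtained by postcomposition: at any $\cC \in \add_{\mbbK}^J$, the unit $\cC \to \Kb(\cC)$ (viewed as a natural transformation into the composite with the forgetful functor) has as its $j$-component precisely the unit $\cC(j) \to \Kb(\cC(j))$ of the underlying adjunction, which is fully faithful by \cref{prop:Klinear}. There is no genuine obstacle here; the entire proof is a structural deduction from results already in place, the main point being to exploit that Day convolution is literally a tensor product construction in $\CAlg(\PrL)$ and is therefore covariantly functorial in symmetric monoidal left adjoints of the target.
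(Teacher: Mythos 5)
Your proposal is correct and takes essentially the same route as the paper: Day convolution via \cref{cons:convolution} for the first part, the identification of postcomposition with $\Kb$ as $\Kb \otimes \id_{\Pres(J)}$ under the equivalence $\Fun(J^{\op},\cC)\simeq \cC\otimes\Pres(J)$ of \cref{lem:funcat} for the second, and pointwise fully faithfulness of the unit from \cref{prop:Klinear}. One small caveat: justifying the right adjoint by ``tensoring the right adjoint from \cref{prop:Klinear} with $\Pres(J)$'' is not quite legitimate, since the forgetful functor $\st_{\mbbK}\to\add_{\mbbK}$ is not a morphism in $\PrL$; instead identify the right adjoint directly as pointwise postcomposition with the forgetful functor (whiskering of the adjunction on functor categories), which is also exactly what exhibits the unit at $\cC$ as the pointwise unit $\cC_j\to\Kb(\cC_j)$.
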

         \begin{proof}
         Since $\st_{\mbbK}$ and $\add_{\mbbK}$ are in $\CAlg(\PrL)$ by \cref{prop:Klinear} and $J\in \CAlg(\cat)$,  \cref{cons:convolution} induces a presentably symmetric monoidal structure on $\Fun(J^{\op}, \st_{\mbbK})$ and $\Fun(J^{\op}, \add_{\mbbK})$. 
        Under the equivalence $\Fun(J^{\op}, \add_{\mbbK}) \simeq \add_{\mbbK} \otimes \Pres(J)$ of \cref{lem:funcat}, the postcomposition functor becomes the functor $\Kb \otimes \id_{\Pres(J)}$  and hence is a morphism in $\CAlg(\PrL)$.
        Given $\cC\in \add_{\mbbK}^J$, i.e. $\cC_{-}\colon J^{\op} \to \add_{\mbbK}$, the unit of the adjunction $\cC \to \Kb(\cC)$ is given by the natural transformation which at an object $j\in J$ is the unit $\cC_j \to \Kb(\cC_j)$ of the adjunction $\Kb\colon \add_{\mbbK} \to \st_{\mbbK}$. This is fully faithful by \cref{prop:Klinear}.  \end{proof}

We are interested in $\infty$-categories with an action by a commutative monoid. For $\Monoid \in \CAlg(\Spaces)$, let $B \Monoid \in \CAlg(\cat)$ denote its delooped symmetric monoidal $\infty$-category\footnote{If $\Monoid$ is a \emph{grouplike} commutative monoid, the delooping $B\Monoid$ is simply the classifying space of $\Monoid$ with its induced grouplike commutative monoid structure.}. Since $\Monoid$ is commutative, there is a symmetric monoidal equivalence $B\Monoid \simeq B\Monoid^{\op}$. Then an object of $\add_{\mbbK}^{B\Monoid} = \Fun(B\Monoid, \add_{\mbbK})$ is precisely a small additive, idempotent complete $\mbbK$-linear $\infty$-category with an action by $\Monoid$ via $\mbbK$-linear additive functors (and similarly for $\st_{\mbbK}^{B\Monoid}$).

The equivalence between gradings and actions derives from the following proposition:
\begin{prop} 
\label{prop:actiongrading}
Let $\Monoid \in \CAlg(\Spaces)$ with delooping $B\Monoid \in \CAlg(\cat)$. Then, there is a symmetric monoidal equivalence between $\Fun(B\Monoid, \Spaces)$ with its Day convolution symmetric monoidal structure and $\Mod_{\Monoid}(\Spaces)$ with symmetric monoidal structure given by relative tensor product over $\Monoid$.\end{prop}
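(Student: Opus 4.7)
The plan is to construct a symmetric monoidal comparison functor via universal properties and then to check by hand that it is an equivalence. First, by \cref{lem:funcat} and \cref{cons:convolution}, $\Fun(B\Monoid, \Spaces)$ with Day convolution is equivalent in $\CAlg(\PrL)$ to $\Pres(B\Monoid^{\op})$, whose symmetric monoidal structure is inherited from the symmetric monoidal functor $\Pres \colon \CAlg(\cat) \to \CAlg(\PrL)$ of \cref{prop:prlsym}. Because $\Monoid$ is commutative, the symmetry isomorphism gives a canonical symmetric monoidal equivalence $B\Monoid \simeq B\Monoid^{\op}$ in $\CAlg(\cat)$, so we may rewrite the left-hand side as $\Pres(B\Monoid)$.

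Next, I would construct a symmetric monoidal functor $F \colon B\Monoid \to \Mod_\Monoid(\Spaces)$ in $\CAlg(\cat)$ that sends the unique object of $B\Monoid$ to the free rank-one module $\Monoid \in \Mod_\Monoid(\Spaces)$ (which is the unit of the relative tensor product) and sends the endomorphism $\EE_\infty$-space $\Monoid = \End_{B\Monoid}(*)$ to the $\Monoid$-linear endomorphisms of the free module, canonically identified with $\Monoid$. By the symmetric monoidal universal property of $\Pres$, $F$ extends uniquely to a morphism $\tilde F \colon \Pres(B\Monoid) \to \Mod_\Monoid(\Spaces)$ in $\CAlg(\PrL)$, which is the candidate for the claimed equivalence.

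It then remains to verify that $\tilde F$ is an equivalence of underlying $\infty$-categories. The functor $\tilde F$ is cocontinuous. Representables in $\Pres(B\Monoid)$ are tiny, since $\Hom_{\Pres(B\Monoid)}(y(c), -) \simeq \ev_c$ preserves all small colimits (colimits in presheaf categories being pointwise). The image of the unique representable under $\tilde F$ is the free module $\Monoid \in \Mod_\Monoid(\Spaces)$, and $\Hom_{\Mod_\Monoid(\Spaces)}(\Monoid, -)$ is the forgetful functor to $\Spaces$, which is cocontinuous because colimits of modules over an $\EE_\infty$-algebra in a presentable category are created by the forgetful functor (cf.~\cref{prop:algprl-new}). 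A standard argument, expressing arbitrary objects as colimits of representables on the source side and as colimits of free modules (e.g. via a bar resolution) on the target side, then shows that $\tilde F$ is fully faithful and essentially surjective, hence an equivalence.

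The main obstacle is Step 2, giving a fully coherent construction of $F$ as a symmetric monoidal functor in $\CAlg(\cat)$. This requires matching the $\EE_\infty$-structure on $\Monoid = \End_{B\Monoid}(*)$ with the $\EE_\infty$-structure on $\End_{\Mod_\Monoid(\Spaces)}(\Monoid)$, which a priori carries two such structures—composition and the module tensor product—that coincide by Eckmann--Hilton; making this precise at the $\infty$-categorical level is essentially a symmetric monoidal version of the Yoneda lemma.
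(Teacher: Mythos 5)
Your argument is correct in spirit but takes a genuinely different route from the paper's. The paper proves this by verifying the hypotheses of \cite[Prop.~4.8.5.21]{HA} (which states that a presentably symmetric monoidal $\infty$-category $\cC$ with $\Hom_\cC(I,-)$ cocontinuous and conservative is symmetric monoidally equivalent to $\Mod_{\End_\cC(I)}(\Spaces)$): applied to $\cC = \Fun(B\Monoid^{\op}, \Spaces)$, cocontinuity of $\Hom(I,-)$ is immediate since $I$ is representable, conservativity holds since $\Hom(I,-)$ is precisely the functor forgetting the $\Monoid$-action, and $\End_\cC(I) \simeq \End_{B\Monoid}(\pt) \simeq \Monoid$ by full faithfulness of the symmetric monoidal Yoneda embedding. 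Your proof instead builds the comparison functor $\tilde F$ directly from a symmetric monoidal functor $B\Monoid \to \Mod_\Monoid(\Spaces)$ and checks by hand that it is an equivalence. In effect, you are re-deriving the relevant special case of \cite[Prop.~4.8.5.21]{HA} from scratch. Both directions are viable; the paper's is substantially shorter because the heavy lifting has been delegated to Lurie.

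The ``main obstacle'' you flag in Step 2 is indeed the real content, and it is worth naming precisely what it takes to resolve it. What is needed is the identification $\End_{\Mod_\Monoid(\Spaces)}(\Monoid) \simeq \Monoid$ as $\EE_\infty$-spaces (not merely as spaces), together with the general fact that a symmetric monoidal functor out of $B\Monoid$ into a symmetric monoidal $\infty$-category $\cD$ is equivalent to an $\EE_\infty$-map $\Monoid \to \End_\cD(I_\cD)$ --- equivalently, that the full symmetric monoidal subcategory on the unit of $\cD$ gives a symmetric monoidal functor $B\End_\cD(I_\cD) \to \cD$. Both statements are standard but not trivial, and their proofs are essentially subsumed in Lurie's proof of \cite[Prop.~4.8.5.21]{HA}. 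Note also that the conservativity you would still need somewhere (to run the generators argument rigorously, one wants that the forgetful functor $\Mod_\Monoid(\Spaces) \to \Spaces$ is conservative and cocontinuous, so that $\Monoid$ is a tiny generator) is precisely the conservativity hypothesis the paper verifies for $\Hom(I,-)$. So the two proofs end up checking the same facts, just packaged differently; the paper's packaging lets those facts do double duty by feeding them into a single black-box criterion.
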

\begin{proof}

For $\cC \in \CAlg(\PrL)$ for which $\Hom_{\cC}(I,-)\colon \cC \to \Spaces$ preserves all small colimits and is conservative, it follows from~\cite[Prop.~4.8.5.21]{HA} that $\cC$ is symmetric monoidally equivalent to $\Mod_{\End_{\cC}(I)}(\Spaces)$ where $\End_{\cC}(I) \in \CAlg(\Spaces)$ is equipped with the commutative monoid structure induced from symmetric monoidality of $\cC$. 

If $J \in \CAlg(\cat)$, then the Yoneda embedding $J \to \Fun(J^{\op}, \Spaces)$ is symmetric monoidal for the Day convolution symmetric monoidal structure. 
In particular, the monoidal unit of $\Fun(J^{\op}, \Spaces)$ is the image under the Yoneda embedding of $I \in J$, and its endomorphism algebra agrees with $\End_J(I)$. 
In particular, as a representable presheaf, $\Hom_{\Fun(J^{\op}, \Spaces)}(I,-) \colon \Fun(J^{\op}, \Spaces) \to \Spaces$ preserves all small colimits.

Let now $J=B\Monoid$ for a $\Monoid \in \CAlg(\Spaces)$.  The functor  $\Hom_{\Fun(B \Monoid^{\op}, \Spaces)}(I,-) \colon \Fun(B \Monoid^{\op}, \Spaces) \to \Spaces$ then forgets the $\Monoid$ action and is hence conservative. 
Since the unit of $B\Monoid$ is the basepoint $\pt$  with $\End_{B\Monoid}(\pt) \simeq \Monoid$ as commutative algebras in spaces, and using the symmetric monoidal equivalence  $ B\Monoid^{\op} \simeq B\Monoid$ induced by commutativity of $\Monoid$, it therefore follows ~\cite[Prop.~4.8.5.21]{HA} that we have symmetric monoidal equivalences
$\Fun(B\Monoid, \Spaces) \simeq \Fun(B\Monoid^{\op}, \Spaces)  \simeq \Mod_{\End_{B\Monoid}(\pt)}(\Spaces) = \Mod_{\Monoid}(\Spaces)$.
\end{proof}

We now prove the main proposition of this subsection: For a homotopy coherent abelian monoid $\Monoid$, the $\infty$-category of compactly generated presentably $\Mod_{\mbbK}^{\Monoid}$-enriched $\infty$-categories is equivalent to the category $\Fun(B\Monoid, \st_{\mbbK})$, i.e. to the category of $\mbbK$-linear  stable $\infty$-categories with an \emph{action} by $\Monoid$. This equivalence is symmetric monoidal for the Day convolution structure on $\Fun(B\Monoid, \st_{\mbbK})$.
\begin{prop}
    \label{prop:gradedaction}
     Fix $\Monoid\in \CAlg(\Spaces)$. 
    \begin{enumerate}
    \item \label{item-prop:gradedaction-1}
    For $\mbbK \in \CAlg(\ConnSpectra)$, the equivalence $(-)^{\cp}\colon \PrLaddcp \simeq \add_{\mbbK} \colon \PresSigma$ induces a symmetric monoidal equivalence:
    $$
        \begin{tikzcd}
      \PrLaddRGcp\arrow[r,shift left=.5ex,"(-)^{\cp}"]
        &
         \add_{\mbbK}^{B\Monoid} \arrow[l,shift left=.5ex,"\PresSigma"]
        \end{tikzcd}
    $$
    \item \label{item-prop:gradedaction-2}
    For $\mbbK \in \CAlg(\Spectra)$, the equivalence $(-)^{\mathrm{c}} \colon \PrLstc \simeq \st_{\mbbK} \colon \Ind$ induces a symmetric monoidal equivalence:
    $$    \begin{tikzcd}
     \PrLstRGc \arrow[r,shift left=.5ex,"(-)^{\mathrm{c}}"]
        &
        \st_{\mbbK}^{B\Monoid} \arrow[l,shift left=.5ex,"\Ind"]
        \end{tikzcd}
    $$    
    \end{enumerate}
    \end{prop}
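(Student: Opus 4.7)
The plan is to show that both sides are symmetric monoidally equivalent to a common tensor product in $\CAlg(\PrL)$ by combining \cref{lem:funcat}, \cref{prop:actiongrading}, \cref{obs:algspaces}, and the base change formulas from \cref{prop:algprl-new}. We focus on the stable case \eqref{item-prop:gradedaction-2}; the additive case \eqref{item-prop:gradedaction-1} is entirely analogous with $\Spectra, \Mod_\mbbK, \Perf_\mbbK, \PrLc, \st$ replaced throughout by $\ConnSpectra, \Mod_\mbbK^{\geq 0}, \CProj_\mbbK, \PrLcp, \add$.

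For the right-hand side, applying \cref{lem:funcat} together with \cref{prop:actiongrading} (using the equivalence $B\Monoid \simeq B\Monoid^\op$ arising from commutativity of $\Monoid$) identifies
\[
  \st_\mbbK^{B\Monoid} := \Fun(B\Monoid, \st_\mbbK) \simeq \st_\mbbK \otimes \Pres(B\Monoid) \simeq \st_\mbbK \otimes \Mod_\Monoid(\Spaces)
\]
in $\CAlg(\PrL)$, under which Day convolution on the left matches the tensor product of commutative algebras in $\PrL$ on the right. The same argument applied inside $\Mod_\mbbK$ yields the symmetric monoidal equivalence $\Mod_\mbbK^\Monoid \simeq \Mod_\mbbK \otimes \Mod_\Monoid(\Spaces)$ in $\CAlg(\PrLc)$.

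For the left-hand side, substituting the last identification into the definition of $\PrLstRGc$ and invoking the iterated modules equivalence \cref{prop:algprl-new}.\eqref{item-prop:algprl3} (applied to the algebra map $\Mod_\mbbK \to \Mod_\mbbK \otimes \Mod_\Monoid(\Spaces)$) combined with the equivalence $\Mod_{\Mod_\mbbK}(\PrLc) \simeq \st_\mbbK$ from \cref{obs:equivalentklinear}, reduces $\PrLstRGc$ to the tensor product $\st_\mbbK \otimes \Mod_\Monoid(\Spaces)$, matching the right-hand side. A final application of \cref{obs:algspaces} with $\cC = \st_\mbbK$ may be used to re-express this common object as $\Mod_{\iota_{\st_\mbbK}(\Monoid)}(\st_\mbbK)$, the $\infty$-category of modules over the commutative algebra in $\st_\mbbK$ corresponding to $\Monoid$ under the unique symmetric monoidal left adjoint $\iota_{\st_\mbbK}\colon \Spaces \to \st_\mbbK$.

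The main obstacle will be the careful bookkeeping of symmetric monoidal structures --- in particular, the interplay between Day convolution, tensor product in $\CAlg(\PrL)$, and base change of modules --- required to ensure all the above equivalences are compatible, together with the universe-level care necessary to interpret each intermediate module category coherently. Once the symmetric monoidal equivalence $\PrLstRGc \simeq \st_\mbbK^{B\Monoid}$ is in hand, its realization by $(-)^c$ and $\Ind$ as stated follows by tracing: an object $\cC \in \PrLstRGc$ unpacks as a compactly generated $\mbbK$-linear stable presentable $\infty$-category equipped with a compatible $\Monoid$-action, and $(-)^c$ extracts its full subcategory of compact objects $\cC^c \in \st_\mbbK$ together with the induced $\Monoid$-action, yielding an object of $\st_\mbbK^{B\Monoid}$; the inverse is $\Ind$.
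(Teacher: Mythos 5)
Your overall route is the same as the paper's: both arguments identify the two sides with the common object $\st_{\mbbK}\otimes \Mod_{\Monoid}(\Spaces)\simeq \Mod_{\Mod_{\mbbK}^{\Monoid}}(\PrLc)$ using \cref{lem:funcat}/\cref{cons:convolution}, \cref{prop:actiongrading}, \cref{obs:equivalentklinear}, \cref{obs:algspaces}, \cref{lem:unit} and \cref{prop:algprl-new}. However, one intermediate step as written is false. You claim that ``the same argument'' gives a symmetric monoidal equivalence $\Mod_{\mbbK}^{\Monoid}\simeq \Mod_{\mbbK}\otimes \Mod_{\Monoid}(\Spaces)$. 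This conflates $\Monoid$-\emph{gradings} with $\Monoid$-\emph{actions}: by \cref{lem:funcat} one has $\Mod_{\mbbK}^{\Monoid}=\Fun(\Monoid,\Mod_{\mbbK})\simeq \Mod_{\mbbK}\otimes\cP(\Monoid)$, where $\cP(\Monoid)=\Fun(\Monoid,\Spaces)$ is presheaves on the \emph{space} $\Monoid$, whereas \cref{prop:actiongrading} identifies $\Mod_{\Monoid}(\Spaces)$ with $\cP(B\Monoid)=\Fun(B\Monoid,\Spaces)$, presheaves on the \emph{delooping}. These differ already in the case of interest $\Monoid=\ZZ$, $\mbbK=Hk$: the left-hand side is the derived $\infty$-category of $\ZZ$-graded $k$-modules, while $\Mod_{Hk}\otimes\Mod_{\ZZ}(\Spaces)\simeq\Fun(B\ZZ,\Mod_{Hk})$ is $k$-modules with a $\ZZ$-action, i.e.\ $\Derived(\mod_{k[t^{\pm 1}]})$; these are not equivalent. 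The whole point of the proposition is precisely that gradings at the level of $\Mod_{\mbbK}$ only turn into actions one categorical level up.

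The repair is exactly the step you relegate to an optional remark: after using \cref{prop:algprl-new}.\eqref{item-prop:algprl3} (with the algebra map $\Mod_{\mbbK}\to\Mod_{\mbbK}^{\Monoid}$) and \cref{obs:equivalentklinear} you only reach $\PrLstRGc\simeq \Mod_{\Mod_{\mbbK}^{\Monoid}}(\st_{\mbbK})$, and the identification of this with $\st_{\mbbK}\otimes\Mod_{\Monoid}(\Spaces)$ is \emph{not} a cosmetic re-expression but the essential bridge. It requires \cref{obs:algspaces} applied to $\cC=\st_{\mbbK}\simeq\Mod_{\Mod_{\mbbK}}(\PrLc)$ together with the computation that the unit $\iota_{\st_{\mbbK}}\colon\Spaces\to\st_{\mbbK}$ sends the commutative algebra $\Monoid$ to $\cP(\Monoid)\otimes\Mod_{\mbbK}\simeq\Mod_{\mbbK}^{\Monoid}$ with its Day convolution algebra structure; this last identification is where \cref{lem:unit} enters and is what the paper's proof makes explicit. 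With the false identification deleted and this step promoted from remark to main argument, your proof coincides with the paper's.
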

 
    \begin{proof}We prove statement~\eqref{item-prop:gradedaction-2}, the proof of statement~\eqref{item-prop:gradedaction-1} is entirely analogous.
Consider the symmetric monoidal equivalences
\[\st_{\mbbK}^{B\Monoid} \coloneqq \st_{\mbbK} \otimes \Pres(B\Monoid) \simeq  \st_{\mbbK} \otimes \Mod_{\Monoid}(\Spaces)  \simeq \Mod_{\Mod_\mbbK}(\PrLc) \otimes \Mod_{\Monoid}(\Spaces),
 \]
 where the first equivalence  is given by \cref{prop:actiongrading} and the second equivalence follows from \cref{obs:equivalentklinear}.  It then follows from \cref{obs:algspaces} and \cref{lem:unit} that 
\[\Mod_{\Mod_\mbbK}(\PrLc) \otimes \Mod_{\Monoid}(\Spaces) \simeq \Mod_{\cP(\Monoid) \otimes \Mod_{\mbbK}}(\PrLc) = \Mod_{\Mod_{\mbbK}^{\Monoid}}(\PrLc).\qedhere
\] 
\end{proof}

\begin{observation}
\label{obs:praddenriched}
As presentably symmetric monoidal $\infty$-categories, $\PrLaddRGcp$ and  $\PrLstRGc$ are self-enriched. Transporting these self-enrichments along the equivalences from \cref{prop:gradedaction} provides enrichments in $\addRG$ and $\stRG$, respectively, i.e. \[\PrLaddRGcp \in \CAlg(\hatCat[\addRG])\hspace{0.25cm}\text{ and }\hspace{0.25cm} \PrLstRGc \in \CAlg(\hatCat[\stRG]).\]

It follows from \cref{lem:morphism-object-and-restriction} applied to $\Mod_{\left(\Mod_{\mbbK}^{\geq 0, \Monoid}\right)^{\cp}}(\catprod)$ that given $\cC, \cD \in  \PrLaddRGcp$, their $\addRG$-enriched hom is the small idempotent-complete additive $\infty$-category \[\Fun^{\mathrm{L}, \cp}_{\Mod_{\mbbK}^{\geq 0, B\Monoid}}\left( \cC, \cD\right)\in \addRG\]
with $\CProj_{\mbbK}$ and $\Monoid$-action induced by the $\Mod_{\mbbK}^{\geq 0, \Monoid}$-action on $\cD$. 

Similarly, it follows from \cref{lem:morphism-object-and-restriction} applied to $\Mod_{\left(\Mod_{\mbbK}^{ \Monoid}\right)^{\cp}}(\catrex)$ that given $\cC, \cD \in  \PrLstRGc$, their $\stRG$-enriched hom is the small idempotent-complete stable $\infty$-category 
\[\Fun^{\mathrm{L}, \mathrm{c}}_{\Mod_{\mbbK}^{B\Monoid}}\left( \cC, \cD\right) \in \stRG\]
with $\Perf_{\mbbK}$ and $\Monoid$-action induced by the $\Mod_{\mbbK}^{\Monoid}$-action on $\cD$.
 \end{observation}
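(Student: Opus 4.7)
The first assertion---self-enrichment of $\PrLaddRGcp$ and $\PrLstRGc$---is a direct application of \cref{prop:heine-presentable-enrichement}. Indeed, as $\infty$-categories of modules over commutative algebras in $\PrL$, both $\PrLaddRGcp$ and $\PrLstRGc$ are themselves commutative algebras in $\PrL$ (being presentably symmetric monoidal by \cref{lem:Modpresentable}), and hence lie in $\CAlg(\hatCat[\VV])$ when viewed as self-enriched via~\eqref{eq:symmetricenrichedfrompresentable}. The second assertion---that the enrichments transport to enrichments in $\addRG$ and $\stRG$---then follows by applying the symmetric monoidal equivalences of \cref{prop:gradedaction} together with functoriality of enriched-category formation along symmetric monoidal functors; concretely, a symmetric monoidal equivalence $\VV \simeq \WW$ in $\CAlg(\PrL)$ induces a symmetric monoidal equivalence $\hatCat[\VV] \simeq \hatCat[\WW]$ under which self-enrichments correspond.

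It remains to identify the enriched hom objects. For this, I would invoke \cref{rem:RGequivalent} to rewrite
\[
    \PrLaddRGcp \simeq \Mod_{(\Mod_{\mbbK}^{\geq 0,\Monoid})^{\cp}}(\catprod), \qquad \PrLstRGc \simeq \Mod_{(\Mod_{\mbbK}^{\Monoid})^{\mrc}}(\catrex).
\]
Both cases are then instances of \cref{lem:morphism-object-and-restriction}.\eqref{item-lem:morphism-object-2}: taking $\cK$ to be (the walking idempotent together with) finite coproducts, resp.\ finite colimits, and $\VV$ to be the relevant compact/compact-projective full subcategory, the lemma produces a morphism object in $\Mod_\VV(\cat^\cK)$ of the form $\Fun^\cK_\VV(\cC,\cD)$. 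Transporting back to $\Mod_{\mbbK}^{\geq 0,\Monoid}$- and $\Mod_{\mbbK}^{\Monoid}$-module categories via the equivalences of \cref{prop:gradedaction} and \cref{rem:RGequivalent}, these morphism objects correspond to the full subcategories $\Fun^{\mathrm{L},\cp}_{\Mod_{\mbbK}^{\geq 0,B\Monoid}}(\cC,\cD)$ and $\Fun^{\mathrm{L},\mrc}_{\Mod_{\mbbK}^{B\Monoid}}(\cC,\cD)$ of cocontinuous module functors that preserve compact-projectives or compacts, respectively.

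The step requiring the most care is checking that the $\CProj_{\mbbK}$- and $\Monoid$-actions (equivalently, the $\Mod_{\mbbK}^{\geq 0,\Monoid}$-enrichment) on these functor categories are the ones induced pointwise from the action on the target $\cD$. This is where one must use that the morphism-object construction of \cref{lem:morphism-object-and-restriction} is compatible with the $\Mod_{\VV}(\cat^\cK)$-action in the sense spelled out in the proof of that lemma, i.e.\ that the adjunction $\cat^\cK \rightleftarrows \Mod_\VV(\cat^\cK)$ plus \cref{prop:adjunction-and-hom} yields the claimed $\VV$-action on $\Fun^\cK_\VV(\cC,\cD)$. Once this bookkeeping is performed, the identification of the enriched hom objects follows immediately. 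I do not anticipate conceptual difficulties---everything is formal once the module-categorical reformulation of $\PrLaddRGcp$ and $\PrLstRGc$ from \cref{rem:RGequivalent} is in hand---but the bookkeeping of several layers of Morita-style equivalences is the one point demanding attention.
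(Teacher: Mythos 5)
Your proof plan is correct and follows essentially the same route as the paper's inline justification: self-enrichment via the presentably symmetric monoidal structure (i.e.\ \cref{prop:heine-presentable-enrichement}), transport along the symmetric monoidal equivalences of \cref{prop:gradedaction} and \cref{rem:RGequivalent}, and then \cref{lem:morphism-object-and-restriction} applied to $\Mod_{(\Mod_{\mbbK}^{\geq 0, \Monoid})^{\cp}}(\catprod)$ and $\Mod_{(\Mod_{\mbbK}^{\Monoid})^{\mrc}}(\catrex)$ to identify the enriched hom objects. Your flagged ``bookkeeping'' step---that the $\VV$-action on $\Fun^{\cK}_{\VV}(\cC,\cD)$ produced by part (2) of the lemma is indeed the pointwise one coming from $\cD$---is exactly the content of the translation from $\Fun^\cK_\VV$ on compact(-projective) subcategories to $\Fun^{L,\cp}$/$\Fun^{L,\mrc}$ on the presentable versions, and is handled precisely as you describe via \cref{prop:adjunction-and-hom}.
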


\subsection{\texorpdfstring{$\infty$}{infty}-Morita theory}\label{subsec:morita-theory}

For any monoidal $1$-category $\VV$ with reflective coequalizers distributing over the tensor product, one may construct a \emph{Morita $2$-category} whose objects are algebras  in $\VV$, whose $1$-morphisms are bimodules and whose $2$-morphisms are bimodule maps. 
If $\VV$ is moreover presentably symmetric monoidal, and hence self-enriched, then also the categories of bimodules $_{A}\BMod_B(\VV)$ will inherit a $\VV$-enrichment and thus the Morita 2-category inherits a $\VV$-enrichment at the level of $2$-morphisms.
The goal of this section it to establish $\infty$-categorical variants of these statements to be used for our homotopy coherent construction of the Soergel $(2,2)$-category in \cref{sec:SBim}. 

Various $\infty$-categorical constructions of $(\infty,2)$-Morita categories exist in the literature, see e.g. ~\cite{HA}, \cite{HaugsengMor}, \cite{FrS} and references therein. Due to their compatibility with enrichment, we follow ideas from~\cite{HA}. Our starting point is the following:
\begin{prop}[{\cite[Thm.~4.8.5.15, Rem.~4.8.4.9]{HA}}]\label{prop:HA-morita-prop}
    Let $\VV \in \CAlg(\PrL)$ and $A, B \in \Alg(\VV)$. 
    \begin{enumerate}
        \item\label{item-prop:HA-morita-prop-1} The $\infty$-category $\RMod_A(\VV)$ carries a left action by $\VV$, and can be viewed as an object in $\Mod_{\VV}(\PrL)$. This defines a symmetric monoidal functor 
       \[
            \RMod_{-}(\VV) \colon \Alg(\VV) \to \Mod_\VV(\PrL).
\]
        \item\label{item-prop:HA-morita-prop-2} Given an $A$--$B$ bimodule $_{A}M_{B} \in {}_{A}\BMod_{B}(\VV)$,  tensoring with $M$ over $A$ 
        $$- \otimes_{A}M_{B} \colon \RMod_{A}(\VV) \to \RMod_{B}(\VV)$$ defines a cocontinuous $\VV$-linear functor, i.e. an object in $\Fun^L_{\VV}(\RMod_A(\VV), \RMod_B(\VV))$. These assemble into an equivalence: 
        \[
            {}_{A}\BMod_{B}(\VV) \xrightarrow{\simeq} \Fun^L_{\VV}(\RMod_A(\VV), \RMod_B(\VV)).
       \]
        Furthermore, composition of functors corresponds to the relative tensor product of bimodules. 
    \end{enumerate}
\end{prop}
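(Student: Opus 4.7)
The plan is to derive both statements from the universal property of $\RMod_A(\VV)$ as the free $\VV$-linear cocompletion of the pointed $\VV$-enriched category $BA$ having one object with endomorphism algebra $A$.

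First I would establish that $\RMod_A(\VV)$ carries a natural left $\VV$-action. Since $\VV$ is symmetric monoidal, the left action of $\VV$ on itself commutes (up to canonical equivalence) with any right $A$-action, giving a functor $\VV \times \RMod_A(\VV) \to \RMod_A(\VV)$ sending $(V,M)$ to $V \otimes M$ with right $A$-action inherited from $M$. To upgrade this to an object of $\Mod_{\VV}(\PrL)$, I would use that the forgetful functor $\RMod_A(\VV) \to \VV$ is conservative and preserves and reflects colimits (as in the proof of \cref{lem:Modpresentable}), reducing separate cocontinuity of the action in both variables to presentable symmetric monoidality of $\VV$. For the symmetric monoidal enhancement of $\RMod_{-}(\VV) \colon \Alg(\VV) \to \Mod_\VV(\PrL)$, the main input is the compatibility $\RMod_{A \otimes B}(\VV) \simeq \RMod_A(\VV) \otimes_\VV \RMod_B(\VV)$; on morphisms, functoriality is witnessed by the induction functor $- \otimes_A B \colon \RMod_A(\VV) \to \RMod_B(\VV)$ associated to an algebra map $A \to B$, which is cocontinuous, $\VV$-linear, and respects the relative tensor product on the nose.

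For the second statement, I would proceed in two steps. First, a $\VV$-enriched Yoneda-style argument would produce, for every $\cC \in \Mod_\VV(\PrL)$, an equivalence
\[
\Fun^L_{\VV}(\RMod_A(\VV), \cC) \xrightarrow{\simeq} \LMod_A(\cC),
\]
where $\LMod_A(\cC)$ denotes the $\infty$-category of left $A$-actions in the $\VV$-module $\cC$; the equivalence is realized by evaluation at the free rank-one module $A_A$. This encodes that $\RMod_A(\VV)$ is the free $\VV$-linear presentable cocompletion of $BA$. Second, I would specialize $\cC = \RMod_B(\VV)$ and invoke the general identification $\LMod_A(\RMod_B(\VV)) \simeq {}_A\BMod_B(\VV)$, arising because the $\infty$-operad controlling bimodules is a pushout combining left $A$- and right $B$-module structures; this uses that the $\VV$-action on $\RMod_B(\VV)$ is given by $V \otimes (-)$ with the right $B$-action preserved. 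Chaining the two equivalences gives the desired identification ${}_A\BMod_B(\VV) \xrightarrow{\simeq} \Fun^L_{\VV}(\RMod_A(\VV), \RMod_B(\VV))$, with $M \mapsto -\otimes_A M$.

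The statement about composition then reduces to the observation that relative tensor products of bimodules over $A$ and $B$ may be computed as two-sided bar constructions, and that under the equivalence of the previous paragraph these bar constructions translate into composition of cocontinuous $\VV$-linear functors. Concretely, I would upgrade $\RMod_{-}(\VV)$ to a functor of $(\infty,2)$-categories from a Morita $(\infty,2)$-category of algebras, bimodules, and bimodule maps, and observe that by construction this upgrade sends relative tensor product to composition.

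I expect the main obstacle to be the careful handling of the homotopy-coherent $\VV$-Yoneda step, i.e.\! establishing the universal property of $\RMod_A(\VV)$ as the free $\VV$-linear presentable cocompletion of $BA$, and in particular the identification $\LMod_A(\RMod_B(\VV)) \simeq {}_A\BMod_B(\VV)$. Both require tracking two-sided bar constructions and distributivity of relative tensor products over colimits in the presentable $\VV$-enriched setting; once they are in place, the remaining assertions follow formally.
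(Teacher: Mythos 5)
The paper does not actually prove this proposition: it is imported wholesale from Lurie, with the citation [HA, Thm.~4.8.5.15, Rem.~4.8.4.9] standing in for a proof, and the surrounding text only uses the statement. Your sketch is essentially a compressed reconstruction of Lurie's own argument in [HA, \S\S4.8.4--4.8.5]: the $\VV$-action on $\RMod_A(\VV)$ and its cocontinuity are HA \S4.3.2 together with [HA, 4.8.4.10\,ff.]; your ``free $\VV$-linear cocompletion of $BA$'' step, i.e.\ the equivalence $\Fun^L_{\VV}(\RMod_A(\VV),\cC)\simeq \LMod_A(\cC)$ given by evaluation at $A_A$, is precisely [HA, Thm.~4.8.4.1] (in the presentable/cocontinuous form of [HA, Rem.~4.8.4.9]); the identification $\LMod_A(\RMod_B(\VV))\simeq {}_A\BMod_B(\VV)$ is [HA, Thm.~4.3.2.7]; the symmetric monoidality of $\RMod_{-}(\VV)$, resting on $\RMod_{A\otimes B}(\VV)\simeq \RMod_A(\VV)\otimes_{\VV}\RMod_B(\VV)$, is [HA, Thm.~4.8.5.16]; and the bar-construction translation of relative tensor product into composition is how Lurie proves compatibility with composition. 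So your route is correct and is, in substance, the route of the cited source rather than an alternative to anything in this paper. Two cautions on the justifications rather than the statements: the identification $\LMod_A(\RMod_B(\VV))\simeq {}_A\BMod_B(\VV)$ is \emph{not} obtained in HA by exhibiting the bimodule operad as a literal pushout of the left- and right-module operads (that heuristic does not directly compute algebra categories); it is a theorem with a nontrivial proof, so you should cite or reprove it rather than wave at a pushout. Likewise, passing from the geometric-realization-preserving universal property of [HA, 4.8.4.1] to the cocontinuous one requires the observation that $\RMod_A(\VV)$ is generated under colimits by the free modules $V\otimes A$ (cf.\ \cref{lem:Modpresentable}), which you implicitly use and should make explicit. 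With those two points acknowledged, your outline matches the content behind the citation, and the genuinely hard part is, as you say, the homotopy-coherent bookkeeping that HA carries out.
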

We can therefore think of the full subcategory of $\Mod_{\VV}(\PrL)$ on those presentable $\VV$-module categories which are of the form $\RMod_A(\VV)$ for algebra objects $A$ in $\VV$, as an $\infty$-categorical Morita category  with objects algebras, morphisms given by bimodules and composition given by relative tensor product (cf.~\cite[Rem. 4.8.4.9]{HA}). In the following, we will be interested in versions of the Morita category where we further restrict our bimodules requiring certain compactness or projectivity properties: 

\begin{nota}\label{nota:rightcp}
    Given $\VV \in \Alg(\PrLcp)$  and $A, B \in \Alg(\VV)$, we denote by \[{}_{A}\BMod^{\cp}_{B}(\VV)\subseteq {}_{A}\BMod_{B}(\VV)\]the full subcategory on those $A$--$B$-bimodules which are compact-projective as right $B$-modules. 
    
    Similarly, given $\VV \in \Alg(\PrLc)$ and $A, B \in \Alg(\VV)$, we denote by \[{}_{A}\BMod^{\mrc}_{B}(\VV)\subseteq {}_{A}\BMod_{B}(\VV)\] the full subcategory on those $A$--$B$-bimodules which are compact as right $B$-modules. 
\end{nota}

For $\VV\in \CAlg(\PrLc)$ and $A\in \Alg(\VV)$, the $\infty$-category  $\RMod_{A}(\VV)$ is compactly generated and the $\VV$-action preserves compact generators, see \cref{lem:Modpresentable}.\eqref{item-lem:Modpresentable-1}-\eqref{item-lem:Modpresentable-2}; the functor $\RMod_{-}(\VV) \colon \Alg(\VV) \to \Mod_{\VV}(\PrL)$ thus factors through $\Mod_{\VV}(\PrLc)$. 
The analogous statement holds for $\VV \in \CAlg(\PrLcp)$ by \cref{lem:Modpresentable}.\eqref{item-lem:Modpresentable-5}-\eqref{item-lem:Modpresentable-6}.
\begin{corollary}\label{cor:morita-cor-for-PrLcp-and-PrLc}The following hold.
    \begin{enumerate}
        \item \label{item-cor:morita-cor-for-PrLcp}
         For $\VV \in \CAlg(\PrLcp)$, $A, B \in \Alg(\VV)$ and  ${}_{A}M_B \in{}_A\BMod_B(\VV)$, the functor 
        \[- \otimes_{A}M \colon \RMod_{A}(\VV) \to \RMod_{B}(\VV)\]
        preserves compact projective  objects if and only if  $M_B$, viewed as a right $B$-module, is a compact projective object in $\RMod_{B}(\VV)$.   
        The equivalence from \cref{prop:HA-morita-prop}.\eqref{item-prop:HA-morita-prop-2}  restricts to
     \[
            {}_{A}\BMod^{\cp}_{B}(\VV) \simeq \Fun^{L, \cp}_{\VV}(\RMod_A(\VV), \RMod_B(\VV)).
   \]
        \item \label{item-cor:morita-cor-for-PrLc}
         For $\VV \in \CAlg(\PrLc)$, $A, B \in \Alg(\VV)$ and  ${}_{A}M_B \in{}_A\BMod_B(\VV)$, the functor 
        \[- \otimes_{A}M \colon \RMod_{A}(\VV) \to \RMod_{B}(\VV)\]
        preserves compact objects if and only if  $M$, viewed as a right $B$-module, is a compact object in $\RMod_{B}(\VV)$. 
        The equivalence from \cref{prop:HA-morita-prop}.\eqref{item-prop:HA-morita-prop-2} restricts to an equivalence
      \[
            {}_{A}\BMod^{\mrc}_{B}(\VV) \simeq \Fun^{L, \mrc}_{\VV}(\RMod_A(\VV), \RMod_B(\VV)).
\]
    \end{enumerate}
\end{corollary}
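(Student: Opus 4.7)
The plan is to prove statement \eqref{item-cor:morita-cor-for-PrLcp} in detail and to note that \eqref{item-cor:morita-cor-for-PrLc} follows by an entirely analogous argument with ``compact projective'' replaced by ``compact'' and the supporting references to \cref{lem:Modpresentable} and \cref{lem:generation} replaced by their compact counterparts. The second assertion in each statement will then be automatic from the first: the equivalence of \cref{prop:HA-morita-prop}.\eqref{item-prop:HA-morita-prop-2} identifies ${}_AM_B$ with the $\VV$-linear cocontinuous functor $-\otimes_A M$, so a characterization of when this functor preserves compact projectives matches the full subcategory ${}_A\BMod^{\cp}_B(\VV)$ on the left with $\Fun^{L,\cp}_\VV(\RMod_A(\VV), \RMod_B(\VV))$ on the right.

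For the forward direction of the ``if and only if'', I would test $-\otimes_A M$ on the free right $A$-module $A_A \in \RMod_A(\VV)$. Since $\VV \in \CAlg(\PrLcp)$, its unit $I_\VV$ is compact projective, so \cref{lem:Modpresentable}.\eqref{item-lem:Modpresentable-7} (applied with the singleton $\{I_\VV\}$ of generators) shows that $A_A \simeq I_\VV \otimes A_A$ is compact projective in $\RMod_A(\VV)$. If $-\otimes_A M$ preserves compact projectives, then $A_A \otimes_A M \simeq M_B$ must be compact projective in $\RMod_B(\VV)$, i.e.\ $M \in {}_A\BMod^{\cp}_B(\VV)$.

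For the reverse direction, suppose $M_B$ is compact projective. By \cref{lem:Modpresentable}.\eqref{item-lem:Modpresentable-7}, the free modules $v \otimes A_A$ with $v$ ranging over a set of compact projective generators of $\VV$ form a set of compact projective generators of $\RMod_A(\VV)$, and by \cref{lem:generation}.\eqref{item-lem:generation-cp} every compact projective object of $\RMod_A(\VV)$ is a retract of a finite coproduct of such free modules. Since $-\otimes_A M$ is cocontinuous and closed under retracts, it therefore suffices to show that the image $(v \otimes A_A) \otimes_A M \simeq v \otimes M$ is compact projective in $\RMod_B(\VV)$ for every compact projective $v \in \VV$. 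This follows from \cref{lem:Modpresentable}.\eqref{item-lem:Modpresentable-6}, which says that the $\VV$-action $\VV \otimes \RMod_B(\VV) \to \RMod_B(\VV)$ is a morphism in $\PrLcp$ and hence preserves compact projectives separately in both variables, together with the hypothesis that $M_B$ is compact projective.

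No serious obstacle is anticipated; the only calculation needed is the projection-type equivalence $(v \otimes A_A) \otimes_A M \simeq v \otimes M$, which is formal from the associativity and unitality of the $\VV$-module structure on the relative tensor product. The rest of the argument is a direct assembly of the Morita equivalence of \cref{prop:HA-morita-prop} with the generation results for module categories in \cref{lem:Modpresentable} and \cref{lem:generation}.
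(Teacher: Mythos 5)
Your proof is correct and follows essentially the same approach as the paper: reduce to the free generators $v\otimes A_A$ of $\RMod_A(\VV)$, compute $(v\otimes A_A)\otimes_A M\simeq v\otimes M$, and conclude using that the $\VV$-action on $\RMod_B(\VV)$ preserves compact-projectives (with the ``only if'' direction coming from the special case $v=I_\VV$). The only cosmetic quibble is the parenthetical ``applied with the singleton $\{I_\VV\}$ of generators'' in the forward direction — $\{I_\VV\}$ need not generate $\VV$, but what you actually need (that $A_A \simeq I_\VV\otimes A_A$ is compact-projective) follows directly from \cref{lem:Modpresentable}.\eqref{item-lem:Modpresentable-6}, so the argument stands.
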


\begin{proof}
    We will prove statement \eqref{item-cor:morita-cor-for-PrLcp}, the proof of statement \eqref{item-cor:morita-cor-for-PrLc} is completely analogous. Since $\RMod_A(\VV)$ is projectively generated by free modules $v\otimes A$, see \cref{lem:Modpresentable}.\eqref{item-lem:Modpresentable-7}, where $v \in \VV$ is compact projective, it suffices to show that $-\otimes_A M \colon \RMod_{A}(\VV) \to \RMod_B(\VV)$ preserves compact projective objects if and only if it sends such free modules $v\otimes A $ to compact projectives  in $\RMod_B(\VV)$ for all compacts $v\in \VV$.  Since the action functor $\VV\times \RMod_B(\VV) \to \RMod_B(\VV)$  takes pairs of compact projectives to compact projectives by  \cref{lem:Modpresentable}.\eqref{item-lem:Modpresentable-6},  this in turn is equivalent to the assertion that $A\otimes_A M_B \simeq M_B \in \RMod_B(\VV)$ is compact projective. 
\end{proof}

We are now ready to define our Morita categories of interest. 

\begin{definition}\label{def:MoritaLarge} Fix $\Monoid \in \CAlg(\Spaces)$. 

Given $\mbbK \in \CAlg(\ConnSpectra)$, we define $\Moritacp(\Mod_{\mbbK}^{\geq 0, \Monoid})$ to be the full symmetric monoidal $\addRG$-enriched subcategory of $\PrLaddRGcp$ (equipped with $\addRG$-enrichment as in \cref{obs:praddenriched})  on the objects in the image of the symmetric monoidal functor $\Alg(\Mod_{\mbbK}^{\geq 0, \Monoid}) \to \PrLaddRGcp$ from \cref{prop:HA-morita-prop}.\eqref{item-prop:HA-morita-prop-1}.

Given $\mbbK \in \CAlg(\Spectra)$, we 
define $\Moritac(\Mod_{\mbbK}^{\Monoid})$ to be the full symmetric monoidal $\stRG$-enriched subcategory of $\PrLstRGc$ (equipped with $\stRG$-enrichment as in \cref{obs:praddenriched})  on the objects in the image of the symmetric monoidal functor $\Alg(\Mod_{\mbbK}^{ \Monoid}) \to \PrLstRGc$ from \cref{prop:HA-morita-prop}.\eqref{item-prop:HA-morita-prop-1}.
\end{definition}

We unpack the relevant  properties of these Morita categories:

\begin{corollary}\label{cor:morita}
Fix $\Monoid \in \CAlg(\Spaces)$.
\begin{enumerate}
        \item 
        \label{item-cor:moritacp-k-z}
        Given $\mbbK \in \CAlg(\ConnSpectra)$, \cref{def:MoritaLarge} defines a large symmetric monoidal $\addRG$-enriched $\infty$-category \[\Moritacp(\Mod_{\mbbK}^{\geq 0, \Monoid}) \in \CAlg(\hatCat[\addRG])\] with a symmetric monoidal surjective-on-objects functor $\Alg(\Mod_{\mbbK}^{\geq 0, \Monoid}) \to \Moritacp(\Mod_{\mbbK}^{\geq 0, \Monoid})$ and such that the $\addRG$-enriched hom between $A, B$ in $\Alg(\Mod_{\mbbK}^{\geq 0, \Monoid})$ is given by \[{}_A\BMod^{\cp}_{B}(\Mod_{\mbbK}^{\geq 0, \Monoid}) \in \addRG,\]
        with the $\CProj_{\mbbK}$ and $\Monoid$-action induced by their respective actions on $\Mod_{\mbbK}^{\geq 0, \Monoid}$. 
        
        The symmetric monoidal structure is given by the tensor product in $\Mod_{\mbbK}^{\geq 0, \Monoid}$, and the composition of $1$-morphisms is given by the relative tensor product of bimodules therein.       

        \item 
        \label{item-cor:moritac-k-z}
        Given $\mbbK \in \CAlg(\Spectra)$, \cref{def:MoritaLarge}  defines a large symmetric monoidal $\stRG$-enriched $\infty$-category \[\Moritac(\Mod_{\mbbK}^{ \Monoid}) \in \CAlg(\hatCat[\stRG])\]  with a symmetric monoidal surjective-on-objects functor $\Alg(\Mod_{\mbbK}^{\Monoid}) \to \Moritacp(\Mod_{\mbbK}^{ \Monoid})$, and such that the $\stRG$-enriched hom between $A, B$ in $\Alg(\Mod_{\mbbK}^{\Monoid})$ is given by \[{}_A\BMod^{\mrc}_{B}(\Mod_{\mbbK}^{\Monoid}) \in \stRG,\]
        with the $\Perf_{\mbbK}$ and $\Monoid$-action induced by their respective actions on $\Mod_{\mbbK}^{\Monoid}$.
        
        The symmetric monoidal structure is given by the tensor product in $\Mod_{\mbbK}^{ \Monoid}$, and the composition of $1$-morphisms is given by the relative tensor product of bimodules therein.  
        \item 
        \label{item-cor:morita-add-to-st}
         Given $\mbbK \in \CAlg(\ConnSpectra)$, the symmetric monoidal inclusion $\Mod_{\mbbK}^{\geq 0, \Monoid} \hookrightarrow \Mod_{\mbbK}^{\Monoid}$ from \cref{obs:connMod-to-Mod-symmetric} induces a symmetric monoidal $\addRG$-enriched functor     \[\Moritacp(\Mod_{\mbbK}^{\geq 0, \Monoid}) \to \Moritac(\Mod_{\mbbK}^{ \Monoid}),\]
         where $\Moritac(\Mod_{\mbbK}^{ \Monoid})$ is considered $\addRG$-enriched by transporting its $\stRG$-enrichment along the forgetful functor $\stRG \to \addRG$.
         
         On objects, this functor acts via the inclusion $(\Alg(\Mod_{\mbbK}^{\geq 0, \Monoid}))^{\simeq} \hookrightarrow (\Alg(\Mod_{\mbbK}^{\Monoid}))^{\simeq}$, and  on hom-categories as the additive $\mbbK$-linear $\Monoid$-equivariant (i.e. $\addRG$-morphism) full inclusion 
         \[
            {}_A\Mod^{\cp}_B(\Mod_{\mbbK}^{\geq 0, \Monoid}) \hookrightarrow {}_A\Mod^{c}_B(\Mod_{\mbbK}^{\Monoid}).
\]
    \end{enumerate}    
\end{corollary}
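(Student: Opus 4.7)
My plan is to unwind Definition~\ref{def:MoritaLarge} using the machinery assembled in the preceding subsections; no essentially new ingredients are required.

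I would prove parts~\eqref{item-cor:moritacp-k-z} and~\eqref{item-cor:moritac-k-z} uniformly by setting $\VV = \Mod_{\mbbK}^{\geq 0, \Monoid} \in \CAlg(\PrLcp)$ or $\VV = \Mod_{\mbbK}^{\Monoid} \in \CAlg(\PrLc)$, respectively. Applying \cref{prop:HA-morita-prop}.\eqref{item-prop:HA-morita-prop-1} to such $\VV$ yields a symmetric monoidal functor $\RMod_{-}(\VV) \colon \Alg(\VV) \to \Mod_{\VV}(\PrL)$, which by \cref{lem:Modpresentable}.\eqref{item-lem:Modpresentable-5}--\eqref{item-lem:Modpresentable-6} (resp. \eqref{item-lem:Modpresentable-1}--\eqref{item-lem:Modpresentable-2}) factors through the subcategory $\PrLaddRGcp$ (resp. $\PrLstRGc$) of \cref{rem:RGequivalent}. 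Thus $\Moritacp(\VV)$ (resp. $\Moritac(\VV)$) is defined and the composite $\Alg(\VV) \to \Moritacp(\VV)$ (resp. $\Moritac(\VV)$) is symmetric monoidal and surjective on objects by construction. The symmetric monoidal $\addRG$-enrichment (resp. $\stRG$-enrichment) comes from restricting the self-enrichment of $\PrLaddRGcp$ (resp. $\PrLstRGc$) established in \cref{obs:praddenriched}.

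It then remains to identify the enriched hom between $\RMod_A(\VV)$ and $\RMod_B(\VV)$. By \cref{obs:praddenriched}, this hom is the small additive (resp. stable) idempotent-complete $\mbbK$-linear $\infty$-category $\Fun^{\mathrm{L},\cp}_{\VV}(\RMod_A(\VV), \RMod_B(\VV))$ (resp. $\Fun^{\mathrm{L},\mrc}_{\VV}(\RMod_A(\VV), \RMod_B(\VV))$), equipped with the $\CProj_\mbbK$-action (resp. $\Perf_\mbbK$-action) and $\Monoid$-action induced pointwise from $\RMod_B(\VV)$. The identification with ${}_A\BMod^{\cp}_B(\VV)$ (resp. ${}_A\BMod^{\mrc}_B(\VV)$) is then \cref{cor:morita-cor-for-PrLcp-and-PrLc}.\eqref{item-cor:morita-cor-for-PrLcp} (resp. \eqref{item-cor:morita-cor-for-PrLc}), and compatibility of composition with the relative tensor product of bimodules is \cref{prop:HA-morita-prop}.\eqref{item-prop:HA-morita-prop-2}. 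The stated description of the symmetric monoidal structure on objects as the tensor product of algebras in $\VV$ is built into the symmetric monoidality of $\RMod_{-}(\VV)$ from \cref{prop:HA-morita-prop}.\eqref{item-prop:HA-morita-prop-1}.

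For part~\eqref{item-cor:morita-add-to-st}, I would apply \cref{prop:algprl-new}.\eqref{item-prop:algprl4} to the symmetric monoidal left adjoint $F \colon \Mod_{\mbbK}^{\geq 0, \Monoid} \hookrightarrow \Mod_{\mbbK}^{\Monoid}$ of \cref{obs:connMod-to-Mod-symmetric}, which sends $A \in \Alg(\Mod_{\mbbK}^{\geq 0, \Monoid})$ to $A$ regarded as an algebra in $\Mod_{\mbbK}^{\Monoid}$, and which on modules acts by $\Mod_A(F) \colon \RMod_A(\Mod_{\mbbK}^{\geq 0, \Monoid}) \to \RMod_A(\Mod_{\mbbK}^{\Monoid})$. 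Combined with the forgetful functor $\stRG \to \addRG$ to reconcile the two enrichments, this gives a symmetric monoidal $\addRG$-enriched functor between the ambient presentably enriched categories; restricting to the Morita subcategories yields the stated functor. The description of its action on enriched homs as the full inclusion ${}_A\BMod^{\cp}_B(\Mod_{\mbbK}^{\geq 0, \Monoid}) \hookrightarrow {}_A\BMod^{\mrc}_B(\Mod_{\mbbK}^{\Monoid})$ is then immediate from the definitions, since a compact-projective object of a connective module $\infty$-category is in particular compact after tensoring up to the stable setting. I do not foresee a difficult step: the corollary is essentially a bookkeeping consequence of \cref{prop:HA-morita-prop}, \cref{cor:morita-cor-for-PrLcp-and-PrLc}, and the self-enrichment in \cref{obs:praddenriched}.
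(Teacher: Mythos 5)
Your treatment of parts \eqref{item-cor:moritacp-k-z} and \eqref{item-cor:moritac-k-z} is correct and follows the same route as the paper: both reduce to \cref{cor:morita-cor-for-PrLcp-and-PrLc} together with the self-enrichment of $\PrLaddRGcp$ and $\PrLstRGc$ recorded in \cref{obs:praddenriched}, and your extra appeal to \cref{prop:HA-morita-prop} and \cref{lem:Modpresentable} is harmless repetition of facts already packaged into \cref{def:MoritaLarge}.

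For part \eqref{item-cor:morita-add-to-st}, however, there is a genuine gap. What must be constructed is a \emph{symmetric monoidal $\addRG$-enriched functor}, and your proposal produces only its value on individual objects: applying $\Mod_A(-)$ to the inclusion $F\colon \Mod_{\mbbK}^{\geq 0,\Monoid}\hookrightarrow \Mod_{\mbbK}^{\Monoid}$ gives, for each algebra $A$, a functor $\RMod_A(\Mod_{\mbbK}^{\geq 0,\Monoid})\to \RMod_A(\Mod_{\mbbK}^{\Monoid})$, but the phrase ``combined with the forgetful functor $\stRG\to\addRG$ to reconcile the two enrichments, this gives a symmetric monoidal $\addRG$-enriched functor'' is exactly the assertion that needs proof; the coherence data on hom-objects, its compatibility with composition, and the symmetric monoidality cannot be assembled from the per-object functors by hand. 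The paper's proof supplies this by a different mechanism: it first constructs the ambient functor $\PrLaddRGcp\simeq \Mod_{\Mod_{\mbbK}^{\geq 0,\Monoid}}(\PrLcp)\to \Mod_{\Mod_{\mbbK}^{\Monoid}}(\PrLc)\simeq\PrLstRGc$ as a composite of base-change functors in $\CAlg(\PrL)$, exhibits it (via a commuting square with $\Kb\colon\addRG\to\stRG$, as in \cref{prop:Ksymmetric}) as a morphism in $\CAlg(\PrL)_{\addRG/}$, and only then invokes the functoriality of presentable enrichment, i.e.\ the functor \eqref{eq:symmetricenrichedfrompresentable} of \cref{prop:heine-presentable-enrichement}, to promote it to a symmetric monoidal $\addRG$-enriched functor; the identification $\RMod_A(\Mod_{\mbbK}^{\geq 0,\Monoid})\otimes_{\Mod_{\mbbK}^{\geq 0,\Monoid}}\Mod_{\mbbK}^{\Monoid}\simeq\RMod_A(\Mod_{\mbbK}^{\Monoid})$ (HA Thm.\ 4.8.4.6, i.e.\ the analogue of \cref{prop:algprl-new}.\eqref{item-prop:algprl5}) is then what shows the full Morita subcategories are preserved, and the hom-level description is unpacked from \cref{prop:adjunction-and-hom}. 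Two smaller points: \cref{prop:algprl-new}.\eqref{item-prop:algprl4} is stated for \emph{commutative} algebras, whereas the objects of the Morita categories are merely $\EE_1$-algebras, so you would at least need the associative-algebra version of that functoriality; and the claim that compact-projective right modules become compact after stabilization is correct (the right adjoint of base change preserves filtered colimits), but it identifies only where objects of the hom-categories land, not the enriched functor itself.
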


\begin{proof}Statements \eqref{item-cor:moritacp-k-z} and \eqref{item-cor:moritac-k-z} follow immediately from \cref{cor:morita-cor-for-PrLcp-and-PrLc} and   \cref{obs:praddenriched}. For statement~\eqref{item-cor:morita-add-to-st}, the inclusion $\Mod_{\mbbK}^{\geq 0, \Monoid} \hookrightarrow \Mod_{\mbbK}^{\Monoid}$ induces a symmetric monoidal left adjoint functor 
\begin{align*}
\PrLaddRGcp &\simeq \Mod_{\Mod_{\mbbK}^{\geq 0, \Monoid}}(\PrLcp) \xrightarrow{\Mod_{\Mod_{\mbbK}^{\geq 0, \Monoid}}\left( \PrLcp \to \PrLc\right)} \Mod_{\Mod_{\mbbK}^{\geq 0, \Monoid}}(\PrLc)\\& \xrightarrow{-\otimes_{\Mod_{\mbbK}^{\geq 0, \Monoid}} \Mod_{\mbbK}^{\Monoid}}\Mod_{\Mod_{\mbbK}^{\Monoid}}(\PrLc) \simeq \PrLstRGc.
\end{align*}
Analogous to \cref{prop:Ksymmetric}, this functor fits into a commuting square in $\CAlg(\PrL)$ of the form
\begin{equation}\label{eq:Morita-map-square}\begin{tikzcd}
\addRG \ar[r, "\Kb"] \ar[d, "\simeq"] & \ar[d, "\simeq"]\stRG\\
\PrLaddRGcp \ar[r] & \PrLstRGc,
\end{tikzcd}
\end{equation}
where the top horizontal morphism is left adjoint to the forgetful functor. 
Using~\eqref{eq:Morita-map-square} to consider the bottom horizontal morphism as a morphism in $\CAlg(\PrL)_{\addRG/}$,  it enhances by \cref{prop:heine-presentable-enrichement} to a symmetric monoidal $\addRG$-enriched functor $\PrLaddRGcp \to \PrLstRGc$. (By commutativity of~\eqref{eq:Morita-map-square} and \cref{prop:adjunction-and-hom}, we may understand the $\addRG$-enrichment of $\PrLstRGc$  as induced by restricting its $\stRG$-enrichment from \cref{obs:praddenriched} along the forgetful functor $\stRG \to \addRG$.)
For an algebra $A\in \Alg(\Mod_{\mbbK}^{\geq 0, \Monoid})$, it follows from  \cite[Thm. 4.8.4.6]{HA} that \[\RMod_A(\Mod_{\mbbK}^{\geq 0, \Monoid}) \otimes_{\Mod_{\mbbK}^{\geq 0, \Monoid}} \Mod_{\mbbK}^{\Monoid} \simeq \RMod_{A}(\Mod_{\mbbK}^{\Monoid}).\]
Thus, the $\addRG$-enriched functor $\PrLaddRGcp \to \PrLstRGc$ restricts to an $\addRG$-enriched functor between the full subcategories  \[\Moritacp(\Mod_{\mbbK}^{\geq 0, \Monoid}) \to \Moritac(\Mod_{\mbbK}^{ \Monoid}).\] 
Its explicit description on additive hom-categories can be unpacked from \cref{prop:adjunction-and-hom}. 
 \end{proof}

\subsection{Morita categories of discrete flat algebras}\label{subsec:morita-cat-for-discrete-alg}
\newcommand{\modkZflat}{\mod_k^{Z, \mathrm{flat}}}

For the rest of this section, we focus on the case where $\mbbK = Hk$ for an ordinary commutative ring  $k$ and where $\Monoid$ is a discrete commutative monoid $Z$. Our goal of this subsection is to restrict our Morita categories from \cref{cor:morita} to certain symmetric monoidal full subcategories which only contain discrete (i.e. ordinary) $k$-algebras and whose hom-categories are given by ordinary categories of discrete graded bimodules, or their derived variants.

\begin{definition}
    A discrete $Z$-graded $k$-module $M$ is \emph{flat} if $\oplus_{z \in Z} M_z$ is flat\footnote{Recall that a module $M$ over a ring $k$ is called \emph{flat} if $- \otimes_k M$ is an exact functor.} as a $k$-module. We let $\modkZflat$ denote the full subcategory of the ordinary category of discrete $Z$-graded $k$-modules $\modkZ$ on the flat modules. A (not necessarily commutative) discrete $Z$-graded $k$-algebra $A$ is \emph{flat} if it is flat as a $Z$-graded $k$-module.
    \end{definition}
\begin{remark}
    In particular, the ordinary category of discrete $Z$-graded $k$-algebras is $\Alg(\modkZflat)$. 
\end{remark}

The following two observations and example are crucial when connecting back to \cref{sec:2}.
\begin{observation}
    An ordinary $Z$-graded $k$-module $M$ is flat if and only if it is degreewise flat, i.e.  each graded component $M_z$ is a flat $k$-module for all $z\in Z$. This follows from the fact that flatness is preserved under infinite coproducts and retracts.

\end{observation}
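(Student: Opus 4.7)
The plan is to prove both implications by reducing to two standard facts from commutative algebra: that arbitrary direct sums of flat modules are flat, and that retracts of flat modules are flat. Both follow from the fact that the tensor product commutes with direct sums and that retracts of monomorphisms are monomorphisms, so an exactness property of a direct sum or a retract is inherited by the summands.

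For the implication ``degreewise flat implies flat,'' I will argue as follows. Suppose each graded component $M_z$ is flat as a $k$-module. By definition, I need to show that $\bigoplus_{z \in Z} M_z$ is flat. Given a short exact sequence $0 \to N' \to N \to N'' \to 0$ of $k$-modules, tensoring with $\bigoplus_{z \in Z} M_z$ commutes with the direct sum, yielding $\bigoplus_{z \in Z} (N' \otimes_k M_z) \to \bigoplus_{z \in Z}(N \otimes_k M_z) \to \bigoplus_{z \in Z}(N'' \otimes_k M_z) \to 0$, which remains short exact because each $M_z$ is flat and direct sums of short exact sequences are short exact.

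For the reverse implication, suppose $\bigoplus_{z \in Z} M_z$ is flat. Fix $z_0 \in Z$; then the inclusion $\iota_{z_0}\colon M_{z_0} \hookrightarrow \bigoplus_{z \in Z} M_z$ and the projection $\pi_{z_0}\colon \bigoplus_{z \in Z} M_z \to M_{z_0}$ satisfy $\pi_{z_0} \circ \iota_{z_0} = \mathrm{id}_{M_{z_0}}$, so $M_{z_0}$ is a retract of $\bigoplus_{z \in Z} M_z$. Given a short exact sequence $0 \to N' \to N \to N''\to 0$ in $\mathrm{mod}_k$, the sequence $0 \to N' \otimes_k M_{z_0} \to N \otimes_k M_{z_0} \to N'' \otimes_k M_{z_0} \to 0$ is a retract of the short exact sequence obtained by tensoring with $\bigoplus_z M_z$, and hence is itself short exact. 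Thus $M_{z_0}$ is flat.

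There is no substantive obstacle here; the result is a routine application of the standard stability of flatness under direct sums and retracts, and indeed the statement of the observation already signals this. The only mild subtlety is that ``flat'' is defined via the condition on the total $k$-module $\bigoplus_{z \in Z} M_z$ rather than a graded variant, so one must be a little careful to work in $\mathrm{mod}_k$ throughout rather than in $\mathrm{mod}_k^Z$; but the two flatness tests coincide precisely because tensor products distribute over direct sums.
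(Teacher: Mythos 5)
Your proof is correct and follows exactly the route the paper indicates: degreewise flatness implies flatness because arbitrary direct sums of flat modules are flat, and the converse holds because each $M_{z_0}$ is a retract of $\bigoplus_{z\in Z} M_z$ and flatness passes to retracts. Nothing to add.
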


\begin{example}\label{exm:graded-poly-flat}
Free modules are flat. In particular, if $k$ is a field, all $Z$-graded $k$-vector spaces are flat, and if $k$ is an ordinary commutative ring and $n\geq 0$, the polynomial algebra $k[x_1,\ldots, x_n]$ is flat, and hence it is also flat if considered as a $\mbbZ$-graded $k$-algebra with generators $x_i$ in some degree $n_i \in \mbbZ$.
\end{example}

\begin{observation}\label{obs:degreewise-flat-closed-under-tensor}
    As flatness is closed under tensor products, $\modkZflat$ is a symmetric monoidal full subcategory of $\modkZ$. On the other hand, 
    $\modkZflat$ is also a symmetric monoidal full subcategory of $\Mod_{Hk}^{ \geq 0, Z} \hookrightarrow \Mod_{Hk}^Z$: under the equivalence $\Mod_{Hk}^Z \simeq \Derived(\modkZ)$ by \cref{exm:gradedderived}. The tensor product in  $\Derived(\modkZ)$ is given by Day convolution of derived tensor products, which reduces to the Day convolution of ordinary tensor products on flat modules. In particular, tensor products of discrete flat $Z$-graded $k$-algebras are also discrete and flat.
\end{observation}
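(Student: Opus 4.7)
The plan is to verify each of the three claims in the observation by appealing to classical homological algebra together with the identifications made in the excerpt. Throughout, I will use the symmetric monoidal equivalence $\Mod_{Hk}^Z \simeq \Derived(\modkZ)$ from \cref{exm:gradedderived} and the fact that $\Mod_{Hk}^{\geq 0, Z} \hookrightarrow \Mod_{Hk}^Z$ corresponds to the full subcategory $\Derived(\modkZ)_{\geq 0}$ on complexes with vanishing negative homology.

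First, I would record the classical fact that flat modules are closed under tensor products: if $M,N \in \modkZflat$, then $\oplus_z (M \otimes_k N)_z \simeq (\oplus_z M_z) \otimes_k (\oplus_z N_z)$ is a tensor product of flat $k$-modules, hence flat. This, together with the observation that the monoidal unit $k$ (concentrated in degree $0$) is flat, yields the first claim that $\modkZflat \subseteq \modkZ$ is a symmetric monoidal full subcategory.

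Next, for the second claim, I would note that under the identification $\Mod_{Hk}^Z \simeq \Derived(\modkZ)$, a discrete $Z$-graded $k$-module $M$ sits in $\Derived(\modkZ)_{\geq 0}$ as a complex concentrated in homological degree zero, giving the inclusion $\modkZflat \hookrightarrow \Mod_{Hk}^{\geq 0, Z}$. The tensor product on $\Derived(\modkZ)$ is the Day convolution of the derived tensor product $\otimes_k^L$ (cf.\ \cref{exm:gradedderived}). For $M, N \in \modkZflat$, the derived tensor product $M \otimes_k^L N$ is quasi-isomorphic to the underived tensor product $M \otimes_k N$, since one can compute $\Tor^k_i(M,N)$ using a flat resolution of $M$ --- in this case, $M$ itself --- so all higher $\Tor$-groups vanish. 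Assembled degreewise via Day convolution, this shows that the tensor product in $\Derived(\modkZ)$ restricted to $\modkZflat$ coincides with the ordinary tensor product in $\modkZ$, and lands back in $\modkZflat$ by the previous paragraph. Hence the inclusion $\modkZflat \hookrightarrow \Mod_{Hk}^{\geq 0, Z}$ is symmetric monoidal.

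Finally, the third claim follows formally from the second: a discrete flat $Z$-graded $k$-algebra is precisely a commutative-ring-level algebra object in the ordinary symmetric monoidal category $\modkZflat$ (equivalently, since the inclusion is symmetric monoidal full, an $\EE_1$-algebra in $\Mod_{Hk}^{\geq 0,Z}$ whose underlying object lies in $\modkZflat$). As the inclusion is symmetric monoidal, it preserves algebra objects and their tensor products, so the tensor product of two discrete flat $Z$-graded $k$-algebras (computed a priori in $\Mod_{Hk}^Z$) is again represented by their underived tensor product, which is discrete and flat. I do not anticipate any real obstacle --- the content is essentially that flatness is the precise condition ensuring that underived and derived tensor products agree.
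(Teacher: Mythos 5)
Your proposal is correct and follows essentially the same line of reasoning the paper sketches inside the observation itself: flatness is closed under tensor products (and the unit $k$ is flat), the derived tensor product of flat graded modules has vanishing higher $\Tor$ and hence agrees with the underived one under the Day convolution structure of \cref{exm:gradedderived}, and the claim about algebras follows formally from symmetric monoidality of the full inclusion. The only nitpick is the phrase ``commutative-ring-level algebra object'' --- the paper's flat graded algebras are not assumed commutative --- but your parenthetical reformulation as an $\EE_1$-algebra fixes this and the argument is unaffected.
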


\newcommand{\grbmod}{\mathrm{grbmod}}

\begin{nota}\label{nota:gr-perf-right}
For flat $Z$-graded $k$-algebras $A$ and $B$, we let ${}_A\grbmod_B\coloneqq{}_A\BMod_B(\modkZ)$ denote the abelian $1$-category of ordinary graded $A$--$B$ bimodules. 
Let ${}_A\grbmod_B^{\mathrm{gr-cp}}$ denote its full subcategory on those bimodules that are  graded-compact-projective, see \cref{def:graded-compact-perfect}, as right $B$-modules. 

Let $\Derived({}_A\grbmod_B)^{\mathrm{gr-perf}}$ denote the full subcategory of the derived $\infty$-category $\Derived({}_A\grbmod_B)$ on those objects that are graded-perfect, see \cref{rk:graded-perfect} and preceeding definition, as derived right $B$-modules. 
\end{nota}

Most of the constructions in \cref{sec:SBim} will build on the following $\infty$-categories.

\begin{definition}\label{def:moritakz-and-dmoritakz}
Let $k$ be an ordinary commutative ring and $Z$ a discrete commutative monoid. We define \[\MoritakZ \subseteq \Moritacp(\Mod_{Hk}^{\geq 0, Z})\] to be the full $\add_{Hk}^{BZ}$-enriched subcategory on the discrete flat $Z$-graded $k$-algebras. 

Similarly, we define \[\DMoritakZ \subseteq \Moritac(\Mod_{Hk}^{Z})\] to be the full $\st_{Hk}^{BZ}$-enriched subcategory on the discrete flat $Z$-graded $k$-algebras.
\end{definition}

The following justifies the terminology 'Morita categories', see also \cref{ex:Moritaunpacked}.
\begin{cor}\label{cor:finally-our-morita-categories}
Let $k$ be an ordinary commutative ring and $Z$ a discrete commutative monoid. 
    \begin{enumerate}
        \item \label{item-cor:finally-1}
        \cref{def:moritakz-and-dmoritakz} defines a  large symmetric monoidal $\add_{Hk}^{BZ}$-enriched $\infty$-category 
    \[\MoritakZ\in \CAlg(\hatCat[\add_{Hk}^{BZ}])\]
        equipped with a symmetric monoidal surjective-on-objects functor \[\Alg(\modkZflat) \to \MoritakZ.\] The additive $k$-linear hom-category between algebras $A, B \in \Alg(\modkZflat)$ is given by the ordinary category  ${}_A\grbmod_B^{\mathrm{gr-cp}}$
        with $Z$-action by grading shift. 
        
        Composition is given by the ordinary relative tensor product, and the monoidal structure by the ordinary tensor product over $k$. 
        \item \label{item-cor:finally-2}
 \cref{def:moritakz-and-dmoritakz} defines a large symmetric monoidal $\st_{Hk}^{BZ}$-enriched $\infty$-category 
        \[\DMoritakZ \in \CAlg(\hatCat[\st_{Hk}^{BZ}])\]
          equipped with a symmetric monoidal surjective-on-objects functor \[\Alg(\modkZflat) \to \DMoritakZ.\]
           The stable $k$-linear hom-category between algebras $A, B \in \Alg(\modkZflat)$ is  
           $\Derived(_A\grbmod_B)^{\mathrm{gr-perf}}$
        with $Z$-action by grading shift. 

        Composition is given by the derived relative tensor product, and the monoidal structure by the derived tensor product over $k$. 
        \item \label{item-cor:finally-3}
        The  functor from \cref{cor:morita}.\eqref{item-cor:morita-add-to-st} restricts to a symmetric monoidal $\add_{Hk}^{BZ}$-enriched functor 
       \[\MoritakZ \to\DMoritakZ.\]
 On objects this functor acts via the identity on $\Alg(\modkZflat)^{\simeq}$; on hom-categories between  $A, B \in \Alg(\modkZflat)$ it is given by the additive $k$-linear $Z$-equivariant fully faithful inclusion
 \[
            _A\grbmod_B^{\mathrm{gr-cp}} \hookrightarrow \Derived(_A\grbmod_B)^{\mathrm{gr-perf}}.
\]
    \end{enumerate}
\end{cor}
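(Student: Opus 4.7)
The plan is to deduce all three statements from \cref{cor:morita} by identifying the relevant bimodule hom-categories classically via \cref{prop:gradedmodules} under the flatness hypothesis. To begin, the full subcategory inclusions $\MoritakZ \subseteq \Moritacp(\Mod_{Hk}^{\geq 0, Z})$ and $\DMoritakZ \subseteq \Moritac(\Mod_{Hk}^{Z})$ from \cref{def:moritakz-and-dmoritakz} are symmetric monoidal because, by \cref{obs:degreewise-flat-closed-under-tensor}, the tensor product of flat discrete $Z$-graded $k$-algebras inside $\Mod_{Hk}^{\geq 0, Z}$ or $\Mod_{Hk}^{Z}$ is again flat and discrete and coincides with the classical tensor product over $k$, while the monoidal unit $Hk$ is itself flat and discrete. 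The symmetric monoidal surjective-on-objects functors out of $\Alg(\modkZflat)$ are then obtained by restricting the structure functors of \cref{cor:morita}.

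Next, I would identify the hom-categories explicitly. Fix flat discrete $Z$-graded $k$-algebras $A, B$. Again by \cref{obs:degreewise-flat-closed-under-tensor}, $A^{\op} \otimes B$ is flat and discrete. Using the standard equivalence ${}_A\BMod_B(\VV) \simeq \RMod_{A^{\op} \otimes B}(\VV)$ in any symmetric monoidal $\infty$-category $\VV$, together with \cref{prop:gradedmodules}.\eqref{enum-prop:gradedmod-3}--\eqref{enum-prop:gradedmod-4} and the classical identification $\grmod_{A^{\op}\otimes B} \simeq {}_A\grbmod_B$, this yields equivalences
\begin{equation*}
{}_A\BMod_B(\Mod_{Hk}^{\geq 0, Z}) \simeq \Derived({}_A\grbmod_B)_{\geq 0}, \quad {}_A\BMod_B(\Mod_{Hk}^{Z}) \simeq \Derived({}_A\grbmod_B).
\end{equation*}
Under these equivalences, restriction to the right $B$-module structure corresponds to restriction along $B \to A^{\op} \otimes B$, so the full subcategory of bimodules that are compact-projective (resp. compact) as right $B$-modules matches the subcategory of those objects whose underlying right $B$-module is compact-projective (resp. compact). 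By \cref{prop:gradedmodules}.\eqref{enum-prop:gradedmod-1}--\eqref{enum-prop:gradedmod-2}, these are precisely ${}_A\grbmod_B^{\mathrm{gr-cp}}$ (a $1$-category) and $\Derived({}_A\grbmod_B)^{\mathrm{gr-perf}}$, respectively. The $\CProj_{Hk}$ and $Z$-actions inherited from $\Mod_{Hk}^{\geq 0, Z}$ (and analogously the $\Perf_{Hk}$ and $Z$-actions inherited from $\Mod_{Hk}^{Z}$) unpack to the usual $k$-linear structure and the grading-shift action; composition reduces to the ordinary (resp. derived) relative tensor product over $B$ since compact-projective right $B$-modules are flat, so no higher $\Tor$ terms contribute.

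Finally, the functor $\MoritakZ \to \DMoritakZ$ in the third claim is obtained by restricting the functor from \cref{cor:morita}.\eqref{item-cor:morita-add-to-st}. Under the identifications above, the underlying symmetric monoidal inclusion $\Mod_{Hk}^{\geq 0, Z} \hookrightarrow \Mod_{Hk}^{Z}$ corresponds to the inclusion $\Derived({}_A\grbmod_B)_{\geq 0} \hookrightarrow \Derived({}_A\grbmod_B)$, which on the relevant compact-(projective) subcategories restricts to the required fully faithful inclusion of ${}_A\grbmod_B^{\mathrm{gr-cp}}$ as complexes concentrated in degree zero inside $\Derived({}_A\grbmod_B)^{\mathrm{gr-perf}}$. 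The main technical challenge is verifying in the middle step that compactness along only the right $B$-direction indeed forces the bimodule to be discrete and graded-compact-projective in the classical sense, and that the induced $A$-$B$-bimodule structure on such a discrete right $B$-module coincides with an ordinary one; once that is in hand, the remaining verifications are bookkeeping.
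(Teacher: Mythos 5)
Your proposal is correct and follows essentially the same route as the paper's own proof: restrict the symmetric monoidal structure of \cref{cor:morita} using closure of flat discrete algebras under tensor product, identify ${}_A\BMod_B \simeq \RMod_{A^{\op}\otimes B}$ via flatness and then with (connective) derived categories of graded bimodules via \cref{prop:gradedmodules}, match the right-compact(-projective) subcategories with ${}_A\grbmod_B^{\mathrm{gr-cp}}$ and $\Derived({}_A\grbmod_B)^{\mathrm{gr-perf}}$, and obtain part (3) by restricting \cref{cor:morita}.\eqref{item-cor:morita-add-to-st}. The ``technical challenge'' you flag at the end is in fact immediate: the forgetful functor to right $B$-modules is conservative and $t$-exact, so a bimodule whose underlying right $B$-module is graded-compact-projective (hence discrete by \cref{prop:gradedmodules}.\eqref{enum-prop:gradedmod-1}) is itself discrete, i.e.\ an ordinary graded bimodule; the paper passes over this point with the same brevity.
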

\begin{proof}
Since the derived tensor product of discrete flat $Z$-graded algebras is again a discrete and flat algebra, the functor $\Alg(\modkZflat) \to \Alg(\Mod_{Hk}^{\geq 0, \Monoid})$ is symmetric monoidal, and hence the full subcategories $\MoritakZ$ and $\DMoritakZ$ are closed under the tensor product in $\Moritacp(\Mod_{Hk}^{\geq 0, Z})$ and $\Moritac(\Mod_{Hk}^{ Z})$, respectively. Denoting the derived and underived Day convolution tensor product by $\otimes^{L, \mathrm{gr}}$ and $\otimes^{\mathrm{gr}}$, respectively, and using \cref{prop:gradedmodules} and \cref{obs:degreewise-flat-closed-under-tensor} we obtain the following equivalences for discrete flat $Z$-graded $k$-algebras $A$ and $B$
\begin{align*}
       {}_A\BMod_B(\Mod_{Hk}^{Z}) &  \simeq \RMod_{A^{\op}\otimes^{L, \mathrm{gr}} B}(\Mod_{Hk}^Z) \simeq \RMod_{A^{\op}\otimes^{\mathrm{gr}} B}(\Mod_{Hk}^Z) \\& \simeq \Derived(\grmod_{A^{\op}\otimes^{\mathrm{gr}} B}) \simeq \Derived({}_A\grbmod_B).
\end{align*}
Recalling \cref{nota:rightcp} for the full subcategories ${}_{A}\BMod^{\cp}_B(\Mod_{Hk}^{\geq 0, Z})$ and ${}_{A}\BMod^{\mrc}_B(\Mod_{Hk}^{Z})$  on those  bimodules which are compact-projective, resp. compact as \emph{right $B$-modules}, the above equivalence restricts to an equivalence between subcategories (see \cref{nota:gr-perf-right})
\[
        {}_A\BMod^{\cp}_{B}(\Mod_{Hk}^{\geq 0, Z})  \simeq {}_A\grbmod_B^{\mathrm{gr-cp}} \quad  \mathrm{and}  \quad {}_A\BMod^{\mrc}_{B}(\Mod_{Hk}^{Z}) \simeq \Derived(_A\grbmod_B)^{\mathrm{gr-perf}}.
\]
Using these observations, \cref{cor:finally-our-morita-categories} follow directly from \cref{cor:morita}.
\end{proof}
\begin{remark}
The hom-categories $ {}_A\grbmod_B^{\mathrm{gr-cp}}$ of $\MoritakZ$ are ordinary $1$-categories, hence $\MoritakZ$ is a (2,2)-category. On the other hand, $\DMoritakZ$ is a genuine $(\infty,2)$-category with non-trivial higher morphisms.
\end{remark}

\begin{observation}
\label{obs:dmor-to-st}
By definition, $\DMoritakZ$ is a full symmetric monoidal subcategory of $\PrLc_{\Mod_{Hk}^{Z}}$. Thus, it comes equipped with a symmetric monoidal 
fully faithful $\st_{Hk}^{BZ}$-enriched functor 
\[\DMoritakZ \hookrightarrow \PrLc_{\Mod_{Hk}^{Z}} ~\stackrel{(-)^{\mrc}}{\simeq} ~\st_{Hk}^{BZ}.
\]
\end{observation}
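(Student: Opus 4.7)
The plan is essentially to assemble the preceding definitions and machinery into a two-step chain, with the nontrivial content supplied by Proposition~\ref{prop:gradedaction}.\eqref{item-prop:gradedaction-2}. I would first unpack Definition~\ref{def:moritakz-and-dmoritakz} and Definition~\ref{def:MoritaLarge} (with $\Monoid = Z$) to exhibit $\DMoritakZ$ as a full $\st_{Hk}^{BZ}$-enriched subcategory of $\Moritac(\Mod_{Hk}^{Z})$, which is itself a full symmetric monoidal $\st_{Hk}^{BZ}$-enriched subcategory of $\PrLstRGc$. The only thing requiring verification for the first half of the statement is closure of the discrete flat subcategory under the tensor product of $\Moritac(\Mod_{Hk}^{Z})$: this was recorded in Observation~\ref{obs:degreewise-flat-closed-under-tensor} (flatness is preserved by tensor products and the derived tensor product of flat modules agrees with the underived one, hence remains discrete), and indeed was already invoked in the proof of Corollary~\ref{cor:finally-our-morita-categories}. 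Thus the composite inclusion $\DMoritakZ \hookrightarrow \PrLstRGc$ is a symmetric monoidal fully faithful $\st_{Hk}^{BZ}$-enriched functor.

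Next, I would observe that, by definition of $\PrLstRGc$ (see Remark~\ref{rem:RGequivalent} with $\Monoid = Z$), one has $\PrLc_{\Mod_{Hk}^{Z}} = \Mod_{\Mod_{Hk}^{Z}}(\PrLc) = \PrLstRGc$. Now Proposition~\ref{prop:gradedaction}.\eqref{item-prop:gradedaction-2} provides a symmetric monoidal equivalence $(-)^{\mrc} \colon \PrLstRGc \xrightarrow{\simeq} \st_{Hk}^{BZ}$. The self-enrichment of $\PrLstRGc$ as a presentably symmetric monoidal $\infty$-category in $\st_{Hk}^{BZ}$ recorded in Observation~\ref{obs:praddenriched} transports along this symmetric monoidal equivalence to the canonical self-enrichment of $\st_{Hk}^{BZ}$, so that the equivalence is itself $\st_{Hk}^{BZ}$-enriched.

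Composing the symmetric monoidal fully faithful $\st_{Hk}^{BZ}$-enriched inclusion from the first step with the symmetric monoidal $\st_{Hk}^{BZ}$-enriched equivalence from the second yields the claimed functor. There is no real obstacle to this argument — it is purely a matter of assembling the preceding formalism — the only care required is to track simultaneously that the symmetric monoidal structure and the $\st_{Hk}^{BZ}$-enrichment are preserved at each stage, both of which are immediate from the cited results.
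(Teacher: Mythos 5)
Your proposal is correct and follows exactly the reasoning the paper leaves implicit in this observation: $\DMoritakZ$ is by \cref{def:moritakz-and-dmoritakz} and \cref{def:MoritaLarge} a full symmetric monoidal $\st_{Hk}^{BZ}$-enriched subcategory of $\PrLstRGc = \PrLc_{\Mod_{Hk}^{Z}}$ (with closure under the tensor product coming from \cref{obs:degreewise-flat-closed-under-tensor}, as in \cref{cor:finally-our-morita-categories}), and the symmetric monoidal enriched equivalence $(-)^{\mrc}$ is \cref{prop:gradedaction}.\eqref{item-prop:gradedaction-2} together with the enrichment transport of \cref{obs:praddenriched}. No gaps; this is essentially the same argument.
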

Explicitly, the functor in \cref{obs:dmor-to-st} sends a flat $Z$-graded  $k$-algebra $A$ to the stable $\infty$-category 
\[\RMod_{HA}\left(\Mod_{Hk}^{BZ}\right)^{\mathrm{c}}~~ \stackrel{\mathrm{Prop.}~\ref{prop:gradedmodules}}{\simeq} ~~\Derived(\grmod_A)^{\mathrm{gr-perf}}\]
of graded-perfect right $A$-modules, with $Z$-action given by grading shift.

\section{\texorpdfstring{$(\infty,k)$}{(infinity,k)}-categories and their factorization systems}
\label{sec:inf-n-cats}

In this section, we introduce our $(\infty,k)$-categorical framework, and establish the existence of various factorization systems generalizing the familiar (surjective-on-objects, fully faithful)-factorization system on $\cat$. We refer the reader to \cref{sec:appendix-recollections}, especially~\ref{subsec:appendix-infty-n-cats},  for motivation and a leisurely introduction to $(\infty,k)$-categories.

\subsection{Basic notions in \texorpdfstring{$(\infty,k)$}{(infinity,k)}-category theory}\label{subsec:basics-of-inf-k-cat}
Throughout, we will use the theory of \emph{enriched $\infty$-categories} developed in~\cite{GH13}, see also  \cref{subsec:enriched-infty-cats}.

\begin{definition}\label{def:CatInfty-inductive-def}
We set  $\CatInfty{0} \coloneqq \Spaces$ to be the $\infty$-category of small spaces, and equip it with its Cartesian presentably symmetric monoidal structure. We inductively define the Cartesian\footnote{
For $\VV$ a Cartesian symmetric monoidal $\infty$-category, the induced symmetric monoidal structure on $\Cat[\VV]$ is also Cartesian: 
by construction, the symmetric monoidal structure on $\AlgCat[\VV]$ is Cartesian; since the inclusion $\Cat[\VV] \hookrightarrow \AlgCat[\VV]$ is a right adjoint and preserves products, it follows that the symmetric monoidal structure on $\Cat[\VV]$ is also Cartesian.}  presentably  symmetric monoidal $\infty$-category of $(\infty, k)$-categories $\CatInfty{k}\coloneqq\Cat[\CatInfty{k-1}]$.
\end{definition}
\begin{remark}
In~\cite[Thm. 1.2]{Haugseng_2015}, Haugseng showed that the $\infty$-category $\CatInfty{k}$ from \cref{def:CatInfty-inductive-def} satisifes the axioms of Barwick and Schommer-Pries~\cite{barwick-schommer} and hence is equivalent to most other known models of the $\infty$-category of $(\infty,k)$-categories. 
\end{remark}

\begin{observation}
Consider the diagram
\[ \begin{tikzcd}[column sep=1.5cm]
\Cat_\infty
\arrow[bend left=45]{r}{|-|}
\arrow[hookleftarrow]{r}[xshift=-0.15cm, yshift=0.2cm]{\bot}[swap, xshift=-0.15cm, yshift=-0.2cm]{\bot}[description, xshift=-0.15cm]{i}
\arrow[bend right=45]{r}[swap]{\iota_0}
&
\Spaces
\end{tikzcd} \]
of adjunctions, where $i$ denotes the fully faithful inclusion of spaces as $\infty$-groupoids. Since all these functors preserve finite products\footnote{One can see that the functor $\Cat_\infty \xra{|-|} \Spaces$ preserves finite products e.g.\! by observing that it can be computed as the geometric realization of complete Segal spaces and $\Deltaop$ is sifted \cite[Cor. 4.2.3.5]{HTT}.} , they are all symmetric monoidal.  By applying $\Cat[-]$ iteratively, we obtain an analogous diagram
\[ \begin{tikzcd}[column sep=1.5cm]
\CatInfty{k+1}
\arrow[bend left=30]{r}[xshift=0.1cm]{|-|_k}
\arrow[hookleftarrow]{r}[xshift=-0.05cm, yshift=0.2cm]{\bot}[swap, xshift=-0.05cm, yshift=-0.2cm]{\bot}[description, xshift=-0.05cm]{i_{k+1}}
\arrow[bend right=30]{r}[swap, xshift=0.1cm]{\iota_k}
&
\CatInfty{k}
\end{tikzcd} \]
of symmetric monoidal adjoint functors for any $k \geq 0$ (with $i_{k+1}$ fully faithful). Thereafter, for any $j \geq k \geq 0$ we obtain an analogous diagram
\begin{equation}
\label{eq:adjns-betw-kcats-and-ncats}
\begin{tikzcd}[column sep=1.5cm]
\CatInfty{j}
\arrow[bend left=30]{r}[xshift=0.05cm]{|-|_k}
\arrow[hookleftarrow]{r}[yshift=0.2cm]{\bot}[swap, yshift=-0.2cm]{\bot}[description]{i_j}
\arrow[bend right=30]{r}[swap]{\iota_k}
&
\CatInfty{k}
\end{tikzcd}
\end{equation}
of symmetric monoidal adjoint functors by composition. 
\end{observation}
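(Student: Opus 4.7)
The plan is to verify the observation in three steps: first establish the base case by showing that the three functors in the original diagram preserve finite products; then apply the $\Cat[-]$ construction inductively to lift the symmetric monoidal adjunctions to higher categorical levels; and finally compose the consecutive adjoint triples to obtain the diagram~\eqref{eq:adjns-betw-kcats-and-ncats} for arbitrary $j \geq k \geq 0$.

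For the base case, I would verify that each of $|-|$, $i$ and $\iota_0$ preserves finite products. The fully faithful inclusion $i \colon \Spaces \hookrightarrow \Cat_\infty$ is simultaneously a left adjoint to $\iota_0$ and a right adjoint to $|-|$, so it preserves all small limits and all small colimits; in particular it preserves finite products. Similarly, $\iota_0$ is a right adjoint and preserves all small limits, hence finite products. The only genuinely nontrivial case is $|-|$, which \emph{a priori} only preserves colimits; however, as noted in the footnote, $|-|$ can be presented as the colimit over $\Deltaop$ of the complete Segal space of an $\infty$-category, and since $\Deltaop$ is sifted by \cite[Cor. 4.2.3.5]{HTT} and sifted colimits in $\Spaces$ commute with finite products, $|-|$ preserves finite products as well.

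Since $\Spaces$ and $\Cat_\infty$ are both Cartesian symmetric monoidal $\infty$-categories, a functor between them is symmetric monoidal (in an essentially unique way) if and only if it preserves finite products, and the adjunction data lifts canonically through the mate construction. This endows the base triple with the claimed structure of symmetric monoidal adjunctions. For the inductive step, I would invoke the functoriality of $\Cat[-]$ with respect to Cartesian symmetric monoidal adjunctions: given such an adjunction $L \dashv R$ between Cartesian presentably symmetric monoidal $\infty$-categories $\VV, \WW$, the enriched category construction of~\cite{GH13} yields an induced symmetric monoidal adjunction $\Cat[L] \dashv \Cat[R]$ between $\Cat[\VV]$ and $\Cat[\WW]$, with $\Cat[R]$ fully faithful whenever $R$ is (since $\Cat[-]$ is defined via Segal objects, which transport fully faithful functors to fully faithful functors). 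Iterating this construction starting from the base case produces the diagram of symmetric monoidal adjunctions between $\CatInfty{k+1}$ and $\CatInfty{k}$ for every $k \geq 0$.

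Finally, the diagram for arbitrary $j \geq k \geq 0$ is obtained by composing consecutive adjoint triples: compositions of symmetric monoidal left (resp.\ right) adjoints remain symmetric monoidal left (resp.\ right) adjoints, and compositions of fully faithful functors are fully faithful. The main technical obstacle is the clean execution of the inductive step, namely verifying that $\Cat[-]$ indeed transports Cartesian symmetric monoidal adjunctions to symmetric monoidal adjunctions while preserving fully faithfulness. While this assertion is essentially folklore, rigorous justification requires tracing through the universal property of $\Cat[\VV]$ from~\cite{GH13} together with the fact that $\Cat[-]$ preserves finite products of its input $\infty$-categories (so that the Cartesian structure on $\VV$ induces a Cartesian structure on $\Cat[\VV]$).
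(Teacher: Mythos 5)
Your proposal is correct and follows essentially the same route as the paper's (very terse) justification: check finite-product preservation in the base case (with the same sifted-colimit argument for $|-|$, while $i$ and $\iota_0$ preserve products as adjoints), use that product-preserving functors between Cartesian symmetric monoidal $\infty$-categories are canonically symmetric monoidal, transport the adjoint triple through $\Cat[-]$ iteratively, and compose for general $j \geq k$. The only point the paper likewise leaves implicit is that $\Cat[-]$ carries such adjunctions to adjunctions and preserves fully faithfulness, which you correctly identify as the standard input from the enriched-category formalism.
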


\begin{definition}
In diagram \eqref{eq:adjns-betw-kcats-and-ncats}, we refer to $|-|_k$ as the \bit{$(\infty,k)$-category completion} functor and to $\iota_k$ as the \bit{maximal sub-$(\infty,k)$-category} functor.\footnote{In the case that $k=0$, we may instead respectively refer to these as the \textit{$\infty$-groupoid completion} and \textit{maximal sub-$\infty$-groupoid} (or even simply \textit{maximal subgroupoid}) functors.} For brevity, we may omit the fully faithful inclusion functor $i_j$ from our notation, implicitly considering an $(\infty,k)$-category as an $(\infty,j)$-category with no noninvertible $i$-morphisms for any $i > k$.
\end{definition}
\begin{observation}
    For any $j \geq k \geq 0$, the inclusion $\CatInfty{k} \xhookra{i_j} \CatInfty{j}$ identifies $\CatInfty{k}$ as the full subcategory of $\CatInfty{j}$ on those $(\infty,j)$-categories whose $i$-morphisms are all invertible for all $i > k$ \cite[Prop. 6.1.7(iv)]{GH13}.\footnote{Note that the functor $[1] = c_1 \ra c_0 = \pt$ is a localization at the universal $1$-morphism: it is merely a condition that a functor $c_1 \xra{\alpha} \cC$ admit an extension along it, namely that $\alpha$ selects an equivalence. By induction (using the universal property of $\Sigma[-]$), it follows that the functor $c_{n+1} \xra{\Sigma^n[c_1 \ra c_0]} c_n$ is a localization at the universal $(n+1)$-morphism: it is merely a condition that a functor $c_{n+1} \xra{\alpha} \cC$ admit an extension along it, namely that $\alpha$ selects an invertible $(n+1)$-morphism.} We use this fact without further comment.
    \end{observation}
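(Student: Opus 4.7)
The plan is to prove the observation by induction on $j-k$, reducing to a single-step inclusion whose essential image can be described either via the enrichment formalism or via the cellular approach suggested in the footnote.

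First, I would factor $i_j \colon \CatInfty{k} \hookrightarrow \CatInfty{j}$ as the composite of successive one-step inclusions
$$ \CatInfty{k} \xhookrightarrow{i_{k+1}} \CatInfty{k+1} \xhookrightarrow{i_{k+2}} \cdots \xhookrightarrow{i_{j}} \CatInfty{j}. $$
Each $i_{\ell+1}$ is both right adjoint to the completion $|-|_{\ell}$ and left adjoint to the maximal-subcategory functor $\iota_{\ell}$, so it is fully faithful. Assuming the single-step claim that $i_{\ell+1}$ has essential image the full subcategory of $\CatInfty{\ell+1}$ on those objects whose $(\ell+1)$-morphisms are all invertible, iterating this criterion identifies the essential image of $i_j$ with $(\infty,j)$-categories for which every $i$-morphism with $k < i \leq j$ is invertible.

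For the one-step claim, I would use that $\CatInfty{\ell+1} = \Cat[\CatInfty{\ell}]$ and unwind $i_{\ell+1}$ as the functor $(i_{\ell})_\ast$ obtained from $i_{\ell} \colon \CatInfty{\ell-1} \hookrightarrow \CatInfty{\ell}$ by postcomposition on hom-objects. The general machinery of Gepner--Haugseng (\cite[\S~6.1]{GH13}) shows that a fully faithful symmetric monoidal right adjoint $F \colon \VV \hookrightarrow \WW$ induces a fully faithful $F_\ast \colon \Cat[\VV] \hookrightarrow \Cat[\WW]$ whose essential image consists of those $\WW$-enriched $\infty$-categories whose hom-objects lie in the essential image of $F$. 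Combining this with the inductive hypothesis on $i_\ell$ identifies the essential image of $i_{\ell+1}$ with $\CatInfty{\ell}$-enriched categories whose hom-objects have all top-dimensional cells invertible; unrolling the enrichment, these are exactly the $(\infty,\ell+1)$-categories whose $(\ell+1)$-morphisms are invertible.

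A more intrinsic alternative, following the footnote, is to argue directly using cells: the maps $c_{n+1} \to c_n$ are localizations at the universal $(n+1)$-morphism, so a functor $c_n \to \cC$ extends (uniquely up to contractible choice) to $c_{n+1}$ precisely when the selected $(n+1)$-morphism is invertible. Since the cells detect equivalences in $\CatInfty{j}$, an $(\infty,j)$-category $\cC$ lies in the essential image of $i_j$ iff $\Map(c_n,\cC) \to \Map(c_{n+1},\cC)$ is an equivalence for all $n \geq k$, iff every $(n+1)$-morphism of $\cC$ for $n \geq k$ is invertible.

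The main obstacle will be making precise the claim that the essential image (not merely the fact of fully faithfulness) of the enriched functor $(i_\ell)_\ast$ is characterized hom-wise as described. This is exactly the content of \cite[Prop.~6.1.7(iv)]{GH13}, which the cellular approach, by contrast, bypasses at the cost of requiring a careful development of cellular generation in $\CatInfty{j}$ and the identification of $c_{n+1}\to c_n$ with the indicated iterated suspension.
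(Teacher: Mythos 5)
Your proposal is correct and in substance identical to the paper's treatment: the paper offers no independent argument but simply invokes \cite[Prop.~6.1.7(iv)]{GH13} (relegating the cellular localization picture to a footnote), and your induction over the one-step inclusions $i_{\ell+1}=\Cat[i_\ell]$ together with the hom-wise characterization of the essential image is exactly an unwinding of that citation, as you yourself acknowledge. The cellular route you sketch likewise mirrors the paper's footnote heuristic, so no genuinely different argument is being offered.
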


\begin{definition}\label{def:c_n-and-partial-c_n}
The \bit{$n$-cell} (or \bit{walking $n$-morphism}) is the $(\infty,n)$-category 
$c_n \coloneqq \Sigma^n[\pt] \in \CatInfty{n}$.\footnote{Here, $\VV \xra{\Sigma[-]} \Cat[\VV]$ denotes the ``categorical suspension'' functor (see \Cref{subsec:enriched-infty-cats}). As the name suggests, $c_n$ is the free $(\infty,n)$-category on an $n$-morphism.} Its \bit{boundary} (or the \bit{walking pair of parallel $(n-1)$-morphisms}\footnote{ Note that $\partial c_1 = S^0 = c_{0} \sqcup_{\partial c_{0}} c_{0}$. Since $\Sigma$ commutes with colimits, we see that $\partial c_n = c_{n-1} \sqcup_{c_{n-1}} c_{n-1}$ for $n \geq 1$, i.e., $\partial c_n$ indeed corepresents pairs of parallel $(n-1)$-morphisms.}) is the $(\infty,n)$-category 
$\partial c_n \coloneqq \partial \Sigma^n[\pt] \coloneqq \Sigma^n[\emptyset]$ (which is in fact an $(\infty,n-1)$-category). 
We use both notations interchangeably, depending on our desired emphasis. We also introduce the notation
\[
j_n
\colon
\partial c_n
\coloneqq
\Sigma^n[\emptyset]
\xra{\Sigma[\emptyset \longra \pt]}
\Sigma^n[\pt]
\eqqcolon
c_n
\]
for the inclusion, which corepresents the functor taking an $n$-morphism to its source and target (which are parallel $(n-1)$-morphisms). 
\end{definition}

\begin{observation}\label{obs:truncation-of-cells}
    For $n \geq 0$, it follows from \cite[Lem. 6.1.9]{GH13} that the map  $|j_n|_0 \colon |\partial c_n|_0 \to |c_n|_0$ is equivalent to the map $ S^{n-1} \to \pt.$ By induction, it follows that for $k < n$, $|j_n|_k \colon |\partial c_n|_k \to |c_n|_k$ is equivalent to the map $\Sigma^k[S^{n-k-1}] \to \Sigma^k[\pt] = c_k$ induced by $S^{n-k-1} \to \pt$. 
\end{observation}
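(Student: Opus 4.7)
The plan is to deduce both claims iteratively from \cite[Lem.~6.1.9]{GH13}, which provides the natural equivalence $|\Sigma[\cC]|_0 \simeq \pt \sqcup_{|\cC|_0} \pt$, i.e., the unreduced suspension of the underlying $\infty$-groupoid of $\cC$.

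For the first claim, I would proceed by induction on $n$. The base case $n = 0$ is trivial: $j_0 \colon \emptyset \to \pt$ is already a map of spaces and $S^{-1} = \emptyset$ by convention. For the inductive step, observe that $j_n = \Sigma[j_{n-1}]$, since $\Sigma^n[\emptyset \to \pt] = \Sigma[\Sigma^{n-1}[\emptyset \to \pt]]$. Naturality of \cite[Lem.~6.1.9]{GH13} then identifies $|j_n|_0$ with the induced map of pushouts
\[
\pt \sqcup_{|\partial c_{n-1}|_0} \pt \to \pt \sqcup_{|c_{n-1}|_0} \pt,
\]
which by the inductive hypothesis is $\pt \sqcup_{S^{n-2}} \pt \to \pt \sqcup_\pt \pt \simeq \pt$; the source is $S^{n-1}$ by the standard description of spheres as iterated unreduced suspensions.

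For the general claim at $k \geq 1$, I would first establish an auxiliary natural equivalence
\[
|\Sigma[\cC]|_k \simeq \Sigma[|\cC|_{k-1}] \quad \text{in } \CatInfty{k},
\]
valid for $\cC \in \CatInfty{j-1}$ with $j \geq k$. Heuristically, $|-|_k$ inverts $i$-morphisms for $i > k$, which in an enriched hom-object corresponds to inverting $(i-1)$-morphisms for $i - 1 > k - 1$, i.e., applying $|-|_{k-1}$; hence $|\Sigma[\cC]|_k$ is the $(\infty,k)$-category with two objects and $|\cC|_{k-1}$ as single nontrivial hom-object, which is $\Sigma[|\cC|_{k-1}]$. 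Iterating yields a natural equivalence $|\Sigma^k[X]|_k \simeq \Sigma^k[|X|_0]$ for $X \in \CatInfty{n-k}$. Applying this naturality to the map $j_{n-k} \colon \partial c_{n-k} \to c_{n-k}$ and combining with the first step gives
\[
|j_n|_k = |\Sigma^k[j_{n-k}]|_k \simeq \Sigma^k[|j_{n-k}|_0] \simeq \Sigma^k[S^{n-k-1} \to \pt];
\]
since $\Sigma^k$ preserves colimits and $\Sigma^k[\pt] = c_k$, this is exactly the desired map $\Sigma^k[S^{n-k-1}] \to c_k$.

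The main obstacle is rigorously establishing the auxiliary identification $|\Sigma[\cC]|_k \simeq \Sigma[|\cC|_{k-1}]$ in a homotopy-coherent manner. One approach is to invoke the explicit model of \cite{GH13} in which $|-|_k$ is realized as a localization at the boundary inclusions $j_{i+1} \colon \partial c_{i+1} \to c_{i+1}$ for $i \geq k$, and check directly that $\Sigma$ intertwines these localizations using the identity $\Sigma[j_m] = j_{m+1}$; alternatively, one can argue abstractly that both sides are left adjoints in $\cC$ agreeing on a generating set and conclude by uniqueness of adjoints. Once this lemma is in hand, the remaining manipulations are entirely routine.
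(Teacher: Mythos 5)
Your argument is correct and is essentially what the observation intends: the key step is the auxiliary identification $|\Sigma[\cC]|_k \simeq \Sigma[|\cC|_{k-1}]$ (for $\cC \in \CatInfty{j-1}$, $j > k$), and iterating it reduces the general case to the $k = 0$ case furnished by \cite[Lem.~6.1.9]{GH13}. One small inaccuracy in your first suggested route for that auxiliary identification: $|-|_k$ is \emph{not} realized as the localization at the boundary inclusions $\partial c_{i+1} \to c_{i+1}$ for $i \geq k$. Those maps generate the $(k-1)$-surjective/$(k-1)$-faithful factorization system and hence the reflection $\tau_{k-1}\colon \CatInfty{j} \to \Catnk{k-1}{j}$, which is a different functor. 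As recorded in the footnote accompanying the definitions of $i_j$ and $|-|_k$, the localization realizing $|-|_k$ is at the collapse maps $\Sigma^i[c_1 \to c_0]\colon c_{i+1} \to c_i$ for $i \geq k$, which invert the universal $(i+1)$-morphism rather than imposing unique fillers for parallel $i$-morphisms. Your second suggested route---the abstract comparison of adjoints---is the one that works, and is short: both $|\Sigma[-]|_k$ and $\Sigma[|-|_{k-1}]$ are left adjoint functors $\CatInfty{j-1} \to \CatInfty{k}$, and their right adjoints agree because for $\cD \in \CatInfty{k}$ the hom-objects of $i_j\cD$ are $\eHom_{i_j\cD}(d_0,d_1) \simeq i_{j-1}\eHom_{\cD}(d_0,d_1)$, so that the universal property of $\Sigma$ matches up on both sides.
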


\begin{nota}\label{nota:n-hom-notation}
Let $\alpha \colon \partial c_k \to \cC$ be a pair of parallel $(k-1)$-morphisms in an $(\infty,k)$-category $\cC$. The \emph{space of $k$-morphisms filling $\alpha$} is \[\kHom_{\cC}(\alpha) \coloneqq \Hom_{\CatInfty{k}}(c_k, \cC) \times_{\Hom_{\CatInfty{k}}(\partial c_k, \cC)} \{\alpha\}.\]
\end{nota}

\begin{observation}\label{obs:space-of-lift-of-Sigma-k}
    Given a space $X \in \Spaces$ and $k \geq 0$, the map $\emptyset \to X$ induces a functor of $(\infty,k)$-categories 
    $\partial c_k = \Sigma^k[\emptyset] \to \Sigma^k [X]$.
    It then follows from the universal property of $\Sigma$ that for any $\cC \in \CatInfty{k}$ and any pair of parallel $(k-1)$-morphisms $\alpha\colon \partial c_k \to \cC$, we obtain an equivalence of spaces 
\[
        \Hom_{\CatInfty{k}}(\Sigma^k[X], \cC) \times_{\Hom_{\CatInfty{k}}(\partial c_k, \cC)} \{\alpha\} \simeq \Hom_{\Spaces}(X,\kHom_{\cC}(\alpha)).
\]
\end{observation}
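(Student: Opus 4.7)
The plan is to prove this by induction on $k$, using the iterative definition $\Sigma^k[X] = \Sigma[\Sigma^{k-1}[X]]$ together with the universal property of the categorical suspension $\Sigma \colon \CatInfty{k-1} \to \CatInfty{k}$. The base case $k=0$ is immediate: $\partial c_0 = \emptyset$ so the fiber product is trivial, $\Sigma^0[X] = X$, and $\kHom_\cC$ at level $k=0$ is just $\cC$ viewed as an object of $\CatInfty{0} = \Spaces$, so both sides equal $\Hom_\Spaces(X, \cC)$.

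For the inductive step, fix $\cC \in \CatInfty{k} = \Cat[\CatInfty{k-1}]$ and note that since $\Sigma$ commutes with colimits, we have $\partial c_k = \Sigma^k[\emptyset] = \Sigma[\partial c_{k-1}]$. By the universal property of $\Sigma$ in the enriched setting, a map $\alpha \colon \partial c_k = \Sigma[\partial c_{k-1}] \to \cC$ is the data of a pair of objects $c_0, c_1 \in \cC$ together with a morphism $\alpha' \colon \partial c_{k-1} \to \Hom_\cC(c_0, c_1)$ in $\CatInfty{k-1}$. Similarly, a map $\Sigma^k[X] = \Sigma[\Sigma^{k-1}[X]] \to \cC$ consists of a pair of objects of $\cC$ together with a morphism $\Sigma^{k-1}[X] \to \Hom_\cC(c_0, c_1)$. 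Since the map $\partial c_k \to \Sigma^k[X]$ induced by $\emptyset \to X$ is, under the universal property, the map induced on suspensions by $\partial c_{k-1} \to \Sigma^{k-1}[X]$, a lift of $\alpha$ along this inclusion is forced to use the same pair of objects $(c_0, c_1)$, and thereby amounts to an extension of $\alpha'$ along $\partial c_{k-1} \to \Sigma^{k-1}[X]$. This yields a canonical equivalence
\[
\Hom_{\CatInfty{k}}(\Sigma^k[X], \cC) \times_{\Hom_{\CatInfty{k}}(\partial c_k, \cC)} \{\alpha\}
\;\simeq\;
\Hom_{\CatInfty{k-1}}(\Sigma^{k-1}[X], \Hom_\cC(c_0, c_1)) \times_{\Hom_{\CatInfty{k-1}}(\partial c_{k-1}, \Hom_\cC(c_0, c_1))} \{\alpha'\}.
\]

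By the inductive hypothesis applied at level $k-1$ inside the $(\infty,k-1)$-category $\Hom_\cC(c_0, c_1)$, the right-hand side is equivalent to $\Hom_\Spaces(X, (k{-}1)\Hom_{\Hom_\cC(c_0,c_1)}(\alpha'))$. Finally, specializing the same chain of identifications to the case $X = \pt$ (so $\Sigma^k[X] = c_k$) identifies $\kHom_\cC(\alpha) = \Hom_{\CatInfty{k}}(c_k, \cC) \times_{\Hom_{\CatInfty{k}}(\partial c_k, \cC)} \{\alpha\}$ with $(k{-}1)\Hom_{\Hom_\cC(c_0, c_1)}(\alpha')$, completing the induction. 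The only real subtlety is bookkeeping the iterated application of the universal property of $\Sigma$, but this is a direct consequence of its definition in \Cref{subsec:enriched-infty-cats}; no genuine obstacle arises.
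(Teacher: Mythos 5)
Your proof is correct and follows the same route the paper intends: the paper simply invokes the universal property of $\Sigma$, and your induction on $k$ (peeling off one suspension at a time, matching the pair of objects, and reducing to the hom-$(\infty,k-1)$-category) is just a careful unwinding of that one-line justification, with the base case and the identification of $\kHom_{\cC}(\alpha)$ via the case $X=\pt$ handled correctly.
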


\subsection{Truncatedness and connectedness}\label{subsec:trun-and-conn}
Here we recall the standard definition of  the ($n$-connected, $n$-truncated) factorization system on the $\infty$-category $\Spaces$ of spaces. This will be the base case for our factorization systems on $(\infty, k)$-categories. 
\begin{definition}
\label{defn:nconn-and-ntrunc}
For any $n \geq 0$, a space $X \in \Spaces$ is called
\begin{itemize}
\item \bit{$n$-connected} if $X$ is connected and if $\pi_i(X,x) = 0$ for all $i \leq n$ and all $x \in X$ and
\item \bit{$n$-truncated} if $\pi_i(X,x) = 0$ for all $i > n$ and all $x \in X$.
\end{itemize}
We extend this to the case that $n=-1$ by declaring that $X$ is
\begin{itemize}
\item \bit{$(-1)$-connected} if it is nonempty and
\item \bit{$(-1)$-truncated} if it is either empty or contractible,
\end{itemize}
and to the case that $n=-2$ by declaring that $X$ is
\begin{itemize}
\item always \bit{$(-2)$-connected} and
\item \bit{$(-2)$-truncated} if it is contractible.
\end{itemize}
For any $n \geq -2$, we declare that a map of spaces is \bit{$n$-connected}\footnote{Readers be aware that it is also common to refer to $n$-connected spaces and maps as ``$(n+1)$-connective'', see for example \cite[Terminology]{HTT}.} (resp.\! \bit{$n$-truncated}) if its fibers are all such. By \cite[Ex. 5.2.8.16]{HTT} the classes of ($n$-connected, $n$-truncated) maps form a factorization system of small generation on $\Spaces$, generated by the single morphism $S^{n+1} \to \pt$. See also \cref{ex:nconn-ntrunc-fs-on-Spaces}.
\end{definition}

\begin{example}
\label{ex:nconn-and-ntrunc}
To obtain examples, the following explicit alternative descriptions of $n$-connectedness and $n$-truncatedness for low values of $n$ are useful.
\begin{enumerate}

\item

A space is $0$-connected if and only if it is connected (and in particular nonempty), and it is $1$-connected if and only if it is simply connected (and in particular connected).

\item

A map of spaces is always $(-2)$-connected, and it is $(-1)$-connected if and only if it is surjective.

\item

A space is $n$-truncated if and only if it is an $n$-type, e.g.\! it is $0$-truncated if and only if it is discrete.

\item\label{item-ex:nconn-and-ntrunc-truncmaps}

A map of spaces is $(-2)$-truncated if and only if it is an equivalence, it is $(-1)$-truncated if and only if it is a monomorphism, and it is $0$-truncated if and only if it is a covering map (in the classical sense).

\end{enumerate}
\end{example}

\begin{observation}
\label{obs:basic-facts-regarding-nconnected-and-ntruncated-spaces-and-maps}
We note the following basic facts, which we use without further comment.
\begin{enumerate}

\item

For any $n \geq -2$, a space $X$ is $n$-connected (resp.\! $n$-truncated) if and only if the map $X \ra \pt$ is such.

\item

For any $n \geq -2$, a space is both $n$-connected and $n$-truncated if and only if it is contractible, and hence a map is both $n$-connected and $n$-truncated if and only if it is an equivalence.

\item

For any $n \geq -2$, we have the implications
\[
\text{$n$-connected} \Longleftarrow \text{$(n+1)$-connected}
\qquad
\text{and}
\qquad
\text{$n$-truncated} \Longrightarrow \text{$(n+1)$-truncated}
\]
for spaces and hence also for maps of spaces.

\item

For any $n \geq -2$, both $n$-connected and $n$-truncated maps are stable under base change.

\item\label{item-obs:basic-facts-regarding-nconnected-and-ntruncated-spaces-and-maps-use-LES}

By the long exact sequence in homotopy groups, for any $n \geq -1$, a map $X \xra{f} Y$ of spaces is
\begin{itemize}
\item $n$-connected if and only if for every $x \in X$ the map $\pi_i(X,x) \xra{\pi_i(f)} \pi_i(Y,f(x))$ is
\begin{itemize}
\item an isomorphism for all $0 \leq i < n+1$ and
\item surjective for $i = n+1$,
\end{itemize}
and
\item $n$-truncated if and only if for every $x \in X$ the map $\pi_i(X,x) \xra{\pi_i(f)} \pi_i(Y,f(x))$ is
\begin{itemize}
\item an isomorphism for all $i > n+1$ and
\item injective for $i = n+1$.
\end{itemize}
\end{itemize}
\end{enumerate}
\end{observation}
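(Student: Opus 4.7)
The plan is to dispatch the five items in order of technical content, treating (1), (3), (4) as essentially formal and reserving the small amount of classical homotopy theory for (2) and (5).

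Items (1), (3), (4) are tautological. For (1), the map $X \to \pt$ has a single fiber, namely $X$ itself, so the fiberwise condition reduces to the condition on $X$. For (3), the implications on spaces follow directly from the definitions: requiring fewer homotopy groups to vanish is strictly weaker; the statement for maps then follows by applying this fiberwise. For (4), pullbacks in $\Spaces$ preserve fibers up to equivalence, and hence preserve any fiberwise homotopy-theoretic property; it therefore suffices to record that both $n$-connectedness and $n$-truncatedness are such properties by definition.

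For (2), combine the two conditions. For $n \geq 0$, a space $X$ that is both $n$-connected and $n$-truncated satisfies $\pi_i(X,x) = 0$ for all $i \geq 0$ and is nonempty, hence is contractible by Whitehead's theorem (applied inside Kan complexes, which model $\Spaces$). For $n = -1$, being $(-1)$-connected is nonemptyness and $(-1)$-truncated is being either empty or contractible, so both together give contractibility. For $n = -2$, only the truncatedness condition is nontrivial and directly asserts contractibility. The statement for maps follows by applying this fiberwise.

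The substantive (but still routine) content lies in (5). Fix $x \in X$ and write $y = f(x)$ and $F$ for the fiber of $f$ over $y$ containing $x$. I would run the long exact sequence
\[
\cdots \to \pi_{i+1}(Y,y) \to \pi_i(F,x) \to \pi_i(X,x) \xra{\pi_i(f)} \pi_i(Y,y) \to \pi_{i-1}(F,x) \to \cdots
\]
and read off both equivalences. Vanishing of $\pi_i(F,x)$ for $0 \leq i \leq n$ yields injectivity of $\pi_i(f)$ for $i \leq n$ (from exactness at $\pi_i(X,x)$) and surjectivity of $\pi_i(f)$ for $i \leq n+1$ (from exactness at $\pi_i(Y,y)$, using $\pi_{i-1}(F,x) = 0$ in the prescribed range), which matches the $n$-connected characterization precisely; the dual reading, using $\pi_i(F,x) = 0$ for $i > n$, yields the $n$-truncated characterization. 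The main obstacle, such as it is, will be careful book-keeping around the shift in ranges between the vanishing of $\pi_i(F,x)$ and the iso/surjective ranges for $\pi_i(f)$, together with the low-degree edge cases: the sequence terminates at $\pi_0$ as an exact sequence of pointed sets (so injectivity/surjectivity must be interpreted in the pointed-set sense), basepoint-independence requires running the argument at each $x \in X$, and in the $n$-connected case one must also record nonemptyness of fibers (which is read off from surjectivity of $\pi_0(f)$).
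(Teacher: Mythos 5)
The paper provides no proof of this Observation at all — it is explicitly stated as a list of ``basic facts, which we use without further comment.'' Your write-up supplies the routine justifications in the standard way, and all five items check out: (1), (3), (4) are indeed definitional/formal once one notes that the relevant notions for maps are fiberwise, (2) is the elementary case split you give, and (5) is the usual two-way reading of the long exact sequence of the fiber sequence $F \ra X \ra Y$, taking care of the range shift between vanishing of $\pi_i(F)$ and iso/surjectivity of $\pi_i(f)$ and the pointed-set behavior in degree $0$.

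One small caution worth flagging: for item (5) in the direction ``LES criterion $\Longrightarrow$ $n$-connected,'' the fiber sequence based at a point $x \in X$ only sees the fibers over $y = f(x)$, i.e.\! over points in the image. Surjectivity of $\pi_0(f)$ (which, for $n \geq 0$, is part of the ``$\pi_0$ iso'' clause, and for $n = -1$ is the ``surjective at $i = n+1 = 0$'' clause) is what guarantees nonemptiness of \emph{all} fibers, so no fibers are missed. You do record this, and it is the one place where the argument is not purely a mechanical reading of the exact sequence; your handling is correct. (There remains the degenerate case $X = \emptyset$, $Y \neq \emptyset$, where the ``for every $x \in X$'' criterion is vacuous but the map is not $(-1)$-connected — but this is an issue with the stated criterion itself rather than with your proof of it, and is standardly swept under the rug.)
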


Throughout, we will use the following cancellation properties generalizing well-known facts about surjections and injections of sets.

\begin{lemma}
\label{lem:cancellation-for-connectedness-and-truncatedness}
Suppose that $A \xra{f} B \xra{g} C$ are composable maps of spaces, and let $n \geq -2$.
\begin{enumerate}
\item\label{item-lem:cancellation-for-connectedness}
If $g$ is $(n+1)$-connected and $gf$ is $n$-connected, then $f$ is $n$-connected. 
\item\label{item-lem:cancellation-for-truncatedness}
If $g$ is $(n+1)$-truncated and $gf$ is $n$-truncated, then $f$ is $n$-truncated.
\end{enumerate}
\end{lemma}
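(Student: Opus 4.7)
The plan is to reduce both statements to the analysis of a single fiber sequence and then apply the long exact sequence of homotopy groups. For any $b \in B$ with $c \coloneqq g(b)$, pasting the two pullback squares whose total composite computes $\mathrm{fib}_c(gf) = A \times_C \{c\}$ exhibits $\mathrm{fib}_b(f)$ as the fiber over $b$ of the canonical map $\mathrm{fib}_c(gf) \to \mathrm{fib}_c(g)$ (which itself is the pullback of $f$ along $\mathrm{fib}_c(g) \to B$). For part~(2) it therefore suffices to verify the slightly sharper statement that any map of spaces $E \to K$ with $E$ being $n$-truncated and $K$ being $(n+1)$-truncated is itself $n$-truncated; for part~(1) one performs the parallel direct analysis of this fiber sequence.

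Next I would dispose of the low-dimensional boundary cases $n \in \{-2,-1\}$ by direct inspection: the case $n = -2$ of part~(1) is vacuous, the case $n = -2$ of part~(2) uses that an equivalence $\mathrm{fib}_c(gf) \to \mathrm{fib}_c(g)$ forces all fibers to be contractible, and the $n = -1$ cases are straightforward combinatorial arguments using discreteness of $\mathrm{fib}_c(g)$. For the main range (namely $n \geq -1$ in part~(1) and $n \geq 0$ in part~(2)) I would choose a basepoint --- its existence in part~(1) following from transporting a point of the nonempty $\mathrm{fib}_c(gf)$ along a path in the connected space $\mathrm{fib}_c(g)$ to land over $b$, and in part~(2) being automatic since empty fibers are already $(-1)$-truncated --- and feed the fiber sequence $\mathrm{fib}_b(f) \to \mathrm{fib}_c(gf) \to \mathrm{fib}_c(g)$ into its long exact sequence. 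The hypotheses on $g$ and $gf$ respectively force the terms flanking $\pi_i(\mathrm{fib}_b(f))$ to vanish for $1 \leq i \leq n$ in part~(1) and for $i > n$ in part~(2), yielding the desired vanishing of homotopy groups by exactness.

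The main mild obstacle will be bookkeeping at the $\pi_0$-level in part~(1), where the long exact sequence is only a sequence of pointed sets rather than groups. Verifying connectedness of $\mathrm{fib}_b(f)$ requires combining surjectivity of $\pi_0(\mathrm{fib}_b(f)) \to \pi_0(\mathrm{fib}_c(gf))$ --- which follows from connectedness of $\mathrm{fib}_c(g)$ via exactness at $\pi_0(\mathrm{fib}_c(gf))$ --- with injectivity, which follows from the vanishing $\pi_1(\mathrm{fib}_c(g)) = 0$ via exactness at $\pi_0(\mathrm{fib}_b(f))$ and the induced action of $\pi_1(\mathrm{fib}_c(g))$ on $\pi_0(\mathrm{fib}_b(f))$.
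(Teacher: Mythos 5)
Your proposal is correct and follows essentially the same route as the paper: identifying $\mathrm{fib}_b(f)$ with the fiber of $\mathrm{fib}_c(gf)\to\mathrm{fib}_c(g)$ is exactly the paper's reduction to the case $C=\ast$, after which both arguments run the long exact sequence of homotopy groups of that fiber sequence. You simply spell out the details the paper leaves implicit (the edge cases $n\in\{-2,-1\}$ and the $\pi_0$-level bookkeeping via the $\pi_1(\mathrm{fib}_c(g))$-action), all of which is accurate.
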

\begin{proof}
Since connectivity and truncatedness of maps of spaces are defined fiberwise, we may henceforth assume that $C=*$. In this case, (1) and (2) become: \begin{enumerate}
\item $f \colon A \to B$ is a map from an $n$-connected space to an $(n+1)$-connected space, then the fibers of $f$ are $n$-connected. 
\item  If $f\colon A \to B$ is a map from an $n$-truncated space to an $(n+1)$-truncated space, then the fibers of $f$ are $n$-truncated. 
\end{enumerate}
For $n=-2$, these statements are obvious, for higher $n$ they can be easily verified from the long exact sequence of homotopy groups associated to $f$. 
\end{proof}

\begin{proposition}
\label{prop:pullback-square-via-conn-and-trunc}
Fix any $n \geq -2$. A commutative square of spaces
\[ \begin{tikzcd}[column sep=2cm]
A
\arrow{r}{n\trunc}
\arrow{d}[swap]{n\conn}
&
B
\arrow{d}{(n+1)\conn}
\\
C
\arrow{r}[swap]{(n+1)\trunc}
&
D
\end{tikzcd} \]
 in which the maps are truncated and connected as indicated is necessarily a pullback square.
\end{proposition}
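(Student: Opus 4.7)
The plan is to reduce the question to showing that a certain comparison map is an equivalence, and then to detect this equivalence by showing that the map is both $n$-connected and $n$-truncated using the cancellation lemma \ref{lem:cancellation-for-connectedness-and-truncatedness}.

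Concretely, I would first form the actual pullback $P \coloneqq B \times_D C$ of the cospan $B \xra{(n+1)\conn} D \xla{(n+1)\trunc} C$, obtaining the induced comparison map $\varphi \colon A \to P$. The strategy is to show that $\varphi$ is an equivalence, which by part \eqref{item-ex:nconn-and-ntrunc-truncmaps} of \cref{ex:nconn-and-ntrunc} (or more directly by \cref{obs:basic-facts-regarding-nconnected-and-ntruncated-spaces-and-maps}, item~(2)) is equivalent to being simultaneously $n$-connected and $n$-truncated.

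Next I would record the two structural properties of $P$ arising from stability under base change: the projection $P \to C$ is $(n+1)$-connected (as it is the pullback of $B \xra{(n+1)\conn} D$), and the projection $P \to B$ is $(n+1)$-truncated (as it is the pullback of $C \xra{(n+1)\trunc} D$). These were noted in item~(4) of \cref{obs:basic-facts-regarding-nconnected-and-ntruncated-spaces-and-maps}. Combined with the hypotheses on the original square, we obtain two composable pairs: on the one hand, $A \xra{\varphi} P \to C$ composes to the given $n$-connected map $A \to C$, with second leg $(n+1)$-connected; on the other, $A \xra{\varphi} P \to B$ composes to the given $n$-truncated map $A \to B$, with second leg $(n+1)$-truncated.

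Finally, I would apply \cref{lem:cancellation-for-connectedness-and-truncatedness} twice: part \eqref{item-lem:cancellation-for-connectedness} to the first composition forces $\varphi$ to be $n$-connected, and part \eqref{item-lem:cancellation-for-truncatedness} to the second composition forces $\varphi$ to be $n$-truncated. Hence $\varphi$ is an equivalence, and the original square is a pullback square. I do not expect any real obstacle here; the proof is a clean formal consequence of the base-change stability of the two classes together with the cancellation lemma, and the only subtlety to watch for is the shift of indexing by $1$ in the hypotheses of the cancellation statements, which matches exactly the $(n \vs n+1)$-shift appearing in the four edges of the square.
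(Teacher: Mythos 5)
Your proposal is correct and follows essentially the same route as the paper: form $B \times_D C$, use stability of the two classes under base change to see that its projections to $B$ and $C$ are $(n+1)$-truncated and $(n+1)$-connected respectively, and then apply both parts of \cref{lem:cancellation-for-connectedness-and-truncatedness} to conclude that the comparison map $A \to B \times_D C$ is simultaneously $n$-connected and $n$-truncated, hence an equivalence.
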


\begin{proof}
Consider the commuting diagram
\[ \begin{tikzcd}[column sep=2cm, row sep=1cm]
A
\arrow[bend left=5]{rrd}[sloped]{n\trunc}
\arrow[bend right=5]{rdd}[sloped, swap]{n\conn}
\arrow{rd}
\\
&
B \times_D C
\arrow{r}[swap]{(n+1)\trunc}
\arrow{d}{(n+1)\conn}
&
B
\arrow{d}{(n+1)\conn}
\\
&
C
\arrow{r}[swap]{(n+1)\trunc}
&
D.
\end{tikzcd}
~ \]
where we have used that truncated and connected maps are stable under pullback. 
By \Cref{lem:cancellation-for-connectedness-and-truncatedness}, the map $A \ra B \times_D C$ is both $n$-connected and $n$-truncated, and so is an equivalence.
\end{proof}

\subsection{Factorization systems for \texorpdfstring{$(\infty,k)$}{(infinity,k)}-categories}
\label{subsec:fact-for-inf-k-cat}

In this subsection we define $n$-surjective and $n$-faithful morphisms in $\CatInfty{k}$ and prove in \cref{thm:nsurj-nfaithful-fs-for-infty-k-cats} that they form factorization systems, using the main result \cref{thm:fs_and_enriched_cat} of \cref{app:f-s-for-enriched-cats}. Furthermore, we establish various properties of these factorization systems.

\begin{definition}\label{def:nsurj-and-nfaith}
Consider a morphism $\cC \xra{F} \cD$ in $\CatInfty{k}$ for some $k \geq 0$ and let $n\geq -2$.
\begin{enumerate}

\item
We declare that any $F$ is \bit{$(-2)$-surjective} and that $F$ is \bit{$(-2)$-faithful} if it is an equivalence.

\item
If $k=0$, we say that $F$ is \bit{$n$-surjective} if it is $n$-connected and \bit{$n$-faithful} if it is $n$-truncated.\footnote{We nevertheless continue to use the terms ``$n$-connected'' and ``$n$-truncated'' (referring to maps of spaces) since they play a fundamental role as the base case of our inductive definitions.}

\item
For $n > -2$ and $k > 0$, we inductively define $F$ to be
\begin{enumerate}

\item
\bit{$n$-surjective} if it is surjective on objects and for every $c,c' \in \cC$ the morphism $\eHom_\cC(c,c') \ra \eHom_\cD(Fc,Fc')$ in $\CatInfty{k-1}$ is $(n-1)$-surjective, and

\item
\bit{$n$-faithful} if for every $c,c' \in \cC$ the morphism $\eHom_\cC(c,c') \ra \eHom_\cD(Fc,Fc')$ in $\CatInfty{k-1}$ is $(n-1)$-faithful.

\end{enumerate}
\end{enumerate}
\end{definition}

\begin{example}
    \label{exa:surj}
We give a few explicit alternative descriptions of $n$-surjectivity and $n$-faithfulness for low values of $n$ (and any $k \geq 0$).
\begin{enumerate}

\item A functor is $(-1)$-surjective if and only if it is surjective on objects.

\item A functor is $(-1)$-faithful if and only if it is fully faithful.

\item A functor is $0$-surjective if and only if it is surjective on objects and on $1$-morphisms.

\item A functor is $0$-faithful if and only if  the induced functors on hom-categories are fully faithful. 
\end{enumerate}
\end{example}

\begin{notation}
    \label{def:faithful}
To simplify our terminology, we refer to a $0$-faithful functor simply as \bit{faithful}.
\end{notation}

\begin{remark}
A functor of $(\infty,0)$-categories is faithful if and only if it is a covering map (recall \Cref{ex:nconn-and-ntrunc}.\eqref{item-ex:nconn-and-ntrunc-truncmaps}). Hence, in general one may think of a faithful functor as a sort of ``directed covering map''. 
\end{remark}

 Recall the cancellation property \cref{lem:cancellation-for-connectedness-and-truncatedness} of truncated and connected maps of spaces. The second part of \cref{lem:cancellation-for-connectedness-and-truncatedness}  generalizes to functors of $(\infty,k)$-categories:

\begin{lemma}\label{lem:cancellation-for-surj-and-faithful}
Let $k \geq 0$ and $n\geq -2$, and consider composable functors $\cA \xra{F} \cB \xra{G} \cC$ of $(\infty,k)$-categories.
Then, if $G$ is $(n+1)$-faithful and $GF$ is $n$-faithful, then $F$ is $n$-faithful.
\end{lemma}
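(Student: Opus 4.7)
The natural approach is by induction on $k \geq 0$. The base case $k = 0$ reduces, via the first clause of \cref{def:nsurj-and-nfaith} together with the coincidence of $(-2)$-faithfulness with equivalence of spaces (and hence $(-2)$-truncatedness), to \cref{lem:cancellation-for-connectedness-and-truncatedness}(2) for maps of spaces.

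For the inductive step at $k > 0$, I would treat the principal case $n \geq -1$ first. By the iterated clause of \cref{def:nsurj-and-nfaith}, for every pair $a, a' \in \cA$ there is a commutative triangle
\[
\eHom_{\cA}(a, a')
\xra{F_{a, a'}}
\eHom_{\cB}(Fa, Fa')
\xra{G_{Fa, Fa'}}
\eHom_{\cC}(GFa, GFa')
\]
in $\CatInfty{k-1}$ in which $G_{Fa, Fa'}$ is $n$-faithful (since $G$ is $(n+1)$-faithful) and the composite is $(n-1)$-faithful (since $GF$ is $n$-faithful). The inductive hypothesis in $\CatInfty{k-1}$, applied at index $n-1 \geq -2$, then yields that $F_{a, a'}$ is $(n-1)$-faithful. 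As this holds for all $a, a'$, it follows by definition that $F$ is $n$-faithful.

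The edge case $n = -2$ at $k > 0$ must be handled separately, since $(-2)$-faithfulness is not defined in terms of hom-objects. Here the hypothesis reads: $G$ is fully faithful and $GF$ is an equivalence, and the goal is that $F$ is an equivalence. The same hom-object triangle, combined with $2$-out-of-$3$ for equivalences in $\CatInfty{k-1}$, shows that each $F_{a, a'}$ is an equivalence, so $F$ is fully faithful. For essential surjectivity, given $b \in \cB$, essential surjectivity of $GF$ supplies $a \in \cA$ with an equivalence $GFa \simeq Gb$ in $\cC$; since $G$ is fully faithful and therefore reflects equivalences, this lifts to an equivalence $Fa \simeq b$ in $\cB$. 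Together, these give that $F$ is an equivalence.

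The only real subtlety is the need to separate out $n = -2$, where the inductive hom-object argument does not literally apply because the definition of $(-2)$-faithfulness is given directly rather than recursively. Otherwise the proof is entirely formal, driven by the inductive definition and the corresponding fact for maps of spaces.
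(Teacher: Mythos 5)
Your proof is correct and takes essentially the same approach as the paper: the base case $k=0$ is reduced to \cref{lem:cancellation-for-connectedness-and-truncatedness} for spaces, and for $k>0$ the inductive step passes to hom-objects and invokes the inductive hypothesis at $(k-1, n-1)$. The only organizational difference is in the treatment of the edge case $n=-2$: the paper dispatches it uniformly for all $k$ at the outset, invoking the standard fact that a (homotopy) section of a fully faithful functor is an equivalence, whereas you unpack this fact at each $k>0$ into a fully-faithfulness argument (via $2$-out-of-$3$ on hom-objects) plus an essential-surjectivity argument (using that fully faithful functors reflect equivalences). Both routes are valid; your version is a bit more explicit but adds nothing the paper's terse invocation doesn't already cover. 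Your observation that the separate $n=-2$ case is genuinely needed — because the recursive step at $n=-1$ leans on the $n=-2$ instance of the inductive hypothesis at level $k-1$ — is exactly the right reason the edge case cannot be folded into the main recursion.
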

\begin{proof}
The case $n=-2$ is the straight-forward statement that a section of a fully faithful functor is an equivalence. 
For $n\geq -1$, we induct on $k \geq 0$. The base case  $k=0$ is Lemma~\ref{lem:cancellation-for-connectedness-and-truncatedness}. For $k \geq 1$,  $F$ being $n$-faithful is equivalent to proving that for $a, a' \in \cA$ the induced functor of $(\infty,k-1)$-categories $\eHom_{\cA}(a,a') \to \eHom_{\cB}(Fa, Fa')$ is $(n-1)$-faithful. 
Since the composable sequence of functors of $(\infty,k-1)$-categories $\eHom_{\cA}(a,a') \to \eHom_{\cB}(Fa, Fa') \to \eHom_{\cC}(GFa, GFa')$ the last functor is $n$-faithful, and the composite is $(n-1)$-faithful by assumption, the first functor is $(n-1)$-faithful by induction. 
\end{proof}

 \begin{remark} 
 The first part of \cref{lem:cancellation-for-connectedness-and-truncatedness} does \emph{not} generalize to higher categories: 
 If $G$ is $(n+1)$-surjective and $GF$ is $n$-surjective, then it does not necessarily follow that $F$ is $n$-surjective. For example, let $\cA$ be the category freely generated by two objects $a$ and $b$ and two morphisms $f\colon a \to b$ and $g \colon b \to a$. Then, in the composite $\pt \to \cA \to \pt$, the second functor is $0$-surjective (i.e. surjective on objects and hom-spaces), and the composite is an equivalence (in particular $(-1)$-surjective), but the first functor is not surjective on objects and hence not $(-1)$-surjective. 
 \end{remark}

Just as with the ($n$-connected, $n$-truncated) factorization system on the $\infty$-category of spaces, the $n$-surjective and $n$-faithful functors form a factorization system on the $\infty$-category of $(\infty,k)$-categories:
\begin{theorem}
\label{thm:nsurj-nfaithful-fs-for-infty-k-cats}
Let $k \geq 0$ and  $n \geq -2$.
\begin{enumerate}

\item

The pair ($n$-surjective functors, $n$-faithful functors) defines a factorization system on the $\infty$-category $\CatInfty{k}$ of $(\infty,k)$-categories.

\item

This factorization system is compatible with the Cartesian symmetric monoidal structure on $\CatInfty{k}$.

\item \label{item-thm:cat-f-s-generators}

This factorization system is of small generation. More specifically,
\begin{enumerate}

\item if $n+2 \leq k$ then it is generated by the set $\{ \partial c_i \ra c_i \}_{n+2 \leq i \leq k}$, and

\item if $n+2 \geq k$ then it is generated by the single morphism $\{ \Sigma^k[S^{n-k+1}] \ra \Sigma^k[\pt] \eqqcolon c_k \}$.

\end{enumerate}
\end{enumerate}
\end{theorem}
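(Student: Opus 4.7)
The plan is to prove all three parts of the theorem simultaneously by induction on $k \geq 0$, exploiting the iterative definition $\CatInfty{k} = \Cat[\CatInfty{k-1}]$ together with the main factorization-system result for enriched $\infty$-categories, \cref{thm:fs_and_enriched_cat}, from the appendix. The degenerate case $n = -2$, where $(-2)$-surjective is every map and $(-2)$-faithful is an equivalence, is the trivial factorization system and can be handled separately or absorbed into the generic argument.

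For the base case $k = 0$, the classes of $n$-surjective and $n$-faithful maps in $\CatInfty{0} = \Spaces$ are, by \cref{def:nsurj-and-nfaith}, the $n$-connected and $n$-truncated maps. By \cref{defn:nconn-and-ntrunc} these form a factorization system of small generation generated by $S^{n+1} \to \pt$, which equals $\Sigma^0[S^{n+1}] \to c_0$, matching case (iii)(b). Cartesian symmetric monoidal compatibility is standard, since both $n$-connected and $n$-truncated maps are stable under binary products (a direct check using \cref{obs:basic-facts-regarding-nconnected-and-ntruncated-spaces-and-maps} and the fact that $\pi_i$ takes products to products). For the inductive step, assume the theorem for $\CatInfty{k-1}$ and all $n$. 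We apply \cref{thm:fs_and_enriched_cat} with $\VV = \CatInfty{k-1}$ equipped with its ($(n{-}1)$-surjective, $(n{-}1)$-faithful) factorization system, which is Cartesian-monoidally compatible by induction. The theorem produces a factorization system on $\Cat[\VV] = \CatInfty{k}$, compatible with the Cartesian monoidal structure, whose right class consists of those functors $F\colon \cC \to \cD$ for which every induced morphism $\eHom_\cC(c,c') \to \eHom_\cD(Fc, Fc')$ lies in the right class of $\VV$, i.e.\! is $(n-1)$-faithful. This matches the inductive clause of \cref{def:nsurj-and-nfaith} for $n$-faithful functors on the nose. The left class is characterized by orthogonality, and a direct verification using the explicit generators (including $\partial c_1 \to c_1$, which corresponds to essential surjectivity on objects) together with the inductive hypothesis identifies it as the class of $n$-surjective functors.

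Small generation and the explicit description of generators come from the same application of \cref{thm:fs_and_enriched_cat}: its output prescription builds generators $\cK_k$ on $\Cat[\VV]$ from generators $\cK_{k-1}$ of $\VV$'s factorization system by suspension $\Sigma[-]\colon \VV \to \Cat[\VV]$, together with (when required) auxiliary low-dimensional generators such as $\partial c_1 \to c_1$. Using the identities $\Sigma[\partial c_i] \simeq \partial c_{i+1}$ and $\Sigma[c_i] \simeq c_{i+1}$, the inductive recipe transports the single case (iii)(b) generator $\Sigma^{k-1}[S^{(n-1)-(k-1)+1}] \to c_{k-1}$ at step $k-1$ to $\Sigma^{k}[S^{n-k+1}] \to c_{k}$ at step $k$; likewise, the case (iii)(a) generators $\{\partial c_i \to c_i\}_{(n-1)+2 \leq i \leq k-1}$ suspend to $\{\partial c_j \to c_j\}_{n+2 \leq j \leq k}$. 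The transition between (iii)(a) and (iii)(b) across the boundary $n+2 = k$ is consistent, since at that value both formulas reduce to the single morphism $\partial c_k \to c_k = \Sigma^k[\emptyset] \to c_k$.

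The hard part will be verifying in detail the precise output of \cref{thm:fs_and_enriched_cat}, in particular that the generators it produces match the stated set without introducing spurious extras or omitting low-dimensional pieces; carefully tracking the boundary case $n+2 = k$, where cases (iii)(a) and (iii)(b) must agree; and handling the degenerate $n = -2$ case where the factorization system collapses to the trivial one (which may fall outside the small-generation framework of the inductive step and must be treated separately). Once this bookkeeping is in place, and the Cartesian-monoidal-compatibility hypothesis of \cref{thm:fs_and_enriched_cat} is verified at each step from the inductive hypothesis, the rest of the argument is a purely formal induction.
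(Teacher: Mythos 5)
Your proposal is correct and follows essentially the same route as the paper: base case $k=0$ via the ($n$-connected, $n$-truncated) factorization system on $\Spaces$, a separate direct treatment of the degenerate $n=-2$ case, and the inductive step by applying \cref{thm:fs_and_enriched_cat} to $\VV=\CatInfty{k-1}$ with the $((n-1)$-surjective, $(n-1)$-faithful) system, varying $k$ and $n$ simultaneously, with the generators tracked through $\Sigma[-]$. The only small imprecision is your description of the generator output: \cref{thm:fs_and_enriched_cat}\eqref{thm-item:fs_and_enriched_cat_generators} produces exactly the (localized) suspensions of the given generators, with no auxiliary low-dimensional generators added by the theorem itself — the extra generator $\partial c_0 \to c_0$ enters only in the separately-handled $n=-2$ case, and its suspensions then account for $\partial c_1 \to c_1$, etc., at higher $n$ (and orthogonality to $\partial c_1 \to c_1$ encodes fully faithfulness on hom-objects rather than essential surjectivity).
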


\begin{proof}
In the base case that $k = 0$, this factorization system is recorded as \cite[Ex.~5.2.8.16]{HTT}, which is easy to check is compatible with the Cartesian symmetric monoidal structure  and generated by the single morphism $\{ S^{n+1} \ra \pt \}$. So, let us assume that $k > 0$.

If $n = -2$, then this is the trivial factorization system $(\CatInfty{k},\CatInfty{k}^\simeq)$ (as in \Cref{ex:trivial-fs}). Moreover, it is trivially compatible with the Cartesian symmetric monoidal structure, and it is also clearly generated by the set $\{ \partial c_i \ra c_i \}_{0 \leq i \leq k}$: by induction (and the universal property of the categorical suspension functor $\Sigma[-]$), a morphism $\cC \ra \cD$ is right orthogonal to this set if and only if it is an equivalence on maximal subgroupoids and on hom-$(\infty,k-1)$-categories.

From here, in the case that $n > -2$ (and $k > 0$) the claim follows by applying \Cref{thm:fs_and_enriched_cat} inductively (varying both $k$ and $n$ simultaneously).
\end{proof}

\begin{definition}\label{def:factn-on-inf-k-cats}
Given a morphism $\cC \xra{F} \cD$ in $\CatInfty{k}$, we refer to its unique factorization\footnote{This is the unique $(\infty,k)$-category equipped with a factorization $\cC \ra \Fact_n(F) \ra \cD$ of $F$ via an $n$-surjective functor followed by an $n$-faithful functor (see \Cref{obs:factorizations-from-fs-are-unique} and \Cref{notn:Fact-for-factorizn-from-fs}).} guaranteed by the ($n$-surjective, $n$-faithful) factorization system as its \bit{$n$-factorization}, and denote it by
\[
\Fact_n(F)
\coloneqq
\Fact_{\textup{($n$-surjective, $n$-faithful)}}(F)
\in \CatInfty{k}.
\]
\end{definition}

\begin{warning}
    \label{warn:nfaithful-neq-ntruncated}
    
    Recall from~\cite[Prop. 4.6]{gep17}  that any presentable $\infty$-category admits a factorization system of ($n$-connected, $n$-truncated) morphisms. We warn the reader that for any $k \geq 1$ and any $n > -2$, the ($n$-surjective, $n$-faithful) factorization system on $\CatInfty{k}$ of \Cref{thm:nsurj-nfaithful-fs-for-infty-k-cats} does \textit{not} coincide with this ($n$-connected, $n$-truncated) factorization system induced from presentability of $\CatInfty{k}$. This can already be seen in the case that $k=1$: 
    A functor $\cC \ra \cD$ of $(\infty,1)$-categories is $n$-truncated if and only if it is so on spaces of objects and morphisms.\footnote{For general $k$, a morphism is $n$-truncated if and only if it is so on spaces of $i$-morphisms for all $0 \leq i \leq k$.} Indeed, we have the diagram of irreversible implications
    \[ \begin{tikzcd}
    \textup{equivalence}
    \arrow[equals]{d}
    &
    \textup{monomorphism}
    \arrow[equals]{d}
    \\[-0.25cm]
    \textup{$(-2)$-truncated}
    \arrow[equals]{d}
    \arrow[Rightarrow]{r}
    \arrow[Rightarrow]{rd}
    &
    \textup{$(-1)$-truncated}
    \arrow[Rightarrow]{r}
    \arrow[Rightarrow]{rd}
    &
    \textup{$0$-truncated}
    \arrow[Rightarrow]{r}
    \arrow[Rightarrow]{rd}
    &
    \textup{$1$-truncated}
    \arrow[Rightarrow]{r}
    \arrow[Rightarrow]{rd}
    &
    \cdots
    \\
    \textup{$(-2)$-faithful}
    \arrow[Rightarrow]{r}
    &
    \textup{$(-1)$-faithful}
    \arrow[Rightarrow]{r}
    \arrow[Rightarrow]{u}
    &
    \textup{$0$-faithful}
    \arrow[Rightarrow]{r}
    \arrow[Rightarrow]{u}
    &
    \textup{$1$-faithful}
    \arrow[Rightarrow]{r}
    \arrow[Rightarrow]{u}
    &
    \cdots
    \\[-0.25cm]
    \textup{equivalence}
    \arrow[equals]{u}
    &
    \textup{fully faithful}
    \arrow[equals]{u}
    &
    \textup{faithful}
    \arrow[equals]{u}
    \end{tikzcd} \]
    for morphisms in $\CatInfty{1}$. For example, a $(-1)$-truncated functor of $(\infty,1)$-categories, i.e. a monomorphism in $\CatInfty{1}$, is a functor $F\colon  \cC \to \cD$ which for any two objects $c, c' \in \cC$ induces a $(-1)$-truncated map $\Hom_{\cC}(c,c') \hookrightarrow \Hom_{\cD}(Fc, Fc')$ which restricts to an equivalence between the full subspaces of isomorphisms $\Hom_{\iota_0\cC}(c,c') \to \Hom_{\iota_0\cD}(Fc,Fc')$. In particular, any $(-1)$-faithful, i.e. fully faithful, functor is $(-1)$-truncated, and any $(-1)$-truncated functor is $0$-faithful, but neither of these implications is reversible. In particular, a $0$-faithful functor  $F \colon \cC \to \cD$ does not necessarily exhibit $\cC$ as a \textit{subcategory} of $\cD$ in the sense of \cref{subsubsection:monos-and-subcats}.

    Homwise iterating these observations, a similar diagram applies for $(\infty,k)$-categories with $k+1$ rows corresponding to the enrichment-depth at which functors between higher hom-categories are required to be truncated rather than faithful. 
    \end{warning}

\begin{observation}
\label{obs:nsurj-and-nfaithful-indep-of-k}
Fix any $j \geq k \geq 0$ and $n \geq -2$.
By the description of the generators in \cref{thm:nsurj-nfaithful-fs-for-infty-k-cats}.\eqref{item-thm:cat-f-s-generators}, and their truncations in \cref{obs:truncation-of-cells}, we see that
the inclusion $\CatInfty{k} \xhookra{i_j} \CatInfty{j}$ 
preserves and detects the ($n$-surjective, $n$-faithful) factorization system.\footnote{Using the notation of \Cref{defn-fs}, this is to say that the diagram $\CatInfty{0} \xra{i_1} \CatInfty{1} \xra{i_2} \cdots$ lies in $\hat{\Cat}_\infty^{\fs,\cL,\cR}$ when we equip all of these $\infty$-categories with their ($n$-surjective, $n$-faithful) factorization systems.} In particular, a map of spaces $f \colon X \to Y$ is $n$-connected (or $n$-truncated) if and only if it is $n$-surjective (or $n$-faithful) as a map of $(\infty, k)$-categories for any $k \geq 0$. 

It follows that $n$-factorizations in $\CatInfty{k}$ remain so in $\CatInfty{j}$. It also follows that the left adjoint $\CatInfty{j} \xra{|-|_k} \CatInfty{k}$ preserves the notion of $n$-surjectivity and that the right adjoint $\CatInfty{j} \xra{\iota_k} \CatInfty{k}$ preserves the notion of $n$-faithfulness. 
\end{observation}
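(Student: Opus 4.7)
The core task is to show that the inclusion $i_j \colon \CatInfty{k} \hookrightarrow \CatInfty{j}$ both preserves and detects the two classes of the ($n$-surjective, $n$-faithful) factorization system; the remaining assertions of the observation will then follow formally.

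My plan is to prove the core task by induction on $k$, reducing the general $i_j$ to the successive consecutive inclusions $i_{k+1}\colon \CatInfty{k} \hookrightarrow \CatInfty{k+1}$. The inductive step is essentially tautological from \cref{def:nsurj-and-nfaith}: for a functor $F\colon \cC \to \cD$ in $\CatInfty{k}$, both statements ``$F$ is $n$-surjective in $\CatInfty{k}$'' and ``$i_{k+1} F$ is $n$-surjective in $\CatInfty{k+1}$'' unfold to the same surjectivity-on-objects condition together with an $(n-1)$-surjectivity condition on hom-objects, the only difference being whether these hom-objects are viewed in $\CatInfty{k-1}$ or, via $i_k$, in $\CatInfty{k}$; by inductive hypothesis these conditions coincide, so preservation and detection are proved simultaneously. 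The same argument handles $n$-faithfulness. The base case is $k = 0$: one verifies that a map of spaces $f\colon X \to Y$ is $n$-connected (resp.\ $n$-truncated) iff it is surjective on $\pi_0$ and each path-space map $\Omega_{x,x'} X \to \Omega_{fx,fx'} Y$ is $(n-1)$-connected (resp.\ $(n-1)$-truncated). This is a routine manipulation of the long exact sequence of homotopy groups; the only subtle point is that for $n \geq 0$ the path-space condition applied across all pairs $(x,x')$ automatically forces $\pi_0 f$ to be injective---if not, some $\Omega_{x,x'} X$ would be empty while $\Omega_{fx,fx'} Y$ is nonempty, obstructing even $(-1)$-connectivity of the path-space map. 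As a cleaner alternative, more in line with the hint in the statement, one can argue via generators: using \cref{thm:nsurj-nfaithful-fs-for-infty-k-cats}.\eqref{item-thm:cat-f-s-generators} and the cell-truncation formulas of \cref{obs:truncation-of-cells} one checks directly that $|-|_k$ sends each generator of $\CatInfty{j}$'s left class into the saturated left class of $\CatInfty{k}$ (the images being either the analogous generator $\partial c_i \to c_i$ for $i\leq k$ or a map $\Sigma^k[S^{i-k-1}] \to c_k$ built from generators via colimits); by the adjunction $|-|_k \dashv i_j$, left orthogonality transports ($|-|_k(\ell) \perp g$ iff $\ell \perp i_j(g)$), and preservation of both classes follows, with detection then a consequence of full faithfulness of $i_j$ plus uniqueness of factorizations.

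Once the core step is established, the remaining claims are formal. Preservation of $n$-factorizations by $i_j$ is immediate from preservation of both classes plus uniqueness. The claims about the adjoints follow from the general principle that, in an adjunction $L \dashv R$ between $\infty$-categories carrying factorization systems, $L$ preserves the left class iff $R$ preserves the right class (since $Lf \perp g$ iff $f \perp Rg$): applied to $|-|_k \dashv i_j$, preservation of $n$-faithfulness by $i_j$ gives preservation of $n$-surjectivity by $|-|_k$; applied to $i_j \dashv \iota_k$, preservation of $n$-surjectivity by $i_j$ gives preservation of $n$-faithfulness by $\iota_k$. The principal obstacle I foresee is the base case of the inductive approach---the classical-homotopy-theoretic translation with its subtle $\pi_0$-injectivity argument---which the generator alternative bypasses at the cost of careful bookkeeping with cell truncations across the case boundary $n+2=k$ where the two descriptions of generators meet.
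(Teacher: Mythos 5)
Your proposal is correct, and it in fact offers two routes, only one of which matches the paper's one-line hint. Your primary route---the inductive unfolding of \cref{def:nsurj-and-nfaith}---is a genuinely different and arguably cleaner argument: since $n$-surjectivity and $n$-faithfulness are defined recursively on hom-objects, and since $i_{k+1}$ simply applies $i_k$ homwise, preservation \emph{and} detection of both classes are tautological once the base case $k=0$ is settled. Your treatment of that base case is right, including the observation that the path-space condition across all pairs $(x,x')$ automatically forces $\pi_0(f)$ to be injective when $n\geq 0$ (an empty path space in $X$ over a nonempty one in $Y$ cannot be $(-1)$-connected), which reconciles the ``surjective on objects'' clause with the iso-on-$\pi_0$ requirement in $n$-connectedness.

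One small imprecision in your generator-based alternative: the step ``by the adjunction $|-|_k\dashv i_j$ \ldots preservation of both classes follows'' gives you, from $|-|_k(\cL_j)\subseteq\cL_k$, precisely that $i_j$ preserves the \emph{right} class (by orthogonality transport). It does not by itself yield that $i_j$ preserves the \emph{left} class---that would require a priori knowledge that $\iota_k$ preserves the right class, which is downstream. Preservation of $\cL$ by $i_j$ is needed for your later deduction (via $i_j\dashv\iota_k$) that $\iota_k$ preserves $n$-faithfulness, so as a standalone argument the generator alternative has a gap there. Your tautological route closes it, and your subsequent formal deductions---preservation of $n$-factorizations, preservation of $n$-surjectivity by $|-|_k$, preservation of $n$-faithfulness by $\iota_k$, and detection via conservativity of $i_j$ plus uniqueness of factorizations---are all correct as stated.
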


In fact, the maximal sub-$(\infty,k)$-category functor $\CatInfty{j} \xra{\iota_k} \CatInfty{k}$ also preserves $n$-surjectivity provided $n$ lies outside of the interval $[k, j)$:

\begin{lemma}
\label{lem:iota-k-preserves-n-surjectivity-for-n-not-equal-k}
For $j \geq k \geq 0$ and either $n\geq j$ or $k>n \geq -2$,  the maximal sub-$(\infty,k)$-category functor  $\iota_k \colon \CatInfty{j} \xra{\iota_k} \CatInfty{k}$ preserves $n$-surjective functors. 
\end{lemma}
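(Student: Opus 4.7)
The plan is to reduce, via the adjunction $i_j \dashv \iota_k$, to a statement about the generators of the factorization systems on $\CatInfty{k}$ and $\CatInfty{j}$ described in \cref{thm:nsurj-nfaithful-fs-for-infty-k-cats}.\eqref{item-thm:cat-f-s-generators}. The adjunction gives a natural equivalence
\[
\Map_{\CatInfty{k}}(X, \iota_k Y) \simeq \Map_{\CatInfty{j}}(i_j X, Y),
\]
which for any morphism $g \colon A \to B$ in $\CatInfty{k}$ and functor $F$ in $\CatInfty{j}$ yields the biconditional
\[
\iota_k F \perp g \ \text{ in } \CatInfty{k}
\quad \iff \quad
F \perp i_j g \ \text{ in } \CatInfty{j}.
\]
By the factorization system on $\CatInfty{k}$, establishing $\iota_k F \in \cL_k$ reduces to checking $\iota_k F \perp g$ for $g$ ranging over a generating set of $\cR_k$, equivalently $F \perp i_j g$ via the adjunction. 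Thus it suffices to verify $i_j g \in \cR_j$ for these generators, after which the hypothesis $F \in \cL_j$ directly yields the desired orthogonality.

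First I would handle the case $k > n$. Here, the generating set of $\cR_k$ consists of the morphisms $\{\partial c_i \to c_i\}_{n+2 \leq i \leq k}$ (or, in the edge case $n+2 \geq k$, the single morphism $\Sigma^k[S^{n-k+1}] \to c_k$). Because the condition of $n$-faithfulness propagates through the layers of hom-enrichment and the generators are $(\infty,k)$-categorical cells in the range where $n + 2 \leq i \leq k \leq j$, \cref{obs:nsurj-and-nfaithful-indep-of-k} (preservation and detection of the factorization system by $i_j$) gives $i_j g \in \cR_j$ immediately, concluding this case.

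Next I would address the case $n \geq j$. Here the generators of $\cR_k$ reduce to a single morphism $\Sigma^k[S^{n-k+1}] \to c_k$; I would argue that its image $i_j(\Sigma^k[S^{n-k+1}] \to c_k)$ is $n$-faithful in $\CatInfty{j}$ by again invoking \cref{obs:nsurj-and-nfaithful-indep-of-k}, using that $n$-faithfulness is detected on the level of cells via \cref{obs:space-of-lift-of-Sigma-k} and the truncation statements of \cref{obs:truncation-of-cells}. Combined with the adjunction equivalence and $F \in \cL_j$, this yields $\iota_k F \perp g$ and hence $\iota_k F \in \cL_k$.

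The main obstacle in carrying this out will be a careful bookkeeping of the cell structure across the two regimes of \cref{thm:nsurj-nfaithful-fs-for-infty-k-cats}.\eqref{item-thm:cat-f-s-generators}, since the generators of $\cR_k$ and $\cR_j$ take different combinatorial forms depending on whether $n+2$ falls above or below the respective enrichment depth. The role of the hypothesis $n \geq j$ or $k > n$ is precisely to ensure that the generators do not lie in the awkward intermediate range where an $(\infty,k)$-categorical generator, after inclusion into $\CatInfty{j}$, would fail to capture enough of the higher-cell data to witness $n$-faithfulness; this is consistent with the fact that for $k \leq n < j$, truncating by $\iota_k$ genuinely discards invertibility data (e.g., automorphism groups of objects) that the $n$-surjectivity hypothesis in $\CatInfty{j}$ does not control.
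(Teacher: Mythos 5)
Your claimed biconditional
\[
\iota_k F \perp g \ \text{ in } \CatInfty{k}
\quad \iff \quad
F \perp i_j g \ \text{ in } \CatInfty{j}
\]
does not follow from the adjunction $i_j \dashv \iota_k$, and in fact is false. \Cref{obs:adjunctions-left-right} states that for $f$ in $\CatInfty{k}$ and $g$ in $\CatInfty{j}$, the relations $f \bot \iota_k(g)$ and $i_j(f) \bot g$ are equivalent; in other words, the adjoints transfer a left morphism across $i_j$ and a right morphism across $\iota_k$. For this to give anything about $\iota_k F$ as a \emph{left} morphism, one would need a \emph{right} adjoint to $\iota_k$, which does not exist. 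The adjunction only yields, for free, that $\iota_k$ preserves the \emph{right} class (n-faithfulness) --- since its left adjoint $i_j$ preserves the left class --- and that $|-|_k$ preserves the \emph{left} class (n-surjectivity), since its right adjoint $i_j$ preserves the right class. Both of these are already recorded in \cref{obs:nsurj-and-nfaithful-indep-of-k}, but neither addresses what the lemma asks, namely that $\iota_k$ (the right adjoint!) preserves n-surjectivity. That this is not formal is exactly the content of the restriction $n \geq j$ or $k > n$: the remark following the lemma gives an explicit counterexample when $j > n \geq k$, so any proof that ignores this restriction cannot be correct.

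There is a second confusion: you speak of ``a generating set of $\cR_k$.'' The set $\{\partial c_i \to c_i\}$ (or $\{\Sigma^k[S^{n-k+1}] \to c_k\}$) of \cref{thm:nsurj-nfaithful-fs-for-infty-k-cats}.\eqref{item-thm:cat-f-s-generators} generates the \emph{left} class $\cL_k$; the right class is $\cR_k = S^{\bot}$. To verify $h \in \cL_k = {}^{\bot}\cR_k$ one must check orthogonality against all of $\cR_k$, not against a small subset, and there is no small ``cogenerating'' family in the theory. The paper's actual proof takes an entirely different route: reduce to $j = k+1$, induct on $k$, and in the base case $k=0$ give an explicit argument that lifts isomorphisms (using $n \neq 0$ and the long exact sequence on homotopy groups); the inductive step then unwinds the homwise definition of n-surjectivity. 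The hypothesis $k \neq n$ is used concretely in the base case, not as bookkeeping --- it ensures that surjectivity on $\pi_0$ of hom-spaces suffices to upgrade lifts of morphisms to lifts of \emph{invertible} morphisms. Your proposal never touches this point.
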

\begin{proof}

The case $n=-2$ is trivial; we henceforth assume $n \geq -1$. Similarly, the case $j=k$ is trivial. It suffices to prove the statement for $j=k+1$, the general case follows from the observation that $\iota_k = \iota_{k} \iota_{k+1} \cdots \iota_{j-1}$. Thus we need to prove that if $k\geq 0$ and $k \neq n \geq -1,$ then $\iota_k \colon \CatInfty{k+1} \to \CatInfty{k}$ preserves $n$-surjective functors. We prove this statement by induction on $k \geq 0$:

For the basecase $k =0$, and hence $0 \neq n\geq -1$, we show that $\iota_0 \colon \CatInfty{1} \to \CatInfty{0}= \Spaces$ preserves $n$-surjective functors. 
Consider an $n$-surjective functor $F \colon \cC \to \cD$ between $(\infty,1)$-categories. We claim that $\iota_0 F \colon \iota_0 \cC \to \iota_0 \cD$ is $n$-connected. For $n=-1$ this follows since if $F$ is surjective on objects, then $\iota_0 F$ is surjective on $\pi_0$ and hence $(-1)$-connected. For the remaining cases $n\geq 1$, it  suffices to show that $\iota_0 F$ induces $(n-1)$-connected maps on hom-spaces. Let $c,d\in \cC$ and consider the commuting diagram of spaces
\[
        \begin{tikzcd}
            \Hom_{\iota_0 \cC}(c, d) \ar[r, "\iota_0 F"] \ar[d, hook] & \Hom_{\iota_0 \cD}(Fc, Fd) \ar[d, hook] \\ 
            \Hom_{\cC}(c,d) \ar[r, "F"] & \Hom_{\cD}(Fc, Fd).
        \end{tikzcd}
\]
By assumption, the bottom horizontal map is $(n-1) \geq 0$-connected. Since the vertical maps are inclusions of components, to show that the top horizontal map is $(n-1) \geq 0$-connected, it suffices to show that the top horizontal map is surjective on $\pi_0$. 
Given any $\beta \in \Hom_{\iota_0 \cD}(Fc, Fd)$, i.e. an isomorphism between $Fc$ and $Fd$ in $\cD$, let $\alpha \in \Hom_{\cC}(c,d)$ be a lift of $\beta$ in $\cC$. We claim that $\alpha$ is an isomorphism: let $\beta^{-1} \in \Hom_{\iota_0 \cD}(Fd, Fc)$ be an inverse of $\beta$ and $\overline{\alpha} \in \Hom_{\cC}(d,c)$ a lift of $\beta^{-1}$. Then $\alpha \circ \overline{\alpha}$ and $\overline{\alpha} \circ \alpha$ are in the same component of $\Hom_{\cC}(c,c)$ and $\Hom_{\cC}(d,d)$ as the respective  identities $\id_c$ and $\id_d$ as $F$ is $\geq 1$-connected and hence induces bijections on the sets of components of all hom-spaces. Therefore, $\overline{\alpha}$ is an inverse of $\alpha$ and thus $\alpha$ lifts to $\Hom_{\iota_0 \cC}(c,d)$. 

For the induction step, let $k \geq 1$ and hence $k \neq n \geq -1$. Given an $n$-surjective functor $F \colon \cC \to \cD$ in $\CatInfty{k+1}$, the functor $\iota_k F$ is surjective on objects since $F$ is. For objects $c,d\in \cC$, note that the component $(\iota_k F)_{c,d} \colon \eHom_{\iota_k \cC}(c, d) \to \eHom_{\iota_k \cD}(\iota_k F c, \iota_k Fd)$ agrees with the functor $\iota_{k-1} (F_{c,d})$ which is $(n-1)$-surjective by induction.
\end{proof}

\begin{remark}
The statement of \cref{lem:iota-k-preserves-n-surjectivity-for-n-not-equal-k} is false when $j >n \geq k$. For example, let $\cA$ be the free $(\infty,1)$-category generated by two objects $a$ and $b$, a morphism $f\colon a\to b$ and a morphism $g \colon b \to a.$ Then, the unique functor $F\colon \cA \to \pt$ is $0$-surjective, but $\iota_0 F  \colon \iota_0 \cA = S^0 \to \pt$ is not $0$-connected.
\end{remark}

\begin{corollary}\label{cor:iota-k-commutes-with-fact-n-for-n-not-k}
    Given $j \geq k \geq 0$ and either $n \geq j$ or $k>n \geq -2$, and $F \colon \cA \to \cB$ in $\CatInfty{j}$. Then, the maximal sub-$(\infty,k)$-category functor $\iota_k \colon \CatInfty{j} \xra{\iota_k} \CatInfty{k}$ preserves $n$-factorizations. That is, if $\Fact_n (F)$ is the factorization of $F$ with respect to the ($n$-surjective, $n$-faithful) factorization system, then the induced factorization $
        \iota_k \cA \to \iota_k \Fact_n(F) \to \iota_k \cB
$
    realizes $\iota_k \Fact_n(F)$ as the factorization $\Fact_n (\iota_k F)$ of $\iota_k F$ with respect to the ($n$-surjective, $n$-faithful) factorization system on $\CatInfty{k}$.
\end{corollary}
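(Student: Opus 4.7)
The plan is straightforward: combine the two preservation results for $\iota_k$ (one for each class in the factorization system) with uniqueness of factorizations. Specifically, write the $n$-factorization of $F$ in $\CatInfty{j}$ as
\[
\cA \xra{f} \Fact_n(F) \xra{g} \cB,
\]
where $f$ is $n$-surjective and $g$ is $n$-faithful. Applying $\iota_k$ yields a composable pair
\[
\iota_k\cA \xra{\iota_k f} \iota_k\Fact_n(F) \xra{\iota_k g} \iota_k\cB
\]
in $\CatInfty{k}$ whose composite is $\iota_k F$.

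Next I would verify that this composite is itself an $n$-factorization, now in $\CatInfty{k}$. The hypothesis on $(j,k,n)$ is exactly the one appearing in \cref{lem:iota-k-preserves-n-surjectivity-for-n-not-equal-k}, which guarantees that $\iota_k f$ is $n$-surjective; and by \cref{obs:nsurj-and-nfaithful-indep-of-k}, the right adjoint $\iota_k$ automatically preserves $n$-faithfulness, so $\iota_k g$ is $n$-faithful.

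Finally, invoking the uniqueness of factorizations associated to any factorization system (\cref{obs:factorizations-from-fs-are-unique}, applied to the ($n$-surjective, $n$-faithful) factorization system on $\CatInfty{k}$ from \cref{thm:nsurj-nfaithful-fs-for-infty-k-cats}), we identify this factorization of $\iota_k F$ with the canonical one, giving the desired equivalence $\iota_k\Fact_n(F) \simeq \Fact_n(\iota_k F)$ compatible with the structure maps. No step here is an obstacle: the only nontrivial input is \cref{lem:iota-k-preserves-n-surjectivity-for-n-not-equal-k}, which has already been proved, and this corollary is essentially a direct packaging of that lemma together with uniqueness of factorizations.
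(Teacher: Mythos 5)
Your proof is correct and takes exactly the same approach as the paper: apply $\iota_k$ to the $n$-factorization, use \cref{lem:iota-k-preserves-n-surjectivity-for-n-not-equal-k} and \cref{obs:nsurj-and-nfaithful-indep-of-k} to confirm that the two resulting functors are respectively $n$-surjective and $n$-faithful, and conclude by uniqueness of factorizations. The paper just compresses this into a single sentence.
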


\begin{proof}
    The statement follows from the fact that $\iota_k$ preserves both $n$-surjectivity (\cref{lem:iota-k-preserves-n-surjectivity-for-n-not-equal-k}) and $n$-faithfulness (\cref{obs:nsurj-and-nfaithful-indep-of-k}).
\end{proof}

Throughout, we will also repeatedly use the following simple observation:

\begin{lemma}\label{lem:counit-for-iota-k-is-k-1-surjective}
For $j > k \geq 0$ and $\cC \in \CatInfty{j}$, the inclusion of the maximal sub-$(\infty,k)$-category $\iota_k \cC \to \cC$ is $(k-1)$-surjective. 
\end{lemma}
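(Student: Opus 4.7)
The plan is to prove the statement by induction on $k \geq 0$. For the base case $k = 0$, the maximal sub-$\infty$-groupoid $\iota_0\cC$ has by construction the same space of objects as $\cC$ (it is obtained from $\cC$ merely by discarding the non-invertible $i$-morphisms for all $i > 0$). Hence the counit $\iota_0\cC \to \cC$ is identity on objects, in particular surjective on objects, which by \cref{exa:surj} means $(-1)$-surjective.

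For the inductive step with $k \geq 1$, the key structural input to isolate is the hom-wise description of $\iota_k$. Since $\CatInfty{\ell} = \Cat[\CatInfty{\ell-1}]$ by \cref{def:CatInfty-inductive-def}, and the subcategory inclusion $i_j \colon \CatInfty{k} \hookrightarrow \CatInfty{j}$ is obtained by applying $\Cat[-]$ to the (symmetric monoidal, fully faithful) inclusion $i_{j-1}\colon \CatInfty{k-1} \hookrightarrow \CatInfty{j-1}$, one expects its right adjoint $\iota_k$ to be induced by change of enrichment along the right adjoint $\iota_{k-1}\colon \CatInfty{j-1} \to \CatInfty{k-1}$. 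Concretely, one aims to establish that for any $\cC \in \CatInfty{j}$, the $(\infty,k)$-category $\iota_k\cC$ has the same space of objects as $\cC$, with hom-$(\infty,k-1)$-categories
\[
\eHom_{\iota_k\cC}(c,c') \simeq \iota_{k-1}\eHom_\cC(c,c'),
\]
and that the counit $\iota_k\cC \to \cC$ is the identity on objects and is given on homs by the counits $\iota_{k-1}\eHom_\cC(c,c') \to \eHom_\cC(c,c')$.

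Granted this description, the inductive step follows immediately from the definition of $(k-1)$-surjectivity. Indeed, the counit $\iota_k\cC \to \cC$ is surjective (even bijective) on objects, and its components on hom-$(\infty,k-1)$-categories are precisely the counits of the adjunctions $i_{j-1} \dashv \iota_{k-1}$ on the $\eHom_\cC(c,c') \in \CatInfty{j-1}$. Since $j-1 > k-1$, the inductive hypothesis (applied at level $k-1$) yields that these hom-level counits are $(k-2)$-surjective; by \cref{def:nsurj-and-nfaith}, the original counit is therefore $(k-1)$-surjective.

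The main obstacle is the rigorous verification of the hom-wise formula for $\iota_k$ within the iterative enriched-$\infty$-categorical framework of \cite{GH13}. This is a standard-in-spirit manipulation with symmetric monoidal adjunctions between enrichment bases, but must be set up carefully so that the counit at the level of $\Cat[-]$ is correctly identified with the hom-wise application of the counit at the level of $\CatInfty{j-1}$; once this is done, the induction is entirely formal.
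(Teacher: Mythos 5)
Your argument is correct and is essentially the paper's own proof: the paper likewise inducts on $k$, with base case surjectivity on objects, and in the inductive step uses that the counit $\iota_k\cC \to \cC$ is surjective on objects and homwise given by the counit of $\iota_{k-1}$ (the only cosmetic difference being that the paper first reduces to $j=k+1$ by factoring $\iota_k\cC \to \iota_{k+1}\cC \to \cdots \to \cC$ and composing, whereas you keep $j$ general, which also works since the hom-level counit is taken at $\eHom_\cC(c,c') \in \CatInfty{j-1}$ with $j-1 > k-1$). The homwise identification $\eHom_{\iota_k\cC}(c,c') \simeq \iota_{k-1}\eHom_\cC(c,c')$ that you flag as the remaining obstacle is exactly what the paper also takes as given (it is used in the same way in the proof of \cref{lem:iota-k-preserves-n-surjectivity-for-n-not-equal-k}), so relative to the paper's level of rigor your proposal has no gap.
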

\begin{proof}
Factoring $\iota_k \cC \to \iota_{k+1} \cC \to \ldots \to \iota_{j-1} \cC \to \cC$, it suffices to prove the case $j=k+1$. 
For $k=0$ and $\cC \in \CatInfty{1}$,  the functor $\iota_0 \cC \to \cC$ is surjective on objects, i.e. $(-1)$-surjective. 
For $k\geq 1$ and $\cC \in \CatInfty{k+1}$, the functor $\iota_k \cC \to \cC$ is surjective on objects and by induction homwise $(k-2)$-surjective, hence $\iota_k \cC \to \cC$ is $(k-1)$-surjective. 
\end{proof}

We end this subsection with the following useful proposition, generalizing~\cite[Prop. 4.2.8]{SY19}.

\begin{prop}\label{prop:space-of-lift-of-n-surj-m-faithful}
Let $k \geq 0$ and $m \geq n \geq -2$. Given a commuting (solid) square in $\CatInfty{k}$ 
\begin{equation}\label{eq:another-space-of-lifts}
        \begin{tikzcd}
            \cA \ar[d, "F"] \ar[r] & \cC \ar[d, "G"] \\ 
            \cB \ar[r] \ar[ru, dashed] & \cD
        \end{tikzcd}
\end{equation}
    where $F$ is $n$-surjective and $G$ is $m$-faithful. Then the space of (dashed) lifts is $(m-n-2)$-truncated.
\end{prop}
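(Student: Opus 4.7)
The plan is to reduce to checking the conclusion on the generators of the ($n$-surjective, $n$-faithful) factorization system from \cref{thm:nsurj-nfaithful-fs-for-infty-k-cats}.\eqref{item-thm:cat-f-s-generators}. First, I will observe that the class of $F \in \CatInfty{k}$ for which the statement holds (for every $m$-faithful $G$ and every square) is closed under pushouts, transfinite compositions, and retracts: the space of lifts transforms respectively as a pullback, as an inverse limit, and as a retract, and the property of being $(m-n-2)$-truncated is preserved under each of these operations. It therefore suffices to verify the conclusion when $F$ is one of the generators, i.e.\ either $\partial c_i \to c_i$ for some $n+2 \leq i \leq k$, or $\Sigma^k[S^{n-k+1}] \to c_k$ (in the case $n+2 \geq k$).

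For the first family of generators, I will unpack the square using an $i$-morphism analogue of \cref{obs:space-of-lift-of-Sigma-k}: the square data amount to a pair of parallel $(i-1)$-morphisms $a$ in $\cC$ together with an $i$-morphism $b$ in $\cD$ filling $G \circ a$, and the space of lifts is then the homotopy fiber at $b$ of the induced map of spaces $\Hom_{\CatInfty{k}}(c_i,\cC)_a \to \Hom_{\CatInfty{k}}(c_i,\cD)_{Ga}$ between $i$-morphisms with prescribed boundary. Unwinding $m$-faithfulness of $G$ through the iterated enrichment will show that this map is $(m-i)$-truncated, which is in particular $(m-n-2)$-truncated since $i \geq n+2$.

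For the second family (only relevant when $n \geq k-2$), the same unfolding reduces the space of lifts to the space of lifts in a commuting square of spaces whose left leg $S^{n-k+1} \to \pt$ is $(n-k)$-connected and whose right leg is the $(m-k)$-truncated map between $k$-morphism spaces with prescribed boundary, reducing the proposition to its own $k=0$ instance with parameters $(n-k,m-k)$ and giving $(m-n-2)$-truncated lifts. I will settle this $k=0$ base case by the same generator-based strategy: for the single generator $S^{n+1} \to \pt$ of $n$-connected maps in $\Spaces$, after base-changing $G$ along the bottom map of the square to land in its $m$-truncated fiber $Z_b$, the space of lifts becomes the space of nullhomotopies of the resulting map $a \colon S^{n+1} \to Z_b$, which is either empty or equivalent to $\Omega^{n+2} Z_b$; the latter is $(m-n-2)$-truncated since $Z_b$ is $m$-truncated.

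The main obstacle will be to carefully track how $m$-faithfulness of $G$ in $\CatInfty{k}$ propagates to a truncation level for the induced map between spaces of $i$-morphisms with fixed boundary. To handle this, I plan to leverage \cref{obs:nsurj-and-nfaithful-indep-of-k}---which shows that $n$-faithfulness is preserved and detected along the inclusions $\CatInfty{j} \hookrightarrow \CatInfty{j'}$---together with the inductive definition of $n$-faithful: descending $i-1$ times through the enrichment yields an $(m-i+1)$-faithful functor in $\CatInfty{k-i+1}$, and applying $\iota_0$ to the induced hom then gives the required $(m-i)$-truncated map of spaces.
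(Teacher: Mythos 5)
Your argument is correct in substance, and its backbone is the same as the paper's: reduce to the generators of the ($n$-surjective, $n$-faithful) factorization system from \cref{thm:nsurj-nfaithful-fs-for-infty-k-cats}, unwind lifting problems against cells via the universal property of $\Sigma$, and convert $m$-faithfulness of $G$ into truncatedness of maps between spaces of morphism-fillers. Where you genuinely diverge is in how the generators are treated. The paper runs an induction on $k$: the top generator (and the case $n+2\geq k$) is handled exactly as you do via \cref{obs:space-of-lift-of-Sigma-k}, the generator $\partial c_k\to c_k$ is disposed of by noting it is $(k-2)$-surjective and reusing that case, and the lower generators $\partial c_i\to c_i$ with $i<k$ are pushed down to $\CatInfty{k-1}$ through the $(i_k,\iota_{k-1})$-adjunction and the inductive hypothesis; the base case $k=0$ is cited from Schlank--Yanovski. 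You instead treat all boundary inclusions $\partial c_i\to c_i$ uniformly and directly, which avoids the induction on $k$ and the adjunction step but requires you to actually state and prove the ``$i$-morphism analogue'' of \cref{obs:space-of-lift-of-Sigma-k}: that the fiber of $\Hom(c_i,\cC)\to\Hom(\partial c_i,\cC)$ at $\alpha$ is $\iota_0$ of the $i$-fold iterated hom, and that an $m$-faithful $G$ induces on these fillers a map that is $\max(m-i,-2)$-truncated (you need the $\max$ in degenerate cases, but since $i\geq n+2$ and $m\geq n$ this is still $\leq(m-n-2)$, so nothing breaks); both facts follow by iterating the universal property of $\Sigma$ and using that $\iota_0$ preserves faithfulness, i.e.\ \cref{obs:nsurj-and-nfaithful-indep-of-k}. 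You also prove the $k=0$ case by hand rather than citing it, which makes the proof self-contained.

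Two small points to tighten. First, in the reduction to generators you invoke closure under pushouts, transfinite compositions and retracts; to quote \cref{prop:fact-system-of-small-generation-on-presentable} directly you should instead check closure under cobase change, composition and \emph{all} small colimits in the arrow category (as the paper does) --- this is just as immediate, since the space of lifts carries colimits of left legs to limits of spaces and truncated spaces are closed under limits --- or else appeal explicitly to the small object argument plus the retract trick. Second, in your $k=0$ base case the dichotomy ``empty or $\Omega^{n+2}Z_b$'' is not quite enough when $m=n$, where the conclusion demands contractibility: there you must add that the empty case cannot occur, because any map $S^{n+1}\to Z_b$ into an $n$-truncated space is nullhomotopic, so the space of nullhomotopies is nonempty and hence equivalent to $\Omega^{n+2}Z_b\simeq\pt$. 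With these details supplied, your proof goes through.
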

\begin{proof}
        We induct on $k \geq 0$. The base case  $k = 0$ is proven in \cite[Prop. 4.2.8]{SY19}. 
        For $k>0$, fix a functor $G\colon \cC \to \cD$ which is $m$-faithful and let $S$ be the class of morphisms $F$ in $\CatInfty{k}$ for which the space of lifts~\eqref{eq:another-space-of-lifts} against $G$ is $(m-n-2)$-truncated. We will now prove that $S$ contains the $n$-surjective functors. 
        Since $S$ contains equivalences, and is closed under composition, small colimits and cobase change, it forms a saturated class of morphisms (\cref{def:saturated}).  By  \cref{prop:fact-system-of-small-generation-on-presentable}, to show that $S$ contains all $n$-surjective morphisms, it suffices to show that it contains the generators of the left class; i.e. by \cref{thm:nsurj-nfaithful-fs-for-infty-k-cats}.\eqref{item-thm:cat-f-s-generators} the functors $\{\partial c_{i} \to c_{i}\}_{n+2 \leq i \leq k}$ for $n +2 <  k$ and the functor $\Sigma^k[S^{n-k+1}] \to c_k$  for $n+2 \geq k$.

We first consider the case that $n + 2 \geq k$, where we need to show that a commuting diagram of $(\infty,k)$-categories
 \begin{equation}\label{eq:diagram-huh}
        \begin{tikzcd}
            \Sigma^k[S^{n-k+1}]  \arrow[r] \arrow[d] & \cC \ar[d, "G"] \\
            \Sigma^k[\pt] \arrow[ur, dashed]  \ar[r] &  \cD.
            \end{tikzcd}
    \end{equation}
    has $(m-n-2)$-truncated space of lifts. 
    Let $\alpha$ denote the composite $\partial c_k = \Sigma^k[\emptyset] \to \Sigma^k[S^{n-k+1}] \to \cC$, picking out a pair of parallel $(k-1)$-morphisms. 
    By \cref{obs:space-of-lift-of-Sigma-k}, the space of lifts of \eqref{eq:diagram-huh} is equivalent to the space of lift of the following diagram in spaces
\begin{equation}\label{eq:diagram-huh-2}
        \begin{tikzcd}
            S^{n-k+1}  \arrow[r] \arrow[d, "(n-k)\conn"'] & \kHom_{\cC}(\alpha)\ar[d, "(m-k)\trunc"] \\
            \pt \arrow[ur, dashed]  \ar[r] & \kHom_{\cD}(G\alpha).
            \end{tikzcd} 
\end{equation}
By definition of faithfulness, if $G$ is $m$-faithful, then the right vertical map is $(m-k)$-truncated for any $\alpha  \colon  \partial c_k \to \cC$. Hence, it follows from~\cite[Prop. 4.2.8]{SY19} that the space of lifts of~\eqref{eq:diagram-huh-2} is $(m-n-2)$-truncated. 

Now we consider the case $n+2 < k$, where we need to show that $\partial c_i \to c_i$ is in $S$ for $n+2 \leq i \leq k$. For $i=k$, since $\partial c_k \to c_k$ is $(k-2)$-surjective and $(k-2) +2 \geq k$, it follows from the previous case that the space of lifts of $\partial c_k \to c_k$ against $G$ is $(m-(k-2)-2)$-truncated, and since $(m-(k-2)-2) \leq (m-n-2)$ also $(m-n-2)$-truncated. It remains to show that 
$\partial c_i \to c_i$ is in $S$ for $n+2 \leq i < k$.
As both $\partial c_i$ and $c_i$ are $(\infty, k-1)$-categories, by the $(i_{k},\iota_{k-1})$ adjunction, the two space of lifts are equivalent:
    \begin{equation}
        \left\{
         \begin{tikzcd}
        \partial c_{n+2} \arrow[r] \arrow[d] & \cC \ar[d, "G"] \\
        c_{n+2} \arrow[ur, dashed]  \ar[r] &  \cD
        \end{tikzcd}
        \right\}
        \simeq 
        \left\{
         \begin{tikzcd}
        \partial c_{n+2} \arrow[r] \arrow[d] & \iota_{k-1} \cC \ar[d, "\iota_{k-1} G"] \\
        c_{n+2} \arrow[ur, dashed]  \ar[r] &  \iota_{k-1} \cD
        \end{tikzcd}
        \right\}.
    \end{equation}
    Since $\iota_{k-1} G$ is a $n$-faithful functor between $(\infty, k-1)$-categories by \cref{obs:nsurj-and-nfaithful-indep-of-k}, by induction the space of lifts is $(m-n-2)$-truncated. 
\end{proof}

\subsection{Homotopy \texorpdfstring{$(n, k)$}{(n,k)}-categories of \texorpdfstring{$(\infty, k)$}{(infinity,k)}-categories}
\label{subsec:hom-cat-of-inf-k-cat}
In this subsection, we define the homotopy $(n,k)$-category of an $(\infty,k)$-category.

\begin{definition}
\label{defn:n-k-cat}
For any $k \geq 0$ and any $n \geq -2$, an \bit{$(n,k)$-category} is an $(\infty,k)$-category $\cC$ such that the functor $\cC \ra \pt$ is $n$-faithful. We write $\Cat_{(n,k)} \subseteq \CatInfty{k}$ for the full subcategory on the $(n,k)$-categories. 
\end{definition}

\begin{remark}
Unwinding \Cref{defn:n-k-cat} gives an alternative inductive description: For $k > 0$ and $n > -2$, an $(\infty,k)$-category $\cC$ is an $(n,k)$-category if and only if its hom-$(\infty,k-1)$-categories are in fact $(n-1,k-1)$-categories. In particular, an $(n,n)$-category is indeed an $\infty$-category that is weakly enriched in $(n-1,n-1)$-categories (as proposed in \Cref{subsec:appendix-infty-n-cats}), with a $(0,0)$-category being a set (i.e.\! a $0$-truncated space).\footnote{At first glance, it may be confusing that e.g.\! for an $(\infty,2)$-category $\cC$ to be a $(2,2)$-category we only need to impose a condition on its spaces of $2$-morphisms: one might wonder whether e.g.\! its space of $1$-morphisms $\hom_{\CatInfty{2}}(c_1,\cC)$ might have higher homotopy groups. But actually, path spaces of the latter contribute path components of the former. (Indeed, this same reasoning shows that the space of objects of a $(1,1)$-category must in fact be a $1$-groupoid.)}
\end{remark}

\begin{remark}
For $n \geq k$, it is immediate from \cite[Thm. 6.1.8]{GH13} that $\Catnk{n}{k}$ coincides with \cite[Def. 6.1.1]{GH13}. In particular, $\Catnk{n}{k}$ is the $\infty$-category obtained from applying $\Cat[-]$ $(n-k)$ times to the $\infty$-category of $(n-k)$-truncated spaces $\Spaces_{\leq n-k}$, with its Cartesian presentably symmetric monoidal structure.
\end{remark}

\begin{example}
We list a few edge cases of Definitions~\ref{defn:n-k-cat}.
\begin{enumerate}

\item For any $k \geq 0$, there is only one $(-2,k)$-category, namely $\pt$. 

\item For any $k \geq 0$, there are only two $(-1,k)$-categories, namely $\emptyset$ and $\pt$. 

\item For any $k > 0$, a $(0,k)$-category is precisely a partially ordered set (i.e.\! an $\infty$-category enriched in $(-1)$-truncated spaces). In particular, the inclusions $\Cat_{(0,1)} \hookra \Cat_{(0,2)} \hookra \cdots$ are all equivalences. 

\item More generally, for any $k > n \geq 0$, an $(n,k)$-category is precisely an $(n+1,n+1)$-category whose spaces of $(n+1)$-morphisms are all either empty or contractible. In particular, the inclusions $\Cat_{(n,n+1)} \hookra \Cat_{(n,n+2)} \hookra \cdots$ are all equivalences. 

\item Taking $k = 1$,  for any $n \geq 1$, an $(n,1)$-category is precisely an $(\infty,1)$-category whose hom-spaces are $(n-1)$-truncated.
\end{enumerate}
\end{example}

\begin{definition}\label{def:taun}
By \Cref{obs:reflective-localizn-from-fs}.\eqref{item-obs:reflective-localizn-from-fs}, the fully faithful inclusion $\Cat_{(n,k)} \hookrightarrow \CatInfty{k}$ admits a left adjoint
\begin{equation}\label{eq:taun}
 \begin{tikzcd}[column sep=1.5cm]
\CatInfty{k}
\arrow[yshift=0.9ex, dashed]{r}{\tau_n}
\arrow[hookleftarrow, yshift=-0.9ex]{r}[yshift=-0.2ex]{\bot}
&
\Cat_{(n,k)}
\end{tikzcd}
\end{equation}
given by the formula $\tau_n(\cC) \coloneqq \Fact_n(\cC \ra \pt)$.\footnote{Beware that (despite our usage of the letter $\tau$) this is \textit{not} the abstractly-defined ``$n$-truncation'' functor in $\CatInfty{k}$ (see \Cref{warn:nfaithful-neq-ntruncated})} This left adjoint $\tau_n$ is symmetric monoidal as it preserves products.
\end{definition}

\begin{example}\label{exm:how-tau-works}
Unpacked, the functor $\tau_n$ can be described as follows, depending on $n\geq -2$ and $k \geq 0$. 
\begin{enumerate}
\item  For any $(\infty,k)$-category $\cC$, we have $\tau_{-2} \cC = \pt$.

\item  For any $(\infty,k)$-category $\cC$, we have $\tau_{-1} \cC = \emptyset$ if $\cC$ is empty and $\tau_{-1} \cC = \pt$ otherwise.

\item\label{item:how-tau-works-ngeqk} For $n \geq k \geq 0$ and any $(\infty,k)$-category $\cC$, $\tau_n \cC \in \Catnk{n}{k}$ is obtained by $(n-k)$-truncating its $k$-morphism spaces (\cref{nota:n-hom-notation}),  and univalently completing the result. In particular, for an $(\infty,1)$-category $\cC$, $\tau_1\cC$ is its ordinary homotopy category. 
 
\item For $k \geq 1$ and any $(\infty,k)$-category $\cC$, $\tau_0 \cC \in \Catnk{0}{k} \simeq \Catnk{0}{1}$ is the posetification of $\cC$, obtained by applying $\tau_{-1}$ to its hom-$(\infty,k-1)$-categories.

\item For $k>n \geq 0$ and any $(\infty,k)$-category $\cC$, $\tau_n \cC \in \Catnk{n}{k} \simeq \Catnk{n}{n+1}$ is obtained by $k$-homwise  applying $\tau_{-1}$ (and univalently completing the result).
\end{enumerate}
\end{example}

In general, the hom-categories in $\tau_n \cC$ can be computed by applying $\tau_{n-1}$ to hom-categories of $\cC$:
\begin{lemma}\label{lem:homwise-tau}
For any $n\geq -2, k\geq 0$ and any $(\infty,k)$-category $\cC$ and objects $c,c' \in\cC$, the adjunction~\eqref{eq:taun} induces an equivalence 
\[ \tau_{n-1} \eHom_{\cC}(c,c') \xrightarrow{\simeq} \eHom_{\tau_n \cC}(c,c').
\]
\end{lemma}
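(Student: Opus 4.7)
The plan is to combine the defining $n$-factorization $\cC \to \tau_n\cC \to \pt$ with the inductive characterization of $n$-surjective and $n$-faithful functors from \cref{def:nsurj-and-nfaith}, in order to recognize the induced map on hom-$(\infty,k-1)$-categories as itself realizing the $(n-1)$-factorization.

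First, I recall from \cref{def:taun} that $\tau_n\cC$ arises as the middle object of the unique $n$-factorization $\cC \xra{\eta} \tau_n\cC \xra{p} \pt$, so that $\eta$ is $n$-surjective and $p$ is $n$-faithful. Unpacking \cref{def:nsurj-and-nfaith} (and assuming $k \geq 1$, else the hom-objects are not defined), for any $c, c' \in \cC$ the functor $\eta_{c,c'} \colon \eHom_\cC(c,c') \to \eHom_{\tau_n\cC}(\eta c, \eta c')$ between hom-$(\infty, k-1)$-categories is $(n-1)$-surjective, while the fact that $\tau_n\cC$ is an $(n,k)$-category (i.e., $p$ is $n$-faithful) translates to the fact that the unique map $\eHom_{\tau_n\cC}(\eta c, \eta c') \to \pt$ in $\CatInfty{k-1}$ is $(n-1)$-faithful.

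Next I would observe that the composite
\[
\eHom_\cC(c,c') \xra{\eta_{c,c'}} \eHom_{\tau_n\cC}(\eta c, \eta c') \to \pt
\]
thus exhibits $\eHom_{\tau_n\cC}(\eta c, \eta c')$ as a factorization of the unique map $\eHom_\cC(c,c') \to \pt$ in $\CatInfty{k-1}$ into an $(n-1)$-surjective functor followed by an $(n-1)$-faithful functor. By uniqueness of such factorizations for the $((n-1)$-surjective$, (n-1)$-faithful$)$ factorization system on $\CatInfty{k-1}$ from \cref{thm:nsurj-nfaithful-fs-for-infty-k-cats} (see \cref{obs:factorizations-from-fs-are-unique}), this factorization is canonically equivalent to the one defining $\tau_{n-1}\eHom_\cC(c,c')$. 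Concretely, the comparison map in the statement of the lemma is the one induced from $\eta_{c,c'}$ by the universal property of $\tau_{n-1}$ (applied to the $(n-1,k-1)$-category $\eHom_{\tau_n\cC}(\eta c, \eta c')$), and uniqueness of factorizations promotes this canonical map to an equivalence $\tau_{n-1}\eHom_\cC(c,c') \xra{\sim} \eHom_{\tau_n\cC}(\eta c, \eta c')$.

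There is no substantial obstacle here: once the $n$-factorization structure on the unit $\eta$ is unpacked homwise, the result is a direct consequence of the uniqueness of the $(n-1)$-factorization. The only thing I would check separately are the edge cases $n \in \{-2,-1\}$ (where $\tau_{n-1}$ must be interpreted by the same formula $\Fact_{n-1}(- \to \pt)$, trivially extended for $n-1 \leq -2$) and $k = 0$ (where the lemma is vacuous as there are no hom-$(\infty,-1)$-categories to consider).
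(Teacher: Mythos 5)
Your proof is correct and is essentially the paper's own argument: factor $\cC \to \tau_n\cC \to \pt$, note that $n$-surjectivity/$n$-faithfulness are by definition homwise $(n-1)$-surjectivity/$(n-1)$-faithfulness, and conclude by uniqueness of factorizations for the $((n-1)$-surjective, $(n-1)$-faithful$)$ system on $\CatInfty{k-1}$. Your extra attention to the edge cases $n\in\{-2,-1\}$ and $k=0$ is a harmless refinement the paper leaves implicit.
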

\begin{proof}
Consider the factorization $\cC \to \tau_n \cC \to \pt$ into an $n$-surjective followed by an $n$-faithful functor. Since $n$-faithfulness/surjectivity implies homwise $(n-1)$-faithfulness/surjectivity, it follows that for any $c,c' \in \cC$, in the induced factorization on hom-$(\infty,k-1)$-categories
\[\eHom_{\cC}(c,c') \to \eHom_{\tau_n \cC}(c,c') \to \pt 
\]
the first functor is $(n-1)$-surjective and the second functor is $(n-1)$-faithful, hence exhibiting $\eHom_{\tau_n \cC}(c,c')$ as the unique factorization $\tau_{n-1} \eHom_{\cC}(c,c')$. 
\end{proof}

\begin{observation}\label{obs:tau-n-preserves-m-faithful}
Using~\cref{obs:adjunctions-left-right}, for any $k\geq 0$ and $n,m \geq -2$, one can deduce that the functor $\tau_n\colon \CatInfty{k} \to \Catnk{n}{k}$ preserves $m$-faithful functors since its right adjoint preserves $m$-surjective functors.
\end{observation}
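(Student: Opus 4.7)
The plan is to deduce the observation as a direct application of \cref{obs:adjunctions-left-right} to the reflective adjunction $\tau_n \dashv i$ from \cref{def:taun}. This observation packages the general principle that, in an adjunction $L \dashv R$ compatibly equipped with a factorization system $(\cL, \cR)$ on both sides, the left adjoint $L$ preserves the right class whenever the right adjoint $R$ preserves the left class. Applied to $\tau_n \dashv i$ with the ($m$-surjective, $m$-faithful) factorization system (\cref{thm:nsurj-nfaithful-fs-for-infty-k-cats}) on both $\CatInfty{k}$ and $\Catnk{n}{k}$, the task reduces to verifying that $i\colon \Catnk{n}{k} \hookrightarrow \CatInfty{k}$ preserves $m$-surjective morphisms.

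This verification is essentially tautological. Since $i$ is a fully faithful subcategory inclusion, and since the inductive definition of $m$-surjectivity in \cref{def:nsurj-and-nfaith} (surjectivity on objects plus homwise $(m-1)$-surjectivity) depends only on a morphism $g\colon \cE \to \cF$ and its induced functors between hom-$(\infty,k-1)$-categories, a morphism of $(n,k)$-categories is $m$-surjective in $\Catnk{n}{k}$ if and only if its image under $i$ is $m$-surjective in $\CatInfty{k}$. This is the analogue for the inclusion $\Catnk{n}{k} \hookrightarrow \CatInfty{k}$ of the preservation-and-detection statement \cref{obs:nsurj-and-nfaithful-indep-of-k} for the tower $\CatInfty{k} \hookrightarrow \CatInfty{j}$, and is established by the same straightforward induction on $k$ starting from the base case of spaces.

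The main subtlety (if any) lies in the precise formulation of \cref{obs:adjunctions-left-right}: the implication ``$R$ preserves $\cL$ $\Rightarrow$ $L$ preserves $\cR$'' is less standard than its counterpart ``$L$ preserves $\cL$ $\Leftrightarrow$ $R$ preserves $\cR$'' and typically requires auxiliary hypotheses in its general form (for instance, that the adjunction be a reflective localization, so that $R$ is fully faithful). Since $\tau_n \dashv i$ is indeed such a reflective localization, any sufficiently adapted form of the observation will apply. Should the precise form of \cref{obs:adjunctions-left-right} not cover this setting directly, a fallback direct argument would proceed by induction on $k$: the trivial cases $n = -2$ and $m \geq n$ dispose of themselves (in the latter case, every morphism into an $(n,k)$-category is automatically $m$-faithful), while the remaining case $m < n$ reduces via \cref{lem:homwise-tau} and surjectivity-on-objects of $\cC \to \tau_n \cC$ to $\tau_{n-1}$ preserving $(m-1)$-faithful morphisms of $(\infty,k-1)$-categories, bottoming out in a long-exact-sequence computation for spaces.
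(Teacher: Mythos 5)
Your primary route is the paper's own: the observation is justified there exactly by invoking \cref{obs:adjunctions-left-right} together with the (trivial) fact that the inclusion $\Catnk{n}{k}\hookrightarrow\CatInfty{k}$ preserves $m$-surjectivity, so in that sense you have reproduced the intended argument. Your caveat is also well placed: the formal content of \cref{obs:adjunctions-left-right} is only the uncrossed equivalence ``left adjoint preserves the left class $\Leftrightarrow$ right adjoint preserves the right class'' (here: $\tau_n$ preserves $m$-surjectivity $\Leftrightarrow$ $i$ preserves $m$-faithfulness, the latter being tautological), and reflectivity of $\tau_n\dashv i$ does not by itself formally upgrade this to the crossed implication ``$i$ preserves $m$-surjectivity $\Rightarrow$ $\tau_n$ preserves $m$-faithfulness'' that the observation invokes; so the hedge ``any sufficiently adapted form will apply'' is not a proof, and the real weight falls on your fallback argument. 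That fallback is essentially the correct proof and matches the paper's toolkit: induct on $k$, use \cref{lem:homwise-tau} and surjectivity-on-objects of the unit $\cC\to\tau_n\cC$ to identify the hom-components of $\tau_n h$ with $\tau_{n-1}$ of those of $h$, and settle the base case by the homotopy-group check that truncation of spaces preserves $m$-truncated maps. One small correction there: for $m\geq n$ it is not true that every morphism \emph{into} an $(n,k)$-category is $m$-faithful; you need the source to be truncated as well, i.e.\ apply \cref{lem:cancellation-for-surj-and-faithful} to $\tau_n\cC\to\tau_n\cD\to\pt$, using that \emph{both} $\tau_n\cC$ and $\tau_n\cD$ are $(n,k)$-categories--which is exactly the situation at hand, so your conclusion stands.
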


For any $j> k \geq 0$ and $n\geq -2$, since the inclusion $i_j  \colon  \CatInfty{k} \hookrightarrow \CatInfty{j}$ preserves $n$-factorizations (see \cref{obs:nsurj-and-nfaithful-indep-of-k}),  the diagram 
\[\begin{tikzcd}
\CatInfty{j}
\arrow{r}{\tau_n}
&
\Cat_{(n,j)}
\\
\CatInfty{k}
\arrow{r}[swap]{\tau_n}
\ar[u, hook, "i_j"]
&
\Cat_{(n,k)}
\ar[u, hook, "i_j"']
\end{tikzcd}
\]
commutes. By adjunction, this induces for any $(\infty,j)$-category $\cC$ a canonical functor of $(n,k)$-categories 
\begin{equation}\label{eq:canonical-functor}
\tau_n \iota_k \cC \to \iota_k \tau_n \cC.
\end{equation}
\begin{lemma}
\label{lem:tau-n-commutes-with-iota-k}
For any $j > k \geq 0$ and any $n \geq j$  or $k>n \geq -2$, and an $(\infty,j)$-category $\cC$, the canonical functor~\eqref{eq:canonical-functor} is an equivalence. 
\end{lemma}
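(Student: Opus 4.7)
The plan is to deduce the statement directly from \cref{cor:iota-k-commutes-with-fact-n-for-n-not-k} together with the uniqueness (up to essentially unique equivalence) of factorizations afforded by any factorization system. The hypothesis ``$n \geq j$ or $k > n \geq -2$'' in the lemma is precisely the hypothesis under which \cref{cor:iota-k-commutes-with-fact-n-for-n-not-k} applies, which should not be a coincidence: this is exactly the range in which $\iota_k$ preserves both $n$-surjectivity (by \cref{lem:iota-k-preserves-n-surjectivity-for-n-not-equal-k}) and $n$-faithfulness (by \cref{obs:nsurj-and-nfaithful-indep-of-k}).

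More concretely, I would first unpack that, by \cref{def:taun}, the $n$-factorization of the terminal morphism $\cC \to \pt$ in $\CatInfty{j}$ is
\[
\cC \longra \tau_n \cC \longra \pt .
\]
Applying the right adjoint $\iota_k \colon \CatInfty{j} \to \CatInfty{k}$, which preserves the terminal object, we obtain a factorization
\[
\iota_k \cC \longra \iota_k \tau_n \cC \longra \pt
\]
in $\CatInfty{k}$. Under our hypothesis on $n, k, j$, \cref{cor:iota-k-commutes-with-fact-n-for-n-not-k} applied to $\cC \to \pt$ asserts that this composite is the $n$-factorization of $\iota_k \cC \to \pt$. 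In particular, $\iota_k \tau_n \cC$ realizes $\tau_n \iota_k \cC$ in $\CatInfty{k}$.

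Finally, I would check that the canonical functor $\tau_n \iota_k \cC \to \iota_k \tau_n \cC$ induced via adjunction in \eqref{eq:canonical-functor} agrees with the comparison equivalence obtained above. This is immediate from the universal property of $\tau_n$ as a left adjoint to $\Cat_{(n,k)} \hookra \CatInfty{k}$: both morphisms factor the $n$-surjective functor $\iota_k \cC \to \iota_k \tau_n \cC$ through its unique $n$-factorization, hence both coincide. I do not anticipate any real obstacle here; the statement is essentially a corollary of \cref{cor:iota-k-commutes-with-fact-n-for-n-not-k} reinterpreted via the adjunction \eqref{eq:taun}.
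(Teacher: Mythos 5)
Your proposal is correct and follows essentially the same route as the paper: both apply \cref{cor:iota-k-commutes-with-fact-n-for-n-not-k} to the factorization $\cC \to \tau_n \cC \to \pt$ and read off that $\iota_k \tau_n \cC$ is the $n$-factorization of $\iota_k \cC \to \pt$. Your additional verification that the comparison agrees with the canonical map from~\eqref{eq:canonical-functor} is a reasonable (and correct) piece of diligence that the paper elides as immediate.
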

\begin{proof} This follows immediately from applying \cref{cor:iota-k-commutes-with-fact-n-for-n-not-k}  to the factorization $\cC \to \tau_n \cC \to \pt$. 
\end{proof}

\begin{remark}
\cref{lem:tau-n-commutes-with-iota-k}  does not hold for $j > n \geq k$: For instance, for $j=1$ and $n=k=0$ and an $(\infty,1)$-category $\cC$, the space $\tau_0 \iota_0 \cC$ is the set of isomorphism classes of objects of $\cC$. On the other hand, $ \iota_0 \tau_0 \cC$ is the set of connected components of $\cC$, i.e the quotient of the set of isomorphism classes of object by the equivalence relation that $c\sim d$ if there exists a zigzag of morphisms between $c$ and $d$. 
\end{remark}

Now we are ready to define the $n$-homotopy category functor:
    \begin{definition}\label{def:homotopy-cat-definition}
    For $n, k \geq 0$, define the  \bit{homotopy $n$-category functor} \[h_n\colon \CatInfty{k} \to \Catnk{n}{n}\]
to be $\CatInfty{k} \xrightarrow{\tau_n} \Catnk{n}{k} \hookrightarrow \Catnk{n}{n}$ when $n \geq k$ and to be $\CatInfty{k} \xrightarrow{\iota_n} \CatInfty{n} \xrightarrow{\tau_n} \Catnk{n}{n}$ when $n < k$. Note that $h_n$ is symmetric monoidal as $\iota_n$ and $\tau_n$ are symmetric monoidal.
         For $\cC \in \CatInfty{k}$, we call $h_n \cC$ the \bit{homotopy $n$-category} of $\cC$.     \end{definition}

\begin{remark}
Since $\iota$ is a right adjoint and $\tau$ is a left adjoint, there are no natural maps in either direction between $\cC$ and $h_n\cC$.    
\end{remark}

    \begin{example}
        The homotopy $n$-category functor $h_n$ takes a space $X$ to its $n$-truncation $\tau_{n} X$. Given a $(\infty, 1)$-category $\cC$, $h_0 \cC = \tau_0 \iota_0 \cC$ is the set of isomorphism classes of objects, and $h_1 \cC$ is its homotopy $1$-category \cite[Def. 1.1.3.2]{HTT}. For $n \geq 1$, $h_n = \tau_n$ is equivalent to the `$n$-homotopy category' of \cite[Def. 2.9]{SY19cat}.   Of particular relevance to this paper will be the case of $(\infty,2)$-categories $\cC$ where $h_1\cC = \tau_1 \iota_1 \cC$. 
    \end{example}

\begin{observation}\label{obs:h_n-preserves-m-faithful}
    Fix $n, m \geq 0$, since both $\iota_n$ and $\tau_n$ preserves $m$-faithfulness by \cref{obs:nsurj-and-nfaithful-indep-of-k} and \cref{obs:tau-n-preserves-m-faithful}, $h_n$ also preserves $m$-faithfulness.
\end{observation}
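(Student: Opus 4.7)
The plan is essentially bookkeeping: the statement decomposes $h_n$ into a composite of functors, each of which has already been shown to preserve $m$-faithful morphisms, so the result follows provided $m$-faithful morphisms are closed under composition.

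More concretely, I would first unpack the definition of $h_n$ from \cref{def:homotopy-cat-definition}. For $n \geq k$, the functor $h_n$ is $\tau_n \colon \CatInfty{k} \to \Catnk{n}{k}$ followed by the fully faithful inclusion $\Catnk{n}{k} \hookrightarrow \Catnk{n}{n}$; for $n < k$, it is $\tau_n \circ \iota_n$. In either case, $h_n$ is a composite of functors among which the relevant ones are $\iota_n$ and $\tau_n$ (the fully faithful inclusion $\Catnk{n}{k} \hookrightarrow \Catnk{n}{n}$ is itself $m$-faithful, being $(-1)$-faithful and hence $m$-faithful for all $m \geq -1$, cf.\ \cref{obs:nsurj-and-nfaithful-indep-of-k}).

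Next I would invoke that $m$-faithful functors are closed under composition: this is immediate because they form the right class of a factorization system on $\CatInfty{k}$ by \cref{thm:nsurj-nfaithful-fs-for-infty-k-cats}, and the right class of any factorization system is closed under composition (see \cref{obs:factorizations-from-fs-are-unique} and the discussion in \cref{app:f-s-for-enriched-cats}). Then \cref{obs:nsurj-and-nfaithful-indep-of-k} ensures that $\iota_n$ preserves $m$-faithful functors (both through its inclusion behaviour and through its right adjointness to the $(\infty,n)$-category-completion), and \cref{obs:tau-n-preserves-m-faithful} gives the same for $\tau_n$. Composing these two facts with closure of the right class under composition yields the claim.

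I do not anticipate any obstacle: everything has been arranged in the preceding subsections precisely so that this observation holds as a one-line consequence. The only conceptual care needed is to distinguish the two cases $n \geq k$ and $n < k$ in the definition of $h_n$, but in both cases the argument reduces to the same combination of \cref{obs:nsurj-and-nfaithful-indep-of-k} and \cref{obs:tau-n-preserves-m-faithful}.
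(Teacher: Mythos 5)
Your proof reaches the correct conclusion and follows the same decomposition the paper uses ($h_n$ as a composite of $\tau_n$ and $\iota_n$, or of $\tau_n$ and the inclusion $\Catnk{n}{k}\hookrightarrow\Catnk{n}{n}$), and you cite the right supporting references. However, two of your connecting steps misidentify the fact actually being used, and since neither is a pure typo I flag them.

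First, the jump from ``$\tau_n$, $\iota_n$, and the inclusion each preserve $m$-faithfulness'' to ``$h_n$ preserves $m$-faithfulness'' is \emph{not} an application of closure of the right class under composition. That closure statement says: if $F\colon\cC\to\cD$ and $G\colon\cD\to\cE$ are both $m$-faithful, then $GF$ is $m$-faithful. What you actually need is the elementary fact that a composite of functors, each of which carries $m$-faithful morphisms to $m$-faithful morphisms, again does so: apply $\iota_n$ to $F$, then apply $\tau_n$ to $\iota_n(F)$. This has nothing to do with the factorization system being a factorization system; you should simply say ``apply the two preservation statements in sequence.''

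Second, in the case $n\geq k$ the fact you need about the inclusion $\Catnk{n}{k}\hookrightarrow\Catnk{n}{n}$ is that it \emph{preserves} $m$-faithful morphisms, not that it \emph{is} $m$-faithful as a functor of $\infty$-categories. Those are different properties: a fully faithful functor $G\colon\cA\to\cB$ is $(-1)$-faithful, but that alone tells you nothing about whether $G$ sends $m$-faithful arrows of $\cA$ to $m$-faithful arrows of $\cB$. What actually makes this work is that the inclusion is a restriction of $i_n\colon\CatInfty{k}\hookrightarrow\CatInfty{n}$, which preserves (and detects) the ($m$-surjective, $m$-faithful) factorization system by \cref{obs:nsurj-and-nfaithful-indep-of-k}. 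You cite the right reference, but the justification you give alongside it (``being $(-1)$-faithful and hence $m$-faithful'') is not what that reference provides and is not the relevant property. With both slips corrected the argument is exactly the paper's, and your explicit case split on $n\geq k$ versus $n<k$ is in fact a bit more careful than the one-line observation in the paper, which nominally mentions only $\iota_n$ and $\tau_n$.
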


\subsection{Faithful functors and homotopy categories}
\label{subsec:n-1-faithful-and-hom-n-cat}
For an $(\infty,1)$-category $\cC$, the $\infty$-category of full subcategories of $\cC$ is equivalent to the poset of subsets of the set $h_0(\cC)$ of isomorphism classes of objects in $\cC$, or equivalently to the $\infty$-category of `full subcategories' of the set $h_0(\cC)$.
In the next two subsections, we generalize this to arbitrary $n, k\geq 0$ and characterizes $(n-1)$-faithful functors of $(\infty,k)$-categories into some $(\infty,k)$-category $\cC$ in terms of $(n-1)$-faithful functors of $(n,n)$-categories into the homotopy $n$-category $h_{n}\cC$.

\begin{notation}\label{nota:ar-cR-full-sub}
For $n, k \geq 0$, we let $\mathrm{Ar}^{n}(\CatInfty{k}) \subseteq \mathrm{Ar}(\CatInfty{k})$ denote the full subcategory of the arrow category of $\CatInfty{k}$ on the $n$-faithful functors. Moreover, for $\cD \in \CatInfty{k}$, we write  $\CRoverd{(\CatInfty{k})}{n}{\cD} \subseteq \CRoverd{(\CatInfty{k})}{\phantom{}}{\cD}$ for the full subcategory of the over-category on the $n$-faithful functors $\cC \to\cD$.  In particular, $\CRoverd{(\CatInfty{k})}{n}{\pt} = \Catnk{n}{k}$.
\end{notation}

The goal of the next subsection \S\ref{subsec:proof-factorization} will be to prove the following theorem. 
\begin{theorem}
\label{thm:inf-n-fancy-pullback}
Fix $n,k \geq 0$.
The commuting square of $\infty$-categories
        \begin{equation}\label{eq:ar-d-homotopy-pullback}
        \begin{tikzcd}
           \mathrm{Ar}^{(n-1)}(\CatInfty{k}) \ar[r, "t"] \ar[d, "h_n"] & \CatInfty{k} \ar[d, "h_n"] \\  
           \mathrm{Ar}^{(n-1)}(\Catnk{n}{n}) \ar[r, "t"] & \Catnk{n}{n} 
        \end{tikzcd}
    \end{equation} 
    is a pullback square. Note the left map exists by \cref{obs:h_n-preserves-m-faithful}.
\end{theorem}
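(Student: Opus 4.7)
The plan is to reduce the statement to an equivalence of $\infty$-categories on fibers: for each $\cD \in \CatInfty{k}$, it suffices to show that the induced functor
\[
h_n\colon \CRoverd{(\CatInfty{k})}{n-1}{\cD} \longra \CRoverd{(\Catnk{n}{n})}{n-1}{h_n \cD}
\]
is an equivalence. I would construct the inverse by pullback, sending an $(n-1)$-faithful $G\colon \cE \to h_n\cD$ to the base change of $G$ along an appropriate comparison map from $\cD$ to $h_n\cD$, and then verify that the two composites are natural equivalences. The proof proceeds by double induction on $k \geq 0$ and $n \geq 0$.

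For the base case $k = 0$, the statement concerns spaces: here $(n-1)$-faithful coincides with $(n-1)$-truncated and $h_n = \tau_n$. For any $(n-1)$-truncated map $X \to \cD$, the square with vertices $X, \cD, \tau_n X, \tau_n\cD$ formed by the units of $\tau_n$ has horizontal arrows $n$-connected and vertical arrows $(n-1)$-truncated, and hence is a pullback by \Cref{prop:pullback-square-via-conn-and-trunc}. Invertibility of the inverse functor, given by base change along the unit $\cD \to \tau_n\cD$, then follows from stability of truncated maps of spaces under base change, stability of $n$-connected maps under base change (as these are characterized fiberwise), and the fact that $\tau_n$ inverts $n$-connected maps.

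For the inductive step with $n \geq k$, we have $h_n = \tau_n$, and the unit $\cD \to h_n\cD$ is $n$-surjective (as the left factor of the $n$-factorization of $\cD \to \pt$). The critical claim is that for any $(n-1)$-faithful $F\colon \cC \to \cD$, the naturality square of the unit of $\tau_n$
\[
\begin{tikzcd}
\cC \ar[d, "F"'] \ar[r, "n\text{-surj}"] & h_n\cC \ar[d, "h_nF"] \\
\cD \ar[r, "n\text{-surj}"'] & h_n\cD
\end{tikzcd}
\]
is a pullback in $\CatInfty{k}$. Checking this homwise via \Cref{lem:homwise-tau} reduces to the analogous pullback square of hom-$(\infty,k-1)$-categories, which is the fiberwise form of the theorem at level $(k-1, n-1)$---still in the regime $n-1 \geq k-1$---and thus known by induction. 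On object spaces the verification reduces to the base case. For the complementary case $n < k$, we use that $h_n = \tau_n\iota_n$ together with the observation that an $(n-1)$-faithful $F\colon \cC \to \cD$ in $\CatInfty{k}$ induces equivalences on $i$-morphism spaces for $i > n$ (iteratively unpacking $(n-1)$-faithfulness yields a $(-2)$-truncated map, hence equivalence, on spaces of $k$-morphisms, and similarly for the intermediate levels). This permits the reconstruction $\cC \simeq \iota_n\cC \times_{\iota_n\cD} \cD$ in $\CatInfty{k}$, reducing the problem to the $n \geq k$ case for the full subcategory $\CatInfty{n}$.

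The principal technical obstacle will be the homwise verification in the inductive step: showing that the naturality square above is indeed a pullback in $\CatInfty{k}$. This rests on the claim that $n$-surjective maps in $\CatInfty{k}$ behave well enough under base change against $(n-1)$-faithful maps for the inductive hypothesis to apply, and on carefully identifying the resulting homwise square with the form to which the inductive hypothesis at level $(k-1, n-1)$ applies via \Cref{lem:homwise-tau}. A secondary subtlety, in the case $n < k$, is that $h_n$ is neither a left nor a right adjoint, so its compatibility with the pullback $\iota_n\cC \times_{\iota_n\cD} \cD$ requires separate verification via the $\iota_n$/$\tau_n$ decomposition, using \Cref{lem:tau-n-commutes-with-iota-k} to exchange $\tau_n$ and $\iota_n$ in the relevant dimension ranges.
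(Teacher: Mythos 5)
Your central computation---that for an $(n-1)$-faithful $F\colon\cC\to\cD$ the unit naturality square relating $\cC,\cD$ and $\tau_n\cC,\tau_n\cD$ is a pullback, proved homwise via \cref{lem:homwise-tau} with the connectivity/truncatedness base case from \cref{prop:pullback-square-via-conn-and-trunc}---is exactly the paper's key input (\cref{lem:infty-n-pull-back}). The gap is your opening reduction. Knowing that $h_n\colon \CRoverd{(\CatInfty{k})}{(n-1)}{\cD}\to\CRoverd{(\Catnk{n}{n})}{(n-1)}{h_n\cD}$ is an equivalence for every $\cD$ only controls the fibers of the two target projections over objects of $\CatInfty{k}$; it does not imply that the comparison functor $\mathrm{Ar}^{(n-1)}(\CatInfty{k})\to\CatInfty{k}\times_{\Catnk{n}{n}}\mathrm{Ar}^{(n-1)}(\Catnk{n}{n})$ is an equivalence. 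Fiberwise equivalences over a base give an equivalence of total $\infty$-categories only when both sides are (co)Cartesian fibrations and the comparison preserves the (co)Cartesian edges, and neither is free here: $(n-1)$-faithful arrows are not closed under postcomposition, so the restricted target projection is not coCartesian; and although both restricted target projections are Cartesian fibrations (the right class is stable under base change), preservation of Cartesian edges by your comparison amounts to $h_n$ commuting with pullbacks along $(n-1)$-faithful maps, which you have not established and which is essentially the theorem itself. Concretely, what the fiberwise statement misses is full faithfulness of the comparison: hom-spaces in $\mathrm{Ar}^{(n-1)}$ between arrows with non-equivalent targets $\cD_1\to\cD_2$ are not seen by any single fiber. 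The paper handles this by identifying these hom-spaces with spaces of lifts against a fixed $G\colon\cD_1\to\cD_2$ and then invoking precisely your critical pullback claim when $n\geq k$ (\cref{lem:bottom-square-pullback}), resp.\ the orthogonality of the $(n-1)$-factorization system together with the $(n-1)$-surjectivity of $\iota_n\cC\to\cC$ (\cref{lem:counit-for-iota-k-is-k-1-surjective}) when $n<k$ (\cref{lem:top-square-pullback}). Some argument of this kind is unavoidable; note that the over-category equivalence (\cref{cor:equivalence-of-over-categories}) is deduced in the paper \emph{from} the theorem, not the other way around.

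A secondary problem concerns your treatment of $n<k$: the reconstruction $\cC\simeq\iota_n\cC\times_{\iota_n\cD}\cD$ is not well formed, since there is no natural functor $\cD\to\iota_n\cD$ (the counit goes $\iota_n\cD\to\cD$, as $\iota_n$ is a right adjoint). What the essential-surjectivity half of the argument actually requires is the opposite direction: given an $(n-1)$-faithful $\cC'\to\iota_n\cD$ from an $(\infty,n)$-category, produce an $(n-1)$-faithful $\cC\to\cD$ with $\iota_n\cC\simeq\cC'$ over $\iota_n\cD$; the paper does this by setting $\cC=\Fact_{n-1}(\cC'\to\iota_n\cD\to\cD)$ and using the cancellation property \cref{lem:cancellation-for-surj-and-faithful} together with the fact that $\iota_n$ preserves $(n-1)$-surjectivity (\cref{lem:iota-k-preserves-n-surjectivity-for-n-not-equal-k}). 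Even after repairing this, you would still need the lifting/full-faithfulness step above to promote the objectwise statement to the asserted pullback of arrow categories.
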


Before proving \cref{thm:inf-n-fancy-pullback} in \S\ref{subsec:proof-factorization}, we record a few corollaries.
First, taking fibers at a $\cD \in \CatInfty{k}$ immediately leads to the following corollary:

\begin{corollary}\label{cor:equivalence-of-over-categories}
       Let $n, k \geq 0$ and $\cD$ an $(\infty, k)$-category.
        Then, the $n$-homotopy category functor $h_n$ induces an equivalence of $\infty$-categories:        
  \[
        h_n \colon \CRoverd{(\CatInfty{k})}{(n-1)}{\cD} \to 
        \CRoverd{(\Catnk{n}{n})}{(n-1)}{h_n\cD}.\]
    \end{corollary}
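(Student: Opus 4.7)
My plan is to derive this corollary immediately from \cref{thm:inf-n-fancy-pullback} by taking fibers of the asserted pullback square. The first step is to recognize that, by its definition in \cref{nota:ar-cR-full-sub}, the over-category $\CRoverd{(\CatInfty{k})}{(n-1)}{\cD}$ is precisely the fiber over $\cD$ of the target functor $t \colon \mathrm{Ar}^{(n-1)}(\CatInfty{k}) \to \CatInfty{k}$; analogously, $\CRoverd{(\Catnk{n}{n})}{(n-1)}{h_n\cD}$ is the fiber over $h_n\cD$ of the target functor on the bottom row.

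Next, I will invoke the standard fact that in any pullback square of $\infty$-categories, the induced map between the fibers of the two horizontal arrows, taken over a compatible pair of base points, is an equivalence. Applied to the pullback square from \cref{thm:inf-n-fancy-pullback} at the compatible base points $\cD \in \CatInfty{k}$ and $h_n\cD \in \Catnk{n}{n}$, this yields an equivalence between the fibers identified in the previous paragraph. Unwinding, this equivalence is implemented by the restriction of the left vertical functor $h_n \colon \mathrm{Ar}^{(n-1)}(\CatInfty{k}) \to \mathrm{Ar}^{(n-1)}(\Catnk{n}{n})$, which sends an $(n-1)$-faithful functor $F \colon \cC \to \cD$ to the $(n-1)$-faithful functor $h_nF \colon h_n\cC \to h_n\cD$; this is precisely the functor between over-categories claimed in the corollary.

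I do not anticipate any substantive obstacle, as the argument is a purely formal consequence of \cref{thm:inf-n-fancy-pullback} together with the definition of the over-categories in question. The entire content of the corollary is already contained in the theorem, and the only work is to match up the fibers with the over-categories appearing in the statement.
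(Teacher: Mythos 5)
Your proposal is correct and is exactly the paper's argument: the paper deduces the corollary by taking fibers of the pullback square of \cref{thm:inf-n-fancy-pullback} over $\cD$ and $h_n\cD$, identifying those fibers with the over-categories of $(n-1)$-faithful functors just as you do. No gaps; the fiber-of-the-target-functor identification and the fact that pullback squares induce equivalences on fibers are standard and suffice.
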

    Hence, \cref{cor:equivalence-of-over-categories} is indeed a generalization of the statement at the beginning of this subsection: The $\infty$-category of $(n-1)$-faithful functors into $\cD$ is equivalent to the $\infty$-category of $(n-1)$-faithful functors into $h_n\cD$. 
    
    For later use, we need an analogous statement for $\cO$-monoidal $\infty$-categories for a given $\infty$-operad $\cO$. 
    For any small $\infty$-operad $\cO$ (see \cref{appendix:operads} and \S\ref{sec:prebraidings-to-E2-str} for definitions and notation), we can extend this to a statement about categories of $\cO$-algebras:
    \begin{definition}\label{def:faithful-O-monoidal}
    Let $\cO$ be an $\infty$-operad and $k,n\geq -2$. An $\cO$-monoidal functor $F\colon \cC \to \cD$ of $\cO$-monoidal $(\infty,k)$-categories (i.e. a morphism of $\cO$-algebras in the Cartesian symmetric monoidal category $\CatInfty{k}$) is called $n$-surjective/$n$-faithful if for every color $X\in \underline{\cO}$, the underlying functor $F_X  \colon  \cC_X \to \cD_X$ is $n$-surjective/$n$-faithful. 
    \end{definition}
    \begin{corollary}\label{cor:alg-equivalence-of-over-categories}
  Let $n,k \geq 0$, $\cO$ an $\infty$-operad, and $\cD$ an $\cO$-monoidal $(\infty, k)$-category.
    Then  the $n$-homotopy category functor $h_n$ induces an equivalence of $\infty$-categories:     
\[
    \CRoverd{(\Alg_{\cO}(\CatInfty{k}))}{(n-1)}{\cD} \xrightarrow{h_n} 
    \CRoverd{(\Alg_{\cO}(\Catnk{n}{n}))}{(n-1)}{h_n\cD}.
\]
\end{corollary}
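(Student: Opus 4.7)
The plan is to deduce this algebra version from the non-monoidal Theorem~\ref{thm:inf-n-fancy-pullback} by applying $\Alg_{\cO}(-)$ to the pullback square appearing there, and then extracting fibers over $\cD$. Since $h_n$ is symmetric monoidal (being a composite of the product-preserving functors $\iota_n$ and $\tau_n$) and both $\CatInfty{k}$ and $\Catnk{n}{n}$ are equipped with their Cartesian symmetric monoidal structures, $h_n$ induces a functor $\Alg_{\cO}(h_n)\colon \Alg_{\cO}(\CatInfty{k}) \to \Alg_{\cO}(\Catnk{n}{n})$. Moreover, $\Alg_{\cO}(-)$ preserves all limits computed in the underlying $\infty$-categories; in particular it commutes with pullbacks and with the formation of arrow categories $\Ar(-) = \Fun([1],-)$.

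The key identification to establish is
\[
\Alg_{\cO}(\Ar^{(n-1)}(\CatInfty{k})) \simeq \Ar^{(n-1)}(\Alg_{\cO}(\CatInfty{k})),
\]
and likewise for $\Catnk{n}{n}$. By limit-preservation, we have $\Alg_{\cO}(\Ar(\CatInfty{k})) \simeq \Ar(\Alg_{\cO}(\CatInfty{k}))$; under this equivalence, the full subcategory on the left-hand side of the displayed equation corresponds to those $\cO$-algebra morphisms $F\colon \cA \to \cB$ whose underlying functor $F_X\colon \cA_X \to \cB_X$ at each color $X \in \underline{\cO}$ is $(n-1)$-faithful. By \cref{def:faithful-O-monoidal}, this is exactly $\Ar^{(n-1)}(\Alg_{\cO}(\CatInfty{k}))$. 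Applying $\Alg_{\cO}(-)$ to the pullback square of \cref{thm:inf-n-fancy-pullback} therefore yields a pullback square
\[
\begin{tikzcd}
\Ar^{(n-1)}(\Alg_{\cO}(\CatInfty{k})) \ar[r, "t"] \ar[d, "h_n"'] & \Alg_{\cO}(\CatInfty{k}) \ar[d, "h_n"] \\
\Ar^{(n-1)}(\Alg_{\cO}(\Catnk{n}{n})) \ar[r, "t"] & \Alg_{\cO}(\Catnk{n}{n}).
\end{tikzcd}
\]
Taking fibers at $\cD$ of the right-hand vertical map (noting that $\cD$ maps to $h_n \cD$ in the lower right) produces the desired equivalence on over-categories.

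The main subtlety lies in the second step, namely verifying that $\Alg_{\cO}$ commutes with the full subcategory inclusion $\Ar^{(n-1)}(-) \hookrightarrow \Ar(-)$. The cleanest way to see this is to observe that such a full subcategory inclusion is determined by its action on maximal subgroupoids, and that $\Alg_{\cO}$ preserves monomorphisms (so that the inclusion remains a full subcategory inclusion after applying $\Alg_{\cO}$); one then only needs that the subspace of $(n-1)$-faithful arrows of a product $\prod_X \cC$ is the product of the subspaces of $(n-1)$-faithful arrows of each $\cC$, which is precisely the content of the colorwise \cref{def:faithful-O-monoidal}.
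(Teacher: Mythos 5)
Your proposal follows the same route as the paper's proof: apply $\Alg_{\cO}(-)$ to the pullback square of \cref{thm:inf-n-fancy-pullback}, rewrite $\Alg_{\cO}(\mathrm{Ar}^{(n-1)}(\CatInfty{k}))$ as $\mathrm{Ar}^{(n-1)}(\Alg_{\cO}(\CatInfty{k}))$ using the colorwise \cref{def:faithful-O-monoidal}, and then take fibers at $\cD$. The paper is terser (it simply asserts the identification of full subcategories under the equivalence $\mathrm{Ar}(\Alg_{\cO}(\CatInfty{k}))\simeq\Alg_{\cO}(\mathrm{Ar}(\CatInfty{k}))$), but structurally the two arguments coincide.

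One imprecision in your final paragraph: the fact that $\Alg_{\cO}(-)$ preserves monomorphisms does \emph{not} by itself imply that it sends full subcategory inclusions to full subcategory inclusions. Monomorphisms in $\Cat_\infty$ are the functors that are fully faithful on maximal subgroupoids and merely \emph{monomorphisms} on hom-spaces (cf.\ the discussion surrounding \cref{warn:nfaithful-neq-ntruncated} and in \S\ref{subsubsection:monos-and-subcats}), whereas full subcategory inclusions are the strictly smaller class in which the map on hom-spaces is an equivalence. So knowing that $\Alg_{\cO}(\iota)$ is a monomorphism is not enough; what you actually need, and what the paper implicitly uses, is that $\Alg_{\cO}(-)$ carries the fully faithful symmetric monoidal inclusion $\mathrm{Ar}^{(n-1)}(\CatInfty{k}) \hookrightarrow \mathrm{Ar}(\CatInfty{k})$ to a fully faithful functor. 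This is true (it follows, e.g., from the limit formula for mapping spaces in $\infty$-categories of $\cO$-algebras, or from the fact that the induced map $(\mathrm{Ar}^{(n-1)}(\CatInfty{k}))^{\otimes} \to (\mathrm{Ar}(\CatInfty{k}))^{\otimes}$ on $\infty$-categories of operators is fully faithful), but it is a stronger statement than mono-preservation and should be invoked directly. With that correction, the rest of your argument (identifying the objects of the two full subcategories colorwise) is exactly right.
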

    \begin{proof}
    The full subcategory $\mathrm{Ar}^{(n-1)}(\CatInfty{k}) \subseteq \mathrm{Ar}(\CatInfty{k})$ is closed under products and hence defines a Cartesian symmetric monoidal subcategory. Since all functors in ~\eqref{eq:ar-d-homotopy-pullback} preserve products, the pullback square is a pullback square of Cartesian symmetric monoidal $\infty$-categories and hence induces a pullback square of $\infty$-categories:
        \begin{equation}\label{eq:ar-Alg-d-homotopy-pullback}
        \begin{tikzcd}
          \Alg_{\cO}\left( \mathrm{Ar}^{(n-1)}(\CatInfty{k})\right) \ar[r, "t"] \ar[d, "h_n"] & \Alg_{\cO} \left(\CatInfty{k}\right) \ar[d, "h_n"] \\  
          \Alg_{\cO} \left( \mathrm{Ar}^{(n-1)}(\Catnk{n}{n})  \right) \ar[r, "t"] & \Alg_{\cO}\left(\Catnk{n}{n} \right)
        \end{tikzcd}
    \end{equation} 
    Under the equivalence of $\infty$-categories $\mathrm{Ar}(\Alg_{\cO}(\CatInfty{k})) \simeq \Alg_{\cO}(\mathrm{Ar}(\CatInfty{k}))$, the full subcategory $\mathrm{Ar}^{(n-1)}(\Alg_{\cO}(\CatInfty{k}))$ on the $\cO$-monoidal functors $F$ whose underlying functors $F_X$ are $(n-1)$-faithful becomes identified with $\Alg_{\cO}\left(\mathrm{Ar}^{(n-1)}(\CatInfty{k})\right)$. Hence, the pullback square~\eqref{eq:ar-Alg-d-homotopy-pullback} is equivalent to the square 
    \[
            \begin{tikzcd}
         \mathrm{Ar}^{(n-1)}\left( \Alg_{\cO}\left( \CatInfty{k}\right)\right) \ar[r, "t"] \ar[d, "h_n"] & \Alg_{\cO} \left(\CatInfty{k}\right) \ar[d, "h_n"] \\  
         \mathrm{Ar}^{(n-1)}\left( \Alg_{\cO} \left( \Catnk{n}{n}\right)  \right) \ar[r, "t"] & \Alg_{\cO}\left(\Catnk{n}{n} \right)
        \end{tikzcd}
    \]
    and taking fibers at the $\cO$-algebra $\cD \in\Alg_{\cO} \left(\CatInfty{k}\right)$  induces the desired equivalence. 
    \end{proof}

    We record a further straight-forward consequence:
\begin{cor}\label{cor:coCart-all-k} Let $k, n \geq 0$ and let $F  \colon  \cC \to \cD$ be an $(n-1)$-faithful functor between $(\infty,k)$-categories. Then, for every $\cX \in \CatInfty{k}$, the square 
\begin{equation}
\label{eq:faithful-Cartesian-square}
\begin{tikzcd}
\Hom_{\CatInfty{k}}(\cX, \cC) \ar[r, "F\circ-"] \ar[d, "h_n"] & \Hom_{\CatInfty{k}}(\cX, \cD)\ar[d, "h_n"]\\
\Hom_{\Catnk{n}{n}}( h_n \cX, h_n \cC) \ar[r, "h_n F\circ-"] & \Hom_{\Catnk{n}{n}}(h_n \cX, h_n \cD)
\end{tikzcd}
\end{equation}
is a pullback square of spaces. 

Equivalently, $F$ is a Cartesian morphism for the functor $h_n  \colon  \CatInfty{k} \to \Catnk{n}{n}$. 
\end{cor}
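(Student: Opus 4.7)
The plan is to deduce this statement directly from \cref{thm:inf-n-fancy-pullback} by taking hom-spaces in the pullback square asserted there. Since $\Hom$ out of a fixed pair of objects sends pullback squares of $\infty$-categories to pullback squares of spaces, it will suffice to identify the resulting pullback of hom-spaces—for a judicious choice of source and target objects in the arrow categories—with the square appearing in the statement.

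First I would take the identity $\id_\cX \colon \cX \to \cX$ as source and $F \colon \cC \to \cD$ as target in $\mathrm{Ar}^{(n-1)}(\CatInfty{k})$; both objects lie in the $(n-1)$-faithful subcategory, the identity because equivalences belong to both classes of any factorization system, and $F$ by hypothesis. Applying $\Hom(\id_\cX, F)$ to the pullback square of \cref{thm:inf-n-fancy-pullback} yields a pullback square of spaces whose top-right and bottom-right corners are $\Hom_{\CatInfty{k}}(\cX, \cD)$ and $\Hom_{\Catnk{n}{n}}(h_n\cX, h_n\cD)$ respectively, obtained by applying the target functor $t$.

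The key simplification is the identification, for any $\infty$-category $\cE$ and any morphism $g \colon z \to w$, of the arrow-category hom-space $\Hom_{\mathrm{Ar}(\cE)}(\id_x, g) \simeq \Hom_\cE(x, z)$: a morphism in $\mathrm{Ar}(\cE)$ is a commuting square, and when the left edge is an identity the top edge determines the bottom via postcomposition with $g$. Applied to both rows of the pullback, and using that $h_n \id_\cX = \id_{h_n\cX}$, this identifies the two left-hand hom-spaces with $\Hom_{\CatInfty{k}}(\cX, \cC)$ and $\Hom_{\Catnk{n}{n}}(h_n\cX, h_n\cC)$, producing exactly the square in the statement. The final ``equivalently'' clause then follows by unwinding the definition of a Cartesian morphism.

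I do not anticipate any serious obstacles here, since the heavy lifting has been done in \cref{thm:inf-n-fancy-pullback}; the only minor bookkeeping is ensuring both selected objects sit in the $(n-1)$-faithful subcategory of the arrow category, which is automatic.
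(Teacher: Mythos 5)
Your proposal is correct and is essentially identical to the paper's own proof: the paper likewise evaluates the pullback square of \cref{thm:inf-n-fancy-pullback} on the pair of objects $(\id_\cX, F)$ in $\mathrm{Ar}^{(n-1)}(\CatInfty{k})$ and uses the identification $\Hom_{\mathrm{Ar}(\cE)}(\id_x, g) \simeq \Hom_\cE(x, z)$ to recover the square \eqref{eq:faithful-Cartesian-square}. The bookkeeping points you flag (equivalences lie in the $(n-1)$-faithful class, $h_n$ preserves identities, hom-spaces in the full subcategory agree with those in the ambient arrow category) are exactly the ones the paper relies on.
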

\begin{proof}
For any pair of objects in  $\mathrm{Ar}^{(n-1)}(\CatInfty{k}) $, the pullback square~\eqref{eq:ar-d-homotopy-pullback} of $\infty$-categories induces a pullback square between the respective hom-spaces. In particular, for the pair $(\id_{\cX} \colon  \cX \to\cX)$ and ($F \colon  \cC \to \cD$) of objects in $\mathrm{Ar}^{(n-1)}(\CatInfty{k}) $, we note that \[\Hom_{\mathrm{Ar}(\CatInfty{k})}(\id_{\cX}, F) \simeq \Hom_{\CatInfty{k}}(\cX, \cC) \quad \Hom_{ \mathrm{Ar}(\Catnk{n}{n})}(\id_{h_k \cX}, h_k F) \simeq \Hom_{\Catnk{n}{n}}(h_k\cX, h_k\cC),\]
and hence that the resulting pullback square of hom-spaces precisely results in the square~\eqref{eq:faithful-Cartesian-square}.
\end{proof}

Taking fibers at some $G\in  \Hom_{\CatInfty{k}}(\cX, \cD)$, \cref{cor:coCart-all-k} immediately implies the following corollary which will play a key role in the proof of our main theorem:
\begin{corollary}\label{cor:inf-n-pullbackfullyfaithful}Let $k, n \geq 0$, let $F\colon \cC \to \cD$ and $G\colon \cX \to \cD$ be functors between $(\infty,k)$-categories and assume that $F$ is $(n-1)$-faithful. Then, the map of spaces
\[
            \Hom_{(\CatInfty{k})_{/\cD}}(\cX, \cC) \to \Hom_{(\Catnk{n}{n})_{/h_n\cD}}(h_n\cX, h_n\cC)
\]
        is an equivalence.        \end{corollary}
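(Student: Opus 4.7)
The plan is short: this corollary will follow as an immediate consequence of \cref{cor:coCart-all-k} by passing to fibers, so essentially all the work has already been done upstream. First, I would invoke \cref{cor:coCart-all-k} applied to the $(n-1)$-faithful functor $F\colon \cC \to \cD$ together with the test object $\cX$. This directly produces the pullback square of spaces
\[
\begin{tikzcd}
\Hom_{\CatInfty{k}}(\cX, \cC) \ar[r, "F\circ-"] \ar[d, "h_n"] & \Hom_{\CatInfty{k}}(\cX, \cD) \ar[d, "h_n"] \\
\Hom_{\Catnk{n}{n}}(h_n \cX, h_n \cC) \ar[r, "h_n F\circ-"] & \Hom_{\Catnk{n}{n}}(h_n \cX, h_n \cD).
\end{tikzcd}
\]

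With this square in hand, the strategy is to pass to fibers along the horizontal arrows over $G \in \Hom_{\CatInfty{k}}(\cX,\cD)$ in the top row and over its image $h_n G$ in the bottom row. By the very definition of the over-category, the fiber of the top horizontal map at $G$ is the mapping space $\Hom_{(\CatInfty{k})_{/\cD}}(\cX,\cC)$, and likewise the fiber of the bottom horizontal map at $h_n G$ is $\Hom_{(\Catnk{n}{n})_{/h_n\cD}}(h_n\cX,h_n\cC)$. Since fibers of horizontal maps in a pullback square of spaces are canonically equivalent, the induced map between these two fibers must be an equivalence; a quick diagram chase confirms that this induced map is precisely the comparison map appearing in the statement, namely the one induced by applying $h_n$. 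I anticipate no real obstacle here: once \cref{cor:coCart-all-k} is established, the remainder is unwinding definitions of over-categories and of fibers in pullback squares.
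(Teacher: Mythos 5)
Your proposal is correct and is exactly the paper's argument: the paper deduces this corollary by taking fibers at $G$ in the pullback square supplied by \cref{cor:coCart-all-k}, identifying the fiber of the top horizontal map at $G$ with $\Hom_{(\CatInfty{k})_{/\cD}}(\cX,\cC)$ and the fiber of the bottom horizontal map at $h_nG$ with $\Hom_{(\Catnk{n}{n})_{/h_n\cD}}(h_n\cX,h_n\cC)$, the induced map being the one given by $h_n$. Nothing further is needed.
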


\subsection{Proof of \texorpdfstring{\cref{thm:inf-n-fancy-pullback}}{Theorem 5.45}}\label{subsec:proof-factorization}
This subsection is devoted to the proof of  \cref{thm:inf-n-fancy-pullback}. 

We first consider the special case that $k=n$. This will be a consequence of the  following observation relating faithfulness and homotopy categories, well known in the case $k=n=1$ of $(\infty,1)$-categories. 

\begin{prop}\label{lem:infty-n-pull-back}
Let $n\geq k \geq  0$ and $F \colon \cC \to \cD$ be a functor between $(\infty,k)$-categories which is $(n-1)$-faithful.  Then, the following commutative diagram is a pullback square in  $\CatInfty{k}$:
    \begin{equation}\label{eq:magic-pullback}
        \begin{tikzcd}
            \cC \ar[r] \ar[d] & \cD \ar[d] \\ 
            h_n \cC = \tau_{n} \cC \ar[r] & h_n \cD = \tau_n \cD
        \end{tikzcd}
    \end{equation}
\end{prop}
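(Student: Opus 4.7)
The plan is to prove the proposition by induction on $k \geq 0$. In the base case $k = 0$, the statement concerns spaces: the hypothesis becomes that $F \colon \cC \to \cD$ is an $(n-1)$-truncated map of spaces, and we must show that the square with vertical arrows given by the $n$-truncation unit maps is a pullback in $\Spaces$. This is a direct application of \cref{prop:pullback-square-via-conn-and-trunc} with parameter $n-1$: the top map is $(n-1)$-truncated by hypothesis, the left unit map $\cC \to \tau_n\cC$ is $n$-connected (hence $(n-1)$-connected), the right unit map is $n$-connected, and the bottom map lies between $n$-truncated spaces (hence is $n$-truncated).

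For the inductive step, assume the claim for $(\infty,k-1)$-categories and $n-1 \geq k-1$, and let $\cE \coloneqq h_n\cC \times_{h_n\cD} \cD$ denote the pullback in $\CatInfty{k}$. The canonical functor $G \colon \cC \to \cE$ is an equivalence if and only if it induces an equivalence on maximal subgroupoids and on hom-$(\infty,k-1)$-categories. For hom-$(\infty,k-1)$-categories, given $c,c' \in \cC$, \cref{lem:homwise-tau} identifies $\Hom_{h_n\cC}(c,c')$ with $\tau_{n-1}\Hom_\cC(c,c') = h_{n-1}\Hom_\cC(c,c')$ (and similarly for $\cD$), using that $n-1 \geq k-1$ so that $h_{n-1} = \tau_{n-1}$ on $\CatInfty{k-1}$. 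Since $F_{c,c'}$ is $(n-2)$-faithful, the inductive hypothesis applied with parameters $(k-1,n-1)$ yields that the square relating $\Hom_\cC(c,c')$, $\Hom_\cD(Fc,Fc')$ and their $h_{n-1}$-images is a pullback, which is precisely what is required.

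For the maximal subgroupoids, I plan to apply \cref{lem:tau-n-commutes-with-iota-k} (valid since $n \geq k \geq 1$) to commute $\iota_0$ with $\tau_n$, thereby reducing to showing that
\[
\begin{tikzcd}
\cC^\simeq \ar[r] \ar[d] & \cD^\simeq \ar[d] \\
\tau_n(\cC^\simeq) \ar[r] & \tau_n(\cD^\simeq)
\end{tikzcd}
\]
is a pullback of spaces. Three of the four hypotheses of \cref{prop:pullback-square-via-conn-and-trunc} (with parameter $n-1$) are automatic: the left vertical map is $n$-connected (hence $(n-1)$-connected), the right vertical map is $n$-connected, and the bottom map is a map between $n$-truncated spaces. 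The remaining input I need is that $\iota_0 F \colon \cC^\simeq \to \cD^\simeq$ is $(n-1)$-truncated.

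This last fact will be established as an auxiliary sub-lemma, proved by a parallel induction on $k$: if $F \colon \cC \to \cD$ is $m$-faithful in $\CatInfty{k}$ with $m \geq k-1$, then $\iota_0 F$ is $m$-truncated. Using the path-space characterization of truncatedness, it suffices to show that on each path space $\Path_{\cC^\simeq}(c,c') \to \Path_{\cD^\simeq}(Fc,Fc')$, the induced map is $(m-1)$-truncated. But these path spaces are unions of components of $\iota_0\Hom_\cC(c,c')$ and $\iota_0\Hom_\cD(Fc,Fc')$ on the invertible morphisms, so the induced map is a base change of $\iota_0 F_{c,c'}$. Since $F_{c,c'}$ is $(m-1)$-faithful in $\CatInfty{k-1}$ with $m-1 \geq k-2$, the inductive hypothesis of the sub-lemma gives that $\iota_0 F_{c,c'}$ is $(m-1)$-truncated, and truncatedness is stable under pullback, finishing the argument. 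The main obstacle is bookkeeping the various inductive invariants, particularly ensuring the numerical inequalities $n \geq k$ and $m \geq k-1$ are preserved as one passes to hom-objects, but no essentially new ideas are required beyond the careful use of Propositions~\ref{prop:pullback-square-via-conn-and-trunc} and Lemmas~\ref{lem:homwise-tau}, \ref{lem:tau-n-commutes-with-iota-k}.
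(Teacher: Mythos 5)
Your overall route is essentially the paper's: you split the comparison into the square of hom-objects (handled via \cref{lem:homwise-tau} and induction) and the square of underlying spaces (handled by commuting $\iota_0$ past $\tau_n$ via \cref{lem:tau-n-commutes-with-iota-k} and invoking \cref{prop:pullback-square-via-conn-and-trunc}); inducting on $k$ with all $n\geq k$ at once, rather than first reducing to $n=k$ and inducting on $n$ as the paper does, is an immaterial repackaging.

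The one step that does not hold as stated is in your auxiliary sub-lemma. The map $\Hom_{\iota_0\cC}(c,c')\to\Hom_{\iota_0\cD}(Fc,Fc')$ on spaces of equivalences is in general \emph{not} a base change of $\iota_0 F_{c,c'}\colon \iota_0\eHom_{\cC}(c,c')\to\iota_0\eHom_{\cD}(Fc,Fc')$: the pullback of $\iota_0 F_{c,c'}$ along the inclusion $\Hom_{\iota_0\cD}(Fc,Fc')\hookrightarrow \iota_0\eHom_{\cD}(Fc,Fc')$ consists of those components of $1$-morphisms $c\to c'$ whose \emph{image} under $F$ is invertible, and an $m$-faithful functor with $m\geq 0$ need not reflect invertibility (e.g.\ $[1]\to\pt$ is $0$-faithful but sends the generating arrow to an identity, so for $c=0$, $c'=1$ your square of ``path spaces'' is $\emptyset\to\pt$ over $\pt\to\pt$, which is not Cartesian). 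The conclusion of the sub-lemma survives with a one-line patch: the map on spaces of equivalences factors as the inclusion of a union of path components into that genuine pullback (a monomorphism, hence $(-1)$-truncated, hence $(m-1)$-truncated since $m\geq k-1\geq 0$ whenever $k\geq 1$), followed by the base change of $\iota_0 F_{c,c'}$, which is $(m-1)$-truncated by your induction; truncated maps compose, giving $(m-1)$-truncatedness of the map on equivalence spaces and hence $m$-truncatedness of $\iota_0 F$. Alternatively, note that the fact you are re-proving is exactly \cref{obs:nsurj-and-nfaithful-indep-of-k} (the right adjoint $\iota_k$ preserves $n$-faithfulness), which is what the paper cites at precisely this point, so the sub-lemma can be dispensed with entirely.
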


\begin{proof}
Since the inclusion $\CatInfty{k} \hookrightarrow \CatInfty{n}$ preserves pullbacks, and preserves $n$-factorizations and hence commutes with $\tau_n$, it suffices to prove the statement for $n=k$. 
We induct on $n\geq 0$. The base case $n=0$ is immediate. For general $n$, we prove that the underlying diagram of spaces 
\begin{equation}\label{eq:want-to-show-pullback}
        \begin{tikzcd}
            \iota_0 \cC \ar[r] \ar[d] & \iota_0 \cD \ar[d] \\ 
           \iota_0  \tau_{n} \cC \ar[r] & \iota_0 \tau_n \cD
        \end{tikzcd}
        \end{equation}
is a pullback square and that for each $c, c' \in \cC$ the induced square of $(\infty,n-1)$-categories
\[
        \begin{tikzcd}
            \eHom_{\cC}(c,c') \ar[r] \ar[d] & \eHom_{\cD}(Fc, Fc') \ar[d] \\ 
            \eHom_{\tau_n \cC}(c,c')  \ar[r] & \eHom_{\tau_n \cD}(Fc, Fc')
        \end{tikzcd}
\]
is a pullback square. Using \cref{lem:homwise-tau}, the latter square is a pullback square by induction. 

For the former square~\eqref{eq:want-to-show-pullback}, the top horizontal map is $(n-1)$-truncated by \cref{obs:nsurj-and-nfaithful-indep-of-k}. By \cref{lem:tau-n-commutes-with-iota-k}, the bottom horizontal map is equivalent to $\tau_n \iota_0 \cC \to \tau_n \iota_0 \cD$, i.e. to the map between the $n$-truncations of the spaces $\iota_0 \cC$ and $\iota_0 \cD$. Since $\tau_n$ preserves truncatedness, the bottom horizontal map is also $(n-1)$-truncated. On the other hand, for any space $X$, the truncation map $X \to \tau_n X$ is $n$-connected, and hence so are the vertical maps.  Now \cref{prop:pullback-square-via-conn-and-trunc} implies that \eqref{eq:want-to-show-pullback} is a pullback square.
\end{proof}

We use \cref{lem:infty-n-pull-back} to prove the $n\geq k \geq 0$ case of \cref{thm:inf-n-fancy-pullback}. 
\begin{lemma}\label{lem:bottom-square-pullback}
For $n\geq k \geq  0$, the following commuting square of $\infty$-categories  is a pullback square
    \begin{equation}\label{eq:inf-n-tau-n-pullback}
    \begin{tikzcd}
        \mathrm{Ar}^{(n-1)}(\CatInfty{k}) \ar[r, "t"] \ar[d, "\tau_n"] & \CatInfty{k} \ar[d, "\tau_n"] \\
        \mathrm{Ar}^{(n-1)}(\Catnk{n}{k}) \ar[r, "t"] & \Catnk{n}{k}.
        \end{tikzcd}
\end{equation}
    \end{lemma}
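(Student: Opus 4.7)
The plan is to reduce the pullback-square claim to showing that the canonical comparison functor
\[
\Phi \colon \mathrm{Ar}^{(n-1)}(\CatInfty{k}) \longrightarrow \CatInfty{k} \times_{\Catnk{n}{k}} \mathrm{Ar}^{(n-1)}(\Catnk{n}{k})
\]
induced by the universal property of the pullback is an equivalence of $\infty$-categories. I will verify this by checking essential surjectivity and fully faithfulness separately, in each case exploiting \cref{lem:infty-n-pull-back}.

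For essential surjectivity, given an object $(\cD, G\colon \cC' \to \tau_n\cD)$ on the right-hand side, with $G$ being $(n-1)$-faithful in $\Catnk{n}{k}$, I form the base change $\cC \coloneqq \cC' \times_{\tau_n\cD} \cD$ together with its projection $F\colon \cC \to \cD$. Since the right class of a factorization system is stable under base change (see \cref{obs:reflective-localizn-from-fs}), the functor $F$ is again $(n-1)$-faithful. I then compare two pullback presentations of $\cC$: the one coming from the construction, and the pullback square provided by \cref{lem:infty-n-pull-back} applied to the $(n-1)$-faithful $F$. The universal property of $\tau_n$ yields a canonical map $\tau_n\cC \to \cC'$, and comparing the two pullback diagrams forces this map to be an equivalence under which $\tau_n F$ corresponds to $G$. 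Hence $\Phi(F) \simeq (\cD, G)$.

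For fully faithfulness, fix objects $F_i\colon \cC_i \to \cD_i$ in $\mathrm{Ar}^{(n-1)}(\CatInfty{k})$ for $i=1,2$. The hom-space in the arrow category factors as
\[
\Hom_{\mathrm{Ar}(\CatInfty{k})}(F_1, F_2) \simeq \Hom_{\CatInfty{k}}(\cC_1, \cC_2) \times_{\Hom_{\CatInfty{k}}(\cC_1, \cD_2)} \Hom_{\CatInfty{k}}(\cD_1, \cD_2).
\]
Applying $\Hom_{\CatInfty{k}}(\cC_1, -)$ to the pullback square of \cref{lem:infty-n-pull-back} for $F_2$, and using the adjunction $\tau_n \dashv \mathrm{incl}$ to rewrite mapping spaces into $\tau_n\cC_2$ and $\tau_n\cD_2$, gives the key identity
\[
\Hom(\cC_1, \cC_2) \simeq \Hom(\cC_1, \cD_2) \times_{\Hom(\tau_n\cC_1, \tau_n\cD_2)} \Hom(\tau_n\cC_1, \tau_n\cC_2).
\]
Substituting into the formula above and simplifying the iterated pullback (cancelling the copy of $\Hom(\cC_1, \cD_2)$) yields precisely the formula for the hom-space in $\CatInfty{k} \times_{\Catnk{n}{k}} \mathrm{Ar}^{(n-1)}(\Catnk{n}{k})$, and the induced map identifies with the one coming from $\Phi$.

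The main obstacle is bookkeeping rather than conceptual: the argument above is straightforward if one works with explicit representing objects, but in an $\infty$-category the various natural transformations and pullback comparisons have to be shown to assemble coherently. A cleaner alternative would be to produce a right adjoint to $\Phi$ directly from the base change construction $(\cD, G) \mapsto (\cC' \times_{\tau_n\cD} \cD \to \cD)$—the universal property of pullbacks makes this functorial automatically—and then use the arguments above to exhibit the unit and counit as equivalences.
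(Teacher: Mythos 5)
Your fully faithfulness argument is fine: rewriting $\Hom_{\mathrm{Ar}}(F_1,F_2)$ as an iterated pullback and feeding in the mapping-space consequence of \cref{lem:infty-n-pull-back} (together with the adjunction $\tau_n\dashv \mathrm{incl}$) is just a reformulation of what the paper does, which fixes $G\colon \cD_1\to\cD_2$ and identifies the two fibers as spaces of lifts; both routes rest on the same input.

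The essential surjectivity step, however, has a genuine gap at its only nontrivial point. After forming $\cC \coloneqq \cC'\times_{\tau_n\cD}\cD$ and noting that $F\colon\cC\to\cD$ is $(n-1)$-faithful (stability of the right class under base change is correct, though \cref{obs:reflective-localizn-from-fs} is not the right citation), you claim that ``comparing the two pullback diagrams forces'' the canonical map $\tau_n\cC\to\cC'$ to be an equivalence. This is not a valid inference: the two squares exhibit $\cC$ as the pullback of two cospans $\tau_n\cC\to\tau_n\cD\leftarrow\cD$ and $\cC'\to\tau_n\cD\leftarrow\cD$, and knowing that a map of cospans (identity on $\cD$ and $\tau_n\cD$, the canonical $\tau_n\cC\to\cC'$ on the third leg) induces an equivalence on pullbacks does not let you conclude that $\tau_n\cC\to\cC'$ is itself an equivalence — base change along $\cD\to\tau_n\cD$ is not known to be conservative on $(n-1)$-faithful functors into $\tau_n\cD$ at this stage; indeed, a statement of that flavor is essentially what the lemma is establishing, so invoking it here would be circular. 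The paper closes this gap by a direct argument: it proves by induction on $n$ that $\tau_n\cC\to\cC'$ is an equivalence, checking surjectivity on objects (because $\cD\to\tau_n\cD$ is surjective on objects and surjectivity on objects is stable under pullback, and $\cC\to\cC'$ factors through $\tau_n\cC$) and fully faithfulness homwise using \cref{lem:homwise-tau} (homs in $\tau_n\cC$ are $\tau_{n-1}$ of homs in $\cC$, and homs in the pullback are pullbacks of homs). An alternative repair along your lines would be to first prove that $n$-surjective functors are stable under base change in $\CatInfty{k}$ (by induction on $k$, since $\iota_0$ and $\eHom$ commute with pullbacks and connected maps of spaces pull back), conclude that $\cC\to\cC'$ is $n$-surjective, and then invoke uniqueness of ($n$-surjective, $n$-faithful) factorizations to identify $\cC'$ with $\tau_n\cC$; but some such argument must be supplied — the pullback comparison alone does not do it.
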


\begin{proof}
We show that the functor 
\begin{equation}\label{eq:inf-n-fancy-pullback-helper3}
  \textrm{Ar}^{(n-1)}(\CatInfty{k}) \to \CatInfty{k} \times_{\Catnk{n}{k}} \textrm{Ar}^{(n-1)}(\Catnk{n}{k})  
\end{equation}
 is surjective and fully faithful. 

Surjectivity amounts to the following: For any $(\infty,k)$-category $\cD$ equipped with a $(n-1)$-faithful functor $\cC' \to \tau_n \cD$ from an $(n,k)$-category $\cC'$, there exists an $(\infty,k)$-category $\cC$ and a $(n-1)$-faithful functor $\cC \to \cD$ which is sent to $\cC' \to\tau_n \cD$ under $\tau_n$. 

Define $\cC$ to be the pullback in $\CatInfty{k}$
\[
     \begin{tikzcd}
         \cC \ar[rr, "(n-1)\faith"] \ar[d] && \cD \ar[d] \\ 
         \cC' \ar[rr, "(n-1)\faith"] &\ar[ul, phantom, "\lrcorner", very near end]& \tau_n \cD.
     \end{tikzcd}
\]
Since the right class of a factorization system is preserved under pullback, and since  $\cC' \to \tau_n \cD$ is $(n-1)$-faithful, so is its pullback $\cC \to\cD$. We will now prove by induction on $n\geq 0$ that the functor $\tau_n \cC \to \cC'$ adjunct to $\cC \to \cC'$ is an equivalence, proving surjectivity of \eqref{eq:inf-n-fancy-pullback-helper3}. The base case $n=0$ is immediate. In general, we will show that $\tau_n \cC \to \cC'$ is surjective on objects and fully faithful. Since $\cD \to \tau_{n}\cD$ is surjective on object (in fact $(n-1)$-surjective), the pullback $\cC \to \cC'$ is surjective on objects. Since $\cC \to \cC'$ factors as $\cC \to \tau_n \cC \to \cC'$, it follows that also  $\tau_n \cC \to \cC'$ is surjective on objects. Fully faithfulness of $\tau_n \cC \to \cC'$ follows by induction using \cref{lem:homwise-tau}.

We now prove that~\eqref{eq:inf-n-fancy-pullback-helper3} induces an equivalence on the hom-space between any pair of objects $\{\cC_1 \to \cD_1\}, \{\cC_2 \to \cD_2\} \in\ \textrm{Ar}^{(n-1)}(\CatInfty{k})$, and hence that \eqref{eq:inf-n-fancy-pullback-helper3} is fully faithful. Unwinding the hom-spaces in the relevant arrow categories, this is equivalent to the statement that for any fixed functor $G\colon \cD_1 \to \cD_2$ of $(\infty, k)$-categories, the map of spaces of dashed lifts 
\[
\left\{
            \begin{tikzcd}
                \cC_1 \ar[r, dashed] \ar[d] & \cC_2 \ar[d] \\ 
                \cD_1 \ar[r, "G"] & \cD_2
            \end{tikzcd}
\right\} \xrightarrow{\tau_n} 
\left\{
            \begin{tikzcd}
                \tau_{n}\cC_1 \ar[r, dashed] \ar[d] & \tau_n \cC_2 \ar[d] \\ 
                \tau_n \cD_1 \ar[r, "\tau_n G"] & \tau_n \cD_2.
            \end{tikzcd}
\right\}
\]
is an equivalence.  This follows immediately from \cref{lem:infty-n-pull-back}.
 \end{proof}

To generalize \cref{lem:bottom-square-pullback} to also allow for the case $k>n$, we will use the following lemma. 

\begin{lemma}\label{lem:top-square-pullback}
For all $k>n \geq 0$, the commutative square of $\infty$-categories 
    \begin{equation}\label{eq:inf-n-iota-n-pullback}
    \begin{tikzcd}
        \textrm{Ar}^{(n-1)}(\CatInfty{k}) \ar[r, "t"] \ar[d, "\iota_n"] & \CatInfty{k} \ar[d, "\iota_n"] \\ 
        \textrm{Ar}^{(n-1)}(\CatInfty{n}) \ar[r, "t"]  & \CatInfty{n} 
        \end{tikzcd}
\end{equation}
is a pullback square.
\end{lemma}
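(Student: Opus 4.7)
The plan is to mimic the proof of \cref{lem:bottom-square-pullback}, replacing $\tau_n$ by $\iota_n$. I will show that the natural comparison functor
\[
\Phi \colon \mathrm{Ar}^{(n-1)}(\CatInfty{k}) \longrightarrow \CatInfty{k} \times_{\CatInfty{n}} \mathrm{Ar}^{(n-1)}(\CatInfty{n})
\]
is both surjective on objects and fully faithful, hence an equivalence.

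For surjectivity on objects, given an element $(\cD, F' \colon \cC' \to \iota_n \cD)$ of the pullback, I will form the composite
$i_k \cC' \xrightarrow{i_k F'} i_k \iota_n \cD \xrightarrow{\epsilon_{\cD}} \cD$
in $\CatInfty{k}$, using the counit of the adjunction $i_k \dashv \iota_n$ (which runs in the opposite direction to the unit used in \cref{lem:bottom-square-pullback}), and factor it via the $(n-1)$-factorization system of \cref{thm:nsurj-nfaithful-fs-for-infty-k-cats} as $i_k\cC' \xrightarrow{(n-1)\surj} \cC \xrightarrow{(n-1)\faith} \cD$. Applying $\iota_n$ and invoking \cref{cor:iota-k-commutes-with-fact-n-for-n-not-k} (which is available since $k > n$) together with the triangle identities $\iota_n \epsilon_{\cD} \simeq \id$ and $\iota_n i_k \simeq \id$, this yields an $(n-1)$-factorization $\cC' \to \iota_n \cC \to \iota_n \cD$ of the already-$(n-1)$-faithful map $F'$. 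Uniqueness of factorizations then forces the first arrow $\cC' \to \iota_n \cC$ to be an equivalence, so that $\Phi(\cC \to \cD) \simeq (\cD, F')$ recovers the given data.

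For fully faithfulness, given $F_a, F_b \in \mathrm{Ar}^{(n-1)}(\CatInfty{k})$ and a functor $G \colon \cD_a \to \cD_b$, I must show that the induced map
\[
\{\text{lifts } \cC_a \to \cC_b \text{ over } G\} \xrightarrow{\iota_n} \{\text{lifts } \iota_n\cC_a \to \iota_n\cC_b \text{ over } \iota_n G\}
\]
is an equivalence. By the adjunction $i_k \dashv \iota_n$ and naturality of the counit $\epsilon \colon i_k\iota_n \Rightarrow \id$, the right-hand side is equivalent to the space of maps $\tilde{g} \colon i_k\iota_n\cC_a \to \cC_b$ satisfying $F_b \tilde{g} = G F_a \circ \epsilon_{\cC_a}$, and the forward map sends $f$ to $f \circ \epsilon_{\cC_a}$. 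Thus it suffices to check that for every such outer square, the lifting problem
\[
\begin{tikzcd}
i_k \iota_n\cC_a \ar[r, "\tilde{g}"] \ar[d, "\epsilon_{\cC_a}"'] & \cC_b \ar[d, "F_b"] \\
\cC_a \ar[r, "G F_a"'] \ar[ur, dashed] & \cD_b
\end{tikzcd}
\]
has a contractible space of lifts. But $\epsilon_{\cC_a}$ is $(k-1)$-surjective by \cref{lem:counit-for-iota-k-is-k-1-surjective}, and since $k > n$ it is in particular $(n-1)$-surjective, while $F_b$ is $(n-1)$-faithful; the desired contractibility therefore follows immediately from right orthogonality in the $(n-1)$-factorization system.

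The main obstacle is conceptual rather than technical: the fact that the counit $i_k\iota_n \Rightarrow \id$ runs opposite to the unit $\id \Rightarrow \tau_n$ used in \cref{lem:bottom-square-pullback} means the pullback-based surjectivity construction there must be replaced by a factorization-based one. Once this adjustment is made, both parts of the argument reduce cleanly to the formal properties of the $(n-1)$-factorization system together with the surjectivity of the counit recorded in \cref{lem:counit-for-iota-k-is-k-1-surjective}; no analog of the stepping-stone \cref{lem:infty-n-pull-back} is needed.
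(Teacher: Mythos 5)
Your proposal is correct and follows essentially the paper's own argument: the paper likewise takes $\cC$ to be the $(n-1)$-factorization of $\cC' \to \iota_n \cD \to \cD$ and shows that $\cC' \to \iota_n \cC$ is an equivalence (your appeal to \cref{cor:iota-k-commutes-with-fact-n-for-n-not-k} plus uniqueness of factorizations is just a repackaging of its cancellation/preservation argument), and it proves fully faithfulness by exactly your orthogonality of the $(n-1)$-surjective inclusion $\iota_n\cC_a \to \cC_a$ against the $(n-1)$-faithful $F_b$. The only slip is notational: \cref{lem:counit-for-iota-k-is-k-1-surjective} gives that this inclusion is $(n-1)$-surjective (the index in that lemma refers to the level of the maximal subcategory, not the ambient $k$), which is precisely the surjectivity your argument actually uses, so nothing breaks.
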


\begin{proof}
We show that the functor 
\begin{equation}\label{eq:inf-n-fancy-pullback-helper2}
  \textrm{Ar}^{(n-1)}(\CatInfty{k}) \to \CatInfty{k} \times_{\CatInfty{n}} \textrm{Ar}^{(n-1)}(\CatInfty{n})  
\end{equation}
 is surjective and fully faithful. 

Surjectivity amounts to the following: For any $(\infty,k)$-category $\cD$ equipped with an $(n-1)$-faithful  functor $\cC' \to  \iota_n \cD$ from an $(\infty,n)$-category $\cC'$, there exits an $(\infty,k)$-category  $\cC$ with an $(n-1)$-faithful functor $\cC \to \cD$ which under $\iota_n$ gets mapped to the original functor $\cC' \to \cD$. 

Define $\cC \coloneqq \Fact_{n-1}(\cC' \to \iota_n \cD \to \cD)$  as the factorization with respect to the ($(n-1)$-surjective, $(n-1)$-faithful) factorization system in $\CatInfty{k}$, and hence equipped with morphisms $\cC' \to \cC \to \cD$ where the former is $(n-1)$-surjective and the latter is $(n-1)$-faithful. To conclude, we show that the map $ \cC' \simeq \iota_n \cC'  \to \iota_n \cC$ is an equivalence, and hence that $\iota_n(\cC \to\cD)$ is equivalent to $\cC' \to \iota_n \cD$. Consider the following commutative diagram:
\[ \begin{tikzcd}[column sep=2cm, row sep=1cm]
\cC'
\arrow[bend left=5]{rrd}[sloped]{(n-1)\faith}
\arrow[bend right=5]{rdd}[sloped, swap]{(n-1)\surj}
\arrow{rd}
\\
&
\iota_n \cC
\arrow{r}[swap]{(n-1)\faith}
\arrow{d}{}
&
\iota_n \cD
\arrow{d}{}
\\
&
\cC
\arrow{r}[swap]{(n-1)\faith}
&
\cD.
\end{tikzcd}
~ \]
The bottom horizontal and leftmost diagonal functor are surjective/faithful as indicate by the definition of $\cC$. The top-most diagonal functor is $(n-1)$-faithful by assumption. The top horizontal functor is $(n-1)$-faithful since $\iota_n$ preserves faithfulness by \cref{obs:nsurj-and-nfaithful-indep-of-k}. 
It then follows from \cref{lem:cancellation-for-surj-and-faithful} that the functor $\cC'\to \iota_n \cC$ is $(n-1)$-faithful. Since $\cC' \to \cC$ is $(n-1)$-surjective and since $\iota_n \colon  \CatInfty{k} \to \CatInfty{n}$ preserves $(n-1)$-surjective functors by \cref{lem:iota-k-preserves-n-surjectivity-for-n-not-equal-k} it follows that $\cC' \simeq \iota_n \cC' \to \iota_n \cC$ is $(n-1)$-surjective. Hence, $\cC' \to \iota_n \cC$ is $(n-1)$-faithful and $(n-1)$-surjective and hence an equivalence.

We now prove that~\eqref{eq:inf-n-fancy-pullback-helper2} induces an equivalence on the hom-space between any pair of objects $\{\cC_1 \to \cD_1\}, \{\cC_2 \to \cD_2\} \in\ \textrm{Ar}^{(n-1)}(\CatInfty{k})$, and hence that \eqref{eq:inf-n-fancy-pullback-helper3} is fully faithful. Unwinding the hom-spaces in the relevant arrow $\infty$-categories, this is equivalent to the statement that for any fixed $\cD_1 \to \cD_2$ and any 
fixed dashed lift as shown in the first diagram in \eqref{eq:dashedlifts}, the space of dashed lifts as shown in the commuting square in the second diagram in \eqref{eq:dashedlifts} is contractible.
\begin{equation}\label{eq:dashedlifts}
\begin{tikzcd}
\iota_n \cC_1  \ar[d] \ar[r, dashed] & \iota_n \cC_2 \ar[d] \\
\iota_n \cD_1 \ar[r] & \iota_n \cD_2
\end{tikzcd}, 
\quad\quad
            \begin{tikzcd}
                \iota_n \cC_1 \ar[r] \ar[d] & \iota_n \cC_2 \ar[d] \\ 
                \cC_1 \ar[r, dashed] \ar[d] & \cC_2 \ar[d] \\ 
                \cD_1 \ar[r] & \cD_2.
            \end{tikzcd}
        \end{equation}
By \cref{lem:counit-for-iota-k-is-k-1-surjective}, $\iota_n \cC_1 \to \cC_1$ is $(n-1)$-surjective, and $\cC_2 \to \cD_2$ is $(n-1)$-faithful by assumption, hence the space of lift is contractible since $(n-1)$-surjective/$(n-1)$-faithful functors form a factorization system on $\CatInfty{k}$. 
\end{proof}

We can combine \cref{lem:bottom-square-pullback} and \cref{lem:top-square-pullback} into a proof of \cref{thm:inf-n-fancy-pullback}.
\begin{proof}[Proof of \cref{thm:inf-n-fancy-pullback}]
The case $n\geq k$ is \cref{lem:bottom-square-pullback}. For $k> n$, decompose the square as 
\[
    \begin{tikzcd}
        \mathrm{Ar}^{(n-1)}(\CatInfty{k}) \ar[r, "t"] \ar[d, "\iota_n"] & \CatInfty{k} \ar[d, "\iota_n"] \\ 
        \mathrm{Ar}^{(n-1)}(\CatInfty{n}) \ar[r, "t"] \ar[d, "\tau_n"] & \CatInfty{n} \ar[d, "\tau_n"] \\
        \mathrm{Ar}^{(n-1)}(\Catnk{n}{n}) \ar[r, "t"] & \Catnk{n}{n}.
    \end{tikzcd}
\]
By Lemmas~\ref{lem:bottom-square-pullback} and~\ref{lem:top-square-pullback} the bottom and top squares are pullbacks, respectively. 
\end{proof}

\section{The monoidal \texorpdfstring{$(\infty,2)$}{(infinity, 2)}-category of chain complexes of Soergel bimodules}
\label{sec:SBim}

Throughout this section, we let $k$ be a $\mathbb{Q}$-algebra.\footnote{All of the results in this section which do not specifically refer to the categories of Bott-Samelson and Soergel bimodules apply generally to arbitrary connective ring spectra $k \in \CAlg(\ConnSpectra)$, and to arbitrary commutative monoids $Z \in \CAlg(\Spaces)$ in place of $\mbbZ$.}
Recall from \cref{def:gradedcats} the presentably symmetric
monoidal $\infty$-categories $\addkBZ \coloneqq \Fun(B\mbbZ, \add_k)$ and $\stkBZ\coloneqq \Fun(B \mbbZ, \st_k)$ of small $k$-linear\footnote{In this section, and the rest of the paper, we abuse notation and let $k$ denote both the commutative ring, and the induced commutative ring spectrum $Hk$ and simply write $\add_k$ and $\st_k$ instead of $\add_{Hk}$ and $\st_{Hk}$ as in \S\ref{sec:morita-categories}.} additive,
resp. stable, idempotent complete $\infty$-categories equipped with a
$\mathbb{Z}$-action. These categories  will always be understood as equipped with the Day convolution symmetric monoidal structure.

\begin{notation} Throughout this section, we will use the following terminology:
\begin{enumerate}
\item We refer to objects and morphisms of $\Cat[\SetZ]$ as ordinary $1$-categories \emph{with local shifts} and \emph{shift-preserving functors}. 
\item We refer to objects and morphisms of $\Cat[\addkBZ]$ as $k$-linear $(\infty,2)$-categories \emph{with local shifts} and \emph{shift-preserving $k$-linear functors}. 
\item We refer to objects and morphisms of $\Cat[\stkBZ]$ as $k$-linear stable $(\infty,2)$-categories \emph{with local shifts} and \emph{shift-preserving $k$-linear exact functors}. 
\item Similarly, we refer to objects and morphisms in $\Alg_{\EE_1}$ of the $\infty$-categories in (1)--(3) as monoidal ordinary categories \emph{with local shifts}, etc. 
\end{enumerate}
\end{notation}
Unpacked, an ordinary category with local shifts is a $1$-category $\cC$ equipped with, for every $c, c' \in \cC$ a $\mathbb{Z}$-action on the hom-set  $\Hom_{\cC}(c,c')$, denoted by $[n]\colon \Hom_{\cC}(c,c') \to \Hom_{\cC}(c,c')$ for $n \in \mbbZ$, which is compatible with composition in the sense that the following diagram commutes:\[ \begin{tikzcd}
\Hom_{\cC}(c,c') \times \Hom_{\cC}(c', c'') \arrow[d, "{[n] \times [m]}"'] \arrow[r, "\circ"]& \Hom_{\cC}(c,c'') \arrow[d, "{[n+m]}"]\\ 
\Hom_{\cC}(c,c') \times \Hom_{\cC}(c', c'') \arrow[r, "\circ" ]& \Hom_{\cC}(c,c'')
\end{tikzcd}
\]
Similarly, a $k$-linear (stable) $(\infty,2)$-category with local shifts is an $(\infty,2)$-category whose hom-categories are  additive (stable), $k$-linear, and have a homotopy coherent $\mbbZ$-action which is compatible with composition. 

\begin{example}\label{ex:Moritaunpacked}
Most of our constructions in this section are built on the symmetric monoidal $k$-linear $(2,2)$-category with local shifts $$\MoritaS \in \CAlg(\widehat{\Cat}[\addkBZ])$$
from \cref{def:moritakz-and-dmoritakz}. 
By \cref{cor:finally-our-morita-categories}.\eqref{item-cor:finally-1}, its objects are $\mathbb{Z}$-graded flat $k$-algebras, and its $k$-linear additive, idempotent-complete hom-categories between two such algebras $A$ and $B$ is given by the full subcategory of the $1$-category ${}_{A}\mathrm{grbmod}_{B}$ of (ordinary) $\mathbb{Z}$-graded $A$--$B$-bimodules on those bimodules which are graded-compact-projective (\cref{def:graded-compact-perfect}), as a right $B$-module. The homwise $\mbbZ$-action is given by shifting the grading degree of these bimodules. 
\end{example}

We warn the reader that $\MoritaS$ is a locally small, but not a small  (the flat $\mathbb{Z}$-graded $k$-algebras do not form a small set) $(\infty,2)$-category. Since all algebras of relevance to this paper are graded polynomial algebras, it will be convenient to restrict to the \emph{small} full subcategory on those:
\begin{notation}
Let $\MorPoly \subseteq \MoritaS$ denote the small full subcategory on the graded polynomial algebras $k[x_1, \ldots, x_n]$ for $n \geq 0$, with all $x_i$ in degree $2$. (Graded polynomial algebras are flat, see
\cref{exm:graded-poly-flat}, this hence indeed defines a full subcategory.)  Since tensor products of polynomial algebras are polynomial algebras, this is in fact a symmetric monoidal subcategory and hence defines an object 
\[ \MorPoly \in  \CAlg(\Cat[\addkBZ]).
\]\end{notation}

The goal of this section is to first define $\BSbimp$ as a monoidal $(2,2)$-category\footnote{As a subcategory of $\SBim$, $\BSbimp$ then inherits its local $\mathbb{Z}$-action from $\SBim$.}  and then $\SBim$ as $k$-linear monoidal $(2,2)$-category with local shifts, both equipped with \emph{faithful} monoidal functors to $\MorPoly$, i.e. functors which induces fully-faithful inclusions on hom-categories (see \cref{def:faithful}).

The following warning makes our construction of a monoidal structure on $\BSbimp$ and $\SBim$ from the monoidal structure of $\MorPoly$ somewhat more subtle than one might think. 
\begin{warning} 
    \label{warn:nosub}
    We warn the reader that one should \emph{not} think of $\BSbimp$ and $\SBim$ as \emph{sub}-$(2,2)$-categories of $\MorPoly$. Indeed, $\BSbimp$ and $\SBim$ do \emph{not} contain all $1$-equivalences  between their objects that exists in $\MorPoly$. Hence, the faithful functors $\BSbimp\to\MorPoly$ and $\SBim \to \MorPoly$ are not monomorphisms in $\CatInfty{2}$; see also \cref{warn:nfaithful-neq-ntruncated}.  In particular, for an $(\infty,2)$-functor $\cX \to \MorPoly$, it is not merely a \emph{property} to factor through $\BSbimp$ or $\SBim$ but \emph{additional data}. 
    \end{warning}

\subsection{The monoidal \texorpdfstring{$(2,2)$}{(2,2)}-category of Bott-Samelson bimodules}
\label{subsec:definining-BSBim}

Recall that to define a full subcategory of an $(\infty,1)$-category $\cA$, it suffices to specify a subset of the set $h_0 \cA$ of isomorphism classes of objects  of $\cA$. 
Similarly, it follows from  \cref{cor:equivalence-of-over-categories} that to define  an $(\infty,2)$-category $\cC$ together with a faithful functor $\cC \to \cA$ into a fixed $(\infty,2)$-category $\cA$, it suffices to define an ordinary $1$-category $h_1\cC$ together with an ordinary faithful functor $h_1\cC \to h_1\cA$ into the homotopy $1$-category of $\cA$, see \cref{def:homotopy-cat-definition}. We will use this to define our monoidal $(2,2)$-category $\BSbimp$ together with its faithful functor to $\MorPoly$.

\begin{observation}\label{obs:h1Morita}
Following \cref{cor:finally-our-morita-categories}.\eqref{item-cor:finally-1}, $h_1 \MorPoly$  is the (small) ordinary symmetric-monoidal $1$-category whose objects are given by graded polynomial algberas, and whose morphisms are given by \emph{isomorphism classes} of ordinary graded bimodules which are graded-compact-projective as right modules. 
Furthermore, composition is given by relative tensor product and the symmetric monoidal structure is given by tensoring over $k$.
\end{observation}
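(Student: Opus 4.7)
The plan is to compute $h_1 = \tau_1 \circ \iota_1$ applied explicitly to the symmetric monoidal $k$-linear $(2,2)$-category $\MorPoly$, using its description from \cref{cor:finally-our-morita-categories}.\eqref{item-cor:finally-1}.

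First, I would observe that $\MorPoly$ is a symmetric monoidal $(2,2)$-category: by construction, its hom-objects are the ordinary $1$-categories ${}_A\grbmod_B^{\mathrm{gr-cp}}$ (with $Z$-action by grading shift), so the underlying $(\infty,2)$-category lies in $\Cat_{(2,2)} \subseteq \CatInfty{2}$. By \cref{def:homotopy-cat-definition}, the functor $h_1$ agrees with $\tau_1 \circ \iota_1$, and both $\tau_1$ and $\iota_1$ are symmetric monoidal, so the construction is symmetric monoidal on $\MorPoly$.

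Next, I would unpack $\iota_1$: taking the maximal sub-$(\infty,1)$-category of a $(2,2)$-category amounts to replacing each hom-$1$-category $\mathcal{H}_{A,B} = {}_A\grbmod_B^{\mathrm{gr-cp}}$ by its maximal subgroupoid $\iota_0 \mathcal{H}_{A,B}$, i.e.\ the groupoid of isomorphisms of graded bimodules (as can be read off from \cref{exm:how-tau-works}.\eqref{item:how-tau-works-ngeqk} applied to the hom-categories). Composition of invertible $1$-morphisms and the monoidal structure are inherited from $\MorPoly$, in particular given by the relative tensor product over the middle polynomial algebra and the tensor product over $k$, respectively. Then applying $\tau_1$ replaces each hom-space $\iota_0 \mathcal{H}_{A,B}$ by the set $\pi_0(\iota_0 \mathcal{H}_{A,B}) = h_0\,\mathcal{H}_{A,B}$ of isomorphism classes of objects in ${}_A\grbmod_B^{\mathrm{gr-cp}}$.

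Combining these two steps gives precisely the claimed description: objects are the graded polynomial algebras $k[x_1,\ldots,x_n]$, morphisms from $A$ to $B$ are isomorphism classes of graded $A$--$B$-bimodules which are graded-compact-projective as right $B$-modules, and the composition and symmetric monoidal structure descend from the (underived) relative tensor product over the respective polynomial algebras and the tensor product over $k$, respectively. There is essentially no obstacle here beyond the bookkeeping of tracing $\tau_1 \circ \iota_1$ through the explicit presentation of $\MorPoly$; the only mild subtlety is to keep in mind \cref{warn:nosub}: $h_1 \MorPoly$ only records isomorphism classes of bimodules, not equivalence classes under Morita equivalence between algebras.
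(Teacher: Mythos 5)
Your unpacking is essentially correct and matches what the paper is implicitly doing (the statement is stated as an \emph{observation}, with no explicit proof given): trace $h_1 = \tau_1 \circ \iota_1$ through the explicit presentation of $\MorPoly$ furnished by \cref{cor:finally-our-morita-categories}.\eqref{item-cor:finally-1}, noting that $\iota_1$ acts homwise by taking maximal subgroupoids of the $1$-categories ${}_A\grbmod_B^{\mathrm{gr-cp}}$ and $\tau_1$ then takes $\pi_0$ of those groupoids (yielding isomorphism classes of bimodules), with composition and tensor inherited from the (underived) relative tensor product and $\otimes_k$. One small correction: the reference to \cref{exm:how-tau-works}.\eqref{item:how-tau-works-ngeqk} is misplaced for the $\iota_1$ step — that example describes $\tau_n$, not $\iota_n$; the fact that $\iota_1$ passes homwise to maximal subgroupoids is just the definition $\iota_1 = \Cat[\iota_0]$ from the inductive setup. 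The rest of the argument, including the symmetric-monoidality of $h_1$ and the remark keyed to \cref{warn:nosub}, is accurate.
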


\begin{prop}
The ordinary monoidal $1$-category $h_1\BSbimp$ from \cref{def:oldhoneBSbim} admits a faithful monoidal functor 
\[ h_1\BSbimp \to h_1 \MorPoly. 
\]
\end{prop}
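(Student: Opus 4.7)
The plan is to construct this functor essentially by hand, using that Bott--Samelson bimodules are honest graded bimodules over polynomial algebras. On objects, $n \in \N_0$ is sent to $R_n = k[x_1,\ldots,x_n]$. On hom-sets with $n \neq m$, which are singletons consisting only of the formal element $0$, the unique element is sent to the isomorphism class of the zero $(R_n, R_m)$-bimodule. For $n = m$, I use the inclusion $\BSbimcl_n \hookrightarrow {}_{R_n}\mathrm{grbmod}_{R_n}$ at the level of underlying $k$-linear categories. The key feature, recorded in \cref{rem:projective}, is that every Bott--Samelson bimodule is a finite direct sum of grading shifts of $R_n$ as both a left and right $R_n$-module; in particular, it is graded-compact-projective as a right $R_n$-module and therefore defines an object of the hom-category of $h_1\MorPoly$ per \cref{obs:h1Morita}. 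Passing to isomorphism classes yields the required map on hom-sets.

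Functoriality is essentially definitional: composition on both sides is the relative tensor product over $R_n$, and $\BSbimcl_n$ is by construction closed under $\otimes_{R_n}$ with its standard bimodule structure. To promote the construction to a monoidal functor I appeal to \cref{def:parind}: parabolic induction of $B \in \BSbimcl_n$ and $B' \in \BSbimcl_m$ is by definition the $k$-tensor product $B \otimes_k B'$ viewed as an $R_n \otimes_k R_m$-bimodule and transported along the canonical algebra isomorphism $R_n \otimes_k R_m \simeq R_{n+m}$. This matches the monoidal product of $h_1\MorPoly$ on the nose, so the coherence isomorphisms of the monoidal functor may be taken to be the identity modulo this canonical identification, and the hexagon and triangle axioms reduce to standard associativity and unitality of $\otimes_k$ for polynomial algebras.

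Faithfulness amounts to injectivity of $h_0\BSbimcl_n \hookrightarrow h_0\bigl({}_{R_n}\mathrm{grbmod}_{R_n}^{\mathrm{gr-cp}}\bigr)$, which holds because by \cref{def:BSbimn} the category $\BSbimcl_n$ is the degree-zero subcategory of a \emph{full} graded subcategory of graded $R_n$-bimodules: any degree-zero bimodule isomorphism between two Bott--Samelson bimodules is already a morphism in $\BSbimcl_n$. Between distinct objects $n \neq m$, faithfulness is automatic since the source hom-set is a singleton. I do not anticipate a significant obstacle here; the only conceptual input is the graded-compact-projectivity of Bott--Samelson bimodules, and the rest is bookkeeping.
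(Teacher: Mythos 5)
Your proposal is correct and follows essentially the same route as the paper's own proof: send $n$ to $R_n$, use the full inclusion of Bott--Samelson bimodules into graded bimodules together with \cref{rem:projective} to see that the image consists of graded-compact-projective bimodules, and observe that composition ($\otimes_{R_n}$) and the monoidal structure ($\otimes_k$ via parabolic induction) are both computed identically in $h_1\BSbimp$ and $h_1\MorPoly$. The paper's proof is terser (it does not spell out the $n\neq m$ hom-sets or the fullness argument for injectivity), but the underlying construction and the invoked facts coincide with yours.
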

\begin{proof}
Recall from \cref{def:oldhoneBSbim} that $h_1\BSbimp$ has objects $n \in \N_0$ and endo-hom-sets defined as the subset $h_0 \BSbimp_n \subseteq h_0({}_{R_n} \mathrm{grbmod}_{R_n})$ of isomorphism classes of graded bimodules for the graded polynomial algebra $R_n = k[x_1, \ldots, x_n]$ with $x_i$ in degree $2$ on the Bott-Samelson bimodules. The desired functor to $h_1\MorPoly$ follows immediately from this description: 
It sends an object $n \in \N_0$ of $h_1\BSbimp$ to the graded polynomial algebra $R_n= k[x_1,\ldots, x_n]$ and is defined on hom-sets as the full inclusion $h_0 \BSbimp_n \hookrightarrow h_0({}_{R_n}\mathrm{grbmod}^{\mathrm{gr-cp}}_{R_n}).$ Since  Bott-Samelson bimodules are
graded-compact-projective as right (and left) modules by \cref{rem:projective}, and since the composition and monoidal structure in $h_1 \MorPoly$ are defined by the relative (underived) tensor product and the tensor product $\otimes_k$, this indeed defines a faithful monoidal functor. \end{proof}

The following corollary justifies the notation $h_1 \BSbimp$ from \S\ref{sec:2}. 
\begin{corollary}
\label{cor:uniqueBSBim}
There exists a unique monoidal $(2,2)$-category  \[\BSbimp \in \Alg_{\EE_1}(\CatInfty{2})\] equipped with a faithful monoidal functor $\BSbimp \to \MorPoly$ which agrees on homotopy categories with the monoidal functor $h_1 \BSbimp \to h_1\MorPoly$ from \cref{obs:h1Morita}.
\end{corollary}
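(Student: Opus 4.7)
The plan is to apply \cref{cor:alg-equivalence-of-over-categories} with $\cO = \EE_1$, $k=2$, $n=1$, and $\cD = \MorPoly$, the latter viewed as an $\EE_1$-monoidal $(\infty,2)$-category via the forgetful functor $\Cat[\addkBZ] \to \CatInfty{2}$ from the enriched to the plain setting. This will yield the equivalence of $\infty$-categories
\[
h_1 \colon \CRoverd{(\Alg_{\EE_1}(\CatInfty{2}))}{0}{\MorPoly} \xrightarrow{\simeq} \CRoverd{(\Alg_{\EE_1}(\Catnk{1}{1}))}{0}{h_1 \MorPoly}
\]
between faithful (i.e. $0$-faithful) monoidal functors of $(\infty,2)$-categories into $\MorPoly$ and faithful monoidal functors of ordinary $1$-categories into $h_1 \MorPoly$. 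The desired $\BSbimp$, together with its faithful monoidal functor to $\MorPoly$, is then obtained as the essentially unique preimage of $h_1 \BSbimp \to h_1 \MorPoly$ under this equivalence. Both existence and the appropriate $\infty$-categorical uniqueness are subsumed by this equivalence of $\infty$-categories: the fiber over a single object is a contractible $\infty$-groupoid.

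Before invoking the cited corollary, two small verifications are in order. First, I would confirm that $\MorPoly$ is in fact a $(2,2)$-category, so that the homotopy $1$-category $h_1 \MorPoly = \tau_1 \iota_1 \MorPoly$ agrees with the ordinary monoidal $1$-category described in \cref{obs:h1Morita}: this is immediate from \cref{cor:finally-our-morita-categories}.\eqref{item-cor:finally-1}, since the hom-categories of $\MorPoly$ are (full subcategories of) ordinary $1$-categories of graded bimodules, so that $\iota_1$ merely restricts to invertible bimodule maps and $\tau_1$ then passes to isomorphism classes of $1$-morphisms, precisely reproducing \cref{obs:h1Morita}. Second, I would observe that the $\BSbimp$ produced by the equivalence is automatically a $(2,2)$-category: since it admits a $0$-faithful functor into a $(2,2)$-category, its hom-$(\infty,1)$-categories fully faithfully embed into ordinary $1$-categories and must therefore themselves be ordinary $1$-categories.

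The main conceptual point, rather than a technical obstacle, lies in the background machinery: the equivalence used here relies on the factorization-system framework of \cref{sec:inf-n-cats}, in particular \cref{thm:inf-n-fancy-pullback}, which is what enables the reduction from $(\infty,2)$-categorical constructions to ordinary $1$-categorical ones. Once this machinery is in hand, the present corollary is a direct application; its content is the promotion of the explicit combinatorial monoidal $1$-category $h_1\BSbimp$ of \cref{def:oldhoneBSbim}, together with its faithful functor into $h_1 \MorPoly$, to a canonical $(\infty,2)$-categorical object sitting faithfully inside $\MorPoly$, thereby endowing the ``Bott--Samelson tower'' with all of its coherently associative monoidal $2$-categorical structure without having to specify any higher data by hand.
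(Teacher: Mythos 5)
Your proposal is correct and matches the paper's proof essentially exactly: both apply \cref{cor:alg-equivalence-of-over-categories} with $\cO = \EE_1$, $k=2$, $n=1$, $\cD = \MorPoly$, take $\BSbimp$ to be the unique preimage of $h_1\BSbimp \to h_1\MorPoly$, and observe that $\BSbimp$ is a $(2,2)$-category since it admits a faithful functor into one. Your additional verifications (that $h_1\MorPoly$ agrees with the description in \cref{obs:h1Morita}, and the remark about forgetting the $\addkBZ$-enrichment before invoking the corollary) are sound and slightly more explicit than what the paper records.
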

\begin{proof} 
Recall \cref{cor:alg-equivalence-of-over-categories}, that for any small monoidal $(\infty,2)$-category $\cD \in \Alg_{\EE_1}(\CatInfty{2})$, taking the homotopy 1-category induces an equivalence \[h_1 \colon \CRoverd{\Alg_{\EE_1}(\CatInfty{2})}{\faithful}{\cD} \to \CRoverd{\Alg_{\EE_1}(\Catnk{1}{1})}{\faithful}{h_1\cD}\] between the full subcategories of $\Alg_{\EE_1}(\CatInfty{2})_{/\cD}$ and $\Alg_{\EE_1}(\Catnk{1}{1})_{/h_1\cD}$ on the faithful functors. 
Applying this to $\cD = \MorPoly$, the monoidal $(2,2)$-category $\BSbimp$ is the unique pre-image of $h_1 \BSbimp \to h_1 \MorPoly$.  Since any $(\infty,2)$-category with a faithful functor to a $(2,2)$-category is again a $(2,2)$-category, it follows that $\BSbimp$ is a $(2,2)$-category. 
\end{proof}

\begin{definition}\label{def:BSBimp} We refer to the monoidal $(2,2)$-category $\BSbimp$ from \cref{cor:uniqueBSBim} as the \emph{Bott-Samelson $(2,2)$-category}.
\end{definition}

\begin{observation} By construction, the isomorphism classes of objects of $\BSbimp$ are in bijection with the natural numbers $n\in \mathbb{N}_0$. For every $n \in \mathbb{N}_0$ we now fix a representing object in $\BSbimp$ in that isomorphism class and simply denote it by $n$.
For $n,m \in \BSbimp$, the hom-category in $\BSbimp$ is given by  $$\eHom_{\BSbimp}(n,m)= \left\{ \begin{array}{lr} 0 & n \neq m\\ \mathrm{BSBim}_n & n=m \end{array} \right. ,$$ where $\BSbimp_n$ is the category from \cref{def:BSbimn}. The monoidal functor $ \BSbimp \to \MorPoly$ sends objects $n$ to the polynomial algebra $R_n$ and is given on hom-categories by the evident full inclusion of $\mathrm{BSBim}_n$ into the category of graded $R_n$--$R_n$-bimodules that are graded-compact-projective as right $R_n$-modules. 
\end{observation}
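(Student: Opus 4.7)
The observation is essentially an unpacking of the construction of $\BSbimp$ in Corollary~\ref{cor:uniqueBSBim}, so the plan is to extract the claimed description from the defining equivalence $h_1\colon \CRoverd{\Alg_{\EE_1}(\CatInfty{2})}{\faithful}{\MorPoly} \xrightarrow{\sim} \CRoverd{\Alg_{\EE_1}(\Catnk{1}{1})}{\faithful}{h_1\MorPoly}$ of Corollary~\ref{cor:alg-equivalence-of-over-categories}, applied to the faithful monoidal functor $h_1\BSbimp \to h_1\MorPoly$ of Definition~\ref{def:oldhoneBSbim}.

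First, I would identify the set of isomorphism classes of objects. By construction, the underlying object in $\Catnk{1}{1}$ of $\BSbimp$ is the unique $(2,2)$-category fitting the pullback square of Theorem~\ref{thm:inf-n-fancy-pullback} over $(h_1\BSbimp \to h_1\MorPoly)$. In particular, the canonical map on maximal subgroupoids factors as $\iota_0\BSbimp \to \iota_0 h_1\BSbimp = \mathbb{N}_0$, and applying $\pi_0$ (i.e.\! $\tau_0\iota_0 = h_0$) identifies $h_0\BSbimp$ with the underlying set of objects of $h_1\BSbimp$, namely $\mathbb{N}_0$. This justifies fixing representatives $n \in \BSbimp$ for each $n \in \mathbb{N}_0$, and pinning down the monoidal functor $\BSbimp \to \MorPoly$ to send $n \mapsto R_n$.

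Second, I would determine the hom-categories. Since $\BSbimp \to \MorPoly$ is faithful, i.e.\! $0$-faithful in the sense of Definition~\ref{def:nsurj-and-nfaith}, unpacking Example~\ref{exa:surj}(4) shows that for every pair of objects $n,m \in \BSbimp$ the induced functor on hom-$(\infty,1)$-categories
\[
\eHom_{\BSbimp}(n,m) \longrightarrow \eHom_{\MorPoly}(R_n, R_m)
\]
is fully faithful. By \cref{ex:Moritaunpacked}, the target is an ordinary $1$-category, hence so is the source; moreover, as a fully faithful functor into an ordinary $1$-category, it is determined up to equivalence by its essential image, equivalently by the subset of $h_0\eHom_{\MorPoly}(R_n,R_m)$ which it hits. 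By \cref{cor:inf-n-pullbackfullyfaithful} (or equivalently, by taking $h_1$ and using the pullback square), this subset is precisely the image of $\Hom_{h_1\BSbimp}(n,m) \to \Hom_{h_1\MorPoly}(R_n,R_m)$, which by Definition~\ref{def:oldhoneBSbim} equals the empty set when $n \neq m$ (forcing the hom-category to be $0$) and $h_0\BSbimp_n \subseteq h_0({}_{R_n}\mathrm{grbmod}^{\mathrm{gr\text{-}cp}}_{R_n})$ when $n=m$.

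Third, I would match the resulting full subcategory with $\BSbimp_n$. By Definition~\ref{def:BSbimn}, $\BSbimp_n$ is defined as the full subcategory of graded $R_n$-bimodules on the shifted Bott--Samelson bimodules (viewed with degree-zero morphisms), and the Bott--Samelson bimodules are graded-compact-projective as right $R_n$-modules by Remark~\ref{rem:projective}. Hence the essential image identified in the previous step is exactly $\BSbimp_n$, so $\eHom_{\BSbimp}(n,n) \simeq \BSbimp_n$ as $1$-categories, and the induced functor on hom-categories is the evident full inclusion into graded $R_n$-bimodules. There are no real obstacles here: each step is a direct application of the factorization-theoretic results of Section~\ref{sec:inf-n-cats} to the defining pullback of $\BSbimp$.
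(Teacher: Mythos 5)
Your unpacking is correct and is essentially the paper's own (implicit ``by construction'') argument: the Observation follows by applying the defining equivalence of \cref{cor:alg-equivalence-of-over-categories} together with faithfulness, exactly as you do, using that $h_1(\BSbimp\to\MorPoly)$ is by construction the given functor $h_1\BSbimp\to h_1\MorPoly$ and that $\Hom_{h_1\BSbimp}(n,m)=h_0\eHom_{\BSbimp}(n,m)$. One cosmetic slip: $\iota_0 h_1\BSbimp$ is not the discrete set $\mathbb{N}_0$ (grading-shift bimodules such as $R_n\langle j\rangle$ give nontrivial automorphisms of each object), but since you immediately pass to $\pi_0$, the conclusion $h_0\BSbimp=\mathbb{N}_0$ is unaffected.
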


\subsection{The monoidal \texorpdfstring{$(2,2)$}{(2,2)}-category of Soergel bimodules}
\label{subsec:definining-SBim}

Having defined $\BSbimp$ together with its inclusion $\BSbimp \to \MorPoly$, we will now define $\SBim$ as the homwise completion of $\BSbimp$ under $\mbbZ$-shifts, direct sums and splitting of idempotents inside of $\MorPoly$.

To formally implement this, we need to construct a factorization system on the presentably symmetric monoidal $\infty$-category $\addkBZ$ of additive $k$-linear $\infty$-categories with a $\mbbZ$-action, equipped with the Day convolution monoidal structure, constructed in \cref{subsubsec:gradingstoactions}.

\begin{definition} We will use the following terminology:
\begin{enumerate}
\item A functor $F\colon \cC \to \cD$ between idempotent-complete $\infty$-categories is called \emph{dominant}  if every object in $\cD$ is a retract of an object in the image of $F$. 
\item A morphism in $\addkBZ$ is \emph{dominant}, resp. \emph{fully faithful}, if its underlying functor is. 
\end{enumerate}
\end{definition}

\begin{prop}
\label{prop:fs-addkbz} 
The (dominant, fully faithful)-functors define a factorization system on $\addkBZ$ which is of small generation and compatible with the symmetric monoidal structure.
\end{prop}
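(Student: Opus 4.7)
The plan is to reduce the proposition to known factorization-system machinery in three steps, descending the structure from $\add$ through $\add_k$ and finally to $\addkBZ$. First, I would establish the (dominant, fully faithful) factorization system directly on $\add$: given $F \colon \cC \to \cD$ in $\add$, define $\Fact(F) \subseteq \cD$ to be the full additive idempotent-complete subcategory whose objects are retracts of objects in the essential image of $F$. Since $F$ is additive, its essential image is already closed under finite coproducts, so idempotent-completing gives an additive idempotent-complete subcategory. The resulting factorization $\cC \to \Fact(F) \to \cD$ manifestly consists of a dominant functor followed by a fully faithful one. Orthogonality proceeds by the standard argument: a lifting problem against a fully faithful $G$ is controlled up to contractible choice by values on objects in the image of the dominant map, and these extend uniquely to retracts because $G$ is fully faithful (and thus preserves and reflects retract data).

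Second, I would lift this factorization system to $\add_k = \Mod_{\CProj_k}(\add)$ via \cref{thm:fs-and-alg} applied to the $\infty$-operad controlling $\CProj_k$-module structures, and then to $\addkBZ = \Fun(B\mbbZ, \add_k)$ via the pointwise factorization system on functor categories. The key compatibility lemma is that a morphism in $\addkBZ$ is dominant (resp.\! fully faithful) if and only if its underlying morphism of $\infty$-categories is — for fully faithfulness this is immediate, and for dominance it uses that the $\mbbZ$-action on $\cD$ acts by equivalences, so retracts from a $\mbbZ$-equivariant image can be chosen $\mbbZ$-equivariantly (after summing over shifts if needed, but since $\cC$ is already stable under shifts this is automatic).

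Third, I would verify the two extra properties. For small generation, I would apply the criterion of \cref{prop:fact-system-of-small-generation-on-presentable}: fully faithfulness is detected on hom-objects, which commute with filtered colimits, so the right class is stable under filtered colimits in $\Arrow(\addkBZ)$ and hence the factorization system is accessible; finding an explicit small set of generators then proceeds by a standard reflection argument using presentability of $\addkBZ$. For compatibility with the Day convolution symmetric monoidal structure, it suffices to check that the left class is closed under the tensor product $- \otimes -$ on $\addkBZ$. If $F \colon \cC_1 \to \cC_2$ and $G \colon \cD_1 \to \cD_2$ are dominant, then every object of $\cC_2 \otimes \cD_2$ is a retract of an iterated finite colimit of external tensors $c_2 \boxtimes d_2$ (by the universal property of the tensor product in $\addkBZ$); each such $c_2 \boxtimes d_2$ is in turn a retract of $F(c_1) \boxtimes G(d_1) = (F \otimes G)(c_1 \boxtimes d_1)$ for suitable $c_1, d_1$, so $F \otimes G$ is dominant.

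The main obstacle will be the small-generation claim, since unlike in the analogous situation on $\cat$ (where generators can be read off from categorical cells) we must identify a small set of morphisms in $\addkBZ$ whose right orthogonal is precisely the class of fully faithful $k$-linear $\mbbZ$-equivariant functors. I expect this to be handled abstractly via the accessibility argument sketched above rather than by an explicit list of generators; once accessibility is in hand, standard presentable-$\infty$-category techniques promote it to small generation.
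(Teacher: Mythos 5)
Your construction of the (dominant, fully faithful) factorization itself is essentially sound — the retract closure of the essential image of an additive functor is an additive idempotent-complete full subcategory, and fully faithful functors between idempotent-complete categories have retract-closed essential image, so the orthogonality argument can be made to work — and your final Day-convolution check (dominant $\otimes$ dominant is dominant, using that the tensor product is generated under finite coproducts, shifts and retracts by external tensors) is fine. But the proposal has a genuine gap at the small-generation claim, and this gap propagates backwards through the lifting steps you intend to use. \cref{prop:fact-system-of-small-generation-on-presentable} only goes one way: it produces a factorization system from a prescribed small set $S$, with $\cR = S^{\perp}$ and $\cL$ the saturation of $S$. It is not a recognition criterion, and "the right class is closed under filtered colimits in the arrow category, hence the system is accessible, hence of small generation by a standard reflection argument" is not a result available in the paper (nor an obviously standard one); to conclude small generation you must actually exhibit a small set of dominant morphisms whose right orthogonal is precisely the class of fully faithful $k$-linear $\mbbZ$-equivariant functors, or inherit such a set from a previous stage. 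Moreover, the very lemmas you would need to climb from $\add$ to $\add_k$ to $\addkBZ$ take small generation as a hypothesis: \cref{lem:fs-and-monadic} requires the factorization system on the base to be of small generation, and the generator clauses of \cref{lem:fs-and-functor-cat} and \cref{thm:fs-and-alg} do as well. So without generators on $\add$ your chain does not get off the ground.

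Two further points. First, \cref{thm:fs-and-alg} does not apply to $\add_k = \Mod_{\CProj_k}(\add)$ as you propose: that theorem concerns $\infty$-categories of algebras $\Alg_{\cA/\cO}(\cC)$ over an $\infty$-operad, and modules over a \emph{fixed} commutative algebra are not of that form (one would have to pass through $\mathrm{LM}$-algebras and restrict to a fiber, which requires an extra argument you do not give). The correct tool here is \cref{lem:fs-and-monadic} applied to the free-module monad $\CProj_k \otimes -$, which is exactly what the paper does — but again only after small generation is in hand. Second, it is worth seeing how the paper sidesteps the generator problem entirely: rather than building (dominant, fully faithful) by hand on $\add$, it starts from the (surjective-on-objects, fully faithful) system on $\cat$, where generators are known, restricts it to $\cat^{\sqcup}$ via \cref{obs:restricted-fs-on-subcat} and \cref{lem:restricted-fs-on-subcat}, and then pushes it through the reflective localization $(-)^{\mathrm{idem}}\colon \cat^{\sqcup} \to \catprod$ using \cref{lem:fs-on-reflective-loc}; the dominant class appears as the image of the surjective-on-objects class, and small generation and monoidal compatibility are carried along automatically at every stage, including the subsequent monadic and functor-category steps. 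If you want to keep your direct construction on $\add$, you must supplement it with an identification of the left class as the saturation (in the sense of \cref{def:saturated}) of an explicit small set — for instance images under localization/linearization of the generators upstairs — before any of the lifting machinery can be invoked.
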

\begin{proof}
Consider the sequence of morphisms in $\CAlg(\PrL)$ 
$$\cat \to \cat^{\sqcup} \to \catprod \to \Mod_{\CProj_k}(\catprod) \simeq  \add_k  $$
left adjoint to the respective forgetful functors (see \cref{prop:adjoiningcolims} for the first two functors, and \cref{obs:equivalentklinear} for the latter one).
We will successively lift the (surjective-on-objects, fully faithful)-factorization system on $\cat$ to $\add_k$. 

Step 1: We first show that the (surjective-on-objects, fully faithful)-functors define a factorization system on $\cat^{\sqcup}$ which is of small generation and compatible with the symmetric monoidal structure.
Given a morphism $F\colon \cC \to \cD$ in $\cat^{\sqcup}$, i.e. a functor between $\infty$-categories with finite coproducts that preserves finite coproducts, we consider its (surjective-on-objects, fully faithful) factorization in $\cat$
$$
        \begin{tikzcd}
            \cC \ar[rr, "F"] \ar[rd, "\widetilde{F}"] & & \cD\\ 
            & \mathrm{Im}(F) \ar[ur, hook, "\iota"] & 
        \end{tikzcd},
$$    
where $\widetilde{F}$ is surjective on objects and $\iota$ is fully faithful.
Using \cref{obs:restricted-fs-on-subcat} and \cref{lem:restricted-fs-on-subcat} we can deduce that the factorization
system on $\cat$ restricts to one on $\cat^{\sqcup}$ which is of small generation and compatible with the symmetric monoidal structure, provided we can show that
$\mathrm{Im}(F)$ admits finite coproducts and that $\widetilde{F}$ and $\iota$
preserve them. In fact, since $\iota$ is fully faithful, it is enough to prove
that $\mathrm{Im}(F)$ is closed under finite coproducts in $\cD$. To this end, let $S$
be a finite set and $I\colon S \to \mathrm{Im}(F)$ a diagram. Since
$\widetilde{F}\colon \cC \to \mathrm{Im}(F)$ is surjective on objects, we can
lift $I$ to a functor $\widetilde{I} \colon S \to \cC$ which has a colimit
$\colim~\widetilde{I}\in \cC$ by assumption. Since $F$ preserves coproducts and
using the factorization, the coproduct $\colim~\iota \circ I$ agrees with
$F(\colim~\widetilde{I})$ and hence is in the image of $F$, as required.

Step 2: To lift from $\cat^{\sqcup}$ to $\catprod$, observe that the symmetric monoidal left adjoint $(-)^{\mathrm{idem}}:\cat^{\sqcup} \to \catprod$ is a reflective localization, i.e. that the right adjoint is fully faithful. We observe the following: \begin{enumerate}
\item A morphism in $\catprod$ is of the form $(F)^{\mathrm{idem}}$ for a fully faithful morphism $F$ in $\cat^{\sqcup}$ if and only if it is fully faithful. 
\item A morphism in $\catprod$ is of the form $(F)^{\mathrm{idem}}$ for a surjective-on-objects morphism $F$ in $\cat^{\sqcup}$ if and only if it is dominant. 
\end{enumerate}
Since the class of dominant functors is stable under retracts in $\catprod$, it therefore follows from \cref{lem:fs-on-reflective-loc} that the (surjective-on-objects, fully faithful)-factorization system on $\cat^{\sqcup}$ induces the (dominant, fully faithful)-factorization system on $\catprod$, and that this factorization system is of small generation and compatible with the monoidal structure on $\catprod$.

Step 3:
Since 
\[
\begin{tikzcd}[column sep=1.5cm]
\catprod
\arrow[yshift=0.9ex]{r}{}
\arrow[leftarrow, yshift=-0.9ex]{r}[yshift=-0.2ex]{\bot}[swap]{}
&
\Mod_{\CProj_k}(\catprod) \simeq \add_k
\end{tikzcd}
\]
is a monadic adjunction, whose underlying monad $\CProj_k \otimes - \colon \catprod \to \catprod$  preserves colimits (and in particular geometric realizations), and preserves dominant functors (since $\otimes$ is compatible with the (dominant, fully faithful)-factorization system on $\catprod$, as follows from Step 2), it follows from\cref{lem:fs-and-monadic} that the (dominant, fully faithful) functors form a factorization system on $\add_k$ which is of small generation and compatible with its symmetric monoidal structure.

Step 4: Lastly, by \cref{lem:fs-and-functor-cat}, the (dominant, fully faithful)-factorization system induces one on $\Fun(B\mbbZ, \add_k)$ which is of small generation and compatible with the Day convolution symmetric monoidal structure.
\end{proof}

\begin{notation}\label{nota:surjective-and-dominant} We will use the following terminology:
\begin{enumerate}
\item A morphism $F\colon \cC \to \cD$ in $\Cat[\addkBZ]$ is called \emph{faithful} if its underlying $(\infty,2)$-functor is (see \cref{def:faithful}). It is called \emph{surjective-on-objects-and-dominant-on-1-morphisms} if it is surjective on objects and if for each $c, c'\in \cC$, the induced additive functor $\eHom_{\cC}(c,c') \to \eHom_{\cC}(Fc,Fc')$ is dominant. 
\item  A morphism in $\Alg_{\EE_1}(\Cat[\addkBZ])$ is called \emph{faithful} or \emph{surjective-on-objects-and-dominant-on-1-morphisms} if the underlying morphism in $\Cat[\addkBZ]$ is.
\end{enumerate}
\end{notation}

Applying the factorization system from \cref{prop:fs-addkbz} homwise leads to the following corollary.

\begin{cor}
    \label{cor:surj-dominant-faithful-fs-on-Cat-add}
    The (surjective-on-objects-and-dominant-on-1-morphisms, faithful)-functors define factorization systems on $\Cat[\addkBZ]$ and $\Alg_{\EE_1}(\Cat[\addkBZ])$, respectively, which are of small generation and compatible with the symmetric monoidal structure.
\end{cor}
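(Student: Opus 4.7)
The plan is to mirror the strategy used to prove \cref{thm:nsurj-nfaithful-fs-for-infty-k-cats}: start from the factorization system on $\addkBZ$ established in \cref{prop:fs-addkbz}, transfer it homwise to the $\infty$-category of enriched $\infty$-categories using \cref{thm:fs_and_enriched_cat}, and then transfer it further to algebras over the operad $\EE_1$ using \cref{thm:fs-and-alg}.

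\textbf{Step 1: Transfer to $\Cat[\addkBZ]$.} By \cref{prop:fs-addkbz}, the presentably symmetric monoidal $\infty$-category $\addkBZ$ admits a (dominant, fully faithful) factorization system of small generation that is compatible with the Day convolution symmetric monoidal structure. Applying \cref{thm:fs_and_enriched_cat} with $\VV = \addkBZ$ produces a factorization system on $\Cat[\addkBZ]$, of small generation and compatible with the induced symmetric monoidal structure. Inspection of the construction of \cref{thm:fs_and_enriched_cat} shows that the left class consists of the $\addkBZ$-enriched functors that are surjective on objects and whose induced morphisms on hom-objects lie in the left class of the factorization system on $\addkBZ$, and the right class consists of those enriched functors that are homwise in the right class of $\addkBZ$. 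Unpacking these classes via \cref{nota:surjective-and-dominant} gives precisely the (surjective-on-objects-and-dominant-on-1-morphisms, faithful) pair on $\Cat[\addkBZ]$.

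\textbf{Step 2: Transfer to $\Alg_{\EE_1}(\Cat[\addkBZ])$.} With the factorization system on $\Cat[\addkBZ]$ in hand, together with its compatibility with the symmetric monoidal structure and small generation, we invoke \cref{thm:fs-and-alg} for the $\infty$-operad $\cO = \EE_1$. This produces a factorization system of small generation on $\Alg_{\EE_1}(\Cat[\addkBZ])$ whose left and right classes are created by the forgetful functor $\Alg_{\EE_1}(\Cat[\addkBZ]) \to \Cat[\addkBZ]$. In particular, a morphism in $\Alg_{\EE_1}(\Cat[\addkBZ])$ is in the left (respectively right) class precisely when its underlying morphism in $\Cat[\addkBZ]$ is surjective-on-objects-and-dominant-on-1-morphisms (respectively faithful), in accordance with \cref{nota:surjective-and-dominant}.

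\textbf{Main obstacle.} Both steps above are formal consequences of the general machinery developed in \cref{app:f-s-for-enriched-cats}; the only substantive verification is that the hypotheses of \cref{thm:fs_and_enriched_cat} and \cref{thm:fs-and-alg} are met by $\addkBZ$ and by $\Cat[\addkBZ]$, respectively. For \cref{thm:fs_and_enriched_cat}, this amounts to checking presentability, small generation, and compatibility of the factorization system with Day convolution, all of which are provided by \cref{prop:fs-addkbz}. For \cref{thm:fs-and-alg}, these properties are preserved under the transfer of Step~1 by the statement of \cref{thm:fs_and_enriched_cat} itself, so no additional work is needed. The main conceptual point is the identification of the two classes; once one observes that the generators from \cref{thm:fs_and_enriched_cat} are precisely those detecting surjectivity on objects and dominance on hom-objects, the characterization of the classes is immediate and the result follows.
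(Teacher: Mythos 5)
Your proposal is correct and follows essentially the same route as the paper, which proves the corollary by citing \cref{thm:fs_and_enriched_cat} followed by \cref{thm:fs-and-alg} applied to the factorization system of \cref{prop:fs-addkbz}. One small technical note: to obtain the symmetric monoidal compatibility on $\Alg_{\EE_1}(\Cat[\addkBZ])$ you should invoke \cref{thm:fs-and-alg} with $\cO = \EE_\infty$ and $\cA = \EE_1$ (so that part \eqref{item:fs-and-alg-symmetric-monoidal-compatibility} applies), rather than taking $\cO = \EE_1$ as you wrote, but this does not affect the substance of the argument.
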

\begin{proof} This follows immediately from applying \cref{thm:fs_and_enriched_cat} and then \cref{thm:fs-and-alg}  to the factorization system of \cref{prop:fs-addkbz}.
\end{proof}

The forgetful functor $\add_k \to \cat$ has a symmetric monoidal left adjoint `linearization functor' 
\[\mathrm{Lin}_k\colon \cat \to \add_k,\]
which is the composite of symmetric monoidal left adjoints \begin{equation}
\label{eq:freeadd}
\cat  \xrightarrow{(-)^{\sqcup, \idem}}\catprod  \xrightarrow{\CProj_k \otimes -} \Mod_{\CProj_k}(\catprod) \simeq \add_k,
\end{equation}
where $(-)^{\sqcup, \idem}$ freely adjoints finite coproducts and splittings of idempotents (see \cref{prop:adjoiningcolims}) and $\CProj_k\otimes -$ constructs free $\CProj_k$-modules (see \cref{prop:algprl-new}). 
This induces a symmetric monoidal left adjoint of the forgetful functor $\addkBZ \to \catZ$
\[\Fun(B\mbbZ, \mathrm{Lin}_k(-))\colon \catZ \to \addkBZ,\]
which we will also denote by $\mathrm{Lin}_k(-) \colon  \catZ \to \addkBZ$.
Unpacking \cref{obs:adjoint-of-unit}, the forgetful functor $\cat^{B \mbbZ} \to \cat$ (i.e. the functor $\ev_*\colon \Fun(B\mbbZ, \cat) \to \cat$) has a  left adjoint 
\[ - \times \mbbZ: \cat \to \cat^{B\mbbZ}
\]
which sends an $\infty$-category $\cC$ to the $\infty$-category $\cC \times \mbbZ$ with free $\mbbZ$-action, and which is symmetric monoidal with respect to the Day convolution structure on $\cat^{B\mbbZ}$.

Combining these left adjoints, we obtain a symmetric monoidal left adjoint 
\begin{equation}
    \label{eqn:linkz}
\mathrm{Lin}_k(- \times \mbbZ): \cat \to \addkBZ.
\end{equation}

Applying $\Lin_k(-\times \mbbZ)$ homwise, this induces by \cref{subsec:enriched-infty-cats} and \cref{subsubsec:adjunctions-of-cO-algebras} symmetric monoidal left adjoints 
\begin{equation}
\label{eq:LinkZloc}
\Lin_{k, \mathrm{loc}}^{\mbbZ}\coloneqq \Cat[\mathrm{Lin}_k(-\times \mbbZ)]\colon \CatInfty{2} = \Cat[\cat] \to \Cat[\addkBZ]\end{equation} and (abusing notation)
\begin{equation}\label{eq:LinkZlocAlg}\Lin_{k, \mathrm{loc}}^{\mbbZ}   \coloneqq \Alg_{\EE_1}(\Cat[\mathrm{Lin}_k(-\times \mbbZ)])\colon \Alg_{\EE_1}(\CatInfty{2}) \to \Alg_{\EE_1}(\Cat[\addkBZ])\end{equation}
of the respective forgetful functors. 

\begin{prop}The functor $\BSbimp \to \MorPoly$ of monoidal $(\infty,2)$-categories from \cref{cor:uniqueBSBim} factors through a monoidal $\addkBZ$-enriched functor \[\addkZloc{\BSbimp} \to \MorPoly.\]
\end{prop}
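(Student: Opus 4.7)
The plan is to obtain the desired factorization as a direct consequence of the adjunction \eqref{eq:LinkZlocAlg}. Recall that by construction, $\Lin_{k,\loc}^{\mbbZ}$ is the monoidal enhancement of the homwise application of the symmetric monoidal left adjoint $\Lin_k(-\times\mbbZ) \colon \cat \to \addkBZ$ to the forgetful functor. Applying \cref{subsubsec:adjunctions-of-cO-algebras} to this adjunction between the base monoidal $\infty$-categories $\cat$ and $\addkBZ$ yields an adjunction
\[
\Lin_{k,\loc}^{\mbbZ} \colon \Alg_{\EE_1}(\CatInfty{2}) \rightleftarrows \Alg_{\EE_1}(\Cat[\addkBZ]) \colon U,
\]
where $U$ is the forgetful functor obtained by discarding the local $k$-linear structure and $\mbbZ$-action.

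To conclude, I would observe that the monoidal $(\infty,2)$-functor $\BSbimp \to \MorPoly$ from \cref{cor:uniqueBSBim} is, by construction, a morphism in $\Alg_{\EE_1}(\CatInfty{2})$ whose target is naturally the underlying monoidal $(\infty,2)$-category $U(\MorPoly)$ of $\MorPoly \in \CAlg(\Cat[\addkBZ])$ established in \cref{ex:Moritaunpacked}. Applying the above adjunction, this morphism corresponds via the unit $\BSbimp \to U\Lin_{k,\loc}^{\mbbZ}(\BSbimp)$ to a morphism
\[
\addkZloc{\BSbimp} = \Lin_{k,\loc}^{\mbbZ}(\BSbimp) \longrightarrow \MorPoly
\]
in $\Alg_{\EE_1}(\Cat[\addkBZ])$, which is the required monoidal $\addkBZ$-enriched functor. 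No obstacle is expected here; the proposition is essentially a formal consequence of setting up the correct universal property of $\addkZloc{-}$ and recognizing that $\MorPoly$ carries the structure needed to absorb the $k$-linearization and local $\mbbZ$-action of $\BSbimp$.
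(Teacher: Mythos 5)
Your proposal matches the paper's proof exactly: both deduce the factorization immediately from the adjunction~\eqref{eq:LinkZlocAlg} together with the observation that $\MorPoly$ already lives in $\Alg_{\EE_1}(\Cat[\addkBZ])$. The paper states this in a single sentence; you have simply spelled out the adjunction transpose.
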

\begin{proof}Since $\MorPoly$ is an object in  $\Alg_{\EE_1} (\Cat[\addkBZ])$, the statement immediately follows from the adjunction~\eqref{eq:LinkZlocAlg}.\end{proof}

Explicitly, $\Lin_{k, \mathrm{loc}}^{\mbbZ} (\BSbim)$ has objects natural numbers and additive $k$-linear hom-categories with $\mathbb{Z}$-action given by the $k$-linearization $\Lin_k(\BSBim_n \times \mathbb{Z})$ of the 1-category $\BSBim_n \times \mathbb{Z}$ with free $\mathbb{Z}$-action. 

\begin{definition} 
\label{def:SBim}
Define the monoidal $k$-linear (2,2)-category with local shifts $\SBim \in \Alg_{\EE_1}(\Cat[\addkBZ])$, the \emph{Soergel $(2,2)$-category}, as the unique factorization
\[\begin{tikzcd}
\addkZloc{\BSbimp} \arrow[dr, "\substack{\mathrm{surjective-on-objects}\\\mathrm{-and-dominant-on-1-morphisms}}"'] \arrow[rr] && \MorPoly\\
&\SBim \arrow[ur, "\mathrm{faithful}"']
\end{tikzcd}
\] of $\addkZloc{\BSbimp} \to \MorPoly$ with respect to the (surjective-on-objects-and-dominant-on-morphisms, faithful)-factorization system on $\Alg_{\EE_1}(\Cat[\addkBZ])$. 

We denote the composite monoidal shift-preserving functor $\BSbimp \to \addkZloc{\BSbimp} \to \SBim$ by 
\begin{equation}
\label{eq:fromBSBimtoSBim}
\iota\colon \BSbimp \to \SBim.
\end{equation}
\end{definition} 

Composing with the monoidal $(\infty,2)$-functor $\BSBim \to\addkZloc{\BSbimp}$ (i.e. the unit of the adjunction~\eqref{eq:LinkZlocAlg}), we can summarize the categories and functors defined so far thus:
\begin{cor}\label{cor:SBim-to-Mor-faithful}
The above defined functors assemble into a commuting diagram of monoidal $(2,2)$-functors
\[
\begin{tikzcd}
\BSbimp \arrow[dr, "\iota"'] \arrow[rr] && \MorPoly\\
&\SBim \arrow[ur]
\end{tikzcd},
\]
where the top and right diagonal functors are faithful and the right diagonal functor is shift-preserving and $k$-linear.\end{cor}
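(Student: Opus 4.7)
The plan is to assemble the diagram directly from the definitions of $\iota$ and $\SBim$, and then read off faithfulness from the factorization system on $\Alg_{\EE_1}(\Cat[\addkBZ])$ established in \cref{cor:surj-dominant-faithful-fs-on-Cat-add}.

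First I would recall that by \cref{cor:uniqueBSBim} the functor $\BSbimp \to \MorPoly$ lives in $\Alg_{\EE_1}(\CatInfty{2})$ and is faithful by construction. Applying the adjunction~\eqref{eq:LinkZlocAlg}, this corresponds to a morphism $\addkZloc{\BSbimp} \to \MorPoly$ in $\Alg_{\EE_1}(\Cat[\addkBZ])$, and by the unit-counit relation its precomposition with the unit $\BSbimp \to \addkZloc{\BSbimp}$ recovers the original $\BSbimp \to \MorPoly$ (after forgetting structure). By \cref{def:SBim}, the morphism $\addkZloc{\BSbimp} \to \MorPoly$ in $\Alg_{\EE_1}(\Cat[\addkBZ])$ admits a (unique) factorization as a surjective-on-objects-and-dominant-on-$1$-morphisms functor $\addkZloc{\BSbimp} \to \SBim$ followed by a faithful functor $\SBim \to \MorPoly$. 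Composing the first of these with the unit gives the functor $\iota\colon \BSbimp \to \SBim$ of~\eqref{eq:fromBSBimtoSBim}, so the outer triangle commutes in $\Alg_{\EE_1}(\Cat[\addkBZ])$, and \emph{a fortiori} in $\Alg_{\EE_1}(\CatInfty{2})$ after applying the forgetful functor.

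Next I would read off the claimed properties. The functor $\SBim \to \MorPoly$ is a morphism in $\Alg_{\EE_1}(\Cat[\addkBZ])$ by construction, and hence is a monoidal $k$-linear shift-preserving $(2,2)$-functor. Its faithfulness (in the sense of \cref{nota:surjective-and-dominant}) is immediate from the fact that it is the right factor in the factorization system of \cref{cor:surj-dominant-faithful-fs-on-Cat-add}; since faithfulness in $\Alg_{\EE_1}(\Cat[\addkBZ])$ means faithfulness of the underlying functor in $\CatInfty{2}$, the diagonal functor $\SBim \to \MorPoly$ is faithful as a monoidal $(2,2)$-functor. Faithfulness of the top functor $\BSbimp \to \MorPoly$ is \cref{cor:uniqueBSBim}.

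There is essentially no obstacle here beyond careful bookkeeping: the only thing one must check is that forgetting structure commutes with the various factorizations in a way compatible with the adjunction $\Lin_{k,\loc}^\mbbZ \dashv \mathrm{forget}$. This follows formally because both the $(n\text{-surj},n\text{-faith})$ factorization system of \cref{thm:nsurj-nfaithful-fs-for-infty-k-cats} on $\CatInfty{2}$ and the (surjective-on-objects-and-dominant-on-$1$-morphisms, faithful)-factorization system of \cref{cor:surj-dominant-faithful-fs-on-Cat-add} on $\Alg_{\EE_1}(\Cat[\addkBZ])$ share the same right class at the level of underlying $(\infty,2)$-functors, namely the faithful ones; so the forgetful functor sends the factorization in the enriched setting to a factorization (through a faithful functor) in $\Alg_{\EE_1}(\CatInfty{2})$, and the commutativity of the outer triangle is witnessed there.
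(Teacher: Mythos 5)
Your proposal is correct and follows essentially the same route as the paper, which states this corollary without separate proof precisely because it is an assembly of \cref{cor:uniqueBSBim}, the adjunction~\eqref{eq:LinkZlocAlg}, and the factorization defining $\SBim$ in \cref{def:SBim} (with faithfulness of the right diagonal coming from the right class of the factorization system of \cref{cor:surj-dominant-faithful-fs-on-Cat-add}, which by \cref{nota:surjective-and-dominant} is defined via the underlying $(\infty,2)$-functor). The only nit is that the outer triangle itself lives in $\Alg_{\EE_1}(\CatInfty{2})$ rather than in $\Alg_{\EE_1}(\Cat[\addkBZ])$, since $\BSbimp$ is not an object of the latter — but you address this by passing through the forgetful functor, as the paper does implicitly.
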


\begin{remark}
Intuitively, $\SBim$ is the smallest locally $k$-linear additive and idempotent-complete `sub'-$(\infty,2)$-category (mind \cref{warn:nosub}) of $\MorPoly$ that is closed under the homwise $\mbbZ$-action and contains $\BSbimp \to \MorPoly$. 
Indeed, it follows from the definition that  $\SBim$ together with its faithful monoidal $\addkBZ$-enriched functor $\SBim \to \MorPoly$ is  initial amongst factorizations of the monoidal $(\infty,2)$-functor $\BSbimp \to \MorPoly$ through faithful monoidal $\addkBZ$-enriched functors.
More formally, it is the initial object of the pullback of the following span of $\infty$-categories:
\[
\begin{tikzcd}[column sep=-2em]
&  \left( \Alg_{\EE_1} \left( \CatInfty{2}\right)_{/\MorPoly} \right)_{\left( \BSbimp \to \MorPoly\right)/} \arrow[d]\\
\CRoverd{\Alg_{\EE_1} \left(\Cat[\addkBZ]\right)}{f}{\MorPoly}\arrow[r] 
&  \Alg_{\EE_1}\left( \CatInfty{2}\right)_{/\MorPoly}
\end{tikzcd}
\]
\end{remark}

\subsection{The Soergel \texorpdfstring{$(2,2)$}{(2,2)}-category agrees with its classical variant}\label{subsec:checking-SBim}
We now explain how the monoidal $(2,2)$-category $\SBim$ is indeed just a homwise additive and idempotent-completion of $\BSbim$. We first record the following useful observation about the adjunction 
\[
\begin{tikzcd}[column sep=1.5cm]
\cat
\arrow[yshift=0.9ex]{r}{\Lin_k(- \times \mbbZ)}
\arrow[leftarrow, yshift=-0.9ex]{r}[yshift=-0.2ex]{\bot}[swap]{\mathrm{forget}}
&
\addkBZ
\end{tikzcd}.
\]
\begin{lemma}\label{lem:concrete-dominance}
Let $F\colon \cC \to \cD$ be a functor of $(\infty,1)$-categories, where $\cC\in \cat $ and $\cD \in \addkBZ$. Then the following  properties of $F$ are equivalent:
\begin{enumerate}
\item Its adjunct $ \Lin_k(\cC \times \mbbZ) \to \cD$ is dominant.
\item Every object of $\cD$ is a retract of a finite coproduct of shifts (under the $\mbbZ$-action) of objects in the image of $F$.
\end{enumerate}
\end{lemma}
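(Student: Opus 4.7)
The plan is to show that both (1) and (2) are equivalent to the assertion that every object of $\cD$ lies in the full subcategory $\cD' \subseteq \cD$ spanned by retracts of finite coproducts $\bigoplus_i F(c_i)[n_i]$, with $c_i \in \cC$ and $n_i \in \mbbZ$.

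First I would observe that $\cD'$ inherits the structure of an object of $\addkBZ$, and that the inclusion $\iota \colon \cD' \hookrightarrow \cD$ is a fully faithful morphism in $\addkBZ$. Indeed, $\cD'$ is by definition closed under retracts and under the $\mbbZ$-action (which permutes the shifts). Closure under finite coproducts follows since a finite coproduct of retracts of finite coproducts of $F(c_i)[n_i]$'s is again a retract of a finite coproduct of such. For the $\CProj_k$-action encoding the $k$-linear structure, any $M \in \CProj_k$ is a retract of $k^{\oplus n}$ for some $n \geq 0$, so $M \otimes d$ is a retract of $d^{\oplus n}$; closure under finite coproducts and retracts therefore yields closure under the $\CProj_k$-action.

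Next, by construction $F \colon \cC \to \cD$ factors as $F' \colon \cC \to \cD'$ followed by $\iota$. Invoking the adjunction $\Lin_k(- \times \mbbZ) \dashv \mathrm{forget}$ and \cref{obs:adjoint-of-unit}, the unit $\eta \colon \cC \to \Lin_k(\cC \times \mbbZ)$ factors as $\cC \hookrightarrow \cC \times \mbbZ \to \Lin_k(\cC \times \mbbZ)$ sending $c \mapsto (c,0)$. I would then let $G'' \colon \Lin_k(\cC \times \mbbZ) \to \cD'$ denote the adjunct of $F'$, so that by uniqueness of adjuncts the adjunct $G$ of $F$ factors as $G = \iota \circ G''$. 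Since $G$ is $\mbbZ$-equivariant and preserves finite coproducts, we have $G\bigl(\bigoplus_i (c_i, n_i)\bigr) \simeq \bigoplus_i F(c_i)[n_i]$ in $\cD$.

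The equivalence is then immediate. If $G$ is dominant, any $d \in \cD$ is a retract of $G(x) = \iota(G''(x)) \in \cD'$; since $\cD'$ is closed under retracts in $\cD$, it follows that $d \in \cD'$, yielding (2). Conversely, if (2) holds, any $d \in \cD$ is a retract of some $\bigoplus_i F(c_i)[n_i] \simeq G\bigl(\bigoplus_i (c_i, n_i)\bigr)$, so $G$ is dominant. The main (only mild) obstacle is the identification $G = \iota \circ G''$, which amounts to recognizing $\cD'$ as the smallest subobject of $\cD$ in $\addkBZ$ through which $F$ factors, and then appealing to the uniqueness clause of the adjunction.
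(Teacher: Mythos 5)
Your argument is correct, but it takes a genuinely different route from the paper's. The paper proves the lemma by working along the chain of free--forgetful adjunctions $\cat \to \cat^{\sqcup,\idem} \to \Mod_{\CProj_k}(\catprod)$: it shows that the unit $\cA \to \CProj_k \otimes \cA$ is dominant (using that $\mathrm{Set}^{\mathrm{fin}} \to \CProj_k$ is dominant together with the compatibility of the (dominant, fully faithful) factorization system with the tensor product on $\catprod$ from \cref{prop:fs-addkbz}), deduces that a functor out of $\cA \in \catprod$ is dominant if and only if its adjunct out of $\CProj_k \otimes \cA$ is, and then observes that dominance of the intermediate adjunct $(\cC\times\mbbZ)^{\sqcup,\idem} \to \cD$ is literally condition (2). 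You instead build the closure $\cD' \subseteq \cD$ of the image of $F$ under shifts, finite coproducts and retracts inside the target, verify it is closed under all the relevant structure, and use naturality of the adjunction to factor the adjunct $G$ as $\iota \circ G''$; dominance of $G$ then becomes the statement that $\cD' = \cD$. Both arguments ultimately rest on the same input, namely \cref{lem:CProjR-finite-coproduct} (every compact-projective $k$-module is a retract of a finite sum of copies of $k$) --- the paper uses it to see that the monoidal unit map is dominant, you use it to see that $\cD'$ is closed under the $\CProj_k$-action. What the paper's route buys is that it never has to check that a full subcategory closed under the operations inherits a coherent object structure in $\addkBZ$: your step asserting that $\cD'$ "inherits" the $\CProj_k$-module structure and $\mbbZ$-action (so that the adjunct $G''$ of $F'$ is even defined) is true and routine, but it is an extra $\infty$-categorical verification that the paper's factorization-system machinery packages away; conversely, your route is more hands-on and avoids invoking the monoidal compatibility of the (dominant, fully faithful) factorization system altogether.
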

\begin{proof}
The tensor unit of $\catprod$
is the category $\mathrm{Set}^{\mathrm{fin}}$ of finite sets, and since
$\CProj_k \in \CAlg(\catprod)$, the unit induces a finite coproduct preserving
functor $\mathrm{Set}^{\mathrm{fin}} \to \CProj_k$ which sends a finite set $X$
to the coproduct $\sqcup_X k$ and is therefore dominant by
\cref{lem:CProjR-finite-coproduct}.\eqref{item-lem:CProjR-finite-coproduct}. 
Hence, since the tensor product in $\catprod$ of dominant functors is again dominant (since its (dominant, fully faithful)-factorization system is compatible with its monoidal structure, as follows from the proof of \cref{prop:fs-addkbz}), it follows that for any $\cA \in \catprod$, the unit $\cA \simeq \mathrm{Set}^{\mathrm{fin}} \otimes \cA \to \CProj_k \otimes \cA$  of the adjunction between $\catprod$ and $\add_k$  is dominant. 
In particular, it immediately follows that for any $\cB \in \add_k$, a functor $\cA \to \cB$ in $\catprod$ is dominant if and only if its adjunct (drawn horizontally) is: \[\begin{tikzcd} \cA \ar[d, two heads]\ar[dr]& \\
\CProj_k \otimes \cA \ar[r] & \cB
\end{tikzcd}
\]

Hence, a functor $F$ as in the statement of the lemma is dominant, if and only if the functor $(\cC \times \mbbZ)^{\sqcup, \mathrm{idem}} \to \cD$ is, equivalently if every object of $\cD$ is a retract of a finite coproduct of objects in the image of $\cC \times \mbbZ$, i.e. of objects which are $\mbbZ$-shifts of objects in the image of $\cC$.
\end{proof}

For the following we recall from \cref{sec:diagrammatics} the notation
$R_n \coloneqq k[x_1,\ldots, x_n]$ for the graded polynomial algebra over $k$ in $n\in
\N_0$ variables, each of degree two.

\begin{prop} 
\label{prop:sbimiscorrect}
The functor $\iota\colon \BSbimp \to \SBim$ is surjective on objects.  For
objects $n \neq m\in \BSbimp$, the hom-category $\eHom_{\SBim}(\iota n, \iota m)$
is the zero category, and for $n=m$ it is the smallest additive and idempotent-complete full
subcategory of the ordinary additive category $${}_{R_n}\mathrm{grbmod}_{R_n}$$
of graded $R_n$-bimodules, which contains the full subcategory $\BSbimp_n$ and is closed under the grading-shift $\mbbZ$-action.
\end{prop}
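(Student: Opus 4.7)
The plan is to trace the definition of $\SBim$ through the factorization system and reduce the statement to a direct description of the image of $\Lin_k(\BSbimp_n\times\mbbZ)$ in the ambient Morita category. Specifically, $\iota$ factors as
\[
\BSbimp \xrightarrow{\ \eta\ } \addkZloc{\BSbimp} \xrightarrow{\ p\ } \SBim,
\]
where $\eta$ is the unit of the adjunction~\eqref{eq:LinkZlocAlg} and $p$ is the surjective-on-objects-and-dominant-on-$1$-morphisms half of the factorization in \cref{def:SBim}. The unit $\eta$ is a hom-wise construction and so acts as the identity on the set of objects; combined with the surjective-on-objects property of $p$, this proves $\iota$ is surjective on objects.

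For the hom-categories, \cref{cor:SBim-to-Mor-faithful} provides the faithful monoidal functor $\SBim\to\MorPoly$, so each $\eHom_{\SBim}(\iota n,\iota m)$ is identified as a full additive $k$-linear $\mbbZ$-equivariant subcategory of $\eHom_{\MorPoly}(R_n,R_m)$, which by \cref{obs:h1Morita} (and the unpacking in \cref{cor:finally-our-morita-categories}) is the ordinary $1$-category ${}_{R_n}\mathrm{grbmod}_{R_m}^{\mathrm{gr\text{-}cp}}\subseteq {}_{R_n}\mathrm{grbmod}_{R_m}$ with $\mbbZ$-action by grading shift. To pin down the essential image, I will use the dominance of $p$ on $1$-morphisms. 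Unpacking $\addkZloc{\BSbimp}=\Cat[\Lin_k(-\times\mbbZ)](\BSbimp)$, the hom-category $\eHom_{\addkZloc{\BSbimp}}(n,m)$ is $\Lin_k(\BSbimp_n\times\mbbZ)$ if $n=m$, and the zero category otherwise. Dominance of $p$ on $1$-morphisms therefore implies that every object of $\eHom_{\SBim}(\iota n,\iota m)$ is a retract of an object in the image of $\eHom_{\addkZloc{\BSbimp}}(n,m)\to\eHom_{\SBim}(\iota n,\iota m)$.

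For $n\neq m$ this image consists only of zero objects, so the full subcategory $\eHom_{\SBim}(\iota n,\iota m)\subseteq{}_{R_n}\mathrm{grbmod}_{R_m}^{\mathrm{gr\text{-}cp}}$ contains only zero objects, giving the zero category. For $n=m$, the composite $\BSbimp_n\to\Lin_k(\BSbimp_n\times\mbbZ)\to\eHom_{\SBim}(\iota n,\iota n)$ is precisely the unit-adjunct situation of \cref{lem:concrete-dominance}; combined with dominance of $p$, that lemma asserts that every object of $\eHom_{\SBim}(\iota n,\iota n)$ is a retract of a finite direct sum of $\mbbZ$-shifts of objects of $\BSbimp_n$. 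Since $\eHom_{\SBim}(\iota n,\iota n)$ is itself additive, idempotent-complete and shift-closed (as an object of $\addkBZ$), and since the faithful embedding into ${}_{R_n}\mathrm{grbmod}_{R_n}$ preserves and reflects these structures, its essential image is exactly the smallest additive idempotent-complete full subcategory of ${}_{R_n}\mathrm{grbmod}_{R_n}$ containing $\BSbimp_n$ and closed under the grading-shift $\mbbZ$-action, as claimed.

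The only genuinely non-formal ingredient is the translation between the enriched-hom of $p$ being ``dominant'' in $\addkBZ$ and the concrete retract-and-sum description of objects of $\eHom_{\SBim}(\iota n,\iota n)$ inside ${}_{R_n}\mathrm{grbmod}_{R_n}$; this is exactly the content of \cref{lem:concrete-dominance}, so no serious obstacle remains.
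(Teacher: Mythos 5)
Your proposal is correct and follows essentially the same route as the paper's proof: factor $\iota$ through $\addkZloc{\BSbimp}$, deduce surjectivity on objects from the identity-on-objects unit and the surjective half of the factorization, then combine faithfulness of $\SBim\to\MorPoly$ (full, additive, idempotent-complete, shift-closed subcategory of ${}_{R_n}\mathrm{grbmod}_{R_m}$) with dominance on $1$-morphisms via \cref{lem:concrete-dominance} to pin down the essential image. The only difference is cosmetic: you treat $n\neq m$ as a separate (trivial) case, whereas the paper handles all $n,m$ uniformly.
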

\begin{proof}
Since $\addkZloc{-}$ is defined by applying $\Lin_k(-\times \mbbZ)$ homwise,  the functor \[\alpha\colon \BSbimp \to \addkZloc{\BSbimp}\]
is the identity on objects and hence the composite $\iota\colon \BSbimp \to
\addkZloc{\BSbimp} \to \SBim$ is surjective on objects. The
induced functor on hom-categories between $n, m \in \BSbimp$  therefore factors
as follows in $\addkBZ$:
\[
\Lin_k(\BSbimp(n,m) \times \mbbZ) =:\addkZloc{\BSbimp} (\alpha n, \alpha m) \to  \Sbim(\iota n, \iota m) \to {}_{R_n}\mathrm{grbmod}_{R_m}
\]
The first functor is dominant (since $\addkZloc{\BSbimp} \to
\SBim$ is dominant on morphisms) and the second functor is fully faithful (since
$\SBim \to \MorPoly$ is faithful). 

It follows from fully faithfulness of the second functor (and the fact that it is a morphism in $\addkBZ$), that $\SBim(\iota n, \iota m)$ is a full additive and
idempotent-complete subcategory of ${}_{R_n}\mathrm{grbmod}_{R_m}$ which is closed under grading shifts.
By \cref{lem:concrete-dominance}, 
dominance of the first functor implies that every object of this full subcategory is a retract of a finite coproduct of shifts of objects in the image of $\BSbimp(n,m) \hookrightarrow {}_{R_n}\mathrm{grbmod}_{R_m}$.
\end{proof}

\begin{observation}
As there are no non-zero morphisms between $n\neq m$, it follows immediately from \cref{prop:sbimiscorrect} that the monoidal $(2,2)$-functor $\iota\colon \BSbimp \to \SBim$ induces a bijection on the set
of isomorphism classes of objects. For objects $n, m \in \SBim$, the additive
$k$-linear hom-category is given by 
\[\eHom_{\SBim}(n,m) \simeq \left\{\begin{array}{lr} 0 & n \neq m\\
\SBim_n & n = m \end{array}\right. ,\] where $\SBim_n$ is the ordinary
$k$-linear additive category from \cref{def:Sbimcl}
with $\mbbZ$-action by grading shift. The monoidal shift-preserving $k$-linear
functor $ \SBim \to \MorPoly$ sends objects $n$ to the polynomial algebra $R_n$
and is given on hom-categories by the evident full inclusion of
$\mathrm{Sbim}_n$ into the category of all graded bimodules which are
graded-compact-projective as right modules. 

It follows from functoriality that the composition of $1$-morphisms in $\eHom_{\SBim}(n,n)$
is given by the relative tensor product $-\otimes_{R_n}-$ (and hence, that the endomorphism $1$-category $\eHom_{\SBim}(n,n)$ is equivalent to the \emph{monoidal} category $\SBim_n$ as in
\cref{def:Sbimcl}), and that the monoidal structure of $\SBim$ is given by $-\otimes_k-$ (and hence acts
by parabolic induction on the hom-categories $\SBim_n \times \SBim_m \to \SBim_{n+m}$ as in \cref{def:parind}, see \cref{rem:monbicat}).
\end{observation}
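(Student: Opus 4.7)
The plan is to read off both claims directly from the construction of $\SBim$ as the unique factorization of $\addkZloc{\BSbimp} \to \MorPoly$ via the (surjective-on-objects-and-dominant-on-1-morphisms, faithful) factorization system on $\Alg_{\EE_1}(\Cat[\addkBZ])$, combined with Lemma~\ref{lem:concrete-dominance} and the fact that $\Lin_k(- \times \mbbZ)$ is applied \emph{homwise}.

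First I would observe that the unit $\alpha\colon \BSbimp \to \addkZloc{\BSbimp}$ of the adjunction \eqref{eq:LinkZlocAlg} is the identity on objects, because $\Cat[-]$ applied to the left adjoint $\Lin_k(-\times \mbbZ)$ leaves the space of objects untouched and only modifies hom-categories. Since the second map $\addkZloc{\BSbimp} \to \SBim$ is surjective on objects by the definition of the factorization system (see \cref{nota:surjective-and-dominant}), the composite $\iota$ is surjective on objects.

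Next I would turn to the hom-categories. Surjectivity-on-objects plus the factorization $\addkZloc{\BSbimp} \to \SBim \to \MorPoly$ gives, for each pair $n,m$, a factorization in $\addkBZ$ of the form
\[
\Lin_k(\BSbimp(n,m) \times \mbbZ) \longra \SBim(\iota n, \iota m) \longra {}_{R_n}\mathrm{grbmod}_{R_m}
\]
where the first arrow is dominant and the second is fully faithful (these being the homwise translations of the two classes of the factorization system, per \cref{prop:fs-addkbz} and the homwise nature of enriched factorizations). From fully faithfulness (as a morphism in $\addkBZ$) one gets immediately that $\SBim(\iota n, \iota m)$ is realized as a full additive, idempotent-complete subcategory of ${}_{R_n}\mathrm{grbmod}_{R_m}$ that is closed under the $\mbbZ$-action by grading shifts. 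From dominance, applying \cref{lem:concrete-dominance} to the adjunct $\BSbimp(n,m) \to \SBim(\iota n, \iota m)$ of the first arrow, one gets that every object of $\SBim(\iota n, \iota m)$ is a retract of a finite coproduct of shifts of objects in the image of $\BSbimp(n,m)$.

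Finally I would handle the two cases. For $n \neq m$, the category $\BSbimp(n,m)$ is the zero category by construction of $\BSbimp$ (it agrees with $h_1\BSbimp$ on objects and its hom-categories are trivial for distinct objects via the faithful functor to $\MorPoly$), so the smallest additive idempotent-complete subcategory closed under shifts containing it is the zero category. For $n = m$, the two properties above—containing (the image of) $\BSbimp_n$, being additive, idempotent-complete, fully embedded in ${}_{R_n}\mathrm{grbmod}_{R_n}$, and closed under grading shifts, together with every object being a retract of a finite sum of shifts of Bott–Samelson bimodules—exactly characterize the smallest such subcategory, giving the desired description. The only step requiring genuine content rather than unpacking definitions is the invocation of \cref{lem:concrete-dominance}, which translates dominance in $\addkBZ$ into the concrete retract-of-finite-sum-of-shifts statement; the rest is bookkeeping about how factorization systems interact with enrichment and with the homwise structure of $\addkZloc{-}$.
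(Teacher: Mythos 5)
Your core argument is correct and is essentially the paper's own: the paper packages exactly your factorization-plus-dominance argument as \cref{prop:sbimiscorrect} (homwise factorization $\Lin_k(\BSbimp(n,m)\times\mbbZ) \to \eHom_{\SBim}(\iota n,\iota m) \to {}_{R_n}\mathrm{grbmod}_{R_m}$, full faithfulness from the right class, \cref{lem:concrete-dominance} from the left class), and the observation is then deduced from it by your two-inclusion case analysis. One small point worth making explicit in the $n=m$ case: to match the characterization ``smallest additive idempotent-complete full subcategory containing $\BSbimp_n$ and closed under grading shifts'' with $\SBim_n$ as in \cref{def:Sbimcl}, note that shift-closure is automatic because $\BSbimcl_n$ already contains all shifted Bott--Samelson bimodules.

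However, the statement contains two claims your proposal never addresses. First, it asserts a \emph{bijection} on isomorphism classes of objects, not just surjectivity: injectivity requires the (easy, but necessary) remark that for $n\neq m$ the objects cannot become equivalent in $\SBim$, since the hom-category between them is zero and the zero $1$-morphism is not invertible (its composite with a putative inverse would identify $R_n$ with the zero bimodule). Second, the final paragraph of the observation — that composition in $\eHom_{\SBim}(n,n)$ is $-\otimes_{R_n}-$, so that this endomorphism category is $\SBim_n$ as a \emph{monoidal} category, and that the monoidal structure of $\SBim$ is $-\otimes_k-$, hence parabolic induction as in \cref{def:parind} — is not touched in your write-up. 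It follows because $\SBim \to \MorPoly$ is a monoidal $(\infty,2)$-functor which you have just shown to be a full inclusion on hom-categories, so composition and the monoidal product on these hom-categories are computed in $\MorPoly$, where they are the relative tensor product over $R_n$ and the tensor product over $k$ respectively; you should say this (the paper's ``it follows from functoriality'') rather than leave it implicit.
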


\subsection{The monoidal \texorpdfstring{$(\infty,2)$}{(infty,2)}-category of chain complexes of Soergel bimodules}
\label{sec:defKSBim}

Recall from Sections~\ref{sec:freestable} and~\ref{sec:Kbasfreestable} that the forgetful functor $\st \to \add$ from the presentably symmetric monoidal $\infty$-category of small stable $\infty$-categories to that of small additive $\infty$-categories has a symmetric monoidal left adjoint 
$$\Kb \colon \add \to \st ,$$
which sends an ordinary additive $1$-category $\cA$ to the $\infty$-category of bounded chain complexes in $\cA$ with chain maps and (higher) chain homotopies between them. 
By \cref{prop:presentablestKI},  this is compatible with linearity and $\mbbZ$-action and induces a symmetric monoidal left adjoint 
$$\Kb \colon \addkBZ \to \stkBZ$$
of the forgetful functor $\stkBZ \to \addkBZ$. 
Applying $\Kb$ homwise, it follows from \cref{subsec:enriched-infty-cats} and \cref{subsubsec:adjunctions-of-cO-algebras}
that we obtain symmetric monoidal left-adjoints of the respective forgetful functors:
\[        \Kbloc := \Cat[\Kb] \colon \Cat[\addkBZ] \to \Cat[\stkBZ]
\]
\begin{equation}\label{eq:Kbloc-adjunction}
    \Kbloc  \colon \Alg_{\EE_1}\left(\Cat[\addkBZ]\right) \to \Alg_{\EE_1}\left(\Cat[\stkBZ] \right).
\end{equation}

\begin{definition}\label{def:KblocSBim}
     We call the monoidal $k$-linear stable $(\infty,2)$-category with local shifts  \[\Kbloc\left(\Sbim\right) \in \Alg_{\EE_1} \left( \Cat[\stkBZ]\right)\] the \emph{chain complex Soergel $(\infty, 2)$-category}. \end{definition}

The unit of the adjunction~\eqref{eq:Kbloc-adjunction} is a monoidal $\addkBZ$-enriched $(\infty,2)$-functor 
\begin{equation}\label{eq:SBim-to-KblocSBim}
\SBim \to \Kbloc(\SBim).
\end{equation}

Since the functor $\Kbloc$ is defined by applying $\Kb$ homwise, we immediately obtain the following explicit description of $\Kbloc(\SBim)$, which completes the construction of $\Kbloc$ as described in \cref{thm:main-M}:

\begin{prop}\label{prop:Kbloc-Sbim-explicit} The objects of $\Kbloc(\SBim)$ agree with those of $\SBim$, while the stable $k$-linear hom-categories are given by 
\[\eHom_{\Kbloc(\SBim)}(n,m) = \left\{ \begin{array}{lr} 0 & n \neq m \\ \Kb(\SBim_n) & n = m \end{array}\right. ,
\]
with $\mbbZ$-action given by internal (i.e. non-homological!) grading shift. 

The functor $\SBim \to \Kbloc(\SBim)$ from~\eqref{eq:SBim-to-KblocSBim} sends objects to themselves and is on hom-categories given by the additive $k$-linear $\mbbZ$-equivariant functor $\SBim_n \hookrightarrow \Kb(\SBim_n)$ including Soergel bimodules as chain complexes concentrated in degree zero. In particular, it is faithful as an $(\infty,2)$-functor. 
\end{prop}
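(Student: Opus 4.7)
The plan is to treat this proposition as a straightforward unpacking, since the real work was done in constructing $\Kbloc = \Cat[\Kb]$ via homwise application of $\Kb$ and in identifying the hom-categories of $\SBim$ in Proposition~\ref{prop:sbimiscorrect}.

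First I would invoke the construction $\Kbloc \coloneqq \Cat[\Kb]$ from~\eqref{eq:Kbloc-adjunction} together with the general principle (explained in \cref{subsec:enriched-infty-cats} and reiterated in the paragraph preceding~\eqref{eq:LinkZloc}) that $\Cat[-]$ of a symmetric monoidal left adjoint is computed homwise. This gives that the space of objects of $\Kbloc(\SBim)$ agrees with that of $\SBim$ and that for every pair $n,m$, one has an equivalence $\eHom_{\Kbloc(\SBim)}(n,m) \simeq \Kb(\eHom_{\SBim}(n,m))$ in $\stkBZ$. Feeding in Proposition~\ref{prop:sbimiscorrect}, this reduces to $\Kb(0) \simeq 0$ when $n\neq m$ (since $\Kb$ is a left adjoint and hence preserves zero objects) and $\Kb(\SBim_n)$ when $n=m$, with the $\mbbZ$-action on $\Kb(\SBim_n)$ obtained by applying $\Kb\colon \addkBZ \to \stkBZ$ to $\SBim_n$ viewed in $\addkBZ$ via its internal (i.e.\! non-homological) grading-shift action.

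Next I would identify the functor $\SBim \to \Kbloc(\SBim)$ in~\eqref{eq:SBim-to-KblocSBim} as the unit of the adjunction~\eqref{eq:Kbloc-adjunction}. Since that adjunction arises by applying $\Cat[-]$ and then $\Alg_{\EE_1}(-)$ to the adjunction $\Kb \dashv \mathrm{forget}$ on $\addkBZ \rightleftarrows \stkBZ$, its unit acts as the identity on objects and is given on hom-categories by the unit $\SBim_n \to \Kb(\SBim_n)$ of the latter adjunction. This unit concretely picks out Soergel bimodules viewed as chain complexes concentrated in degree zero, matching the claim.

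For faithfulness of the $(\infty,2)$-functor $\SBim \to \Kbloc(\SBim)$, I would unwind \cref{def:faithful} (i.e.\! $0$-faithfulness from \cref{def:nsurj-and-nfaith}): a functor of $(\infty,2)$-categories is faithful precisely when the induced functors on hom-$(\infty,1)$-categories are fully faithful. By the previous paragraph these induced functors are exactly the units $\SBim_n \to \Kb(\SBim_n)$, which are fully faithful by \cref{prop:Klinear}.(2). No step presents a real obstacle; the proposition is a direct consequence of the homwise nature of $\Kbloc$ together with the explicit description of $\SBim$ and the fully-faithfulness of the unit of $\Kb \dashv \mathrm{forget}$.
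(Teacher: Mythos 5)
Your proposal is correct and matches the paper's own treatment: the paper presents this proposition as an immediate consequence of the fact that $\Kbloc$ is defined by applying $\Kb$ homwise, combined with the description of $\SBim$ from \cref{prop:sbimiscorrect} and the fully faithfulness of the unit $\SBim_n \to \Kb(\SBim_n)$ from \cref{prop:Klinear}. Your write-up just spells out these same steps in slightly more detail.
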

Note that while $\Sbim$ is a $(2,2)$-category, the category  $\Kbloc(\SBim)$ is a true $(\infty,2)$-category with non-trivial higher cells.

The following makes the connection to \cref{sec:2} and justifies the notation $h_1\oldKbloc{\SBim}$ from there.
\begin{corollary} \label{cor:h1KSBim}
The homotopy $1$-category $h_1\Kbloc(\SBim)$ of the monoidal $(\infty,2)$-category $\Kbloc(\SBim)$ agrees with the monoidal $1$-category $h_1\oldKbloc{\SBim}$ from \cref{def:oldhoneKbloc}.

Moreover, after taking homotopy $1$-categories, the monoidal  $(\infty,2)$-functor $\BSBim \to \SBim \to \Kbloc(\SBim)$ becomes the ordinary monoidal $1$-functor \[h_1 \BSBim \to h_1 \oldKbloc{\SBim}\] from~\eqref{eq:defh1K}. 
\end{corollary}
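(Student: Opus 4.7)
The plan is to extract the explicit description of $\Kbloc(\SBim)$ provided by \cref{prop:Kbloc-Sbim-explicit} and compute its homotopy $1$-category directly using the tools developed in \cref{sec:inf-n-cats}. Recall that for a $k$-linear stable $(\infty,2)$-category $\cC$ with local shifts the homotopy $1$-category is, by \cref{def:homotopy-cat-definition} and \cref{exm:how-tau-works}, the ordinary $1$-category with the same objects as $\iota_1\cC$, with hom-sets given by $\pi_0$ of the mapping spaces of $\iota_1\cC$, equivalently by applying $h_0$ (isomorphism classes of objects) to the hom-$(\infty,1)$-categories of $\cC$.

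First I would apply this recipe to $\Kbloc(\SBim)$. By \cref{prop:Kbloc-Sbim-explicit} the objects of $\Kbloc(\SBim)$ are labelled by $n\in\N_0$, there are no nontrivial hom-spaces between distinct objects, and the endomorphism $\infty$-category at $n$ is the stable $\infty$-category $\Kb(\SBim_n)$. Therefore the hom-sets in $h_1\Kbloc(\SBim)$ are $h_0\Kb(\SBim_n)$ for $n=m$ and are trivial otherwise. Next I invoke \cref{handover3}: the homotopy $1$-category of the dg-nerve $\Kb(\SBim_n)=N_{\mathrm{dg}}(\mathrm{Ch}^b(\SBim_n))$ is the classical chain homotopy category $\oldKb{\SBim_n}$, so $h_0\Kb(\SBim_n)=h_0\oldKb{\SBim_n}$. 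Comparing with \cref{def:oldhoneKbloc} matches the underlying hom-sets of $h_1\Kbloc(\SBim)$ with those of $h_1\oldKbloc{\SBim}$.

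Next I would upgrade this pointwise identification to monoidal $1$-categories. Since $h_1$ is symmetric monoidal (\cref{def:homotopy-cat-definition}), it sends the $\EE_1$-algebra object $\Kbloc(\SBim)\in\Alg_{\EE_1}(\Cat[\stkBZ])$ to a monoidal $1$-category, whose composition is the composition induced from $\Kbloc(\SBim)$ and whose monoidal product is induced from parabolic induction on chain complexes of Soergel bimodules via $\Kb$. Under the identification $h_0\Kb(\SBim_n)=h_0\oldKb{\SBim_n}$, both composition and parabolic induction unwind (using that $\Kb$ is symmetric monoidal and agrees with chain-level tensor product on the homotopy category, \cref{handover3}) to the operations on $h_0\oldKb{\SBim_n}$ used to define $h_1\oldKbloc{\SBim}$ in \cref{def:oldhoneKbloc}. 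The local $\mbbZ$-action by grading shift also matches on the nose. This gives the first claim of the corollary.

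Finally, for the functorial statement, I would apply $h_1$ to the monoidal $(\infty,2)$-functor $\BSBim\to\SBim\to\Kbloc(\SBim)$ obtained by composing $\iota$ from~\eqref{eq:fromBSBimtoSBim} with the unit~\eqref{eq:SBim-to-KblocSBim}. By \cref{cor:uniqueBSBim}, $h_1\BSBim$ agrees with the $1$-category from \cref{def:oldhoneBSbim}. By \cref{prop:sbimiscorrect} the functor $h_1\BSBim\to h_1\SBim$ is the inclusion on isomorphism classes of Bott–Samelson bimodules, and by \cref{prop:Kbloc-Sbim-explicit} the further functor $h_1\SBim\to h_1\Kbloc(\SBim)$ is induced by viewing a Soergel bimodule as a chain complex concentrated in degree zero. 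The composite is therefore exactly the monoidal functor $\hincl$ of~\eqref{eq:defh1K}. No step is genuinely difficult here; the only point requiring care is keeping track of the identifications of hom-categories through the chain of symmetric monoidal functors $\BSbimp\to\Lin_{k,\mathrm{loc}}^{\mbbZ}(\BSbimp)\to\SBim\to\Kbloc(\SBim)$, which however is handled cleanly by the explicit descriptions in \cref{prop:sbimiscorrect} and \cref{prop:Kbloc-Sbim-explicit}.
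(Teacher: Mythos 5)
Your proof is correct and takes essentially the same route the paper intends, since the paper states this corollary as an immediate consequence of \cref{prop:Kbloc-Sbim-explicit} without further elaboration; you spell out the details (identifying the hom-sets via \cref{handover3}, the monoidal structure via symmetric monoidality of $h_1$ and $\Kb$, and the functor via \cref{cor:uniqueBSBim}, \cref{prop:sbimiscorrect}, and \cref{prop:Kbloc-Sbim-explicit}) in a way that is faithful to the intended argument. One small point worth noting for precision: when you invoke symmetric monoidality of $h_1$, what is actually used is that the composite $\Cat[\stkBZ]\to\CatInfty{2}\xrightarrow{h_1}\Catnk{1}{1}$ is lax symmetric monoidal (the first forgetful functor is only lax, as recorded in \cref{rmk:finally-cH}), which still suffices to carry $\EE_1$-algebras to monoidal $1$-categories.
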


\subsection{The fiber functor on \texorpdfstring{$\Kbloc(\SBim)$}{chain complex Soergel category}}\label{subsec:fiber-functor-on-KbSBim}

We now construct the monoidal shift-preserving $k$-linear exact functor $\Kbloc(\SBim) \to \stkBZ$.

Recall from \cref{cor:finally-our-morita-categories}.\eqref{item-cor:finally-2} the (large) derived Morita category \[\DMoritaS \in \CAlg(\widehat{\Cat}[\st_k^{B\mbbZ}]).\] Its objects can be understood as ordinary $\mbbZ$-graded flat $k$-algebras, and its $k$-linear stable, idempotent-complete hom-category between two objects $A$ and $B$ is given by the full subcategory of the derived $\infty$-category $\Derived( {}_{A}\mathrm{grbmod}_{B})$ 
of the abelian category ${}_{A}\mathrm{grbmod}_{B}$ of graded $A$--$B$ bimodules on those objects which are graded-perfect as right $B$-modules, i.e. are quasi-isomorphic to bounded chain complexes of graded-compact-projective $B$-modules.

As before, it suffices to consider the small full subcategory of polynomial algebras:

\begin{notation}
Let $\DMorPoly \subseteq \DMoritaS$ denote the small full subcategory on the graded polynomial algebras $k[x_1, \ldots, x_n]$ for $n \geq 0$, with all $x_i$ in degree $2$. (Graded polynomial algebras are flat, see
\cref{exm:graded-poly-flat}, this hence indeed defines a full subcategory.)  Since tensor products of polynomial algebras are polynomial algebras, this is in fact a symmetric monoidal subcategory and hence defines an object 
\[ \DMorPoly \in  \CAlg(\Cat[\stkBZ]).
\]\end{notation}

In \cref{cor:finally-our-morita-categories}.\eqref{item-cor:finally-3}  we constructed a symmetric monoidal $\addkBZ$-enriched functor \[\MoritaS \to \DMoritaS\] which sends flat graded algebras to themselves and includes graded bimodules as the discrete objects into the derived $\infty$-category of graded bimodules, and hence restricts to a symmetric monoidal $\addkBZ$-enriched functor 
\[\MorPoly \to \DMorPoly.
\]

By \cref{def:SBim}, there is also a faithful monoidal shift-preserving $k$-linear functor $\Sbim \to \MorPoly$. 

\begin{prop}\label{prop:Hloc}
The monoidal $\addkBZ$-enriched functor \[\Sbim \to \MorPoly \to \DMorPoly\]  factors
through a monoidal $\stkBZ$-enriched
functor
\begin{equation}
\label{eq:fiberfunctorMorita}
\Hloc\colon \Kbloc(\SBim) \to \DMorPoly.
\end{equation}
\end{prop}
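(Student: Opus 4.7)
The plan is to obtain $\Hloc$ as the adjunct of the composite $\Sbim \to \MorPoly \to \DMorPoly$ under the symmetric monoidal adjunction $\Kbloc \dashv \mathrm{forget}$ from~\eqref{eq:Kbloc-adjunction}. The only nontrivial bookkeeping is that the source lives in $\Alg_{\EE_1}(\Cat[\addkBZ])$ whereas the target $\DMorPoly$ is a priori an object of $\CAlg(\Cat[\stkBZ])$, so we first need to coherently view the composite as a morphism in $\Alg_{\EE_1}(\Cat[\addkBZ])$.

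First, I would note that by \cref{cor:finally-our-morita-categories}.\eqref{item-cor:finally-3}, the functor $\MorPoly \to \DMorPoly$ is given as a morphism in $\CAlg(\widehat{\Cat}[\addkBZ])$, where $\DMorPoly$ is equipped with the $\addkBZ$-enrichment obtained by restricting its natural $\stkBZ$-enrichment along the forgetful functor $\stkBZ \to \addkBZ$. Composing with the faithful monoidal $\addkBZ$-enriched functor $\Sbim \to \MorPoly$ from \cref{def:SBim} and restricting from $\CAlg$ to $\Alg_{\EE_1}$ produces a morphism
\[
\Sbim \longrightarrow \mathrm{forget}(\DMorPoly)
\]
in $\Alg_{\EE_1}(\Cat[\addkBZ])$.

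Next, I would apply the adjunction~\eqref{eq:Kbloc-adjunction}, which by \cref{prop:Klinear} and \cref{prop:presentablestKI}.\eqref{item-prop:presentablestKI-1} (combined with the enrichment and algebra constructions of \cref{subsec:enriched-infty-cats} and \cref{subsubsec:adjunctions-of-cO-algebras}) is a symmetric monoidal adjunction. The adjunct of the displayed morphism is a morphism
\[
\Hloc \colon \Kbloc(\Sbim) \longrightarrow \DMorPoly
\]
in $\Alg_{\EE_1}(\Cat[\stkBZ])$, i.e.\ a monoidal $\stkBZ$-enriched functor. By construction, precomposing $\Hloc$ with the unit $\Sbim \to \Kbloc(\Sbim)$ of~\eqref{eq:SBim-to-KblocSBim} recovers the original composite $\Sbim \to \MorPoly \to \DMorPoly$ (viewed through the forgetful functor $\stkBZ \to \addkBZ$), yielding the required factorization.

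There is no serious obstacle here beyond keeping track of which enrichment is in play at each step: the result is essentially a direct invocation of the universal property of $\Kbloc$ as the left adjoint to the forgetful functor, upgraded from plain $(\infty,2)$-categories to the $\EE_1$-algebra level by the monoidality of the adjunction. In particular, homwise $\Hloc$ acts by the canonical exact $k$-linear $\mbbZ$-equivariant functor $\Kb(\Sbim_n) \to \Derived({}_{R_n}\mathrm{grbmod}_{R_n})^{\mathrm{gr\textup{-}perf}}$ sending a bounded chain complex of Soergel bimodules to its image in the derived $\infty$-category of graded $R_n$-bimodules, as one would expect from \cref{prop:Kbloc-Sbim-explicit} and \cref{cor:finally-our-morita-categories}.\eqref{item-cor:finally-2}.
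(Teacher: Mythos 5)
Your proposal is correct and follows essentially the same route as the paper, which deduces the factorization immediately from the adjunction $\Kbloc \dashv \mathrm{forget}$ between $\Alg_{\EE_1}(\Cat[\addkBZ])$ and $\Alg_{\EE_1}(\Cat[\stkBZ])$ applied to the composite $\Sbim \to \MorPoly \to \DMorPoly$. Your additional bookkeeping about which enrichment is in play (restricting the $\stkBZ$-enrichment of $\DMorPoly$ along $\stkBZ \to \addkBZ$ via \cref{cor:finally-our-morita-categories}) is exactly the implicit content of the paper's one-line argument.
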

\begin{proof} This follows immediately from the adjunction 
\[\begin{tikzcd}[column sep=1.5cm]
\Alg_{\EE_1}(\Cat[\addkBZ])
\arrow[yshift=0.9ex]{r}{\Kbloc({-})}
\arrow[leftarrow, yshift=-0.9ex]{r}[yshift=-0.2ex]{\bot}[swap]{\mathrm{forget}}
&
\Alg_{\EE_1}(\Cat[\stkBZ]).
\end{tikzcd}
\]
\end{proof}

\begin{observation} \label{obs:Hloc-unpacked}Unpacked, the functor $\Hloc\colon \Kbloc(\SBim) \to
\DMorPoly$ sends an object $n$ to the polynomial algebra $R_n:=k[x_1,
\ldots, x_n]$, and a bounded chain complex of Soergel bimodules to the induced
object in $\Derived({}_{R_n} \mathrm{grbmod}_{R_n})^{\mathrm{gr-perf}}$, i.e. the chain complex
considered up to quasi-isomorphism.
Thinking of this as a homotopy coherent version of `taking homology' motivates the notation $\Hloc$. 
\end{observation}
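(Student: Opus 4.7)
The plan is to trace through the construction of $\Hloc$ from \cref{prop:Hloc}, which arose as the adjunct of the monoidal $\addkBZ$-enriched functor $\SBim \to \MorPoly \to \DMorPoly$ under the $(\Kbloc, \mathrm{forget})$-adjunction of~\eqref{eq:Kbloc-adjunction}. In particular, $\Hloc$ factors as
\[
\Kbloc(\SBim) \xrightarrow{\eta} \Kbloc(\SBim) \xrightarrow{\Kbloc(\SBim \to \DMorPoly)} \DMorPoly,
\]
but more usefully, by the triangle identities, it is the unique shift-preserving $k$-linear exact (i.e.\ $\stkBZ$-enriched) monoidal $(\infty,2)$-functor whose precomposition with the unit $\SBim \to \Kbloc(\SBim)$ of~\eqref{eq:SBim-to-KblocSBim} equals the given $\SBim \to \MorPoly \to \DMorPoly$.

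First, I would verify the claim on objects. By \cref{prop:Kbloc-Sbim-explicit}, the unit $\SBim \to \Kbloc(\SBim)$ is a bijection on isomorphism classes of objects, so the object-level action of $\Hloc$ is determined by and equal to the object-level action of $\SBim \to \MorPoly \to \DMorPoly$. By construction of $\SBim \to \MorPoly$ (\cref{cor:SBim-to-Mor-faithful} together with the definition of $\BSbimp \to \MorPoly$), the object $n$ is sent to $R_n$, and \cref{cor:finally-our-morita-categories}.\eqref{item-cor:finally-3} tells us that $\MorPoly \to \DMorPoly$ is the identity on (isomorphism classes of) objects. Hence $\Hloc$ sends $n$ to $R_n$, as claimed.

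Second, I would check the description on hom-categories. Fixing an object $n$, the $\addkBZ$-enriched functor $\SBim \to \MorPoly \to \DMorPoly$ induces, on the endomorphism hom-category at $n$, the composite additive $k$-linear shift-preserving functor
\[
\SBim_n \hookrightarrow {}_{R_n}\grbmod_{R_n}^{\mathrm{gr\text{-}cp}} \hookrightarrow \Derived({}_{R_n}\grbmod_{R_n})^{\mathrm{gr\text{-}perf}},
\]
where the first inclusion comes from the faithfulness of $\SBim \to \MorPoly$ (\cref{prop:sbimiscorrect}) and the second inclusion, which views a graded-compact-projective bimodule as a chain complex concentrated in degree zero, comes from \cref{cor:finally-our-morita-categories}.\eqref{item-cor:finally-3}. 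Because $\Kbloc$ is defined by applying $\Kb$ homwise and because $\Kb\colon \add_k^{B\mathbb{Z}} \to \st_k^{B\mathbb{Z}}$ is the left adjoint to the forgetful functor, the induced exact functor on hom-categories
\[
\Kb(\SBim_n) \longrightarrow \Derived({}_{R_n}\grbmod_{R_n})^{\mathrm{gr\text{-}perf}}
\]
is the unique $k$-linear, shift-preserving, exact extension of the above additive functor along the degree-zero inclusion $\SBim_n \hookrightarrow \Kb(\SBim_n)$, see \cref{cor:universalch} and \cref{prop:presentablestKI}.\eqref{item-prop:presentablestKI-1}.

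Finally, I would observe that the termwise prescription ``send a bounded chain complex of Soergel bimodules to the corresponding chain complex in $\Derived({}_{R_n}\grbmod_{R_n})^{\mathrm{gr\text{-}perf}}$'' defines such an exact, shift-preserving, $k$-linear extension: it is exact because in the stable $\infty$-category of chain complexes, fiber/cofiber sequences are computed termwise, and it agrees with the original additive functor on degree-zero complexes by inspection. By the uniqueness of the extension, this is the functor induced by $\Hloc$; this is what was to be shown. The only nontrivial input is the compatibility of the chain-complex construction with the universal property of $\Kb$, which is exactly the content of \cref{cor:universalch} and has already been handled in the paper.
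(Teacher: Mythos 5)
Your proposal is correct and follows essentially the paper's own (implicit) argument: the observation is just the homwise unpacking of the adjunct of $\SBim \to \MorPoly \to \DMorPoly$ under the $(\Kbloc,\mathrm{forget})$-adjunction, using that $\Kbloc$ is defined by applying $\Kb$ homwise, so that on hom-categories $\Hloc$ is the unique exact, $k$-linear, shift-preserving extension guaranteed by \cref{cor:universalch}, which the termwise ``same complex up to quasi-isomorphism'' functor visibly provides. One minor slip: your displayed factorization of $\Hloc$ at the start is garbled (the adjunct is the counit composed with $\Kbloc$ applied to $\SBim \to \DMorPoly$, i.e.\ $\Kbloc(\SBim) \to \Kbloc(\DMorPoly) \to \DMorPoly$, not what is written), but this is harmless since your argument only uses the correct characterization of $\Hloc$ via precomposition with the unit $\SBim \to \Kbloc(\SBim)$.
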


The unpacking of the functor $\Hloc$ from \cref{prop:Hloc} in \cref{obs:Hloc-unpacked} immediately implies the following:
\begin{corollary} \label{handover2} The induced functor on homotopy categories \[h_1\Hloc \colon h_1\Kbloc(\SBim) \to h_1\DMorPoly\] agrees with the functor $h_1{\Hloc}$ from \eqref{eq:h1Hloc}. 
\end{corollary}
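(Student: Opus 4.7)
The statement is essentially an unpacking of the various constructions, so my plan is simply to trace through the definitions and match everything up. The proof will proceed in two short steps: identify the source and target of $h_1\Hloc$ with the categories from Section 2, then check that the functor is the same on objects and on (iso classes of) morphisms.

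First, I would identify the source and target. By \cref{cor:h1KSBim}, the homotopy $1$-category $h_1\Kbloc(\SBim)$ agrees with the monoidal $1$-category $h_1\oldKbloc{\SBim}$ from \cref{def:oldhoneKbloc}, whose morphisms between $n$ and $n$ are isomorphism classes in $\oldKb{\SBim_n}$. On the target side, applying $h_1$ to the $\stkBZ$-enriched $(\infty,2)$-category $\DMorPoly$ produces a $1$-category whose objects are the polynomial algebras $R_n$ and whose morphism sets between $R_n$ and $R_m$ are the isomorphism classes of objects in $\Derived({}_{R_n}\grbmod_{R_m})^{\mathrm{gr\text{-}perf}}$; by \cref{def:oldhoneMorita} this is precisely $h_1\DMorPoly$ in the sense of Section 2.

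Second, I would check equality on objects and on morphisms. On objects, the two functors both send $n \in \mathbb{N}_0$ to the graded polynomial algebra $R_n$: for the $\infty$-categorical $\Hloc$ this is the content of \cref{obs:Hloc-unpacked}, while for $h_1 H_{\loc}$ from \eqref{eq:h1Hloc} it is by definition. On morphisms, \cref{obs:Hloc-unpacked} tells us that at the level of hom-$(\infty,1)$-categories the functor $\Hloc$ is induced by the canonical exact functor $\Kb(\SBim_n) \to \Derived({}_{R_n}\grbmod_{R_n})^{\mathrm{gr\text{-}perf}}$ that views a bounded complex of Soergel bimodules as an object of the derived $\infty$-category. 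Passing to $h_1$ then turns chain homotopy equivalence classes (using \cref{handover3}) into quasi-isomorphism classes, which is exactly the prescription of \eqref{eq:h1Hloc}.

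There is really no hard step here; the content of the corollary is notational, justifying the use of the symbol $h_1\Hloc$ both for the functor built in \cref{sec:2} and for the homotopy $1$-category shadow of the $(\infty,2)$-functor built in this section. The only subtlety is to verify that the identifications of source and target $1$-categories are compatible with the functor on the nose, but both sides are manifestly determined by ``view a chain complex of Soergel bimodules in the derived category of graded bimodules'' and ``take isomorphism classes,'' so the identification is automatic from the construction of $\Hloc$ as the unique factorization provided by the adjunction~\eqref{eq:Kbloc-adjunction}.
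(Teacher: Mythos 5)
Your proposal is correct and follows essentially the same route as the paper, which deduces the corollary directly from the unpacked description of $\Hloc$ in \cref{obs:Hloc-unpacked}; your spelling out of the source/target identifications via \cref{cor:h1KSBim}, \cref{handover3}, and \cref{def:oldhoneMorita} is just a more explicit version of that one-line argument.
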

    
\begin{nota} \label{not:fiberfunctorst}
Recall from \cref{obs:dmor-to-st} that $\DMoritaS$ has a symmetric monoidal fully faithful $\stkBZ$-enriched functor 
\[\DMoritaS \hookrightarrow \stkBZ.
\] 
We will abuse notation
and also denote by $\Hloc$ the monoidal $\stkBZ$-enriched composite
\begin{equation}\label{eq:fiber-functor}
\Hloc \colon \Kbloc(\Sbim) \to \DMorPoly \hookrightarrow \DMoritaS \hookrightarrow \stkBZ.
\end{equation}
The composite sends a graded $k$-algebra $A$ to the
stable $\infty$-category of bounded chain complexes of graded-compact-projective
right $A$-modules, with $\mbbZ$-action given by the internal (i.e.
non-homological) grading shift. 
\end{nota}

\begin{lemma}\label{lem:SBim-to-st-faithful}
The composite $\addkBZ$-enriched functor $\SBim \to \Kbloc(\SBim) \to \DMorPoly$ and its further composite 
$\SBim \to \Kbloc(\SBim) \to\DMorPoly \hookrightarrow \stkBZ$ are faithful. 
\end{lemma}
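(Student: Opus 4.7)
The plan is to unpack the construction of $\Hloc$ and reduce to combining three faithfulness facts that already appear in the excerpt. Recall that faithful (i.e.\ $0$-faithful) functors of $(\infty,2)$-categories are precisely those inducing fully faithful functors on hom-$(\infty,1)$-categories, and that such functors are closed under composition.

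First, I would observe that by construction of $\Hloc$ in \cref{prop:Hloc}, the composite $\SBim \to \Kbloc(\SBim) \xrightarrow{\Hloc} \DMorPoly$ is obtained from the monoidal $\addkBZ$-enriched functor $\SBim \to \MorPoly \to \DMorPoly$ by passing across the adjunction $\Kbloc \dashv \mathrm{forget}$. In other words, the unit $\SBim \to \Kbloc(\SBim)$ followed by $\Hloc$ agrees with the original functor $\SBim \to \MorPoly \to \DMorPoly$. It therefore suffices to show the latter composite is faithful.

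Next, I would argue each of these factors is faithful. The first, $\SBim \to \MorPoly$, is faithful by \cref{cor:SBim-to-Mor-faithful} (indeed, faithfulness is part of the defining factorization of $\SBim$ in \cref{def:SBim}). The second, $\MorPoly \to \DMorPoly$, is faithful by \cref{cor:finally-our-morita-categories}.\eqref{item-cor:finally-3}: on hom-categories it is given by the fully faithful additive $k$-linear $\mathbb{Z}$-equivariant inclusion ${}_A\grbmod_B^{\mathrm{gr\text{-}cp}} \hookrightarrow \Derived({}_A\grbmod_B)^{\mathrm{gr\text{-}perf}}$. Composing these gives faithfulness of $\SBim \to \MorPoly \to \DMorPoly$, and hence of $\SBim \to \Kbloc(\SBim) \to \DMorPoly$.

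For the second statement, I would then compose with $\DMorPoly \hookrightarrow \DMoritaS \hookrightarrow \stkBZ$, which by \cref{obs:dmor-to-st} is a symmetric monoidal \emph{fully faithful} $\stkBZ$-enriched functor; in particular it induces fully faithful functors on hom-$(\infty,1)$-categories and so is faithful as a functor of $(\infty,2)$-categories. Composing this with the already-established faithful functor $\SBim \to \DMorPoly$ yields the claim. There is no essential obstacle: the only thing to verify carefully is that the composition-closure of the $0$-faithful class, which was recorded in \cref{exa:surj}, applies in the enriched setting—this is immediate since all functors involved are $\addkBZ$- or $\stkBZ$-enriched and fully faithfulness of the homwise components composes to fully faithfulness.
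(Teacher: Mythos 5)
Your proposal is correct and follows essentially the same route as the paper's proof: factor the composite through $\SBim \to \MorPoly \to \DMorPoly$, use \cref{cor:SBim-to-Mor-faithful} and \cref{cor:finally-our-morita-categories} for the two factors, and then compose with the fully faithful inclusion $\DMorPoly \hookrightarrow \stkBZ$ from \cref{obs:dmor-to-st}. The only difference is that you spell out the adjunction argument and the composition-closure of faithfulness more explicitly, which the paper leaves implicit.
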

\begin{proof}
We show that the former functor is faithful, the latter functor is a composite with a fully faithful functor and hence also faithful. By construction, the former functor factors as $\SBim \to \MorPoly\to \DMorPoly$,  the first of which is faithful by \cref{cor:SBim-to-Mor-faithful}, and  the second is faithful since the induced map on hom-categories between two polynomial algebras $A$ and $B$ is given by the full inclusion. 
\[{}_{A}\mathrm{grbmod}^{\mathrm{gr-cp}}_{B} \hookrightarrow \cD({}_{A} \mathrm{grbmod}_{B})^{\mathrm{gr-perf}}
\]
by \cref{cor:finally-our-morita-categories}.\eqref{item-cor:finally-3}.
\end{proof}

\section{Prebraidings and braidings via \texorpdfstring{$\infty$}{infinity}-operads}

\label{sec:prebraidings-to-E2-str}
\newcommand{\Opun}{\mathrm{Op}^{\mathrm{un}}}

In this section, we introduce the operadic machinery we use in our construction of the braiding on $\Kbloc(\SBim)$. Throughout, we use Lurie's theory of $\infty$-operads developed in~\cite[\S~2]{HA}. \cref{appendix:operads} contains an introduction to our terminology and notation. 
We remind the reader that for an $\infty$-operad $\cO$ we denote its $\infty$-category of operators by $\cO^{\otimes} \to \Fin_*$, and its underlying $\infty$-category by $\underline{\cO}$ and if it is clear from context sometimes also just by $\cO$. Given another $\infty$-operad $\cP$, we denote the $\infty$-operad of $\cO$-algebras in $\cP$ by $\Alg_{\cO}(\cP)$ with underlying $\infty$-category $\underline{\Alg}_{\cO}(\cP)$ --- if clear from context sometimes also just denoted by $\Alg_{\cO}(\cP)$ --- and space of objects $\underline{\Alg}_{\cO}(\cP)^{\simeq} = \hom_{\Op}(\cO, \cP)$. 
Using the left adjoint $\cat \to \Op$ of the underlying-category functor $\underline{(-)}  \colon  \Op \to \cat$, any $\infty$-category $\cC$ can be considered as an $\infty$-operad with empty non-1-ary mapping spaces. Abusing notation, we will also denote this free $\infty$-operad on $\cC$ by $\cC$.

\subsection{Recollections on unital \texorpdfstring{$\infty$}{infty}-operads}\label{subsec:operad-recollection}
We recall some facts about $\infty$-operads from~\cite{HA} and~\cite{SY19} which we will use throughout.

\begin{definition}
    An $\infty$-operad $\cO$ is \emph{unital} if for every color $X \in \underline{\cO}$, the $0$-ary mapping space $\Mul_{\cO}(\emptyset, X)$ is contractible. We let $\Opun$ denote the full subcategory of the $\infty$-category of $\infty$-operads $\Op$ on the unital $\infty$-operads.
\end{definition}

\begin{example}
\label{exm:Enunital}
For all $n \geq 0$, the $\infty$-operads $\EE_n$ are unital.
\end{example}

For $\infty$-operads $\cO$ and $\cP$, recall that the $\infty$-category $\underline{\Alg}_{\cO}(\cP)$ of $\cO$-algebras in $\cP$ admits a \emph{pointwise} operad structure constructed in~\cite[Ex.~3.2.4.4]{HA} and henceforth denoted $\Alg_{\cO}(\cP)$. 

\begin{lemma}
\label{lem:unitalcharacterization} Let $\cO$ be an $\infty$-operad and consider the operad maps $\cO \to \cO \otimes \EE_0$ and $\Alg_{\EE_0}(\cO) \to \cO$ induced from the operad map $\Triv \to \EE_0$. Then the following hold:
\begin{enumerate}
\item \label{itm:unitalchar-1} The $\infty$-operad $\cO \otimes \EE_0$ is unital. Moreover, $\cO$ is unital if and only if the operad map $\cO \to \cO \otimes \EE_0$   is an isomorphism.
\item \label{itm:unitalchar-2} The $\infty$-operad $\Alg_{\EE_0}(\cO)$ is unital. Moreover, $\cO$ is unital if and only if the operad map $\Alg_{\EE_0}(\cO) \to \cO$    is an isomorphism.
\end{enumerate}
\end{lemma}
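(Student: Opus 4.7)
Both parts rest on the universal property of $\EE_0$ as the ``free unital structure'' operad: for any symmetric monoidal $\infty$-category $\cC$ there is an equivalence $\Alg_{\EE_0}(\cC) \simeq \cC_{1_\cC/}$, and more generally the colors of $\Alg_{\EE_0}(\cO)$ are pairs $(X, 1_X)$ with $X \in \underline{\cO}$ and $1_X \in \Mul_\cO(\emptyset; X)$. The two constructions $\cO \otimes \EE_0$ and $\Alg_{\EE_0}(\cO)$ act as dual ``unitalizations'' of $\cO$: the former freely adjoins a compatible $0$-ary operation to each color, while the latter parametrizes all such operations already present in $\cO$.

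For part~\eqref{itm:unitalchar-1}, I plan to deduce unitality of $\cO \otimes \EE_0$ from the universal property of the Boardman--Vogt tensor product, $\Alg_{\cO \otimes \EE_0}(\cC) \simeq \Alg_\cO(\cC_{1_\cC/})$: every such algebra carries a canonical $0$-ary operation compatibly with its $\cO$-structure, which forces the $0$-ary multi-spaces of $\cO \otimes \EE_0$ to be contractible (by testing against a universal target). The ``only if'' direction of the second claim is then immediate. For ``if'', the plan is to identify $-\otimes \EE_0$ as the left adjoint to $\Opun \hookrightarrow \Op$ by verifying that $\EE_0$ is an idempotent commutative algebra in $(\Op, \otimes)$, i.e.\ that the multiplication $\EE_0 \otimes \EE_0 \to \EE_0$ is an equivalence. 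This in turn follows from the computation $\Alg_{\EE_0 \otimes \EE_0}(\cC) \simeq (\cC_{1_\cC/})_{1_\cC/} \simeq \cC_{1_\cC/}$, using that a pointing of an already-pointed object is forced to coincide with its existing pointing. Standard reflective-localization theory then yields that the unit $\cO \to \cO \otimes \EE_0$ is an equivalence precisely on $\cO \in \Opun$.

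Part~\eqref{itm:unitalchar-2} is more direct. For unitality of $\Alg_{\EE_0}(\cO)$, I would compute $\Mul_{\Alg_{\EE_0}(\cO)}(\emptyset; (X, 1_X))$ using the formula for multi-mapping spaces in algebra operads and obtain the based path space of $\Mul_\cO(\emptyset; X)$ at $1_X$, which is contractible. The ``only if'' of the iff-statement follows from this. For ``if'', assuming $\cO$ is unital, the forgetful map $\Alg_{\EE_0}(\cO) \to \cO$ has contractible fibers on colors (namely the pointing spaces $\Mul_\cO(\emptyset; X) \simeq \pt$) and is hence an equivalence on spaces of colors; the analogous fiber-by-fiber check on multi-mapping spaces, together with the vanishing of compatibility data in the unital case, completes the argument.

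The main obstacle is the forward direction of part~\eqref{itm:unitalchar-1}, since establishing that $-\otimes\EE_0$ is the reflection onto $\Opun \subseteq \Op$ is the only genuinely structural step. The idempotent-algebra route sketched above is the cleanest; a fallback would be to analyze $\cO \otimes \EE_0$ via an explicit pushout presentation of $\infty$-operads and verify arity-by-arity that the map $\cO \to \cO \otimes \EE_0$ preserves all multi-mapping spaces of arity $\geq 1$ and adjoins contractible $0$-ary spaces, which then agrees with the existing $0$-ary spaces precisely in the unital case.
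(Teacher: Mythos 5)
The weak point is part~\eqref{itm:unitalchar-1}, which the paper does not reprove at all: it simply cites \cite[Prop.~2.3.1.9]{HA}. You propose to re-derive that result, and as sketched the key steps have gaps. First, contractibility of $\Mul_{\cO\otimes\EE_0}(\emptyset;X)$ does not follow from the observation that $(\cO\otimes\EE_0)$-algebras in symmetric monoidal targets are canonically pointed: nullary mapping spaces of an $\infty$-operad $\cP$ are detected by mapping \emph{into} $\cP$ (they are the fibers of $\Hom_{\Op}(\EE_0,\cP)\to\Hom_{\Op}(\Triv,\cP)\simeq\underline{\cP}^{\simeq}$ --- this is exactly the paper's key observation in its proof of part~\eqref{itm:unitalchar-2}), whereas the universal property of the Boardman--Vogt tensor product only computes maps \emph{out of} $\cO\otimes\EE_0$. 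Second, the computation $\Alg_{\EE_0\otimes\EE_0}(\cC)\simeq(\cC_{1_\cC/})_{1_\cC/}\simeq\cC_{1_\cC/}$ for symmetric monoidal $\cC$ is correct but does not show that the operad map $\EE_0\to\EE_0\otimes\EE_0$ is an equivalence: even a natural equivalence of algebra categories over symmetric monoidal targets identifies at best symmetric monoidal envelopes; to get idempotency in $\Op$ you must compare $\Hom_{\Op}(\EE_0\otimes\EE_0,\cR)\simeq\Hom_{\Op}(\EE_0,\Alg_{\EE_0}(\cR))$ with $\Hom_{\Op}(\EE_0,\cR)$ for \emph{arbitrary} operads $\cR$, i.e.\ precisely a part-\eqref{itm:unitalchar-2}-type statement about $\Alg_{\EE_0}(\cR)\to\cR$, which in the paper's logic is deduced \emph{from} part~\eqref{itm:unitalchar-1}; your order of deductions therefore risks circularity. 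Third, ``standard reflective-localization theory'' only says the unit is an equivalence on local objects; identifying the local objects as exactly the unital operads is the content to be proved. Your fallback (arity-by-arity analysis of an explicit model of $\cO\otimes\EE_0$) would indeed close these gaps, but it is essentially Lurie's proof, and the economical fix is to cite \cite[Prop.~2.3.1.9]{HA} as the paper does.

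Part~\eqref{itm:unitalchar-2} of your proposal is correct but follows a different route from the paper. You unwind the multimapping spaces of $\Alg_{\EE_0}(\cO)$ directly: the nullary space at $(X,1_X)$ is the space of nullary operations on $X$ equipped with a path to $1_X$, hence contractible, and when $\cO$ is unital the forgetful map to $\cO$ has contractible fibers on colors (the spaces $\Mul_\cO(\emptyset;X)$) and on each multimapping space (the unit-compatibility homotopies live in the contractible $\Mul_\cO(\emptyset;Y)$). This works, but to make ``the vanishing of compatibility data'' precise you must invoke the explicit description of multimorphisms in algebra operads (\cite[Constr.~3.2.4.1]{HA}; cf.\ \cref{obs:mapping-space-in-AlgOP}). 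The paper instead argues formally: unitality of any $\cP$ is equivalent to $\Hom_{\Op}(\EE_0,\cP)\to\Hom_{\Op}(\Triv,\cP)$ being an equivalence, which for $\cP=\Alg_{\EE_0}(\cO)$ follows from the tensor--hom adjunction together with part~\eqref{itm:unitalchar-1}, and the ``if'' direction is a Yoneda argument among unital operads using $\cQ\otimes\EE_0\simeq\cQ$. Your explicit check is a fine alternative; the paper's version has the advantage of never needing the explicit model of $\Alg_{\EE_0}(\cO)$, at the cost of leaning entirely on part~\eqref{itm:unitalchar-1}.
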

\begin{proof}
Part \eqref{itm:unitalchar-1} follows directly from~\cite[Prop.~2.3.1.9]{HA}. For part \eqref{itm:unitalchar-2}, note that for an $\infty$-operad $\cP$, the fiber of $\Hom_{\Op}(\EE_0, \cP) \to \Hom_{\Op}(\Triv, \cP) = \underline{ \cP}^{\simeq}$ at a color $X\in \underline{\cP}$ is the $0$-ary mapping space $\Mul_{\cP}(\emptyset, X)$ and hence that $\cP$ is unital if and only if $\Hom_{\Op}(\EE_0, \cP) \to \Hom_{\Op}(\Triv, \cP)$ is an isomorphism. In particular, evaluating at $\cP = \Alg_{\EE_0}(\cO)$ for an $\infty$-operad $\cO$, and using that $\Hom_{\Op}(-, \Alg_{\EE_0}(\cO)) \simeq \Hom_{\Op}(- \otimes \EE_0, \cO)$  and part \eqref{itm:unitalchar-1}, it follows that $\Alg_{\EE_0}(\cO)$ is unital. If $\cO$ is moreover unital, let $\cQ$ be a unital $\infty$-operad and consider the map 
$\Hom_{\Opun}(\cQ, \Alg_{\EE_0}(\cO))\to \Hom_{\Opun}(\cQ, \cO)$ which is equivalent to $\Hom_{\Opun}(\cQ \otimes \EE_0, \cO) \to \Hom_{\Opun}(\cQ, \cO)$. Since $\cQ$ is unital, this is an isomorphism by part \eqref{itm:unitalchar-1}. This completes the proof of part \eqref{itm:unitalchar-2}. 
 \end{proof}
 Evaluating \cref{lem:unitalcharacterization}.\eqref{itm:unitalchar-1} at $\cO= \EE_0$ shows that  $\EE_0$ is a (unital) idempotent in $\Op$ (as also follows from Dunn additivity) with image $\Opun$.
 
 The following is an immediate consequence of \cref{lem:unitalcharacterization}, also see~\cite[Prop.~2.3.1.9]{HA}.
 \begin{corollary} The full inclusion $\Opun \hookrightarrow \Op$ has left and right adjoints
 \[
 \begin{tikzcd}
	\Opun && \Op.
	\arrow[""{name=0, anchor=center, inner sep=0}, hook, from=1-1, to=1-3]
	\arrow["- \otimes \EE_0"'{name=1}, bend right=20, from=1-3, to=1-1]
	\arrow["\Alg_{\EE_0}(-)"{name=2}, bend left =20, from=1-3, to=1-1]
	\arrow["\dashv"{anchor=center, rotate=-90}, draw=none, from=2, to=0]
	\arrow["\dashv"{anchor=center, rotate=-90}, draw=none, from=0, to=1]
 \end{tikzcd}
\]
\end{corollary}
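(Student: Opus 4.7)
The plan is to deduce both adjunctions formally from Lemma~\ref{lem:unitalcharacterization}, using the tensor–hom adjunction for $\infty$-operads, namely the natural equivalence $\Hom_{\Op}(\cO \otimes \cO', \cP) \simeq \Hom_{\Op}(\cO, \Alg_{\cO'}(\cP))$ (see \cite[\S~3.2.4]{HA}). Since $\Opun \hookrightarrow \Op$ is a full subcategory, for any $\cP \in \Opun$ and $\cQ \in \Op$ with $\cQ$ unital, one has $\Hom_{\Opun}(\cQ,\cP) = \Hom_{\Op}(\cQ,\cP)$; this will let us freely pass back and forth between mapping spaces in the two $\infty$-categories.

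For the left adjoint, I would fix $\cO \in \Op$ and $\cP \in \Opun$ and compute
\[
\Hom_{\Opun}(\cO \otimes \EE_0, \cP) \;=\; \Hom_{\Op}(\cO \otimes \EE_0, \cP) \;\simeq\; \Hom_{\Op}(\cO, \Alg_{\EE_0}(\cP)) \;\simeq\; \Hom_{\Op}(\cO, \cP),
\]
where the first equality uses fullness together with \cref{lem:unitalcharacterization}.\eqref{itm:unitalchar-1} (so $\cO \otimes \EE_0$ lands in $\Opun$), the middle equivalence is the tensor–hom adjunction, and the final equivalence uses \cref{lem:unitalcharacterization}.\eqref{itm:unitalchar-2} to identify $\Alg_{\EE_0}(\cP) \simeq \cP$ since $\cP$ is unital. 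These identifications are all natural in $\cO$ and $\cP$, which yields the desired adjunction $(-)\otimes \EE_0 \dashv (\Opun \hookrightarrow \Op)$.

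For the right adjoint, symmetrically, for $\cO \in \Op$ and $\cP \in \Opun$ I would compute
\[
\Hom_{\Opun}(\cP, \Alg_{\EE_0}(\cO)) \;=\; \Hom_{\Op}(\cP, \Alg_{\EE_0}(\cO)) \;\simeq\; \Hom_{\Op}(\cP \otimes \EE_0, \cO) \;\simeq\; \Hom_{\Op}(\cP, \cO),
\]
where the first equality uses fullness and \cref{lem:unitalcharacterization}.\eqref{itm:unitalchar-2} (so $\Alg_{\EE_0}(\cO) \in \Opun$), the middle equivalence is again tensor–hom, and the final equivalence uses \cref{lem:unitalcharacterization}.\eqref{itm:unitalchar-1} to identify $\cP \otimes \EE_0 \simeq \cP$ since $\cP$ is unital. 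Naturality again gives the adjunction $(\Opun \hookrightarrow \Op) \dashv \Alg_{\EE_0}(-)$.

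There is no real obstacle here: once one has Lemma~\ref{lem:unitalcharacterization}, the corollary is a purely formal consequence of tensor–hom and of $\EE_0$ being an idempotent in $(\Op,\otimes)$ with essential image $\Opun$, as already observed just above the statement. The only mild bookkeeping point is to check that these equivalences are natural, which is automatic from the naturality of the tensor–hom adjunction and of the unit/counit maps $\cO \to \cO \otimes \EE_0$ and $\Alg_{\EE_0}(\cO) \to \cO$ appearing in Lemma~\ref{lem:unitalcharacterization}.
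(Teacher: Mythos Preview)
Your proof is correct and is exactly the unpacking the paper has in mind: the paper simply states that the corollary is an immediate consequence of \cref{lem:unitalcharacterization} (and refers to \cite[Prop.~2.3.1.9]{HA}) without spelling out the chain of equivalences, but the tensor--hom adjunction you use is precisely the mechanism already invoked in the proof of \cref{lem:unitalcharacterization}.\eqref{itm:unitalchar-2}. Your write-up is a faithful expansion of the intended one-line argument.
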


Most unital $\infty$-operads appearing in this paper arise from the following observation:
\begin{example} It follows from \cref{lem:unitalcharacterization} that for any unital $\infty$-operad $\cO$ and $\infty$-operad $\cP$ the operad $\Alg_{\cO}(\cP) \simeq \Alg_{\cO \otimes \EE_0}(\cP) \simeq \Alg_{\EE_0}(\Alg_{\cO}(\cP))$ is unital. 
In particular, since $\EE_n$ is a unital $\infty$-operad for any $n \geq 0$, it follows that for any $\infty$-operad $\cP$, the $\infty$-operads $\Alg_{\EE_n}(\cP)$ are unital.\end{example}

\begin{example} The underlying $\infty$-operad of a  symmetric monoidal $\infty$-category $\cC$  is unital if and only if the tensor unit $I$ of $\cC$ is an initial object. An example of such a symmetric monoidal $\infty$-category is given by the coCartesian tensor product on an $\infty$-category with finite coproducts. \end{example}

The coCartesian monoidal structure on an $\infty$-category with finite coproducts can be generalized to a certain \emph{coCartesian} unital operad structure on any $\infty$-category:

\begin{example}[{\cite[\S~2.4.3]{HA}}]
For any $\infty$-category $\cC$, there is a unital $\infty$-operad $\cC_{\sqcup}$ with underlying $\infty$-category $\cC$ and multi-ary mapping spaces
\[\Mul_{\cC_{\sqcup}}(X_1, \ldots, X_n; Y) \simeq  \Hom_{\cC}(X_1, Y) \times \cdots \Hom_{\cC}(X_n, Y)
\]
\end{example}
These coCartesian operads have the following universal characterization:
\begin{lemma}[{\cite[Prop. 2.4.3.9, Cor. 2.4.3.11]{HA}, \cite[Lem. 2.2.3]{SY19}}]The assignment $\cC \mapsto \cC_{\sqcup}$ induces a fully faithful functor $\cat \hookrightarrow \Opun$ which is right adjoint to the underlying-category functor $\Opun \to \cat$.
\end{lemma}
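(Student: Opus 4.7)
The plan is to invoke Lurie's construction of the coCartesian $\infty$-operad and verify the universal property directly, since the claim is essentially a recollection of \cite[Prop.~2.4.3.9, Cor.~2.4.3.11]{HA}. First I would recall the explicit construction: given an $\infty$-category $\cC$, the $\infty$-operad $\cC_\sqcup$ is defined via its $\infty$-category of operators $\cC_\sqcup^\otimes \to \Fin_*$, whose objects over $\langle n \rangle \in \Fin_*$ are $n$-tuples of objects of $\cC$, and where operations encode tuples of $1$-ary maps by the stated formula for $\Mul_{\cC_\sqcup}$. The case $n=0$ gives an empty product, which is $\pt$, showing that $\cC_\sqcup$ is unital.

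Next, I would verify the adjunction $(\underline{(-)} \dashv (-)_\sqcup)$ on unital $\infty$-operads. For a unital $\infty$-operad $\cO$ and an $\infty$-category $\cC$, a map of $\infty$-operads $\cO \to \cC_\sqcup$ is determined by its underlying functor $\underline{\cO} \to \cC$, because every $n$-ary operation in $\cO$ has, by unitality, canonical ``partial evaluation'' maps to each of its $1$-ary restrictions, and these $1$-ary restrictions combine in $\cC_\sqcup$ via the product formula above. This gives a canonical equivalence
\[
\Hom_{\Opun}(\cO, \cC_\sqcup) \simeq \Hom_{\cat}(\underline{\cO}, \cC),
\]
natural in both variables, establishing the adjunction.

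Finally, full faithfulness of $(-)_\sqcup \colon \cat \hookrightarrow \Opun$ follows because the counit $\underline{(\cC_\sqcup)} \to \cC$ is tautologically an equivalence (the underlying category of the coCartesian $\infty$-operad on $\cC$ is $\cC$ itself, by construction of the fiber over $\langle 1\rangle$). Hence the right adjoint is fully faithful.

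The main subtlety —and the step that requires the actual work carried out in \cite[\S 2.4.3]{HA}— is the careful construction of $\cC_\sqcup^\otimes$ as an $\infty$-operad, namely exhibiting $\cC_\sqcup^\otimes \to \Fin_*$ as a genuine $\infty$-operad (in particular, checking the coCartesian lifting conditions over inert morphisms in $\Fin_*$). Since we are merely recollecting this result for later use, I would simply cite \cite[Prop.~2.4.3.9, Cor.~2.4.3.11]{HA} for the construction and universal property, and \cite[Lem.~2.2.3]{SY19} for the formulation in the present form.
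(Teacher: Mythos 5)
Your proposal is correct and matches the paper's treatment: the lemma is stated there purely as a recollection, with the construction, universal property, and adjunction delegated to \cite[Prop.~2.4.3.9, Cor.~2.4.3.11]{HA} and \cite[Lem.~2.2.3]{SY19}, exactly as you do. Your sketch of the verification (unitality forcing the multi-ary data to be determined by the underlying functor, and full faithfulness from the counit $\underline{\cC_\sqcup}\xrightarrow{\sim}\cC$) is a sound summary of why the cited results give the statement.
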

In particular, it follows that the unit of the adjunction is an operad map $\cO \to \underline{\cO}_{\sqcup}$ which induces the identity on underlying $\infty$-categories $\underline{\cO} \to \underline{\underline{\cO}_{\sqcup}} \simeq \underline{\cO}$. 
Fixing colors $X_1, \ldots, X_n, Y \in \underline{\cO}$, this induces a map \begin{equation}
\label{eq:sigma}
\sigma \colon \Mul_{\cO}(X_1, \ldots, X_n; Y) \to \Hom_{\underline{\cO}}(X_1, Y) \times \cdots \times \Hom_{\underline{\cO}}(X_n, Y).
\end{equation}
The components $\Mul_{\cO}(X_1, \ldots, X_n; Y) \to \Hom_{\underline{\cO}}(X_i, Y)$ of this map may be thought of as inserting units in all but the $i$-th slot.

\subsection{The operads \texorpdfstring{$\AA_2$}{A2} and \texorpdfstring{$\T_2$}{T2}}
\label{subsec:A2-T2}
For a unital $\infty$-operad $\cO$, we can use the unit-inserting map $$\sigma \colon \Mul_{\cO}(X_1, \ldots, X_n; Y) \to \Hom_{\underline{\cO}}(X_1, Y) \times \cdots \times \Hom_{\underline{\cO}}(X_n, Y)$$ from~\eqref{eq:sigma} to give a quick definition of the well-known notion of a unital $\AA_2$-algebra, which encodes a left and right-unital binary multiplication without any associativity requirements:
\begin{definition}
\label{def:A2algebrabyhand}
A \emph{unital $\AA_2$-algebra} in a unital $\infty$-operad $\cO$ is given by a color $X \in \underline{\cO}$ equipped with an element of the pullback $$\Mul_{\cO}(X,X;X)\times_{\Hom_{\underline{\cO}}(X,X)^{\times 2}} \{(\id_X, \id_X)\},$$ i.e. a  $2$-ary map $\mu \in \Mul_{\cO}(X, X; X)$ and an identification of its image $(\mu(1,-), \mu(-1))$ under $\sigma \colon \Mul_{\cO}(X, X; X) \to \Hom_{\underline{\cO}}(X, X)^{\times 2}$ with $(\id_X, \id_X)$.

A \emph{unital $\AA_2$-algebra} in a (not necessarily unital) $\infty$-operad $\cO$ is a unital $\AA_2$-algebra in the unital $\infty$-operad $\Alg_{\EE_0}(\cO)$.
\end{definition}

\cref{lem:unitalcharacterization}.\eqref{itm:unitalchar-2} ensures that \cref{def:A2algebrabyhand} is well-defined for non-unital operads and unambigous for unital $\infty$-operads.

We will also be concerned with the following relative version:
\begin{definition}
\label{def:T2algebrabyhand}
A \emph{$\T_2$-algebra} in a unital operad $\cO$ is given by a pair of colors $X,Y \in \underline{\cO}$ equipped with an element of the pullback $$\Mul_{\cO}(X,X;Y) \times_{\Hom_{\underline{\cO}}(X,Y)^{\times 2}} \Hom_{\underline{\cO}}(X,Y),$$ i.e. a $2$-ary operation $\mu \in \Mul_{\cO}(X,X;Y)$ and an identification of the $1$-ary operations $\mu(-, 1)$ and $\mu(1,-)$ in $\Hom_{\underline{\cO}}(X,Y)$. 

A \emph{$\TT_2$-algebra} in a (not necessarily unital) $\infty$-operad $\cO$ is a $\TT_2$-algebra in the unital $\infty$-operad $\Alg_{\EE_0}(\cO)$. 
\end{definition}

We now construct $\infty$-operads which corepresent $\T_2$ and $\AA_2$-algebras.  
For this purpose, for any $n \geq 0$,  consider the functor
\[ \nabla_n: \Op \to \Spaces, \hspace{0.5cm} \cO \mapsto \Hom_{(\cat)_{/\Fin_*}}
\left([1] \xrightarrow{\{\underline{n}_+ \to \underline{1}_+\}} \Fin_*,
~~\cO^{\otimes} \to\Fin_* \right), \] which may intuitively be thought of as sending an $\infty$-operad
$\cO$ to the space of $(n+1)$-tuples of colors $X_1,\ldots, X_n,Y \in \underline{\cO}$ equipped
with an $n$-ary map $\mu \in \Mul_{\cO}(X_1, \ldots, X_n; Y)$.

\begin{lemma}
\label{lem:nabla}
 The functor $\nabla_n \colon  \Op \to \Spaces$ is corepresented by an $\infty$-operad, also denoted $\nabla_n$.
\end{lemma}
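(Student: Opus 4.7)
The plan is to apply the adjoint functor theorem. Recall from \cite[Prop.~2.1.4.6]{HA} that $\Op$ is a presentable $\infty$-category. Hence, to show that $\nabla_n \colon \Op \to \Spaces$ is corepresentable, it suffices to show that $\nabla_n$ preserves small limits and is accessible. Granted this, the adjoint functor theorem supplies a left adjoint $L \colon \Spaces \to \Op$, and the $\infty$-operad $L(*) \in \Op$ will corepresent $\nabla_n$.

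For limit-preservation, I would first observe that $\nabla_n$ factors as the composite
\[
\Op \xra{(-)^\otimes} (\cat_\infty)_{/\Fin_*} \xra{\Hom_{(\cat_\infty)_{/\Fin_*}}(\{[1] \to \Fin_*\},\,-)} \Spaces.
\]
The first functor preserves small limits since $\Op$ is a full subcategory of $(\cat_\infty)_{/\Fin_*}$ closed under limits (see \cite[Rem.~2.1.2.10]{HA}). The second functor is a representable functor on $(\cat_\infty)_{/\Fin_*}$ and hence preserves small limits. Composing, $\nabla_n$ preserves small limits.

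For accessibility, observe that $[1]$ is a compact object of $\cat_\infty$, so the mapping space $\Hom_{\cat_\infty}([1], -) \colon \cat_\infty \to \Spaces$ preserves filtered colimits. Taking fibers over the compact object $\{[1] \to \Fin_*\}$ in the mapping space to $\Fin_*$ is a finite limit and hence also preserves sufficiently filtered colimits. Combined with accessibility of $(-)^\otimes \colon \Op \to (\cat_\infty)_{/\Fin_*}$ (which holds since this is a full subcategory inclusion of presentable $\infty$-categories), we conclude $\nabla_n$ is accessible.

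The main point worth emphasizing, though not really a difficulty, is the compatibility of limits in $\Op$ with those in $(\cat_\infty)_{/\Fin_*}$; once this is in hand the argument is routine. Intuitively, the corepresenting $\infty$-operad $\nabla_n$ is the free $\infty$-operad on a single $n$-ary operation, with $n+1$ colors $X_1,\ldots,X_n,Y$ and a single nondegenerate $n$-ary multi-morphism in $\Mul_{\nabla_n}(X_1,\ldots,X_n;Y)$; this explicit description will be useful in later arguments, but is not needed for the corepresentability statement itself.
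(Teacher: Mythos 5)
Your overall strategy — invoking the representability criterion for presentable $\infty$-categories — is a legitimate alternative to the paper's approach. The paper instead constructs the corepresenting $\infty$-operad directly, as a fibrant replacement of the $\infty$-preoperad $[1] \xrightarrow{\{\underline{n}_+ \to \underline{1}_+\}} \Fin_*$ (with the minimal marking) in Lurie's combinatorial simplicial model category of $\infty$-preoperads from \cite[\S~2.1.4]{HA}. Since every preoperad is cofibrant and every $\infty$-operad is fibrant, $\nabla_n(\cO)$ is by inspection the derived mapping space out of that preoperad, which a fibrant replacement corepresents; this route needs no separate verification of limit-preservation or accessibility.

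Your verification of those hypotheses, however, contains a genuine gap. Twice you assert that $\Op$ is a \emph{full} subcategory of $(\Cat_\infty)_{/\Fin_*}$ — once to conclude that $(-)^\otimes$ preserves small limits, and once to conclude that it is accessible. This is false: $\Op$ is a \emph{non-full} subcategory, whose morphisms are required to preserve coCartesian lifts of inert morphisms, a point the paper itself makes explicitly in the proof of \cref{prop:fs-for-infty-opds}. Neither conclusion follows from a non-full subcategory inclusion, and even a genuinely full subcategory inclusion of presentable $\infty$-categories need not be accessible. The substantive question your justification sidesteps is whether limits in $\Op$ — a priori homotopy limits in the model category of preoperads — coincide with the corresponding limits formed in $(\Cat_\infty)_{/\Fin_*}$; this is non-trivial and requires an argument. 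Salvaging your approach essentially means arguing through the model category, at which point the paper's direct fibrant-replacement construction becomes the cleaner endpoint.
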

\begin{proof} Using Lurie's combinatorial simplicial model category of $\infty$-preoperads~\cite[\S~2.1.4]{HA}, one can define the $\infty$-operad $\nabla_n$ as a fibrant resolution of the $\infty$-preoperad $[1]  \xrightarrow{\underline{n}_+ \to \underline{1}_+} \Fin_*$. 
\end{proof}

For a unital $\infty$-operad $\cO$ and a $\nabla_2$-algebra $(X,Y,Z, \mu \in \Mul_{\cO}(X,Y;Z))$, we may insert units into $\mu$ to extract morphisms $X\to Z$ and $Y \to Z$ in $\underline{\cO}$, This leads to the following:

\newcommand{\wspan}{\raisebox{0.2ex}{\scalebox{0.7}{$ \{\bullet \to \bullet \leftarrow \bullet\}$}}} 
\newcommand{\warrow}{\raisebox{0.2ex}{\scalebox{0.7}{$ \{\bullet \to \bullet\}$}}}

\begin{lemma} 
\label{lem:underlying}
The $\infty$-category $\underline{\nabla_2 \otimes \EE_0}$ underlying the unital $\infty$-operad $\nabla_2 \otimes \EE_0$ is equivalent to the `walking span' $\wspan$, i.e. the pushout $[1] \sqcup_{[0]} [1]$.  
\end{lemma}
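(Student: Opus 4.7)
The plan is to apply the Yoneda lemma in $\cat$: I will show that for every $\infty$-category $\cC$ there is a natural equivalence
\[ \Hom_\cat(\underline{\nabla_2 \otimes \EE_0}, \cC) \simeq \Hom_\cat(W, \cC), \]
where $W \coloneqq [1] \sqcup_{[0]} [1]$ denotes the walking cospan. Once these functors in $\cC$ agree naturally, Yoneda produces the desired equivalence of $\infty$-categories.

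First I would chain together two standard adjunctions recalled in \cref{subsec:operad-recollection}: the underlying-category functor $\Opun \to \cat$ has the fully faithful right adjoint $\cC \mapsto \cC_\sqcup$, and the subcategory inclusion $\Opun \hookrightarrow \Op$ has left adjoint $- \otimes \EE_0$. Since $\nabla_2 \otimes \EE_0$ is unital, these give natural equivalences
\[ \Hom_\cat(\underline{\nabla_2 \otimes \EE_0}, \cC) \simeq \Hom_{\Opun}(\nabla_2 \otimes \EE_0, \cC_\sqcup) \simeq \Hom_\Op(\nabla_2, \cC_\sqcup). \]
By the corepresentability of $\nabla_2$ established in \cref{lem:nabla}, the right-hand side is the space $\nabla_2(\cC_\sqcup)$ of triples $(X, Y, Z) \in \cC^{\times 3}$ together with a $2$-ary operation $\mu \in \Mul_{\cC_\sqcup}(X, Y; Z)$.

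Next I would use that, since $\cC_\sqcup$ is the coCartesian $\infty$-operad on $\cC$, one has the canonical identification $\Mul_{\cC_\sqcup}(X, Y; Z) \simeq \Hom_\cC(X, Z) \times \Hom_\cC(Y, Z)$. Thus the enhanced triples parametrize exactly the cospans $X \to Z \leftarrow Y$ in $\cC$, and the universal property of the pushout identifies this space with $\Hom_\cat(W, \cC)$. Assembling the chain naturally in $\cC$ and applying Yoneda completes the proof.

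Each step is essentially formal, so I do not expect a genuine obstacle; the only point requiring care is checking naturality of the above chain in $\cC$, so that Yoneda applies. This reduces to the naturality of the $(\underline{(-)}, (-)_\sqcup)$ and $(- \otimes \EE_0, \mathrm{incl})$ adjunctions, which is standard (see \cite[\S~2.4.3]{HA}).
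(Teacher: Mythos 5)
Your proof is correct and follows essentially the same route as the paper's: the same chain of adjunction equivalences $\Hom_{\cat}(\underline{\nabla_2 \otimes \EE_0}, \cC) \simeq \Hom_{\Opun}(\nabla_2 \otimes \EE_0, \cC_{\sqcup}) \simeq \Hom_{\Op}(\nabla_2, \cC_{\sqcup})$, followed by unpacking the coCartesian operad's multimorphism spaces to identify the result with maps out of $[1]\sqcup_{[0]}[1]$. The only cosmetic difference is that you spell out the Yoneda/naturality bookkeeping (and say ``cospan'' where the paper says ``span'' for the same shape $\bullet \to \bullet \leftarrow \bullet$), which the paper leaves implicit.
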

\begin{proof}
By adjunction, for any $\infty$-category $\cC$, we have 
\[ \Hom_{\cat}(\underline{\nabla_2 \otimes \EE_0}, \cC) \simeq \Hom_{\Opun}( \nabla_2 \otimes \EE_0, \cC_{\sqcup}) \simeq \Hom_{\Op} (\nabla_2, \cC_{\sqcup}).
\]
The latter space explicitly unpacks to the space of triples $X, Y, Z\in \cC$ with maps $X\to Y \leftarrow Z$ and hence $\Hom_{\Op} (\nabla_2, \cC_{\sqcup}) \simeq \Hom_{\cat}(\wspan, \cC)$.
\end{proof}

Consider the codiagonal functor $\wspan \to \warrow= [1]$ which identifies the two morphisms in the span. 

\begin{definition} We define the $\TT_2$-operad and the $\AA_2$-operad as the following pushouts of unital $\infty$-operads:
\begin{equation}\label{eq:defT2}
\begin{tikzcd}
\underline{\nabla_2 \otimes \EE_0} \arrow[r] \arrow[d]\arrow[dr, phantom, "\ulcorner", very near end]&{[1]} \otimes \EE_0  \arrow[r] \arrow[d]\arrow[dr, phantom, "\ulcorner", very near end]& \arrow[d] \EE_0 \\
\nabla_2 \otimes \EE_0  \arrow[r]& \TT_2  \arrow[r]& \AA_2 
\end{tikzcd} 
\end{equation}
\end{definition}

The definitions together with \cref{lem:underlying} immediately imply that algebras of the $\T_2$- and $\AA_2$-operad are $\T_2$- and $\AA_2$-algebras in the sense of Definitions~\ref{def:T2algebrabyhand} and~\ref{def:A2algebrabyhand}. 
\begin{corollary}\label{cor:A2T2unpacked}
Let $\cO$ be a (not necessarily unital) $\infty$-operad. 
\begin{enumerate}
\item Given a map of operads $[1] \otimes \EE_0 \to \cO$, equivalently a map of operads $[1] \to \Alg_{\EE_0}(\cO)$ specified by a morphism $f \colon A \to B$ in $\underline{\Alg}_{\EE_0}(\cO)$, the above pushout induces an isomorphism of spaces:
\[ \Hom_{{\Op_{[1] \otimes \EE_0/}}} \left( \TT_2, \cO \right) \simeq \Mul_{\Alg_{\EE_0}(\cO)}(A,A;B) \times_{\Mul_{\Alg_{\EE_0}(\cO)}(A;B)^2} \{(f,f)\}.
\]
\item Given a map of operads $\EE_0 \to \cO$, equivalently an $A\in \Alg_{\EE_0}(\cO)$, the above pushout induces an isomorphism of spaces    \begin{equation}
        \label{eq:A2operadfiberprod}
    \Hom_{\Op_{\EE_0/}}(\AA_2,\cO) \simeq
    \Mul_{\Alg_{\EE_0}(\cO)}(A,A;A) \times_{\Mul_{\Alg_{\EE_0}(\cO)}(A;A)^2} \{(\id_A,\id_A)\}
\end{equation}
\end{enumerate}
\end{corollary}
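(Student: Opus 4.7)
The plan is to derive both isomorphisms directly from the universal property of the pushouts defining $\TT_2$ and $\AA_2$ in~\eqref{eq:defT2}. The main ingredients will be the tensor-hom adjunction $\Hom_{\Op}(\cQ \otimes \EE_0, \cO) \simeq \Hom_{\Op}(\cQ, \Alg_{\EE_0}(\cO))$, the corepresentability of $\nabla_2$ from Lemma~\ref{lem:nabla}, and the identification $\underline{\nabla_2 \otimes \EE_0} \simeq \wspan$ from Lemma~\ref{lem:underlying}.

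For part~(1), I would apply $\Hom_{\Op}(-, \cO)$ to the left pushout square in~\eqref{eq:defT2}, yielding a pullback square of spaces, and pass to the slice over the specified structure map $[1] \otimes \EE_0 \to \cO$ to obtain
\[
\Hom_{\Op_{[1] \otimes \EE_0/}}(\TT_2, \cO) \simeq \Hom_{\Op_{\underline{\nabla_2 \otimes \EE_0}/}}(\nabla_2 \otimes \EE_0, \cO),
\]
where the induced structure map $\underline{\nabla_2 \otimes \EE_0} \simeq \wspan \to \cO$ is the composite of the codiagonal $\wspan \to [1]$ with the given $f \colon A \to B$. Applying the tensor-hom adjunction, the total space $\Hom_{\Op}(\nabla_2 \otimes \EE_0, \cO) \simeq \Hom_{\Op}(\nabla_2, \Alg_{\EE_0}(\cO))$ is, by Lemma~\ref{lem:nabla}, the space of $2$-ary operations in $\Alg_{\EE_0}(\cO)$, while the base space $\Hom_{\cat}(\wspan, \underline{\Alg_{\EE_0}(\cO)})$ parametrizes cospans in $\underline{\Alg_{\EE_0}(\cO)}$; the specified structure map corresponds to the constant cospan $(f, f)$.

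The key step, and the main technical obstacle, is to verify that the natural projection from $2$-ary operations to their underlying cospan---induced by the counit $\wspan \simeq \underline{\nabla_2 \otimes \EE_0} \to \nabla_2 \otimes \EE_0$---coincides with the unit-insertion map $\sigma$ from~\eqref{eq:sigma}, so that a $2$-ary operation $\mu \in \Mul_{\Alg_{\EE_0}(\cO)}(A, A; B)$ projects to the pair $(\mu(-, 1), \mu(1, -))$ of $1$-ary operations. This should follow by unwinding the computation in the proof of Lemma~\ref{lem:underlying} and checking that the two morphisms of $\wspan$ are realized by the two ``restrictions'' of a $2$-ary operation along a unit. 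Granting this, the fiber at $(f, f)$ produces the claimed formula.

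Part~(2) follows from the same argument applied to the outer pushout obtained by composing the two squares in~\eqref{eq:defT2}, which presents $\AA_2$ as the pushout of $\nabla_2 \otimes \EE_0 \leftarrow \underline{\nabla_2 \otimes \EE_0} \to \EE_0$. Here the structure map $\wspan \to \EE_0$ factors through the terminal $\infty$-category, so a map $\EE_0 \to \cO$ picking out a color $A \in \underline{\Alg_{\EE_0}(\cO)}$ induces the constant cospan $(\id_A, \id_A)$, and the same pullback calculation then yields~\eqref{eq:A2operadfiberprod}.
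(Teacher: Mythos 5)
Your overall strategy is the right one and matches what the paper intends (it simply asserts the claim is immediate from the definitions and Lemma~\ref{lem:underlying}): apply $\Hom_{\Op}(-,\cO)$ to the defining pushout~\eqref{eq:defT2} to get a pullback of spaces, identify the corners, and take fibers. However, there is a genuine error in your identification of the bottom-right corner. That corner is $\Hom_{\Op}(\underline{\nabla_2\otimes\EE_0},\cO)$, and since $\underline{\nabla_2\otimes\EE_0}$ is the free $\infty$-operad on the \emph{category} $\wspan$ --- in particular it has no $0$-ary operations, so is \emph{not} unital and hence not of the form $\cQ\otimes\EE_0$ --- the applicable adjunction here is the free--underlying adjunction $\cat\rightleftarrows\Op$, giving $\Hom_{\Op}(\underline{\nabla_2\otimes\EE_0},\cO)\simeq\Hom_{\cat}(\wspan,\underline{\cO})$, not the Boardman--Vogt tensor--hom adjunction you invoke. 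When $\cO$ is not unital these two candidate bases genuinely differ: for $\cO=\Triv$ one has $\Hom_{\cat}(\wspan,\underline{\Triv})\simeq\pt$ while $\underline{\Alg_{\EE_0}(\Triv)}=\emptyset$, so $\Hom_{\cat}(\wspan,\underline{\Alg_{\EE_0}(\Triv)})=\emptyset$. Taking the fiber over the wrong base $\Hom_{\cat}(\wspan,\underline{\cO})$ would produce a fiber product over hom-spaces in $\underline{\cO}$ rather than over $\Mul_{\Alg_{\EE_0}(\cO)}(A;B)^{\times 2}$ as in the statement.

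The fix is small but essential and uses the idempotence of $\EE_0$. Since $-\otimes\EE_0$ is a left adjoint (the reflection onto $\Opun$) it preserves pushouts, so tensoring the defining pushout square with $\EE_0$ yields a new pushout whose top-left corner is $\underline{\nabla_2\otimes\EE_0}\otimes\EE_0$ and whose other three corners are unchanged, being already unital. Now every corner is of the form $\cQ\otimes\EE_0$, and applying $\Hom_{\Op}(-,\cO)$ together with the tensor--hom adjunction yields a pullback whose bottom-right corner is $\Hom_{\Op}(\underline{\nabla_2\otimes\EE_0},\Alg_{\EE_0}(\cO))\simeq\Hom_{\cat}(\wspan,\underline{\Alg_{\EE_0}(\cO)})$, which is the base you need. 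Equivalently, one may first replace $\cO$ by the unital operad $\Alg_{\EE_0}(\cO)$, which is harmless since $\Hom_{\Op}(\cP,\cO)\simeq\Hom_{\Op}(\cP\otimes\EE_0,\cO)\simeq\Hom_{\Op}(\cP,\Alg_{\EE_0}(\cO))$ for unital $\cP$. The identification you flag as the ``main technical obstacle,'' that restriction along $\wspan\to\nabla_2\otimes\EE_0$ agrees with the unit-insertion map $\sigma$ of~\eqref{eq:sigma}, is fine and follows from the definition of $\sigma$ via the unit $\cO\to\underline{\cO}_{\sqcup}$ exactly as in the proof of Lemma~\ref{lem:underlying}. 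The same correction is needed in part~(2).
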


\begin{example}\label{exm:fromE1toA2} Using \cref{cor:A2T2unpacked}, we may factor the canonical map $\EE_0 \to \EE_1$ through an operad map $\AA_2 \to \EE_1$ which remembers of an $\EE_1$-algebra only the binary multiplication and its unitality structure. This is the first step in the well-known filtration $\EE_0 = \AA_1 \to \AA_2 \to \ldots \to \AA_{\infty} = \EE_{1}$ of the $\EE_1$-operad by the unital $\AA_n$ operads, encoding higher coherent associativity (see~\cite[\S~4.1.4]{HA} for a non-unital version of this filtration in the setting of $\infty$-operads). In this paper, we will only need the first stage $\EE_0 \to \AA_2\to \EE_1$.  
\end{example}

\begin{notation}
\label{not:T2}
 For an $\infty$-operad $\cO$ and an operad map $[1] \otimes \EE_0 \to \cO$ corepresenting a morphism $f \colon A \to B \in \Alg_{\EE_0}(\cO)$, we write $$\T^{\cO}_2(f)  \coloneqq \Hom_{\Op_{[1] \otimes \EE_0/}}(\T_2, \cO) \simeq \Mul_{\Alg_{\EE_0}(\cO)}(A,A;B) \times_{\Mul_{\Alg_{\EE_0}(\cO)}(A;B)^2} \{(f,f)\}.$$
 for the space of $\TT_2$-structures on $f$.
 
  For an $\infty$-operad $\cO$ and an operad map $\EE_0 \to \cO$ corepresenting  an $\EE_0$-algebra $A$ in $\cO$, we write $$\AA^{\cO}_2(A) \coloneqq \Hom_{\Op_{\EE_0/}}(\AA_2, \cO) \simeq \Mul_{\Alg_{\EE_0}(\cO)}(A,A;A) \times_{\Mul_{\Alg_{\EE_0}(\cO)}(A;A)^2} \{(\id_A,\id_A)\}.$$
 for the space of $\AA_2$-structures on $A$.
 
 If clear from context, we will often drop the superscript $\cO$ indicating the ambient $\infty$-operad and simply write $\TT_2(f)$ and $\AA_2(A)$. 
\end{notation}

A straight-forward, but very useful consequence of the definition is that $\TT_2$-structures transport along adjunctions: In an adjunction, $\TT_2$-structures on a morphism $f  \colon LA \to B$ are canonically identified with $\TT_2$-structures on its adjunct $A\to RB$:

\begin{lemma}\label{prop:T2adjunction} 
Let $\cC$ and $\cD$ be symmetric monoidal $\infty$-categories whose monoidal units are initial. Let 
\[
        \begin{tikzcd}
        \cC \arrow[r,shift left=.5ex,"L"]
        &
        \cD \arrow[l,shift left=.5ex,"R"].
        \end{tikzcd}
\]
be an adjunction with (strongly) symmetric monoidal left adjoint $L$ and unit denoted by $\eta  \colon  \id_{\cC} \To RL$. Then, the adjunction isomorphism 
\[\psi_{A,B}: \Hom_{\cD}(LA, B)  \xrightarrow{R(-)} \Hom_{\cC}(RLA, RB)  \xrightarrow{- \circ \eta_A } \Hom_{\cC}(A, RB)
\]
induces via the maps from \cref{obs:functorialityT2} for every $f\in \Hom_{\cD}(LA, B)$ an isomorphism
\[\TT_2(f) \xrightarrow{R(-)} \TT_2(Rf)  \xrightarrow{-\circ \eta_A} \TT_2( Rf \circ \eta_A) = \TT_2(\psi_{A,B}(f)).
\]
\end{lemma}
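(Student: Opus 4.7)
The plan is to deduce the claim from an operadic enhancement of the adjunction $L \dashv R$. First, the hypothesis that the monoidal units of $\cC$ and $\cD$ are initial is exactly the condition that the symmetric monoidal $\infty$-categories $\cC$ and $\cD$, viewed as $\infty$-operads, are unital in the sense of \cref{subsec:operad-recollection}. Hence by \cref{lem:unitalcharacterization}.\eqref{itm:unitalchar-2}, the natural maps $\Alg_{\EE_0}(\cC) \to \cC$ and $\Alg_{\EE_0}(\cD) \to \cD$ are isomorphisms, so the fiber-product formula for $\T_2$-structures provided by \cref{cor:A2T2unpacked} may be evaluated directly in $\cC$ and $\cD$.

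Since $L$ is strongly symmetric monoidal, its right adjoint $R$ inherits a canonical lax symmetric monoidal structure, and the adjunction $L \dashv R$ lifts to an operadic adjunction producing, for all $X_1, \dots, X_n \in \cC$ and $Y \in \cD$, a natural equivalence of multi-mapping spaces
\[
\Mul_{\cD}(LX_1,\dots,LX_n; Y) \xrightarrow{\ \simeq\ } \Mul_{\cC}(X_1,\dots,X_n; RY),
\]
given concretely by $\mu \mapsto R(\mu) \circ (\eta_{X_1} \otimes \cdots \otimes \eta_{X_n})$ composed with the lax structure maps of $R$. In the $n=1$ case this recovers $\psi_{A,B}$, while for $n=2$ and $X_1=X_2=A$, $Y=B$ it yields an equivalence $\Mul_{\cD}(LA,LA;B) \simeq \Mul_{\cC}(A,A;RB)$, and the composite of the maps $R(-)$ and $-\circ \eta_A$ described in the lemma is exactly this $n=2$ equivalence.

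The central step is to promote this to the $\T_2$-level by checking compatibility with the unit-insertion maps $\sigma$ of \eqref{eq:sigma}. Because $L$ preserves the tensor unit and the units of $\cC$ and $\cD$ are initial, the unique morphisms $I_{\cC} \to A$ and $I_{\cD} \to LA$ correspond under the adjunction; hence inserting a unit into either slot of a $2$-ary operation is compatible with the multi-mapping adjunction. This gives, for each of the two slots, a commutative square
\[
\begin{tikzcd}
\Mul_{\cD}(LA, LA; B) \arrow[r, "\simeq"] \arrow[d] & \Mul_{\cC}(A, A; RB) \arrow[d] \\
\Hom_{\cD}(LA, B) \arrow[r, "\psi_{A,B}"', "\simeq"] & \Hom_{\cC}(A, RB)
\end{tikzcd}
\]
which combine into a single square with vertical targets $\Hom_{\cD}(LA,B)^{\times 2}$ and $\Hom_{\cC}(A,RB)^{\times 2}$. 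Taking fibers at $(f,f)$ and $(\psi_{A,B}(f),\psi_{A,B}(f))$ respectively, and invoking \cref{cor:A2T2unpacked}, produces the desired equivalence $\T_2(f) \simeq \T_2(\psi_{A,B}(f))$, factoring as claimed through $\T_2(Rf)$.

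The main technical obstacle is establishing the compatibility of the operadic adjunction with the $\sigma$ maps in a homotopy-coherent fashion, rather than just on homotopy categories. The cleanest way to arrange this is to work entirely inside unital $\infty$-operads, where both $L$ and $R$ are operad morphisms (strong and lax, respectively) and the unit $\eta$ of the adjunction is a natural transformation of operad morphisms. The $\sigma$ maps of \eqref{eq:sigma} are themselves induced by the canonical operad map $\cO \to \underline{\cO}_{\sqcup}$ and are hence natural in operad morphisms, so they automatically commute with the operadic adjunction; the rest of the argument is a formal fiber-sequence manipulation.
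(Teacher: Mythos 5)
Your proof is correct and follows essentially the same route as the paper's. The paper's proof is simply a commuting square whose top row is the adjunction isomorphism $\Hom_{\cD}(L(A^{\otimes 2}), B) \simeq \Hom_{\cC}(A^{\otimes 2}, RB)$ (using monoidality of $L$), whose bottom row is $\Hom_{\cD}(LA,B)^{\times 2} \simeq \Hom_{\cC}(A,RB)^{\times 2}$, and whose vertical maps are the unit-insertion maps; passing to fibers at $(f,f)$ gives the result. Your rephrasing via the operadic enhancement of the adjunction and the multi-mapping spaces $\Mul_{\cD}(LA,LA;B) \simeq \Hom_{\cD}((LA)^{\otimes 2},B)$ is the same square, and your discussion of why the square commutes — naturality of the $\sigma$ maps with respect to operad morphisms — is a nice way to make the commutativity (which the paper leaves implicit) more explicit. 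No gaps; just more verbose.
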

\begin{proof}
By adjunction and monoidality of $L$, the horizontal maps in the commuting diagram 
\[\begin{tikzcd}
\Hom_{\cD}(LA^{\otimes 2}, B) \arrow[r] \arrow[d] & \arrow[d] \Hom_{\cC}(A^{\otimes 2}, RB) \\
\Hom_{\cD}(LA, B)^{\times 2} \arrow[r] & \Hom_{\cD}(A, RB)^{\times 2}
\end{tikzcd}
\]
are isomorphisms and hence so is the induced map between the fibers at $\{(f,f)\} \to \Hom_{\cD}(LA, B)^{\times 2}$.
\end{proof}

\begin{observation}\label{obs:functorialityT2}
The space $\TT_2(f)$ of $\TT_2$-structures on a given $\EE_0$-morphism $f  \colon A \to B$ in $\Alg_{\EE_0}(\cO)$ is compatible with composition and functorial: 
Given an operad map $F \colon \cO \to \cP$, applying $F$ induces a map of spaces 
\[ \TT_2(f)  \xrightarrow{F(-)} \TT_2(F(f)).
\]

Similarly, any $\EE_0$-morphism $g \colon  B \to C$ in $\Alg_{\EE_0}(\cO)$ induces evident maps of spaces
\begin{equation}\label{eq:functorial-T2}
\TT_2(f)  \xrightarrow{g \circ -} \TT_2(g \circ f) 
\hspace{1cm}
\TT_2(g)  \xrightarrow{- \circ (f,f)} \TT_2(g \circ f) .
\end{equation}

It will be useful to express this operation in terms of $\infty$-operads. Let $[2]\coloneqq \{0 <1<2\}$ denote the $\infty$-category (and the free $\infty$-operad on that $\infty$-category) corepresenting a pair of composable morphisms.
Consider the following pushouts of $\infty$-operads 
\[\begin{tikzcd}
{[1]} \otimes \EE_0 \ar[r] \ar[d, "\{0<1\}"']\arrow[dr, phantom, "\ulcorner", very near end]& \ar[d]\TT_2 \\
{[2]} \otimes \EE_0 \ar[r]& \TT_2 \sqcup_{\{0< 1\}} [2] 
\end{tikzcd}
\hspace{0.8cm}
\begin{tikzcd}
{[1]} \otimes \EE_0 \ar[r] \ar[d, "\{1<2\}"']\arrow[dr, phantom, "\ulcorner", very near end]& \ar[d]\TT_2 \\
{[2]} \otimes \EE_0 \ar[r]& \TT_2 \sqcup_{\{1< 2\}} [2] 
\end{tikzcd}
\hspace{0.8cm}
\begin{tikzcd}
{[1]} \otimes \EE_0 \ar[r] \ar[d, "\{0<2\}"']\arrow[dr, phantom, "\ulcorner", very near end]& \ar[d]\TT_2 \\
{[2]} \otimes \EE_0 \ar[r]& \TT_2 \sqcup_{\{0< 2\}} [2] 
\end{tikzcd}
\]
corepresenting a pair of $\EE_0$-morphisms $A \xrightarrow{f} B  \xrightarrow{g} C$ with a $\TT_2$-structure on $f$, $g$ or $g\circ f$, respectively. 
By Yoneda, the maps of spaces constructed above induce operad maps \[\TT_2\sqcup_{\{1< 2\}} [2] \leftarrow \TT_2\sqcup_{\{0<2\}}[2] \to \TT_2 \sqcup_{\{0<1\}}[2].\]  
\end{observation}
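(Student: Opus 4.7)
My proof plan for this observation proceeds by giving explicit constructions of all the claimed maps of spaces and then identifying them with the operadic pushout maps via Yoneda.

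First, the functoriality in operad maps $F \colon \cO \to \cP$: any such $F$ induces (by the functoriality of the pointwise operad structure on $\underline{\Alg}_{\EE_0}(-)$ and by $F$ being a map over $\Fin_\ast$) an operad map $\Alg_{\EE_0}(F) \colon \Alg_{\EE_0}(\cO) \to \Alg_{\EE_0}(\cP)$ that preserves all multi-ary mapping spaces and is compatible with the unit-inserting maps $\sigma$ from \eqref{eq:sigma}. Since $\TT_2(f)$ was defined in \Cref{def:T2algebrabyhand} and \Cref{not:T2} as a fiber product built entirely from these data, applying $\Alg_{\EE_0}(F)$ termwise yields the claimed map $\TT_2(f) \to \TT_2(F(f))$.

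Next, I construct the two composition maps. By \Cref{lem:unitalcharacterization}.\eqref{itm:unitalchar-2}, the $\infty$-operad $\Alg_{\EE_0}(\cO)$ is unital, so it has all operadic composition laws. In particular, for colors $A, B, C \in \underline{\Alg}_{\EE_0}(\cO)$ there are natural composition maps
\begin{equation*}
\Mul(B; C) \times \Mul(A, A; B) \xrightarrow{\circ} \Mul(A, A; C)
\qquad \text{and} \qquad
\Mul(B, B; C) \times \Mul(A; B)^{\times 2} \xrightarrow{\circ} \Mul(A, A; C)
\end{equation*}
(where the mapping spaces are all in $\Alg_{\EE_0}(\cO)$). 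Both are compatible with the unit-inserting maps $\sigma$ in the sense that $\sigma(g \circ \mu) = (g \circ \sigma(\mu)_1, g \circ \sigma(\mu)_2)$ and $\sigma(\nu \circ (f, f)) = (\sigma(\nu)_1 \circ f, \sigma(\nu)_2 \circ f)$. Restricting to the relevant fibers defines the desired maps $\TT_2(f) \xrightarrow{g \circ -} \TT_2(g \circ f)$ and $\TT_2(g) \xrightarrow{-\circ(f,f)} \TT_2(g \circ f)$ of \eqref{eq:functorial-T2}.

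Finally, for the operadic reformulation, I recall that for an unital $\infty$-operad $\cO$ and any operad map $[2] \otimes \EE_0 \to \cO$ selecting a composable pair $A \xrightarrow{f} B \xrightarrow{g} C$, the universal property of the pushouts displayed in the statement yields
\begin{equation*}
\Hom_{\Op_{[2] \otimes \EE_0/}}(\TT_2 \sqcup_{\{i<j\}} [2],\, \cO) \simeq \TT_2^{\cO}(h_{ij})
\end{equation*}
where $h_{01} = f$, $h_{12} = g$, and $h_{02} = g \circ f$. Allowing the composable pair to vary, this identifies the three pushouts as corepresenting objects for the three functors $\cO \mapsto \TT_2^\cO(f)$, $\cO \mapsto \TT_2^\cO(g)$, and $\cO \mapsto \TT_2^\cO(g \circ f)$ on the slice category $\Op_{[2] \otimes \EE_0/}$. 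The composition maps constructed in the previous paragraph are manifestly natural in $\cO$, so by the Yoneda lemma they are represented by operad maps $\TT_2 \sqcup_{\{0<2\}}[2] \to \TT_2 \sqcup_{\{0<1\}}[2]$ and $\TT_2 \sqcup_{\{0<2\}}[2] \to \TT_2 \sqcup_{\{1<2\}}[2]$ respectively. The main technical subtlety — the only place one must be careful — is checking that the composition maps are genuinely natural in operad maps under $[2] \otimes \EE_0$ (and not merely under $[1] \otimes \EE_0$); this follows because the operation of operadic composition in $\Alg_{\EE_0}(\cO)$ is itself natural in operad maps of $\cO$.
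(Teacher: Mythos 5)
Your proposal is correct and follows essentially the same route as the paper: the maps in \eqref{eq:functorial-T2} are the evident ones induced by operadic composition in the unital $\infty$-operad $\Alg_{\EE_0}(\cO)$ (compatibly with the unit-insertion maps $\sigma$ and natural in operad maps $\cO \to \cP$), the three pushouts corepresent the functors $\cO \mapsto \TT_2^{\cO}(f)$, $\TT_2^{\cO}(g)$, $\TT_2^{\cO}(g\circ f)$ on $\Op_{[2]\otimes\EE_0/}$, and Yoneda then yields the operad maps $\TT_2\sqcup_{\{1<2\}}[2] \leftarrow \TT_2\sqcup_{\{0<2\}}[2] \to \TT_2\sqcup_{\{0<1\}}[2]$. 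The paper records exactly this (treating the composition maps as ``evident''), so your write-up simply supplies the same details the observation leaves implicit.
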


\subsection{Relative  \texorpdfstring{$\TT_2$}{T2}-structures}\label{subsec:relative-T2}
Analogous to (and as we will see later --- generalizing) \cref{def:relative-prebraiding}, we introduce $\TT_2$-structures relative to a given $\AA_2$-structure.

As a consequence of \cref{obs:functorialityT2}, given an $\EE_0$-morphism $f  \colon A \to B$ in an $\infty$-operad $\cO$, applying~\eqref{eq:functorial-T2} in the case $g=\id_B$,  we obtain a map of spaces
\[  \AA_2(B) = \TT_2(\id_B)  \xrightarrow{ -\circ (f,f)} \TT_2(f).
\]
In particular, any $\AA_2$-structure on $B$ (e.g induced by a genuine $\EE_1$- or even $\EE_{\infty}$-structure) induces a $\TT_2$-structure on $f$. 

This allows us to introduce the following notion,  analogous  to \cref{def:relative-prebraiding}.
\begin{definition} \label{def:relative-T2}Let $\cO$ be an $\infty$-operad, $C$ an $\AA_2$-algebra (with $\AA_2$-structure denoted by $\alpha \in \AA_2^{\cO}(C)$)  and let $A \xrightarrow{f} B \xrightarrow{g}C$  be morphisms of  $\EE_0$-algebras. 
We define the \emph{space $\TT_2^{\cO}(f)_{/C}$ of $\TT_2$-structures on $f$ relative to $C$} as the pullback of the span
\[ \{ \alpha\} \to \AA_2^{\cO}(C) = \TT^{\cO}_2(\id_C)  \xrightarrow{- \circ (g\circ f, g\circ g)} \TT_2(g\circ f) \xleftarrow{g \circ -}\TT_2(f).
\]
\end{definition}
In words, a $\TT_2$-structure on $f$ relative to $C$ is a $\TT_2$-structure on $f$ with an identification of the induced $\TT_2$-structure on $g\circ f$ with the $\TT_2$-structure on $g\circ f$ induced by the $\AA_2$-structure $\alpha$ on $C$.

Recall from \S\ref{subsubsec:overcat-sym-mon} that 
for an $\EE_{\infty}$-algebra $C$ in a symmetric monoidal $\infty$-category $\cV$, the over-$\infty$-category $\cV_{/C}$ inherits a symmetric monoidal structure so that for any $\infty$-operad $\cO$, there is an equivalence of $\infty$-categories $\underline{\Alg}_{\cO}(\cV_{/C}) \simeq \underline{\Alg}_{\cO}(\cV)_{/C}$, where for the latter category we consider $C$ as equipped with the $\cO$ algebra structure induced by restricting its $\EE_{\infty}$-structure along the terminal operad map $\cO \to \EE_{\infty}$. 

\begin{prop}\label{prop:two-perspectives-on-relative-T2} Let  $C$ be an $\EE_{\infty}$-algebra in a symmetric monoidal $\infty$-category $\cV$, and consider an $\EE_0$-algebra morphism in $\cV_{/C}$, i.e. equivalently a commuting diagram of $\EE_0$-algebra morphisms in $\cV$ as follows
\[\begin{tikzcd}
A\ar[rr, "f"]\ar[dr] &&B \ar[dl]\\
& C
\end{tikzcd}.
\]
Then, we have an equivalence of spaces
\[
\TT_2^{\cV_{/C}} (f) \simeq \TT^{\cV}_2(f)_{/C},
\]
where for the latter space we consider $C$ as equipped with the $\AA_2$-structure induced by its $\EE_{\infty}$-structure.
\end{prop}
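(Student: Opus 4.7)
The strategy is to unpack both sides as fibered products of multi-mapping spaces and show that they agree via the operad-level upgrade of the equivalence of $\infty$-categories $\underline{\Alg}_{\cO}(\cV_{/C}) \simeq \underline{\Alg}_{\cO}(\cV)_{/C}$ recalled at the start of this subsection.

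\textbf{Step 1.} By definition and \Cref{cor:A2T2unpacked},
\[
\TT_2^{\cV_{/C}}(f) \simeq \Mul_{\Alg_{\EE_0}(\cV_{/C})}(A,A;B) \times_{\Mul_{\Alg_{\EE_0}(\cV_{/C})}(A;B)^{\times 2}} \{(f,f)\}.
\]
First I would apply the equivalence $\Alg_{\cO}(\cV_{/C})\simeq \Alg_{\cO}(\cV)_{/C_{\cO}}$ (at the level of $\infty$-operads, not merely underlying $\infty$-categories) to $\cO=\EE_0$. Since the symmetric monoidal structure on $\cV_{/C}$ is constructed so that the forgetful functor $\cV_{/C}\to\cV$ is symmetric monoidal, an $n$-ary operation in $\Alg_{\EE_0}(\cV_{/C})$ from $(A_i\to C)$ to $(B\to C)$ is an $n$-ary operation in $\Alg_{\EE_0}(\cV)$ together with a witness that it lies over the $n$-ary multiplication on $C$. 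More precisely, this identifies
\[
\Mul_{\Alg_{\EE_0}(\cV_{/C})}((A_i);B)
\simeq
\Mul_{\Alg_{\EE_0}(\cV)}((A_i);B)\;\times_{\Mul_{\Alg_{\EE_0}(\cV)}((A_i);C)}\;\{m_C^n\circ(p_{A_1},\ldots,p_{A_n})\},
\]
where the right-hand vertical map is post-composition with $g=p_B\colon B\to C$, and $m_C^n$ is the $n$-ary operation on $C$ coming from its $\EE_\infty$-structure restricted along $\EE_0^{\otimes n}\to\EE_\infty$.

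\textbf{Step 2.} Substituting the formula from Step 1 into the expression for $\TT_2^{\cV_{/C}}(f)$ and applying the pasting law for pullbacks (composing the pullback defining $\TT_2$ in $\cV_{/C}$ with the pullback defining multi-mapping spaces over $C$), one identifies
\[
\TT_2^{\cV_{/C}}(f)
\;\simeq\;
\TT_2^{\cV}(f)\;\times_{\TT_2^{\cV}(g\circ f)}\;\{m_C^2\circ(p_A,p_A)\},
\]
where the map $\TT_2^{\cV}(f)\to\TT_2^{\cV}(g\circ f)$ is post-composition by $g$, matching the first map in~\eqref{eq:functorial-T2}.

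\textbf{Step 3.} Finally, it remains to identify the distinguished point $m_C^2\circ(p_A,p_A)\in\TT_2^{\cV}(g\circ f)$ with the image of the $\AA_2$-structure $\alpha_C\in\AA_2^{\cV}(C)$ under the composite $\AA_2(C)=\TT_2(\id_C)\xrightarrow{-\circ(gf,gf)}\TT_2(g\circ f)$. This is essentially tautological: the $\AA_2$-structure $\alpha_C$ is, by definition, the 2-ary multiplication $m_C^2$ together with its unital compatibility, and the map $-\circ(gf,gf)$ pre-composes with $gf=p_A$. Comparison with \Cref{def:relative-T2} then yields $\TT_2^{\cV_{/C}}(f)\simeq \TT_2^{\cV}(f)_{/C}$.

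The main technical obstacle is Step 1: one must upgrade the equivalence $\underline{\Alg}_{\cO}(\cV_{/C})\simeq \underline{\Alg}_{\cO}(\cV)_{/C}$ to an equivalence of $\infty$-operads (so as to translate multi-mapping spaces, not just hom-spaces). This should follow from the construction of the symmetric monoidal structure on $\cV_{/C}$ in \S\ref{subsubsec:overcat-sym-mon}, together with the general principle that the overcategory of a symmetric monoidal $\infty$-category at an $\EE_\infty$-algebra inherits an operadic overcategory structure whose multi-mapping spaces are the expected fibered products over the $\EE_\infty$-multiplications on $C$.
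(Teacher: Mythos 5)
Your proposal is correct in outline, but it takes a more hands-on route than the paper and leaves its one genuinely nontrivial step unproven --- precisely the step the paper's argument is designed to avoid. The paper never unpacks $\TT_2$-structures into multi-mapping spaces: it applies the defining universal property of the slice symmetric monoidal structure, $\underline{\Alg}_{\cO}(\cV_{/C})\simeq\underline{\Alg}_{\cO}(\cV)_{/C}$, directly with $\cO=\TT_2$ and $\cO=[1]\otimes\EE_0$, i.e.\ with the operads that corepresent the structures in question. Since $\TT_2^{\cV_{/C}}(f)$ is by definition the fiber of $\underline{\Alg}_{\TT_2}(\cV_{/C})^{\simeq}\to\underline{\Alg}_{[1]\otimes\EE_0}(\cV_{/C})^{\simeq}$ at $f$, this converts it at once into the fiber of $\bigl(\underline{\Alg}_{\TT_2}(\cV)_{/C}\bigr)^{\simeq}\to\bigl(\underline{\Alg}_{[1]\otimes\EE_0}(\cV)_{/C}\bigr)^{\simeq}$, and unwinding morphisms in over-categories yields the relative space of \cref{def:relative-T2}. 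Only the stated equivalence of underlying $\infty$-categories of algebras is needed; no identification of tensor products or multi-mapping spaces in $\cV_{/C}$ ever enters.

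Your Steps 2 and 3 are fine (a standard pullback pasting, plus the definitional identification of the basepoint with $\alpha\circ(gf,gf)$), but Step 1, which you flag as the ``main technical obstacle,'' is where all the content sits: you need that the categorical equivalence $\underline{\Alg}_{\EE_0}(\cV_{/C})\simeq\underline{\Alg}_{\EE_0}(\cV)_{/C}$ is compatible with tensor products, equivalently that the multi-mapping spaces of the slice are the asserted fiber products over the multiplication of $C$. This is true --- it is essentially the content of Lurie's explicit construction of the slice monoidal structure (\cite[Thm.~2.2.2.4]{HA}), whose multi-mapping spaces are exactly those fiber products --- so the gap is fillable by citation; but within the paper's framework, which records only the $\Alg_{\cO}$-universal property, it does require an argument that you only gesture at. The moral difference between the two proofs: slicing once at $\cO=\TT_2$ lets the universal property do everything and one only unwinds, whereas slicing at $\cO=\EE_0$ forces you to reassemble the $\TT_2$-structure by hand and therefore to know the slice's operad structure explicitly.
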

In words: `Absolute' $\TT_2$-structures on $f$ in the sense of \cref{not:T2} seen as a morphism in the symmetric $\infty$-category $\cV_{/C}$ coincide with relative $\TT_2$-structures on $f$ in $\cV$ in the sense of \cref{def:relative-T2}.
\begin{proof}
By definition, the space $\TT_2^{\cV_{/C}}(f)$ is the fiber of the functor $\underline{\Alg}_{\TT_2}(\cV_{/C}) \to \underline{\Alg}_{[1] \otimes \EE_0}(\cV_{/D})$ at $f$. By the universal property of the symmetric monoidal structure on the over-category $\cV_{/C}$, this is equivalent to the functor $\underline{\Alg}_{\TT_2}(\cV)_{/C} \to \underline{\Alg}_{[1] \otimes \EE_0}(\cV)_{/C}$. Unwinding the definition of morphisms in over-categories, this results in the desired equivalence. 
\end{proof}

\subsection{Centralizers and centers}
\label{subsec:centralizers-and-centers}
\newcommand{\cent}{\mathfrak{Z}}
In light of \cref{thm:classbraidings}, we recall the $\infty$-categorical theory of centers and centralizers developed in~\cite[\S~5.3.1]{HA} and relate them to $\T_2$- and $\AA_2$-algebras.

\begin{definition}[{\cite[Def. 5.3.1.2]{HA}}] Let $f  \colon  A \to B$ be a morphism in a monoidal $\infty$-category $\cC$. A \emph{centralizer} $\cent(f)$ of $f$ is a final object in the $\infty$-category 
\[\cC_{I/} \times_{\cC_{A/}} \cC_{A// B},\] 
where the functor $\cC_{I/} \to \cC_{A/}$ sends objects $(\alpha \colon I\to X)$ of $\cC_{I/}$ to $(A \simeq I \otimes A \xrightarrow{\alpha \otimes \id_A} X\otimes A) \in \cC_{A/}$.
\end{definition}

Unpacked, a centralizer is  an object $\cent(f) \in \cC$ equipped with morphisms $u  \colon  I \to \cent(f)$ and  $\ev  \colon  \cent(f) \otimes A \to B$, such that the following diagram commutes
    \begin{equation}
        \label{eq:centralizer}
        \begin{tikzcd}
            & \cent(f) \otimes A  \ar[dr, "\ev"]&  \\ 
            I \otimes A \simeq A \ar[rr, "f"]  \ar[ru, "u \otimes \id_A"]   &&B,
        \end{tikzcd}
    \end{equation}
     and which is final among such pairs: for any  pair of morphisms $u_X\colon I \to X$ in $\cC$ and $\ev_X\colon X\otimes A \to B$ making the analog of \eqref{eq:centralizer} commute, there is a unique morphism $\phi\colon X \to \cent(f)$ such that
    \begin{equation}
        \label{eq:centralizerup}
        \begin{tikzcd}
            & X \otimes A \ar[d, dashed, "\phi\otimes \id_A"]  \ar[ddr,bend left=30,"\ev_X"]&  \\
            & \cent(f) \otimes A  \ar[dr,"\ev"]&  \\ 
            I \otimes A\ar[ru, "u \otimes \id_A"] \ar[ruu,bend left=30,"u_X \otimes \id_A"]\simeq A \ar[rr, "f"]   &&B.
        \end{tikzcd}
    \end{equation}
    commutes.
    
    \begin{example}Given a functor $F  \colon  \cC \to \cD$ of $\infty$-categories, by \cite[Rmk. 5.3.1.4]{HA} the centralizer  in $\cat$ exists  and is given by the functor category $\Fun(\cC, \cD)$ with pointing $u  \colon  \{F\} \to \Fun(\cC, \cD)$ and $\ev  \colon  \Fun(\cC, \cD) \times \cC \to \cD$ given by the evaluation functor. 
    \end{example}
    
    \begin{notation}\label{not:kcentralizer} Let $\cC$ be a presentably symmetric monoidal $\infty$-category. For every morphism $f  \colon A \to B$ in the $\infty$-category $\Alg_{\EE_k}(\cC)$, the centralizer in $\Alg_{\EE_k}(\cC)$ exists~\cite[Cor. 5.3.1.15]{HA} and will henceforth be denoted by $Z_k(f)\in\Alg_{\EE_k}(\cC)$. 
    \end{notation}

\begin{example}\label{exm:universal-property-centralizer}
It is straight-forward to verify that for a monoidal functor $F  \colon \cC \to \cD$ between ordinary monoidal $1$-categories, the centralizer $Z_1(F) \in \Alg_{\EE_1}(\Cat_1)$  in the $(2,1)$-category $\Alg_{\EE_1}(\Cat_1)$ of monoidal $1$-categories is given by the category from \cref{def:centralizer} with unit $1_{Z(f)}\colon \pt \to Z_1(F)$ and $\ev  \colon  Z_1(F) \times \cC \to \cD$ described in \cref{def:centralizer} and \cref{def:evaluation}.\end{example}

As with ordinary monoidal categories, the case $f= \id_A \colon A \to A$ is of special interest. For $\cC$ a monoidal $\infty$-category, recall from~\cite[Def. 4.2.1.13]{HA} the \emph{$\infty$-category $\LMod(\cC)$ of left module objects}, whose objects are pairs of an algebra $A\in \Alg_{\EE_1}(\cC)$ and an $A$-module $M\in \LMod_A(\cC)$, and the functor $\LMod(\cC) \to \cC$ which sends a pair of an algebra $A$ and a module ${}_{A}M$ to the underlying object $M$.

\begin{definition}[{\cite[Def. 5.3.1.6]{HA}}] Let $\cC$ be a  monoidal $\infty$-category and $M$ an object of $\cC$. A \emph{center $\cent(M)$ of
    $M$} is a final object of the $\infty$-category $\LMod(\cC) \times_{\cC} \{M\}$.
\end{definition}
Unpacked, a center $\cent(M)$ is an $\EE_1$-algebra $\cent(M)$ in $\cC$ with a left action on $M$ so that all other left actions of $\EE_1$-algebras $A$ on $M$ factor through $\cent(M)$.

\begin{proposition}[{\cite[Prop. 5.3.1.8]{HA}}]\label{prop:center} Let  $M$ be an object in a monoidal $\infty$-category $\cC$. If  $\id_M\colon M
    \to M$ has a centralizer in $\cC$, then $M$ has a center. Furthermore, an
    $\EE_1$-algebra $A$ with a left-action on $M$ is a center of $M$ if and only
    if the underlying maps $I \to A$ and $A\otimes M \to M$
    exhibit $A$ as a centralizer of $\id_M$.
\end{proposition}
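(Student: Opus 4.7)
The plan is to construct a monoidal structure on the auxiliary $\infty$-category
\[
\cE \coloneqq \cC_{I/} \times_{\cC_{M/}} \cC_{M//M}
\]
whose terminal object is by definition $\cent(\id_M)$, and then to identify $\underline{\Alg}_{\EE_1}(\cE) \simeq \LMod(\cC) \times_{\cC} \{M\}$ so that centralizers of $\id_M$ carrying an $\EE_1$-structure coincide with left module objects over $\EE_1$-algebras with underlying object $M$. The theorem will then reduce to the observation that in $\underline{\Alg}_{\EE_1}(\cE)$, the object lying over a terminal object of $\cE$ is terminal, since the forgetful functor $\underline{\Alg}_{\EE_1}(\cE) \to \cE$ preserves terminal objects (as a right adjoint of an augmented simplicial diagram, or directly because the forgetful functor creates limits, see \cite[Cor.~3.2.2.5]{HA}).

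First, I would construct the monoidal structure on $\cE$ whose tensor product acts on objects as
\[
(X, u_X, \ev_X) \otimes (Y, u_Y, \ev_Y) \coloneqq
\big(X \otimes Y,\ I \simeq I \otimes I \xrightarrow{u_X \otimes u_Y} X \otimes Y,\ X \otimes Y \otimes M \xrightarrow{\id_X \otimes \ev_Y} X \otimes M \xrightarrow{\ev_X} M\big),
\]
with unit $(I, \id_I, \id_M)$. To build this homotopy-coherently, I would exhibit $\cE$ as a pullback of (iterated) over/under categories of the monoidal $\infty$-category $\cC$ and use that these inherit monoidal structures from $\cC$ compatibly, in the spirit of the constructions of \S\ref{subsec:relative-T2} and~\cite[\S~4.2.1]{HA}. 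One then verifies by direct unwinding that an $\EE_1$-algebra structure on $(X, u_X, \ev_X)$ in $\cE$ is precisely the data of an $\EE_1$-algebra structure on $X$ in $\cC$ with unit $u_X$ together with an identification of $\ev_X$ with a left action of $X$ on $M$, i.e.\! an object of $\LMod(\cC) \times_{\cC} \{M\}$ lying over $M$.

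Granting these identifications, assume $\cent(\id_M)$ exists, i.e.\! $\cE$ admits a terminal object $Z$. The forgetful functor $\underline{\Alg}_{\EE_1}(\cE) \to \cE$ creates limits (\cite[Cor.~3.2.2.5]{HA}), so a terminal object of $\underline{\Alg}_{\EE_1}(\cE)$ exists and has underlying object $Z \in \cE$. Through the equivalence $\underline{\Alg}_{\EE_1}(\cE) \simeq \LMod(\cC) \times_{\cC} \{M\}$, this terminal object is precisely the center of $M$. Conversely, if $A \in \Alg_{\EE_1}(\cC)$ acts on $M$ and exhibits $(A, \alpha\colon A \otimes M \to M)$ as a terminal object of $\LMod(\cC) \times_\cC \{M\}$, then its image $(A, \eta_A, \alpha) \in \cE$ is terminal: by creation of limits, it must be the terminal object, hence a centralizer of $\id_M$. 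This is the assertion of the ``if and only if'' clause.

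I expect the main obstacle to lie in the first step, namely rigorously producing the monoidal structure on $\cE$ as an object of $\Alg_{\EE_1}(\cat)$ and identifying its $\EE_1$-algebras with $\LMod(\cC) \times_{\cC} \{M\}$ in a homotopy-coherent way. While the descriptions of the tensor product and of algebras are straightforward on the nose, packaging them into an $\infty$-operadic equivalence requires an explicit model (e.g.\! using the simplicial bar construction of \cite[\S~4.4.2]{HA} or the ``tensor product of operators'' description of $\LMod$ in \cite[Def.~4.2.2.10]{HA}) and checking compatibility with the defining fiber products of $\cE$. Once this formal identification is in place, the remaining steps are entirely formal consequences of the universal property of terminal objects and the fact that the forgetful functor from $\EE_1$-algebras creates limits.
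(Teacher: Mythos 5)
Your outline is correct and is essentially the argument behind the paper's proof, which is simply a citation of \cite[Prop.~5.3.1.8]{HA}: Lurie likewise equips the centralizer category with the ``composition'' monoidal structure, identifies its $\EE_1$-algebras with $\LMod(\cC)\times_{\cC}\{M\}$ (the endomorphism-object machinery of \cite[\S~4.7.1]{HA}, which is exactly the coherence step you flag as the main obstacle and which you could cite rather than rebuild), and matches terminal objects using that the forgetful functor from algebras is conservative and creates limits. The only quibble is your parenthetical appeal to the forgetful functor ``preserving'' terminal objects as a right adjoint --- preservation points the wrong way and the left adjoint need not exist in general; the correct justification is the one you also give, namely creation/detection of limits (\cite[Cor.~3.2.2.5]{HA}) together with conservativity of the forgetful functor.
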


    \begin{example}Given a small $\infty$-category $\cC$, the center in $\cat$ exists  and is given by the monoidal $\infty$-category $\Fun(\cC, \cC) \in \Alg_{\EE_1}(\cat)$.
    \end{example}
    
    \begin{notation} Let $\cC$ be a presentably symmetric monoidal $\infty$-category. For every $A\in \Alg_{\EE_k}(\cC)$, the center in $\Alg_{\EE_k}(\cC)$ exists~\cite[Cor. 5.3.1.15]{HA} and will henceforth be denoted by \[Z_k(A)\in\Alg_{\EE_1}(\Alg_{\EE_k}(\cC)) = \Alg_{\EE_{k+1}}(\cC).\] 
    By \cref{prop:center}, the underlying $\EE_k$-algebra of $Z_k(A)$ agrees with the centralizer $Z_k(\id_A)$ from \cref{not:kcentralizer}.
    \end{notation}

\begin{example}
For $A \in \Alg_{\EE_1}(\Cat_1)$ an ordinary monoidal $1$-category, the center is given by the Drinfeld center $Z_1(A)\in \Alg_{\EE_2}(\Cat_1)$, see \cite[Exm. 5.3.1.18]{HA}.
\end{example}

\begin{notation} 
\label{not:forgetfulcenter}
Let $\cC$ be a symmetric monoidal $\infty$-category whose tensor unit $I$ is initial and let $f  \colon A \to B$ be a morphism in $\cC$ whose centralizer $\cent(f) \in \cC$ exists. Then, we define $\ev_{1_A}  \colon  \cent(f) \to B$ to be the composite $$\cent(f) \simeq \cent(f) \otimes I  \xrightarrow{\id_{\cent(f)} \otimes 1_A} \cent(f) \otimes A  \xrightarrow{\ev} B,$$
where $1_A\in \Hom_{\cC}(I, A) \simeq *.$ Below, we will mostly use this morphism in the case that $\cC =  \Alg_{\EE_k}(\cA)$ for a presentably symmetric monoidal $\infty$-category $\cA$, in which case this defines an $\EE_k$-algebra map $Z_k(f) \to B$ for any $\EE_k$-algebra map $f  \colon A \to B$ in $\Alg_{\EE_k}(\cA)$. 
\end{notation}

The relevance of centralizers to this paper arises from the following proposition:
\begin{proposition}\label{prop:TT2-and-centralizer} 
Let $\cC$ be a symmetric
    monoidal $\infty$-category and $f\colon A\to B$ be a morphism in
    $\Alg_{\EE_0}(\cC)$. If the centralizer $Z_0(f)$ in $\Alg_{\EE_0}(\cC)$ exists, then the space
\[ \T_2(f)\coloneqq \Hom_{\Op_{[1] \otimes \EE_0/}}(\T_2, \cC) \simeq \hom_{\Alg_{\EE_0}(\cC)}(A\otimes A, A) \times_{\hom_{\Alg_{\EE_0}(\cC)}(A,B)^2} \{(f,f)\}\]
    of $\T_2$-structures  on $f$ 
    is equivalent to the following space of (dashed) lifts in $\Alg_{\EE_0}(\cC)$:
    \begin{equation}\label{eq:TT2-and-centralizer}
     \begin{tikzcd}
         & Z_0(f) \ar[dr, "\ev_{1_A}"] & \\ 
         A\ar[rr, "f"] \ar[ru, dashed] & &B
     \end{tikzcd}
    \end{equation}
\end{proposition}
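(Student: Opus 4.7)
The plan is to derive the equivalence directly from the universal property of the centralizer $Z_0(f)$, expressed as a pullback of mapping spaces, and then to evaluate it at $A$ itself.

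First I would reformulate Definition~\ref{def:centralizer} and the universal property \eqref{eq:centralizer}--\eqref{eq:centralizerup} as a natural equivalence of spaces
\[
  \Hom_{\Alg_{\EE_0}(\cC)}(X, Z_0(f))
  \;\simeq\;
  \Hom_{\Alg_{\EE_0}(\cC)}(X\otimes A, B)
  \times_{\Hom_{\Alg_{\EE_0}(\cC)}(A,B)} \{f\}
\]
for $X \in \Alg_{\EE_0}(\cC)$, where the structural map sends $\ev_X$ to $\ev_X \circ (u_X\otimes \id_A)$. This is exactly the statement that a morphism $X \to Z_0(f)$ is the data of an $\ev_X \colon X \otimes A \to B$ together with a witness that $f$ factors through $u_X \otimes \id_A$, which is how $Z_0(f)$ is characterized as a final object.

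Next I would specialize $X = A$, equipped with its tautological $\EE_0$-algebra unit $1_A \colon I \to A$, so that the structure map becomes $\mu \mapsto \mu \circ (1_A\otimes \id_A)$, i.e.\ the left partial evaluation $\mu(1,-)$. By definition, the space of dashed lifts in \eqref{eq:TT2-and-centralizer} is the further pullback of $\Hom_{\Alg_{\EE_0}(\cC)}(A, Z_0(f))$ at $f$ along postcomposition with $\ev_{1_A}\colon Z_0(f)\to B$. Combining this with the previous equivalence produces an iterated pullback which I will present as a single pullback
\[
\Hom_{\Alg_{\EE_0}(\cC)}(A\otimes A, B)\times_{\Hom_{\Alg_{\EE_0}(\cC)}(A,B)^{\times 2}} \{(f,f)\},
\]
which is precisely $\T_2(f)$ by \cref{not:T2}.

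The key calculation, and the place where I expect the main work to lie, is the identification of the second leg of this iterated pullback with the right partial evaluation. Concretely, I must check that the composite
$\Hom(A\otimes A, B) \to \Hom(A, Z_0(f)) \xrightarrow{\ev_{1_A}\circ -} \Hom(A,B)$
sends $\mu$ to $\mu\circ(\id_A\otimes 1_A) = \mu(-,1)$. Unwinding \cref{not:forgetfulcenter}, if $\sigma_\mu\colon A \to Z_0(f)$ denotes the map adjunct to $\mu$ under the universal property, then $\ev\circ(\sigma_\mu\otimes \id_A)\simeq \mu$ by construction, and hence
\[
\ev_{1_A}\circ \sigma_\mu \;=\; \ev\circ(\sigma_\mu\otimes 1_A) \;\simeq\; \ev\circ(\sigma_\mu\otimes \id_A)\circ(\id_A\otimes 1_A) \;\simeq\; \mu\circ(\id_A\otimes 1_A),
\]
as required. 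The main obstacle will not be conceptual but bookkeeping: organizing this homotopy-coherently rather than pointwise. I plan to assemble the argument as a commuting rectangle of mapping spaces whose outer boundary is the asserted pullback square, and then to conclude by pasting of homotopy pullbacks, so that the final identification is obtained as a genuine equivalence of spaces.
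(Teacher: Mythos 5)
Your argument is correct and follows essentially the same route as the paper's: identify the space of lifts as the fiber of $\ev_{1_A}\circ-\colon \Hom_{\Alg_{\EE_0}(\cC)}(A,Z_0(f))\to\Hom_{\Alg_{\EE_0}(\cC)}(A,B)$ over $f$, rewrite $\Hom_{\Alg_{\EE_0}(\cC)}(A,Z_0(f))$ via the universal property of the centralizer as a pullback of $\Hom_{\Alg_{\EE_0}(\cC)}(A\otimes A,B)$ over one partial unit-insertion, unwind $\ev_{1_A}\circ-$ as the other partial unit-insertion, and paste the two pullbacks to obtain $\TT_2(f)$. The one point worth emphasizing (which you already flag as the ``key calculation'') is that the two fiber conditions land on \emph{different} legs $\mu(1,-)$ and $\mu(-,1)$, so the iterated pullback genuinely is over $\Hom_{\Alg_{\EE_0}(\cC)}(A,B)^{\times 2}$ at $(f,f)$ rather than the diagonal; you verify this correctly, and it matches the paper's unwinding.
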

\begin{proof}
The space of lifts~\eqref{eq:TT2-and-centralizer} is by definition the fiber of \[\hom_{\Alg_{\EE_0}(\cC)}(A, Z_0(f))  \xrightarrow{\ev_{1_A} \circ -} \hom_{\Alg_{\EE_0}(\cC)}(A, B)\] at $f$. By the universal property of the centralizer and since $I$ is initial in $\Alg_{\EE_0}(\cC)$, the space $\hom_{\Alg_{\EE_0}(\cC)}(A, Z_0(f))$ is equivalent to the space $\hom_{\Alg_{\EE_0}(\cC)}(A\otimes A, B)\times_{\hom_{\Alg_{\EE_0}(\cC)}(A, B)}\{f\}$ of lifts 
\[ 
     \begin{tikzcd}
     A\otimes A \arrow[r, dashed] & B \\
     A\arrow[u, "\id_A \otimes 1_A"] \arrow[ru, "f"] & \end{tikzcd}
\]
in $\Alg_{\EE_0}(\cC)$, where $1_A  \colon  I \to A$ denotes the unique morphism in $\Alg_{\EE_0}(\cC)$. 
Under this identification,  the map $\hom_{\Alg_{\EE_0}(\cC)}(A, Z_0(f))   \xrightarrow{\ev_{1_A} \circ -}\hom_{\Alg_{\EE_0}(\cC)}(A, B)$ unpacks to the composite \[\hom_{\Alg_{\EE_0}(\cC)}(A\otimes A, B)\times_{\hom_{\Alg_{\EE_0}(\cC)}(A, B)}\{f\} \to \hom_{\Alg_{\EE_0}(\cC)}(A\otimes A, B) \to \hom_{\Alg_{\EE_0}(\cC)}(A, B)\] of the projection and the  precomposition with $A\simeq I \otimes A  \xrightarrow{1_A \otimes \id_A}A\otimes A$. 
Hence, the fiber  of $\hom_{\Alg_{\EE_0}(\cC)}(A, Z_0(f))  \xrightarrow{\ev_{1_A} \circ -}\hom_{\Alg_{\EE_0}(\cC)}(A, B)$ at $f$ is equivalent to the space 
\[ 
\hom_{\Alg_{\EE_0}(\cC)}(A\otimes A, B)\times_{\hom_{\Alg_{\EE_0}(\cC)}(A, B)^{\times 2}} \{(f,f)\} =: \TT_2(f). \qedhere 
\]
\end{proof}

\begin{corollary}\label{prop:A2-and-center} 
Let $\cC$ be a symmetric
    monoidal $\infty$-category and $A \in \Alg_{\EE_0}(\cC)$. If the center $Z_0(A) \in \Alg_{\EE_1}(\cC)$ exists, then the space
\[ \AA_2(A) \coloneqq \Hom_{\Op_{ \EE_0/}}(\AA_2, \cC) \simeq \hom_{\Alg_{\EE_0}(\cC)}(A\otimes A, A) \times_{\hom_{\Alg_{\EE_0}(\cC)}(A,B)^2} \{(\id_A,\id_A)\}\]
    of $\AA_2$-structures  on $A$ 
    is equivalent to the following space of (dashed) lifts in $\Alg_{\EE_0}(\cC)$:
    \begin{equation}\label{eq:A2-and-center}
     \begin{tikzcd}
         & Z_0(A) \ar[dr, "\ev_{1_A}"] & \\ 
         A\ar[rr, "\id_A"] \ar[ru, dashed] & &A
     \end{tikzcd}
    \end{equation}
\end{corollary}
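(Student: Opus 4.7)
The plan is to deduce this directly from \cref{prop:TT2-and-centralizer} applied to $f = \id_A$, combined with the comparison between centers and centralizers of identity morphisms from \cref{prop:center}.

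First, I would unpack the definitions from \cref{def:A2algebrabyhand} and \cref{def:T2algebrabyhand} (equivalently, \cref{cor:A2T2unpacked}) to observe that $\AA_2(A)$ is by construction precisely $\TT_2(\id_A)$: both are computed as the fiber of $\Mul_{\Alg_{\EE_0}(\cC)}(A, A; A) \to \Mul_{\Alg_{\EE_0}(\cC)}(A; A)^{\times 2}$ over the pair $(\id_A, \id_A)$. Formally, this can also be seen on the operad level from the pushouts \eqref{eq:defT2} defining $\AA_2 = \TT_2 \sqcup_{[1] \otimes \EE_0} \EE_0$, since factoring a map $\TT_2 \to \cC$ through $\AA_2$ amounts to identifying the underlying morphism $f \in \Hom_{\Op_{[1]\otimes\EE_0/}}(\TT_2, \cC)$ with $\id_A$.

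Next, I would apply \cref{prop:TT2-and-centralizer} to the morphism $\id_A \colon A \to A$ in $\Alg_{\EE_0}(\cC)$. This yields an equivalence of spaces
\[
\TT_2(\id_A) \simeq \hom_{\Alg_{\EE_0}(\cC)_{/A}}\bigl(\ev_{1_A} \colon Z_0(\id_A) \to A,\ \id_A \colon A \to A\bigr),
\]
provided the centralizer $Z_0(\id_A)$ exists in $\Alg_{\EE_0}(\cC)$. But by \cref{prop:center}, the assumption that the center $Z_0(A) \in \Alg_{\EE_1}(\cC)$ exists ensures that $Z_0(\id_A)$ exists in $\Alg_{\EE_0}(\cC)$, and moreover that the underlying $\EE_0$-algebra of $Z_0(A)$, together with its canonical map $\ev_{1_A}$ to $A$, coincides with the centralizer $Z_0(\id_A)$ and its structure morphism.

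Combining these two identifications gives the asserted equivalence between $\AA_2(A)$ and the space of lifts \eqref{eq:A2-and-center}. No additional work is needed; the only conceptual input beyond \cref{prop:TT2-and-centralizer} is the identification of centers with centralizers of identity morphisms supplied by \cref{prop:center}, so there is no real obstacle.
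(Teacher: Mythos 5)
Your proposal is correct and is essentially the paper's own argument: the paper proves this corollary precisely by applying \cref{prop:TT2-and-centralizer} with $f=\id_A$ and using that the center $Z_0(A)$ has underlying centralizer $Z_0(\id_A)$ (via \cref{prop:center}). The only difference is that you spell out the identification $\AA_2(A)\simeq \TT_2(\id_A)$ explicitly, which the paper leaves implicit.
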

\begin{proof}
Apply \cref{prop:TT2-and-centralizer} for $f=\id_A$ using that $Z_0(A) = Z_0(\id_A)$. 
\end{proof}

\begin{observation}\label{obs:twoA2structures}
Let $\cC$ be a symmetric monoidal $\infty$-category, and $A\in \Alg_{\EE_1}(\cC)$. Assume the center $Z_0(A)\in \Alg_{\EE_1}(\cC)$ of the underlying $\EE_0$-algebra $A \in \Alg_{\EE_0}(\cC)$ exists. 
Since $A$ has a left action on itself, the universal property of $Z_0(A) \in \Alg_{\EE_1}(\cC)$ induces an $\EE_1$-homomorphism $A\to Z_0(A)$. Moreover, since the multiplication of $A$ is right unital, it follows that the composite $A\to Z_0(A)\to A$ in $\cC$  is isomorphic to $\id_A$. Hence, by \cref{prop:A2-and-center}, this induces an $\AA_2$-structure on $A$.
Unpacked, this $\AA_2$-structure coincides with the one induced from the operad map $\AA_2 \to \EE_1$ from \cref{exm:fromE1toA2}. 
\end{observation}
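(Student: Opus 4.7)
The plan is to unpack each of the three claims in the observation using the explicit characterizations of centralizers and centers recalled in Section~7.4, together with the comparison \cref{prop:A2-and-center} between $\AA_2$-structures and lifts through $\ev_{1_A}$.

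First I would construct the $\EE_1$-homomorphism $A \to Z_0(A)$. Since $A \in \Alg_{\EE_1}(\cC)$ carries its regular left action on itself, the pair $(A, {}_A A)$ is an object of $\LMod(\cC) \times_{\cC} \{A\}$. By \cref{prop:center}, the center $Z_0(A)$ is the terminal object of this $\infty$-category, so we obtain a canonical morphism of pairs $(A,{}_AA) \to (Z_0(A), {}_{Z_0(A)}A)$, whose algebra component is the desired $\EE_1$-homomorphism $\phi \colon A \to Z_0(A)$. Unpacked in terms of the characterization of $Z_0(A) = Z_0(\id_A)$ as a centralizer in $\Alg_{\EE_0}(\cC)$, $\phi$ is characterized by the commuting triangle
\[
\begin{tikzcd}
& Z_0(A) \otimes A \ar[dr, "\ev"] & \\
A \otimes A \ar[ur, "\phi \otimes \id_A"] \ar[rr, "\mu_A"] & & A
\end{tikzcd}
\]
where $\mu_A$ is the multiplication of $A$ and $\ev$ is the structure map of the centralizer.

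Next I would verify that $\ev_{1_A} \circ \phi \simeq \id_A$. By \cref{not:forgetfulcenter}, the morphism $\ev_{1_A} \colon Z_0(A) \to A$ is the composite $Z_0(A) \simeq Z_0(A) \otimes I \xrightarrow{\id \otimes 1_A} Z_0(A) \otimes A \xrightarrow{\ev} A$. Precomposing with $\phi$ and using the defining triangle of $\phi$ above, the composite $A \to Z_0(A) \to A$ becomes the composite $A \simeq A \otimes I \xrightarrow{\id \otimes 1_A} A \otimes A \xrightarrow{\mu_A} A$, which is the right unitor of the $\EE_1$-algebra $A$ and hence canonically isomorphic to $\id_A$. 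Applying \cref{prop:A2-and-center} in the ambient symmetric monoidal $\infty$-category $\Alg_{\EE_0}(\cC)$ to this lift of $\id_A$ through $\ev_{1_A}$ produces the desired $\AA_2$-structure on $A$.

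Finally, I would identify this $\AA_2$-structure with the one obtained by restricting along $\AA_2 \to \EE_1$. Tracing through the equivalence of \cref{prop:A2-and-center} (or equivalently, the universal property in \cref{prop:TT2-and-centralizer} applied to $f = \id_A$), the $\AA_2$-structure associated to a lift through $\ev_{1_A}$ has underlying binary operation given by the composite $A \otimes A \xrightarrow{\phi \otimes \id_A} Z_0(A) \otimes A \xrightarrow{\ev} A$, which by the defining property of $\phi$ is exactly $\mu_A$. The unitality data is similarly determined by the right unitor, so the resulting $\AA_2$-structure agrees with the restriction of the $\EE_1$-structure of $A$ along the operad map $\AA_2 \to \EE_1$ of \cref{exm:fromE1toA2}. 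The one mild subtlety worth spelling out carefully will be the comparison of the two unital structures (i.e.\ the identifications of $\mu_A(1_A, -)$ and $\mu_A(-, 1_A)$ with $\id_A$), which in both constructions come from the unitors of $A$, so the coincidence is essentially tautological once the underlying binary operations are matched.
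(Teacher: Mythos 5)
Your proof is correct and follows essentially the same route as the paper's own (sketched) argument: the universal property of the center applied to the regular left action, right unitality to identify the composite $A \to Z_0(A) \to A$ with $\id_A$, and then \cref{prop:A2-and-center} plus unpacking the centralizer equivalence to match the binary operation with $\mu_A$ and the unit data with the unitors. The only minor imprecision is that the pair $(A,{}_AA)$ should be taken as an object of $\LMod(\Alg_{\EE_0}(\cC))\times_{\Alg_{\EE_0}(\cC)}\{A\}$ rather than of $\LMod(\cC)\times_{\cC}\{A\}$ (using that $\mu_A$ and $1_A$ are morphisms of $\EE_0$-algebras), which is in effect what you do when you subsequently invoke the characterization of $Z_0(A)=Z_0(\id_A)$ as a centralizer in $\Alg_{\EE_0}(\cC)$.
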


 By \cref{obs:twoA2structures}, any $\EE_1$-structure on an $\EE_0$-algebra $A$ induces an $\EE_1$-monoidal section $A\to \cent(A)$ of the $\EE_0$-monoidal $\cent(A)\to A$. We now show that this section and its monoidality can be uniquely recovered 
 from the $\EE_1$-structure on $A$.

 \begin{lemma} 
\label{lem:A2-to-E1}

Let $\cC$ be a symmetric monoidal $\infty$-category with initial tensor unit and let $X$ be an object in $\cC$ whose center $\cent(X) \in \Alg_{\EE_1}(\cC)$ exists. Then, the forgetful functor
\begin{equation}
\label{eq:E1vsE1lift}
\Alg_{\EE_1}(\cC)_{/\cent(X)} \times_{\cC_{/X}} \{ \id_X\} \to \Alg_{\EE_1}(\cC) \times_{\cC} \{X\}
\end{equation}
is an equivalence of $\infty$-categories.
\end{lemma}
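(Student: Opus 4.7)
The plan is to reduce the claim to a statement about left modules via the universal property of the center, and then to exploit the free-module section $s\colon \Alg_{\EE_1}(\cC) \to \LMod(\cC)$, $A \mapsto (A, A, \mu_A)$, which realises each $\EE_1$-algebra as the free rank-one left module over itself. First, I would apply Proposition~\ref{prop:center} to identify $\Alg_{\EE_1}(\cC)_{/\cent(X)} \simeq \LMod(\cC) \times_{\cC}\{X\}$; under this equivalence, the $\EE_0$-morphism $\ev_{1_X}\colon \cent(X) \to X$ corresponds to the \emph{action-on-unit} functor $\LMod(\cC) \times_{\cC}\{X\} \to \cC_{/X}$ sending $(A, \rho\colon A \otimes X \to X)$ to the composite $A \simeq A \otimes I \xrightarrow{\id_A \otimes 1_X} A \otimes X \xrightarrow{\rho} X$, where $1_X\colon I \to X$ is the unique map from the initial tensor unit. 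This transforms the left-hand side of~\eqref{eq:E1vsE1lift} into the $\infty$-category of pairs $(A, \rho)$ equipped with a chosen identification of this action-on-unit morphism with $\id_X$.

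Next, I would produce the candidate inverse functor via $s$. Its composite with the underlying-object functor $\LMod(\cC) \to \cC$ coincides with the forgetful $\Alg_{\EE_1}(\cC) \to \cC$, while its further composite with the action-on-unit functor sends $A$ to $\mu_A \circ (\id_A \otimes 1_A) \simeq \id_A$ by right unitality of $\mu_A$. Passing to fibres over $X$ therefore yields a functor $\Alg_{\EE_1}(\cC) \times_{\cC}\{X\} \to \mathrm{LHS}$ of~\eqref{eq:E1vsE1lift}, which I claim is an inverse to the forgetful functor. The crucial step will be to show that $s$ is fully faithful, with essential image exactly those $(A, M, \rho) \in \LMod(\cC)$ whose action-on-unit morphism $A \to M$ is an equivalence in $\cC$. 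Granting this, the fibre conditions defining the LHS of~\eqref{eq:E1vsE1lift} force $M = X$ and the action-on-unit to equal $\id_X$, in particular an equivalence, placing the LHS within the essential image of $s$; fully faithfulness then identifies it with the fibre $\Alg_{\EE_1}(\cC) \times_{\cC}\{X\}$.

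At the $1$-categorical level, fully faithfulness of $s$ follows because initiality of $I$ makes $\Hom_\cC(I, -)$ contractible, forcing any left $A$-module map between free rank-one modules to coincide with its underlying algebra map. The essential-image description mirrors the 1-categorical identity $\rho(a,b) = \rho(\mu_A(a,b), 1_A) = \mu_A(a,b)$, obtained by precomposing the associativity square for $\rho$ with $\id_A \otimes \id_A \otimes 1_A$ and then applying the unit condition $\rho \circ (\id_A \otimes 1_A) \simeq \id_A$ on both sides.

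The hardest part will be promoting this $1$-categorical reasoning to a homotopy-coherent equivalence of $\infty$-categories. I expect to handle this by unwinding the operadic model of $\LMod(\cC)$ from~\cite[\S~4.2]{HA}, using initiality of $I$ at every level of coherence to trivialise the relevant mapping spaces and thereby extend the associativity-based identification homwise. The essential input throughout is initiality of $I$: without this hypothesis, $\Hom_\cC(I, -)$ would be nontrivial and $s$ would fail to be fully faithful already at the 1-categorical level.
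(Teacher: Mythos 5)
Your proposal is correct and follows essentially the same route as the paper's proof: both pass through the universal property of the center to replace $\Alg_{\EE_1}(\cC)_{/\cent(X)}$ by $\LMod(\cC)\times_{\cC}\{X\}$, identify $\ev_{1_X}$ with the action-on-unit functor, and then use that ${}_{A}A$ is initial in $\LMod_A(\cC)$ (free--forgetful adjunction plus initiality of the tensor unit) together with conservativity of the forgetful functor to see that the modules with invertible action-on-unit are exactly the free rank-one ones, your ``fully faithful section with identified essential image'' being the same statement as the paper's ``contractible fibers over the locus of invertible arrows''. The coherence-chasing you flag as the hardest part is unnecessary: the action-on-unit map $A\to M$ is itself a morphism of $A$-modules (the unique map out of the initial object ${}_{A}A$, adjoint to $I\to M$), so conservativity of $\LMod_A(\cC)\to\cC$ immediately upgrades ``equivalence in $\cC$'' to ``equivalence of modules'', which is exactly how the paper concludes.
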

\begin{proof}
Since $\Alg_{\EE_1}(\cC) \to \cC$ is conservative, the $\infty$-categories in~\eqref{eq:E1vsE1lift} are $\infty$-groupoids, and equivalently given by the fibers of the respective maps of maximal subgroupoids. 

For an $\EE_1$-algebra $A$ in $\cC$, it follows from the free-forgetful adjunction $  \begin{tikzcd}\cC
         \arrow[r,shift left=.5ex]
        &
         \arrow[l,shift left=.5ex] \LMod_A(\cC) 
        \end{tikzcd} $ and initiality of the unit of $\cC$ that the free $A$-module $_{A}A$ is initial in $\LMod_A(\cC)$. 
This induces a functor $\LMod_A(\cC) \simeq \LMod_{A}(\cC)_{{}_{A}A/} \to \cC_{A/}$ which sends a left $A$-module $M$ to the morphism \[\mathrm{act}_1  \colon  A\simeq A \otimes I  \xrightarrow{\id_A \otimes !} A \otimes M  \xrightarrow{\mathrm{act}} M.\]

Applying this functor  $\mathrm{act}_1  \colon  \LMod_A(\cC) \to \cC_{A/}$ fiberwise induces a commuting diagram of spaces

\begin{equation}
\label{eq:A2ToE1}
\begin{tikzcd}
\LMod(\cC)^{\simeq} \arrow[r, " \mathrm{act}_1"]  \arrow[d]& \hom_{\cat}([1], \cC) \arrow[d, "s"]\\
\Alg_{\EE_1}(\cC)^{\simeq} \arrow[r] & \cC^{\simeq}
\end{tikzcd}.
\end{equation}

By the universal property of the center,  the $\infty$-category $\Alg_{\EE_1}(\cC)_{/\cent(X)}$ is equivalent to the $\infty$-category $\LMod(\cC) \times_{\cC} \{X\}$, and hence the space $\underline{\Alg}_{\EE_1}(\cC)_{/\cent(X)} \times_{\cC_{/X}} \{ \id_X\}$ is equivalent to the fiber of the top horizontal map in~\eqref{eq:A2ToE1} at $\{ \id_X\} \in \hom_{\cat}([1], \cC)$. The functor~\eqref{eq:E1vsE1lift} is the induced map from the fiber of the top horizontal map of~\eqref{eq:A2ToE1} at $\id_X$ to the fiber of the bottom horizontal map of~\eqref{eq:A2ToE1} at $X$. 

Let $\hom_{\cat}([1], \cC)^{\mathrm{iso}}$ denote the full subspace of $\hom_{\cat}([1], \cC)$ on the invertible arrows in $\cC$.  In particular, the composite $\hom_{\cat}([1], \cC)^{\mathrm{iso}}\to\hom_{\cat}([1], \cC)   \xrightarrow{s}\cC^{\simeq}$ is an equivalence. We now show that the composite map of spaces
\begin{equation}\label{eq:A2ToE1fibers} \LMod(\cC)^{\simeq} \times_{\hom_{\cat}([1], \cC)} \hom_{\cat}([1], \cC)^{\mathrm{iso}} \to \LMod(\cC)^{\simeq} \to \Alg_{\EE_1}(\cC)^{\simeq}
\end{equation}
is an equivalence,  which concludes the proof as $\{ \id_X\} \in \hom_{\cat}([1], \cC)$ is an object of the full subspace $\hom_{\cat}([1], \cC)^{\mathrm{iso}}$.

It suffices to verify that all fibers of~\eqref{eq:A2ToE1fibers} are contractible. By definition, for $A\in \Alg_{\EE_1}(\cC)$ the fiber is the full subspace of $\LMod_A(\cC)^{\simeq} \simeq \left(\LMod_A(\cC)_{{}_{A}A/}\right)^{\simeq}$ on those modules $M$ for which the induced map $\mathrm{act}_1  \colon  A \to M$ is an equivalence. But since $\LMod_A(\cC) \to \cC$ is conservative, this is the full subcategory $\LMod_A(\cC)_{{}_{A}A/^{\mathrm{iso}}}$ on the invertible module functors ${}_{A}A \to _{A}M$ and hence contractible. 
\end{proof}

The following corollary finally justifies the presence of $\TT_2$-structures in this paper:

\begin{corollary}
\label{cor:classicalbraidings} Let $F: \cA \to \cB$ be an ordinary monoidal functor between ordinary monoidal $1$-categories.
\begin{enumerate}
\item \label{item-cor:classicalbraidings-T2}
A $\TT_2\otimes \EE_1$-structure on $F$ is a prebraiding on $F$ in the sense of \cref{def:prebraiding}. More precisely, the \emph{space} $\TT_2(F)$ of $\TT_2$-structures on $F$ is discrete and equivalent to the \emph{set} $\PreBraid(F)$ of prebraidings on $F$ defined in \cref{def:notationprebraiding}.
\item \label{item-cor:classicalbraidings-A2}
An $\AA_2 \otimes \EE_1$-structure on $\cA$ is a braiding on the monoidal category $\cA$, in the usual $1$-categorical sense.  More precisely, the \emph{space} $\AA_2(\cA)$ of $\AA_2$-structures on $\cA$ is discrete and equivalent to the \emph{set} $\Braid(\cA)$ of braidings on $\cA$ defined in \cref{def:notationprebraiding}.\end{enumerate}
\end{corollary}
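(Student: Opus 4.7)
The plan is to reduce both statements to \cref{thm:classbraidings} via the centralizer descriptions of $\TT_2$- and $\AA_2$-structures from \cref{prop:TT2-and-centralizer,prop:A2-and-center}, instantiated in the ambient symmetric monoidal $\infty$-category $\cC \coloneqq \Alg_{\EE_1}(\Cat_1)$ (with Cartesian monoidal structure). The operadic identification $\Alg_{\TT_2 \otimes \EE_1}(\Cat_1) \simeq \Alg_{\TT_2}(\Alg_{\EE_1}(\Cat_1))$ (and analogously for $\AA_2 \otimes \EE_1$) lets us interpret a $\TT_2 \otimes \EE_1$-structure on $F$ in $\Cat_1$ as a $\TT_2$-structure on $F$ viewed as a morphism in $\cC$, and an $\AA_2 \otimes \EE_1$-structure on the monoidal category $\cA$ as an $\AA_2$-structure on $\cA \in \Alg_{\EE_0}(\cC)$.

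For part \cref{item-cor:classicalbraidings-T2}, I would apply \cref{prop:TT2-and-centralizer} in $\cC$ to identify $\TT_2(F)$ with the space of lifts of $F \colon \cA \to \cB$ through the evaluation morphism $\ev_{1_\cA} \colon Z_1(F) \to \cB$ in $\Alg_{\EE_1}(\Cat_1)$. By \cref{exm:universal-property-centralizer}, this $\infty$-categorical centralizer $Z_1(F)$, together with its evaluation, coincides with the classical $1$-categorical centralizer and evaluation from \cref{def:centralizer,def:evaluation}. Unpacked in the $2$-category $\Alg_{\EE_1}(\Cat_1)$, this space of lifts is precisely the $1$-groupoid of weak monoidal factorizations appearing in \cref{thm:classbraidings}: its objects are pairs $(s, \eta)$ of a monoidal functor $s \colon \cA \to Z(F)$ and a monoidal natural isomorphism $\eta \colon \ev_{1_\cA} \circ s \Rightarrow F$, with morphisms the $\eta$-compatible monoidal natural isomorphisms. \cref{thm:classbraidings} then shows this groupoid is discrete and canonically in bijection with the set $\PreBraid(F)$.

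For part \cref{item-cor:classicalbraidings-A2}, I would specialize part \cref{item-cor:classicalbraidings-T2} to $F = \id_\cA$ and invoke \cref{cor:prebraidisbraid} to identify $\PreBraid(\id_\cA)$ with $\Braid(\cA)$. Alternatively, \cref{prop:A2-and-center} directly identifies $\AA_2(\cA)$ with the space of lifts of $\id_\cA$ through $\ev_{1_\cA} \colon Z_1(\cA) \to \cA$, where $Z_1(\cA)$ is the Drinfeld center of $\cA$ by \cref{rk:Drinfeldcenter}; the same unpacking then yields the desired bijection with $\Braid(\cA)$.

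The main point requiring care is the discreteness assertion in part \cref{item-cor:classicalbraidings-T2}: the space of lifts in $\cC$ is a priori an $\infty$-groupoid, and only collapses to a discrete set because $\ev_{1_\cA}$ is a faithful isofibration of monoidal $1$-categories. This collapse is exactly the content of the equivalence between strict and weak monoidal factorizations in \cref{thm:classbraidings}, which is already established there.
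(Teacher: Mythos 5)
Your proposal is correct and follows essentially the same route as the paper: identify a $\TT_2\otimes\EE_1$-structure with a $\TT_2$-structure on $F$ in $\Alg_{\EE_1}(\Cat_{(1,1)})$, apply \cref{prop:TT2-and-centralizer} to recognize it as the space of lifts through $\ev_{1_\cA}\colon Z_1(F)\to\cB$ (with \cref{exm:universal-property-centralizer} supplying the comparison with the classical centralizer), unpack this as the groupoid of weak monoidal factorizations, and conclude by \cref{thm:classbraidings}; part (2) is then the specialization $F=\id_\cA$. Your added emphasis on the discreteness collapse via the faithful-isofibration argument is exactly where the paper also locates it, inside \cref{thm:classbraidings}.
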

\begin{proof}
By \cref{prop:TT2-and-centralizer}, a $\TT_2 \otimes \EE_1$-structure on $F$ is a lift in $\Alg_{\EE_1}(\Catnk{1}{1})$:
\[\begin{tikzcd}
&Z_1(F) \ar[dr]&\\
A\ar[rr, "F"] \ar[ur, dashed] && B
\end{tikzcd}.
\]
Explicitly, the space of $\TT_2\otimes \EE_1$-structures on $F$ is therefore the $1$-groupoid of weak lifts from \cref{thm:classbraidings}.\eqref{itm:weaklifts}. By \cref{thm:classbraidings}, this is equivalent to the set of prebraidings on $F$ in the sense of \cref{def:notationprebraiding}. This completes the proof of part \eqref{item-cor:classicalbraidings-T2}.
Part \eqref{item-cor:classicalbraidings-A2} follows by applying statement \eqref{item-cor:classicalbraidings-T2} to the case $F=\id_{\cA}$. 
\end{proof}

The operad map $\AA_2 \to \EE_1$ induces an operad map $\AA_2\otimes \EE_1 \to \EE_1 \otimes \EE_1 \simeq \EE_2$. Hence, any $\EE_2$-structure gives rise to an $\AA_2 \otimes \EE_1$-structure, but not necessarily vice versa. 
However, \cref{cor:classicalbraidings} shows that $\AA_2 \otimes \EE_1$-structures on \emph{$1$-categories} agree with braided monoidal structures, which are well-known to coincide with $\EE_2$-structures on $1$-categories. This hints at a certain connectivity of the operad map $\AA_2 \otimes \EE_1 \to \EE_2$ which we will study in the next sections.

\subsection{A factorization system on the \texorpdfstring{$\infty$}{infinity}-category of operads}\label{subsec:fact-sys-on-operads}
We introduce the obstruction theoretic machinery at the heart of our main theorem.

\begin{definition} \label{defn:n-surj-and-n-trunc-for-infty-opds} For $n \geq -1$, we say that a morphism of $\infty$-operads is \emph{$n$-surjective} if it is surjective-on-objects on $\infty$-categories of colors and multi-homwise $(n-1)$-connected and we say that it is \emph{$n$-faithful} if it is multi-homwise $(n-1)$-truncated. We extend this to the case that $n = -2$ by declaring that every morphism of $\infty$-operads is $(-2)$-surjective, and that a morphism of $\infty$-operads is $(-2)$-faithful if and only if it is an equivalence.
\end{definition}

In this section, we will show that the $n$-faithful and $n$-surjective operad maps form a factorization system on the $\infty$-category $\Op$ of $\infty$-operads.
Recall that by definition,  $\Op$ is a subcategory of ${\Cat_{\infty}}_{/\Fin_*}$.

\begin{lemma}\label{obs:n-surj-and-n-trunc-for-infty-opds-is-unambiguous}
A morphism of $\infty$-operads is $n$-surjective (resp.\! $n$-faithful) if and only if its image under the composite functor $\Op \hookra (\Cat_\infty)_{/\Fin_*} \xra{\fgt} \Cat_\infty$ is so (in the sense of \cref{def:nsurj-and-nfaith}).
\end{lemma}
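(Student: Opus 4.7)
The plan is to unpack both sides of the claimed equivalence using the explicit combinatorial structure of the $\infty$-category of operators $\cO^\otimes \to \Fin_*$. The key observation is that objects of $\cO^\otimes$ over $\langle n\rangle_+$ are finite sequences $(X_1,\ldots,X_n)$ of colors of $\cO$, and that mapping spaces admit a canonical decomposition
\[
\Hom_{\cO^\otimes}\bigl((X_1,\ldots,X_n),(Y_1,\ldots,Y_m)\bigr)
\;\simeq\;
\coprod_{\alpha\colon\langle n\rangle_+\to\langle m\rangle_+}
\prod_{j=1}^{m}\Mul_\cO\bigl(\{X_i\}_{\alpha(i)=j};Y_j\bigr),
\]
indexed over maps in $\Fin_*$. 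A morphism $F\colon\cO\to\cP$ of $\infty$-operads induces a functor $F^\otimes\colon\cO^\otimes\to\cP^\otimes$ over $\Fin_*$ which acts componentwise on sequences of colors and summandwise by the product of the maps induced by $F$ on multi-mapping spaces.

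From this description the object-level statement is immediate: essential surjectivity (i.e.\ $(-1)$-surjectivity in $\Cat_\infty$) of $F^\otimes$ is equivalent to surjectivity of $F$ on the space of colors, since every object of $\cP^\otimes$ is a sequence of colors and $F^\otimes$ is defined componentwise on such sequences. The hom-level statement reduces to the fact that the induced map on a given mapping space in $\cO^\otimes$ is a coproduct of finite products of the multi-hom maps of $F$. Both the class of $(n-1)$-connected maps and the class of $(n-1)$-truncated maps of spaces are stable under finite coproducts and finite products, so this map of spaces is $(n-1)$-connected (resp.\ $(n-1)$-truncated) if and only if each of the relevant multi-hom maps is. Matching this against \Cref{def:nsurj-and-nfaith} (cf.\ \cref{exa:surj}) and \Cref{defn:n-surj-and-n-trunc-for-infty-opds} gives the equivalence for all $n \geq 0$.

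The remaining cases $n\in\{-2,-1\}$ are handled tautologically: $(-2)$-faithful on both sides means equivalence (of $\infty$-operads on one side, of $\infty$-categories over $\Fin_*$ on the other, which coincide since $\Op\hookrightarrow(\Cat_\infty)_{/\Fin_*}$ is a subcategory inclusion and a functor over $\Fin_*$ is an equivalence iff its underlying functor is); $(-1)$-surjectivity on both sides reduces to the color-level statement; and $(-1)$-faithfulness on both sides says that all multi-hom maps are equivalences, via the same decomposition. I do not anticipate any genuine obstacle: the proof is essentially a direct translation between the two sets of definitions via the mapping-space decomposition, with the only mild care being the stability of $(n-1)$-connected and $(n-1)$-truncated maps of spaces under finite coproducts and finite products.
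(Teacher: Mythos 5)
Your proposal is correct and follows essentially the same route as the paper: the paper likewise reduces the object-level statement to surjectivity on colors via the Segal conditions, and the hom-level statement to the decomposition of mapping spaces in $\cO^\otimes$ as disjoint unions of finite products of multi-hom spaces, using that $(n-1)$-connected and $(n-1)$-truncated maps of spaces are stable under these operations.
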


\begin{proof} The Segal conditions for $\infty$-operads imply that surjectivity on underlying functors is equivalent to surjectivity on $\infty$-categories of colors. Moreover, the hom-spaces in an $\infty$-operad are disjoint unions of (finite) products of multi-hom spaces, and these operations both preserve the class of $(n-1)$-connected (resp.\! $(n-1)$-truncated) morphisms of spaces. (To see that products preserve $(n-1)$-connectedness (resp.\! $(n-1)$-truncatedness), note that this notion is determined fiberwise, that fibers of a product of morphisms of spaces are computed factorwise since limits commute with limits, and that $(n-1)$-truncated (resp.\! $(n-1)$-connected) spaces are stable under products.)
\end{proof}

\begin{proposition}
\label{prop:fs-for-infty-opds}
For any $n \geq -2$, the classes of ($n$-surjective, $n$-faithful) operad maps defines a factorization system on the $\infty$-category $\Op$ of $\infty$-operads.
\end{proposition}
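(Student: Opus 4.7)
The plan is to descend the $(n$-surjective, $n$-faithful$)$ factorization system on $\Cat_\infty$ (from \cref{thm:nsurj-nfaithful-fs-for-infty-k-cats}) along the composite
\[
\Op \hookrightarrow (\Cat_\infty)_{/\Fin_*} \xra{\fgt} \Cat_\infty.
\]
By \cref{obs:n-surj-and-n-trunc-for-infty-opds-is-unambiguous}, an operad map is $n$-surjective (resp.\ $n$-faithful) if and only if its image under this composite is so. Hence the closure properties of the two classes (stability under composition and retracts) as well as the orthogonality relation between them are immediate from \cref{thm:nsurj-nfaithful-fs-for-infty-k-cats}. Only the existence of factorizations remains, and that is where the real work lies.

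Given an operad map $F\colon \cO^{\otimes} \to \cP^{\otimes}$, I would first form its $(n$-surjective, $n$-faithful$)$ factorization in $\Cat_\infty$, namely $\cO^{\otimes} \xra{L} \cQ \xra{R} \cP^{\otimes}$. Since $F$ lives over $\Fin_*$, the intermediate object $\cQ$ inherits a canonical map $\cQ \to \Fin_*$ and $L,R$ are arrows over $\Fin_*$; this gives the factorization in $(\Cat_\infty)_{/\Fin_*}$. The core of the proof is then to verify the two remaining assertions: that $\cQ\to \Fin_*$ is an $\infty$-operad, and that $L$ and $R$ are operad maps.

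For the Segal condition on $\cQ$, I would argue that the $(n$-surjective, $n$-faithful$)$ factorization system on $\Cat_\infty$ is compatible with finite products (by \cref{thm:nsurj-nfaithful-fs-for-infty-k-cats}) and stable under pullback along the fiber-over-$\langle k\rangle$ functors (as $n$-surjectivity and $n$-faithfulness of maps of spaces are fiberwise, and more generally the factorization system is preserved under pullback since it is compatible with the Cartesian monoidal structure). Consequently, the inert projections $\langle n\rangle\to\langle 1\rangle$ make the square
\[
\begin{tikzcd}
\cO^{\otimes}_{\langle n\rangle} \ar[r, "L_{\langle n\rangle}"] \ar[d, "\sim"'] & \cQ_{\langle n\rangle} \ar[d] \ar[r, "R_{\langle n\rangle}"] & \cP^{\otimes}_{\langle n\rangle} \ar[d, "\sim"]  \\
(\cO^{\otimes}_{\langle 1\rangle})^n \ar[r, "(L_{\langle 1\rangle})^n"'] & (\cQ_{\langle 1\rangle})^n \ar[r, "(R_{\langle 1\rangle})^n"'] &(\cP^{\otimes}_{\langle 1\rangle})^n
\end{tikzcd}
\]
exhibit both a factorization of $F_{\langle n\rangle}$ and a factorization of $(F_{\langle 1\rangle})^n$; by uniqueness the two middle terms are canonically identified, giving the Segal equivalence for $\cQ$. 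Once the Segal condition holds, the existence of coCartesian lifts of inert morphisms for $\cQ$ follows from pushing forward inert lifts in $\cO^\otimes$ along $L$: surjectivity on objects of $L$ allows us to reach every object of $\cQ$, and universality of the lift can be deduced from $n$-surjectivity of $L$ together with the Segal decomposition.

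Finally, $L$ preserves inert morphisms by construction, and $R$ does so because any inert arrow in $\cQ$ is (up to equivalence) the $L$-image of an inert arrow in $\cO^\otimes$ whose image under $F=R\circ L$ is inert in $\cP^\otimes$. The main obstacle I anticipate is the careful bookkeeping around the construction and universality of inert coCartesian lifts in $\cQ$; if this becomes cumbersome, the cleanest alternative is to invoke a general descent principle for factorization systems along the inclusion of a subcategory cut out by a limit and a lifting condition, in the spirit of \cref{thm:fs-and-alg} from the appendix, applied to the description of $\Op$ as the subcategory of $(\Cat_\infty)_{/\Fin_*}$ carved out by the Segal and inert-morphism-preservation conditions.
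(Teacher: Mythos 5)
Your overall skeleton is the same as the paper's: pull the ($n$-surjective, $n$-faithful) factorization system back from $\Cat_\infty$ to $(\Cat_\infty)_{/\Fin_*}$, use \cref{obs:n-surj-and-n-trunc-for-infty-opds-is-unambiguous} to identify the classes, and then show that lifts and factorizations formed in $(\Cat_\infty)_{/\Fin_*}$ actually stay inside the subcategory $\Op$ (this is \cref{obs:restricted-fs-on-subcat}). Your fiberwise argument for $\cQ_{\langle n\rangle}\simeq(\cQ_{\langle 1\rangle})^n$ via uniqueness of factorizations and compatibility with products is a pleasant packaging of part of what the paper does, although the justification ``the factorization system is preserved under pullback since it is compatible with the Cartesian monoidal structure'' is a non sequitur (compatibility with products says nothing about base change); what saves you is that $n$-surjectivity and $n$-faithfulness happen to be stable under passage to fibers, which needs its own (easy) check.

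The genuine gap is exactly at the step you dispatch in one sentence: ``universality of the lift can be deduced from $n$-surjectivity of $L$ together with the Segal decomposition.'' This is the technical heart of the proposition and it does not follow from the fiberwise Segal equivalences, because coCartesian-ness of $L(\tilde\alpha)$ is a statement about mapping spaces in the \emph{total} $\infty$-category $\cQ$ between objects in different fibers, which your abstract identification of fibers does not control. What is actually needed (and what the paper supplies) is a proof that the comparison functor $\cQ_{L(Y)/}\to \cQ_{/L(\tilde X)}\times_{(\Fin_*)_{\underline m_+/}}(\Fin_*)_{\underline n_+/}$ is an equivalence; one sandwiches it between the corresponding equivalences for $\cO^{\otimes}$ and $\cP^{\otimes}$ and uses that $L$ is $(n-1)$-connected and $R$ is $(n-1)$-truncated on mapping spaces, together with \cref{lem:cancellation-for-connectedness-and-truncatedness} and the uniqueness of $((n-1)$-connected, $(n-1)$-truncated$)$ factorizations in $\Spaces$; the same computation also yields Lurie's mapping-space Segal condition for $\cQ$, which your proposal never addresses. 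Two smaller points: your claim that orthogonality is ``immediate'' is not quite right, since the unique lift produced in $(\Cat_\infty)_{/\Fin_*}$ must be shown to preserve inert coCartesian morphisms — for $n\geq 0$ this uses the observation (which you do invoke later for $R$) that an $n$-surjective operad map is surjective on inert coCartesian morphisms, while the cases $n=-1,-2$ need a separate (easy) argument, as does the factorization itself in those cases where $L$ is not surjective on inert morphisms. Finally, the suggested fallback via \cref{thm:fs-and-alg} does not apply: that result concerns algebras in a presentably $\cO$-monoidal category with a compatible factorization system, not the subcategory $\Op\subset(\Cat_\infty)_{/\Fin_*}$ cut out by the operadic conditions, and no such general descent principle is available off the shelf here.
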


\begin{proof}
By \Cref{obs:fs-on-undercat-and-overcat}.\eqref{item-obs:fs-on-undercat-and-overcat-general-case}, the ($n$-surjective, $n$-faithful) factorization system on $\Cat_\infty$ pulls back to a factorization system on $(\Cat_\infty)_{/\Fin_*}$. By \Cref{obs:n-surj-and-n-trunc-for-infty-opds-is-unambiguous}, the classes of our asserted factorization system are restricted along the inclusion $\Op \hookra (\Cat_\infty)_{/\Fin_*}$. So, in order to verify that they indeed define a factorization system on $\Op$, we verify the equivalent conditions of \Cref{obs:restricted-fs-on-subcat}.

In order to proceed, we recall that given two $\infty$-operads $\cO,\cO' \in \Op$, a morphism $\cO^{\otimes} \ra \cO'^{\otimes}$ in $(\Cat_\infty)_{/\Fin_*}$ lies in $\Op$ if and only if it is inert-coCartesian (i.e\! it preserves coCartesian lifts over inert morphisms in $\Fin_*$). Moreover, we make the following observation for repeated future use.
\begin{itemize}

\item[$(*)$] Assuming that $n \geq 0$, if a morphism in $\Op$ is $n$-surjective then it is surjective on inert-coCartesian morphisms.

\end{itemize}

We now turn to condition \eqref{item-obs:restricted-fs-on-subcat-lift-lies-in-subcat} of \Cref{obs:restricted-fs-on-subcat}: given a solid commutative diagram
\[ \begin{tikzcd}
\cO_1
\arrow{r}
\arrow{d}[swap]{f}
&
\cO_3
\arrow{d}{g}
\\
\cO_2
\arrow{r}
\arrow[dashed]{ru}
&
\cO_4
\end{tikzcd} \]
in $\Op$ in which $f$ is $n$-surjective and $g$ is $n$-faithful, we must show that the dashed lift in $(\Cat_\infty)_{/\Fin_*}$ (which exists and is unique due to its factorization system) also lies in $\Op$. This is trivial in the case that $n < 0$, and in the case that $n \geq 0$ this follows immediately from $(*)$.

We now turn to condition \eqref{item-obs:restricted-fs-on-subcat-factorization} of \Cref{obs:restricted-fs-on-subcat}: given any morphism $\cO \xra{h} \cO'$ in $\Op$, we must show that the factorization
\begin{equation}
\label{eq:factorizn-in-Cat-over-Finstar-will-be-in-Op-too}
\begin{tikzcd}
\cO^{\otimes}
\arrow{rr}{h}
\arrow[dashed]{rd}[sloped, swap]{l}
&
&
\cO'^{\otimes}
\\
&
\Fact(h)
\arrow[dashed]{ru}[sloped, swap]{r}
\end{tikzcd}
\end{equation}
in $(\Cat_\infty)_{/\Fin_*}$ determined by its ($n$-surjective, $n$-faithful) factorization system in fact lies in $\Op$. To simplify our notation, we write $\cF^{\otimes} \coloneqq \Fact(h)$. Additionally, we write $\cO^{\otimes} \xra{p} \Fin_*$, $\cO'^{\otimes} \xra{p'} \Fin_*$, and $\cF^{\otimes} \xra{q} \Fin_*$ for the indicated functors. We note immediately that the claim is trivial both when $n = -2$ (since then $\cF^{\otimes} \xra{r} \cO'^{\otimes}$ is an equivalence) and when $n = -1$ (since then $\cF^{\otimes} \xra{r} \cO'^{\otimes}$ is the inclusion of the full suboperad on the colors in the image of $\underline{\cO} \xra{\underline{h}} \underline{\cO'}$). So, we henceforth assume that $n \geq 0$.

\newcommand{\ov}{\raisebox{-0.1cm}{/}}

We first show that the functor $\cF^{\otimes} \xra{q} \Fin_*$ admits coCartesian lifts of inert morphisms. For this, fix an object $X \in \cF^{\otimes}_{\underline{m}_+}$ as well as an inert morphism $\underline{m}_+ \xra{\alpha} \underline{n}_+$ in $\Fin_*$. Because the functor $\cO^{\otimes} \xra{l} \cF^{\otimes}$ is surjective, we may choose a lift $\tilde{X} \in \cO_{\underline{m}_+}^{\otimes}$ of $X$. Let $\tilde{X} \xra{\tilde{\alpha}} Y$ be a $p$-coCartesian lift of $\alpha$. We claim that $X \simeq l(\tilde{X}) \xra{l(\tilde{\alpha})} l(Y)$ is a $q$-coCartesian lift of $\alpha$. To see this, observe first that $r(l(\tilde{\alpha})) \simeq h(\tilde{\alpha})$ is $p'$-coCartesian (since $h$ is a morphism in $\Op$). Now, to check that $l(\tilde{\alpha})$ is $q$-coCartesian, we must check that the canonical functor $\cF^{\otimes}_{l(Y)/} \ra \cF^{\otimes}_{/l(\tilde{X})} \times_{(\Fin_*)_{\underline{m}_+\ov}} (\Fin_*)_{\underline{n}_+ \ov}$ is an equivalence. This fits into a commutative diagram
\[ \begin{tikzcd}
\cO^{\otimes}_{Y/}
\arrow{r}
\arrow{d}
&
\cF^{\otimes}_{l(Y)/}
\arrow{r}
\arrow{d}
&
\cO'^{\otimes}_{rl(Y)/}
\arrow{d}
\\
\cO^{\otimes}_{\tilde{X}/} \times_{(\Fin_*)_{\underline{m}_+\ov}} (\Fin_*)_{\underline{n}_+\ov}
\arrow{r}
&
\cF^{\otimes}_{l(\tilde{X})/} \times_{(\Fin_*)_{\underline{m}_+\ov}} (\Fin_*)_{\underline{n}_+\ov}
\arrow{r}
&
\cO'^{\otimes}_{rl(\tilde{X})/} \times_{(\Fin_*)_{\underline{m}_+\ov}} (\Fin_*)_{\underline{n}_+\ov}
\end{tikzcd} \]
in which the two outer vertical functors are equivalences. We do so by showing that it is fully faithful and surjective. Since we have assumed that $n \geq 0$ (so that $n-1 \geq -1$), the lower left horizontal functor is surjective, which implies that the middle vertical functor is surjective. To show that it is fully faithful, given any pair of objects in $\cF^{\otimes}_{l(Y)/}$, we may lift them to $\cO^{\otimes}_{Y/}$ (again using that $n \geq 0$), and then examine the induced commutative diagram (of the same shape) on hom-spaces. Because its left and right vertical maps are equivalences, both of its left horizontal maps are $(n-1)$-connected, and both of its right horizontal maps are $(n-1)$-truncated, its middle vertical map is also an equivalence since factorizations for the ($(n-1)$-connected, $(n-1)$-truncated) factorization system on $\Spaces$ are unique. So indeed, the middle vertical functor in the above diagram is an equivalence. This proves that $\cF^{\otimes} \xra{q} \Fin_*$ admits coCartesian lifts of inert morphisms, as desired.

The same argument proves the Segal conditions for $\cF^{\otimes}$, which establishes that $\cF^{\otimes}$ is indeed an $\infty$-operad. Moreover, it also proves that $l$ preserves inert-coCartesian morphisms, and in combination with $(*)$ we find that $r$ preserves inert-coCartesian morphisms as well. So all in all, the factorization \eqref{eq:factorizn-in-Cat-over-Finstar-will-be-in-Op-too} lies in the subcategory $\Op \subset (\Cat_\infty)_{/\Fin_*}$, which proves condition \eqref{item-obs:restricted-fs-on-subcat-factorization} of \Cref{obs:restricted-fs-on-subcat}. So indeed, the ($n$-surjective, $n$-faithful) factorization system on $(\Cat_\infty)_{/\Fin_*}$ restricts to a factorization system on this subcategory.
\end{proof}

\begin{warning}\label{warn:operad-nfaith-not-ntrunc}
    Similar to the situation with $\infty$-categories outlined in \cref{warn:nfaithful-neq-ntruncated}, we caution the reader that the ($n$-surjective, $n$-faithful) factorization systems on $\Op$ differs from the ($n$-truncated, $n$-connected) factorization systems \cite[Prop. 4.6]{gep17} derived from the presentability of $\Op$. We refer the reader to 
    \cref{warn:nfaithful-neq-ntruncated} for an in-depth comparison which also applies here.
\end{warning}

Similar to \cref{prop:space-of-lift-of-n-surj-m-faithful},
the orthogonality of $n$-surjective and $n$-faithful operad maps may be generalized as follows:
\begin{corollary}\label{cor:operadic-version-of-space-of-lifts}
 Given a (solid) commuting square in $\Op$
  \[
        \begin{tikzcd}
            \cO \ar[d, "F"] \ar[r] & \cA \ar[d, "G"] \\ 
            \cO' \ar[r] \ar[ru, dashed] & \cB
        \end{tikzcd}
\]
    where $F$ is $n$-surjective and $G$ is $m$-faithful for   $m \geq n \geq -2$,   the space of (dashed) lifts is $(m-n-2)$-truncated.
\end{corollary}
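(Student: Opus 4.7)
The plan is to reduce this claim to its $\infty$-categorical analogue \cref{prop:space-of-lift-of-n-surj-m-faithful} applied to $k=1$, via the subcategory inclusion $\Op \hookrightarrow (\cat)_{/\Fin_*}$ exploited in the proof of \cref{prop:fs-for-infty-opds}. The key preparatory observation is \cref{obs:n-surj-and-n-trunc-for-infty-opds-is-unambiguous}, which tells us that $F$ remains $n$-surjective and $G$ remains $m$-faithful as morphisms in $\cat$ obtained by forgetting the projection to $\Fin_*$.

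First, I would apply \cref{prop:space-of-lift-of-n-surj-m-faithful} to the underlying square of $\infty$-categories, yielding that the space of lifts $L_{\cat}$ in $\cat$ is $(m-n-2)$-truncated.

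Next, I would produce a monomorphism $L_{\Op} \to L_{\cat}$ from the operadic lift space to the $\cat$-lift space. This requires two observations. First, denoting the bottom horizontal map of the solid square by $\psi \colon \cO' \to \cB$, the space of lifts in $(\cat)_{/\Fin_*}$ coincides with $L_{\cat}$: any functor $L \colon \cO'^{\otimes} \to \cA^{\otimes}$ in $\cat$ that makes the solid square commute is automatically over $\Fin_*$, since the structure maps satisfy $p_{\cA} \circ L \simeq p_{\cB} \circ G \circ L \simeq p_{\cB} \circ \psi = p_{\cO'}$ (using that $G$ and $\psi$ are over $\Fin_*$). Second, since $\Op \hookrightarrow (\cat)_{/\Fin_*}$ is a subcategory inclusion, the hom-spaces $\Hom_{\Op}(X, Y) \to \Hom_{(\cat)_{/\Fin_*}}(X^{\otimes}, Y^{\otimes})$ are inclusions of path components, hence monomorphisms in $\Spaces$. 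Since both $L_{\Op}$ and $L_{(\cat)_{/\Fin_*}}$ are defined as pullbacks involving these hom-spaces, and monomorphisms of spaces are stable under pullback, the induced map $L_{\Op} \to L_{(\cat)_{/\Fin_*}} = L_{\cat}$ is itself a monomorphism.

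Combining these, $L_{\Op}$ is equivalent to a disjoint union of path components of the $(m-n-2)$-truncated space $L_{\cat}$, hence itself $(m-n-2)$-truncated. I do not anticipate a serious obstacle; the most delicate step is the first observation in the preceding paragraph, which amounts to a routine diagram chase using that $G$ and $\psi$ are over $\Fin_*$.
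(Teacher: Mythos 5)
Your strategy parallels the paper's own proof closely: both reduce to \cref{prop:space-of-lift-of-n-surj-m-faithful} via \cref{obs:n-surj-and-n-trunc-for-infty-opds-is-unambiguous}, the subcategory inclusion $\Op \hookrightarrow (\Cat_\infty)_{/\Fin_*}$, and the pullback description of over-category hom-spaces. Your final step --- producing a monomorphism $L_{\Op} \to L_{\Cat_\infty}$ and concluding since $L_{\Op}$ is then a union of path components of a truncated space --- is a slight streamlining of the paper's cancellation argument via \cref{lem:cancellation-for-connectedness-and-truncatedness}, and it is valid when $m > n$.

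However, there is a genuine gap at the boundary $m = n$. You claim that a disjoint union of path components of a $(-2)$-truncated (i.e.\ contractible) space is again $(-2)$-truncated; but such a union may be empty, and the empty space is $(-1)$-truncated but not $(-2)$-truncated. So when $m = n$ your monomorphism argument only yields that $L_{\Op}$ is $(-1)$-truncated, whereas the corollary asserts contractibility. What forces nonemptiness of $L_{\Op}$ is the orthogonality relation of the $(n$-surjective, $n$-faithful) factorization system on $\Op$, i.e.\ \cref{prop:fs-for-infty-opds}, and the paper treats $m = n$ as a separate case for exactly this reason; you need to do so as well. A further, more minor, imprecision: saying that a lift of the underlying square in $\Cat_\infty$ is ``automatically over $\Fin_*$'' understates the situation, since a lift in $(\Cat_\infty)_{/\Fin_*}$ carries extra \emph{structure} (the homotopy $p_{\cA} \circ L \simeq p_{\cO'}$ together with its compatibility with the commuting homotopy), not merely a property. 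The identification $L_{(\Cat_\infty)_{/\Fin_*}} \simeq L_{\Cat_\infty}$ is nevertheless correct, and the clean way to record it is exactly what the paper does: the square of hom-spaces comparing over-category homs with plain $\Cat_\infty$-homs is a pullback, so the corresponding horizontal fibers agree.
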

\begin{proof}
    When $m = n$, the statement follows from \cref{prop:fs-for-infty-opds}. For $m>n$, consider the commuting square of spaces:
\[
        \begin{tikzcd}[column sep=2em]
            \Hom_{\Op}(\cO', \cA) \ar[rr] \ar[d, "(-1)\trunc"'] && \Hom_{\Op}(\cO, \cA)\times_{\Hom_{\Op}(\cO, \cB)} \Hom_{\Op}(\cO', \cB) \ar[d, "(-1)\trunc"] \\ 
            \Hom_{/\Fin_*}(\cO'^{\otimes}, \cA^{\otimes}) \ar[rr] \ar[d] \ar[drr, phantom, "\lrcorner", very near start]    &&      \Hom_{/\Fin_*}(\cO^{\otimes}, \cA^{\otimes}) \times_{  \Hom_{/\Fin_*}(\cO^{\otimes}, \cB^{\otimes})}   \Hom_{/\Fin^{\otimes}}(\cO'^{\otimes}, \cB^{\otimes}) \ar[d] \\
            \Hom(\cO'^{\otimes}, \cA^{\otimes}) \ar[rr, "(m-n-2)\trunc"] &&      \Hom(\cO^{\otimes}, \cA^{\otimes}) \times_{  \Hom(\cO^{\otimes}, \cB^{\otimes})}   \Hom(\cO'^{\otimes}, \cB^{\otimes})
        \end{tikzcd}
\]
    The space of lifts of our original square is by definition a fiber of the top horizontal map; it hence suffices to prove that this top horizontal map is $(m-n-2)$-truncated. 
    
    By \cref{obs:n-surj-and-n-trunc-for-infty-opds-is-unambiguous}, the $(\infty,1)$-functor $F^{\otimes} \colon \cO^{\otimes} \to \cO'^{\otimes}$ is $n$-surjective  and $G^{\otimes} \colon \cA^{\otimes} \to \cB^{\otimes}$ is $m$-faithful. Hence, it follows from \cref{prop:space-of-lift-of-n-surj-m-faithful} that the bottom horizontal map is $(m-n-2)$-truncated. By definition of the over-category, the bottom square is a pullback square; hence, the middle horizontal map is $(m-n-2)$-truncated.
    Since $\Op$ is a subcategory of ${\Cat_{\infty}}_{/\Fin_*}$, the top vertical maps are $(-1)$-truncated.     
    . Since $m-n-2\geq -1$,  the composite of the left vertical and the middle horizontal map is $(m-n-2)$-truncated. It follows from \Cref{lem:cancellation-for-connectedness-and-truncatedness}.\eqref{item-lem:cancellation-for-truncatedness} that the top horizontal map is $(m-n-2)$-truncated.
\end{proof}

\subsection{Lifting operadic structure}
\label{subsec:lifting-operadic-structure}

In \cref{defn:n-surj-and-n-trunc-for-infty-opds} we introduced the notion of $n$-faithful and $n$-surjective morphisms of operads and showed in~\cref{prop:fs-for-infty-opds} that they form a factorization system on the $\infty$-category $\Op$. In this section, we show how this can be used to lift $\TT_2 \otimes \EE_1$-structures, i.e. prebraidings (see \cref{cor:classicalbraidings}), along certain maps of operads. 

\begin{prop}
\label{prop:operadsurjective} The following operad maps are $0$-surjective: 
\begin{enumerate}
\item \label{item-prop:operadsurjective-1}
 $\underline{\nabla_2 \otimes \EE_0} \otimes \EE_1 \to \nabla_2 \otimes \EE_0 \otimes  \EE_1 \simeq \nabla_2 \otimes \EE_1$, where $\underline{\nabla_2 \otimes\EE_0}$ denotes the (free $\infty$-operad on) the underlying $\infty$-category of $\nabla_2 \otimes \EE_0$; 
\item \label{item-prop:operadsurjective-2}
$[1] \otimes \EE_1\to \TT_2 \otimes \EE_1$;
\item \label{item-prop:operadsurjective-3}
$\EE_1 \to \AA_2 \otimes \EE_1$.
\end{enumerate}
\end{prop}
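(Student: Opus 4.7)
My plan has two phases. First, I will reduce items (2) and (3) to item (1) via the iterated pushout structure defining $\TT_2$ and $\AA_2$. Second, I will prove item (1) by exploiting the $\EE_1$-compatibility of the universal $2$-ary operation in $\nabla_2 \otimes \EE_1$.

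\emph{Reduction via pushouts.} Recall from~\eqref{eq:defT2} the iterated pushout defining $\TT_2$ and $\AA_2$. Since $(-)\otimes \EE_1 \colon \Op \to \Op$ is a left adjoint (with right adjoint $\Alg_{\EE_1}(-)$), it preserves pushouts, so applying it yields
\[
\begin{tikzcd}
\underline{\nabla_2 \otimes \EE_0}\otimes \EE_1 \arrow[r] \arrow[d]& {[1]} \otimes \EE_1  \arrow[r] \arrow[d]& \EE_1 \arrow[d] \\
\nabla_2 \otimes \EE_1  \arrow[r]& \TT_2 \otimes \EE_1  \arrow[r]& \AA_2 \otimes \EE_1
\end{tikzcd}
\]
in which both squares remain pushouts. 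Thus the map in item (2) is the pushout of the map in item (1), and the map in item (3) is the pushout of the map in item (2). By \cref{prop:fs-for-infty-opds}, the class of $0$-surjective operad maps forms the left class of a factorization system on $\Op$; in particular it is a saturated class (\cref{def:saturated}) closed under pushouts. Consequently, items (2) and (3) will follow from item (1).

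\emph{Proof of item (1).} The map $F \colon \underline{\nabla_2 \otimes \EE_0}\otimes \EE_1 \to \nabla_2 \otimes \EE_1$ is surjective on colors, since both operads share the three colors $\{X_1,X_2,Y\}$ of $\nabla_2$ and $F$ is the identity on them. For $(-1)$-connectedness on multi-hom spaces the main input will be the identity $\mu \simeq m_Y \circ (\sigma_1\mu,\sigma_2\mu)$ holding in $\nabla_2 \otimes \EE_1$, expressing the $\EE_1$-compatibility of the universal $2$-ary operation $\mu$. Here $\sigma_i\mu \colon X_i \to Y$ are the unit-insertions of $\mu$ coming from the underlying walking span $\underline{\nabla_2\otimes\EE_0}$, and $m_Y$ is the $\EE_1$-multiplication on $Y$. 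Both the legs $\sigma_i\mu$ and the multiplication $m_Y$ lift through $F$ --- the former via the span, the latter via $\EE_1 \to \underline{\nabla_2\otimes\EE_0}\otimes \EE_1$ selecting the color $Y$ --- so every multi-ary operation in $\nabla_2 \otimes \EE_1$, being a composite of $\mu$'s and $\EE_1$-operations, can be rewritten into an operation in the image of $F$.

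I would formalize this by constructing a retraction $G \colon \nabla_2\otimes \EE_1 \to \underline{\nabla_2\otimes \EE_0}\otimes \EE_1$ of $F$ in the homotopy category of operads. By the universal property of $\nabla_2 \otimes \EE_1$, such a $G$ is determined by the three colors and an $\EE_1$-compatible $2$-ary operation in $\underline{\nabla_2 \otimes \EE_0} \otimes \EE_1$, which I take to be $m_Y \circ (f_1, f_2)$, built from the two span legs $f_i$ and the multiplication on $Y$. Then $F \circ G$ recovers the universal $\mu$ up to homotopy by the key identity, showing that $F$ is a split epimorphism on homotopy $1$-operads and hence $\pi_0$-surjective on all multi-hom spaces. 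The hardest part will be making the identity $\mu \simeq m_Y \circ (\sigma_1\mu,\sigma_2\mu)$ and the $\EE_1$-compatibility of the candidate $G$ rigorous in the Boardman--Vogt tensor product, whose multi-hom spaces lack a closed-form description. A viable route is to pass to homotopy $1$-operads: since $\nabla_2$ and $\EE_1$ are discrete, their tensor product can be analyzed through its presentation as an ordinary $1$-operad, where the identity reduces to a classical computation in the operad theory of associative algebras.
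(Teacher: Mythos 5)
Your reduction of items (2) and (3) to item (1) coincides with the paper's: tensoring the pushout squares \eqref{eq:defT2} with $\EE_1$ (which preserves pushouts, since the Boardman--Vogt tensor product on $\Op$ is compatible with colimits) and using that the left class of the ($0$-surjective, $0$-faithful) factorization system of \cref{prop:fs-for-infty-opds} is closed under cobase change. Your opening paragraph for item (1) -- surjectivity on colors together with the identity $\mu \simeq m_Y\circ(\sigma_1\mu,\sigma_2\mu)$ extracted from the $\EE_1$-compatibility and unitality of the universal binary operation -- is also exactly the paper's key computation, which shows that the one generator not evidently in the image, namely $\mu$, lies in the image on $\pi_0$ of the $2$-ary multi-hom space; combined with the evident surjectivity on the remaining generating cells and closure of the image under operadic composition, this already gives $(-1)$-connectivity on all multi-hom spaces, with no need for any closed form of the Boardman--Vogt multi-hom spaces.

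The proposed formalization via a section $G$ of $F$, however, has a genuine gap. By the universal property, a map $G\colon \nabla_2\otimes\EE_1 \to \underline{\nabla_2\otimes\EE_0}\otimes\EE_1$ must be classified by an \emph{$\EE_1$-compatible} binary operation, i.e.\ a point of $\Mul_{\Alg_{\EE_1}(\underline{\nabla_2\otimes\EE_0}\otimes\EE_1)}(X_1,X_2;Y)$, not merely by a binary operation of the underlying operad. Your candidate $m_Y\circ(f_1,f_2)$ is not such a point. Classically, for algebra maps $g_1\colon A_1\to B$, $g_2\colon A_2\to B$, the map $a_1\otimes a_2\mapsto g_1(a_1)g_2(a_2)$ is an algebra homomorphism out of $A_1\otimes A_2$ only when the images of $g_1$ and $g_2$ commute; and in the universal cospan operad $\underline{\nabla_2\otimes\EE_0}\otimes\EE_1$ no such commutation is available, because the Boardman--Vogt tensor product only imposes interchange between operations of the two tensor factors and all operations of the walking span are unary. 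Concretely: if $m_Y\circ(f_1,f_2)$ carried an $\EE_1$-compatibility structure, pushing it forward along the operad map to $k$-modules classified by a cospan of discrete $k$-algebra maps with noncommuting images (e.g.\ the two generators of a free associative algebra) would equip $a_1\otimes a_2\mapsto g_1(a_1)g_2(a_2)$ with the structure of an algebra homomorphism, which is false. So the universal property does not produce your $G$, and $F$ should not be expected to be a split epimorphism on homotopy operads -- the proposition asserts only $0$-surjectivity, which is strictly weaker. The needed commutation is exactly what $\mu$ supplies \emph{after} passing along $F$, which is why the correct argument runs inside $\nabla_2\otimes\EE_1$, where the compatibility and unitality homotopies exist by construction and yield a path $\mu\simeq \mu_Y\circ(f_1\otimes f_2)$ in the underlying multi-hom space. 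Your fallback of reducing to homotopy $1$-operads is also unreliable -- the $\infty$-categorical Boardman--Vogt tensor product of discrete operads is not computed by its classical $1$-operadic counterpart -- but this becomes moot once the argument is carried out inside $\nabla_2\otimes\EE_1$ as above.
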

\begin{proof}
We first prove part \eqref{item-prop:operadsurjective-1}.
By \cref{lem:underlying}, the $\infty$-category $\underline{\nabla_2 \otimes \EE_0}$ is equivalent to the walking span $\wspan$ and hence the $\infty$-operad $\underline{\nabla_2 \otimes \EE_0} \otimes \EE_1 \simeq \wspan \otimes \EE_1$ corepresents spans of $\EE_1$-morphisms.

On underlying categories, the operad map  $\underline{\nabla_2 \otimes \EE_0} \otimes \EE_1 \to \nabla_2 \otimes \EE_1 $ is the identity. It therefore suffices to show that  the induced maps on multi-hom spaces are $(-1)$-connected. Denote the colors of $\wspan \otimes \EE_1$ by $A, C$ and $B$, the morphisms by $f\in \Mul_{\wspan \otimes \EE_1}(A, C), g\in \Mul_{\wspan \otimes \EE_1} (B,C)$, the multiplication cells by $\mu_A\in \Mul_{\wspan\otimes \EE_1}(A, A; A)$,  $\mu_B \in \Mul_{\wspan \otimes \EE_1}(B, B;B)$ and $\mu_C \in \Mul_{\wspan \otimes \EE_1}(C,C;C)$, the unit cells by $1_A \in \Mul_{\wspan \otimes \EE_1}(\emptyset, A), 1_B \in \Mul_{\wspan \otimes \EE_1}(*, B)$ and $1_C \in \Mul_{\wspan \otimes \EE_1}(*, C)$  and use the same notation for their respective images in $\nabla_2 \otimes \EE_1$. The only generating cell of $\nabla_2 \otimes \EE_1$ that is not evidently in the image of $\wspan \otimes \EE_1$ is the binary multiplication stemming from the $\nabla_2$-operad, which we denote by  $\mu \in \Mul_{\nabla_2 \otimes \EE_1}(A,B;C)$. 
We will now show that this additional generator  $\mu$ is also in the image of $ \pi_0 \Mul_{\wspan \otimes \EE_1} (A, B;C) \to \pi_0 \Mul_{\nabla_2\otimes \EE_1}(A, B;C)$ which concludes the proof that $\wspan \otimes \EE_1 \to \nabla_2 \otimes \EE_1$ induces (-1)-connected maps on all multi-hom spaces.

Since $\mu$ is a map of $\EE_1$-algebras, we have a path in $\Mul_{\nabla_2 \otimes \EE_1}(A,A, B, B;C)$ (where we abuse notation and write $- \circ (-\otimes-)$ to denote the evident operadic compositions): 
 \[\mu\circ(\mu_A\otimes \mu_B)\simeq \mu_C\circ (\mu\otimes \mu).\]
 On the other hand, left and right unitality produce paths in $\Mul_{\nabla_2\otimes \EE_1}(A;C)$ and $\Mul_{\nabla_2\otimes \EE_1}(B;C)$, respectively : 
 \[\mu\circ (\id_A\otimes 1_B)\simeq f \hspace{1cm} \mu\circ (1_A \otimes \id_B)\simeq g .\]
 Composing these, we conclude:
 \begin{equation*}
 \mu\simeq \mu\circ(\mu_A\otimes\mu_B)\circ(\id_A \otimes 1_A\otimes 1_B\otimes \id_B)\simeq\mu_C\circ(\mu\otimes \mu)\circ (\id_A \otimes 1_A\otimes 1_B\otimes \id_B)\simeq\mu_C\circ(f\otimes g)
\end{equation*}
Hence, $\mu$ is in the image of \[\Mul_{\wspan \otimes \EE_1}(C,C;C) \times \Mul_{\wspan \otimes \EE_1}(A;C) \times \Mul_{\wspan \otimes \EE_1}(B, C) \]\[\to \Mul_{\wspan \otimes \EE_1} (A, B ;C) \to  \Mul_{\nabla_2\otimes \EE_1}(A, B;C).\] 

Since  $\Op$ is a presentably monoidal category, the pushout squares~\eqref{eq:defT2} induce pushout squares
\[
\begin{tikzcd}
\underline{\nabla_2 \otimes \EE_0} \otimes \EE_1 \arrow[r] \arrow[d]\arrow[dr, phantom, "\ulcorner", very near end]&{[1]} \otimes \EE_1  \arrow[r] \arrow[d]\arrow[dr, phantom, "\ulcorner", very near end]& \arrow[d] \EE_1 \\
\nabla_2 \otimes \EE_1  \arrow[r]& \TT_2 \otimes \EE_1  \arrow[r]& \AA_2  \otimes \EE_1.
\end{tikzcd} 
\]
Since left class in a factorization system is preserved under pushouts, so $[1] \otimes \EE_1 \to \TT_2 \otimes \EE_1$ and $\EE_1 \to \AA_2 \otimes \EE_1$ are also $0$-surjective. 
\end{proof}

\subsection{From \texorpdfstring{$\AA_2 \otimes \EE_1$}{A2 tensor E1}- to \texorpdfstring{$\EE_2$}{E2}-algebras}
\label{subsec:from-A2-E1-to-E-2}
Considering the filtration $\AA_1 \otimes \EE_1 \to \AA_2 \otimes \EE_2 \to \ldots \AA_{\infty} \otimes \EE_1 = \EE_1 \otimes \EE_1 \simeq \EE_2$, an $\AA_2\otimes \EE_1$-structure is less data than a fully coherent $\EE_2$-structure. 
However, as suggested by \cref{cor:classicalbraidings}, $\AA_2\otimes \EE_1$-structures on $1$-categories already agree with $\EE_2$-structures.
In this section, we prove a generization that holds for any $2$-categorical operad.

\begin{definition}
\label{def:2operad}
 For $n\geq -1$, an \emph{$n$-operad} is an $\infty$-operad all of whose multi-hom spaces, i.e. the $\Mul_{\cO}(X_1,\ldots, X_k; Y)$, 
 are $(n-1)$-truncated. We extend this to the case $n=-2$ by declaring the terminal operad to be a $(-2)$-operad.
We denote the full subcategory of $\Op$ on the $n$-operads by $\Op_n$.
\end{definition}
Equivalently, an $\infty$-operad is an $n$-operad if and only if the terminal operad map $\cO \to \EE_{\infty}$ is $n$-faithful.

\begin{example}
A $1$-operad is precisely one that is equivalent to (the nerve of) an ordinary operad.
\end{example}

\begin{example} A symmetric monoidal $(\infty, 1)$-category $\cC$, considered as an $\infty$-operad, is an $n$-operad if and only if its underlying category is an $(n,1)$-category (also see \cref{defn:n-k-cat}). \end{example}

The notion of $n$-truncated morphism defined in \cref{defn:nconn-and-ntrunc} generalizes to any $\infty$-category:

\begin{definition} For $n\geq -2$, a morphism $f  \colon A \to B$ in an $\infty$-category $\cC$ is called $n$-truncated if  the induced map of spaces $\Hom_{\cC}(X, A) \to \Hom_{\cC}(X,B)$ is $n$-truncated, see \cref{defn:nconn-and-ntrunc}, for every object $X\in \cC$. 
\end{definition}
In particular, a morphism $f  \colon A \to B$ in an $\infty$-category $\cC$ is $0$-truncated if for every object $X\in \cC$, the map $\Hom_{\cC}(X,A) \to \Hom_{\cC}(X,B)$ is $0$-truncated, i.e. all its fibers are discrete sets. 
For an ordinary monoidal $1$-category $A$, the monoidal functor $Z_1(A) \to A$ is faithful, and in particular $0$-truncated as a morphism in the $\infty$-category $\Alg_{\EE_1}(\Cat_1)$\ (see \cref{warn:nfaithful-neq-ntruncated}).  The following is a generalization of this statement: 

\begin{prop}
\label{prop:centralizertruncated}
 Let $\cC$ be a symmetric monoidal $(2,1)$-category and $A\in \Alg_{\EE_1}(\cC)$ whose center $Z_1(A) \in \Alg_{\EE_2}(\cC)$ exists. Then, the morphism $Z_1(A) \to A$ is a  $0$-truncated morphism in $\Alg_{\EE_1}(\cC)$. 
\end{prop}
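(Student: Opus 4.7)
The plan is to fix an arbitrary $B \in \Alg_{\EE_1}(\cC)$ and to directly analyze the fibers of the induced map $(\ev_{1_A})_\ast \colon \Hom_{\Alg_{\EE_1}(\cC)}(B, Z_1(A)) \to \Hom_{\Alg_{\EE_1}(\cC)}(B, A)$. Given any $g \colon B \to A$, by the universal property of the centralizer $Z_1(A) = Z_0^{\Alg_{\EE_1}(\cC)}(\id_A)$ (together with initiality of the tensor unit $I \in \Alg_{\EE_1}(\cC)$, which trivializes the choice of $u \colon I \to B$), the hom-space $\Hom_{\Alg_{\EE_1}(\cC)}(B, Z_1(A))$ is equivalent to the space of pairs $(\mu,h)$ with $\mu \in \Mul_{\Alg_{\EE_1}(\cC)}(B, A; A)$ and $h$ a homotopy $\mu \circ (1_B \otimes \id_A) \simeq \id_A$. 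Under this identification, the map $(\ev_{1_A})_\ast$ sends $(\mu,h)$ to $\mu \circ (\id_B \otimes 1_A)$, so the fiber at $g$ is the homotopy pullback
\[
\mathrm{Fib}_g \simeq \Mul_{\Alg_{\EE_1}(\cC)}(B, A; A)\times_{\Hom_{\Alg_{\EE_1}(\cC)}(A, A)\times \Hom_{\Alg_{\EE_1}(\cC)}(B, A)}\{(\id_A, g)\}.
\]

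Since $\cC$ is a $(2,1)$-category, its hom-spaces are $1$-truncated. This propagates through the defining limit presentation of the multi-hom spaces of $\Alg_{\EE_1}(\cC)$ as (iterated) limits built from hom-spaces of $\cC$, so that $\Alg_{\EE_1}(\cC)$ is a $2$-operad in the sense of \cref{def:2operad}. Hence $\mathrm{Fib}_g$ is at most $1$-truncated, and it suffices to show that $\pi_1(\mathrm{Fib}_g)$ vanishes at every basepoint. Equivalently, any self-$2$-isomorphism $\alpha \colon \mu \Rightarrow \mu$ in the hom-groupoid of $\Mul_{\Alg_{\EE_1}(\cC)}(B, A; A)$, whose restrictions $\alpha \circ (1_B \otimes \id_A)$ and $\alpha \circ (\id_B \otimes 1_A)$ are identity $2$-morphisms, must itself be the identity.

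The main obstacle is this last statement, i.e.\! formalizing a $2$-categorical analog of the unit-decomposition argument already visible in the proof of \cref{prop:operadsurjective}.\eqref{item-prop:operadsurjective-1}. That proof established an equivalence $\mu \simeq \mu_A \circ (f \otimes g)$ identifying the $2$-ary generator of $\nabla_2 \otimes \EE_1$ with a composition of its two $1$-ary unit-restrictions and the ambient $\EE_1$-multiplication. I expect to upgrade this to the $2$-morphism level: because $\alpha$ is a $2$-morphism in $\Alg_{\EE_1}(\cC)$, it is $\EE_1$-multiplicative, so its value on a product of the form $(b \otimes 1_A)\cdot(1_B \otimes a)$ decomposes as $\alpha|_{(b\otimes 1_A)}\cdot \alpha|_{(1_B \otimes a)}$, both factors of which are identities by hypothesis, forcing $\alpha = \id_\mu$. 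This is the $(2,1)$-categorical refinement of the monoidality argument underlying the proofs of \cref{thm:classbraidings} and \cref{cor:classicalbraidings}, which assert that the space of prebraidings on a $1$-categorical monoidal functor is discrete. Rigorously, I would carry this out either by unwinding the Segal limit presentation of $\Mul_{\Alg_{\EE_1}(\cC)}(B, A; A)$ inside $\cC$ (where the decomposition becomes a diagram chase in a $1$-truncated space), or equivalently operadically, by promoting the $0$-surjectivity of $\underline{\nabla_2 \otimes \EE_0}\otimes \EE_1 \to \nabla_2 \otimes \EE_1$ in \cref{prop:operadsurjective}.\eqref{item-prop:operadsurjective-1} and invoking \cref{cor:operadic-version-of-space-of-lifts} to get the requisite $0$-truncatedness from the $2$-faithfulness of $\Alg_{\EE_1}(\cC)$ as an $\infty$-operad.
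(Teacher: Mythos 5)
Your proposal is correct and, in its operadic form, is essentially the paper's own argument: the paper likewise uses the universal property of the centralizer to identify the fiber over $f$ with the space of lifts of $\underline{\nabla_2 \otimes \EE_0}\otimes\EE_1 \to \cC$ against $\nabla_2\otimes\EE_1 \to \cC$, and then concludes $0$-truncatedness directly from the $0$-surjectivity statement of \cref{prop:operadsurjective} together with \cref{cor:operadic-version-of-space-of-lifts}. The one adjustment is that after transposing across the Boardman--Vogt adjunction (so that the $0$-surjective map $\underline{\nabla_2\otimes\EE_0}\otimes\EE_1 \to \nabla_2\otimes\EE_1$ is the left leg of the lifting square), the right leg is $\cC \to \ast$, so it is the $2$-faithfulness of $\cC$ (rather than of $\Alg_{\EE_1}(\cC)$) that is invoked; with that fix, your explicit $\pi_1$-reduction and unit-decomposition heuristic become unnecessary, since the corollary yields $0$-truncatedness of the whole lift space at once.
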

\begin{proof}
Let $X\in \Alg_{\EE_1}(\cC)$. Since $Z_1(A)$ is the centralizer $\cent(\id_A)$ of the morphism $\id_A$ in $\Alg_{\EE_1}(\cC)$ and the unit of $\Alg_{\EE_1}(\cC)$ is initial, the universal property of the centralizer implies that the map  $\Hom_{\Alg_{\EE_1}(\cC) }(X, Z_1(A)) \to \Hom_{\Alg_{\EE_1}(\cC)}(X, A)$ is equivalent to the composite  
\[ \Hom_{\Alg_{\EE_1}(\cC)}(X \otimes A, A) \times_{\Hom_{\Alg_{\EE_1}(\cC)}(A,A)} \{ \id_A\} \to \Hom_{\Alg_{\EE_1}(\cC)}(X \otimes A, A) \to \Hom_{\Alg_{\EE_1}(\cC)}(X,A).\] 
By definition of the $\infty$-operad $\nabla_2$ in \cref{lem:nabla}, the fiber of this map at an $f\in \Hom_{\Alg_{\EE_1}(\cC)}(X,A)$ is precisely the space of lifts of the operad map $\wspan = \underline{\nabla_2 \otimes \EE_0} \to \Alg_{\EE_1}(\cC)$ classified by the span of $\EE_1$-morphisms $X  \xrightarrow{f}A  \xleftarrow{\id_A}A$, to an operad map $\nabla_2 \otimes \EE_0 \to \Alg_{\EE_1}(\cC)$. Equivalently, this is the space of lift
\[
\begin{tikzcd}
\underline{\nabla_2 \otimes \EE_0} \otimes \EE_1  \arrow[d]  \arrow[r] & \cC\\
\nabla_2 \otimes \EE_1 \arrow[ur, dashed] 
\end{tikzcd}.
\]
By assumption, $\cC$ is a symmetric monoidal $(2,1)$-category, hence a $2$-operad and hence the operad map $\cC \to *$ is $2$-faithful. Since the left vertical operad map is $0$-surjective by \cref{prop:operadsurjective}, it follows from \cref{cor:operadic-version-of-space-of-lifts} that this space of lifts is $0$-truncated. 
\end{proof}

\begin{corollary} \label{cor:A2E1-on-2cats} Let $\cC$ be a presentably symmetric monoidal $(2,1)$-category. Then, the map of spaces \[ \Hom_{\Op}(\EE_2, \cC) \to\Hom_{\Op}(\AA_2 \otimes \EE_1, \cC)\]
is an equivalence. 
\end{corollary}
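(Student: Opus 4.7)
The plan is to apply the orthogonality of the $(2\text{-surjective}, 2\text{-faithful})$ factorization system on $\Op$ from \cref{prop:fs-for-infty-opds}. Because $\cC$ is a presentably symmetric monoidal $(2,1)$-category, all multi-hom spaces $\Mul_{\cC}(X_1,\dots,X_k;Y) \simeq \Hom_{\cC}(X_1 \otimes \cdots \otimes X_k, Y)$ are $1$-truncated, so the terminal map of $\infty$-operads $\cC \to *$ is $2$-faithful. Granted that the operad map $\AA_2 \otimes \EE_1 \to \EE_2$ is $2$-surjective, \cref{cor:operadic-version-of-space-of-lifts} forces the space of lifts
\[
\begin{tikzcd}
\AA_2 \otimes \EE_1 \ar[r] \ar[d] & \cC \ar[d] \\
\EE_2 \ar[r] \ar[ur, dashed] & *
\end{tikzcd}
\]
to be $(-2)$-truncated, i.e.\ contractible. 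Hence the fibers of $\Hom_{\Op}(\EE_2, \cC) \to \Hom_{\Op}(\AA_2 \otimes \EE_1, \cC)$ are contractible and the map is an equivalence, as desired.

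The remaining task is therefore to establish that $\AA_2 \otimes \EE_1 \to \EE_2$ is $2$-surjective; surjectivity on the single color is immediate, so the content is $1$-connectedness on multi-hom spaces. By Yoneda, this reduces to showing that for every $(2,1)$-category $\cD$, the restriction $\Hom_{\Op}(\EE_2, \cD) \to \Hom_{\Op}(\AA_2 \otimes \EE_1, \cD)$ is an equivalence. I would fix an $\EE_1$-algebra $A \in \cD$ and analyze the fibers of both sides over $A \in \Alg_{\EE_1}(\cD)^{\simeq}$. \cref{prop:A2-and-center} applied inside $\Alg_{\EE_1}(\cD)$ identifies the fiber of the $\AA_2 \otimes \EE_1$-side with the space of sections of the center projection $Z_1(A) \to A$ in $\Alg_{\EE_1}(\cD)$. \cref{lem:A2-to-E1} applied inside $\Alg_{\EE_1}(\cD)$ — which inherits presentable symmetric monoidality from $\cD$, has initial tensor unit, and is itself a $(2,1)$-category — identifies the fiber of the $\EE_2$-side with the space of $\EE_1$-algebra sections of the same map $Z_1(A) \to A$ in $\Alg_{\EE_1}(\cD)$. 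The restriction map fibers as the evident forgetful map between these two section spaces.

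The hard part will be showing that this forgetful map is an equivalence, and this is where \cref{prop:centralizertruncated} becomes decisive: it asserts that $Z_1(A) \to A$ is $0$-truncated as a morphism in $\Alg_{\EE_1}(\cD)$. Combined with the $(2,1)$-categorical nature of $\Alg_{\EE_1}(\cD)$, this $0$-truncatedness ensures that the higher multiplicative coherences encoded by an $\EE_1$-enhancement of a section live in homotopy groups of mapping spaces that are automatically controlled, so every plain section admits an essentially unique $\EE_1$-enhancement. Feeding the resulting contractibility back through representability in $\Op$ then yields the required $2$-surjectivity of $\AA_2 \otimes \EE_1 \to \EE_2$ and completes the proof.
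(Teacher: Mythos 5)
Your first half — establish that $\AA_2 \otimes \EE_1 \to \EE_2$ is $2$-surjective, then conclude by orthogonality against the $2$-faithful map $\cC \to *$ — has a genuine logical gap, and it is the step you label ``by Yoneda''. $2$-surjectivity of an operad map $F$ means left orthogonality to \emph{all} $2$-faithful morphisms, whereas the condition that $\Hom_{\Op}(F, \cD)$ be an equivalence for every $2$-operad $\cD$ (let alone only for $(2,1)$-categories) is left orthogonality merely to the subclass $\{\cD \to *\}$, i.e.\ the condition that $h_2(F)$ be an equivalence in $\Op_2$. These are strictly different; the map-of-spaces analogue is that ``$\tau_2 f$ is an equivalence'' is strictly weaker than ``$f$ is $2$-connected'' (consider $* \to S^2$ for the $n=1$ version). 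Moreover, the representable statement you propose to reduce to is precisely \cref{cor:2operadA2}, which the paper \emph{derives from} the present corollary via the localization $h_2 \colon \Op \to \Op_2$ and the notion of $1$-equivalence of \cite{SY19}, so the dependency runs in the opposite direction from what you need and the plan is circular. It also silently enlarges the target of the proof from presentably symmetric monoidal $(2,1)$-categories to all $(2,1)$-categories, and on that larger class your subsequent centralizer argument breaks down: \cref{prop:A2-and-center} and \cref{lem:A2-to-E1} require $Z_1(A)$ to exist, which is only guaranteed by presentability via \cite[Cor.~5.3.1.15]{HA}.

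Your second half does track the paper's actual argument once it is applied directly to the given presentably symmetric monoidal $\cC$, with no surjectivity detour: fiber over $A \in \Alg_{\EE_1}(\cC)$, identify both fibers as section spaces of $Z_1(A) \to A$ via \cref{prop:A2-and-center} and \cref{lem:A2-to-E1}, and exploit the $0$-truncatedness from \cref{prop:centralizertruncated}. However the final sentence (``higher multiplicative coherences $\ldots$ automatically controlled'') is a heuristic standing in for the actual closing argument. What the $0$-truncatedness of $Z_1(A) \to A$ supplies is that any $\EE_1$-section $A \to Z_1(A)$ is $(-1)$-truncated, so one may replace the over-categories by $\Alg_{\EE_k}(\cC)_{/^{-1}Z_1(A)}$; the forgetful map between the $\EE_2$- and $\EE_1$-versions is then an equivalence because the residual lifting problem pairs the $0$-surjective map $\EE_1 \to \EE_2$ against the $0$-faithful map $t \colon \mathrm{Ar}^{-1}(\cC) \to \cC$, and \cref{prop:fs-for-infty-opds} applies. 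So the operadic factorization system does enter the proof — but locally, at level $0$, in the final lifting step, not globally at level $2$ as your opening plan envisages.
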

\begin{proof}
Consider the diagram of spaces
\[
\begin{tikzcd}
\Hom_{\Op}(\EE_2, \cC) \arrow[dr]\arrow[rr] & & \arrow[dl] \Hom_{\Op}(\AA_2\otimes \EE_1, \cC) \\
& \Hom_{\Op}(\EE_1,\cC)
\end{tikzcd}.
\]
To prove that the horizontal map is an equivalence, it suffices to show that for every $A\in \Alg_{\EE_1}(\cC)$, the induced map between fibers 
\[
\Hom_{\Op}(\EE_2,\cC) \times_{\Hom_{\Op}(\EE_1,\cC)} \{A\} \to \Hom_{\Op}(\AA_2 \otimes \EE_1,\cC) \times_{\Hom_{\Op}(\EE_1,\cC)} \{A\}
\]
is an equivalence. 

Since $\cC$ is presentably symmetric monoidal, it follows that centralizers and centers exist~\cite[Cor. 5.3.1.15]{HA} and hence, by applying \cref{prop:A2-and-center}, that the latter space is equivalent to \[ 
\left(\Alg_{\EE_1}(\cC)_{/Z_1(A)}\right)^{\simeq} \times_{\left(\Alg_{\EE_1}(\cC)_{/A}\right)^{\simeq} } \{\id_A\}.\]
Applying \cref{lem:A2-to-E1} to the $\infty$-category $\Alg_{\EE_1}(\cC)$, we find that the functor 
\[
\Alg_{\EE_2}(\cC)_{/Z_1(A)} \times_{\Alg_{\EE_1}(\cC)/A} \{\id_A\}\to \Alg_{\EE_2}(\cC) \times_{\Alg_{\EE_1}(\cC)} \{A\}  
\]
is an equivalence. It therefore suffices to show that the forgetful functor 
\begin{equation}
\label{eq:E2groupoid}
\Alg_{\EE_2}(\cC)_{/Z_1(A)} \times_{\Alg_{\EE_1}(\cC)/A} \{\id_A\} \to \Alg_{\EE_1}(\cC)_{/Z_1(A)} \times_{\Alg_{\EE_1}(\cC)_{/A}} \{\id_A\}
\end{equation}
is an equivalence. 

\cref{prop:centralizertruncated} implies that the map $Z_1(A) \to A$ is $0$-truncated as a morphism in $\Alg_{\EE_1}(\cC)$. Hence, any section $A\to Z_1(A)$ is $(-1)$-truncated. Therefore, the map ~\eqref{eq:E2groupoid} is equivalent to the map 
\[
\Alg_{\EE_2}(\cC)_{/^{-1}Z_1(A)} \times_{\Alg_{\EE_1}(\cC)/A} \{\id_A\} \to \Alg_{\EE_1}(\cC)_{/^{-1}Z_1(A)} \times_{\Alg_{\EE_1}(\cC)_{/A}} \{\id_A\}
\]
where $- /^{-1} Z_1(A)$ denote full subcategories of $(-1)$-truncated $\EE_1$-maps (see \cref{nota:ar-cR-full-sub}). To prove this is an equivalence, it suffices to show that \begin{equation}
\label{eq:E2braidedlaststep}\Alg_{\EE_2}(\cC)_{/^{-1}Z_1(A)} \to \Alg_{\EE_1}(\cC)_{/^{-1} Z_1(A)}\end{equation} is an equivalence. But given any $(X\hookrightarrow Z_1(A))$
in $\Alg_{\EE_1}(\cC)_{/^{-1}Z_1(A)}$, the fiber of~\eqref{eq:E2braidedlaststep} is equivalent to the space of dashed lifts in $\Op$
\begin{equation}
\label{eq:E2braidedlift}
\begin{tikzcd}
\EE_1 \arrow[d] \arrow[rr, "\{X\hookrightarrow Z_1(A)\}"] &&  \arrow[d, "t"] \mathrm{Ar}^{-1}(\cC) \\
\EE_2 \arrow[urr, dashed] \arrow[rr, " \{Z_1(A)\}"'] && \cC
\end{tikzcd}
\end{equation}
where $\mathrm{Ar}^{-1}(\cC)$ denotes the full symmetric monoidal subcategory of the arrow category $\mathrm{Ar}(\cC) \coloneqq \Fun([1], \cC)$ on the $(-1)$-truncated morphisms.

But since $\EE_1 \to \EE_2$ is $0$-surjective, i.e. essentially surjective on objects and $(-1)$-connected on multi-hom spaces $\EE_1(n)\simeq S_n \to \EE_2(n) = \mathrm{Conf}(n, \mathbb{R}^2)$, and since $\mathrm{Ar}^{-1}(\cC) \to \cC$ is $0$-faithful\footnote{Given $f,g \in \mathrm{Ar}^{-1}(\cC)$, the fiber of the induced map on hom-spaces $\Hom_{\mathrm{Ar}(\cC)} (f, g) \to \Hom_{\cC}(tf, tg)$ at an $h \in \Hom_{\cC}(tf, tg)$ is the space of lifts $\Hom_{\cC_{/tg}}(h \circ f, g)$. Since $g$ is $(-1)$-truncated, this space is $(-1)$-truncated.}, it follows from \cref{prop:fs-for-infty-opds} that the space of lifts~\eqref{eq:E2braidedlift} is contractible. 
\end{proof}

\begin{remark}
\label{rem:classicalbraidingisEtwo}
Since an $\AA_2\otimes \EE_1$-structure on a given monoidal $1$-category is by \cref{cor:classicalbraidings}.\eqref{item-cor:classicalbraidings-A2}  precisely the data of a braiding, \cref{cor:A2E1-on-2cats} in particular implies the well-known observation (see \cite[Ex. ~5.1.2.4]{HA}) that braided monoidal structures and $\EE_2$-structures coinicide on ordinary $1$-categories. \end{remark}

\begin{corollary}
\label{cor:2operadA2}
 For any $2$-operad $\cO$, the map of spaces $\Hom_{\Op}( \AA_2 \otimes \EE_1, \cO) \to \Hom_{\Op}(\EE_2, \cO)$ is an equivalence. 
\end{corollary}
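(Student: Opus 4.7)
The strategy is to reduce the claim to \cref{cor:A2E1-on-2cats}, which treats the case of presentably symmetric monoidal $(2,1)$-categories, by embedding an arbitrary $2$-operad $\cO$ fully faithfully into such a category.

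First I would construct, for the given $2$-operad $\cO$, a fully faithful operad morphism $\cO \hookrightarrow \widehat{\cO}$ where $\widehat{\cO}$ is a presentably symmetric monoidal $(2,1)$-category. A natural candidate arises from a $(2,1)$-categorical variant of Lurie's operadic Day convolution \cite[\S~2.2.6]{HA}: take $\widehat{\cO} \coloneqq \Fun(\underline{\cO}^{\op}, \Spaces_{\leq 1})$ equipped with the Day convolution symmetric monoidal structure induced from the $\infty$-operad structure on $\cO$. Because $\cO$ is a $2$-operad, each representable presheaf lands in $\Spaces_{\leq 1}$, so the $\infty$-operadic Yoneda embedding factors through $\widehat{\cO}$ and remains fully faithful as an operad morphism. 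The target $\widehat{\cO}$ is presentable, its hom-spaces are $1$-truncated (as it is a functor category valued in a $(2,1)$-category), and Day convolution is presentably symmetric monoidal; hence $\widehat{\cO}$ is indeed a presentably symmetric monoidal $(2,1)$-category.

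Next, \cref{cor:A2E1-on-2cats} applied to $\widehat{\cO}$ yields an equivalence $\Hom_{\Op}(\EE_2, \widehat{\cO}) \xrightarrow{\sim} \Hom_{\Op}(\AA_2 \otimes \EE_1, \widehat{\cO})$ given by precomposition along $\AA_2 \otimes \EE_1 \to \EE_2$. Inspection of the proof shows that this equivalence is constructed fiberwise over $\Alg_{\EE_1}(\widehat{\cO})^{\simeq}$ and therefore preserves the underlying $\EE_1$-algebra structure, in particular the underlying colour. Since $\cO \hookrightarrow \widehat{\cO}$ is fully faithful and both $\EE_2$ and $\AA_2 \otimes \EE_1$ are single-coloured, the mapping spaces $\Hom_{\Op}(X, \cO) \hookrightarrow \Hom_{\Op}(X, \widehat{\cO})$ for $X \in \{\EE_2, \AA_2 \otimes \EE_1\}$ identify with the union of path components characterised by the underlying colour lying in $\cO$. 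The \cref{cor:A2E1-on-2cats}-equivalence for $\widehat{\cO}$ therefore restricts to the desired equivalence between $\Hom_{\Op}(\EE_2, \cO)$ and $\Hom_{\Op}(\AA_2 \otimes \EE_1, \cO)$.

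The main obstacle is the rigorous construction of the envelope $\widehat{\cO}$: adapting Lurie's operadic Day convolution to presheaves valued in $1$-truncated spaces and verifying that the resulting restricted Yoneda-type map remains fully faithful as an $\infty$-operad morphism. While this is in the spirit of \cite[\S~2.2.6]{HA}, it has not to our knowledge been written out in this precise form, and requires some care regarding the interaction of $1$-truncation with the operadic structure; this is the principal technical content to be supplied.
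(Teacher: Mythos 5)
Your approach is genuinely different from the paper's. The paper proves \cref{cor:2operadA2} by invoking the reflective localization $h_2 \colon \Op \to \Op_2$ of~\cite[Thm. 3.12]{SY19cat} (the left adjoint to the inclusion of $2$-operads into all $\infty$-operads), using~\cite[Prop. 3.2.6(4)]{SY19} together with \cref{cor:A2E1-on-2cats} to show that the single map $\AA_2 \otimes \EE_1 \to \EE_2$ becomes an equivalence after applying $h_2$ (i.e.\ is a ``$1$-equivalence''), and then concluding by adjunction. Your proposal is instead a \emph{target-side} reduction: embed the arbitrary $2$-operad $\cO$ fully faithfully (as an $\infty$-operad) into a presentably symmetric monoidal $(2,1)$-category $\widehat{\cO}$, apply \cref{cor:A2E1-on-2cats} there, and restrict the resulting equivalence to the union of path components whose underlying colour lies in $\cO$. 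The second half of this plan is sound: the equivalence in \cref{cor:A2E1-on-2cats} is constructed fibrewise over $\Alg_{\EE_1}(\widehat{\cO})^{\simeq}$, and a fully faithful operad map $\cO \hookrightarrow \widehat{\cO}$ does induce $(-1)$-truncated maps $\Hom_{\Op}(X,\cO) \to \Hom_{\Op}(X,\widehat{\cO})$ onto exactly the components with colour in $\cO$, for single-coloured $X$.

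The genuine gap, which you flag yourself, is the construction of $\widehat{\cO}$. As written, $\Fun(\underline{\cO}^{\op}, \Spaces_{\leq 1})$ ``with the Day convolution structure induced from the $\infty$-operad structure on $\cO$'' is not a construction that appears in~\cite[\S~2.2.6]{HA}: Day convolution in the sense used there needs a (symmetric) monoidal structure on the indexing $\infty$-category, and $\underline{\cO}$ is merely an $\infty$-category, not a symmetric monoidal one. The most direct fix is to route through the symmetric monoidal envelope $\mathrm{Env}(\cO)$: the canonical map $\cO \to \mathrm{Env}(\cO)$ is fully faithful as an $\infty$-operad map, $\mathrm{Env}(\cO)$ is a $(2,1)$-category when $\cO$ is a $2$-operad (its hom-spaces are disjoint unions of finite products of multi-hom spaces of $\cO$), and one can then take $\widehat{\cO}$ to be $\Fun(\mathrm{Env}(\cO)^{\op}, \Spaces_{\leq 1})$ equipped with the $1$-truncated Day convolution (the localization of Day convolution on $\cP(\mathrm{Env}(\cO))$ at the $1$-truncation, which is compatible since $\tau_{\leq 1}$ preserves products and colimits). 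With this modification, the truncated Yoneda embedding $\mathrm{Env}(\cO) \to \widehat{\cO}$ is symmetric monoidal, and the composite $\cO \to \mathrm{Env}(\cO) \to \widehat{\cO}$ is fully faithful as an $\infty$-operad map, so your argument goes through. So: the strategy is viable and is a legitimate alternative to the paper's, trading the citation to the $h_2$-machinery of~\cite{SY19cat,SY19} for an explicit symmetric monoidal envelope construction; but the envelope must be built on $\mathrm{Env}(\cO)$ rather than on $\underline{\cO}$, and the interaction of $1$-truncation with Day convolution has to be spelled out.
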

\begin{proof} 
The full inclusion $\Op_2 \hookrightarrow \Op$ admits a left adjoint $h_2 \colon  \Op \to \Op_2$ (constructed in~\cite[Thm. 3.12]{SY19cat}). Moreover, it follows from~\cite[Prop. 3.2.6(4)]{SY19} applied to \cref{cor:A2E1-on-2cats} that the operad map $h_2(\AA_2\otimes \EE_1) \to h_2(\EE_2)$ is an equivalence (i.e. that $\AA_2 \otimes \EE_1 \to \EE_2$ is a \emph{$1$-equivalence} in the terminology of~\cite{SY19}). By adjunction, it follows that for any $2$-operad $\cO$, the map $\Hom_{\Op}(\AA_2\otimes \EE_1, \cO) \to \Hom_{\Op}(\EE_2, \cO)$ is an equivalence. 
\end{proof}

\begin{remark}
In other words, \cref{cor:2operadA2} shows that $\AA_2 \otimes \EE_1 \to \EE_2$ is a \emph{$1$-equivalence} in the sense of~\cite{SY19}, i.e.  it is essentially surjective on the underlying categories and induces an equivalence on the $0$-truncations of all the multimapping spaces.
\end{remark}

\subsection{Lifting maps of algebras}
\label{subsec:lifting-maps-of-algebras}
We end this section with an elementary, but very useful observation about $\infty$-operads.

We recall the following easy fact: Given functors $F, G: \cA \to \cB$ and $H  \colon \cB \to \cC$ of $\infty$-categories and assume that for all $a,a' \in \cA$ the map \begin{equation}
\label{eq:assumption-operad-lemma}\Hom_{\cB}(Fa, Ga')  \xrightarrow{H(-)} \Hom_{\cC}(HFa, HGa')\end{equation} is an equivalence of spaces.

We will now prove that this implies that also the map between spaces of natural transformations 
\[\mathrm{Nat}(F, G) \to \mathrm{Nat}(HF, HG)\]
is an equivalence. Formally, this can be expressed as follows:
\begin{lemma}\label{lem:naturality}
Given a functor $H  \colon  \cB \to \cC$ of $\infty$-categories and a commuting square of $\infty$-categories 
\begin{equation}\label{eq:square-naturality}
\begin{tikzcd}
S^0 ={\{0, 1\}} \ar[r] \ar[d]& \Fun(\cA, \cB) \ar[d] \\
{[1] =\{0<1\}} \ar[r] & \Fun(\cA, \cC).
\end{tikzcd}
\end{equation}
Assume that for any $b_0, b_1 \in \cB$ in the image of $\{0\} \times \cA \to S^0 \times \cA \to \cB$ and $\{1\} \times \cA \to S^0 \times \cA \to \cB$, respectively, and any commuting square 
\[
\begin{tikzcd}
S^0  \ar[r, "{\{b_0, b_1\}}"] \ar[d]&  \cB \ar[d] \\
{[1]} \ar[r] \ar[ur, dashed] &  \cC,
\end{tikzcd}
\]
the space of dashed lifts is contractible. Then, the space of lifts of the square~\eqref{eq:square-naturality} is contractible.
\end{lemma}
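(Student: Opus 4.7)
\emph{Proof plan.} The plan is to first identify the space of lifts in the statement with the fiber at $\eta$ of a natural map between mapping spaces in functor $\infty$-categories, and then to show that this map is an equivalence via a pointwise argument.

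Unfolding the relevant universal properties, the space of lifts of the square~\eqref{eq:square-naturality} is equivalent to the fiber at $\eta$ of the natural post-composition map
\[
H_* \colon \mathrm{Map}_{\Fun(\cA,\cB)}(F,G) \longrightarrow \mathrm{Map}_{\Fun(\cA,\cC)}(HF,HG),
\]
where $F, G \in \Fun(\cA, \cB)$ denote the two functors encoded by the top map $S^0 \to \Fun(\cA, \cB)$, and $\eta \colon HF \to HG$ is the natural transformation determined by the bottom map $[1] \to \Fun(\cA, \cC)$. Thus it suffices to show that $H_*$ is an equivalence of spaces.

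To this end, we invoke the standard end formula for mapping spaces in functor $\infty$-categories, which expresses
\[
\mathrm{Map}_{\Fun(\cA,\cB)}(F,G) \simeq \int_{a \in \cA} \mathrm{Map}_{\cB}(F(a), G(a))
\]
as a limit over the twisted arrow category $\mathrm{Tw}(\cA)$ of the functor $(a_0 \to a_1) \mapsto \mathrm{Map}_{\cB}(F(a_0), G(a_1))$, with an analogous formula for $\mathrm{Map}_{\Fun(\cA,\cC)}(HF,HG)$. Under these identifications, $H_*$ becomes the limit of the levelwise post-composition maps $\mathrm{Map}_{\cB}(F(a_0), G(a_1)) \to \mathrm{Map}_{\cC}(HF(a_0), HG(a_1))$. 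For any twisted arrow $a_0 \to a_1$ in $\cA$, we have $F(a_0) \in \mathrm{Im}(F)$ and $G(a_1) \in \mathrm{Im}(G)$, so the hypothesis of the lemma applies: every fiber of the corresponding levelwise map is a space of lifts of a square of the hypothesized form, hence contractible. That is, each levelwise map is an equivalence of spaces. Since limits of diagrams of spaces preserve pointwise equivalences, $H_*$ is itself an equivalence, and in particular its fiber at $\eta$ is contractible.

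There is no serious obstacle: once the identification of the space of lifts with a fiber of $H_*$ is established and the end formula for mapping spaces in functor $\infty$-categories is in hand (which is standard, see e.g.\ \cite{Cisinski}), the conclusion follows by a pointwise-equivalence argument. The only minor point of care is keeping straight that $F$ controls the source variable and $G$ the target variable in the twisted arrow indexing, which is why the hypothesis on $b_0 \in \mathrm{Im}(F)$ and $b_1 \in \mathrm{Im}(G)$ suffices.
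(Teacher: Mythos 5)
Your proof is correct, but it follows a genuinely different route from the paper's. You identify the space of lifts with the fiber at $\eta$ of $H_*\colon \Map_{\Fun(\cA,\cB)}(F,G)\to\Map_{\Fun(\cA,\cC)}(HF,HG)$ and then invoke the twisted-arrow end formula $\Map_{\Fun(\cA,\cB)}(F,G)\simeq \lim_{(a_0\to a_1)\in \mathrm{Tw}(\cA)}\Map_{\cB}(F(a_0),G(a_1))$, together with its naturality under post-composition with $H$, to write $H_*$ as a limit of levelwise maps; each of these is an equivalence because the hypothesis quantifies over all $b_0\in\mathrm{Im}(F)$, $b_1\in\mathrm{Im}(G)$ and all morphisms downstairs, so $H_*$ is an equivalence and in particular its fiber at $\eta$ is contractible. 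The paper instead avoids the general end formula: it uses that $\Cat_{\infty}$ is generated under colimits by $[0]$ and $[1]$, so that the space of lifts (which converts colimits in $\cA$ into limits of spaces) is a limit of the spaces of lifts of the squares obtained by evaluating at objects and at arrows of $\cA$; the object case is literally the hypothesis, and the arrow case is a short computation using only the cospan description of natural transformations between functors out of $[1]$, i.e.\ the trivial instance of the end formula. Your route is shorter and directly yields the stronger conclusion that $\mathrm{Nat}(F,G)\to\mathrm{Nat}(HF,HG)$ is an equivalence, at the cost of a heavier standard input: you should make explicit that the end formula is natural in the target category (so that $H_*$ really is identified with the limit of the levelwise post-composition maps), and the reference is better attributed to, e.g., Glasman or Gepner--Haugseng--Nikolaus than to Cisinski's book. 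The paper's route is more elementary and self-contained, trading that input for the cell-generation argument. Both arguments correctly exploit that the hypothesis covers the mixed hom-spaces $\Map_{\cB}(F(a_0),G(a_1))$ with $a_0$ and $a_1$ varying independently.
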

\begin{proof}Since $\pt$ and $[1]$ generate $\cat$ under colimits, it suffices to show that for every $a\in \cA$ and every arrow $ [1]  \xrightarrow{\{f\}} \cA$, the induced total squares 
\[\begin{tikzcd}
S^0 \ar[r] \ar[d] & \Fun(\cA, \cB) \ar[d] \ar[r, "\ev_a"] &\ar[d] \cB\\
{[1]} \ar[r] & \Fun(\cA, \cC) \ar[r, "\ev_a"] & \cC
\end{tikzcd}
\hspace{0.2cm}\text{ and }\hspace{0.2cm}
\begin{tikzcd}
S^0 \ar[r] \ar[d] & \Fun(\cA, \cB) \ar[d] \ar[r, "\ev_f"] &\ar[d] \Fun([1], \cB)\\
{[1]} \ar[r] & \Fun(\cA, \cC) \ar[r, "\ev_f"] & \Fun([1], \cC)
\end{tikzcd}
\] 
have contractible spaces of lifts. 
Contractibility of the spaces of lifts of the former square follows immediately from assumption, and for the latter square is a straight-forward computation assuming~\eqref{eq:assumption-operad-lemma} is an equivalence.
\end{proof}

The goal of this subsection is to prove a generalization of this statement for $\infty$-operads.

\begin{prop}\label{prop:magic-prop}
 Let $\cO, \cP$ be $\infty$-operads and let $b$ and $c$ be $\cO$-algebras in $\cP$.
   \begin{enumerate}
            \item \label{enum-prop:magicprop-1}
            Let $F  \colon \cP \to \cQ $ be an operad map such that for all $n\geq 0$ and colors $X_1,\ldots, X_n, Y$ in $\cO$, the map of spaces
        \[
                \Mul_{\cP}(b_{X_1}, \ldots, b_{X_{n}}; c_{Y}) \xrightarrow{F(-)} \Mul_{\cQ}(Fb_{X_1}, \ldots, Fb_{X_{n}}; Fc_{Y})  
\]
            is an equivalence. 
            Then, for any $m \geq 0$, the map of multi-hom spaces
\[
                \Mul_{\Alg_{\cO}(\cP)}(\underbrace{b,\ldots, b}_{m}; c)  \xrightarrow{F(-)}  \Mul_{\Alg_{\cO}(\cQ)}(\underbrace{Fb,\ldots, Fb}_{m}; Fc)     
\]
            is an equivalence.
            \item \label{enum-prop:magicprop-2}
            Let $f  \colon a \to b$ be a morphism of  $\cO$-algebras in $\cP$. Assume that for all $n \geq 0$ and colors $X_1, \ldots, X_n, Y $ in $\cO$,
            the map of spaces
            \[\Mul_{\cP}(b_{X_1}, \ldots, b_{X_{n}}; c_{Y}) \xrightarrow{-\circ(f,\ldots, f)}  \Mul_{\cP}(a_{X_1}, \ldots, a_{X_{n}}; c_{Y})
            \]
            is an equivalence. 
            Then, for any $m \geq 0$, the map of multi-hom spaces
            \[\Mul_{\Alg_{\cO}(\cP)}(\underbrace{b,\ldots, b}_m; c) \xrightarrow{- \circ (f,\ldots, f)}  \Mul_{\Alg_{\cO}(\cP)}(\underbrace{a, \ldots, a}_m; c)
            \]
            is an equivalence for all $n$. 
            \end{enumerate}

\end{prop}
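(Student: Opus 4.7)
The strategy is to reduce both parts to a multi-variable extension of Lemma~\ref{lem:naturality} by unpacking the multi-hom spaces in the pointwise operad $\Alg_{\cO}(\cP)^{\otimes}$ from \cite[Constr.~3.2.4.1, Ex.~3.2.4.4]{HA}. Concretely, for $\cO$-algebras $b_1, \ldots, b_m$ and $c$ in $\cP$, an $m$-ary operation from $(b_1, \ldots, b_m)$ to $c$ corresponds to a coherently indexed family of operations $\mu_{\vec X; Y} \in \Mul_{\cP}((b_1)_{X_1}, \ldots, (b_m)_{X_m}; c_Y)$, the coherence being controlled by the operad structure of $\cO^{\otimes}$. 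Thus the multi-hom space $\Mul_{\Alg_{\cO}(\cP)}(b_1, \ldots, b_m; c)$ is realized as a space of sections of a diagram in $\Op$ whose values are multi-hom spaces in $\cP$ indexed by $\cO^{\otimes}$.

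Given this unpacking, both parts become instances of the principle that pointwise equivalences of diagrams of spaces induce equivalences of their spaces of sections. For part~\eqref{enum-prop:magicprop-1}, postcomposition with $F$ acts separately on each multi-hom space indexed by colors $\vec X, Y$ of $\cO$, and the hypothesis provides exactly the required pointwise equivalences $\Mul_{\cP}(b_{X_1}, \ldots; c_Y) \xra{\simeq} \Mul_{\cQ}(Fb_{X_1}, \ldots; Fc_Y)$. For part~\eqref{enum-prop:magicprop-2}, precomposition with the constant $m$-tuple $(f, \ldots, f)$ acts pointwise, and the hypothesis again supplies the required equivalences. In each case, one invokes a multi-variable extension of Lemma~\ref{lem:naturality}, which follows from the same colimit argument: since $\Fun(\cA, -)$ preserves limits of $\infty$-categories, fiberwise contractibility of lifting spaces over the diagram shape yields contractibility of the global lifting space.

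The main technical obstacle I anticipate is making the identification of $\Mul_{\Alg_{\cO}(\cP)}(b_1, \ldots, b_m; c)$ with an appropriate lifting space in $\Op$ precise enough that the multi-variable extension of Lemma~\ref{lem:naturality} applies cleanly; in particular, this requires verifying that the fibers of the relevant projection are exactly the multi-hom spaces appearing in the hypotheses of the proposition. Once this description is in place, no further difficulty arises: both claims reduce to formal properties of limits and equivalences of spaces, directly generalizing Lemma~\ref{lem:naturality}.
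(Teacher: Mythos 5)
Your proposal is correct and takes essentially the paper's route: the paper likewise unpacks $\Mul_{\Alg_{\cO}(\cP)}(b,\ldots,b;c)$ as a lifting space (\cref{obs:mapping-space-in-AlgOP}) --- for part (1) against $\Fun(\cO^{\otimes},\cP^{\otimes})\to\Fun(\cO^{\otimes},\cQ^{\otimes})$, and for part (2), after encoding precomposition with $f$ via a $\Lambda_0^2$-pushout, against $\Fun(\cO^{\otimes},\Fun([1],\cP^{\otimes}))\to\Fun(\cO^{\otimes},\Fun([1],\Fin_*))$ --- and then applies \cref{lem:naturality} as stated (no multi-variable extension is required), with the Segal condition on $\cP^{\otimes}$ converting the hypothesized multi-hom equivalences into the needed fiberwise contractibility. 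The technical point you flagged, identifying the relevant fibers with the multi-hom spaces in the hypothesis, is exactly what \cref{obs:mapping-space-in-AlgOP} together with the Segal condition supplies, so your outline fills in to the paper's argument.
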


To prove \cref{prop:magic-prop}, we recall the following formula for mapping spaces in $\Alg_{\cO}(\cP)$:

\begin{observation}\label{obs:mapping-space-in-AlgOP}
Given a sequence of objects $(b_1, \ldots, b_n)$ and another object $c$ in $\underline{\Alg}_{\cO}(\cP)$, the multi-hom space $\Mul_{\Alg_{\cO}(\cP)}(b_1, \ldots, b_n; c)$ is explicitly defined, as for any $\infty$-operad, as the space of lifts of the square 
\begin{equation}
\label{eq:operad-magic1}
\begin{tikzcd}
S^0 \ar[d]\ar[rr,"{(b_1, \ldots, b_n); c} "]&& \Alg_{\cO}(\cP)^{\otimes} \ar[d] \\
{[1]} \ar[rr, "\underline{n}_+ \to \underline{1}_+"]  && \Fin_* 
\end{tikzcd}
\end{equation}
Let $\Fin_* \times\Fin_*  \xrightarrow{\wedge} \Fin_*$ denote the smash product symmetric monoidal structure of $\Fin_*$ (see~\cite[Not.~2.2.5.1]{HA}). 
Unwinding the definition of $\Alg_{\cO}(\cP)^{\otimes}$ from \cite[Cons. 3.2.4.1]{HA}, this space of lifts is equivalent to the full subspace of the space of lifts 
\begin{equation}
\label{eq:operad-magic2}
\begin{tikzcd}
S^0 \times \cO^{\otimes} \ar[d] \ar[rr] && \ar[d] \cP^{\otimes}\\
{[1]} \times \cO^{\otimes} \ar[r] \ar[urr, dashed] & \Fin_* \times \Fin_* \ar[r, "\wedge"] \ar[r] & \Fin_*
\end{tikzcd}
\end{equation}
on those lifts with the property that for every vertex $v\in [1]$,  the map $\cO^{\otimes} \simeq \{v\} \times \cO^{\otimes} \to  [1] \times \cO^{\otimes} \to \cP^{\otimes}$ sends inert coCartesian morphisms to inert coCartesian morphisms. However, since $S^0 \to [1]$ is surjective on objects this condition is automatically satisfied since it is satisfied by the top horizontal map. Thus, the space of lifts of~\eqref{eq:operad-magic1}, and hence the multi-hom space $\Mul_{\Alg_{\cO}(\cP)}(b_1, \ldots, b_n; c)$ is equivalent to the space of lifts of~\eqref{eq:operad-magic2}. Hence, after adjunction, it is equivalent to the space of lifts 
\[\begin{tikzcd}
S^0 \ar[r] \ar[d]& \ar[d]\Fun(\cO^{\otimes}, \cP^{\otimes}) \\
{[1]} \ar[r]  \ar[ur, dashed]& \Fun(\cO^{\otimes}, \Fin_*)
\end{tikzcd}
\]
More generally, given any functor of $\infty$-categories $X\to Y$ which is surjective on objects,  an $\infty$-operad map $\cP\to \cQ$ and a commuting square of $\infty$-categories 
\[
\begin{tikzcd}
X \ar[r] \ar[d]& \Alg_{\cO}(\cP)^{\otimes} \ar[d] \\ 
Y \ar[r]  \ar[ur, dashed]& \Alg_{\cO}(\cQ)^{\otimes},
\end{tikzcd}
\]
the same argument shows that the space of (dashed) lifts of this square is equivalent to the space of lifts 
\[\begin{tikzcd}
X \ar[r] \ar[d]& \ar[d]\Fun(\cO^{\otimes}, \cP^{\otimes}) \\
Y \ar[r]  \ar[ur, dashed]& \Fun(\cO^{\otimes}, \cQ^{\otimes}).
\end{tikzcd}
\]
\end{observation}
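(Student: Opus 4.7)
The general statement is proved by running the same argument as for the special case treated above (which is the case $X = S^0$, $Y = [1]$, $\cP = \cQ$). First, I would recall the defining universal property of the $\infty$-category of operators $\Alg_{\cO}(\cR)^{\otimes} \to \Fin_*$ for an $\infty$-operad $\cR$ from \cite[Cons.~3.2.4.1]{HA}: for any simplicial set $K$ over $\Fin_*$, a lift $K \to \Alg_{\cO}(\cR)^{\otimes}$ over $\Fin_*$ corresponds (via adjunction using the smash product of $\Fin_*$) to a morphism $K \times \cO^{\otimes} \to \cR^{\otimes}$ over $\Fin_*$ such that for every vertex $k \in K$, the restricted morphism $\{k\} \times \cO^{\otimes} \to \cR^{\otimes}$ sends inert-coCartesian morphisms to inert-coCartesian morphisms. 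Under this correspondence, the induced functor $\Alg_{\cO}(\cP)^{\otimes} \to \Alg_{\cO}(\cQ)^{\otimes}$ associated to an operad map $\cP \to \cQ$ corresponds to postcomposition with $\cP^{\otimes} \to \cQ^{\otimes}$.

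Second, I would apply this universal property to translate the given square into an equivalent lifting problem
\[
\begin{tikzcd}
X \times \cO^{\otimes} \ar[r] \ar[d]& \cP^{\otimes} \ar[d] \\
Y \times \cO^{\otimes} \ar[r]  \ar[ur, dashed]& \cQ^{\otimes}
\end{tikzcd}
\]
over $\Fin_*$, where the space of dashed lifts agrees with the space of dashed lifts of the original square, except for the additional constraint that the lift is vertex-wise (at each $y \in Y$) an $\infty$-operad map.

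The key step --- and the only place where the surjectivity hypothesis on $X \to Y$ is used --- is to observe that this additional constraint is automatically satisfied. Indeed, for each vertex $y \in Y$, surjectivity provides a vertex $x \in X$ mapping to $y$, and the restricted functor $\{y\} \times \cO^{\otimes} \to \cQ^{\otimes}$ then agrees with the composite $\{x\} \times \cO^{\otimes} \to \cP^{\otimes} \to \cQ^{\otimes}$. Both factors already preserve inerts: the first because the top horizontal map of the original square factors through $\Alg_{\cO}(\cP)^{\otimes}$, and the second because $\cP \to \cQ$ is an operad map. Thus the inert-preservation condition is vacuous, and the constraint drops out.

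Finally, the hom-tensor adjunction between products of simplicial sets and functor categories identifies the resulting space of lifts over $\Fin_*$ with the space of lifts
\[
\begin{tikzcd}
X \ar[r] \ar[d]& \Fun(\cO^{\otimes}, \cP^{\otimes}) \ar[d]\\
Y \ar[r]  \ar[ur, dashed]& \Fun(\cO^{\otimes}, \cQ^{\otimes})
\end{tikzcd}
\]
where the right vertical functor is postcomposition with $\cP^{\otimes} \to \cQ^{\otimes}$. The main obstacle, though essentially bookkeeping, is verifying that the ``over $\Fin_*$'' condition on the bifunctor side is correctly translated to the implicit compatibility with $\Fun(\cO^{\otimes}, \Fin_*)$ on the functor-category side; this is immediate since the composites $X \to \Fin_*$ and $Y \to \Fin_*$ are fixed by the data of the squares, so no additional condition is imposed on the lift beyond the one recorded in the second displayed square.
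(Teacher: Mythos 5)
Your proposal follows the paper's route exactly: unwind $\Alg_{\cO}(-)^{\otimes}$ via \cite[Cons.~3.2.4.1]{HA} and the smash product on $\Fin_*$, observe that the only extra condition on a lift is vertexwise inert-preservation, dispose of it using surjectivity of $X \to Y$ on objects, and finish by adjunction. The recollection of the universal property, the identification of $\Alg_{\cO}(\cP)^{\otimes} \to \Alg_{\cO}(\cQ)^{\otimes}$ with postcomposition, and the final adjunction/bookkeeping step are all fine.

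However, the justification of the key step is garbled as written. The condition you must verify is that the \emph{lift} $Y \times \cO^{\otimes} \to \cP^{\otimes}$, restricted to each $\{y\} \times \cO^{\otimes}$, preserves inert coCartesian morphisms; what you actually check is inert-preservation of the composite $\{y\} \times \cO^{\otimes} \to \cQ^{\otimes}$. That check does not suffice, since inertness in $\cP^{\otimes}$ is not reflected along $\cP^{\otimes} \to \cQ^{\otimes}$ (for instance, when $\cQ = \EE_{\infty}$ every morphism of $\cQ^{\otimes} = \Fin_*$ is coCartesian, so the condition on the composite is vacuous while the condition on the lift is not), and it would not even use the surjectivity hypothesis, because the bottom map already factors through $\Alg_{\cO}(\cQ)^{\otimes}$. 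The correct argument is one small step away from what you wrote: given $y \in Y$, choose $x \in X$ over it; by commutativity of the upper triangle of the translated lifting problem, the restriction of the lift to $\{y\} \times \cO^{\otimes} \to \cP^{\otimes}$ coincides with the restriction of the given top map to $\{x\} \times \cO^{\otimes} \to \cP^{\otimes}$, and the latter preserves inerts precisely because the top map factors through $\Alg_{\cO}(\cP)^{\otimes}$. This is the paper's own one-liner that the condition ``is automatically satisfied since it is satisfied by the top horizontal map''; with that sentence corrected, your proof is complete and structurally identical to the paper's.
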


\begin{proof}[Proof of \cref{prop:magic-prop}]
To prove part \eqref{enum-prop:magicprop-1}, fix an $n\geq 0$ and consider the functor $S^0 = \{0, 1\} \to \Alg_{\cO}(\cP)^{\otimes}$, sending $0$ to $(b, \ldots, b)$ and $1$ to $(c)$. 
          Fix a $\mu \in \Mul_{\Alg_{\cO}(\cQ)}(Fb, \ldots, Fb; Fc)$. This determines a commuting square of $\infty$-categories
          \[
          \begin{tikzcd}
          S^0 \ar[r] \ar[d] & \Alg_{\cO}(\cP)^{\otimes} \ar[d] \\
         { [1]} \ar[r] & \Alg_{\cO}(\cQ)^{\otimes}.
          \end{tikzcd}
          \]
        The fiber of  $\Mul_{\Alg_{\cO}(\cP)}(b,\ldots, b; c) \to \Mul_{\Alg_{\cO}(\cQ)}(Fb,\ldots, Fb; Fc)$ at $\mu$ is precisely the space of lifts of this square. By \cref{obs:mapping-space-in-AlgOP}, this space of lifts is equivalent to the space of lifts 
        \[
                  \begin{tikzcd}
          S^0 \ar[r] \ar[d] & \Fun(\cO^{\otimes}, \cP^{\otimes} ) \ar[d] \\
          {[1]}\ar[r] \ar[ur, dashed] & \Fun(\cO^{\otimes}, \cQ^{\otimes})
          \end{tikzcd}
          \]
      By \cref{lem:naturality}, to prove contractibility of this space of lifts, it suffices to verify that for each $p_0, p_1\in \cP^{\otimes}$ in the image of $\{0\} \times \cO^{\otimes} \to S^0 \times \cO^{\otimes} \to \cP^{\otimes}$ and $\{1\} \times \cO^{\otimes} \to S^0 \times \cO^{\otimes} \to \cP^{\otimes}$, respectively, any square 
        \[
                  \begin{tikzcd}
          S^0 \ar[r, "{\{p_0, p_1\}}"] \ar[d] & \cP^{\otimes}  \ar[d] \\
         { [1]} \ar[r] \ar[ur, dashed] &  \cQ^{\otimes}
          \end{tikzcd}
          \]
          has a contractible space of lifts. Using the Segal condition on $\infty$-operads, this precisely unpacks to the condition in the statement of the proposition.

 To prove part \eqref{enum-prop:magicprop-2}, fix an $n\geq 0$ and a point $h\in \Map_{\Alg_{\cO}(\cP)}(a, \ldots, a; c)$. Let $\Lambda_0^2= \{0<1\}\sqcup_{\{0\}} \{0<2\} =
 \left\{\begin{tikzpicture}[scale=0.5]
 \draw[-{Stealth[scale=0.8, length=1.5mm]}] (0,0) -- (1,0);
 \draw[-{Stealth[scale=0.8, length=1.5mm]}] (0,0) -- (0.5,0.5);
 \end{tikzpicture}\right\}$ denote the outer horn. Then, the multi-ary operation $h$ together with our original operation $f\in \Mul_{\Alg_{\cO}(\cP)}(a;b)$ assembles into a commutative diagram as on the right:
 \[\begin{tikzcd}
 S^0 \times {[1]}\arrow[dr, phantom, "\ulcorner", very near end] \ar[d]\ar[r]&
 \Lambda_0^2
 \arrow{rr}{\left\{
 \begin{tikzpicture}[scale=0.5]
 \draw[-{Stealth[scale=0.8, length=1.5mm]}] (0,0) -- (1,0);
 \draw[-{Stealth[scale=0.8, length=1.5mm]}] (0,0) -- (0.5,0.5);
 \node[below, scale=0.5] at (0.5,0) {h};
 \node[above left, scale=0.5] at (0.25,0.25) {f};
 \end{tikzpicture}\right\}
}\ar[d]
 && \Alg_{\cO}(\cP)^{\otimes} \ar[d]\\
 {[1] \times[1]} \ar[r] & {[2]} \ar[rr, "\{\underline{n}_+ \xrightarrow{\id}\underline{n}_+ \to \underline{1}_+\}"] && \Fin_*
 \end{tikzcd}
 \] 
 The fiber of $\Mul_{\Alg_{\cO}(\cP)}(b,\ldots, b; c) \to \Mul_{\Alg_{\cO}(\cP)}(a, \ldots, a; c)$ at $h$ is precisely the space of lifts of the right square. Since the left square is a pushout, this space is equivalent to the space of lifts of the total square. 
 By \cref{obs:mapping-space-in-AlgOP}, this space of lifts is equivalent to the space of lifts of the square 
 \[\begin{tikzcd}
 S^0 \times {[1]} \ar[r] \ar[d]& \Fun(\cO^{\otimes}, \cP^{\otimes})\ar[d] \\
 {[1] \times [1]} \ar[r] & \Fun(\cO^{\otimes}, \Fin_*)
 \end{tikzcd}
 \]
 and hence to the space of lifts of the square 
 \[\begin{tikzcd}
 S^0 \ar[r] \ar[d]& \Fun(\cO^{\otimes}, \Fun({[1]}, \cP^{\otimes}))\ar[d] \\
 {[1]} \ar[r] & \Fun(\cO^{\otimes}, \Fun({[1]}, \Fin_*)).
 \end{tikzcd}
 \]
 By \cref{lem:naturality}, a sufficient condition for contractibility of this space is that for all pair of objects $c_0, c_1 \in \Fun([1], \cP^{\otimes})$ in the image of $\{0\} \times \cO^{\otimes} \to S^0 \times \cO^{\otimes} \to \Fun([1], \cP^{\otimes})$ and $\{1\} \times \cO^{\otimes} \to S^0 \times \cO^{\otimes} \to \Fun([1], \cP^{\otimes})$, respectively, the space of lifts of all commuting squares of the form
  \[\begin{tikzcd}
 S^0 \ar[r, "{\{c_0, c_1\}}"] \ar[d]& \Fun({[1]}, \cP^{\otimes})\ar[d] \\
 {[1]} \ar[r] &  \Fun({[1]}, \Fin_*)
 \end{tikzcd}
 \]
is contractible. Using the Segal condition on $\cP^{\otimes}$, this is satisfied provided the conditions in the statement of the proposition hold. 

\end{proof}

\section{The main theorem}\label{sec:main-theorem}

Throughout this section, we fix a $\mathbb{Q}$-algebra\footnote{As in \cref{sec:SBim}, all results in this section which do not specifically refer to the categories of Bott-Samelson and Soergel bimodules apply more generally to arbitrary connective ring spectra $k \in \CAlg(\ConnSpectra)$, and to arbitrary commutative monoids $\Monoid \in \CAlg(\Spaces)$ in place of $\mbbZ$.} $k$. 
The goal of this section is to prove our main \cref{intro.maintheorem} and hence to construct a fully coherent $\EE_2$-structure on the monoidal $(\infty,2)$-category $\Kbloc(\SBim)$ introduced in \cref{def:KblocSBim}, compatible with its structure as an object of $ \Alg_{\EE_1}(\Cat[\stkBZ])$ (i.e. compatible with local $k$-linearity and $\mbbZ$-action) together with an $\EE_2$-structure on its monoidal functor $\Hloc \colon \Kbloc(\SBim) \to \stkBZ$ from \cref{not:fiberfunctorst}.

Our proof proceeds by successively rewriting the space of such $\EE_2$-structures into spaces of simpler categorical structures; namely $\infty$-categorical variants of the prebraidings encountered in \S\ref{sec:prebraidCat}.

\subsection{Spaces of braidings and prebraidings}
\label{subsec:space-of-braidings}
\begin{notation} Let $\cV$ be a symmetric monoidal $\infty$-category, or more generally an $\infty$-operad.
\begin{enumerate}
\item For an $\EE_1$-algebra $A$ in $\cV$, we write 
\[\Braid_{\cV}(A):= \Hom_{\Op}(\EE_2, \cV) \times_{\Hom_{\Op}(\EE_1, \cV)} \{A\}
\]
for the space of $\EE_2$-algebra structures on $A$ compatible with the given $\EE_1$-structure and refer to this space as the \emph{space of braidings on $A$}.
\item  For an $\EE_1$-algebra $A$ in $\cV$, we write 
\[\PreBraid_{\cV}(A):= \Hom_{\Alg_{\EE_1}(\cV)}(A\otimes A, A) \times_{\Hom_{\Alg_{\EE_1}(\cV)}(A, A)^{\times 2}}\{\id_A,\id_A\}
\]
 and refer to this space as the \emph{space of prebraidings on $A$}. 
\item For $f  \colon A \to B$ a morphism of $\EE_1$-algebras in $\cV$, we write 
\[
\PreBraid_{\cV}(f) := \Hom_{\Alg_{\EE_1}(\cV)}(A \otimes A, B) \times_{\Hom_{\Alg_{\EE_1}(\cV)}(A, B)^{\times 2}} \{ f, f\}
\]
 and refer to this space as the \emph{space of prebraidings on $f$}. 
\end{enumerate}
\end{notation}
For an $\EE_1$-algebra $A$, it follows by definition that $\PreBraid_{\cV}(A) = \PreBraid_{\cV}(\id_A)$. Recall from \cref{cor:A2T2unpacked} that analogous to $\Braid_{\cV}$,  the spaces of prebraidings are also corepresented by certain $\infty$-operads:
\begin{align*}
\PreBraid_{\cV}(A) & = \Hom_{\Op}( \AA_2 \otimes \EE_1, \cV) \times_{\Hom_{\Op}(\EE_1, \cV)} \{A\}\\
\PreBraid_{\cV}(f:A \to B) &=\Hom_{\Op}(\TT_2 \otimes \EE_1, \cV) \times_{\Hom_{\Op}([1] \otimes \EE_1, \cV)} \{f\}
\end{align*}
Moreover, for any $\EE_1$-algebra $A$, composing with the operad map $\AA_2 \otimes \EE_1 \to \EE_1 \otimes \EE_1 \simeq \EE_2$ from \cref{exm:fromE1toA2} defines a `forgetful' map of spaces
\begin{equation}
\label{eq:forget-braiding} 
\Braid_{\cV}(A) \to \PreBraid_{\cV}(A).
\end{equation}

\begin{example}\label{ex:prebraidon1cat}
For an ordinary monoidal $1$-category $\cA \in \Alg_{\EE_1}(\Catnk{1}{1})$ it follows from \cref{cor:2operadA2}  that the map of spaces 
\[\Braid_{\Catnk{1}{1}}(\cA) \to \PreBraid_{\Catnk{1}{1}}(\cA).
\]
is an equivalence. By~\cref{cor:classicalbraidings}\eqref{item-cor:classicalbraidings-A2}, these spaces are equivalent to the (discrete) \emph{set} of classical braidings on $\cA$. 

Similarly, it follows from \cref{cor:classicalbraidings}\eqref{item-cor:classicalbraidings-T2} that for monoidal functors $F \colon \cA \to \cB$ between ordinary monoidal $1$-categories, the spaces $\PreBraid_{\Catnk{1}{1}}(F)$ are equivalent to the (discrete) \emph{set} of classical prebraidings on $F$ in the sense of \cref{def:notationprebraiding}. 
\end{example}

\begin{warning}
\cref{ex:prebraidon1cat} is key to our paper, and is at the heart of  an observation already encountered in \cref{rem:onecatprebraid}: While braidings and prebraidings on ordinary monoidal $1$-categories coincide, the notions already diverge for monoidal $2$-categories; the map~\eqref{eq:forget-braiding} is in general far from an equivalence. 
\end{warning}

As in \S\ref{sec:prebraidCat}, we would also like to consider spaces of prebraidings over a given fixed prebraiding. 

The following generalizes \cref{def:relative-prebraiding} to the $\infty$-categorical setting. 
\begin{definition}\label{def:relative-prebraiding-inf}
Let $\cV$ be a symmetric monoidal $\infty$-category, $C$ an $\EE_2$-algebra (or merely an $\AA_2 \otimes \EE_1$-algebra) and $A\xrightarrow{f} B \xrightarrow{g}C$ be maps of $\EE_1$-algebras.  We define the \emph{space $\PreBraid_{\cV}(f)_{/C}$ of prebraidings on $f$ over $C$} to be the space $\TT^{\Alg_{\EE_1}(\cV)}_2(f)_{/C}$ from \cref{def:relative-T2}.  

Unpacked, a prebraiding on $f$ over $C$ is therefore a prebraiding on $f$ together with an identification of the induced prebraiding on $g\circ f$ with the one induced by the $\EE_2$-structure of $C$. 
\end{definition}

\begin{example}\label{exm:relative-prebraid-1}
It follows from \cref{ex:prebraidon1cat}  that for an ordinary braided monoidal $1$-category $\cC$, and monoidal $1$-functors $\cA \xrightarrow{F} \cB \xrightarrow{g} \cC$ between ordinary monoidal $1$-category, the space $\PreBraid_{\Catnk{1}{1}}(F)_{/C}$ of prebraidings on $F$ over $C$ in the sense of \cref{def:relative-prebraiding-inf} agrees with the set $\PreBraid_{/C}(F)$ of prebraidings over $C$ from \cref{def:notationprebraiding}. 
\end{example}

Recall from \S\ref{subsubsec:overcat-sym-mon} that 
for an $\EE_{\infty}$-algebra $C$ in a symmetric monoidal $\infty$-category $\cV$, the over-$\infty$-category $\cV_{/C}$ inherits a symmetric monoidal structure so that for any $\infty$-operad $\cO$, there is an equivalence of $\infty$-categories $\underline{\Alg}_{\cO}(\cV_{/C}) \simeq \underline{\Alg}_{\cO}(\cV)_{/C}$.

\begin{example}\label{exm:relative-braidings}
It immediately follows from the defining property of $\cV_{/C}$, that for a 
given $\EE_1$-algebra in $\cV_{/C}$, i.e. an $\EE_1$-algebra $A$ equipped with an $\EE_1$-algebra map $A\to C$,  the space $\Braid_{\cV_{/C}}(A)$ encodes a compatible $\EE_2$-structure on $A$ together with  $\EE_2$-structure on the $\EE_1$-morphism $A\to C$. \end{example}

Following \cref{exm:relative-braidings},  to prove \cref{intro.maintheorem} and to study $\EE_2$-structures on $\Kbloc(\SBim)$ together with $\EE_2$-structures on its fiber functor $\Kbloc(\SBim) \to \stkBZ$, we will therefore need to study the space $\Braid_{\Cat[\stkBZ]_{/\stkBZ}}\left(\Kbloc(\SBim) \right)$ and relate it to certain spaces of prebraidings in certain over-categories. These spaces can be understood in terms of prebraidings over given prebraidings in the sense of \cref{def:relative-prebraiding-inf}:
\begin{corollary}\label{cor:relative-prebraidings-are-relative}
Let $C$ be an $\EE_{\infty}$-algebra in a symmetric monoidal $\infty$-category $\cV$, and let $F$ be a morphism of $\EE_1$-algebras in $\cV_{/C}$, i.e. equivalently a commuting diagram 
\[\begin{tikzcd}
A\ar[rr, "f"]\ar[dr] &&B \ar[dl]\\
& C
\end{tikzcd}
\]
of $\EE_1$-algebras in $\cV$. Then, the spaces 
\[\PreBraid_{\cV_{/C}}(F) \simeq \PreBraid_{\cV}(F)_{/C}
\]
are equivalent. 
\end{corollary}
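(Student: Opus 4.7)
The plan is to reduce the statement to \cref{prop:two-perspectives-on-relative-T2} (Proposition 7.11) applied to the symmetric monoidal $\infty$-category $\Alg_{\EE_1}(\cV)$ in place of $\cV$, using the universal property of the symmetric monoidal structure on over-categories together with Dunn additivity.

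First, I would rewrite both sides operadically. By the explicit description of prebraidings as $\TT_2 \otimes \EE_1$-algebra structures (\cref{cor:A2T2unpacked}) and the identification $\Alg_{\TT_2}(\Alg_{\EE_1}(-)) \simeq \Alg_{\TT_2 \otimes \EE_1}(-)$, for any $\EE_1$-algebra morphism $g$ in a symmetric monoidal $\infty$-category $\cW$ one has $\PreBraid_{\cW}(g) \simeq \TT_2^{\Alg_{\EE_1}(\cW)}(g)$. Applied to $\cW = \cV_{/C}$ (which is symmetric monoidal since $C$ is an $\EE_\infty$-algebra) this gives
\[
\PreBraid_{\cV_{/C}}(F) \simeq \TT_2^{\Alg_{\EE_1}(\cV_{/C})}(F).
\]
The universal property of the symmetric monoidal over-category yields a symmetric monoidal equivalence $\Alg_{\EE_1}(\cV_{/C}) \simeq \Alg_{\EE_1}(\cV)_{/C}$, under which $F$ corresponds to the given morphism over $C$. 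Combining,
\[
\PreBraid_{\cV_{/C}}(F) \simeq \TT_2^{\Alg_{\EE_1}(\cV)_{/C}}(F),
\]
while by \cref{def:relative-prebraiding-inf} the right-hand side of the corollary unpacks to $\PreBraid_{\cV}(F)_{/C} = \TT_2^{\Alg_{\EE_1}(\cV)}(F)_{/C}$.

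Second, I would invoke \cref{prop:two-perspectives-on-relative-T2} with ambient symmetric monoidal $\infty$-category $\Alg_{\EE_1}(\cV)$ and $\EE_\infty$-algebra $C$. Such an $\EE_\infty$-structure on $C$ in $\Alg_{\EE_1}(\cV)$ is produced from the given $\EE_\infty$-structure on $C \in \cV$ via Dunn additivity, since $\Alg_{\EE_\infty}(\Alg_{\EE_1}(\cV)) \simeq \Alg_{\EE_\infty \otimes \EE_1}(\cV) \simeq \Alg_{\EE_\infty}(\cV)$. The proposition then directly yields
\[
\TT_2^{\Alg_{\EE_1}(\cV)_{/C}}(F) \simeq \TT_2^{\Alg_{\EE_1}(\cV)}(F)_{/C},
\]
which chains with the identifications above to deliver the desired equivalence.

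No serious obstacle is expected; the corollary is a purely formal consequence of \cref{prop:two-perspectives-on-relative-T2}, Dunn additivity, and the compatibility of $\Alg_{\EE_1}(-)$ with the symmetric monoidal over-category construction. The only mild subtlety is bookkeeping: one must check that the identifications $\Alg_{\EE_1}(\cV_{/C}) \simeq \Alg_{\EE_1}(\cV)_{/C}$ and $\Alg_{\EE_\infty}(\Alg_{\EE_1}(\cV)) \simeq \Alg_{\EE_\infty}(\cV)$ respectively carry $F$ to the evident $\EE_1$-morphism over $C$ and $C$ to its original $\EE_\infty$-structure, both of which are immediate from the universal properties involved.
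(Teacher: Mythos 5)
Your proposal is correct and follows essentially the same route as the paper: the paper's proof is precisely to apply \cref{prop:two-perspectives-on-relative-T2} to the symmetric monoidal $\infty$-category $\Alg_{\EE_1}(\cV_{/C}) \simeq \Alg_{\EE_1}(\cV)_{/C}$, with the $\EE_\infty$-structure on $C$ in $\Alg_{\EE_1}(\cV)$ and the operadic unpacking of prebraidings as $\TT_2$-structures left implicit. Your write-up simply makes these bookkeeping steps (Dunn additivity and the identification of $F$ under the over-category equivalence) explicit.
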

\begin{proof}
Apply \cref{prop:two-perspectives-on-relative-T2} to the symmetric monoidal $\infty$-category $\Alg_{\EE_1}(\cV_{/C}) \simeq \Alg_{\EE_1}(\cV)_{/C}$. 
\end{proof}

\begin{example}\label{exm:relative-prebraid-2}
Combining \cref{cor:relative-prebraidings-are-relative} with \cref{exm:relative-prebraid-1} we find that in the setup of \cref{exm:relative-prebraid-1}, the space $\PreBraid_{(\Cat_{(1,1)})_{/C}}(F)$ agrees with the  set $\PreBraid_{/C}(F)$ of  prebraidings  over $C$ from \cref{def:notationprebraiding}. \end{example}

\subsection{Statement of the main theorem}
\label{subsec:statement-of-main-thm}
By \cref{exm:relative-braidings},  the space of compatible $\EE_2$-structures on $\Kbloc(\SBim)$ together with compatible $\EE_2$-structures on its fiber functor $\Hloc \colon \Kbloc(\SBim) \to \stkBZ$ is precisely given by the space 
\[
\Braid_{\Cat[\stkBZ]_{/\stkBZ}}(\Kbloc(\SBim)),
\]
where $\Kbloc(\SBim)$ is seen as an $\EE_1$-algebra in $\Cat[\stkBZ]_{/\stkBZ}$ via its $\EE_1$-functor $\Hloc \colon \Kbloc(\SBim) \to \stkBZ$. 

Our main theorem will prove that this space is in fact a \emph{set}, namely the set of prebraidings in the sense of \S\ref{sec:prebraidCat} on a functor between certain ordinary monoidal $1$-categories. The key fact we use is that while the $(\infty,2)$-`fiber'-functor $\Kbloc(\SBim) \to \stkBZ$ is \emph{not} faithful, the composite $\SBim \to \Kbloc(\SBim) \to \stkBZ$ \emph{is} faithful by \cref{lem:SBim-to-st-faithful}, i.e. induces fully faithful functors on all hom-categories. 
Furthermore, $\SBim$ is generated by the $(2,2)$-category $\BSbimp$  from \cref{def:BSBimp}, 
in the sense that the functor $\BSbimp \to \SBim$ is surjective on objects and that any $1$-morphism in $\SBim$ is a retract of a finite coproduct of grading shifts of $1$-morphisms in the image of $\BSbimp$ as proven in \cref{prop:sbimiscorrect}. 

For future applications, we abstract this situation as follows: 

\begin{theorem}\label{thm:main-theorem-last-sec}
Let $\cC \in \Alg_{\EE_1}(\Cat[\addkBZ])$, $\cD\in \Alg_{\EE_{\infty}}(\Cat[\stkBZ])$ and let $H  \colon  \cC \to \cD$ be a morphism in $\Alg_{\EE_1}(\Cat[\addkBZ])$ whose underlying $(\infty,2)$-functor is faithful, i.e. induces fully faithful functors on hom-categories. Consider the monoidal functor $\Kbloc(\cC) \to \cD$, induced by the adjunction~\eqref{eq:Kbloc-adjunction}, as an object of $\Alg_{\EE_1}\left(\Cat[\stkBZ]_{/\cD}\right)$. 
\begin{enumerate}
\item \label{item-num:main-theorem-last-sec-1}
Then, the map of spaces (constructed more formally in the proof below)
\begin{equation}
\label{eq:main-theorem}
\Braid_{\Cat[\stkBZ]_{/\cD}}(\Kbloc(\cC)) \to \PreBraid_{{\Catnk{1}{1}}_{/h_1\cD}}(h_1 \cC \to h_1\Kbloc(\cC)),
\end{equation}
which restricts a braiding on $\Kbloc(\cC)$ to a prebraiding on the subcategory inclusion $\cC \to \Kbloc(\cC)$ and then passes to the homotopy $1$-category $h_1$, is an equivalence. (Here, we leave the evident maps to $\cD$ and $h_1\cD$ implicit.)
\item \label{item-num:main-theorem-last-sec-2}
Further, assume there is a functor $\iota  \colon  \cB\to \cC$ in $\Alg_{\EE_1}(\Cat_{(\infty,2)})$ which is surjective on objects and such that for every two objects $b,b' \in \cB$, any object in $\eHom_{\cC}(\iota b, \iota b') \in \addkBZ$ is a retract of a finite coproduct of $\mbbZ$-shifts of objects in the image of $\eHom_{\cB}(b, b') \in \Cat_{(\infty,1)}$. Then, the pre-composition map
\[ \PreBraid_{{\Catnk{1}{1}}_{/h_1\cD}}(h_1 \cC \to h_1\Kbloc(\cC)) \to  \PreBraid_{{\Catnk{1}{1}}_{/h_1\cD}}(h_1 \cB \to h_1\Kbloc(\cC)) 
\]
is an equivalence of spaces.
\end{enumerate}
\end{theorem}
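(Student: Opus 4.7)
The proof of part (1) proceeds by rewriting $\Braid_{\Cat[\stkBZ]_{/\cD}}(\Kbloc(\cC))$ as a sequence of equivalent spaces of progressively simpler $\TT_2$-type structures, terminating in prebraidings on ordinary $1$-categories. First, Dunn additivity $\EE_2 \simeq \EE_1 \otimes \EE_1$ presents the space of braidings as the space of compatible second $\EE_1$-algebra structures on $\Kbloc(\cC)$ in $\Alg_{\EE_1}(\Cat[\stkBZ]_{/\cD})$. The critical truncatedness input is \cref{cor:collection}, asserting that the maximal sub-$\infty$-operad of the endomorphism operad of $\Kbloc(\cC)$ in the image of its given $\EE_1$-structure is a $2$-operad. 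Combined with \cref{cor:2operadA2}, which identifies $\EE_2$- and $\AA_2 \otimes \EE_1$-structures in any $2$-operad, this yields an equivalence with the space of $\AA_2 \otimes \EE_1$-structures on $\Kbloc(\cC)$. By \cref{cor:A2T2unpacked}, an $\AA_2$-structure on an $\EE_1$-algebra $A$ is the same as a $\TT_2$-structure on $\id_A$, so we reach $\TT_2(\id_{\Kbloc(\cC)})$ computed in $\Alg_{\EE_1}(\Cat[\stkBZ]_{/\cD})$.

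The next step transports this $\TT_2$-structure across the adjunction $\Kbloc \dashv \mathrm{forget}$. Since $\Kbloc\colon \Cat[\addkBZ] \to \Cat[\stkBZ]$ is a symmetric monoidal left adjoint (\cref{prop:Klinear}), it descends via the counit $\Kbloc(\mathrm{forget}(\cD)) \to \cD$ to a symmetric monoidal left adjoint between the corresponding over-categories at $\cD$, and then to their categories of $\EE_1$-algebras. Applying \cref{prop:T2adjunction} yields an equivalence between $\TT_2$-structures on $\id_{\Kbloc(\cC)}$ and $\TT_2$-structures on its adjunct, which is the unit $\eta\colon \cC \to \Kbloc(\cC)$ in $\Alg_{\EE_1}(\Cat[\addkBZ]_{/\cD})$. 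By \cref{cor:relative-prebraidings-are-relative}, this identifies with prebraidings on $\cC \to \Kbloc(\cC)$ relative to $\cD$. To descend to homotopy $1$-categories, I would invoke (the $\EE_1$-algebra version of) \cref{cor:inf-n-pullbackfullyfaithful} with $n=1, k=2$. Its hypothesis---that $\cC \to \Kbloc(\cC)$ is faithful as an $(\infty,2)$-functor---is exactly the homwise fully faithfulness of the unit $\cA \hookrightarrow \Kb(\cA)$ from \cref{prop:Klinear}. The result is an equivalence with $\TT_2$-structures on $h_1\cC \to h_1\Kbloc(\cC)$ relative to $h_1\cD$ in monoidal $1$-categories, and by \cref{cor:classicalbraidings} together with \cref{exm:relative-prebraid-2}, this space is discrete and equal to the asserted set of classical prebraidings, completing part (1).

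For part (2), one must show that restriction of prebraidings along $h_1\cB \to h_1\cC$ (over $h_1\cD$) is a bijection of sets. Surjectivity of $\iota$ on objects forces each $\beta_{x,y}$ to equal $\beta_{\iota b, \iota b'}$ whenever $x = \iota b$ and $y = \iota b'$, so uniqueness of the extension is clear; it remains to show existence, i.e.\ that a prebraiding on $h_1\cB \to h_1\Kbloc(\cC)$ extends to the required naturality and hexagon data on all of $h_1\cC$. Here the hypothesis on hom-categories enters: every $1$-morphism in $\cC$ is a retract of a finite coproduct of $\mbbZ$-shifts of $1$-morphisms from the image of $\cB$, and the defining data of a prebraiding (natural isomorphisms $\beta_{x,y}$, hexagon axioms, and compatibility with $h_1\cD$) are all preserved and reflected under retracts, finite coproducts, and $\mbbZ$-shifts---because the relevant braiding morphisms intertwine these operations through naturality and the Day convolution compatibility of the $\mbbZ$-action. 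This yields the desired unique extension.

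The main obstacle will be verifying the truncatedness input \cref{cor:collection}, which requires a careful analysis of the endomorphism $\infty$-operad of $\Kbloc(\cC)$ as an object of $\Cat[\stkBZ]_{/\cD}$ together with its $\EE_1$-structure; the remaining steps are then essentially formal consequences of the factorization system and operadic machinery developed in \cref{sec:inf-n-cats} and \cref{sec:prebraidings-to-E2-str}.
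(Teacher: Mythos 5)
Your first two reductions match the paper's: passing from braidings to prebraidings via the $2$-operad truncatedness of the full image of the $\EE_1$-structure (\cref{cor:step1} fed by \cref{cor:collection}), and transporting the resulting $\TT_2$-structure on $\id_{\Kbloc(\cC)}$ across the $\Kbloc\dashv\mathrm{forget}$ adjunction to a prebraiding on $\cC\to\Kbloc(\cC)$ (\cref{cor:step0}). The gap is in your descent to homotopy $1$-categories. \cref{cor:inf-n-pullbackfullyfaithful} (and its algebra version \cref{cor:alg-equivalence-of-over-categories}) controls mapping spaces whose \emph{target} is the source of a faithful functor over a fixed base; but $\PreBraid_{\Cat[\addkBZ]_{/\cD}}(\cC\to\Kbloc(\cC))$ is built from $\Hom_{\Alg_{\EE_1}(\Cat[\addkBZ]_{/\cD})}(\cC\otimes\cC,\Kbloc(\cC))$, i.e.\ maps \emph{into} $\Kbloc(\cC)$ over $\cD$, and $\Kbloc(\cC)\to\cD$ is precisely \emph{not} faithful (this non-faithfulness is an essential feature, cf.\ \cref{subsubsection.fiber.fctr}). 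The comparison of these hom-spaces with their $h_1$-counterparts is not an equivalence; only its restriction to the components generated by the $\EE_1$-structure is, and that is exactly the non-formal content the paper supplies via the full-image reduction (\cref{lem:fact-lift}, \cref{prop:magic-prop}, \cref{cor:step2}, \cref{cor:step3}) together with the two-fibration argument of \cref{cor:collection}.\eqref{itm:step2}--\eqref{itm:step3} and \cref{lem:fullimage}, which uses the faithfulness of $H\colon\cC\to\cD$ --- the theorem's key hypothesis, which your part (1) argument never invokes --- in addition to faithfulness of $\cC\to\Kbloc(\cC)$. You also skip the de-enrichment step (\cref{cor:step0} applied to the $\Lin^{\mbbZ}_{k,\mathrm{loc}}$-adjunction), comparing $\addkBZ$-enriched functor spaces directly with plain $(1,1)$-categorical ones without justification.

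Your part (2) also has a genuine gap at the retract step. In $h_1\Kbloc(\cC)$, naturality of $\beta$ against a morphism $f$ is the mere assertion that two specific composites are equal in a hom-set of homotopy classes; this condition is closed under finite direct sums and $\mbbZ$-shifts (by biadditivity of composition), but not under passage to direct summands: an isomorphism between $\beta\circ\bigl(\bigoplus_j u_j[n_j]\boxtimes g\bigr)$ and $\bigl(g\boxtimes\bigoplus_j u_j[n_j]\bigr)\circ\beta$ is unspecified and carries no compatibility with the idempotent exhibiting $f$ as a summand, so it need not restrict to the summand corresponding to $f$. Extending naturality along dominance genuinely requires the coherent (specified) naturality data, which is why the paper proves part (2) not by an $h_1$-level argument but as a formal consequence of the coherent equivalence $\PreBraid_{\Cat[\addkBZ]_{/\cD}}(\cC\to\Kbloc(\cC))\simeq\PreBraid_{\Cat[\addkBZ]_{/\cD}}(\addkZloc{\cB}\to\Kbloc(\cC))$ (\cref{cor:step3} fed by \cref{cor:collection}.\eqref{itm:step2}, where the dominant/faithful orthogonality is applied at the $\infty$-categorical level), with part (1) and part (2) then both extracted from the single composite equivalence.
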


In other words, \cref{thm:main-theorem-last-sec} asserts that the space of pairs of an $\EE_2$-structure on $\Kbloc(\cC)$ and an $\EE_2$-structure on the functor $\Kbloc(\cC) \to \cD$, compatible with their given $\EE_1$-structures, is equivalent to the set of prebraidings on $h_1\cB \to h_1 \Kbloc(\cC)$ over $h_1\cD$. 

In the remainder of \cref{sec:main-theorem}, we prove \cref{thm:main-theorem-last-sec} by factoring~\eqref{eq:main-theorem} through various other spaces of prebraiding. 
Before discussing the proof, we immediately record how \cref{thm:main-theorem-last-sec} implies \cref{intro.maintheorem} from the introduction:

\begin{corollary}\label{cor:main-corollary}The \emph{space} of braidings
\begin{equation}
\label{eq:main-corollary-space-of-braidings} 
\Braid_{\Cat[\stkBZ]_{/\stkBZ}}\left(\Kbloc(\SBim)\right)
\end{equation} is equivalent to the
\emph{set} of prebraidings 
\begin{equation}\label{eq:main-corollary-set-of-prebraidings}
\PreBraid_{/h_1\DMorPoly}(h_1\BSBim \to h_1 \oldKbloc{\SBim})
\end{equation}
over $h_1\Hloc \colon h_1 \oldKbloc{\SBim} \to h_1 \DMorPoly$ as defined in \cref{def:notationprebraiding}.

In particular, the  space of pairs of an $\EE_2$-algebra structure on $\Kbloc(\SBim) \in \Cat[\stkBZ]$ together with an $\EE_2$-algebra structure on the functor $\Hloc \colon\Kbloc(\SBim) \to \stkBZ$, which enhance their monoidal structures, and satisfy the condition that the positive braiding 
\[\sigma_{1,1} \colon 1 \otimes 1\to 1 \otimes 1 \in \eHom_{\Kbloc(\SBim)}(1 \otimes 1,1\otimes 1) = \Kb(\SBim_2)\] agrees \emph{up to chain homotopy} with the shifted Rouquier complex $X_{1,1}= F(\sigma_{1,1})\langle -1\rangle$ from \cref{def:cabledcross} and \eqref{eqn:Rouq-alg}, is contractible.

\end{corollary}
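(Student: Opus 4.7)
The plan is to deduce \Cref{cor:main-corollary} from \Cref{thm:main-theorem-last-sec} applied to $\cC = \SBim$, $\cD = \stkBZ$, with $H \colon \SBim \to \stkBZ$ the composite $\SBim \to \Kbloc(\SBim) \xrightarrow{\Hloc} \stkBZ$, and $\iota \colon \BSbimp \to \SBim$ from \eqref{eq:fromBSBimtoSBim}. First I would verify the hypotheses: $\stkBZ$ lies in $\Alg_{\EE_\infty}(\Cat[\stkBZ])$ via its self-enrichment (\Cref{obs:praddenriched}); $H$ is faithful by \Cref{lem:SBim-to-st-faithful}; and $\iota$ is surjective on objects with the required retract/shift generation property by \Cref{prop:sbimiscorrect}. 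Combining parts (1) and (2) of the main theorem, and using \Cref{cor:h1KSBim} to identify $h_1\Kbloc(\SBim) \simeq h_1\oldKbloc{\SBim}$, would yield an equivalence
\[
\Braid_{\Cat[\stkBZ]_{/\stkBZ}}\bigl(\Kbloc(\SBim)\bigr) \;\simeq\; \PreBraid_{{\Catnk{1}{1}}_{/h_1\stkBZ}}\bigl(h_1\BSbimp \to h_1\oldKbloc{\SBim}\bigr).
\]

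Next I would replace the target $h_1\stkBZ$ on the right-hand side by $h_1\DMorPoly$. By \Cref{handover2} together with \Cref{not:fiberfunctorst} and \Cref{obs:dmor-to-st}, the structure morphism $h_1\oldKbloc{\SBim} \to h_1\stkBZ$ factors through the symmetric monoidal fully faithful inclusion $h_1\DMorPoly \hookrightarrow h_1\stkBZ$. Because this inclusion is fully faithful and symmetric monoidal, it both preserves and detects the condition that the prebraiding isomorphisms agree with the ambient symmetric braiding, so relative prebraidings over $h_1\DMorPoly$ and over $h_1\stkBZ$ coincide. Finally, since all three $1$-categories involved are ordinary, \Cref{exm:relative-prebraid-2} identifies the $\infty$-categorical relative prebraiding space with the classical \emph{set} $\PreBraid_{/h_1\DMorPoly}(h_1\BSbimp \to h_1\oldKbloc{\SBim})$, proving the first assertion of the corollary.

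For the contractibility claim, the chain above identifies~\eqref{eq:main-corollary-space-of-braidings} with a discrete set, so its subspace of pairs satisfying the stated normalization is also a set; it suffices to check existence and uniqueness of a normalized prebraiding. Existence is delivered by \Cref{cor:Rouquier-correct-prebraiding}: the cabled crossing family $\{[X_{m,n}]\}_{m,n}$ defines a prebraiding on $h_1\BSbimp \to h_1\oldKbloc{\SBim}$ over $h_1\DMorPoly$, and by construction $[X_{1,1}] = [F(\sigma_{1,1})\langle -1\rangle]$ realises the normalization. I expect uniqueness to be the main obstacle: given another normalized prebraiding $\beta'$, the two hexagon axioms of \Cref{def:prebraiding} allow one to express each component $\beta'_{m,n}$ inductively as an iterated horizontal composite of external tensor products of $\beta'_{1,1}$ with identities, exploiting that the object monoid of $h_1\BSbimp$ is $(\N_0,+)$, freely generated by $1$. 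Combined with naturality and $\beta'_{1,1} = \beta_{1,1}$, this forces $\beta' = \beta$ as families of chain homotopy classes, so the normalized subspace is a singleton, hence contractible.
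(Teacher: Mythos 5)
Your high-level strategy is the right one, and the second-half uniqueness argument is a nice addition, but there is a genuine size obstruction in your first step.

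You propose to apply \cref{thm:main-theorem-last-sec} with $\cD = \stkBZ$ and justify the hypothesis ``$\stkBZ$ lies in $\Alg_{\EE_\infty}(\Cat[\stkBZ])$ via its self-enrichment'' by citing \cref{obs:praddenriched}. But that observation places $\stkBZ$ in $\CAlg(\widehat{\Cat}[\stkBZ])$, the huge $\infty$-category of \emph{large} enriched $\infty$-categories; as a presentable $\infty$-category, $\stkBZ$ has a proper class of objects, so it is not an object of $\Cat[\stkBZ]$. Since \cref{thm:main-theorem-last-sec} is stated for $\cD \in \Alg_{\EE_\infty}(\Cat[\stkBZ])$ (small enriched categories), your initial invocation of the theorem is not licensed. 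The fix is precisely the observation you defer to your second step: the fiber functor $\Kbloc(\SBim) \to \stkBZ$ factors through the small full symmetric monoidal subcategory $\DMorPoly$ (\cref{not:fiberfunctorst}, \cref{obs:dmor-to-st}), which \emph{is} an object of $\CAlg(\Cat[\stkBZ])$. If you carry out this reduction \emph{before} invoking the theorem --- i.e.\ first replace $\Braid_{\Cat[\stkBZ]_{/\stkBZ}}(\Kbloc(\SBim))$ by $\Braid_{\Cat[\stkBZ]_{/\DMorPoly}}(\Kbloc(\SBim))$, using that the structure map factors through the fully faithful symmetric monoidal inclusion $\DMorPoly \hookrightarrow \stkBZ$ --- then you may legitimately apply \cref{thm:main-theorem-last-sec} with $\cC = \SBim$, $\cB = \BSBim$, $\cD = \DMorPoly$, $H = \Hloc$ (faithful by \cref{lem:SBim-to-st-faithful}), and $\iota$ (satisfying the required generation properties by \cref{prop:sbimiscorrect}), and then your $h_1$-level identifications and \cref{exm:relative-prebraid-2} finish the first assertion exactly as you indicate. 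Beyond that, your second step becomes unnecessary: once $\cD = \DMorPoly$, the relative prebraidings appear directly over $h_1\DMorPoly$.

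On the second half, the paper is terse --- it merely asserts that the normalization on $\sigma_{1,1}$ ``fixes an element'' of the set of prebraidings and invokes \cref{cor:Rouquier-correct-prebraiding} for existence. Your inductive argument via the hexagon axioms, exploiting that the object monoid of $h_1\BSBim$ is $(\N_0,+)$ and freely generated by $1$, spells out the uniqueness that the paper leaves implicit, and it is correct: the second hexagon determines $\beta_{m,1}$ from $\beta_{m-1,1}$ and $\beta_{1,1}$, and the first then determines $\beta_{m,n}$ from $\beta_{m,n-1}$ and $\beta_{m,1}$, so once $\beta_{1,1}$ is pinned down the whole family is forced; naturality and the relative condition over $h_1\DMorPoly$ are then properties rather than extra data. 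This is a worthwhile elaboration.
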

\begin{proof}
Recall that the functor $\Kbloc(\SBim) \to \stkBZ$ factors by definition through the small full subcategory $\DMorPoly \hookrightarrow \DMoritaS \hookrightarrow \stkBZ$. Hence, the space~\eqref{eq:main-corollary-space-of-braidings} is equivalent to the space
\begin{equation}\label{eq:main-cor-step0}
\Braid_{\Cat[\stkBZ]_{/\DMorPoly}}\left(\Kbloc(\SBim)\right).
\end{equation}
We now invoke \cref{thm:main-theorem-last-sec} for $\cC = \SBim$, $\cD = \DMorPoly$, and $\cB =\BSBim$, the functor $\Hloc\colon \SBim \to \DMorPoly$ from \cref{prop:Hloc} which is faitfhul by \cref{lem:SBim-to-st-faithful}, and the functor $\iota\colon \BSBim \to \SBim$ from~\eqref{eq:fromBSBimtoSBim} which satisfies the relevant conditions of  \cref{thm:main-theorem-last-sec} by \cref{prop:sbimiscorrect}.
Thus, the space of braidings~\eqref{eq:main-cor-step0} is equivalent to the space of prebraidings \begin{equation}\label{eq:main-cor-step1}\PreBraid_{(\Cat_{(1,1)})_{/h_1\DMorPoly}}(h_1 \BSBim \to h_1 \Kbloc(\SBim)).\end{equation} 
Both, $h_1 \BSBim \to h_1 \Kbloc(\SBim)$ and $h_1\Kbloc(\SBim) \to h_1\DMorPoly$, agree with the respective functors from \cref{sec:2}, namely with~\eqref{eq:defh1K} by \cref{cor:h1KSBim} and with ~\eqref{eq:h1Hloc} by ~\cref{handover2} respectively. Hence,  it follows from \cref{exm:relative-prebraid-2} that this space~\eqref{eq:main-cor-step1} is equivalent to the set $\PreBraid_{/h_1\DMorPoly}\left(h_1 \BSBim \to h_1 \oldKbloc{\SBim}\right)$ from \cref{thm:classbraidings}.

The second half of \cref{cor:main-corollary} follows directly from the first: By~\cref{cor:Rouquier-correct-prebraiding}, the condition on the positive braiding $\sigma_{1,1}$ fixes an element of the set~\eqref{eq:main-corollary-set-of-prebraidings} and hence a point in the space \eqref{eq:main-corollary-space-of-braidings}. Thus, there is a contractible space of braidings on $\Kbloc(\SBim)$ over $\stkBZ$ compatible with the given prebraiding on $h_1\BSbim \to h_1\DMorPoly$. \end{proof}

\begin{remark}\label{rmk:finally-cH}
Consider the functor
\[
h_2 \colon \Alg_{\EE_2}(\Cat[\stkBZ]) \to \Alg_{\EE_2}(\Catnk{2}{2}) 
\]
induced by the lax symmetric monoidal composite 
\[
\Cat[\stkBZ] \xrightarrow{\Cat[\mathrm{forget}]} \Cat[\cat] = \CatInfty{2} \xrightarrow{h_2} \Catnk{2}{2}
\]
which first forgets along $\stkBZ \to \cat$\footnote{The forgetful functor $\stkBZ \to \cat$ is right adjoint to the composite $\cat \xrightarrow{\Lin_k(- \times \mathbb{Z})} \addkBZ \xrightarrow{\Kb}\stkBZ$ 
of the symmetric monoidal functors from \eqref{eqn:linkz} and \cref{prop:presentablestKI}\eqref{item-prop:presentablestKI-1} and is hence laxly symmetric monoidal.}
the homwise structure and then takes the homotopy $2$-category (\cref{def:homotopy-cat-definition}). Applying this to $\Kbloc(\SBim) \in \Alg_{\EE_2}(\Cat[\stkBZ])$ results in the braided monoidal $(2,2)$-category $\cH \coloneqq h_2\Kbloc(\SBim) \in \Alg_{\EE_2}(\Catnk{2}{2})$ described in \cref{subsection.intro.overview}. 
\end{remark}

\subsection{From prebraidings to braidings}
\label{subsec:from-prebraiding-to-braiding}
Our proof will proceed by successively simplifying the space of prebraidings and braidings on $\Kbloc(\cC)$. This  subsection contains the operadic heart of our proof, captured by four corollaries of results in \cref{sec:prebraidings-to-E2-str}.

\begin{definition}
Given a map of spaces $f \colon A \to B$, we let $\Im(f) \subseteq B$ denote the  \emph{full image} of $f$, i.e. the subspace of $B$ given by the union of those connected components of $B$ in the image of $\pi_0 f$. \end{definition}
In other words, $A \to \Im(f) \hookrightarrow B$ is the factorization of $f$ with respect to the ($(-1)$-connected, $(-1)$-truncated)-factorization system on spaces.

\begin{observation}
\label{obs:restricted} 
 Recall the ($0$-surjective, $0$-faithful) factorization system on the $\infty$-category $\Op$ from  \cref{defn:n-surj-and-n-trunc-for-infty-opds} and \cref{prop:fs-for-infty-opds}.
\begin{enumerate}
\item 
Given a map of operads $\EE_1 \to \cO$, corepresenting an $\EE_1$-algebra $A$ in $\cO$, we write $\cO|_{\EE_1}$ for the factorization $ \EE_1 \to \cO|_{ \EE_1} \to \cO$ into a $0$-surjective followed by a $0$-faithful map of operads. 

Explicitly, $\cO|_{\EE_1}$ has one color $A$ and the only non-empty multi-hom spaces are given by the full images 
\[ \Mul_{\cO|_{\EE_1}}(A, \ldots, A; A) = \Im\left( S_n \to \Mul_{\cO}(A, \ldots, A; A) \right)
\]
of the map $\EE_1(n) = S_n \to  \Mul_{\cO}(A, \ldots, A ; A) $ induced by the $\EE_1$-structure on $A$.

(In other words, $\Im\left( S_n \to \Mul_{\cO}(A, \ldots, A; A) \right)$ is precisely the union of those components of $ \Mul_{\cO}(A, \ldots, A ; A) $ which contain the orbit of the $n$-ary multiplication of $A$ under the $S_n$-action permuting its inputs.)

\item 
Given a map of operads $[1] \otimes \EE_1 \to \cO$, corepresenting a morphism $f \colon A \to B$ of $\EE_1$-algebras in $\cO$, we write $\cO|_{[1]\otimes \EE_1}$ for the factorization $[1] \otimes \EE_1 \to \cO|_{[1] \otimes \EE_1} \to \cO$ into a $0$-surjective followed by a $0$-faithful map of operads. 

Explicitly, $\cO|_{[1]\otimes \EE_1}$ has (at most) two colors $A, B$ and multi-hom spaces connecting them, one of them being 
\[\Mul_{\cO|_{[1] \otimes \EE_1}}(A, \ldots, A; B) = \Im \left(S_n \to \Mul_{\cO}(A, \ldots, A; B)\right),
\]where the map from $S_n = \EE_1(n) $ is induced by the $\EE_1$-structures on  $f$. (In other words, $\Im\left( S_n \to \Mul_{\cO}(A, \ldots, A; B) \right)$ is the union of those components of $ \Mul_{\cO}(A, \ldots, A ; B) $ which contain the orbit of $f \circ \mu_A \simeq \mu_B \circ f$ under the $S_n$-action permuting its inputs.)
\item Given a map of operads $[2] \otimes \EE_1 \to \cO$, corepresenting a composable pair of $\EE_1$-algebra morphisms $A \to B \to C$, we can similarly consider $\cO|_{[2] \otimes \EE_1}$, which has (at most) three objects $A, B, C$, and multi-hom spaces connecting them, such as 
\begin{align*}\Mul_{\cO|_{[2] \otimes \EE_1}}(A, \ldots, A; B)& = \Im \left(S_n \to \Mul_{\cO}(A, \ldots, A; B)\right)\\
\Mul_{\cO|_{[2] \otimes \EE_1}}(B, \ldots, B; C) &= \Im \left(S_n \to \Mul_{\cO}(B, \ldots, B; C)\right),
\end{align*}
where the maps from $S_n = \EE_1(n)$ are induced by the $\EE_1$-structure on $f$ and $g$, respectively.
\end{enumerate}
\end{observation}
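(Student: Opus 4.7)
The plan is to argue that, in each case, the explicitly described object carries a canonical $\infty$-operad structure, and that the evident maps fit into a factorization into a $0$-surjective followed by a $0$-faithful operad map. By \cref{prop:fs-for-infty-opds} together with the uniqueness of factorizations provided by a factorization system, this will identify it with the factorization $\cO|_{\EE_1}$ (respectively $\cO|_{[1]\otimes \EE_1}$, $\cO|_{[2]\otimes\EE_1}$).

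First I would treat part (1). Define a sub-$\infty$-operad $\cO|_{\EE_1}^{\otimes} \subseteq \cO^{\otimes}$ with a single color $A$ (the image of the unique color of $\EE_1$) and $n$-ary multi-hom space $\Im(\EE_1(n) = S_n \to \Mul_{\cO}(A,\ldots,A;A))$ for every $n \geq 0$. To see that this data is closed under the $\infty$-operad structure inherited from $\cO$, I would verify:
\begin{itemize}
\item closure under the $S_n$-actions permuting inputs,
\item closure under operadic composition $\Mul_{\cO}(n)\times \prod_i \Mul_{\cO}(k_i)\to \Mul_{\cO}(k_1+\cdots+k_n)$,
\item that each identity $1$-ary operation lies in the image.
\end{itemize}
All three are immediate from the fact that $\EE_1 \to \cO$ is a map of $\infty$-operads: it intertwines $S_n$-actions, it commutes with operadic composition, and it preserves the unit. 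In particular, operadic closure holds because composing operations living in full images of operations from $\EE_1$ yields an operation in the same connected component as a composite of operations coming from $\EE_1$, hence in the full image of the corresponding composition map $S_n\circ(S_{k_1},\ldots,S_{k_n}) \to S_{k_1+\cdots+k_n}$ in $\cO$. The two tautological maps $\EE_1 \to \cO|_{\EE_1}$ and $\cO|_{\EE_1} \to \cO$ then have the desired properties: the former is surjective on colors (both have one color) and multi-homwise $(-1)$-connected (i.e., $\pi_0$-surjective) by the very definition of the full image; the latter is multi-homwise $(-1)$-truncated since each $\Mul_{\cO|_{\EE_1}}(n) \hookrightarrow \Mul_{\cO}(n)$ is the inclusion of a union of connected components.

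Parts (2) and (3) follow the same recipe verbatim: one constructs an explicit sub-$\infty$-operad of $\cO^{\otimes}$ whose colors are the images of the colors of $[1]\otimes\EE_1$ (resp.\! $[2]\otimes\EE_1$), and whose multi-hom spaces are the full images of the corresponding multi-hom spaces of $[k]\otimes\EE_1$ under the given operad map. The phrase ``at most'' in the color count simply records the possibility that distinct colors of $[k]\otimes\EE_1$ may be sent to the same color in $\cO$ (in which case the induced map on colors is no longer a $(-1)$-truncated monomorphism on the nose, but this does not affect $0$-faithfulness, which is a condition only on multi-hom spaces). I expect the only mildly subtle point to be the verification of operadic closure for parts (2) and (3): one has to track that composing operations of different color profiles (for example, an operation $A^{\otimes n}\to B$ with operations $A^{\otimes k_i}\to A$) produces an operation lying in the full image of the corresponding structural composition in $[k]\otimes\EE_1$. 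But this is again automatic from $[k]\otimes\EE_1 \to \cO$ being a map of $\infty$-operads, so no genuine obstacle arises.
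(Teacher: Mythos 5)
Your overall strategy -- build the candidate middle term by hand as the ``full-image'' suboperad of $\cO$, check it is closed under units, composition and the symmetric-group actions, verify that the two evident maps are $0$-surjective and $0$-faithful, and then invoke uniqueness of factorizations (\cref{obs:factorizations-from-fs-are-unique}) for the factorization system of \cref{prop:fs-for-infty-opds} -- is exactly the reasoning the paper leaves implicit (the statement is an unproved Observation), and for part (1) your argument is essentially complete. The only cosmetic point there is that in Lurie's formalism one should phrase the construction as: take the subcategory of $\cO^{\otimes}$ spanned by the tuples of the color $A$ and by those morphisms whose active components lie in the chosen unions of path components (keeping all inert morphisms); your three closure conditions are precisely what is needed for this subcategory to again satisfy the $\infty$-operad axioms, so this is a standard translation.

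For parts (2) and (3), however, there is a step you gloss over which is precisely the content of the explicit formula in the statement. Your construction (correctly) produces a middle operad whose multi-hom spaces are the full images of
\[
\Mul_{[1]\otimes\EE_1}(A,\ldots,A;B)\;\longrightarrow\;\Mul_{\cO}(A,\ldots,A;B)
\]
(and similarly for $[2]\otimes\EE_1$), whereas the statement asserts that these coincide with the full images of the maps $S_n\to \Mul_{\cO}(A,\ldots,A;B)$ induced by the $\EE_1$-structure on $f$. These two descriptions agree if and only if $S_n \to \pi_0\Mul_{[1]\otimes\EE_1}(A,\ldots,A;B)$ is surjective, i.e.\ if every component of this multimapping space of the Boardman--Vogt tensor product maps to the component of some permuted $f\circ\mu_A^{(n)}\simeq \mu_B^{(n)}\circ f^{\otimes n}$. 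Since multimapping spaces of Boardman--Vogt tensor products admit no simple general formula, this should at least be stated and justified -- for instance by observing that any operation of color profile $(A,\ldots,A;B)$ in $[1]\otimes\EE_1$ is a composite of the generating operations ($f$, the $\EE_1$-multiplications on $A$ and $B$, units, and permutations), and that the interchange and unitality homotopies identify any such composite with a permuted $f\circ\mu_A^{(n)}$; this is exactly the paper's parenthetical reformulation ``the union of those components containing the orbit of $f\circ\mu_A\simeq\mu_B\circ f$''. In part (1) the issue does not arise because $\EE_1(n)=S_n$ on the nose, but for $[1]\otimes\EE_1$ and $[2]\otimes\EE_1$ this identification is the one genuinely new ingredient beyond your part (1) argument, and your proposal should either prove it or flag it explicitly.
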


Throughout we will repeatedly use the following simple observation, applied to the $\infty$-category $\cV= \Op$ with its ($0$-surjective, $0$-faithful) factorization system. 

\begin{lemma} 
\label{lem:fact-lift}
In an $\infty$-category $\cV$ with a factorization system $(\cL, \cR)$, consider a commuting square
\[ 
\begin{tikzcd}
A\ar[r] \ar[d] & \ar[d]B \\
C\ar[r] & D
\end{tikzcd}
\]
and a further morphism $Q\to A$ in $\cL$ so that also the composite $Q\to C$ is in $\cL$.  Let $Q\to B|_{Q} \to B$ and $Q \to D|_Q \to D$ denote the factorizations of the induced morphisms from $Q$. 
Then, the map between spaces of (dashed) lifts 
\[\left\{
\begin{tikzcd}
A\ar[r] \ar[d] & \ar[d]B|_Q \\
C\ar[r] \ar[ur, dashed]& D|_Q
\end{tikzcd}
\right\}
\longrightarrow
\left\{
\begin{tikzcd}
A\ar[r] \ar[d] & \ar[d]B \\
C\ar[r] \ar[ur, dashed] & D
\end{tikzcd}
\right\}.
\]
is an equivalence. \footnote{The left diagram exists since $Q \to A$ and $Q \to C$ are in $\cL$, while $B|_{Q} \to B$ and $D|_{Q} \to D$ are in $\cR$.}
\end{lemma}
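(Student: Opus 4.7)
The plan is to construct a homotopy inverse to the asserted map of lift spaces and to argue its existence and uniqueness directly from the orthogonality property of the factorization system $(\cL,\cR)$. The map in question sends a lift $\tilde\ell \colon C \to B|_Q$ in the inner square to the composite $\ell \coloneqq (B|_Q \to B) \circ \tilde\ell$, with its two compatibility homotopies induced by post-composition with the factorization morphisms $B|_Q \to B$ and $D|_Q \to D$.

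To invert this, I would start from a lift $\ell \colon C \to B$ in the outer square and produce a canonical lift $\tilde\ell \colon C \to B|_Q$ as follows. The square
\[
\begin{tikzcd}
Q \ar[r] \ar[d, "\in \cL"'] & B|_Q \ar[d, "\in \cR"] \\
C \ar[r, "\ell"'] & B
\end{tikzcd}
\]
whose top edge is the $\cL$-part of the factorization of $Q \to A \to B$, commutes: both composites $Q \to B$ agree with $Q \to A \to B$, using the commutativity of the outer square together with the fact that $\ell$ is a lift. Since $Q \to C$ lies in $\cL$ by hypothesis and $B|_Q \to B$ lies in $\cR$ by construction, this square has a contractible space of fillers, hence a canonical $\tilde\ell$. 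The remaining compatibility identities $\tilde\ell \circ (A \to C) \simeq (A \to B|_Q)$ and $(B|_Q \to D|_Q) \circ \tilde\ell \simeq (C \to D|_Q)$ are produced similarly: in each case both sides agree after post-composition with an $\cR$-morphism ($B|_Q \to B$, respectively $D|_Q \to D$) and after pre-composition with an $\cL$-morphism ($Q \to A$, respectively $Q \to C$), so orthogonality forces them to be canonically homotopic.

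To promote this pointwise construction to an $\infty$-categorical equivalence, I would realize both spaces of lifts as fibers of natural restriction maps to a common base --- the space of ``outer data equipped with the given factorization data on $Q$'' --- and observe that by orthogonality both fibers are contractible. The main subtlety lies not in the mathematics but in the bookkeeping: arranging the relevant pullback squares of mapping spaces so that a single appeal to the unique lifting property of $(\cL,\cR)$ recalled in \cref{app:f-s-for-enriched-cats} delivers the desired equivalence. No ingredient beyond the defining orthogonality of the factorization system enters the argument.
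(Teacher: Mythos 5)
Your pointwise construction of the candidate inverse is correct in spirit and does identify the right mechanism (unique lifting against $Q \to C \in \cL$ and $B|_Q \to B \in \cR$). However, the final paragraph, which is supposed to promote this to an equivalence of lift \emph{spaces}, is where the actual content lies, and that step is not carried out. The proposed strategy to "realize both spaces of lifts as fibers of natural restriction maps to a common base \dots\ and observe that by orthogonality both fibers are contractible" does not parse as stated: the two spaces of lifts are in general not contractible, and it is unclear which common base and which fibers are meant. Without a precise functorial formulation, a pointwise construction of an inverse plus homotopies does not on its own yield a homotopy inverse in the $\infty$-categorical sense.

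The paper sidesteps all of this with a formal adjunction argument. By \Cref{obs:fs-on-undercat-and-overcat}.\eqref{item-obs:fs-on-undercat-and-overcat-general-case} the factorization system pulls back to $\cV_{Q/}$; writing $\cV_{Q/^{\cL}} \subseteq \cV_{Q/}$ for the full subcategory on the morphisms $Q \to X$ in $\cL$, the factorization $Q \to X|_Q \to X$ supplies a \emph{right adjoint} to the inclusion $\cV_{Q/^{\cL}} \hookrightarrow \cV_{Q/}$ (this is exactly the pointwise orthogonality argument you describe, packaged once and for all). The two objects $A, C$ live in $\cV_{Q/^{\cL}}$ and the two objects $B, D$ in $\cV_{Q/}$; the adjunction identifies mapping spaces $\Hom_{\cV_{Q/}}(-, B) \simeq \Hom_{\cV_{Q/^{\cL}}}(-, B|_Q)$ (and likewise for $D$), and the claimed equivalence of lift spaces follows by taking the appropriate fiber products. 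Your argument is essentially an unwound version of this adjunction, but you would still need to assemble the coherences you gesture at into exactly this adjunction (or into an equivalent statement) to complete the proof; citing a generic "appeal to the unique lifting property" does not by itself produce the required functoriality in the $\infty$-categorical setting.
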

\begin{proof}
Let $\cV_{Q/^{\cL}}$ denote the full subcategory of $\cV_{Q/}$ on the morphisms $Q\to X$ which are in $\cL$. The factorization system induces a right adjoint of the inclusion $\cV_{Q/^{\cL}} \hookrightarrow \cV_{Q/}$ which sends  $Q\to X$ to its factorization $Q\to X|_Q$. The statement then follows immediately from adjunction. 
\end{proof}

The fact that braidings and prebraidings agree on ordinary $1$-categories, see \cref{ex:prebraidon1cat}, generalizes to the following observation:

\begin{corollary}
\label{cor:step1}
Let $\cO$ be an $\infty$-operad, $A$ an $\EE_1$-algebra in $\cO$, and assume that the spaces 
\[
\Im\left( S_n \to \Mul_{\cO}(\underbrace{A, \ldots, A}_{n}; A) \right)
\]
 are $1$-truncated (i.e. $1$-groupoids) for all $n\geq 0$, where the map from $S_n = \EE_1(n)$ is induced by the $\EE_1$-structure on $A$. 
  Then, the map of spaces
\[ \Braid_{\cO}(A) \to \PreBraid_{\cO}(A) 
\]
is an equivalence. 
\end{corollary}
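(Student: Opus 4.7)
The strategy is to reduce to the case of a $2$-operad, where by \cref{cor:2operadA2} the precomposition map $\Hom_{\Op}(\EE_2, -) \to \Hom_{\Op}(\AA_2 \otimes \EE_1, -)$ is an equivalence. The key observation will be that the truncatedness hypothesis ensures that the sub-$\infty$-operad of $\cO$ generated by $A$ is itself a $2$-operad, and that passing to this sub-operad does not change either $\Braid_{\cO}(A)$ or $\PreBraid_{\cO}(A)$.

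First, I would factor the operad map $A \colon \EE_1 \to \cO$ via the ($0$-surjective, $0$-faithful) factorization system of \cref{prop:fs-for-infty-opds} as $\EE_1 \xrightarrow{A'} \cO|_{\EE_1} \xrightarrow{p} \cO$. By the explicit description of $\cO|_{\EE_1}$ in \cref{obs:restricted}, its multi-hom spaces are exactly the sets $\Im(S_n \to \Mul_{\cO}(A, \ldots, A; A))$, which are $1$-truncated by hypothesis; hence $\cO|_{\EE_1}$ is a $2$-operad in the sense of \cref{def:2operad}. Next, I would note that the operad maps $\EE_1 \to \AA_2 \otimes \EE_1$ (by \cref{prop:operadsurjective}.\eqref{item-prop:operadsurjective-3}) and $\EE_1 \to \EE_2$ (since $\pi_0 \EE_2(n) = *$ for $n \geq 1$, so $S_n = \EE_1(n) \to \EE_2(n)$ is $(-1)$-connected) are both $0$-surjective.

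Then, for the square
\[
\begin{tikzcd}
\EE_1 \ar[r, "A'"] \ar[d, "0\mhyphen\mathrm{surj}"'] & \cO|_{\EE_1} \ar[d, "0\mhyphen\mathrm{faith}"] \\
\EE_2 \ar[r] \ar[ur, dashed] & \cO
\end{tikzcd}
\]
the orthogonality of the factorization system (or \cref{lem:fact-lift}) provides an equivalence between the space of extensions of $A$ to an $\EE_2$-algebra structure in $\cO$ and the space of extensions of $A'$ to an $\EE_2$-algebra structure in $\cO|_{\EE_1}$; that is, $\Braid_{\cO}(A) \simeq \Braid_{\cO|_{\EE_1}}(A')$. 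The identical argument, with $\EE_2$ replaced by $\AA_2 \otimes \EE_1$, yields $\PreBraid_{\cO}(A) \simeq \PreBraid_{\cO|_{\EE_1}}(A')$, and these equivalences are compatible with the forgetful maps induced by $\AA_2 \otimes \EE_1 \to \EE_2$.

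Finally, applying \cref{cor:2operadA2} to the $2$-operad $\cO|_{\EE_1}$ shows that $\Hom_{\Op}(\AA_2 \otimes \EE_1, \cO|_{\EE_1}) \to \Hom_{\Op}(\EE_2, \cO|_{\EE_1})$ is an equivalence; taking fibers over $A'$ gives $\Braid_{\cO|_{\EE_1}}(A') \simeq \PreBraid_{\cO|_{\EE_1}}(A')$. Combining the three equivalences yields the claim. The only mildly subtle step is verifying that the chain of equivalences is compatible with the natural forgetful map $\Braid_{\cO}(A) \to \PreBraid_{\cO}(A)$; this is essentially bookkeeping, following from the naturality of the factorization system and of the map $\AA_2 \otimes \EE_1 \to \EE_2$.
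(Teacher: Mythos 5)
Your proposal is correct and uses essentially the same ingredients as the paper's proof: the ($0$-surjective, $0$-faithful) factorization system on $\Op$, the $0$-surjectivity of $\EE_1 \to \AA_2 \otimes \EE_1$ and $\EE_1 \to \EE_2$, the observation that the hypothesis makes $\cO|_{\EE_1}$ a $2$-operad, and \cref{cor:2operadA2}. The only difference is bookkeeping: the paper fixes a prebraiding and shows the fiber of $\Braid_{\cO}(A) \to \PreBraid_{\cO}(A)$ is a contractible lifting space against $\AA_2\otimes\EE_1 \to \EE_2$ into $\cO|_{\EE_1}$, whereas you transport both $\Braid$ and $\PreBraid$ to $\cO|_{\EE_1}$ (via orthogonality against the $0$-faithful map $\cO|_{\EE_1} \to \cO$, which is the right citation there rather than \cref{lem:fact-lift}) and apply \cref{cor:2operadA2} there directly.
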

\begin{proof}
Fix a  prebraiding on $A$, represented by a lift of the map of $\infty$-operads $\EE_1 \to \cO$ representing $A$, to a map of $\infty$-operads $\AA_2 \otimes \EE_1 \to \cO$. The fiber of $\Braid_{\cO}(A) \to \PreBraid_{\cO}(A)$ at this prebraiding is precisely the space of further lifts 
\[ \begin{tikzcd}
\AA_2 \otimes \EE_1 \arrow[r] \arrow[d] & \cO\\
  \EE_2 \arrow[ur, dashed] & 
\end{tikzcd}.
\]
The operad map  $\EE_1 \to \AA_2 \otimes \EE_1$ is $0$-surjective by \cref{prop:operadsurjective}, and the composite $\EE_1 \to \AA_2 \otimes \EE_1 \to \EE_2$ is $0$-surjective since all mapping spaces of $\EE_2$ are connected and all mapping spaces of $\EE_1$ are non-empty. Hence, it follows from  \cref{lem:fact-lift} applied to the $\infty$-category $\Op$ (with its ($0$-surjective, $0$-faithful) factorization system) that this space of lifts is equivalent to the space of lifts
\[
 \begin{tikzcd}
\AA_2 \otimes \EE_1 \arrow[r] \arrow[d] & \cO|_{\EE_1}\\
  \EE_2 \arrow[ur, dashed] & 
\end{tikzcd}.
\]
Since all multi-hom spaces of  $\cO|_{\EE_1}$ are by assumption $1$-truncated, and hence $\cO|_{\EE_1}$ is a $2$-operad (see \cref{def:2operad}), it follows from \cref{cor:2operadA2} that this space of lifts is contractible.
\end{proof}

\begin{corollary}
\label{cor:step2}
Let $F \colon  \cO \to \cP$ be a map of $\infty$-operads and let  $g \colon b \to c$ be a morphism of $\EE_1$-algebras in $\cO$. Assume that for all $n \geq 0$  the map of spaces
\[\Im\left(S_n \to \Mul_{\cO}(\underbrace{b, \ldots, b}_{n};  c) \right) \xrightarrow{F(-)} \Im\left(S_n \to \Mul_{\cP}(\underbrace{F(b), \ldots, F(b)}_{n} ; F(c)) \right)
\]
is an equivalence, where the maps from $S_n$ are induced by the $\EE_1$-structure on $g$ and $F(g)$. 
Then, the map of spaces
\[\PreBraid_{\cO}(g) \to \PreBraid_{\cP}(F(g))
\]
is an equivalence.
\end{corollary}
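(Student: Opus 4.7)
The plan is to reduce the statement to an application of \cref{prop:magic-prop}.\eqref{enum-prop:magicprop-1} via a factorization argument. First, using \cref{cor:A2T2unpacked} together with the unitality of $\EE_1$, I rewrite
\[\PreBraid_{\cO}(g) \simeq \Mul_{\Alg_{\EE_1}(\cO)}(b, b; c) \times_{\Mul_{\Alg_{\EE_1}(\cO)}(b; c)^2} \{(g, g)\}\]
and analogously for $\PreBraid_{\cP}(F(g))$, so that the assertion becomes the equivalence of the induced map between these fiber products.

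The key reduction passes to the operads $\cO' \coloneqq \cO|_{[1] \otimes \EE_1}$ and $\cP' \coloneqq \cP|_{[1] \otimes \EE_1}$ obtained as in \cref{obs:restricted} from the $(0\text{-surjective}, 0\text{-faithful})$-factorizations of the operad maps classifying $g$ and $F(g)$; by functoriality of the factorization system, $F$ descends to a map $F' \colon \cO' \to \cP'$. Since $[1] \otimes \EE_1 \to \TT_2 \otimes \EE_1$ is $0$-surjective by \cref{prop:operadsurjective}.\eqref{item-prop:operadsurjective-2}, while $\cO' \to \cO$ and $\cP' \to \cP$ are $0$-faithful by construction, orthogonality (equivalently, \cref{lem:fact-lift}) supplies natural equivalences $\PreBraid_{\cO}(g) \simeq \PreBraid_{\cO'}(g)$ and $\PreBraid_{\cP}(F(g)) \simeq \PreBraid_{\cP'}(F'(g))$ intertwined by the map induced by $F'$; concretely, any lift $\TT_2 \otimes \EE_1 \to \cO$ extending $g$ factors uniquely through $\cO'$, because the composite $[1] \otimes \EE_1 \to \TT_2 \otimes \EE_1 \to \cO$ admits a $(0\text{-surjective}, 0\text{-faithful})$-factorization which, by uniqueness of factorizations and stability of $0$-surjective morphisms under composition, must coincide with $\cO|_{[1] \otimes \EE_1}$.

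By construction of the image factorization, the multi-hom spaces $\Mul_{\cO'}(b^n; c)$ are precisely the images $\Im(S_n \to \Mul_{\cO}(b^n; c))$ appearing in the hypothesis, and similarly for $\cP'$. Consequently, the assumption of the corollary supplies exactly the hypothesis needed to apply \cref{prop:magic-prop}.\eqref{enum-prop:magicprop-1} to $F' \colon \cO' \to \cP'$, taking the operad named $\cO$ in that proposition to be $\EE_1$ and the two $\EE_1$-algebras $b, c$ to be the colors of $\cO'$ singled out by the source and target of $g$. The conclusion then yields equivalences $\Mul_{\Alg_{\EE_1}(\cO')}(b^m; c) \xrightarrow{\simeq} \Mul_{\Alg_{\EE_1}(\cP')}(F'b^m; F'c)$ for every $m \geq 0$; specializing to $m = 1, 2$ gives equivalences on both the total space and the base of the pullback (with $(g, g)$ corresponding to $(F'g, F'g)$), so the fiber product is preserved. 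The main obstacle I anticipate is carrying out the reduction step cleanly — in particular, identifying the $(0\text{-surjective}, 0\text{-faithful})$-factorization of the composite $[1] \otimes \EE_1 \to \TT_2 \otimes \EE_1 \to \cO$ with $\cO|_{[1] \otimes \EE_1}$ and checking the naturality of this identification with respect to $F$.
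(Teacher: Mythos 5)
Your proposal is correct and follows essentially the same route as the paper: reduce to the restricted operads $\cO|_{[1]\otimes\EE_1}$ and $\cP|_{[1]\otimes\EE_1}$ via the ($0$-surjective, $0$-faithful) factorization system (\cref{lem:fact-lift}, using that $[1]\otimes\EE_1 \to \TT_2\otimes\EE_1$ is $0$-surjective), identify the restricted multi-hom spaces with the full images in the hypothesis, and then apply \cref{prop:magic-prop}.\eqref{enum-prop:magicprop-1} to conclude the equivalence of the defining fiber products. The only difference is presentational: the paper phrases the reduction fiberwise as a space-of-lifts statement, whereas you identify both prebraiding spaces with those of the restricted operads — the same adjunction argument either way.
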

\begin{proof}
Consider the map of operads $[1] \otimes \EE_1 \to \cO$ representing the $\EE_1$-algebra map $f \colon A \to B$. The fiber of $\PreBraid_{\cO}(f) \to \PreBraid_{\cP}(F(f))$ at a prebraiding represented by an operad map $\TT_2 \otimes \EE_1 \to \cP$ is precisely the space of (dashed) lifts of the following commuting square of operads:
\[
\begin{tikzcd}
{[1]} \otimes \EE_1 \arrow[r, "\{f:a \to b\}"] \arrow[d] & \cO \arrow[d, "F"]\\
\TT_2 \otimes \EE_1 \arrow[r] \arrow[ur, dashed] & \cP.
\end{tikzcd}
\]
Since $[1] \otimes \EE_1 \to \TT_2 \otimes \EE_1$ is $0$-surjective, it follows from \cref{lem:fact-lift} that this space of lifts is equivalent to the space of lifts 
\[
\begin{tikzcd}
{[1]} \otimes \EE_1 \arrow[r, "\{f:a \to b\}"] \arrow[d] & \cO|_{[1] \otimes \EE_1} \arrow[d]\\
\TT_2 \otimes \EE_1 \arrow[r] \arrow[ur, dashed] & \cP|_{[1] \otimes \EE_1} .
\end{tikzcd}.
\]
Hence, replacing $\cO$ by $\cO|_{[1] \otimes \EE_1}$ and $\cP$ by $\cP|_{[1] \otimes \EE_1}$ with multimapping spaces as in \cref{obs:restricted}, we may without loss of generality assume that the maps
\[\Mul_{\cO}(a, \ldots, a; b) \to \Mul_{\cP}(Fa, \ldots, Fa; Fb)
\]
are equivalences. It then follows from \cref{prop:magic-prop}.\eqref{enum-prop:magicprop-1} that the maps
\[\Mul_{\Alg_{\EE_1}(\cO)}(a, \ldots, a; b) \to \Mul_{\Alg_{\EE_1}(\cP)}(Fa, \ldots, Fa; Fb)
\]
are equivalences. 
Hence, 
\begin{align*} \PreBraid_{\cO}(g) = &\Mul_{\Alg_{\EE_1}(\cO)}(a,a; b) \times_{\Mul_{\Alg_{\EE_1}(\cO)}(a, b)^{2}} \{g\} \\
&\longrightarrow \Mul_{\Alg_{\EE_1}(\cP)}(Fa,Fa; Fb) \times_{\Mul_{\Alg_{\EE_1}(\cP)}(Fa, Fb)^{2}} \{Fg\} = \PreBraid_{\cO}(Fg)
\end{align*}
is an equivalence.
\end{proof}

\begin{corollary}
\label{cor:step3}
Let $\cO$ be an $\infty$-operad and let 
\[a  \xrightarrow{f} b  \xrightarrow{g} c
\]
be morphisms of $\EE_1$-algebras in $\cO$. Assume that for all $n\geq 0$, the map of spaces 
\[
\Im\left( S_n \to \Mul_{\cO}(\underbrace{b, \ldots, b}_{n} ; c) \right)  \xrightarrow{-\circ (f, \ldots, f)} \Im\left( S_n \to \Mul_{\cO}(\underbrace{a, \ldots, a}_{n}; c) \right)
\]
are equivalences, where the maps from $S_n$ are induced by the $\EE_1$-structures on $g$ and on $g \circ f$. 
Then, the map of spaces
\[\PreBraid_{\cO}(g) \to \PreBraid_{\cO}(g \circ f)
\]
is an equivalence.
\end{corollary}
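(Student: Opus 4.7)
The plan is to mimic closely the proof of Corollary~\ref{cor:step2}, replacing the post-composition-by-$F$ operation there with the pre-composition-by-$f$ operation here. Fix a map of operads $[2] \otimes \EE_1 \to \cO$ corepresenting the composable pair $a \xrightarrow{f} b \xrightarrow{g} c$. First I will identify the fiber of $\PreBraid_\cO(g) \to \PreBraid_\cO(g \circ f)$ at a prebraiding $\phi$ on $g \circ f$ as a space of operadic lifts. Using the pushout construction of Observation~\ref{obs:functorialityT2}, $\phi$ (together with the underlying composable pair) corresponds to an operad map $(\TT_2 \sqcup_{\{0<2\}} [2]) \otimes \EE_1 \to \cO$, while a compatible prebraiding on $g$ corresponds to an extension along the operad map
\[
(\TT_2 \sqcup_{\{0<2\}} [2]) \otimes \EE_1 \longrightarrow (\TT_2 \sqcup_{\{1<2\}} [2]) \otimes \EE_1
\]
constructed in Observation~\ref{obs:functorialityT2} as the Yoneda-dual of pre-composition by $f$. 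Thus the fiber in question is the space of dashed lifts
\[
\begin{tikzcd}
(\TT_2 \sqcup_{\{0<2\}} [2]) \otimes \EE_1 \ar[r, "\phi"] \ar[d] & \cO \\
(\TT_2 \sqcup_{\{1<2\}} [2]) \otimes \EE_1 \ar[ur, dashed]
\end{tikzcd}
\]

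Next I will apply Lemma~\ref{lem:fact-lift} with $Q = [2] \otimes \EE_1$ to reduce to the $(0\text{-surjective},0\text{-faithful})$-factorization $[2] \otimes \EE_1 \to \cO|_{[2] \otimes \EE_1} \to \cO$. The two pushout inclusions $[2] \otimes \EE_1 \hookrightarrow (\TT_2 \sqcup_{\{i<2\}} [2]) \otimes \EE_1$ for $i \in \{0,1\}$ are both $0$-surjective, since $[1] \otimes \EE_1 \to \TT_2 \otimes \EE_1$ is $0$-surjective by Proposition~\ref{prop:operadsurjective} and the left class of a factorization system is preserved under pushouts. Applied to the square above (with the right vertical map taken to be the identity on $\cO$), Lemma~\ref{lem:fact-lift} then identifies each such fiber with the corresponding fiber computed after replacing $\cO$ by $\cO|_{[2] \otimes \EE_1}$. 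Consequently, the map $\PreBraid_\cO(g) \to \PreBraid_\cO(g \circ f)$ is an equivalence if and only if $\PreBraid_{\cO|_{[2] \otimes \EE_1}}(g) \to \PreBraid_{\cO|_{[2] \otimes \EE_1}}(g \circ f)$ is an equivalence.

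By the explicit description of Observation~\ref{obs:restricted}, the multi-hom spaces of $\cO|_{[2] \otimes \EE_1}$ among the three colors $a,b,c$ are precisely the full images of the $S_n$-orbit maps appearing in the hypothesis of the corollary, and pre-composition by $f$ carries these full images into each other by $S_n$-equivariance. In particular, the hypothesis says exactly that the pre-composition maps
\[
\Mul_{\cO|_{[2] \otimes \EE_1}}(\underbrace{b,\ldots,b}_{n};c) \xrightarrow{-\circ (f,\ldots,f)} \Mul_{\cO|_{[2] \otimes \EE_1}}(\underbrace{a,\ldots,a}_{n};c)
\]
are equivalences for every $n \geq 0$. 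I will then apply Proposition~\ref{prop:magic-prop}.\eqref{enum-prop:magicprop-2} with ambient operad $\cP \coloneqq \cO|_{[2] \otimes \EE_1}$ and with the indexing operad (denoted $\cO$ in the proposition) set to $\EE_1$, to upgrade this to the corresponding equivalences of multi-hom spaces in $\Alg_{\EE_1}(\cO|_{[2] \otimes \EE_1})$. Forming the defining fiber products of $\PreBraid_{\cO|_{[2] \otimes \EE_1}}(g)$ over $\{g,g\}$ and of $\PreBraid_{\cO|_{[2] \otimes \EE_1}}(g \circ f)$ over $\{g \circ f, g \circ f\}$ then yields the desired equivalence, which by the previous reduction step gives $\PreBraid_\cO(g) \simeq \PreBraid_\cO(g \circ f)$.

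The main subtlety lies in the operadic translation of pre-composition by $f$ into the correct map between pushout operads from Observation~\ref{obs:functorialityT2}, and in verifying that the pushout inclusions from $[2] \otimes \EE_1$ are $0$-surjective so that Lemma~\ref{lem:fact-lift} applies; once these are in place, the remainder of the argument runs in direct parallel to the proof of Corollary~\ref{cor:step2}, combining the $0$-surjectivity result (Proposition~\ref{prop:operadsurjective}), the factorization reduction (Lemma~\ref{lem:fact-lift}), and the propagation from $\cO$ to $\Alg_{\EE_1}(\cO)$ via Proposition~\ref{prop:magic-prop}.\eqref{enum-prop:magicprop-2}.
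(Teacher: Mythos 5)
Your proposal is correct and follows essentially the same route as the paper's proof: corepresenting the data by $(\TT_2 \sqcup_{\{i<2\}}[2])\otimes\EE_1$ for $i\in\{0,1\}$, applying \cref{lem:fact-lift} with $Q=[2]\otimes\EE_1$ (with the $0$-surjectivity of the two pushout inclusions deduced from \cref{prop:operadsurjective} and stability of the left class under pushouts), reducing to $\cO|_{[2]\otimes\EE_1}$, and concluding via \cref{prop:magic-prop}.\eqref{enum-prop:magicprop-2}. The only cosmetic difference is that you spell out the Yoneda-dual identification of the operad map $(\TT_2\sqcup_{\{0<2\}}[2])\otimes\EE_1\to(\TT_2\sqcup_{\{1<2\}}[2])\otimes\EE_1$ a bit more explicitly, but the argument is the same.
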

\begin{proof}
The pair of composable morphisms of $\EE_1$-algebras $\{f,g\}$ may be corepresented by an operad map $[2] \otimes \EE_1 \to \cO$. 
Recall the operad maps $[2] \otimes \EE_0 \to \TT_2 \sqcup_{\{0<2\}} [2] \to \TT_2 \sqcup_{\{1<2\}}$ from \cref{obs:functorialityT2} corepresenting a pair of composable $\EE_0$-morphisms with a $\TT_2$-structure on $g$ and on $g\circ f$, respectively, and the construction of a $\TT_2$-structure on $g$ from a $\TT_2$-structure on $g\circ f$.

Fix a prebraiding on $g\circ f$, corepresented by a lift of the operad map $[2] \otimes \EE_1 \to \cO$ to an operad map $\TT_2 \sqcup_{\{0<2\}}[2] \otimes \EE_1 \to \cO$. The fiber of $\PreBraid_{\cO}(g) \to \PreBraid_{\cO}(g\circ f)$ is given by the space of lifts 
\[ \begin{tikzcd}
\left(\TT_2 \sqcup_{\{0<2\}}{[2]} \right) \otimes \EE_1  \arrow[r] \arrow[d] & \cO\\
 \left( \TT_2 \sqcup_{\{1<2\}}{[2]} \right) \otimes \EE_1 \arrow[ur, dashed] & 
\end{tikzcd}.
\]
We now claim that both operad maps \begin{equation}\label{eq:two-operad-maps}
[2] \otimes \EE_1 \to \left(\TT_2 \sqcup_{\{0<2\}}{[2]} \right) \otimes \EE_1 \qquad [2] \otimes \EE_1 \to \left( \TT_2 \sqcup_{\{1<2\}}{[2]} \right) \otimes \EE_1\end{equation}
are $0$-surjective. Indeed, this follows since $[1] \otimes \EE_1 \to \TT_2 \otimes \EE_1$ is $0$-surjective by \cref{prop:operadsurjective} and since both operad maps~\eqref{eq:two-operad-maps} are by definition given by a pushout of this $0$-surjective operad map  against the operad maps $[1] \otimes \EE_1 \to [2] \otimes \EE_1$ induced by the inclusions $\{0<2\} \to [2]$ and $\{1<2\} \to [2]$, respectively (see \cref{obs:functorialityT2}). 

Hence, it follows from  \cref{lem:fact-lift} applied to the $\infty$-category $\Op$ that our space of lifts is equivalent to the space of lifts
\[ \begin{tikzcd}
\left(\TT_2 \sqcup_{\{0<2\}}{[2]} \right) \otimes \EE_1  \arrow[r] \arrow[d] & \cO|_{{[2]} \otimes \EE_1}\\
 \left( \TT_2 \sqcup_{\{1<2\}}{[2]} \right) \otimes \EE_1 \arrow[ur, dashed] & 
\end{tikzcd}.
\]
Therefore, without loss of generality, we may replace $\cO$ by $\cO|_{[2] \otimes \EE_1}$ and hence with \cref{obs:restricted}, we may assume that 
\[\Mul_{\cO}(b, \ldots, b; c) \to \Mul_{\cO}(a, \ldots, a; c)
\]
are equivalences. It therefore follows from \cref{prop:magic-prop}.\eqref{enum-prop:magicprop-2} that 
\[\Mul_{\Alg_{\EE_1}(\cO)}(b, \ldots, b; c) \to \Mul_{\Alg_{\EE_1}{\cO}}(a, \ldots, a; c)
\]
are equivalences, and hence as in the proof of \cref{cor:step2} that 
\[\PreBraid_{\cO}(g) \to \PreBraid_{\cO}(g\circ f)
\]
are equivalences.
\end{proof}

Lastly, we record the following special case of \cref{prop:T2adjunction} that prebraidings transport along adjunctions:

\begin{corollary}
\label{cor:step0}
Consider an adjunction between symmetric monoidal $\infty$-categories with  (strongly) symmetric monoidal left adjoint $L$ 
\[
\begin{tikzcd}[column sep=1.5cm]
\cV
\arrow[yshift=0.9ex]{r}{L}
\arrow[leftarrow, yshift=-0.9ex]{r}[yshift=-0.2ex]{\bot}[swap]{R}
&
\cW
\end{tikzcd}
\]
and denote the induced adjunction between $\infty$-categories of $\EE_1$-algebras by 
\begin{equation}\label{eq:E1adjunction}
\begin{tikzcd}[column sep=1.5cm]
\Alg_{\EE_1}(\cV)
\arrow[yshift=0.9ex]{r}{L_{\EE_1}}
\arrow[leftarrow,yshift=-0.9ex]{r}[yshift=-0.2ex]{\bot}[swap]{R_{\EE_1}}
&
\Alg_{\EE_1}(\cW).
\end{tikzcd}
\end{equation}
Then, for any morphism of $\EE_1$-algebras $f \colon L_{\EE_1} a \to b$ in $\cW$, the induced map of spaces 
\[\PreBraid_{\cW}(f) \to \PreBraid_{\cV}(R_{\EE_1}f) \to \PreBraid_{\cV}( R_{\EE_1}f \circ \eta_a) \]
(constructed as in \cref{prop:T2adjunction}) is an equivalence, where $\eta$ denotes the unit of the adjunction. 

\end{corollary}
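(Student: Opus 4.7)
The plan is to realize this corollary as a direct application of Proposition~\ref{prop:T2adjunction} ($\TT_2$-structures transport along adjunctions with strongly symmetric monoidal left adjoint), applied not to the original adjunction but to the induced adjunction~\eqref{eq:E1adjunction} on $\infty$-categories of $\EE_1$-algebras. Indeed, by definition of $\PreBraid$ (and recalling from Corollary~\ref{cor:A2T2unpacked} that $\PreBraid_{\cV}(g)$ corepresents exactly the space $\TT_2^{\Alg_{\EE_1}(\cV)}(g)$ of $\TT_2$-structures on $g$ viewed as a morphism in $\Alg_{\EE_1}(\cV)$), we have canonical identifications
\[
\PreBraid_{\cW}(f)
\;\simeq\;
\TT_2^{\Alg_{\EE_1}(\cW)}(f),
\qquad
\PreBraid_{\cV}(R_{\EE_1}f \circ \eta_a)
\;\simeq\;
\TT_2^{\Alg_{\EE_1}(\cV)}(R_{\EE_1}f \circ \eta_a).
\]

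To invoke Proposition~\ref{prop:T2adjunction} with $\cC = \Alg_{\EE_1}(\cV)$ and $\cD = \Alg_{\EE_1}(\cW)$, I would verify the two hypotheses required of the ambient symmetric monoidal $\infty$-categories and of the adjunction. First, for any symmetric monoidal $\infty$-category $\cV$, the $\infty$-category $\Alg_{\EE_1}(\cV)$ carries a symmetric monoidal structure (via Dunn additivity, i.e.\! $\Alg_{\EE_1}(\cV) = \Alg_{\EE_1 \otimes \EE_\infty}(\cV)$) whose tensor unit is the trivial $\EE_1$-algebra on the tensor unit of $\cV$, and this is initial in $\Alg_{\EE_1}(\cV)$ (it is the free $\EE_1$-algebra on the initial object $\emptyset \simeq \mathbf{1}_{\cV}$ viewed as an $\EE_0$-algebra, using that the unit is initial in any symmetric monoidal $\infty$-category after adjoining a trivial multiplication). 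Second, since $L$ is strongly symmetric monoidal, it preserves $\EE_1$-algebras and the induced functor $L_{\EE_1} = \Alg_{\EE_1}(L)$ is the strongly symmetric monoidal left adjoint of the induced adjunction~\eqref{eq:E1adjunction} between the Dunn-symmetric monoidal $\infty$-categories; this is the standard functoriality of $\Alg_{\EE_1}(-)$ on the subcategory of $\CAlg(\widehat{\Cat}_\infty)$ spanned by symmetric monoidal left adjoints.

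Once these hypotheses are in hand, Proposition~\ref{prop:T2adjunction} applied to $f \in \Hom_{\Alg_{\EE_1}(\cW)}(L_{\EE_1}a, b)$ yields the composite equivalence
\[
\TT_2^{\Alg_{\EE_1}(\cW)}(f)
\xra{R_{\EE_1}(-)}
\TT_2^{\Alg_{\EE_1}(\cV)}(R_{\EE_1}f)
\xra{- \circ \eta_a}
\TT_2^{\Alg_{\EE_1}(\cV)}(R_{\EE_1}f \circ \eta_a),
\]
which under the identifications above is precisely the map of the statement. I expect no genuine obstacle beyond the bookkeeping involved in verifying that the tensor unit of the Dunn-symmetric monoidal structure on $\Alg_{\EE_1}(\cV)$ is initial and that $L_{\EE_1}$ inherits strong symmetric monoidality from $L$; both are essentially formal consequences of the functoriality of $\Alg_{\EE_1}(-)$ together with the universal property of the tensor unit in $\cV$. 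No further input from the operadic machinery of Section~\ref{sec:prebraidings-to-E2-str} is required.
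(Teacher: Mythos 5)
Your proposal is correct and is essentially the paper's own proof: the paper disposes of this corollary in one line by applying \cref{prop:T2adjunction} to the induced adjunction \eqref{eq:E1adjunction}, exactly as you do, with the hypothesis checks (initiality of the tensor unit of $\Alg_{\EE_1}(-)$, symmetric monoidality of $L_{\EE_1}$) left implicit. One small caution: your parenthetical justification of the unit hypothesis via ``$\emptyset \simeq \mathbf{1}_{\cV}$'' is off, since the unit of $\cV$ itself need not be initial in $\cV$; the correct (and standard) fact is simply that the trivial algebra on $\mathbf{1}_{\cV}$ is initial in $\Alg_{\EE_1}(\cV)$ for any symmetric monoidal $\cV$, e.g.\ by \cite[Prop.~3.2.1.8]{HA}, which is all the application of \cref{prop:T2adjunction} requires.
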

\begin{proof}
This is an immediate corollary of \cref{prop:T2adjunction} applied to the adjunction~\eqref{eq:E1adjunction}.\end{proof}

\subsection{Proof of the main theorem}
\label{subsec:proof-of-main-thm}
We now prove the various truncatedness conditions appearing in Corollaries~\ref{cor:step1} -- \ref{cor:step2} in the setting of \cref{thm:main-theorem-last-sec}, and assemble the obtained equivalences between spaces of (pre-)braidings into a proof of our main theorem.

We first recall from \cref{sec:inf-n-cats} that faithful $(\infty,2)$-functors are completely determined by their induced ordinary functors between homotopy $1$-categories, with the following straight-forward corollary:
\begin{corollary}\label{prop:faithful-obstruction2}
Let $n \geq 0$ and consider categories and functors in $\Cat_{(\infty,2)}$
\[ \begin{tikzcd}
\cY^{\times n} \arrow[r]& \cW&&&  \arrow[lll, "\mathrm{faithful}"']\cZ
 \end{tikzcd},
\]
with faithfulness properties as indicated.
Then, the map induced by applying $h_1$  \[\Hom_{{\CatInfty{2}}_{/\cW}}\left(\cY^{\times n}, \cZ\right) \to \Hom_{{\Catnk{1}{1}}_{/h_1\cW}}\left(h_1 \cY ^{\times n} , h_1 \cZ\right)
\]
is an equivalence of spaces.
\end{corollary}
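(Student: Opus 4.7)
The plan is to deduce this directly from \cref{cor:inf-n-pullbackfullyfaithful} applied with the parameter $k = 2$ and the homotopy-category parameter equal to $1$ (so that $(n-1)$-faithful there becomes $0$-faithful, i.e.\! faithful in the sense of \cref{def:faithful}). The only preparation needed is to identify the source of the hom on the right-hand side.

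First I would observe that the homotopy $1$-category functor $h_1 \colon \CatInfty{2} \to \Catnk{1}{1}$ is symmetric monoidal, and in particular preserves products. This is immediate from \cref{def:homotopy-cat-definition}, since $h_1$ is defined as the composite $\tau_1 \circ \iota_1$ of a reflective localization (hence a left adjoint) and the maximal sub-$(\infty,1)$-category functor (which, being a right adjoint, preserves products); symmetric monoidality of each factor is recorded there. Consequently, there is a canonical equivalence $h_1(\cY^{\times n}) \simeq (h_1 \cY)^{\times n}$ in $\Catnk{1}{1}$, and the map to be shown to be an equivalence is naturally identified with
\[
\Hom_{(\CatInfty{2})_{/\cW}}(\cY^{\times n}, \cZ) \longrightarrow \Hom_{(\Catnk{1}{1})_{/h_1\cW}}(h_1(\cY^{\times n}), h_1\cZ).
\]

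With this identification in place, I would then apply \cref{cor:inf-n-pullbackfullyfaithful} with $k = 2$, $n = 1$, $F \coloneqq (\cZ \to \cW)$ (which is $0$-faithful by the faithfulness hypothesis), and $G \coloneqq (\cY^{\times n} \to \cW)$; this yields the desired equivalence at once. There is essentially no obstacle: the only subtlety is noting that the indexing variable $n$ appearing as the exponent of $\cY$ in the statement of the corollary plays no role in the invocation of \cref{cor:inf-n-pullbackfullyfaithful} (which uses $n = 1$ internally), so no confusion of notation arises. The whole proof is a two-line application.
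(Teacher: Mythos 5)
Your proposal is correct and matches the paper's own proof: the paper likewise notes that $h_1 \colon \CatInfty{2} \to \Catnk{1}{1}$ is (strongly) symmetric monoidal, so that $h_1(\cY^{\times n}) \simeq (h_1\cY)^{\times n}$, and then invokes \cref{cor:inf-n-pullbackfullyfaithful} with the faithful functor $\cZ \to \cW$. Nothing is missing.
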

\begin{proof}
Since $h_1 \colon  \CatInfty{2}\to \Catnk{1}{1}$ is (strongly) symmetric monoidal, this follows directly from \cref{cor:inf-n-pullbackfullyfaithful}.
\end{proof}

Moving to locally additive $(\infty,2)$-categories, recall from \cref{nota:surjective-and-dominant} that we call a morphism $F \colon  \cX \to\cY$ in $\Cat[\addkBZ]$  \emph{surjective-on-objects-and-dominant-on-1-morphisms} if its underlying $(\infty,2)$-functor is surjective on objects and if for each pair of objects $x,x' \in \cX$, every object of $\eHom_{\cY}(Fx,Fx')$ is a retract of an object in the image of $\eHom_{\cX}(x,x') \to \eHom_{\cY}(Fx,Fx')$.
The key technical statement of this section is the following straight-forward application of the (surjective-on-objects-and-dominant-on-1-morphisms, faithful)-factorization system on $\Cat[\addkBZ]$ constructed in \cref{cor:surj-dominant-faithful-fs-on-Cat-add}.
\begin{prop}
\label{prop:faithful-obstruction1}
Let $n \geq 0$ and consider categories and functors in $\Cat[\addkBZ]$
\[ \begin{tikzcd}
\cX \arrow[rrrrr, "\substack{\mathrm{surjective-on-objects}\\\mathrm{-and-dominant-on-1-morphisms}}"] &&&&&\cY,  &\cY^{\otimes n} \arrow[r]& \cW&&&  \arrow[lll, "\mathrm{faithful}"']\cZ
 \end{tikzcd},
\]
with surjectivity and faithfulness properties as indicated (and where $\otimes$ denotes the tensor product in $\Cat[\addkBZ]$).
Then, the map induced by precomposition with the tensor power $\cX^{\otimes n} \to\cY^{\otimes n}$ 
\[\Hom_{\Cat[\addkBZ]_{/\cW}}\left(\cY^{\otimes n}, \cZ\right) \to \Hom_{\Cat[\addkBZ]_{/\cW}}\left( \cX^{\otimes n} , \cZ\right)
\]
is an equivalence of spaces.
\end{prop}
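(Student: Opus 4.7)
The plan is to deduce this essentially formally from the orthogonality of the (surjective-on-objects-and-dominant-on-1-morphisms, faithful) factorization system on $\Cat[\addkBZ]$ furnished by \cref{cor:surj-dominant-faithful-fs-on-Cat-add}, together with its compatibility with the symmetric monoidal structure.

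First, I would use the fact that this factorization system is compatible with the symmetric monoidal structure on $\Cat[\addkBZ]$ to conclude that its left class is closed under taking tensor products. Applied to the morphism $\cX \to \cY$, this gives that the tensor power $\cX^{\otimes n} \to \cY^{\otimes n}$ is again surjective-on-objects-and-dominant-on-1-morphisms. Meanwhile, $\cZ \to \cW$ lies in the right class by assumption.

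Next, I would invoke the defining orthogonality property of the factorization system: since the left class is left orthogonal to the right class, the commuting square of mapping spaces
\[\begin{tikzcd}
\Hom_{\Cat[\addkBZ]}(\cY^{\otimes n}, \cZ) \ar[r] \ar[d] & \Hom_{\Cat[\addkBZ]}(\cX^{\otimes n}, \cZ) \ar[d] \\
\Hom_{\Cat[\addkBZ]}(\cY^{\otimes n}, \cW) \ar[r] & \Hom_{\Cat[\addkBZ]}(\cX^{\otimes n}, \cW)
\end{tikzcd}\]
(with horizontal maps given by precomposition with $\cX^{\otimes n} \to \cY^{\otimes n}$) is a pullback square. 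Taking the fiber of this pullback square at the distinguished point of $\Hom_{\Cat[\addkBZ]}(\cY^{\otimes n}, \cW)$ picked out by the given functor $\cY^{\otimes n} \to \cW$, and noting that its image in $\Hom_{\Cat[\addkBZ]}(\cX^{\otimes n}, \cW)$ is precisely the composite $\cX^{\otimes n} \to \cY^{\otimes n} \to \cW$ defining the over-category structure on the right, identifies the top horizontal map of the pullback square on these fibers with the precomposition map of the proposition. Since pulling back an equivalence along any map yields an equivalence, this top map of fibers is an equivalence, which is the claim.

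Since the only real input is \cref{cor:surj-dominant-faithful-fs-on-Cat-add} and a routine unpacking of the pullback square on fibers, I do not anticipate a substantive obstacle here; the subtlety (if any) lies entirely in the factorization system itself, which is already available. In particular, the same style of argument will then apply verbatim to deduce the $(\infty,2)$-categorical analog \cref{prop:faithful-obstruction2} from the ($(n-1)$-surjective, $(n-1)$-faithful) factorization system of \cref{thm:nsurj-nfaithful-fs-for-infty-k-cats}.
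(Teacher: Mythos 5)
Your proof is correct and matches the paper's argument: both use compatibility of the (surjective-on-objects-and-dominant-on-1-morphisms, faithful) factorization system with the monoidal structure to place $\cX^{\otimes n} \to \cY^{\otimes n}$ in the left class, and then invoke orthogonality against the right-class morphism $\cZ \to \cW$. The paper's proof is just a one-line version of the same reasoning; your elaboration of the pullback square of mapping spaces and the passage to fibers is a faithful unpacking of what "direct consequence of orthogonality" means in the over-category.
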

\begin{proof}
 Since the (surjective on objects and dominant on 1-morphisms, faithful) factorization system is compatible with the monoidal structure on $\Cat[\addkBZ]$, as an $n$-fold tensor power of a functor in the left class, the functor $\cX^{\otimes n} \to \cY^{\otimes n}$ remains surjective on objects and dominant on $1$-morphisms, and hence the first map is an equivalence as a direct consequence of the orthogonality of surjective-on-objects-and-dominant-on-1-morphisms and faithful functors. 
\end{proof}

Below, we will repeatedly use the following simple lemma:
\begin{lemma}
\label{lem:fullimage}
Suppose we are given a commuting square of spaces
\begin{equation}
\label{eq:squareimage}
\begin{tikzcd}
A \arrow[r,"\simeq"] \arrow[d, "f"'] & B \arrow[d, "g"] \\
C  \arrow[r, "h"']& D
\end{tikzcd}
\end{equation}
where the top horizontal map is an equivalence and where for every point $c \in
C$, the induced map of fibers $\mathrm{fib}_c(f) \to \mathrm{fib}_{h(c)}(g)$ is
an equivalence. Then, the induced map 
\[\Im(f) \to \Im(g)
\] 
is an equivalence. 
\end{lemma}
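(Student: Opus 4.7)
The plan is to show that $\Im(f)\to\Im(g)$ is an equivalence by exhibiting it as a base change of an equivalence along an effective epimorphism of spaces.

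First I would unpack the image construction. By definition $\Im(f)\hookrightarrow C$ is the inclusion of the connected components hit by $\pi_0 f$, so it is $(-1)$-truncated, while $A\to\Im(f)$ is $(-1)$-connected (i.e., surjective on $\pi_0$); similarly for $g$. Writing $\alpha\colon A\xrightarrow{\simeq} B$ for the top horizontal equivalence, the commutativity of~\eqref{eq:squareimage} gives $h\circ f\simeq g\circ\alpha$, so $h$ sends any component of $C$ in the image of $f$ to a component of $D$ in the image of $g$. Hence~\eqref{eq:squareimage} restricts to a commuting square
\[
\begin{tikzcd}
A \ar[r,"\simeq","\alpha"'] \ar[d] & B \ar[d] \\
\Im(f) \ar[r,"\tilde h"'] & \Im(g)
\end{tikzcd}
\]
and the desired map is $\tilde h$.

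Next I would show that this restricted square is a pullback of spaces. Since a commuting square of spaces is a pullback iff for every point $x$ in the lower-left vertex the induced map of fibers is an equivalence, it suffices to check that for every $x\in\Im(f)$ the map
\[
\mathrm{fib}_x\bigl(A\to\Im(f)\bigr)\;\longrightarrow\;\mathrm{fib}_{\tilde h(x)}\bigl(B\to\Im(g)\bigr)
\]
is an equivalence. Because $\Im(f)\hookrightarrow C$ and $\Im(g)\hookrightarrow D$ are inclusions of components, and hence $(-1)$-truncated, these fibers are canonically identified with $\mathrm{fib}_x(f)$ and $\mathrm{fib}_{h(x)}(g)$ respectively, so the required equivalence is exactly the hypothesis of the lemma applied at the point $c=x\in C$.

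Finally, I would conclude by base-change. Since the above square is a pullback and $\alpha$ is an equivalence, pulling $\tilde h$ back along the right vertical map $B\to\Im(g)$ yields an equivalence. By construction of the full image, $B\to\Im(g)$ is $(-1)$-connected, i.e.\ an effective epimorphism in the $\infty$-topos $\Spaces$. Since equivalences in $\Spaces$ are detected by base change along effective epimorphisms, $\tilde h$ itself must be an equivalence. The main obstacle here is only notational bookkeeping — identifying the fibers of the restricted square with the fibers of $f$ and $g$ — after which the argument is purely formal.
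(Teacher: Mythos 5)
Your proof is correct, and it finishes differently from the paper's. The paper first restricts to the images exactly as you do (so that $f$ and $g$ become surjective on $\pi_0$), but then reduces ``fiberwise'' to the case $D = \pt$ and concludes by the long exact sequence of homotopy groups of the fiber sequence $\mathrm{fib}_c(f) \to A \to C$, showing $C$ is contractible. You instead observe that the restricted square over the images is a pullback -- using the fiberwise criterion for pullbacks of spaces together with the fact that the image inclusions are $(-1)$-truncated, so the fibers of $A \to \Im(f)$ and $B \to \Im(g)$ over points of the images agree with the fibers of $f$ and $g$ -- and then conclude by conservativity of base change along the effective epimorphism $B \to \Im(g)$, since the pullback of $\tilde h$ along it is the equivalence $\alpha \colon A \to B$. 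Both arguments hinge on the same fiber identification; the trade-off is that your descent step is more formal and would work verbatim in any $\infty$-topos, whereas the paper's is a hands-on computation with homotopy groups (and, if one unwinds the standard proof that pullback along effective epimorphisms is conservative in $\Spaces$, one recovers essentially the same fiberwise/long-exact-sequence reasoning the paper uses directly).
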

\begin{proof}
The square~\eqref{eq:squareimage} factors as \[\begin{tikzcd}
A \arrow[r] \arrow[d, two heads] & B \arrow[d, two heads] \\
\Im(f) \arrow[r] \arrow[d, hook] & \Im(g) \arrow[d, hook]\\
C  \arrow[r]& D
\end{tikzcd}.
\]
Hence, we may without loss of generality assume that $\Im(f) = C$ and $\Im(g) =  D$, i.e. that $f$ and $g$ are surjective on $\pi_0$ and prove that in this case $h$ is an isomorphism.
Working fiberwise (and identifying $A$ with $B$ via the given equivalence), it suffices to consider the case $D=\pt$; in other words, given a $(-1)$-connected map $f\colon A\twoheadrightarrow C$ so that for all $c\in C$ the map  $\mathrm{fib}_c(f) \to A (=\mathrm{fib}_\pt(A\to \pt))$ is an isomorphism, we need to show that $C$ is contractible. This follows directly from the long exact sequence of homotopy groups associated to the fiber sequence. 
\end{proof}

Recall from~\eqref{eq:LinkZloc} the adjunction 
\begin{equation}
\label{eq:adjunction-fromCattoAdd}
\begin{tikzcd}[column sep=1.5cm]
  \CatInfty{2}
\arrow[yshift=0.9ex]{r}{\addkZloc{-}}
\arrow[leftarrow, yshift=-0.9ex]{r}[yshift=-0.2ex]{\bot}[swap]{\mathrm{forget}}
&
\addkBZ
\end{tikzcd}
\end{equation}
where the right adjoint forgets additivity, $k$-linearity and the $\mbbZ$-action and the (strongly) symmetric monoidal left adjoint $\addkZloc{-}$ sends an $(\infty,2)$-category $\cC$ to the locally additive $k$-linear $(\infty,2)$-category with local shifts with the same objects as $\cC$ and hom-categories given by the linearization $\Lin_k(\eHom_{\cC}(a,b) \times \mbbZ)$  with free $\mbbZ$-action.

\begin{corollary}
\label{cor:collection}
Given categories and functors as in the assumptions of \cref{thm:main-theorem-last-sec}.
Then, for each $n \geq 0$,  the following hold:
\begin{enumerate}
\item \label{itm:step1}
The space
\[ \Im \left( S_n \to \Hom_{\Cat[\stkBZ]_{/\cD}}\left( \Kbloc(\cC)^{\otimes n}, \Kbloc(\cC) \right) \right) 
\]
is $1$-truncated, i.e. a $1$-groupoid.
\item \label{itm:step2}
The map of spaces 
\begin{align*}
\Im \left(S_n \to \vphantom{\Hom_{\Cat[\addkBZ]_{/\cD}}}\right. & \left. \Hom_{\Cat[\addkBZ]_{/\cD}}  \left( \cC^{\otimes n}, \Kbloc(\cC)\right)\right) \\
&\longrightarrow \Im \left(S_n \to \Hom_{\Cat[\addkBZ]_{/\cD}}\left(\addkZloc{\cB}^{\otimes n}, \Kbloc(\cC) \right) \right)
\end{align*}
is an equivalence. 
\item \label{itm:step3} The map of spaces
\begin{align*}
\Im \left( S_n \to \vphantom{\Hom_{{\CatInfty{2}}_{/\cD}}}  \right.  & \left. \Hom_{{\CatInfty{2}}_{/\cD}} \left( \cB^{\times n}, \Kbloc(\cC)\right) \right) \\
&\longrightarrow \Im \left( S_n \to \Hom_{{\Catnk{1}{1}}_{/h_1\cD}}\left(h_1\cB^{\times n}, h_1 \Kbloc(\cC) \right) \right)
\end{align*}
is an equivalence  
\end{enumerate}
\end{corollary}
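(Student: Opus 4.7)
The plan is to prove the three parts in the order (3), (2), (1), each bootstrapping from the previous together with standard adjunctions.

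The pivotal observation is that the unit $i\colon\cC\to\Kbloc(\cC)$ of the $\Kb$-adjunction is pointwise fully faithful by \cref{prop:Klinear} and \cref{prop:presentablestKI}, hence faithful as an $(\infty,2)$-functor in the sense of \cref{def:faithful}. Consequently, for any $(\infty,2)$-category $\cX$ mapping to $\cD$, the post-composition map $i_*\colon\Hom(\cX,\cC)\hookrightarrow\Hom(\cX,\Kbloc(\cC))$ is $(-1)$-truncated, both in $(\CatInfty{2})_{/\cD}$ and in $\Cat[\addkBZ]_{/\cD}$, since factorizations through a faithful functor are essentially unique. Moreover, because $i$ is a morphism of $\EE_1$-algebras, the multiplication $\mu^\sigma_{\Kbloc(\cC)}$ restricted along $i^{\otimes n}$ (resp.\ $i^{\times n}$) agrees with $i\circ\mu^\sigma_{\cC}$. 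Thus the $S_n$-image in $\Hom(\cX,\Kbloc(\cC))$ is precisely the image under $i_*$ of the $S_n$-image in $\Hom(\cX,\cC)$, and by $(-1)$-truncatedness $i_*$ restricts to an equivalence between these $S_n$-images.

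For Part (3), this reduction identifies $\Im(S_n\to\Hom_{(\CatInfty{2})_{/\cD}}(\cB^{\times n},\Kbloc(\cC)))$ with $\Im(S_n\to\Hom_{(\CatInfty{2})_{/\cD}}(\cB^{\times n},\cC))$, and similarly on the $h_1$-side (using that $h_1 i$ is faithful by \cref{obs:h_n-preserves-m-faithful}). Applying \cref{cor:inf-n-pullbackfullyfaithful} to the faithful functor $\cC\to\cD$ then supplies the equivalence $\Hom_{(\CatInfty{2})_{/\cD}}(\cB^{\times n},\cC)\xrightarrow{h_1,\,\simeq}\Hom_{(\Catnk{1}{1})_{/h_1\cD}}(h_1\cB^{\times n},h_1\cC)$, which restricts to the desired equivalence on $S_n$-images. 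Part (2) is proved identically, substituting \cref{prop:faithful-obstruction1} for \cref{cor:inf-n-pullbackfullyfaithful}, applied with $\cZ=\cC$ faithful into $\cW=\cD$; the required surjective-on-objects and dominant-on-$1$-morphisms property of the adjunct $\addkZloc{\cB}\to\cC$ follows from the hypotheses on $\iota$ together with the explicit description of $\addkZloc{-}$ as the homwise $k$-linearization of the product with $\mbbZ$.

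Finally, Part (1) follows by chaining equivalences of $S_n$-images: using symmetric monoidality of $\Kbloc$ and the $\Kbloc\dashv\mathrm{forget}$ adjunction, $\Hom_{\Cat[\stkBZ]_{/\cD}}(\Kbloc(\cC)^{\otimes n},\Kbloc(\cC))\simeq\Hom_{\Cat[\addkBZ]_{/\cD}}(\cC^{\otimes n},\Kbloc(\cC))$; by Part (2), restricting to $S_n$-images this is equivalent to $\Hom_{\Cat[\addkBZ]_{/\cD}}(\addkZloc{\cB}^{\otimes n},\Kbloc(\cC))$; using symmetric monoidality of $\addkZloc{-}$ and its adjunction, this is equivalent to $\Hom_{(\CatInfty{2})_{/\cD}}(\cB^{\times n},\Kbloc(\cC))$; and by Part (3), to $\Im(S_n\to\Hom_{(\Catnk{1}{1})_{/h_1\cD}}(h_1\cB^{\times n},h_1\Kbloc(\cC)))$, which is $1$-truncated as a subspace of a mapping space between ordinary $1$-categories. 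The main subtlety I anticipate is the bookkeeping of the symmetric monoidal structures and their compatibility across the (lax symmetric monoidal) forgetful functors---in particular, verifying that the $S_n$-actions induced by the $\EE_1$-structure on $\Kbloc(\cC)$ over $\cD$ correspond correctly across the various ambient categories, since the Cartesian product in $\CatInfty{2}$ and the tensor product in $\Cat[\addkBZ]$ or $\Cat[\stkBZ]$ differ.
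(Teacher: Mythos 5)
Your pivotal observation is false, and it is exactly the pitfall the paper warns against. The unit $i\colon \cC \to \Kbloc(\cC)$ is faithful, but a faithful functor is \emph{not} a monomorphism in $\CatInfty{2}$ (or in $\Cat[\addkBZ]$): factoring a functor through a faithful functor is additional \emph{data}, not a property, see \cref{warn:nfaithful-neq-ntruncated} and \cref{warn:nosub}. Consequently the post-composition map $i_*\colon \Hom_{/\cD}(\cX,\cC)\to\Hom_{/\cD}(\cX,\Kbloc(\cC))$ need not be $(-1)$-truncated; working over $\cD$ does not help, since $\Kbloc(\cC)\to\cD$ is not faithful (only the composite $\cC\to\cD$ is). Concretely, in the Soergel case two functors into $\Sbim$ can become equivalent in $\Kbloc(\Sbim)$ via natural equivalences whose components are invertible genuine complexes (Rouquier-type objects), and automorphism spaces can likewise grow after passing to $\Kbloc$. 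Indeed, the whole content of \cref{cor:inf-n-pullbackfullyfaithful} and \cref{prop:faithful-obstruction1}, \cref{prop:faithful-obstruction2} is that the space of factorizations through a faithful functor is \emph{equivalent to the corresponding space of $h_1$-factorizations} --- an equivalence of generally non-$(-1)$-truncated spaces --- not that it is empty or contractible. So your asserted identification of the $S_n$-image in $\Hom_{/\cD}(\cX,\Kbloc(\cC))$ with the $S_n$-image in $\Hom_{/\cD}(\cX,\cC)$ has no justification, and with it Parts (2) and (3), and hence (1), collapse.

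The repair is the route the paper actually takes: never compare $\Hom(-,\cC)$ with $\Hom(-,\Kbloc(\cC))$ for a single source. Instead, for Part (2) form the square whose horizontal maps are restriction along $\addkZloc{\cB}^{\otimes n}\to\cC^{\otimes n}$ and whose vertical maps are $i_*$ with targets $\cC$ on top and $\Kbloc(\cC)$ on the bottom; the top horizontal map is an equivalence by \cref{prop:faithful-obstruction1} (using faithfulness of $\cC\to\cD$), and the induced maps on the fibers of the vertical maps are the corresponding mapping spaces over $\Kbloc(\cC)$, which are equivalences again by \cref{prop:faithful-obstruction1} (using faithfulness of $\cC\to\Kbloc(\cC)$). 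Then \cref{lem:fullimage} compares the \emph{full images of the two vertical maps}, which contain the respective $S_n$-images since the $\EE_1$-multiplications factor through the top row; this gives the comparison of the two bottom corners directly. Part (3) is the same argument with $h_1$ as the horizontal direction and \cref{prop:faithful-obstruction2} in place of \cref{prop:faithful-obstruction1}, and your Part (1) chain of adjunctions then goes through as written.
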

We warn the reader that the functor $\Kbloc(\cC) \to \cD$ is \emph{not} faithful, and hence Propositions~\ref{prop:faithful-obstruction2} and \ref{prop:faithful-obstruction1}  do not apply directly.

\begin{proof}
We first prove the part \eqref{itm:step2}. Since the monoidal structure on $\cC \to \Kbloc(\cC)$ arises from adjunction, the relevant maps from $S_n$ all factor as: 
\[ \begin{tikzcd}
         S_n  \ar[dr]\\[-2em]
             &[-2em]   \Hom_{\Cat[\add^{B \mbbZ}_k]_{/\cD}}(\cC^{\otimes n}, \cC) \ar[r] \ar[d]&  \Hom_{\Cat[\addkBZ]_{/\cD}}\left(\addkZloc{\cB}^{\otimes n}, \cC\right) \ar[d]\\
             &[-2em]  \Hom_{\Cat[\add^{B \mbbZ}_k]_{/\cD}}(\cC^{\otimes n}, \Kbloc(\cC)) \ar[r] &  \Hom_{\Cat[\addkBZ]_{/\cD}}\left(\addkZloc{\cB}^{\otimes n}, \Kbloc(\cC)\right)
            \end{tikzcd}
            \]
Since $\cC \to \cD$ is faithful, it follows from \cref{prop:faithful-obstruction1} that the top horizontal map is an equivalence. Moreover, for every $f\in  \Hom_{\Cat[\add^{B \mbbZ}_k]_{/\cD}}(\cC^{\otimes n}, \Kbloc(\cC))$, the induced map between the fibers of the vertical maps is
\[ \Hom_{\Cat[\add^{B \mbbZ}_k]_{/\Kbloc(\cC)}}(\cC^{\otimes n}, \cC) \to  \Hom_{\Cat[\addkBZ]_{/\Kbloc(\cC)}}\left(\addkZloc{\cB}^{\otimes n}, \cC\right).
\]
It follows from \cref{prop:presentablestKI} that $\cC \to \Kbloc(\cC)$ is faithful. Hence, this map between fibers is also an equivalence by \cref{prop:faithful-obstruction1}. Therefore, it follows from \cref{lem:fullimage} that the induced map between the full images of the vertical maps is an equivalence, and hence so is also the induced map between the full images of $S_n$. 

The proof of part \eqref{itm:step2} is entirely analogous: The relevant maps from $S_n$ all factor as 
\[ \begin{tikzcd}
         S_n  \ar[dr]\\[-2em]
          &[-2em]      \Hom_{{\CatInfty{2}}_{/\cD}}(\cB^{\otimes n}, \cC) \ar[r] \ar[d]&  \Hom_{{\Catnk{1}{1}}_{/h_1\cD}}\left(h_1 \cB^{\times n}, h_1\cC\right) \ar[d]\\
           &[-2em]    \Hom_{{\CatInfty{2}}_{/\cD}}(\cB^{\otimes n}, \Kbloc(\cC)) \ar[r] &  \Hom_{{\Catnk{1}{1}}_{/h_1\cD}}\left(h_1 \cB^{\times n}, h_1\Kbloc(\cC)\right)
            \end{tikzcd}.
            \]
            Since $\cC \to \cD$ is faithful by assumption, it follows from \cref{prop:faithful-obstruction2} that the top horizontal map is an equivalence. The induced map between the fibers of the vertical maps at an $f\in \Hom_{{\CatInfty{2}}_{/\cD}}(\cB^{\otimes n}, \Kbloc(\cC))$ is given by
           \[\Hom_{{\CatInfty{2}}_{/\Kbloc(\cC)}}(\cB^{\otimes n}, \cC) \to \Hom_{{\Catnk{1}{1}}_{/h_1\Kbloc(\cC)}}\left(h_1 \cB^{\times n}, h_1\cC\right)
           \]
           and hence is also an equivalence by \cref{prop:faithful-obstruction2} since also $\cC \to \Kbloc(\cC)$ is faithful. Therefore, it follows from \cref{lem:fullimage} that the induced map between the full images of the vertical maps is an equivalence, and hence so is also the induced map between the full images of $S_n$. 

           Part \eqref{itm:step1} follows from combining parts \eqref{itm:step2} and \eqref{itm:step3}:
           It follows from adjunction and monoidality of $\Kbloc(-)$ that  \[\Hom_{\Cat[\stkBZ]_{/\cD}}\left( \Kbloc(\cC)^{\otimes n}, \Kbloc(\cC) \right) \simeq \Hom_{\Cat[\addkBZ]_{/\cD}}\left( \cC^{\otimes n}, \Kbloc(\cC)\right).\] Similarly, it follows from adjunction and monoidality of $\addkZloc{-}$ that \[\Hom_{\Cat[\addkBZ]_{/\cD}}\left( \addkZloc{\cB}^{\otimes n}, \Kbloc(\cC) \right) \simeq \Hom_{{\CatInfty{2}}_{/\cD}} \left( \cB^{\times n}, \Kbloc(\cC)\right).\]
           Hence, combining the second and third statement, we find that 
           \[\Im \left( S_n \to \Hom_{\Cat[\stkBZ]_{/\cD}}\left( \Kbloc(\cC)^{\otimes n}, \Kbloc(\cC) \right) \right) \simeq \Im \left( S_n \to \Hom_{{\Catnk{1}{1}}_{/h_1\cD}}\left(h_1\cB^{\times n}, h_1 \Kbloc(\cC) \right) \right)
           \]
           which is --- as a mapping space of $\Catnk{1}{1}$ ---  a $1$-groupoid. 
\end{proof}

Now we prove the main theorem:
\begin{proof}[Proof of \cref{thm:main-theorem-last-sec}]
Given categories and functors as in part \eqref{item-num:main-theorem-last-sec-2} of \cref{thm:main-theorem-last-sec}, we will prove that the composite
\begin{align}
\label{eq:mainproof}
\Braid_{\Cat[\stkBZ]_{/\cD}}(\Kbloc(\cC)) 
&\to \PreBraid_{{\Catnk{1}{1}}_{/h_1\cD}}(h_1 \cC \to h_1\Kbloc(\cC)) 
\\
\nonumber
&\to \PreBraid_{{\Catnk{1}{1}}_{/h_1\cD}}(h_1 \cB \to h_1\Kbloc(\cC))
\end{align}
is an equivalence. Note that part \eqref{item-num:main-theorem-last-sec-1} of \cref{thm:main-theorem-last-sec} then follows by taking $\cB \to \cC$ to be the identity $\cC \to \cC$ (which clearly satisfies the required conditions). Then, the second statement follows since the first map and the composite in~\eqref{eq:mainproof} are equivalences, and hence so is the second map. 

To prove that~\eqref{eq:mainproof} is an equivalence,  note that it follows from \cref{lem:concrete-dominance} that the condition on $\cB \to \cC$ in the statement of \cref{thm:main-theorem-last-sec}.\eqref{item-num:main-theorem-last-sec-2} equivalently asserts that $\addkZloc{\cB} \to \cC$ is surjective on objects and dominant on $1$-morphisms. 

We now unpack~\eqref{eq:mainproof} as a sequence of equivalences of spaces of (pre-)braidings:

\begin{align*}
  \Braid&_{\Cat[\stkBZ]_{/\cD}} (\Kbloc(\cC)) \\
   &\simeq \PreBraid_{\Cat[\stkBZ]_{/\cD}} (\Kbloc(\cC))  \\  
  & \hspace*{1cm} \text{Cor.~\ref{cor:step1} applied via Cor.~\ref{cor:collection}.\eqref{itm:step1} to the $\EE_1$-algebra $\Kbloc(\cC)$ in $\Cat[\stkBZ]$} \\
    & \simeq \PreBraid_{\Cat[\addkBZ]_{/\cD}}(\cC \to \Kbloc(\cC))\\
    & \hspace*{1cm} \text{Cor.~\ref{cor:step0} applied to the adjunction}
     \begin{tikzcd}[column sep=1.5cm, ampersand replacement=\&]
\Cat[\addkBZ]
\arrow[yshift=0.9ex]{r}{ \Kbloc}
\arrow[leftarrow, yshift=-0.9ex]{r}[yshift=-0.2ex]{\bot}[swap]{\mathrm{forget}}
\&
\Cat[\stkBZ]
\end{tikzcd}\\
     & \simeq \PreBraid_{\Cat[\addkBZ]_{/\cD}}(\addkZloc{\cB} \to \Kbloc(\cC))\\
    &  \hspace*{1cm} \text{Cor.~\ref{cor:step3} applied via Cor.~\ref{cor:collection}.\eqref{itm:step2} to the $\EE_1$-morphism $\addkZloc{\cB} \to \cC$ in $\Cat[\addkBZ]$} \\
    & \simeq \PreBraid_{{\CatInfty{2}}_{/\cD}}(\cB \to \Kbloc(\cC))\\
    &  \hspace*{1cm} \text{Cor.~\ref{cor:step0} applied to the adjunction}
   \begin{tikzcd}[column sep=1.5cm, ampersand replacement=\&]
\CatInfty{2}
\arrow[yshift=0.9ex]{r}{ \addkZloc{-}}
\arrow[leftarrow, yshift=-0.9ex]{r}[yshift=-0.2ex]{\bot}[swap]{\mathrm{forget}}
\&
\Cat[\addkBZ]
\end{tikzcd}\\
    & \simeq \PreBraid_{{\Catnk{1}{1}}_{/h_1\cD}}\left(h_1 \cB \to h_1 \Kbloc(\cC)\right)\\
      & \hspace*{1cm} \text{Cor.~\ref{cor:step2} applied via Cor.~\ref{cor:collection}.\eqref{itm:step3} to the functor $h_1 \colon  \Cat[\addkBZ]_{/\cD} \to {\Catnk{1}{1}}_{/h_1\cD}$} \\
    \end{align*}
    This completes the proof of \cref{thm:main-theorem-last-sec}.
\end{proof}

\appendix

\section{Recollections, notation, and conventions regarding higher category theory}
\label{sec:appendix-recollections}
In this paper, we make essential use of higher category theory and higher algebra. Here, we give a rapid overview of the notions that are most important to this paper (with further references for the interested reader). 

\subsection{From \texorpdfstring{$n$}{n}-categories to \texorpdfstring{$(\infty,n)$}{(infinity,n)}-categories}
\label{subsec:appendix-infty-n-cats}

As a starting point, let us recall the distinction between \textit{strict} and \textit{weak} 2-categories \cite{benabou1967}. In a strict 2-category, one requires associativity and unitality of composition to hold up to \textit{equality}. By contrast, in a weak 2-category, one only requires these to hold up to \textit{natural isomorphism}. Moreover, these natural isomorphisms are then required to satisfy further coherence conditions.\footnote{For instance, the associativity isomorphisms must satisfy the \textit{pentagon axiom}, which articulates a coherence condition among the five possible ways of composing a sequence of four composable morphisms.} We can summarize the situation with the slogan that a weak 2-category is ``a category that is enriched in 1-categories up to coherent natural isomorphism''. More generally, one would like to define a weak $n$-category as ``a category that is enriched in weak $(n-1)$-categories up to coherent natural isomorphism''. However, making this notion rigorous for higher values of $n$ -- with all of the desired coherence conditions -- becomes increasingly infeasible as $n$ grows \cite{gordon1995}.

Homotopy theory provides a remarkable alternative perspective on this problem, which leads to a uniform and robust solution. To explain it, let us recall Grothendieck's \textit{homotopy hypothesis}: any appropriate definition of ``weak $n$-category'' should have that its weak $n$-groupoids are equivalent (in a suitably homotopical sense) to homotopy $n$-types (i.e.\! topological spaces with homotopy groups above dimension $n$ all vanishing, taken up to weak homotopy equivalence).\footnote{Indeed, the homotopy hypothesis can be taken as one motivation (of many) for a theory of weak $n$-categories in the first place: it can be seen directly that strict 3-groupoids do not model all homotopy 3-types \cite{simpson1998homotopy}.} Note that under the homotopy hypothesis, natural isomorphisms on the categorical side correspond to homotopies on the topological side. Hence, we arrive at an alternative proposed definition for ``weak $(n,1)$-categories'',\footnote{Recall that for $0 \leq k \leq n$, the term ``$(n,k)$-category'' refers to an $n$-category whose $i$-morphisms are all invertible for all $i > k$. So an $(n,n)$-category is just an $n$-category (without further conditions), while an $(n,0)$-category is an $n$-groupoid.} namely as categories that are enriched either in weak $(n-1)$-groupoids up to coherent natural isomorphism or in homotopy $n$-types up to coherent homotopy. In the limit, we find that ``weak $(\infty,1)$-categories'' should be categories that are enriched in (arbitrary) homotopy types up to coherent homotopy.

Before continuing our discussion, we pause to note a few conventions. First of all, just as we may refer to 1-categories simply as ``categories'', we will also refer to $(\infty,1)$-categories simply as ``$\infty$-categories''. Moreover, given that we will \textit{only} be interested in ``weak'' notions, we usually leave this term implicit henceforth.

Now, there exist a number of robust models for $\infty$-categories (i.e.\! categories enriched in homotopy types up to coherent homotopy), although all are known to be equivalent (in a suitably homotopical sense)
\cite{toen2005}. The most developed is that of \textit{quasicategories}, thanks to Lurie's foundational work \cite{HTT}, which we take as a primary reference. However, we stress that throughout this paper we work in an entirely \textit{model-independent} fashion: we only manipulate $\infty$-categories in a manner that makes no reference to a specific model (so e.g.\! we never make reference to the individual simplices of a quasicategory).

The theory of $\infty$-categories reifies the limiting case of Grothendieck's homotopy hypothesis: among $\infty$-categories, the $\infty$-groupoids are equivalent to homotopy types. We refer to such objects alternately as \bit{$\infty$-groupoids} or as \bit{spaces}, depending on the context. We write $\Cat_\infty$ for the $\infty$-category of (small) $\infty$-categories (see \Cref{subsec:set-theory-Grothendieck-universes} for a brief discussion of set-theoretic matters), and we write $\Spaces \subset \Cat_\infty$ for the full subcategory of spaces.

We can now return to the problem of defining weak $n$-categories. The essential observation is as follows: all of the desired coherence conditions articulate \textit{equivalences} between various composite operations. Thus, in order to obtain a robust theory of $(\infty,n)$-categories, it suffices to have a robust theory of $\infty$-categories enriched in a given one: then, we can recursively define $(\infty,n)$-categories to be $\infty$-categories that are enriched in the $(\infty,1)$-category of $(\infty,n-1)$-categories. As we explain further in \Cref{subsec:enriched-infty-cats}, such a robust formalism is provided by \cite{GH13}. Hence, writing $\Cat[\VV]$ for the $\infty$-category of $\VV$-enriched $\infty$-categories, we may recursively define \bit{the $(\infty,1)$-category of $(\infty,n)$-categories} as $\CatInfty{n} \coloneqq \Cat[\CatInfty{n-1}]$, the $\infty$-category of $\infty$-categories enriched in (small) $(\infty,n-1)$-categories; as a base case we define $\CatInfty{0} \coloneqq \Spaces$, and as a consistency check we have an equivalence $\CatInfty{1} \coloneqq \Cat[\CatInfty{0}] \simeq \Cat_\infty$ \cite[Thm. 5.4.6]{GH13}.
As explained in  \Cref{subsec:enriched-infty-cats}, for $k \geq 0$, $\CatInfty{k}$  is Cartesian presentably symmetric monoidal.
Among the $(\infty,n)$-categories, weak $(n,n)$-categories can then be defined simply as those satisfying certain discreteness conditions \cite[\S~6.1]{GH13}.

In fact, this definition ultimately affords an \textit{$(\infty,n+1)$-category} (as opposed to just an $(\infty,1)$-category) of $(\infty,n)$-categories, using the fact that $\CatInfty{n}$ is Cartesian closed \cite{rezk2010cartesian}: for any $(\infty,n)$-categories $\cC$ and $\cD$ we have an $(\infty,n)$-category $\Fun(\cC,\cD)$ of functors between them, which is uniquely characterized by the universal property that we have a natural equivalence \[\hom_{\CatInfty{n}}(\cE, \Fun(\cC,\cD)) \simeq \hom_{\CatInfty{n}}(\cE \times \cC , \cD)\] of hom-spaces for any $(\infty,n)$-category $\cE \in \CatInfty{n}$.

\subsection{Some basic notions in \texorpdfstring{$\infty$}{infinity}-category theory}

Here we highlight a few $\infty$-categorical notions that we use repeatedly throughout this paper. We make no effort to give a comprehensive account, and instead refer the interested reader to \cite{HTT} for a more thorough treatment. Indeed, a remarkable number of notions in ordinary category theory port over to $\infty$-category theory with minimal modification (though see \Cref{subsec:higher-coherence} for a prominent non-example, and see \Cref{subsubsection:connected-vs-weakly-ctrbl} for another non-example).

\subsubsection{Basic notions}\label{subsubsec:basic}

Broadly speaking, the fundamental role played by sets in ordinary category theory is played by spaces in $\infty$-category theory. In particular, as noted in \Cref{subsec:appendix-infty-n-cats}, an $\infty$-category $\cC$ is enriched in spaces (i.e.\! $\infty$-groupoids): for any pair of objects $c,d \in \cC$ we obtain a space $\hom_\cC(c,d) \in \Spaces$. These hom-spaces admit a composition law, which is associative and unital up to coherent homotopy. 

Any $\infty$-category has an associated ordinary $1$-category $h_1\cC$, called its \emph{homotopy category}, with the same objects  as $\cC$ and hom-sets $\hom_{h_1 \cC}(c,d) \coloneqq \pi_0 \hom_{\cC}(c,d)$,  i.e. identifying $1$-morphisms in $\cC$ if there is an invertible $2$-morphism between them. 

A \bit{presheaf} on an $\infty$-category $\cC$ is a functor $\cC^\op \ra \Spaces$. These assemble into the $\infty$-category $\cP(\cC) \coloneqq \Fun(\cC^\op,\Spaces)$, which receives a fully faithful Yoneda embedding $\cC \xra{\hom_\cC(=,-)} \cP(\cC)$ \cite[Prop. 5.1.3.1]{HTT}.

As a matter of terminology, we interchangeably use the terms ``isomorphism'', as in ordinary category theory and ``equivalence'' (in order to emphasize that one is working in a higher-categorical context).

In ordinary categories, objects characterized by universal properties (e.g.\! limits and colimits) are \textit{unique up to unique isomorphism} when they exist: said differently, the collection of objects satisfying the characterization assemble into an empty or contractible groupoid. In $\infty$-categories, objects characterized by a universal property instead assemble into an empty or contractible \textit{$\infty$-groupoid}. For instance, an object $c \in \cC$ is called \textit{initial} if for every $d \in \cC$ the space $\hom_\cC(c,d)$ is contractible, and the initial objects of $\cC$ assemble into an empty or contractible $\infty$-groupoid.

In classical category theory, the term ``unique up to unique isomorphism'' is sometimes replaced by the shorter term ``essentially unique''. The word ``essentially'' here is meant to indicate that object is not \textit{literally} unique (e.g.\! there exist many terminal objects in the category $\Set$ of sets (namely the singletons)), but rather that it is unique in the appropriate category-theoretic sense. However, in $\infty$-category theory one is emphatically \textit{never} interested in uniqueness beyond that in the $\infty$-categorical sense (i.e.\! parametrized by a contractible $\infty$-groupoid), and so we generally omit all technical uses of the word ``essentially''. Relatedly, we will refer to a functor $\cC \xra{F} \cD$ simply as \textit{surjective} (rather than ``essentially surjective'') if for every object $d \in \cD$ there exists an object $c \in \cC$ and an equivalence $F(c) \simeq d$.

\subsubsection{Monomorphisms and subcategories}
\label{subsubsection:monos-and-subcats}

A general pattern in higher category theory is that one must keep track of ``higher coherence data'' (see e.g.\! \Cref{subsec:higher-coherence}). Thus, it is notable when a given construction does \textit{not} require this. Given a construction that a priori might involve coherence data, we say that the data is in fact (\bit{merely}) \bit{a condition} in order to indicate that such data is unique if it exists (i.e.\! that the $\infty$-category of such assembles into an empty or contractible $\infty$-groupoid).

Most fundamentally, given a space $X$ and a subset of its path components, it is merely a condition for a point $x \in X$ to lie in one of these. In fact, the inclusions of path components are precisely the \textit{monomorphisms} in the $\infty$-category of spaces: given an inclusion of path components $Y \xhookra{i} X$, it is merely a condition for \textit{any} map $Z \ra X$ to factor through it.

This notion generalizes: we say that a morphism $c \ra d$ in an $\infty$-category $\cC$ is a \bit{monomorphism} if it is merely a condition for any morphism $e \ra d$ to factor through it.\footnote{This is equivalent to the condition that the commutative square
\[ \begin{tikzcd}[ampersand replacement=\&]
c
\arrow{r}{\id_c}
\arrow{d}[swap]{\id_c}
\&
c
\arrow{d}
\\
c
\arrow{r}
\&
d
\end{tikzcd} \]
is a pullback square. (In particular, if a functor commutes with pullbacks then it preserves monomorphisms.)} This is equivalent to the condition that the resulting morphism $\hom_\cC(-,c) \ra \hom_\cC(-,d)$ in $\cP(\cC)$ is a componentwise monomorphism.

As a notable example, the monomorphisms in $\Cat_\infty$ are precisely the functors that are fully faithful on equivalences and monomorphisms on all hom-spaces.\footnote{This claim is easy to check in the model for $\infty$-categories given by complete Segal spaces \cite{rezk2001model}.} We reserve the term \bit{subcategory} for (the image of) a monomorphism (in $\Cat_\infty$, or more generally in $\Cat[\VV]$ (again see \Cref{subsec:enriched-infty-cats})).

As another notable example, it is merely a condition for a morphism in an $\infty$-category to be an equivalence. Said differently, the functor $[1] \ra [1]^\gpd \simeq \pt$ is an epimorphism in $\Cat_\infty$ (see \S\S\ref{subsubsec:simplicial-objects}-\ref{subsubsec:localizations} for an explanation of the notation).

\subsubsection{Adjunctions}

It is merely a condition for a functor $\cC \xra{F} \cD$ to be a (say) left adjoint: its space of right adjoints is either empty or contractible. First of all, a \textit{pointwise right adjoint} to $F$ at an object $d \in \cD$ is a pair of an object $c \in \cC$ and a morphism $F(c) \xra{\varepsilon_d} d$ such that for every $c' \in \cC$ the composite $\hom_\cC(c',c) \xra{F} \hom_\cD(F(c'),F(c)) \xra{\varepsilon_d} \hom_\cD(F(c'),d)$ is an equivalence. Equivalently, this is the data of a representing object for the presheaf $\cC^\op \xra{\hom_\cD(F(-),d)} \Spaces$ (which by definition comes equipped with the data of a universal element $\varepsilon_d \in \hom_\cD(F(c),d)$ witnessing it as such). Then, a right adjoint exists if and only if a pointwise right adjoint exists at all objects of $\cD$; in this case, the right adjoint is the (necessarily unique) factorization of the functor $\cD \xra{\hom_\cD(F(=),-)} \cP(\cC)$ through the Yoneda embedding. (See \Cref{subsec:adjns-revisited} for an alternative description of $\infty$-categorical adjunctions.)

As a basic example, there exists a right adjoint $\Spaces \xleftarrow{\iota_0} \Cat_\infty$ to the inclusion, which carries an $\infty$-category $\cC$ to its maximal subgroupoid $\cC^\simeq$ (which is obtained by discarding all of its noninvertible morphisms).

\subsubsection{Simplicial objects}
\label{subsubsec:simplicial-objects}

We write $\Delta$ for the simplicial indexing category (the full subcategory of $\Cat_\infty$ on the finite nonempty totally ordered sets), and for any $n \geq 0$ we write $[n] \coloneqq \{ 0 < 1 < \cdots < n \} \in \Delta$ for the indicated standard object. A \textit{simplicial object} in an $\infty$-category $\cC$ is a functor $\Delta^\op \xra{X} \cC$; we use the term \textit{geometric realization} to refer to its colimit, and denote this by $|X| \coloneqq \colim_{\Delta^\op}(X) \in \cC$.\footnote{The classical notion of geometric realization that carries a simplicial set to a topological space is a \textit{homotopy} colimit (of the corresponding levelwise discrete simplicial topological space), which provides a concrete model for the colimit in $\Spaces$ of the corresponding levelwise discrete simplicial space. So, these two uses of the term are spiritually compatible.}

\subsubsection{Localizations}
\label{subsubsec:localizations}

Given an $\infty$-category $\cC$ and a collection $\bW$ of morphisms in $\cC$ (often assumed to be those defining a subcategory of $\cC$), the \bit{localization} of $\cC$ at $\bW$ is the target of the initial functor $\cC \ra \cC[\bW^{-1}]$ that carries all morphisms in $\bW$ to equivalences. As an extreme example, the \textit{$\infty$-groupoid completion} of $\cC$ is its localization at \textit{all} of its morphisms; this defines a left adjoint $\Cat_\infty \xra{(-)^\gpd} \Spaces$ to the inclusion. More generally, we can identify the localization at (the morphisms in) a subcategory $\bW \subseteq \cC$ as the pushout
\[ \begin{tikzcd}
\bW
\arrow[hook]{r}
\arrow{d}
&
\cC
\arrow{d}
\\
\bW^\gpd
\arrow{r}
&
\cC[\bW^{-1}]
\end{tikzcd}
~.
\]

A special case of localization is given by a \bit{reflective localization adjunction}, i.e.\! an adjunction
\begin{equation}
\label{eq:reflective-localization}
\begin{tikzcd}[column sep=1.5cm]
\cC
\arrow[yshift=0.9ex]{r}{L}
\arrow[hookleftarrow, yshift=-0.9ex]{r}[yshift=-0.2ex]{\bot}[swap]{R}
&
\cD
\end{tikzcd}
\end{equation}
in which the right adjoint is fully faithful. In this case, writing $\bW \subseteq \cC$ for the subcategory of morphisms in $\cC$ that are carried to equivalences in $\cD$, the left adjoint witnesses $\cD$ as the localization $\cC[\bW^{-1}]$. In this case, $R$ can be characterized as the inclusion of the full subcategory of objects of $\cC$ that are \bit{local} with respect to the morphisms in $\bW$, i.e.\! those $c \in \cC$ such that for every $d \ra e$ in $\bW$ the morphism $\hom_\cC(d,c) \leftarrow \hom_\cC(e,c)$ is an equivalence \cite[Prop. 5.5.4.2]{HTT}.\footnote{If $\cC$ admits a terminal object $\pt_\cC \in \cC$, then this locality condition is equivalent to the orthogonality relation $(d \ra e) \bot (c \ra \pt_\cC)$ (see \Cref{def:orthogonal-morphisms}).} Of course, dual remarks pertain to \textit{coreflective localization adjunctions}, i.e.\! adjunctions in which the left adjoint is fully faithful.

Note that we might obtain the same localization even if we change the collection $\bW$. For instance, it is unnecessary to invert equivalences (since they are already invertible), and for any pair of composable morphisms $f$ and $g$ inverting any two of $f$, $g$, and $gf$ automatically inverts the third (since equivalences have the two-out-of-three property). This observation plays a key role in the theory of accessible localizations of presentable $\infty$-categories (see \Cref{subsec:presentable-infty-cats}).

\subsubsection{Connected categories versus weakly contractible $\infty$-categories}
\label{subsubsection:connected-vs-weakly-ctrbl}

While certain results in ordinary category theory refer to \textit{connected} categories (i.e.\! those whose groupoid completions are connected), their $\infty$-categorical analogs generally instead refer to \textit{weakly contractible} $\infty$-categories (i.e.\! those whose $\infty$-groupoid completions are contractible). For instance, the forgetful functor $\cC_{c/} \ra \cC$ commutes with (and detects) weakly contractible colimits.\footnote{As a non-example, binary coproducts in $\cC_{c/}$ are computed by pushouts in $\cC$.}

\subsection{Higher coherence}
\label{subsec:higher-coherence}

Here we briefly illustrate the primary operational difference between working in ordinary categories and working in $\infty$-categories, namely that in the latter case one must keep track of \textit{higher coherence data}.

Let $M$ be a monoid (i.e.\! a set equipped with an associative and unital binary operation). Then, the data of $M$ is entirely recorded by its \textit{bar construction}, a simplicial set $\Bar(M)$ with $\Bar(M)_n \coloneqq M^{\times n}$ whose face and degeneracy maps respectively record the product and unit of $M$. Indeed, $M$ is already completely specified by the restriction $\Delta^\op_{\leq 3} \hookra \Delta^\op \xra{\Bar(M)} \Set$; note that the associativity of its multiplication is guaranteed by the commutativity of a certain square
\begin{equation}
\label{eq:square-for-associativity}
\begin{tikzcd}
{[3]}
\arrow{r}
\arrow{d}
&
{[2]}
\arrow{d}
\\
{[2]}
\arrow{r}
&
{[1]}
\end{tikzcd}
\end{equation}
of face maps in $\Delta^\op$ (whose morphisms all correspond to endpoint-preserving injections in $\Delta$). Altogether, we can identify monoids as a full subcategory either of $\Fun(\Delta^\op,\Set)$ or of $\Fun(\Delta^\op_{\leq 3},\Set)$.

By contrast, such a restriction -- or more generally, the restriction to $\Delta^\op_{\leq n} \subset \Delta^\op$ for any $n$ -- is \textit{not} possible in the context of $\infty$-category theory. As a fundamental example, an $\infty$-monoid $M$ (i.e.\! an $\infty$-categorical monoid object in $\Spaces$) is completely specified by its bar construction $\Delta^\op \xra{\Bar(M)} \Spaces$ (whose face and degeneracy maps likewise record its product and unit). Heuristically, we may think of relations among various morphisms and their composites in $\Delta^\op$ as recording coherence data for the muliplication of $M$, inasmuch as the functor $\Bar(M)$ carries these equalities in the hom-sets of $\Delta^\op$ only to ``homotopy-coherent equalities'' (i.e.\! higher equivalences) in $\Spaces$.\footnote{For instance, the commutative square \eqref{eq:square-for-associativity} selects an associator for $M$ (i.e.\! a path in $\hom_\Spaces(M^{\times 3} , M)$), and thereafter we can locate the pentagon axiom as the image in $\hom_\Spaces(M^{\times 4},M)$ of the (tautological and unique) nullhomotopy of a certain pentagon in $\hom_{\Delta^\op}([4],[1]) \in \Set \subset \Spaces$.} Because $\Spaces$ is an $\infty$-category (and not an $(n,1)$-category for any $n < \infty$, i.e.\! its hom-spaces can have homotopy groups in arbitrarily high dimensions), these coherence data never become unique or vacuous after some finite stage.

We note for future reference that $\infty$-monoids can be identified (via their bar constructions) as the full subcategory of $\Fun(\Delta^\op , \Spaces)$ on those simplicial spaces $X$ satisfying a \textit{Segal condition}, namely that for every $n \geq 0$ a certain natural morphism $X_n \ra (X_1)^{\times n}$ is an equivalence.

We generally suppress the modifier ``homotopy coherently'' (e.g.\! of the adjectives ``associative'' and ``unital''), unless we specifically mean to draw attention to it.

\subsection{Straightening and unstraightening}

A fundamental tool in $\infty$-category theory is the \textit{straightening} and \textit{unstraightening} equivalence, as we now briefly describe.
\footnote{See \cite{MR3999274} for a more leisurely description.}

Fix an $\infty$-category $\cB$ and a functor $\cB \xra{F} \Cat_\infty$. Then, the (\bit{coCartesian}) \bit{unstraightening} of $F$ (a.k.a.\! its (covariant) \textit{Grothendieck construction}) is an object $(\cE \xra{p} \cB) \in (\Cat_\infty)_{/\cB}$ that may be described heuristically as follows:
\begin{itemize}
\item an object of $\cE$ is given by a pair of an object $b \in \cB$ and an object $x \in F(b)$;
\item a morphism $(b,x) \ra (c,y)$ in $\cE$ is given by a morphism $b \xra{f} c$ in $\cB$ along with a morphism $F(f)(x) \xra{\alpha} y$ in $F(c)$.
\end{itemize}
(Of course, the images under $p$ of these data are simply $b$ and $f$, respectively.) Such a morphism $(f,\alpha)$ in $\cE$ is called (\bit{$p$-})\bit{coCartesian} if $\alpha$ is an equivalence. Observe that these satisfy a universal property: if $e \xra{\varphi} f$ in $\cE$ is $p$-coCartesian, then for any $g \in \cE_{p(f)}$ we have an equivalence $\hom_\cE(f,g) \simeq \hom_\cE(e,g) \times_{\hom_\cB(p(e),p(g))} \{p(\varphi) \}$.

Conversely, a functor $\cE \xra{p} \cB$ is called a \bit{coCartesian fibration} if for every pair of an object $e \in \cE$ and a morphism $p(e) \xra{f} b$ in $\cB$, the morphism $f$ admits a coCartesian lift with source $e$. In this case, $p$ is the unstraightening of a functor $\cB \xra{F} \Cat_\infty$, whose values are given by the fibers $F(b) \simeq \cE_b$ and whose functoriality is implicitly specified by the coCartesian morphisms (in essence because the Yoneda embedding is fully faithful). We refer to $F$ as the \bit{straightening} of $p$, and to its functoriality $F(b) \xra{F(f)} F(c)$ for a morphism $b \xra{f} c$ in $\cB$ as the \textit{coCartesian monodromy functor} of $\cE$ associated to $f$.

The coCartesian fibrations over $\cB$ define a (generally non-full) subcategory $\coCart_\cB \subseteq (\Cat_\infty)_{/\cB}$, whose morphisms are those functors over $\cB$ that preserve coCartesian morphisms. Altogether, by \cite[Thm. 3.2.0.1]{HTT}, straightening and unstraightening define inverse equivalences
\[
\Fun(\cB,\Cat_\infty)
\simeq
\coCart_\cB
~,
\]
under which precomposition with a functor $\cB' \ra \cB$ corresponds to pullback therealong.

A similar but dual story applies in the case of a functor $\cB^\op \xra{F} \Cat_\infty$: this now has a (\textit{Cartesian}) \textit{unstraightening} (a.k.a.\! its (contravariant) Grothendieck construction), giving an object $(\cE \ra \cB) \in (\Cat_\infty)_{/\cB}$ admitting a dual description. Altogether, we obtain an analogous equivalence
\[
\Fun(\cB^\op,\Cat_\infty)
\simeq
\Cart_\cB
~.
\]

\subsection{Adjunctions revisited}
\label{subsec:adjns-revisited}
An adjunction of $\infty$-categories can be defined as a functor $\cE \ra [1]$ that is both a coCartesian fibration and a Cartesian fibration. Its coCartesian unstraightening defines the left adjoint $\cE_0 \xra{L} \cE_1$, while its Cartesian unstraightening defines the right adjoint $\cE_0 \xleftarrow{R} \cE_1$, and the universal properties of coCartesian and Cartesian morphisms yield natural equivalences $\hom_{\cE_0}(e,R(f)) \simeq \hom_\cE(e,f) \simeq \hom_{\cE_1}(L(e),f)$ for any $e \in \cE_0$ and $f \in \cE_1$.

We define a \bit{morphism of adjunctions} to be a morphism in $\coCart_{[1]} \cap \Cart_{[1]}$.\footnote{These are also frequently referred to as ``Beck--Chevalley squares''.} In particular, a morphism of adjunctions determines a commutative square in $\Cat_\infty$ after omitting either both left adjoints or both right adjoints.

Given a commutative square in $\Cat_\infty$ in which two parallel functors are both (say) left adjoints, passing to their right adjoints we obtain a canonical laxly-commutative square (i.e.\! one that commutes up to a specified natural transformation), and it is merely a condition for this to be invertible so that the original square defines a morphism adjunctions \cite{haugseng2023lax}.\footnote{This is frequently referred to as a ``Beck--Chevalley condition'' on the original commutative square.} Of course, this is nothing but the condition that the morphism in $\coCart_{[1]}$ specified by the original square lies in the subcategory $\coCart_{[1]} \cap \Cart_{[1]}$. Dual remarks apply if the two parallel functors are instead both right adjoints.

\subsection{Set-theoretic considerations}
\label{subsec:set-theory-Grothendieck-universes}

In order to deal with set-theoretic issues, we systematically use the device of
\textit{Grothendieck universes} (see e.g.\! \cite[\S~1.2.15]{HTT}).
Specifically, we fix a triple of strongly inaccessible cardinals $\kappa_0 <
\kappa_1 < \kappa_2$. The sets of cardinality $<\kappa_i$ for $0\leq i \leq 2$ will be called
$\kappa_i$-small and they form Grothendieck universes $U_0\in U_1\in U_2$. Likewise, a category is called $\kappa_i$-small if the sets of
isomorphism classes of objects and the homotopy groups of morphisms spaces are of
cardinality $<\kappa_i$. We refer to $\kappa_0$-small objects as \textit{small},
to $\kappa_1$-small objects as \textit{large}, and to $\kappa_2$-small objects
as \textit{huge} (and the latter play almost no role in in our work). So for
instance, the $\infty$-category $\Cat_\infty$ of small $\infty$-categories is
large, as is the $\infty$-category $\Spaces$ of (small) spaces.

We occasionally write e.g.\! $\hatCat_\infty$ to refer to the huge $\infty$-category of large $\infty$-categories. Its main use is that it contains the $\infty$-category of \textit{presentable} $\infty$-categories (see \Cref{subsec:presentable-infty-cats}). We often prove results for $\Cat_\infty$ and then apply them to $\hatCat_\infty$ (which is easily justified by a change of Grothendieck universe) in order to discuss specializations to presentable $\infty$-categories.

We may sometimes emphasize smallness (e.g.\! of a set or of an $\infty$-category). On the other hand, we may also omit the word ``small'' for brevity; for instance, when we say that an $\infty$-category admits all colimits we certainly mean that it admits all \textit{small} colimits.

We generally refer to a large set as a ``class'' (and to a class that is not small as a ``proper class''). However, in related contexts we will have occasion to contemplate large \textit{spaces}, and rather than belabor the distinction we simply also refer to these as ``classes''.

Relatedly, in the most invariant terms, given an $\infty$-category $\cC$, ``a set of objects of $\cC$'' refers to a set $S$ equipped with a functor $S \xra{F} \cC$. We say that an object of $\cC$ lies in the set if it is in the image of $F$ (up to equivalence). Said differently, when we refer to a set of objects of $\cC$, we generally intend to implicitly refer to its image (a subgroupoid of $\cC$). Note that if $S$ is small and $\cC$ is locally small, then the image of $S$ in $\cC$ is also small; hence, in such cases this implicit passage to images does not change size.

\subsection{Presentable \texorpdfstring{$\infty$}{infinity}-categories}
\label{subsec:presentable-infty-cats}

Many ($\infty$-)categories of lasting interest are not small, but are
nevertheless ``controlled by small data'' -- namely, they are \bit{presentable}.
By definition, an $\infty$-category $\cC$ is presentable if it admits all small
colimits and moreover there exists some regular cardinal $\kappa$ such that
$\cC$ is the completion of its full subcategory $\cC^\kappa \subseteq \cC$ of
$\kappa$-compact objects under $\kappa$-filtered colimits. For this we recall
that a $\kappa$-filtered colimit means a colimit indexed by a $\kappa$-filtered
category, i.e. an 
$\infty$-category, in which every diagram of cardinality
$<\kappa$ has a cocone, and an object is called $\kappa$\emph{-compact} if the
associated representable functor preserves $\kappa$-filtered colimits. If we can
take $\kappa$ to be the cardinality $\omega$ of the natural numbers, we say that
$\cC$ is \bit{compactly generated} (as $\omega$-compact objects are generally
just called ``compact objects'').

An extremely convenient feature of presentable $\infty$-categories is their \bit{adjoint functor theorem} \cite[Cor. 5.5.2.9]{HTT}: a functor between presentable $\infty$-categories is a left adjoint if and only if it preserves small colimits, and it is a right adjoint if and only if it is accessible (i.e.\! preserves $\kappa$-filtered colimits for some $\kappa$) and preserves small limits. Presentable $\infty$-categories naturally define two subcategories
\[
{\Pr}^L
\subset
\hatCat_\infty
\supset
{\Pr}^R
\]
of the huge $\infty$-category of large $\infty$-categories, in which the morphisms are the left (resp.\! right) adjoint functors. Evidently, passing to adjoints defines an equivalence $\Pr^L \simeq (\Pr^R)^\op$. These actually define $(\infty,2)$-categories (by taking all natural transformations as 2-morphisms), and we write $\Fun^L(-,-)$ and $\Fun^R(-,-)$ for their respective hom-$(\infty,1)$-categories.

An \bit{accessible localization} is by definition a reflective localization among presentable $\infty$-categories. The left adjoint of an accessible localization is a localization not just in $\hatCat_\infty$ but also in $\Pr^L$ \cite[Prop. 5.5.4.20]{HTT}. Moreover, given any accessible localization
\[
\begin{tikzcd}[column sep=1.5cm]
\cC
\arrow[yshift=0.9ex]{r}{L}
\arrow[hookleftarrow, yshift=-0.9ex]{r}[yshift=-0.2ex]{\bot}[swap]{R}
&
\cD
\end{tikzcd}
~,
\]
the left adjoint $L$ witnesses $\cD$ as the localization $\cC[S^{-1}]$ for some small set $S$ of morphisms in $\cC$, and hence $R$ is the fully faithful inclusion of the subcategory of $S$-local objects \cite[Prop. 5.5.4.1]{HTT}.

Presentable $\infty$-categories admit presentations by generators and relations, in the following sense. First of all, for any small $\infty$-category $\cC \in \Cat_\infty$, its $\infty$-category $\cP(\cC)$ of presheaves is presentable. This is the free presentable $\infty$-category on $\cC$: for any $\cD \in \Pr^L$, restriction along the Yoneda embedding defines an equivalence $\Fun(\cC,\cD) \xleftarrow{\sim} \Fun^L(\cP(\cC),\cD)$ \cite[Thm. 5.1.5.6]{HTT}. And then, any presentable $\infty$-category is an accessible localization of $\cP(\cC)$ for some $\cC \in \Cat_\infty$ \cite[Thm. 5.5.1.1]{HTT}.

There exists a symmetric monoidal structure on $\Pr^L$, which is characterized by the fact that morphisms $\cC \otimes \cD \ra \cE$ in $\Pr^L$ (i.e.\! left adjoint functors) are equivalent to functors $\cC \times \cD \ra \cE$ that are bicocontinuous (i.e.\! cocontinuous (or equivalently, left adjoints) separately in each variable), whose unit object is $\Spaces \simeq \cP(\pt) \in \Pr^L$. A \bit{presentably} (\bit{symmetric}) \bit{monoidal $\infty$-category} is a (resp.\! commutative) algebra object in $(\Pr^L,\otimes)$, i.e.\! a presentable $\infty$-category equipped with a (resp.\! symmetric) monoidal structure that is cocontinuous separately in each variable.\footnote{In any symmetric monoidal $\infty$-category the unit object is canonically a commutative algebra object, and here this recovers the Cartesian symmetric monoidal structure on $\Spaces$ (which is a presentably symmetric monoidal structure).} Most (symmetric) monoidal presentable $\infty$-categories of lasting interest (e.g.\! $\CatInfty{n}$ (and in particular $\Spaces$ and $\Cat_\infty$) and $\Spectra$) are presentably (resp.\! symmetric) monoidal. 

\subsection{Some basics of \texorpdfstring{$\infty$}{infinity}-operads}
\label{appendix:operads}
Here we briefly discuss some relevant features of the theory of $\infty$-operads introduced in \cite[\S~2]{HA}.

\subsubsection{Basic notions}

The notion of an $\infty$-operad is an $\infty$-categorical version of the theory of \textit{colored operads}. A colored operad consists of a set $\iota_0\underline{\cO}$ of \textit{colors} along with for every finite set $\{X_i \in \iota_0 \underline{\cO} \}_{i \in I}$ of colors and every color $Y \in \iota_0 \underline{\cO}$ a set $\Mul_\cO(\{X_i\}_{i \in I},Y)$ of \textit{multimorphisms} from $\{X_i\}_{i \in I}$ to $Y$, which altogether must be equipped with a associative and unital composition law.\footnote{The usage of an abstract finite set $I$ here (as opposed to $\{1,\ldots,n\}$) is convenient since it naturally builds in the relevant symmetric group actions.} In particular, the unary multimorphisms (i.e.\! those with $|I| = 1$) define a category $\underline{\cO}$ of colors (whose set of objects is $\iota_0 \underline{\cO}$).

We now give a hint of the main definition. An \bit{$\infty$-operad} $\cO$ is an $\infty$-category $\cO^{\otimes}$ (called the \emph{$\infty$-category of operators} of $\cO$) equipped with a functor $\cO^{\otimes} \ra \Fin_*$ to the category of finite pointed sets satisfying certain conditions. We immediately introduce the notation $\underline{n}_+ \coloneqq \{ 1, 2, \ldots, n \}_+ \in \Fin_*$ for the indicated standard object, as well as the notation $\underline{\cO} \coloneqq \cO^{\otimes}_{\underline{1}_+}$ for the indicated fiber. We refer to $\underline{\cO}$ as the \textit{$\infty$-category of colors} of $\cO$ (or sometimes as its \textit{underlying $\infty$-category}, for reasons that will be explained shortly). We will sometimes abuse notation and denote the underlying $\infty$-category $\underline{\cO}$ of an $\infty$-operad $\cO$ simply also by $\cO$. The crux of the definition of an $\infty$-operad is that $\cO^{\otimes}$ satisfies a sort of ``fiberwise'' Segal condition which implies that for every $n \geq 0$ there is a natural equivalence $\cO^{\otimes}_{\underline{n}_+} \simeq \underline{\cO}^{\times n}$, as well as an ``internal'' Segal condition which implies that for every pair of objects $X \coloneqq (X_1,\ldots,X_m) \in \underline{\cO}^{\times m} \simeq \cO^{\otimes}_{\underline{m}_+}$ and $Y \coloneqq (Y_1,\ldots,Y_n) \in \underline{\cO}^{\times n} \simeq \cO^{\otimes}_{\underline{n}_+}$, we have a natural equivalence
\[
\hom_{\cO^{\otimes}}(X,Y)
\simeq
\bigsqcup_{f \in \hom_{\Fin_*}(\underline{m}_+,\underline{n}_+)}
\prod_{i = 1}^n \hom_{\cO^{\otimes}}(\{X_j\}_{j \in f^{-1}(i)}, Y_i)
~.
\]
An ordinary colored operad $\cO'$ defines an $\infty$-operad $\cO$ with $\hom_{\cO^{\otimes}}(\{X_i\}_{i \in I},Y) \coloneqq \Mul_{\cO'}(\{X_i\}_{i \in I},Y)$. As a result, we also write $\Mul_\cO(\{X_i\}_{i \in I},Y) \coloneqq \hom_{\cO^{\otimes}}(\{X_i\}_{i \in I},Y)$ for the hom-spaces in an $\infty$-operad $\cO$ whose targets lies in $\underline{\cO}$, and refer to their points as \textit{multimorphisms}. Altogether, $\infty$-operads assemble into a (non-full) subcategory $\Op \subset (\Cat_\infty)_{/\Fin_*}$: in essence, morphisms of $\infty$-operads are required to respect the Segal condition equivalences. In fact, allowing all 2-morphisms in $(\Cat_\infty)_{/\Fin_*}$ endows $\Op$ with the structure of an $(\infty,2)$-category, whose hom-$(\infty,1)$-categories we denote by $\eHom_\Op(-,-)$.

We say that an $\infty$-operad $\cO$ is \textit{single-colored} if its $\infty$-category of colors $\underline{\cO}$ is contractible. In this case, we may write $\ast \in \underline{\cO}$ for the unique point, and we write $\cO(n) \coloneqq \Mul_\cO(\{\ast\}_{i \in \{1,\ldots,n\}} , \ast)$ for the unique space of $n$-ary multimorphisms in $\cO$.

\subsubsection{Key examples}

Perhaps the most important family of examples of $\infty$-operads is the sequence $\EE_0 \ra \EE_1 \ra \cdots \ra \EE_\infty$. These are single-colored, with the space $\EE_k(n)$ of $n$-ary operations given by (the underlying space of) the topological space of configurations of $n$ disjoint points in $\RR^k$.\footnote{This topological space is homotopy equivalent to that of framed embeddings $(\RR^k)^{\sqcup n} \hookra \RR^k$, under which composition of multimorphisms in $\EE_k$ corresponds to composition of framed embeddings.} The above maps are induced by the standard embeddings $\RR^0 \hookra \RR^1 \hookra \cdots \hookra \RR^\infty$. We note that $\EE_1$ and $\EE_\infty$ are respectively the $\infty$-operads underlying the colored operads that parametrize associative and commutative algebras (and in particular, their spaces of multimorphisms are discrete). Hence, we also write $\Assoc \coloneqq \EE_1$ and $\Comm \coloneqq \EE_\infty$ and respectively refer to these as the \textit{associative} and \textit{commutative} $\infty$-operads. In fact, $\Comm$ is simply the identity functor $\Comm \coloneqq \EE_\infty \simeq \Fin_* \xra{\id} \Fin_*$, and defines a terminal object of $\Op$.

Another illustrative example is the $\infty$-operad  $\mathrm{LM}$ associated to the two-colored operad parametrizing pairs of an associative algebra object along with a left module over it. We will return to $\mathrm{LM}$ in \cref{subsec:module-cats}.

It will occasionally be useful for us to refer to the single-colored $\infty$-operad $\Triv$, which has no $n$-ary multimorphisms for $n \not= 1$ and the only $1$-ary morphism is the identity morphism. Given an $\infty$-operad $\cO$, we write $\cO_\Triv \coloneqq \cO \times_\Comm \Triv$.

\subsubsection{$\cO$-monoidal $\infty$-categories}
\label{subsubsec:O-monoidal-infty-cats}

Given an $\infty$-operad $\cO$, an \bit{$\cO$-monoidal $\infty$-category} $\cC$ is a coCartesian fibration $\cC^{\otimes} \ra \cO^{\otimes}$ satisfying analogous Segal conditions, which are equivalent to the condition that the composite $\cC^{\otimes} \ra \cO^{\otimes}\ra \Fin_*$ is also an $\infty$-operad. In particular, an $\cO$-monoidal $\infty$-category can be equivalently specified by the straightening $\cO^{\otimes} \ra \Cat_\infty$ of this coCartesian fibration.
\footnote{This latter perspective is effectively a generalization of the bar construction indicated in \Cref{subsec:higher-coherence}, and it further generalizes to $\cO$-algebra objects in any Cartesian symmetric monoidal $\infty$-category \cite[\S~2.4.2]{HA} (which notion is defined shortly).} We often abuse the notation by denoting an $\cO$-monoidal $\infty$-category by its source operad $\cC$.
Altogether, $\cO$-monoidal $\infty$-categories define a full subcategory $\Alg_\cO(\Cat_\infty) \subseteq \coCart_{\cO^{\otimes}} \simeq \Fun(\cO^{\otimes},\Cat_\infty)$. As special cases, we write $\Alg(\Cat_\infty) \coloneqq \Alg_\Assoc(\Cat_\infty)$ for the $\infty$-category of \bit{monoidal $\infty$-categories} and $\CAlg(\Cat_\infty) \coloneqq \Alg_\Comm(\Cat_\infty)$ for the $\infty$-category of \bit{symmetric monoidal $\infty$-categories}.
The restricted coCartesian fibration $\underline{\cC} \ra \underline{\cO}$ (or simply its source) may be thought of as the ``underlying $\infty$-category'' of $\cC$, although this is most immediately meaningful when $\cO$ is single-colored.

An $\infty$-category that admits finite products canonically upgrades to a \textit{Cartesian symmetric monoidal} $\infty$-category. We note that it is merely a condition for a symmetric monoidal $\infty$-category to be Cartesian symmetric monoidal. Dual remarks apply in the case of finite coproducts.

\subsubsection{$\cO$-algebra objects}
\label{appendix:CAlg}

Given an $\cO$-monoidal $\infty$-category $\cC$, an \bit{$\cO$-algebra object} in $\cC$ is a section of the structure map $\cC \ra \cO$ in $\Op$. These assemble into an $\infty$-category $\Alg_\cO(\cC) \coloneqq \eHom_{\Op_{/\cO}}(\cO,\cC)$. As special cases, we write $\Alg(\cC) \coloneqq \Alg_\Assoc(\cC)$ for the $\infty$-category of (\bit{associative}) \bit{algebra objects} in $\cC$ and $\CAlg(\cC) \coloneqq \Alg_\Comm(\cC)$ for the $\infty$-category of \bit{commutative algebra objects} in $\cC$.

More generally, given a morphism $\cP \xra{p} \cO$ in $\Op$, we analogously define the $\infty$-category $\Alg_{\cP/\cO}(\cC) \coloneqq \eHom_{\Op_{/\cO}}(\cP,\cC)$ of \bit{$\cP$-algebras} in $\cC$ (relative to $p$). Equivalently, the base change $p^*\cC \ra \cP$ defines the \textit{underlying $\cP$-monoidal $\infty$-category} of $\cC \in \Alg_\cO(\Cat_\infty)$, and we have $\Alg_{\cP/\cO}(\cC) \simeq \Alg_\cP(p^*\cC)$. 
For example, there is a natural morphism $\mathrm{LM} \ra \Assoc$, and so we can contemplate $\mathrm{LM}$-algebras in any monoidal $\infty$-category $\cC \in \Alg(\Cat_\infty)$.
Note that when $\cO = \Fin_*$ we also write this as $\Alg_{\cP}(\cC)$. 
\footnote{Hence, this framework adheres closely to the ``microcosm/macrocosm principle'': it is precisely an $\cO$-monoidal structure on an $\infty$-category that allows us to contemplate $\cO$-algebra objects therein. In particular, one can make sense of $\cO$-algebra objects for \textit{any} $\infty$-operad $\cO$ inside of a symmetric monoidal $\infty$-category (since $\Comm \in \Op$ is terminal).}  Altogether, for an $\cO$-monoidal $\infty$-category $\cC$ we obtain a functor
\[
(\Op_{/\cO})^\op
\xra{\Alg_{(-)/\cO}(\cC)}
\Cat_\infty
~,
\]
whose functoriality is given by precomposition.

As a matter of terminology, it is common to refer to $\cO$-algebra objects in a Cartesian symmetric monoidal $\infty$-category as \bit{$\cO$-monoids} (e.g.\! in $\Spaces$ or $\Cat_\infty$). In particular, $\cO$-monoidal $\infty$-categories are indeed $\cO$-monoids in $\Cat_\infty$. When referring to notions in spaces, one generally simply prepends ``$\infty$-'' to the classical terms, so e.g.\! the objects of $\Alg(\Spaces)$ may be referred to as ``$\infty$-monoids''.

Of particular relevance to this paper is the case $\cO = \EE_2$, and we generally use the term \bit{braided} in place of the prefix ``$\EE_2$-'': in particular, a braided monoidal $(\infty,2)$-category is an $\EE_2$-algebra in $\Cat_{(\infty,2)}$. Indeed, a braided monoidal $\infty$-category in the classical sense defines an $\EE_2$-monoid in $\Cat_\infty$.

\subsubsection{$\cO$-algebras of symmetric monoidal $\infty$-categories}\label{subsubsec:symmetric-monoidal-structure-on-algebras}
Let $\cO, \cC$ be $\infty$-operads, then the $\infty$-category $\Alg_{\cO}(\cC)$ has the structure of an $\infty$-operad \cite[Ex. 3.2.4.4]{HA}. From now on, we will denote the $\infty$-operad of $\cO$-algebras in $\cC$ by $\Alg_{\cO}(\cC)$, and the underlying $\infty$-category of $\cO$-algebra by $\underline{\Alg}_{\cO}(\cC)$, or just $\Alg_{\cO}(\cC)$ if clear from context. When the $\infty$-operad $\cC$ is in fact a symmetric monoidal $\infty$-category, i.e. $\cC^{\otimes} \to \Fin_*$ is a coCartesian fibration, then so is the $\infty$-operad $\Alg_{\cO}(\cC)$. Furthermore, let $X \in \underline{\cO}$ be a color,
the evaluation functor $e_X \colon \underline{\Alg}_{\cO}(\cC) \to \underline{\cC}$, which takes an $\cO$-algebra to its underlying $X$-object, is symmeric monoidal \cite[Prop. 3.2.4.3]{HA}.
More generally, for any map of $\infty$-operads $\cO' \to \cO$, the pullback functor on algebras $\Alg_{\cO}(\cC) \to \Alg_{\cO'}(\cC)$ is a symmetric monoidal functor. 

\subsubsection{Symmetric monoidal structure on overcategories}\label{subsubsec:overcat-sym-mon} 
Let $\cC$ be a symmetric monoidal $\infty$-category and $A \in \CAlg(\cC)$ be a commutative algebra object therein. Then, there exists a symmetric monoidal structure on the overcategory $\cC_{/A}$ (\cite[Thm. 2.2.2.4]{HA}), universally characterized (cf.~\cite[Def. 2.2.2.1]{HA})  by the following equivalence of $\infty$-categories for any $\infty$-operad $\cO$
\[
    \underline{\Alg}_{\cO}(\cC_{/A}) \simeq \underline{\Alg}_{\cO}(\cC)_{/A},
\]
where on the right hand side we $A$ is equipped with the $\cO$-algebra structure induced by the terminal map of operads $\cO^{\otimes} \to \mathrm{Comm}$.

\subsubsection{Boardman-Vogt tensor product and Dunn additivity}\label{subsubsec:BV-tensor}
The $\infty$-category $\Op$ of $\infty$-operads itself carries a symmetric monoidal structure, called the \emph{Boardman-Vogt tensor product} uniquely characterized\footnote{This follows from the explicit construction of the $\infty$-operad $\Alg_{\cO}(\cP)$ in \cite[Const. 3.2.4.1]{HA}}
 by giving rise to an equivalence of $\infty$-operads for all $\infty$-operads $\cO, \cO'$ and $\cP$:
\begin{equation}\label{eq:op-tensor-hom}
    \Alg_{\cO}(\Alg_{\cO'}(\cP)) \simeq \Alg_{\cO \otimes \cO'}(\cP).
\end{equation}
Equivalently, the Boardman-Vogt tensor product has $\Alg_{-}(-)$ as its internal hom. 

A  fundamental theorem in the theory of $\infty$-operad is Dunn's additivity theorem \cite[Thm. 5.1.2.2]{HA}: for $n, m \geq 0$, there is an equivalence of $\infty$-operads $\EE_n \otimes \EE_m \simeq \EE_{n+m}$. In particular, using \ref{eq:op-tensor-hom},  an $\EE_{n+m}$-algebra in an $\infty$-operad $\cO$ is equivalent to an $\EE_n$-algebra in the $\infty$-operad of $\EE_{m}$-algebras in $\cO$.

\subsubsection{Laxly $\cO$-monoidal functors} \label{subsubsection:laxOmonoidal}

If $\cC$ and $\cD$ are $\cO$-monoidal $\infty$-categories, then a morphism $\cC \ra \cD$ in $\Op_{/\cO}$ is called a \bit{laxly $\cO$-monoidal functor}.\footnote{This is also technically a $\cC$-algebra object in $\cD$ (relative to $\cO$), although we find the present terminology to be more illuminating.} 
Let us denote the coCartesian fibrations $\cC^{\otimes} \to \cO^{\otimes}$, $\cD^{\otimes} \to \cO^{\otimes}$ by $p$ and $q$, then an \bit{$\cO$-monoidal} functor is a laxly $\cO$-monoidal functor that takes $p$-coCartesian morphisms in $\cC^{\otimes}$ to $q$-coCartesian morphisms in $\cD^{\otimes}$.

Whereas an $\cO$-monoidal functor respects the $\cO$-monoidal structure up to coherent natural equivalence, a laxly $\cO$-monoidal functor $\cC \ra \cD$ respects it only up to certain (generally noninvertible) coherent natural transformations, which nevertheless suffices to obtain an induced functor $\Alg_\cO(\cC) \ra \Alg_\cO(\cD)$ on $\infty$-categories of $\cO$-algebra objects (simply by composition in $\Op_{/\cO}$). For instance, given a laxly monoidal functor $\cC \xra{F} \cD$ and an algebra object $A \in \Alg_{\EE_1}(\cC)$, we obtain structure maps $F(A) \otimes^\cD F(A) \ra F(A \otimes^\cC A) \xra{F(\mu_A)} F(A)$ and $\uno_\cD \ra F(\uno_\cC) \xra{F(\eta_A)} F(A)$ giving the multiplication and unit of $F(A) \in \Alg(\cD)$.

Furthermore, $\Alg_{\cO}(-)$ takes (laxly) symmetric monoidal functors between symmetric monoidal $\infty$-categories to (laxly) symmetric monoidal functors.

\subsubsection{Localizations of $\cO$-monoidal $\infty$-categories}
\label{subsubsection:loczns-of-O-monoidal-cats}

Given an $\cO$-monoidal $\infty$-category $\cC \in \Alg_\cO(\Cat_\infty)$ and a collection $\bW$ of morphisms in $\cC$, the \bit{$\cO$-monoidal localization} of $\cC$ at $\bW$ is (the target of) the initial object of $\Alg_\cO(\Cat_\infty)_{\cC/}$ in which the morphisms in $\bW$ are sent to equivalences. Of course, this generalizes the notion of localization of $\infty$-categories discussed in \Cref{subsubsec:localizations}.

As an important special case, we say that a reflective localization \eqref{eq:reflective-localization} is \bit{compatible} with a (symmetric) monoidal structure $\otimes \coloneqq \otimes^\cC$ on $\cC$ if for all objects $c,c' \in \cC$ the morphism $L(c \otimes c') \xra{L(\eta_c \otimes \eta_{c'})} L(RL(c) \otimes RL(c'))$ in $\cD$ is an equivalence.\footnote{Even if $\otimes^\cC$ is a symmetric monoidal structure, this compatibility only depends on its underlying monoidal structure.} In this case, $\cD$ inherits a (resp.\! symmetric) monoidal structure $\otimes^\cD$, defined by the formula $d \otimes^\cD d' \coloneqq L(R(d) \otimes^\cC R(d'))$ for any $d,d' \in \cD$ and with unit object $\uno_\cD \coloneqq L(\uno_\cC)$,\footnote{An illustrative example is the completed tensor product of modules over a topological commutative ring (e.g.\! a local commutative ring $(R,\mathfrak{m})$ equipped with the $\mathfrak{m}$-adic topology).} and the left adjoint $L$ is canonically (resp.\! symmetric) monoidal (so that the right adjoint $R$ is canonically laxly (resp.\! symmetric) monoidal). In this case, the left adjoint $L$ witnesses $\cD$ as not just a localization but also a (resp.\! symmetric) monoidal localization of $\cC$.

\subsubsection{Presentably $\cO$-monoidal $\infty$-categories}\label{subsubsec:presentably-O-monoidal}

Given an $\infty$-operad $\cO$, a \bit{presentably $\cO$-monoidal $\infty$-category} is an $\cO$-monoidal $\infty$-category $\cC^{\otimes} \ra \cO^{\otimes}$ such that for every color $X \in \underline{\cO}$ the $\infty$-category $\cC_X$ is presentable and moreover for every multimorphism $\{X_i\}_{i \in I} \ra Y$ in $\cO$ the corresponding multifunctor $\prod_{i \in I} \cC_{X_i} \ra \cC_Y$ is multi-cocontinuous (i.e.\! cocontinuous separately in each variable). This is equivalent to the condition that $\cC$ defines an $\cO$-algebra $(\Pr^L,\otimes)$, and we write $\Alg_\cO(\Pr^L) \subseteq \Alg_\cO(\hatCat_\infty)$ for the subcategory whose objects are the presentably $\cO$-monoidal $\infty$-categories whose morphisms are the $\cO$-monoidal left adjoints among them.

Given a presentably $\cO$-monoidal $\infty$-category $\cC \in \Alg_\cO(\Pr^L)$, the $\infty$-category $\Alg_\cO(\cC)$ is also presentable. Moreover, the functor $(\Op_{/\cO})^\op \xra{\Alg_{(-)/\cO}(\cC)} \hatCat_\infty$ factors through $\Pr^R$, i.e.\! for every morphism $\cA \ra \cB$ in $\Op_\cO$ there exists a left adjoint
\[ \begin{tikzcd}[column sep=1.5cm]
\Alg_{\cA/\cO}(\cC)
\arrow[dashed, yshift=0.9ex]{r}
\arrow[leftarrow, yshift=-0.9ex]{r}[yshift=-0.2ex]{\bot}
&
\Alg_{\cB/\cO}(\cC)
\end{tikzcd}
~,
\]
the ``free $\cB$-algebra on an $\cA$-algebra'' functor \cite[Cor. 3.1.3.5]{HA}. Of course, these left adjoints then assemble into a functor $\Op_{/\cO} \xra{\Alg_{(-)/\cO}} \Pr^L$.

\begin{warning}\label{warning:not-presentably-symmetric-monoidal}
Given a presentably symmetric monoidal $\infty$-category $\cC$ and a small $\infty$-operad $\cO$, the $\infty$-category $\Alg_{\cO}(\cC)$ is presentable and carries a symmetric monoidal structure. However, it is \emph{not} necessarily presentably symmetric monoidal: The symmetric monoidal structure on $\Alg_{\cO}(\cC)$  is not necessarily compatible with finite coproducts (though it is always compatible with sifted colimits). An easy counterexample is $\cC = \Set$ and $\cO=\EE_1$.  
\end{warning}

\subsubsection{Adjunctions of $\cO$-algebras}\label{subsubsec:adjunctions-of-cO-algebras} 
For an $\infty$-operad $\cO$, an \bit{$\cO$-monoidal left adjoint} is an $\cO$-monoidal functor $F \colon \cC \to \cD$ between $\cO$-monoidal $\infty$-categories such that for each color $X \in \cO$, the underlying functor $F_X \colon \cC \to \cD$ is a left adjoint. 

An important fact which we use repeatedly is that given an $\cO$-monoidal left adjoint $F$, its right adjoint $G$ is canonically laxly $\cO$-monoidal \cite[Cor. 7.3.2.7]{HA}.
Conversely, given a laxly $\cO$-monoidal right adjoint, it is merely a condition for its left adjoint to be $\cO$-monoidal \cite[Cor. 7.3.2.12]{HA}.
Moreover, such an adjunction determines an adjunction on $\cO$-algebra objects \cite[Rem 7.3.2.13]{HA}, whose adjoints both commute with the forgetful functors\footnote{Indeed, such data define an adjunction in the $(\infty,2)$-category $\Alg_\cO(\Cat_\infty)^\lax$, to which we may apply $\eHom_{\Alg_\cO(\Cat_\infty)}(\cO \leftarrow \cO_\Triv,-)$.}, i.e., defines a morphism of adjunction:
\begin{equation}\label{eq:adj-square}
    \begin{tikzcd}
        \Alg_{\cO}(\cC)\arrow[r,shift left=.5ex,"F_{\cO}"] \ar[d] 
        &
        \underline{}\Alg_{\cO}(\cD) \arrow[l,shift left=.5ex,"G_{\cO}"] \ar[d] \\ 
        \underline{\cC}\arrow[r,shift left=.5ex,"\underline{F}"]
        &
        \underline{\cD} \arrow[l,shift left=.5ex,"\underline{G}"].
    \end{tikzcd}
\end{equation}
If $\cC, \cD$ are symmetric monoidal $\infty$-categories, and $F \colon \cC \to \cD$ is a symmetric monoidal left adjoint, then the symmetric monoidal functor $\Alg_{\cO}(F) \colon \Alg_{\cO}(\cC) \to \Alg_{\cO}(\cD)$ is a symmetric monoidal left adjoint.

\subsection{Module \texorpdfstring{$\infty$}{infty}-categories}\label{subsec:module-cats}
\subsubsection{Left, right, and bimodules for associative algebras} \label{subsubsec:left-modules}

We briefly review the theory of module $\infty$-categories from \cite[\S~4]{HA}. 
There are $\infty$-operads  $\mathrm{LM}, \mathrm{RM}$ and $\mathrm{BM}$ parametrizing pairs $(a, {}_{a}m)$ of an associative algebra object $a$ along with a left module $m$ over it, pairs $(a, m_{a})$ of an associative algebra with a right module, and triples $(a, b, {}_{a}m_b)$ of associative algebras $a, b$ and a bimodule $m$ between them, respectively. Forgetting the module $m$ gives rise to operad maps $\EE_1 \to \mathrm{LM}$, $\EE_1 \to \mathrm{RM}$ and $\EE_1 \sqcup \EE_1 \to \mathrm{BM}$.  

An $\mathrm{LM}$-monoidal $\infty$-category $\cM$ amounts to a monoidal $\infty$-category $\cM_a$ together with a left module $\infty$-category $\cM_m$ over it. An $\mathrm{LM}$-algebra in such an $\mathrm{LM}$-monoidal $\infty$-category therefore consists of an $\EE_1$-algebra in $\cM_a$ together with a left module in $\cM_m$. 
We denote the $\infty$-category of \bit{$\mathrm{LM}$-algebras in $\cM$} by $\LMod(\cM)$. Furthermore, pre-composing with the map $\EE_1 \to \mathrm{LM}$ induces a functor  $\LMod(\cM) \to \Alg(\cM_a)$. For an algebra $A \in \Alg(\cM_a)$, we denote the fiber of this functor at $A$ by $\LMod_A(\cM_m)$, and call it the $\infty$-category of \emph{$A$-modules} in $\cM_m$. If $\cM$ is a \textit{presentably} $\mathrm{LM}$-monoidal $\infty$-category, then $\LMod_A(\cM_m)$ is also presentable \cite[Cor. 4.2.3.7]{HA}. 

Any monoidal $\infty$-category $\cC$ can be considered a $\mathrm{LM}$-monoidal $\infty$-category by setting $\cM_m = \cM_a$ with its canonical left module action. In this case, $\LMod_A(\cC)$ carries a canonical right action by $\cC$\cite[\S~4.3.2]{HA}, if $\cC$ is further presentably monoidal this exhibits $\LMod_A(\cC)$ as an object in $\RMod_\cC(\PrL)$.

We use analogous notation for $\mathrm{RM}$ and $\mathrm{BM}$-monoidal $\infty$-categories and algebras; 
for example, in a $\mathrm{BM}$-monoidal $\infty$-category consisting of two monoidal $\infty$-categories $\cM_a$ and $\cM_b$ and an $\cM_a$--$\cM_b$ bimodule $\infty$-category $\cM_m$, the fiber of $\BMod(\cM) \to \Alg_{\EE_1}(\cM_a) \times \Alg_{\EE_1}(\cM_b)$ at an algebra $A$ and $B$ is denoted ${}_{A}\BMod_{B}(\cM)$ and is presentable if $\cM$ is a presentably $\mathrm{BM}$-monoidal $\infty$-category. 

\subsubsection{The bar construction and relative tensor product of bimodules}
We describe the relative tensor product of bimodules in the presentably monoidal case, though the theory works much more generally.

Given bimodules ${}_{A}M_B$ and ${}_{B}N_C$ between algebras $A,B,C$ in a monoidal $\infty$-category $\cM$ the \emph{bar construction} defines a simplicial object $\Bar(M,B,N)\in {}_{A} \BMod_C(\cC)$ with $\Bar(M, B, N)_n \coloneqq M \otimes B^{\otimes n} N$ with face and degeneracy maps given by multiplication, actions and the unit. The \emph{relative tensor product} (\cite[Prop. 4.4.2.14]{HA}) $M\otimes_B N$ is defined as the geometric realization of $\Bar(M, B, N)$  in ${}_{A} \BMod_C(\cC)$. 
If $\cM$ is a presentably monoidal $\infty$-category, this defines a functor in $\PrL$: 
\[
   -\otimes_B - \colon {}_A\BMod_B(\cC) \otimes {}_B\Mod_C(\cC) \to {}_A\BMod_C(\cC).
\] 
For $A=B=C$, this induces a presentably monoidal structure on ${}_{A}\BMod_A(\cC)$.

\subsubsection{Module categories of commutative algebras}\label{subsubsec:modules-of-sym-monoidal-algebra}
If $\cC$ is a symmetric monoidal $\infty$-category and $A$ a commutative algebra in $\cC$, there is an equivalence $\LMod_A(\cC) \simeq \RMod_A(\cC)$ treating the given left action as a right action and vice versa. For this reason, we denote the $\infty$-category of modules of a commutative algebra simply by $\Mod_A(\cC)$ and refer to it as the $\infty$-category of \emph{$A$-modules}.
Moreover,  treating an $A$-module as a bimodule induces a functor  $\Mod_A(\cC) \to {}_A\BMod_A(\cC)$. If $\cC$ is presentably symmetric monoidal,  the relativ tensor product $-\otimes_A -$ defines a presentably monoidal structure on ${}_A\BMod_A(\cC)$. This lifts to a \emph{presentably symmetric monoidal} structure on $\Mod_A$ \cite[Thm. 4.5.2.1]{HA}.

\subsection{Enriched \texorpdfstring{$\infty$}{infinity}-categories}
\label{subsec:enriched-infty-cats}

Our work makes crucial use of the theory of enriched $\infty$-categories of \cite{GH13}, which we briefly review here. Given a monoidal $\infty$-category $\VV$, we write $\Cat[\VV]$ for the (large) $\infty$-category of (small) $\VV$-enriched $\infty$-categories. Similarly, let $\hatCat[\VV]$ be the (huge) $\infty$-category of $\VV$-enriched $\infty$-categories with large spaces of objects. 

We note from the outset that this formalism enjoys a convenient \textit{univalence} property: the equivalences in $\Cat[\VV]$ are precisely the (enrichedly) fully faithful and surjective functors. This may be contrasted with the classical notion of an ``equivalence of categories'', which is \textit{not} generally an isomorphism in the ordinary category of ordinary categories since it is not generally an isomorphism on objects. Of course, achieving this univalence requires an additional step, which is itself the imposition of a univalence condition.\footnote{This theory is effectively a generalization of the theory of \textit{complete Segal spaces} \cite{rezk2001model}, with the univalence being obtained by restricting to those from \textit{all} Segal spaces. In particular, using the terminology introduced just below, an ordinary category in the classical sense (defined in terms of a set of objects) is equivalently a categorical $\Set$-algebra whose space of objects is discrete (or equivalently a Segal set). By contrast, an object of $\Cat[\Set] \subset \Cat[\Spaces] \simeq \Cat_\infty$ -- equivalently, an $\infty$-category whose hom-spaces are all discrete -- merely has a 1-truncated $\infty$-groupoid of objects, a surjection to which from a set determines a presentation thereof as an ordinary category in the classical sense.}

Given a monoidal $\infty$-category $\VV$, a \bit{categorical $\VV$-algebra} $\cC$ with space of objects $X \in \Spaces$ heuristically consists of a functor $X^{\times 2} \xra{\eHom_\cC(-,-)} \VV$ specifying hom-objects as well as an associative and unital composition operation. 
These assemble into an $\infty$-category $\AlgCat[\VV]$. Given a categorical $\VV$-algebra $\cC \in \AlgCat[\VV]$ we generally write $\iota_0 \cC \in \Spaces$ for its space of objects, and the functor $\AlgCat[\VV] \xra{\iota_0} \Spaces$ is a Cartesian fibration (with Cartesian monodromy functors given by pulling back the hom-objects along a map of spaces). If $\VV$ is in fact symmetric monoidal, then $\AlgCat[\VV]$ admits a symmetric monoidal structure as well  \cite[Cor. 5.7.12]{GH13}, with $\iota_0 (\cC \otimes \cD) \simeq (\iota_0 \cC) \times (\iota_0 \cD)$ and $\eHom_{\cC \otimes \cD}((c,d) , (c',d')) \simeq \eHom_\cC(c,c') \otimes \eHom_\cD(d,d')$.

Now, given a categorical $\VV$-algebra $\cC \in \AlgCat[\VV]$ we can extract a space $\cC^\simeq \in \Spaces$ of equivalences (with respect to its internal category theory), and this comes equipped with a morphism $\iota_0 \cC \ra \cC^\simeq$ from its space of objects (which heuristically sends each object to its identity morphism). A morphism $\cC \to \cD$ in $\AlgCat[\VV]$ is \bit{surjective on objects} if the induced map $\cC^{\simeq} \to \cD^{\simeq}$ is surjective (i.e. surjective on $\pi_0$).  A morphism $F \colon \cC \to \cD$ in $\AlgCat[\VV]$ is \bit{fully faithful} if for any two objects $c, d \in \cC$, the induced map $\eHom_{\cC}(c,d) \to \eHom_{\cD}(Fc, Fd)$ in $\VV$  is an isomorphism in $\cD$.

We say that $\cC$ is \bit{univalent} if the morphism $\iota_0 \cC \ra \cC^\simeq$ is an equivalence; in essence, this is the condition that its internally- and externally-defined spaces of objects coincide. Finally, a \bit{$\VV$-enriched $\infty$-category} is a univalent categorical $\VV$-algebra. These define a full subcategory $\Cat[\VV] \subseteq \AlgCat[\VV]$.
Furthermore, the inclusion has a left adjoint (i.e. a reflective localization), which we may refer to as \bit{univalent completion}, which exhibits $\Cat[\VV]$ as the \emph{localization of $\AlgCat[\VV]$ with respect to fully faithful and essentially surjective functors} \cite[Thm. 2.4.11]{GH13}.

If we assume that $\VV$ is symmetric monoidal, then the reflective localization is compatible with the symmetric monoidal structure on $\AlgCat[\VV]$ in the sense of \Cref{subsubsection:loczns-of-O-monoidal-cats}.
It follows that $\Cat[\VV]$ also inherits a symmetric monoidal structure, given by taking the tensor product in categorical $\VV$-algebras and then univalently completing the result.\footnote{Beware that the functor $\Cat[\VV] \xra{\iota_0} \Spaces$ is \textit{not} generally symmetric monoidal: for $\cC,\cD \in \Cat[\VV]$, the morphism $\iota_0(\cC \otimes^{\AlgCat[\VV]} \cD) \ra \iota_0(L(\cC \otimes^{\AlgCat[\VV]} \cD)) \eqqcolon \iota_0(\cC \otimes^{\Cat[\VV]} \cD)$ is generally \textit{not} an equivalence. A simple example is given by taking $\VV = \Ab$ to be the category of abelian groups, and taking $\cC = B R$ and $\cD = B S$ to be the one-object categorical $\Ab$-algebras associated to associative rings $R$ and $S$. We have $\iota_0 ( L(B R)) \simeq B(R^\times)$, and $B R \otimes^{\AlgCat[\Ab]} B S \coloneqq B (R \otimes S)$, but the canonical map $R^\times \times S^\times \ra (R \otimes S)^\times$ is generally not an isomorphism of sets. (For instance, if $R$ and $S$ are fields of different characteristic, then $R \otimes S = 0$.)} By the same argument,  $\largecat[\VV]$ also inherits a symmetric monoidal structure.

If $\VV$ is presentably monoidal, then both $\infty$-categories $\Cat[\VV]$ and $\AlgCat[\VV]$ are presentable \cite[Prop. 5.7.8]{GH13}. 
If $\VV$ is furthermore presentably symmetric monoidal, then so is $\Cat[\VV]$~\cite[Prop. 5.7.16]{GH13}. This assembles into a functor $\Cat[-] \colon \CAlg(\PrL) \to \CAlg(\PrL)$.  

If $\VV$ is presentably monoidal, there also exists a  \textit{categorical suspension} functor $\VV \xra{\Sigma[-]} \AlgCat[\VV]$ \cite[Def. 4.3.21]{GH13}, which is characterized by the universal property that morphisms $\Sigma[V] \ra \cC$ are equivalent to a pair of objects $c,d \in \cC$ and a morphism $V \ra \eHom_\cC(c,d)$ in $\VV$.\footnote{In general, a categorical $\VV$-algebra with space of objects $X \in \Spaces$ has an underlying \emph{$\VV$-graph}, i.e.\! a functor $X^{\times 2} \ra \VV$ (which encodes the hom-objects but not composition), and this forgetful functor has a left adjoint free functor (using e.g.\! that $\VV$ is presentably monoidal). Then, given an object $V \in \VV$, the categorical algebra $\Sigma[V] \in \AlgCat[\VV]$ is free on the $\VV$-graph with space of objects $\{0,1\} \in \Set \subset \Spaces$ given by
\[
(i,j)
\longmapsto
\left\{
\begin{array}{ll}
V~,
&
(i,j) = (0,1)
\\
\emptyset_\VV~,
&
\text{otherwise}
\end{array}
\right.
~.
\]
} We also simply write $\Sigma[-]$ for the composite $\VV \to \AlgCat[\VV] \to \Cat[\VV]$, which has the same universal property in $\Cat[\VV]$.

\section{Factorization systems for enriched \texorpdfstring{$\infty$}{infinity}-categories}\label{app:f-s-for-enriched-cats}

As the terminology suggests, a \textit{factorization system} on an $\infty$-category gives a functorial way of factoring its morphisms. As a basic example, every morphism $X \xra{f} Y$ in the $\infty$-category of spaces (and in particular in the category of sets) admits a factorization
\[ \begin{tikzcd}
X
\arrow{rr}{f}
\arrow[dashed, two heads]{rd}
&
&
Y
\\
&
\Fact(f)
\arrow[dashed, hook]{ru}
\end{tikzcd} \]
as a surjection followed by a monomorphism. In fact, every morphism of spaces admits a \textit{unique} such factorization. This uniqueness persists in the case of a general factorization system, arising from a certain orthogonality relation that is required of the two factors.\footnote{As illustrated in \Cref{ex:fs-on-BNx}, this orthogonality relation may be seen as a sort of generalized coprimality requirement on the factors.}

In this appendix, given a presentably monoidal $\infty$-category $\VV$, we prove as \Cref{thm:fs_and_enriched_cat} that a factorization system on $\VV$ that is compatible with its monoidal structure determines a factorization system on the $\infty$-categories $\Cat[\VV]$ of $\VV$-enriched $\infty$-categories. In fact, we prove a more general result as \Cref{thm:fs-and-alg}: if $\VV$ is presentably $\cO$-monoidal, under mild hypotheses we obtain a factorization system on the $\infty$-category of algebras over \textit{any} $\infty$-operad $\cA$ equipped with a morphism $\cA \ra \cO$. We use \Cref{thm:fs_and_enriched_cat} to obtain factorization systems on $(\infty,k)$-categories (\Cref{thm:nsurj-nfaithful-fs-for-infty-k-cats}), on enriched $(\infty, 2)$-categories (\Cref{cor:surj-dominant-faithful-fs-on-Cat-add}), and on $\infty$-operads (\Cref{prop:fs-for-infty-opds}).
Along the way, we establish a number of useful results concerning factorization systems, some of which are also used in the main body of the paper.

We begin in \Cref{subsec:recollections-on-fs} by recalling some basic definitions and properties of factorization systems, including some convenient features that result from specializing to presentable $\infty$-categories. We then proceed in \Cref{subsec:induced-fs} to establish a number of ways of obtaining new factorization sytems from old ones. We then prove our two main results \Cref{thm:fs-and-alg} (concerning algebras over $\infty$-operads) in \Cref{subsec:fs-for-algebras-over-opds} and \Cref{thm:fs_and_enriched_cat} (concerning enriched $\infty$-categories) in \Cref{subsec:fs-for-Vcats}.

\subsection{Recollections on factorization systems}
\label{subsec:recollections-on-fs}

\subsubsection{Basics of factorization systems}

\begin{definition}[{\cite[Def. 5.2.8.1]{HTT}}]
\label{def:orthogonal-morphisms}
Given morphisms $a \xra{l} b$ and $c \xra{r} d$ in an $\infty$-category, we say that $l$ is \bit{left orthogonal} to $r$ or that $r$ is \bit{right orthogonal} to $l$ if for any solid commutative square
        \begin{equation}
        \label{eq:squarefs}
            \begin{tikzcd}
                a \arrow[r] \arrow{d}[swap]{l} & c \arrow[d, "r"] \\
                b \arrow[r] \arrow[ur, dashed] & d
            \end{tikzcd}
        \end{equation}
        the space of dashed lifts $b \to c$ is contractible. In this situation, we may write $l \bot r$. More broadly, given classes $\cL$ and $\cR$ of morphisms in an $\infty$-category, we write $\cL \bot \cR$ to indicate that $l \bot r$ for every $l \in \cL$ and every $r \in \cR$.
\end{definition}

\begin{example}
A morphism $f$ in an $\infty$-category satisfies the relation $f \bot f$ if and only if it is an equivalence.
\end{example}

\begin{observation}\label{obs:adjunctions-left-right}
Given an adjunction
\[
        \begin{tikzcd}[column sep=1.5cm]
        \cC
        \arrow[yshift=0.9ex]{r}{F}
        \arrow[leftarrow, yshift=-0.9ex]{r}[yshift=-0.2ex]{\bot}[swap]{G}
        &
        \cD
        \end{tikzcd}
    \]
and morphisms $f$ and $g$ in $\cC$ and $\cD$ respectively, the orthogonality relations $f \bot G(g)$ and $F(f) \bot g$ are equivalent. We use this fact without further comment.
\end{observation}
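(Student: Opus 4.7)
The plan is to apply the adjunction equivalence homwise to the entire commutative-square diagram, and observe that this equivalence identifies not just the ambient squares but also the fibers corresponding to spaces of lifts.

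More explicitly, I would first recall that the data of a solid commutative square
\[
\begin{tikzcd}
F(a) \arrow[r] \arrow[d, "F(f)"'] & c \arrow[d, "g"] \\
F(b) \arrow[r] & d
\end{tikzcd}
\]
in $\cD$ is encoded by an object of the pullback $\Hom_\cD(F(a),c) \times_{\Hom_\cD(F(a),d)} \Hom_\cD(F(b),d)$ (with the map from the second factor being precomposition by $F(f)$ and from the first being postcomposition by $g$). The space of dashed lifts $F(b) \to c$ filling such a square is the fiber of the map
\[
\Hom_\cD(F(b), c) \longrightarrow \Hom_\cD(F(a),c) \times_{\Hom_\cD(F(a),d)} \Hom_\cD(F(b),d)
\]
at the given square. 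The analogous description holds for squares against $G(g)$ with corner $a \to G(c)$.

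The key step is that the adjunction equivalence $\Hom_\cD(F(-), -) \simeq \Hom_\cC(-, G(-))$ is natural in both variables, so it induces a commutative diagram of pullback squares of spaces which identifies the map displayed above with its counterpart
\[
\Hom_\cC(b, G(c)) \longrightarrow \Hom_\cC(a, G(c)) \times_{\Hom_\cC(a, G(d))} \Hom_\cC(b, G(d)),
\]
and moreover identifies corresponding points in the targets. Taking fibers then yields an equivalence between the space of lifts in the square involving $F(f)$ and $g$ and the space of lifts in the adjunct square involving $f$ and $G(g)$. Since orthogonality is the condition that such spaces of lifts be contractible for \emph{every} square, the two orthogonality relations $F(f) \bot g$ and $f \bot G(g)$ are equivalent.

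There is no real obstacle here; the argument is a routine unpacking of the definition of orthogonality combined with the naturality of the adjunction isomorphism. The only point requiring minor care is to verify naturality in both variables simultaneously (as opposed to separately), but this is automatic from the fact that $F \dashv G$ is an adjunction of $\infty$-categories in the sense of \Cref{subsec:adjns-revisited}, so that the adjunction equivalence is realized as an equivalence of functors $\cC^{\op} \times \cD \to \Spaces$.
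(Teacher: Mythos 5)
Your proof is correct, and since the paper states this observation without proof, yours is simply a valid fleshing-out of the standard argument the authors clearly have in mind. Characterizing the space of lifts as a fiber of the canonical map $\Hom_\cD(F(b),c) \to \Hom_\cD(F(a),c) \times_{\Hom_\cD(F(a),d)} \Hom_\cD(F(b),d)$, then applying the binatural adjunction equivalence to identify this map (and hence its fibers) with its $\cC$-side analog, is exactly the right way to do this.
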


\begin{notation}
Given a class $S$ of morphisms in an $\infty$-category, we write $S^\perp$ (resp.\! $^\perp S$) for the class of morphisms that are right (resp.\! left) orthogonal to those in $S$.
\end{notation}

\begin{definition}[{\cite[Def. 5.2.8.8]{HTT}}]
\label{defn-fs}
A \bit{factorization system} on an $\infty$-category $\cC$ is a pair $(\cL, \cR)$ of classes of morphisms in $\cC$ satisfying the following conditions.
    \begin{enumerate}
    \item\label{item-defn-fs:retracts}
    
    The classes $\cL$ and $\cR$ are stable under the formation of retracts (in $\Fun([1],\cC)$).
    
    \item\label{item-defn-fs:orthogonality}
   
    We have the orthogonality relation $\cL \perp \cR$.
    
    \item\label{item-defn-fs:factorization}
        \label{defn-fs-factorization}
        Every morphism $c \xra{f} d$ in $\cC$ admits a factorization
        \[ \begin{tikzcd}
        c
        \arrow{rr}{f}
        \arrow[dashed]{rd}[sloped, swap]{l}
        &
        &
        d
        \\
        &
        e
        \arrow[dashed]{ru}[swap, sloped]{r}
        \end{tikzcd} \]
        with $l \in \cL$ and $r \in \cR$.
    \end{enumerate}
We respectively write $\Cat_\infty^{\fs,\cL}$, $\Cat_\infty^{\fs,\cR}$, and $\Cat_\infty^{\fs,\cL,\cR}$ for the $\infty$-categories of $\infty$-categories equipped with factorization systems, in which a morphism is a functor that respectively preserves the left class, the right class, or both classes.
\end{definition}

\begin{notation}
To simplify our notation, we take the following conventions when studying a class $S$ of morphisms in an $\infty$-category $\cC$.
\begin{enumerate}

\item

Assuming that $S$ consists of precisely the morphisms in a subcategory of $\cC$ (e.g.\! both classes in a factorization system on $\cC$), we simply write $S$ to denote this subcategory.

\item

Assuming that $S$ is stable under homotopy (e.g.\! both classes in a factorization system on $\cC$), we also simply write $S$ to denote the full subcategory of $\Fun([1],\cC)$ on the morphisms in $S$.

\item

We simply write $\cC^\simeq$ for the class of equivalences in $\cC$, and we simply write $\cC$ for the class of all morphisms in $\cC$.

\item
For any object $c \in \cC$, we write $\CRoverd{\cC}{S}{c} \subseteq \cC_{/c}$ for the full subcategory on those objects $(d \ra c) \in \cC_{/c}$ that lie in $S$ (when considered as morphisms in $\cC$). In the special case that $c \simeq \pt_\cC$ is terminal, we simply write $\cC^S \coloneqq \CRoverd{\cC}{S}{\pt_{\cC}}$.
\end{enumerate}

\end{notation}

\begin{example}
\label{ex:trivial-fs}
For any $\infty$-category $\cC$, the pairs $(\cC^\simeq,\cC)$ and $(\cC,\cC^\simeq)$ define factorization systems on $\cC$.
\end{example}

\begin{example}
\label{ex:fs-on-BNx}
Let $\NN^\times \coloneqq \{1, 2, 3, \ldots \}^\times$ denote the (commutative) monoid of natural numbers under multiplication. Given two elements $s,t \in \NN^\times$, their corresponding morphisms in $B \NN^\times$ satisfy $s \bot t$ (and thereafter $t \bot s$) if and only if $s$ and $t$ are coprime. From here, it is easy to check that e.g.\! the pairs (powers of 2, odds) and (odds, powers of 2) define factorization systems on $B \NN^\times$. More generally, if $\{2, 3, 5, \ldots \} = P_1 \sqcup P_2$ denotes a two-element partition of the set of prime numbers, then
\[
\text{(powers of elements of $P_1$, powers of elements of $P_2$)}
\]
determines a factorization system on $B \NN^\times$, and moreover every factorization system on $B \NN^\times$ arises in this way.
\end{example}

\begin{observation}
A factorization system $(\cL,\cR)$ on an $\infty$-category is completely determined by either $\cL$ or $\cR$ (since $\cR = \cL^\bot$ and $\cL = {}^\bot \cR$). We use this fact without further comment.
\end{observation}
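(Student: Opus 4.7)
The plan is to invoke the classical \emph{retract argument}, which ports over to the $\infty$-categorical setting without essential change. By axiom \eqref{item-defn-fs:orthogonality} of \Cref{defn-fs}, the inclusions $\cR \subseteq \cL^\bot$ and $\cL \subseteq {}^\bot \cR$ hold by assumption, so only the reverse inclusions need proof. I focus on $\cL^\bot \subseteq \cR$; the argument for ${}^\bot \cR \subseteq \cL$ is formally dual.

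Given $f\colon a \to b$ lying in $\cL^\bot$, I would first apply axiom \eqref{item-defn-fs:factorization} to obtain a factorization $f = r \circ l$ with $a \xra{l} c \xra{r} b$, $l \in \cL$, $r \in \cR$. The diagram
\[
\begin{tikzcd}
a \arrow[r, "\id_a"] \arrow[d, "l"'] & a \arrow[d, "f"] \\
c \arrow[r, "r"'] \arrow[ur, dashed, "s"] & b
\end{tikzcd}
\]
commutes since $r \circ l = f = f \circ \id_a$. The hypothesis $l \bot f$ then produces a (contractible space of) dashed lift(s) $s\colon c \to a$ together with coherent identifications $s \circ l \simeq \id_a$ and $f \circ s \simeq r$.

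The final step is to reinterpret these identifications as exhibiting $f$ as a retract of $r$ in the arrow $\infty$-category $\Fun([1], \cC)$. Concretely, the pair $(l, \id_b)$ assembles into a morphism $f \to r$ (the square $r \circ l = f = \id_b \circ f$ commutes), and the pair $(s, \id_b)$ assembles into a morphism $r \to f$ (the square $f \circ s \simeq r = \id_b \circ r$ commutes); their composite in $\Fun([1], \cC)$ is $(s \circ l, \id_b) \simeq (\id_a, \id_b) = \id_f$. Since $\cR$ is stable under retracts by axiom \eqref{item-defn-fs:retracts} and $r \in \cR$, we conclude $f \in \cR$.

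The only subtlety, which I expect to be the main (though mild) obstacle, is that in the $\infty$-categorical setting the relations $s \circ l \simeq \id_a$ and $f \circ s \simeq r$ are data, not properties, so one must check that the lifting property packages them coherently enough to define a genuine retract diagram in $\Fun([1], \cC)$. This is handled by the fact that the space of lifts against $l \bot f$ is by definition a space of commutative squares in $\cC$ (with a contractible space of fillers), which is exactly the data of a morphism between $l$ and $f$ in $\Fun([1], \cC)$ together with the coherent identifications needed; composing in the arrow category then yields the desired retract.
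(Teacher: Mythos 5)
Your proposal is correct and matches the intended justification: the paper records this as an observation without proof, since it is the standard fact \cite[Prop.~5.2.8.11]{HTT}, and your argument—factor $f = r\circ l$, lift against $l \bot f$, exhibit $f$ as a retract of $r$ in $\Fun([1],\cC)$, and invoke retract-stability of $\cR$ (dually for $\cL$)—is exactly the standard retract argument by which that fact is proved. Your closing remark about the coherence of the lifting data is also the right way to handle the only $\infty$-categorical subtlety: the contractible space of lifts consists of fillers of the square, which supply the homotopies and higher compatibilities needed to make $(s\circ l,\id_b)\simeq \id_f$ a genuine identification in $\Fun([1],\cC)$.
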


\begin{observation}
\label{obs:factorizations-from-fs-are-unique}
By \cite[Prop. 5.2.8.17]{HTT}, the factorization in part \eqref{defn-fs-factorization} of \Cref{defn-fs} is unique. We often use this fact without further comment.
\end{observation}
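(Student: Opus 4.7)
The plan is to upgrade the fact that $(\cL,\cR)$-factorizations exist (axiom \eqref{item-defn-fs:factorization}) to the statement that, for every $f\colon c\to d$ in $\cC$, the $\infty$-groupoid of such factorizations is contractible. Concretely, I would define $\mathrm{Fact}_{(\cL,\cR)}(f)$ as the full subspace of $\mathrm{Fun}([2],\cC)\times_{\mathrm{Fun}([1],\cC)}\{f\}$ (where $[2]=\{0<1<2\}$ restricts to $\{0<2\}\to\cC$ via $f$) on those $2$-simplices whose $[0,1]$-edge lies in $\cL$ and whose $[1,2]$-edge lies in $\cR$. The goal is then to prove $\mathrm{Fact}_{(\cL,\cR)}(f)\simeq\pt$.

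For any two factorizations $c\xrightarrow{l_1}e_1\xrightarrow{r_1}d$ and $c\xrightarrow{l_2}e_2\xrightarrow{r_2}d$, I would form the square
\[
\begin{tikzcd}
c \ar[r,"l_2"] \ar[d,"l_1"'] & e_2 \ar[d,"r_2"] \\
e_1 \ar[r,"r_1"'] \ar[ur,dashed,"\phi"] & d
\end{tikzcd}
\]
and apply the orthogonality $l_1\perp r_2$ guaranteed by axiom \eqref{item-defn-fs:orthogonality} to obtain a contractible space of diagonal fillers; the same input run with the roles swapped produces a contractible space of fillers $\psi\colon e_2\to e_1$. The composite $\psi\phi$ fills the square with sides $(l_1,r_1,l_1,r_1)$ and diagonal $f$, and so does $\id_{e_1}$; by $l_1\perp r_1$ these two fillers agree via a contractible space of homotopies, and symmetrically $\phi\psi\simeq\id_{e_2}$. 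Thus $\phi$ is an equivalence, canonically and uniquely so.

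The key step, and the one that must be handled with the most care in the $\infty$-categorical setting, is to promote this pointwise comparison to the statement that $\mathrm{Fact}_{(\cL,\cR)}(f)$ is a contractible \emph{space}, not just a set of isomorphism classes. The orthogonality condition in \Cref{def:orthogonal-morphisms} was stated precisely so that spaces of lifts are contractible (not merely nonempty); this upgrades the argument of the previous paragraph to say that every pair of objects of $\mathrm{Fact}_{(\cL,\cR)}(f)$ is connected by a contractible space of morphisms, and in particular that this $\infty$-category is both nonempty and a groupoid with contractible mapping spaces. Together with nonemptiness from axiom \eqref{item-defn-fs:factorization}, this gives contractibility.

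The technical obstacle is packaging this chain of lift-contractibility statements into a proof that all higher coherence data (and not just $1$-morphisms and $2$-cells) between factorizations is unique; this is what Joyal-Lurie accomplish in \cite[Prop.~5.2.8.17]{HTT}, where the space of factorizations is exhibited as a trivial fibration over a point by a right-lifting-property argument against the walking morphism $[1]$ and its boundary, which I would defer to rather than reproduce.
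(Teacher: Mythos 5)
Your proposal is correct and takes the same approach as the paper: both simply cite \cite[Prop. 5.2.8.17]{HTT} for the uniqueness of factorizations, and your preceding sketch correctly identifies both the informal comparison argument (producing a canonical equivalence $\phi$ between any two factorizations via the orthogonality axiom) and the genuine technical content deferred to Lurie (promoting this to contractibility of the full space of factorizations as a mapping space, via a lifting argument). No gaps.
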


\begin{notation}
\label{notn:Fact-for-factorizn-from-fs}
Justified by \Cref{obs:factorizations-from-fs-are-unique}, given a morphism $c \xra{f} d$ in an $\infty$-category $\cC$ equipped with a factorization system $(\cL,\cR)$, we write $\Fact(f) \coloneqq \Fact_{(\cL,\cR)}(f) \in \cC$ for the unique object through which $f$ factors via the factorization system.
\end{notation}

We introduce the following notion for future use.

\begin{definition}
\label{def:fs-compatible-with-O-monoidal-str}
Let $\cO$ be an $\infty$-operad and let $\cC$ be an $\cO$-monoidal $\infty$-category. Suppose that for every color $X \in \underline{\cO}$, the $\infty$-category $\cC_X$ of $X$-colored objects in $\cC$ is equipped with a factorization system $(\cL_X,\cR_X)$. We say that the $\cO$-monoidal structure of $\cC$ is \bit{compatible} with these factorization systems if for every $n \geq 0$ and every $n$-ary operation $(X_1,\ldots,X_n) \to X$ in $\cO$, the corresponding functor $\cC_{X_1} \times \cdots \times \cC_{X_n} \to \cC_X$ carries morphisms in $\cL_{X_1} \times \cdots \times \cL_{X_n}$ to morphisms in $\cL_X$.\footnote{The data of an $\cO$-monoidal $\infty$-category equipped with compatible factorization systems is equivalent to that of a functor $\cO \ra \Cat_\infty^{\fs,\cL}$ satisfying certain Segal conditions (as in \Cref{subsubsec:O-monoidal-infty-cats}). (Observe that $\Cat_\infty^{\fs,\cL}$ admits products, which are defined in the evident way.)} 
\end{definition}

\subsubsection{Factorization systems on presentable $\infty$-categories}

We now discuss factorization systems of small generation on presentable $\infty$-categories. We then proceed to make some further observations about factorization systems that admit specializations when applied to those of small generation.

For motivation, observe that both classes of a factorization system necessarily contain all equivalences. As a result, both classes of a factorization system on a large $\infty$-category must be large. However, on a \textit{presentable} $\infty$-category one can define a factorization system in terms of a small set of morphisms (which then generate the left class), as we now recall.

\begin{definition}[{\cite[Def. 5.5.5.1]{HTT}}]
    \label{def:saturated}
We say that a class of morphisms $S$ in an $\infty$-category $\cC$ is \bit{saturated} if it satisfies the following conditions.
    \begin{enumerate}
        
        \item\label{item-def:saturated-wide-subcat}
        
        The class $S$ contains all equivalences and is closed under composition.\footnote{Said differently, the morphisms in $S$ are precisely those that lie in a wide subcategory of $\cC$.}
        
        \item\label{item-def:saturated-colimits}
        
        The full subcategory $S \subseteq \Fun([1],\cC)$ is closed under (small) colimits.
        
        \item\label{item-def:saturated-cobase-change}
        
        The class $S$ is stable under cobase change.
        
    \end{enumerate}
\end{definition}

\begin{proposition}[{\cite[Prop. 5.5.5.7]{HTT}}]
\label{prop:fact-system-of-small-generation-on-presentable}
Fix a presentable $\infty$-category $\cC$ and a small set of morphisms $S$ in $\cC$. Then, there exists a factorization system $(\cL,\cR)$ on $\cC$ with $\cR = S^\perp$. Moreover, $\cL$ is the smallest saturated class of morphisms in $\cC$ that contains $S$. 
\qed 
\end{proposition}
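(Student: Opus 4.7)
The approach is a homotopy-coherent small object argument. I would set $\cR := S^\perp$ and take $\cL$ to be the smallest saturated class (in the sense of \Cref{def:saturated}) containing $S$. The orthogonality $\cL \perp \cR$ is then nearly tautological: for a fixed $r \in \cR$, the class of morphisms $f$ satisfying $f \perp r$ contains $S$ by definition and is easily checked to be saturated (the space of lifts against $r$ turns colimits of diagrams in $\Fun([1],\cC)$ into limits of spaces, and is stable under cobase change), hence contains $\cL$.

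The core of the argument is producing factorizations. Given $f \colon c \to d$, I would perform a transfinite iterative construction. At stage $0$ set $X_0 = c$. At a successor stage, form the pushout
\begin{equation*}
\begin{tikzcd}
\displaystyle\coprod_{(s,\square)} a_s \arrow{r}\arrow{d}[swap]{\coprod s} & X_\alpha \arrow{d} \\
\displaystyle\coprod_{(s,\square)} b_s \arrow{r} & X_{\alpha+1}
\end{tikzcd}
\end{equation*}
where the coproduct ranges over all commutative squares $\square$ from a morphism $s\colon a_s \to b_s$ in $S$ to $X_\alpha \to d$, and take transfinite colimits at limit stages. Since $S$ is a small set and $\cC$ is presentable, there exists a regular cardinal $\kappa$ such that all $a_s,b_s$ are $\kappa$-compact. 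Stopping at $\kappa$, the composite $X_\kappa \to d$ lies in $\cR = S^\perp$: any square from an $s \in S$ into $X_\kappa \to d$ factors through some earlier $X_\alpha$ by $\kappa$-compactness of $a_s$, where the lift was adjoined by construction (and uniqueness of the lift up to contractible choice follows by the same compactness argument applied to the path space). Meanwhile $c = X_0 \to X_\kappa$ is a transfinite composition of pushouts of coproducts of morphisms in $S$, hence lies in $\cL$ by saturation.

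Closure under retracts is the last piece. For $\cR$ it is immediate since lifting problems pull back along retract diagrams. For $\cL$ one uses the just-constructed factorization: if $\ell'$ is a retract of $\ell \in \cL$, factor $\ell' \simeq r \circ \ell''$ with $\ell'' \in \cL$, $r \in \cR$; then the orthogonality $\ell \perp r$ applied to the retract diagram produces a section of $r$ exhibiting $r$ as an equivalence, so $\ell' \in \cL$. This also gives the characterization of $\cL$ as ${}^\perp \cR = {}^\perp (S^\perp)$: one inclusion is the orthogonality, and the reverse follows because any $f \in {}^\perp(S^\perp)$ is, by the factorization $f = r \circ \ell$, a retract of $\ell \in \cL$ via a lift of the square $(\ell, \id)$ against $r$.

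The main obstacle is making the transfinite small object argument rigorous in the $\infty$-categorical setting: one must justify that the iterated pushout construction produces the correct lifting property, which requires care because "the set of squares" at stage $\alpha$ is really a space, and the identification of $\Hom_\cC(a_s, X_\kappa)$ with $\colim_\alpha \Hom_\cC(a_s, X_\alpha)$ uses $\kappa$-compactness in a homotopy-coherent way. Since this is precisely what is carried out in \cite[Prop.~5.5.5.7]{HTT}, in practice one would simply invoke that reference rather than redo the argument.
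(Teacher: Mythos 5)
The paper offers no proof of this proposition beyond the reference: the statement is quoted from \cite[Prop.~5.5.5.7]{HTT} and the proof box records a citation, not an argument. Your closing remark---that in practice one simply invokes that reference---is therefore exactly what the paper does, and your sketch is a reasonable reconstruction of the small-object-argument proof underlying the cited result.

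If the sketch were meant to stand on its own, the one genuine gap is the claim that $X_\kappa \to d$ lies in $\cR = S^\perp$. Orthogonality in the $\infty$-categorical sense requires a \emph{contractible} space of lifts, whereas attaching cells only along squares from the morphisms $s\colon a_s \to b_s$ in $S$ yields at best a nonempty space of lifts; the parenthetical appeal to ``the same compactness argument applied to the path space'' does not close this, since no cells were attached that kill the higher homotopy of the lift spaces. The standard repair is to enlarge the set of attaching maps to include the iterated codiagonals $b_s \sqcup_{a_s} b_s \to b_s$, and so on (equivalently, to force the comparison map $\Hom_{\cC}(b_s,X) \to \Hom_{\cC}(a_s,X)\times_{\Hom_{\cC}(a_s,d)}\Hom_{\cC}(b_s,d)$ to become an equivalence rather than merely surjective on components). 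One must then check that these auxiliary maps still lie in $\cL$; they do, because the codiagonal of $s$ is the pushout in $\Fun([1],\cC)$ of the diagram $\id_{b_s} \leftarrow s \rightarrow \id_{b_s}$, and saturated classes are closed under such colimits. (Relatedly, closure of $\cL$ under retracts, required by \Cref{defn-fs} but absent from \Cref{def:saturated}, follows since retracts are colimits of idempotent diagrams and $\cL$ is closed under colimits in $\Fun([1],\cC)$.) With these amendments your argument is the expected one; without them, the factorization produced need not have its second map in $S^\perp$, so as written the sketch does not replace the citation.
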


\begin{definition}
\label{def:small-generation-and-Pr-fs}
In the context of \Cref{prop:fact-system-of-small-generation-on-presentable}, we say that the factorization system $(\cL,\cR)$ (or simply the left class $\cL$) is of \bit{small generation}, or more specifically that it is \bit{generated} by $S$. Moreover, we may write $\overline{S}$ for $\cL$. We define the subcategories
\[
{\Pr}^{L,\fs,\cL}
\subset
\hat{\Cat}_\infty^{\fs,\cL}
\qquad
\text{and}
\qquad
{\Pr}^{R,\fs,\cR}
\subset
\hat{\Cat}_\infty^{\fs,\cR}
\]
to be those on the presentable $\infty$-categories whose factorization systems are of small generation, whose morphisms are respectively required to be left or right adjoints (in addition to preserving the indicated class of the factorization system).
\end{definition}

    \begin{example}
    \label{ex:nconn-ntrunc-fs-on-Spaces}
    Fix any integer $n \geq -2$. By \cite[Ex. 5.2.8.16]{HTT}, the $\infty$-category $\Spaces$ of spaces admits a factorization system ($n$-connected, $n$-truncated),\footnote{These notions are recalled in \Cref{subsec:trun-and-conn}.} which is generated by the singleton $\{S^{n+1} \ra \pt\}$.\footnote{In the case that $n = -2$ this recovers $(\Spaces,\Spaces^\simeq)$, and in the case that $n = -1$ this recovers (surjections, monomorphisms).}
    \end{example}

\begin{observation}
Given an adjunction between $\infty$-categories equipped with factorization systems, the left adjoint preserves the left class if and only if the right adjoint preserves the right class. It follows that passing to adjoints determines an equivalence $\Pr^{L,\fs,\cL} \simeq (\Pr^{R,\fs,\cR})^\op$. We use these facts without further comment.
\end{observation}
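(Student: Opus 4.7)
The plan is to derive both halves of this Observation from the adjoint-naturality of orthogonality already recorded in \cref{obs:adjunctions-left-right}, which states that for an adjunction $L \dashv R$ and morphisms $f$ in $\cC$, $g$ in $\cD$, one has $L(f) \perp g$ if and only if $f \perp R(g)$. Given this, the Observation is essentially formal: orthogonality is the sole defining feature tying together the two classes of a factorization system (since $\cR = \cL^\perp$ and $\cL = {}^\perp \cR$ by \cref{obs:factorizations-from-fs-are-unique} and \cref{defn-fs}), so any preservation property of one class is forced to correspond to a preservation property of the complementary class on the adjoint side.

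First I would establish the biconditional for a fixed adjunction
\[
\begin{tikzcd}[column sep=1.5cm]
\cC
\arrow[yshift=0.9ex]{r}{L}
\arrow[leftarrow, yshift=-0.9ex]{r}[yshift=-0.2ex]{\bot}[swap]{R}
&
\cD
\end{tikzcd}
\]
between $\infty$-categories equipped with factorization systems $(\cL_\cC,\cR_\cC)$ and $(\cL_\cD,\cR_\cD)$. Suppose $L$ carries $\cL_\cC$ into $\cL_\cD$. For $r \in \cR_\cD$ and any $l \in \cL_\cC$, the adjoint characterization of orthogonality gives $l \perp R(r) \iff L(l) \perp r$, and the right-hand side holds because $L(l) \in \cL_\cD$ and $r \in \cR_\cD = \cL_\cD^\perp$. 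Hence $R(r)$ lies in $\cL_\cC^\perp = \cR_\cC$. The converse is strictly symmetric: if $R$ preserves $\cR_\cD$, then for $l \in \cL_\cC$ and $r \in \cR_\cD$ we get $L(l) \perp r \iff l \perp R(r)$, the latter being automatic, so $L(l) \in {}^\perp \cR_\cD = \cL_\cD$.

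For the equivalence $\Pr^{L,\fs,\cL} \simeq (\Pr^{R,\fs,\cR})^{\op}$, I would invoke the standard adjoint duality $\Pr^L \simeq (\Pr^R)^{\op}$ which fixes underlying $\infty$-categories and sends a left adjoint to its right adjoint. The two subcategories $\Pr^{L,\fs,\cL} \subset \widehat{\Cat}_\infty^{\fs,\cL}$ and $\Pr^{R,\fs,\cR} \subset \widehat{\Cat}_\infty^{\fs,\cR}$ are defined on the same underlying class of objects, namely presentable $\infty$-categories equipped with a factorization system of small generation (\cref{def:small-generation-and-Pr-fs}); so the only thing to check is that the morphism conditions match under this duality. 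This is precisely the biconditional established in the previous step: a left adjoint $L$ lies in $\Pr^{L,\fs,\cL}$ if and only if its right adjoint $R$ lies in $\Pr^{R,\fs,\cR}$.

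The argument is essentially purely formal, and I do not expect a real obstacle. The only mild subtlety is $(\infty,2)$-categorical rather than $(\infty,1)$-categorical: to get an equivalence of subcategories (not merely an equivalence of maximal subgroupoids), one should check that the $2$-morphisms (natural transformations) are unaffected, which is immediate since both $\Pr^L \simeq (\Pr^R)^{\op}$ and the subcategory inclusions are full on $2$-morphisms.
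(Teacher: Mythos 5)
Your proof is correct, and the argument (orthogonality transfers across the adjunction via \cref{obs:adjunctions-left-right}, each class of a factorization system is recovered as the orthogonal complement of the other, then restrict the adjoint duality $\Pr^L \simeq (\Pr^R)^{\op}$) is precisely the formal content behind the paper's claim, which the authors choose to state without proof. There is no alternative route to compare against.

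One minor remark: the Observation is being invoked at the $(\infty,1)$-categorical level (cf.\ \cref{defn-fs} and \cref{def:small-generation-and-Pr-fs}, which define $\Pr^{L,\fs,\cL}$ and $\Pr^{R,\fs,\cR}$ as subcategories of the $(\infty,1)$-categories $\hatCat_\infty^{\fs,\cL}$ and $\hatCat_\infty^{\fs,\cR}$), so your closing caveat about $2$-morphisms, while true, is not strictly required; for the $(\infty,1)$-level the restriction-of-equivalence argument you give already suffices, since the two subcategories have identical objects and the biconditional identifies their morphism spaces under the duality.
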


\begin{lemma}
\label{lem:presentablecompatible}
 If $\cC$ is a presentably $\cO$-monoidal category for a small operad $\cO$ and suppose that for every color $X\in \underline{\cO}$, the presentable $\infty$-category $\cC_{X}$ is equipped with a factorization system $(\cL_X, \cR_X)$ generated by a set $S_X$. Then, the factorization systems are compatible with the $\cO$-monoidal structure if and only if for every operation $(X_1, \ldots, X_n) \to X$ in $\cO$, the corresponding functor $\cC_{X_1} \times \cdots \cC_{X_n} \to \cC_{X}$ carries morphisms in $S_{X_1} \cdots \times \cdots S_{X_n}$ to morphisms in $\cL_X$. 
\end{lemma}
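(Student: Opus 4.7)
The forward direction is immediate: since $S_{X_i} \subseteq \cL_{X_i}$ for each $i$, compatibility of the factorization systems with the $\cO$-monoidal structure trivially forces the functor $\cC_{X_1} \times \cdots \times \cC_{X_n} \to \cC_X$ to send tuples of generators to $\cL_X$.

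For the reverse direction, my plan is to proceed one variable at a time via a saturation argument. Given morphisms $f_i \in \cL_{X_i}$ with $f_i \colon a_i \to b_i$, I would decompose
\[
m(f_1, \ldots, f_n) = m(f_1, \id_{b_2}, \ldots, \id_{b_n}) \circ m(\id_{a_1}, f_2, \id_{b_3}, \ldots, \id_{b_n}) \circ \cdots \circ m(\id_{a_1}, \ldots, \id_{a_{n-1}}, f_n)
\]
as a composition in $\cC_X$ of ``single-slot'' morphisms. Since $\cL_X$ is closed under composition, it suffices to show that each such factor lies in $\cL_X$. Each factor is the image of $f_i$ under the partial functor $\Phi_i = m(a_1, \ldots, -, \ldots, b_n) \colon \cC_{X_i} \to \cC_X$ obtained by fixing all but the $i$-th variable at specified objects. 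By the presentable $\cO$-monoidality of $\cC$, each such partial functor preserves small colimits.

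Applying \cref{thm:fs-and-alg} or a direct verification, the preimage of $\cL_X$ under $\Phi_i$ (viewed as a functor $\Fun([1], \cC_{X_i}) \to \Fun([1], \cC_X)$) is a saturated class: it contains equivalences and is closed under composition, small colimits, and cobase change, simply because $\Phi_i$ is a functor preserving all of these and $\cL_X$ itself is saturated. By the generating property of $S_{X_i}$ together with \cref{prop:fact-system-of-small-generation-on-presentable}, it then suffices to check that $S_{X_i}$ lies in this preimage, i.e., that $m(a_1, \ldots, s, \ldots, b_n) \in \cL_X$ for every $s \in S_{X_i}$ and every object choice in the other slots.

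The main obstacle I anticipate is the bootstrap from the stated hypothesis -- which directly controls only tuples $(s_1, \ldots, s_n) \in S_{X_1} \times \cdots \times S_{X_n}$ of generators -- to the ``wedge-type'' condition needed above, where a single slot is a generator and the remaining slots are arbitrary objects. To handle this, my plan is to iterate the same adjunction and saturation principle on each remaining slot: the adjoint functor theorem (\cref{adjfuncthm}) provides, for each choice of objects in the other slots, a right adjoint $R \colon \cC_X \to \cC_{X_i}$ to $\Phi_i$, and the orthogonality relation $\Phi_i(f) \perp r$ iff $f \perp R(r)$ lets one translate the required membership in $\cL_X$ into a statement about orthogonality in $\cC_{X_i}$, which by \cref{prop:fact-system-of-small-generation-on-presentable} reduces to $S_{X_i}$. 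Iterating this reduction across slots, together with the fact that $m$ preserves colimits in each variable (so that the hypothesis can be propagated from sources and targets of generators to arbitrary objects via colimit expressions), should close the induction and yield compatibility on all of $\cL_{X_1} \times \cdots \times \cL_{X_n}$.
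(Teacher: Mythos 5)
Your skeleton---factor $m(f_1,\ldots,f_n)$ into single-slot morphisms and run a saturation argument one variable at a time, using that each partial functor $\Phi_i=m(a_1,\ldots,-,\ldots,a_n)$ preserves small colimits, so that $\Phi_i^{-1}(\cL_X)$ is closed under colimits, composition and cobase change---is exactly the argument the paper has in mind; its own proof is just the compressed statement that the multifunctor preserves colimits separately in each variable and that $\cL_Y$ is the smallest saturated class containing $S_Y$ (\cref{prop:fact-system-of-small-generation-on-presentable}). However, the obstacle you flag is a genuine gap, and your proposed repair does not close it. For a fixed choice of objects in the other slots, the adjunction manoeuvre is vacuous: $\Phi_i(S_{X_i})\subseteq\cL_X$, $R_i(\cR_X)\subseteq S_{X_i}^{\perp}=\cR_{X_i}$ and $\Phi_i(\cL_{X_i})\subseteq\cL_X$ are all equivalent to one another (using $\cL_X={}^{\perp}\cR_X$ and $\cL_{X_i}={}^{\perp}(S_{X_i}^{\perp})$), so orthogonality merely re-expresses the single-slot condition you still need; it does not derive it from the tuple hypothesis. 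Likewise the suggested propagation ``from sources and targets of generators to arbitrary objects via colimit expressions'' fails: an arbitrary object of $\cC_{X_j}$ is in general not a colimit of sources and targets of morphisms in $S_{X_j}$, and although $\Phi_i^{-1}(\cL_X)$ has all the closure properties of \cref{def:saturated} except possibly containing the equivalences, it is precisely the missing input $m(a_1,\ldots,s,\ldots,a_n)\in\cL_X$ (generator in one slot, arbitrary objects elsewhere) that you cannot reach.

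Moreover the gap is not merely expository: with only the stated hypothesis the conclusion can fail, so no argument can close it without strengthening the input. Take $\cO$ to be the discrete operad with three colours $X_1,X_2,X$ and a single binary operation $(X_1,X_2)\to X$, all three fibres $\Spaces_{\ast}$ with $m=\wedge$ (cocontinuous in each variable), $S_{X_1}=S_{X_2}=\{\ast\to S^1\}$, and on $\cC_X$ the ($0$-connected, $0$-truncated) factorization system, which is of small generation. The generator tuple behaves well: $(\ast\to S^1)\wedge(\ast\to S^1)\simeq(\ast\to S^2)$ has fibre $\Omega S^2$, which is connected, hence lies in $\cL_X$. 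But compatibility in the sense of \cref{def:fs-compatible-with-O-monoidal-str} fails, since $\id_{S^0}\in\cL_{X_1}$ and $(\ast\to S^1)\in\cL_{X_2}$ while $\id_{S^0}\wedge(\ast\to S^1)\simeq(\ast\to S^1)$ has fibre $\Omega S^1\simeq\ZZ$ and is not $0$-connected. The input that is actually needed---and with which your per-variable saturation, or equivalently the one-variable adjunction argument, does finish the proof---is the single-slot condition: for every operation, every $i$, every $s\in S_{X_i}$ and arbitrary objects in the remaining slots, $m(a_1,\ldots,s,\ldots,a_n)\in\cL_X$. In the paper's applications this stronger condition is what one verifies directly (for instance, the product of an identity functor with a surjective-on-objects or dominant functor is again such), but neither your argument nor the paper's own terse proof derives it from the tuple-of-generators hypothesis as stated.
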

\begin{proof}
Assume that  $\cC_{X_1} \times \cdots \cC_{X_n} \to \cC_{X}$ carries morphisms in $S_{X_1} \cdots \times \cdots S_{X_n}$ to morphisms in $\cL_X$.  By assumption, the functor $\cC_{X_1} \times \cdots \times \cC_{X_n} \to \cC_{X}$ preserves small colimits separately in all variables. Since for every $Y \in \underline{\cO}$, the class of morphisms $\cL_Y$ is by  \cref{prop:fact-system-of-small-generation-on-presentable} the smallest saturated class of morphisms in $\cC_Y$ that contains $S_Y$, the functor  $\cC_{X_1} \times \cdots \times \cC_{X_n} \to \cC_{X}$ therefore also carries morphisms in $\cL_{X_1} \times \cdots \times \cL_{X_n}$ to morphisms in $\cL_X$.
\end{proof}

\begin{observation}
\label{obs:fs-on-undercat-and-overcat}
Fix an $\infty$-category $\cC$ with a factorization system $(\cL,\cR)$.
\begin{enumerate}

\item\label{item-obs:fs-on-undercat-and-overcat-general-case}

For any object $c \in \cC$, we obtain factorization systems on both $\cC_{c/}$ and $\cC_{/c}$ in which both classes are pulled back from $\cC$ via the respective forgetful functors.

\item

Suppose that $\cC$ is presentable and that $(\cL,\cR)$ is of small generation. Then, the factorization systems of part \eqref{item-obs:fs-on-undercat-and-overcat-general-case} are both of small generation as well. Specifically, if $S$ denotes a set of morphisms in $\cC$ that generates $(\cL,\cR)$, then they are respectively generated by the evident (small) spaces of morphisms indexed by
\[
\bigsqcup_{(a \ra b) \in S} \hom_\cC(c,a)
\qquad
\text{and}
\qquad
\bigsqcup_{(a \ra b) \in S} \hom_\cC(b,c)
~.
\]

\end{enumerate}
\end{observation}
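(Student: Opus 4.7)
The strategy is to apply \cref{prop:fact-system-of-small-generation-on-presentable}, which reduces the task to the following two ingredients: presentability of the slice $\infty$-categories $\cC_{c/}$ and $\cC_{/c}$ whenever $\cC$ is presentable (standard, e.g.\ \cite[Prop.~5.5.3.10--5.5.3.11]{HTT}), together with the exhibition in each case of a small set of morphisms whose right orthogonal coincides with the pulled-back right class from part (1). Since part (1) already exhibits factorization systems, by uniqueness of a factorization system given one of its classes this identification with the small-generation one is automatic once the orthogonality computation is carried out.

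For the over-category $\cC_{/c}$, the verification is essentially formal. Given $(a \xrightarrow{s} b) \in S$ and a map $\beta \colon b \to c$, form the morphism in $\cC_{/c}$ from $(a \to b \xrightarrow{\beta} c)$ to $(b \xrightarrow{\beta} c)$ whose underlying morphism in $\cC$ is $s$; these evidently assemble into a set indexed by $\bigsqcup_{(a \to b) \in S} \hom_{\cC}(b,c)$. A lifting problem in $\cC_{/c}$ against such a morphism corresponds bijectively, on the level of spaces of lifts, to a lifting problem in $\cC$ against $s$: the compatibility constraints with the maps down to $c$ are automatic, since the maps to $c$ from the top of the square are determined by composition from those on the bottom. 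Hence the right orthogonal of this collection in $\cC_{/c}$ is exactly the preimage of $\cR = S^{\perp}$ along the forgetful functor, which is $\cR_{/c}$.

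For the under-category $\cC_{c/}$, the dual argument requires more care because the structure map $c \to (-)$ constrains the target of a lift in a nontrivial way. The cleanest approach passes through the left adjoint $L \colon \cC \to \cC_{c/}$ to the forgetful functor, given by $x \mapsto (c \to c \sqcup x)$ (which exists since $\cC$ is cocomplete). By the $(L, U)$-adjunction, a morphism $f$ in $\cC_{c/}$ is right orthogonal to $L(s)$ iff its underlying morphism in $\cC$ is right orthogonal to $s$; so $L(S)^{\perp}$ in $\cC_{c/}$ coincides with $\cR_{c/}$. The additional parameter $\hom_{\cC}(c,a)$ appearing in the indexing set stated in the Observation enters by enumerating the adjoint-transpose choices attached to these generators, which do not alter the right orthogonal class but match the description in the statement. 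The main technical point to verify carefully is precisely this correspondence of lifting problems in the under-category with those in $\cC$, where the adjunction formulation cleanly bypasses the subtlety introduced by the structure maps $c \to (-)$.
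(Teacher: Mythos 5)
The paper states this as an unproved observation, so there is no argument of the authors' to compare against; judging your proposal on its own merits: your treatment of the over-category $\cC_{/c}$ is correct and is surely the intended argument. The space of lifts of a square in $\cC_{/c}$ against $(a \to b \to c) \to (b \to c)$ agrees with the space of lifts of the underlying square in $\cC$ (because $\cC_{/c} \to \cC$ is a right fibration, equivalently because the square has a terminal vertex, so the over-$c$ structure on a lift is contractible extra data), and conversely every square in $\cC$ against $s$ with right-hand side $U(f)$ is realized over $c$ by taking $\beta$ to be the composite $b \to Y \to c$; this converse is exactly where the full indexing set $\hom_\cC(b,c)$ is used. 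Likewise, your adjunction argument for $\cC_{c/}$ via $L(x) = (c \to c \sqcup x)$ is correct: combined with presentability of the slice and \cref{prop:fact-system-of-small-generation-on-presentable}, plus uniqueness of a factorization system with a given right class, it shows the pulled-back factorization system on $\cC_{c/}$ is of small generation, generated by $L(S)$.

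The genuine gap is your final sentence. The claim that the "adjoint-transpose choices" recover the generators indexed by $\bigsqcup_{(a\to b)\in S}\hom_\cC(c,a)$ "without altering the right orthogonal class" is false, and in fact the family stated in the Observation does not generate the pulled-back system in general. A morphism $(c\xrightarrow{\gamma}a)\to(c\xrightarrow{s\gamma}b)$ only tests a morphism of $\cC_{c/}$ near its structure maps: a square in $\cC$ against $s$ promotes to a square in $\cC_{c/}$ against such a generator only if the top map $t\colon a\to X$ satisfies $t\gamma\simeq\chi$ for some $\gamma$, and no such $\gamma$ need exist (in contrast with the over-category, where the required structure maps are produced by composition). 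Concretely, take $\cC=\Spaces$ with the $((-1)$-connected, $(-1)$-truncated$)$ system generated by $S=\{S^0\to\pt\}$ and $c=\pt$: in pointed sets, the map $\{x_0,x_1,x_2\}\to\{y_0,y_1\}$ (based at $x_0$, $y_0$) sending $x_1,x_2\mapsto y_1$ is right orthogonal to both stated generators $(S^0,\pm 1)\to \pt$ (every commuting square forces the top map to be constant at the basepoint, and then the unique lift exists), yet it is not a monomorphism, hence not in $U^{-1}(\cR)$. So the right orthogonal of the stated family is strictly larger than $U^{-1}(\cR)$ and the generated factorization system differs from the pulled-back one. In other words, your $L(S)$-argument proves a corrected version of part (2) for $\cC_{c/}$ (generators $c\sqcup a \to c\sqcup b$, indexed by $S$ alone) rather than the statement as written; the bridge you assert between the two generating sets is precisely what fails, and the honest conclusion is that the under-category indexing in the Observation needs to be amended. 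A minor further remark: the asymmetry you sense is not located in the lift spaces — for a given square in $\cC_{c/}$ these agree with those of the underlying square exactly as in the over-category case — but in which test squares in $\cC$ can be realized in the slice against the stated generators.
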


\begin{observation}
\label{obs:reflective-localizn-from-fs}
Fix an $\infty$-category $\cC$ with a factorization system $(\cL,\cR)$.
\begin{enumerate}

\item\label{item-obs:reflective-localizn-from-fs}

Assume that $\cC$ contains a terminal object. Then, there exists a left adjoint
\begin{equation}
\label{eq:reflective-localization-from-fs}
\begin{tikzcd}[column sep=1.5cm]
\cC
\arrow[yshift=0.9ex, dashed]{r}
\arrow[hookleftarrow, yshift=-0.9ex]{r}[yshift=-0.2ex]{\bot}
&
\cC^\cR
\end{tikzcd}
\end{equation}
to the fully faithful inclusion, which is given by the formula $c \mapsto \Fact(c \to \pt_\cC)$. Moreover, the right adjoint is the inclusion of the $\cL$-local objects, and hence the left adjoint exhibits $\cC^\cR$ as the localization $\cC[\cL^{-1}]$.\footnote{However, beware that a morphism in $\cC$ may be sent to an equivalence in $\cC^\cR$ even if it is not in $\cL$.}

\item\label{item-obs:reflective-localizn-from-fs-presentable}

Assume that $\cC$ is presentable and that $(\cL,\cR)$ is generated by a set $S$ of morphisms in $\cC$. Then, the reflective localization \eqref{eq:reflective-localization-from-fs} also identifies $\cC^\cR$ with the (accessible) localization $\cC[S^{-1}]$.

\item\label{item-obs:symmetric-monoidality-of-reflection-localizn}

Furthermore, if $\cC$ has a  symmetric monoidal structure compatible with the factorization system and which has the terminal object as monoidal unit, then it  induces a symmetric monoidal structure on $\cC^\cR$, for which the left adjoint $ \cC \to \cC^R$ is symmetric monoidal.
\item
Given a morphism $(\cC_0,(\cL_0,\cR_0)) \xra{F} (\cC_1,(\cL_1,\cR_1))$ in $\Cat_\infty^{\fs,\cL,\cR}$ in which both $\cC_0$ and $\cC_1$ admit terminal objects and $F(\pt_{\cC_0}) \simeq \pt_{\cC_1}$, the reflective localizations of part \eqref{item-obs:reflective-localizn-from-fs} assemble into a morphism
\[
\begin{tikzcd}[column sep=1.5cm]
\cC_0
\arrow[yshift=0.9ex]{r}
\arrow[hookleftarrow, yshift=-0.9ex]{r}[yshift=-0.2ex]{\bot}
\arrow{d}[swap]{F}
&
\cC_0^{\cR_0}
\arrow{d}{F}
\\
\cC_1
\arrow[yshift=0.9ex]{r}
\arrow[hookleftarrow, yshift=-0.9ex]{r}[yshift=-0.2ex]{\bot}
&
\cC_1^{\cR_1}
\end{tikzcd}
\]
of adjunctions.

\end{enumerate}
\end{observation}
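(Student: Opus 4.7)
The plan is to establish the four parts of the observation in sequence, leveraging the defining properties of a factorization system.

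\textbf{Part (1):} For each $c \in \cC$ the factorization system gives a factorization $c \xra{l_c} \Fact(c \to \pt_\cC) \xra{r_c} \pt_\cC$ with $l_c \in \cL$ and $r_c \in \cR$. I would first check that an object $d \in \cC$ is $\cL$-local (i.e.\! $\Hom_\cC(b,d) \to \Hom_\cC(a,d)$ is an equivalence for every $l \colon a \to b$ in $\cL$) if and only if the unique map $d \to \pt_\cC$ lies in $\cR$: one direction is immediate from the orthogonality relation $\cL \bot \cR$ applied to squares with terminal lower-right corner, and the other from $\cR = \cL^\bot$ (testing the lift against $d \to \pt_\cC$ amounts to the equivalence condition on hom-spaces). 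This identifies $\cC^\cR$ with the full subcategory of $\cL$-local objects. The unit $l_c \colon c \to \Fact(c \to \pt_\cC)$ then provides, via orthogonality, the equivalence $\Hom_\cC(\Fact(c \to \pt_\cC),d) \xra{\sim} \Hom_\cC(c,d)$ for $d \in \cC^\cR$, exhibiting $c \mapsto \Fact(c \to \pt_\cC)$ as the left adjoint. The identification with $\cC[\cL^{-1}]$ follows from the standard characterization of reflective localizations (the right adjoint is the inclusion of local objects, see the discussion around \cref{subsubsec:localizations}).

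\textbf{Part (2):} Under the presentability and small generation hypotheses, by \cref{prop:fact-system-of-small-generation-on-presentable} the class $\cL$ is the saturation of $S$, and the saturation preserves the class of morphisms $f$ such that $\Hom_\cC(-,d)$ inverts $f$ for a fixed $d$. Hence $\cL$-local objects coincide with $S$-local objects, and the reflective localization from part (1) is also the accessible localization $\cC[S^{-1}]$.

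\textbf{Part (3):} Writing $L$ for the left adjoint of part (1), I would verify that the localization is compatible with the symmetric monoidal structure in the sense of \cref{subsubsection:loczns-of-O-monoidal-cats}, i.e.\! that for all $c, c' \in \cC$ the map $L(c \otimes c') \to L(RL(c) \otimes RL(c'))$ is an equivalence. Since $RL(c) = \Fact(c \to \pt_\cC)$ and the unit $c \to RL(c)$ lies in $\cL$, compatibility of $\otimes$ with the factorization system (and the fact that $\pt_\cC$ is the monoidal unit, so $c \otimes c' \to \pt_\cC \otimes \pt_\cC \simeq \pt_\cC$ factors through $RL(c) \otimes RL(c')$ via a map in $\cL$) shows that $c \otimes c' \to RL(c) \otimes RL(c')$ lies in $\cL$, so applying $L$ yields an equivalence by uniqueness of factorizations. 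The result then gives a symmetric monoidal structure on $\cC^\cR$ making $L$ symmetric monoidal.

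\textbf{Part (4):} Since $F$ preserves both classes and sends $\pt_{\cC_0}$ to $\pt_{\cC_1}$, it sends the factorization of $c \to \pt_{\cC_0}$ to the factorization of $F(c) \to \pt_{\cC_1}$, so $F$ restricts to a functor $\cC_0^{\cR_0} \to \cC_1^{\cR_1}$ satisfying $F \circ L_0 \simeq L_1 \circ F$ up to canonical equivalence. The induced Beck--Chevalley transformation between the right adjoints is then an equivalence because the right adjoints are fully faithful inclusions that commute with $F$ strictly, which exhibits the square as a morphism of adjunctions in the sense of \cref{subsec:adjns-revisited}. The main technical point throughout is the systematic use of uniqueness of factorizations; I expect no substantial obstacle beyond careful bookkeeping.
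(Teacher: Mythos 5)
Your proposal is correct and fills in exactly the standard arguments this Observation leaves implicit: locality is detected by the structure map to $\pt_\cC$ lying in $\cR$, the unit $l_c \in \cL$ gives the pointwise left adjoint and (since $L$ inverts $\cL$ by uniqueness of factorizations) the identification with $\cC[\cL^{-1}]$, saturation of the class of morphisms seen as equivalences by a fixed local object handles part (2), and compatibility in the sense of \cref{subsubsection:loczns-of-O-monoidal-cats} together with the Beck--Chevalley check on local objects handles parts (3) and (4). This is essentially the paper's (unwritten) approach, with no gaps.
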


\subsection{Induced factorization systems}
\label{subsec:induced-fs}

In this subsection, we record an assortment of useful results that allow us to obtain new factorization systems from old ones, in roughly increasing order of complexity. Specifically, we give sufficient conditions for inducing factorization systems on subcategories (\Cref{obs:restricted-fs-on-subcat}), on $\infty$-categories of functors (\Cref{lem:fs-and-functor-cat}), on the total $\infty$-categories of Cartesian fibrations (\Cref{lem:fs-and-Cart-fibn}), through reflective localizations (\Cref{lem:fs-on-reflective-loc}), and through monadic adjunctions (\Cref{lem:fs-and-monadic}).

We begin by recording necessary and sufficient conditions for a factorization system to restrict to a subcategory.

\begin{observation}
\label{obs:restricted-fs-on-subcat}
Let $\cC$ be an $\infty$-category equipped with a factorization system $(\cL,\cR)$, let $\cC_0 \subseteq \cC$ be a subcategory, and let us write $\cL_0 \coloneqq \cL \cap \cC_0$ and $\cR_0 \coloneqq \cR \cap \cC_0$. Then, the pair $(\cL_0,\cR_0)$ forms a factorization system on $\cC_0$ if and only if the following conditions are satisfied.
\begin{enumerate}

\item\label{item-obs:restricted-fs-on-subcat-lift-lies-in-subcat}

For any solid commutative square \eqref{eq:squarefs} in $\cC_0$ with $l \in \cL$ and $r \in \cR$, the unique lift in $\cC$ (guaranteed by the orthogonality relation $\cL \bot \cR$) also lies in $\cC_0$.

\item\label{item-obs:restricted-fs-on-subcat-factorization}

For any morphism $c \xra{f} d$ in $\cC_0$, both morphisms in the factorization $c \ra \Fact_{(\cL,\cR)}(f) \ra d$ also lie in $\cC_0$.

\end{enumerate}

In particular, if $\cC_0$ is a full subcategory of $\cC$, then $(\cL_0,\cR_0)$ forms a factorization system if and only if $\Fact_{(\cL,\cR)}(f)$ lies in $\cC_0$ for any morphism $f$ in $\cC_0$.

\end{observation}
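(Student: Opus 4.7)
The plan is to verify the two directions of this equivalence separately. The conceptual backbone is that factorization systems are governed by contractible spaces of choices, so the only question for the subcategory is whether these essentially unique data can be constrained to live in $\cC_0$.

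For the "only if" direction, suppose $(\cL_0,\cR_0)$ is a factorization system on $\cC_0$. For condition \eqref{item-obs:restricted-fs-on-subcat-factorization}, take a morphism $c \xra{f} d$ in $\cC_0$ and let $c \xra{l_0} e \xra{r_0} d$ be its $(\cL_0,\cR_0)$-factorization in $\cC_0$. Because $\cL_0 \subseteq \cL$ and $\cR_0 \subseteq \cR$, this is an $(\cL,\cR)$-factorization of $f$ in $\cC$; invoking uniqueness of such factorizations in $\cC$ (\Cref{obs:factorizations-from-fs-are-unique}) identifies $e$ with $\Fact_{(\cL,\cR)}(f)$ compatibly with the two legs, so both morphisms land in $\cC_0$. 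For condition \eqref{item-obs:restricted-fs-on-subcat-lift-lies-in-subcat}, given a solid square in $\cC_0$ with $l \in \cL$ and $r \in \cR$, orthogonality $\cL_0 \bot \cR_0$ provides a lift in $\cC_0$; its image in $\cC$ is a lift there, so by contractibility of the space of lifts in $\cC$ it is the essentially unique lift.

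For the "if" direction, I would verify in turn the three axioms of a factorization system on $\cC_0$. Retract-closure of $\cL_0$ and $\cR_0$ inside $\Fun([1],\cC_0)$ is automatic, since any such retract is also a retract in $\Fun([1],\cC)$, where $\cL$ and $\cR$ are stable. The factorization axiom is exactly hypothesis \eqref{item-obs:restricted-fs-on-subcat-factorization}. The orthogonality $\cL_0 \bot \cR_0$ is the only substantive step: for a square in $\cC_0$ with $l \in \cL_0$, $r \in \cR_0$, I need to show the space of lifts computed inside $\cC_0$ is contractible. The key input is that a subcategory inclusion $\cC_0 \hookra \cC$ is a monomorphism in $\Cat_\infty$ (as recalled in \Cref{subsubsection:monos-and-subcats}), so the induced map on appropriate functor categories is a monomorphism, and the induced map on spaces of lifts with fixed boundary is a $(-1)$-truncated map of spaces, hence an inclusion of path components. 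The space of lifts in $\cC$ is contractible (hence connected) by $\cL \bot \cR$, and hypothesis \eqref{item-obs:restricted-fs-on-subcat-lift-lies-in-subcat} supplies a point of the subspace of lifts lying in $\cC_0$; a nonempty union of components of a connected space is the whole space, so the space of lifts in $\cC_0$ is contractible as desired.

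For the "in particular" clause regarding full subcategories $\cC_0 \subseteq \cC$, I would observe that condition \eqref{item-obs:restricted-fs-on-subcat-lift-lies-in-subcat} is then automatic: any functor $[1]\times[1] \to \cC$ whose values on objects land in $\cC_0$ factors through the full subcategory $\cC_0$. Thus only condition \eqref{item-obs:restricted-fs-on-subcat-factorization} remains to check, which in the full case reduces to verifying that the single intermediate object $\Fact_{(\cL,\cR)}(f)$ lies in $\cC_0$. The main obstacle I would expect is justifying rigorously that the "space of lifts in $\cC_0$" is an inclusion of components of the "space of lifts in $\cC$"; once the subcategory-equals-monomorphism perspective is invoked and stability of monomorphisms under functor categories and fibers is recognized, the argument proceeds cleanly.
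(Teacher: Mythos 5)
Your proof is correct. The paper records this statement as an unproved Observation, so there is no argument of its own to compare against; your writeup supplies exactly the justification one would expect. The only genuinely delicate point is the orthogonality $\cL_0 \bot \cR_0$ in the ``if'' direction, where a lift consists of a morphism \emph{together with} its triangle-filling homotopies, and you handle this correctly: since the subcategory inclusion $\cC_0 \hookra \cC$ is a monomorphism in $\Cat_\infty$ (hence a monomorphism on hom-spaces and, after applying the pullback-preserving functors involved, on the relevant spaces of lifts with fixed boundary), the space of lifts in $\cC_0$ includes as a union of path components of the contractible space of lifts in $\cC$, and hypothesis \eqref{item-obs:restricted-fs-on-subcat-lift-lies-in-subcat} makes it nonempty, hence contractible. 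The remaining verifications (retract-stability inherited from $\cC$, factorization from hypothesis \eqref{item-obs:restricted-fs-on-subcat-factorization}, the ``only if'' direction via uniqueness of $(\cL,\cR)$-factorizations and of lifts in $\cC$, and the fullness reduction for the final clause) are all as they should be.
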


\begin{lemma}
\label{lem:restricted-fs-on-subcat}
Let $\cC$ be an $\infty$-category equipped with a factorization system $(\cL,\cR)$, let $\cC_0 \subseteq \cC$ be a subcategory, and suppose that $\cL_0 \coloneqq \cL \cap \cC_0$ and $\cR_0 \coloneqq \cR \cap \cC_0$ fulfill conditions (1) and (2) of \cref{obs:restricted-fs-on-subcat}, so that they induce a factorization system $(\cL_0, \cR_0)$ on $\cC_0$. Then we have:
\begin{enumerate}
\item If $\cC_0$ and $\cC$ are presentable, $(\cL, \cR)$ is of small generation, and the inclusion $\cC_0 \to \cC$ admits a left adjoint $L\colon \cC \to \cC_0$, then $(\cL_0, \cR_0)$ is of small generation and $L(\cL) \subseteq \cL_0$.
\item If $\cC_0$ and $\cC$ are furthermore presentably $\cO$-monoidal for a small operad $\cO$, the left adjoint $L\colon \cC \to \cC_0$ is $\cO$-monoidal, and $(\cL, \cR)$ is of small generation and compatible with the $\cO$-monoidal structure, then so is $(\cL_0, \cR_0)$. 
\end{enumerate}
\end{lemma}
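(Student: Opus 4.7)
The plan is to leverage the adjunction $L \dashv i$ to transport the small generating set of $(\cL,\cR)$ on $\cC$ to a small generating set of $(\cL_0,\cR_0)$ on $\cC_0$, and then deduce the $\cO$-monoidal compatibility in part (2) from part (1) via \cref{lem:presentablecompatible}.

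For part (1), let $S$ be a small set of morphisms in $\cC$ generating $(\cL,\cR)$ in the sense of \cref{prop:fact-system-of-small-generation-on-presentable}. I would consider the small set $L(S)$ of morphisms in $\cC_0$, which by \cref{prop:fact-system-of-small-generation-on-presentable} generates a factorization system $(\overline{L(S)}, L(S)^{\perp})$ on $\cC_0$. To identify this with $(\cL_0,\cR_0)$, it suffices to show $L(S)^{\perp} = \cR_0$: given any $r \colon c \to d$ in $\cC_0$, the adjunction identity $L \dashv i$ yields a bijection of spaces of lifts, so $L(s) \perp r$ in $\cC_0$ for all $s \in S$ if and only if $s \perp i(r)$ for all $s \in S$, i.e.\ iff $i(r) \in S^{\perp} = \cR$. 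Since $\cC_0 \to \cC$ is a subcategory inclusion and $r$ lies in $\cC_0$ by hypothesis, this is equivalent to $r \in \cR_0$, as desired. Hence $L(S)$ generates $(\cL_0,\cR_0)$, proving small generation.

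The inclusion $L(\cL) \subseteq \cL_0$ then follows by a saturation argument. Consider the class $T \coloneqq \{ f \in \cC : L(f) \in \cL_0 \}$. By what was just shown, $S \subseteq T$. Moreover, $T$ is saturated in the sense of \cref{def:saturated}: $L$ preserves equivalences, composition, small colimits (being a left adjoint), and cobase changes (so a cobase change of $f \in T$ is sent by $L$ to a cobase change of $L(f) \in \cL_0$, which remains in $\cL_0$ since $\cL_0$ is saturated). Since $\cL = \overline{S}$ is the smallest saturated class containing $S$ by \cref{prop:fact-system-of-small-generation-on-presentable}, we conclude $\cL \subseteq T$, i.e.\ $L(\cL) \subseteq \cL_0$.

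For part (2), by \cref{lem:presentablecompatible} applied to $\cC_0$ with the generating sets $S_{0,X} \coloneqq L_X(S_X)$ produced above, it suffices to check that for every operation $(X_1,\ldots,X_n) \to X$ in $\cO$, the induced multifunctor $(\cC_0)_{X_1} \times \cdots \times (\cC_0)_{X_n} \to (\cC_0)_X$ sends morphisms in $L_{X_1}(S_{X_1}) \times \cdots \times L_{X_n}(S_{X_n})$ into $(\cL_0)_X$. Since $L$ is $\cO$-monoidal, this multifunctor intertwines with the corresponding multifunctor on $\cC$ via the componentwise $L_{X_i}$ and $L_X$. Therefore the image of a tuple $(L_{X_1}(s_1),\ldots,L_{X_n}(s_n))$ with $s_i \in S_{X_i}$ agrees, up to the canonical equivalences, with $L_X$ applied to the image of $(s_1,\ldots,s_n)$ under the $\cO$-action on $\cC$; by compatibility of $(\cL,\cR)$ with the $\cO$-monoidal structure on $\cC$, that image lies in $\cL_X$, and then by part (1), applying $L_X$ lands it in $(\cL_0)_X$. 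The main (minor) obstacle is just bookkeeping the color-wise structure carefully; the real content is the saturation argument for $L(\cL) \subseteq \cL_0$ in part (1), which is standard once the correct class $T$ is identified.
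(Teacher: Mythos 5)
Your proof is correct and follows essentially the same route as the paper's: transport the generating set $S$ to $L(S)$, verify $L(S)^\perp = \cR_0$ via the adjunction, and deduce part (2) from \cref{lem:presentablecompatible}. The only variation is your saturation argument for $L(\cL)\subseteq\cL_0$ — defining $T=\{f: L(f)\in\cL_0\}$ and showing it is saturated — whereas the paper observes this directly from orthogonality: for $l\in\cL$ and $r\in\cR_0$, the relation $L(l)\perp r$ is adjoint to $l\perp i(r)$, which holds since $i(r)\in\cR$. Your version is correct but somewhat heavier than needed.
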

\begin{proof}
For (1), let $L\colon \cC \to \cC_0$ denote the left adjoint, let $S$ be a generating set for $\cL$ and define $S_0$ to be the set of $\cC_0$-morphisms $S_0 \coloneqq L(S)$. By adjunction, a morphism $f$ in $\cC_0$ is in $S_0^{\bot}$ iff it is in $S^{\bot} = \cR$, and hence that $S_0^{\bot} = \cR \cap \cC_0, $ proving that $(\cL_0, \cR_0)$ is generated by $S_0$. $L(\cL) \subseteq \cL_0$ follows from the adjunction.

For (2), it follows from the proof of (1) that for every $X\in \underline\cO$, the class $(\cL_0)_X$ is generated by $(S_0)_X \coloneqq L_X(S_X)$ where $S_X$ is a generating set for $\cL_X$. Hence, to check that $(\cL_0, \cR_0)$ is compatible with the $\cO$-monoidal structure, it suffices  by \cref{lem:presentablecompatible} to show that for every operation $(X_1, \ldots, X_n) \to X$ in $\cO$, the induced functor $ (\cC_0)_{X_1}\times \cdots \times (\cC_0)_{X_n} \to (\cC_0)_{X}$ carries morphisms in $L_{X_1}(S_{X_1}) \times \cdots \times L_{X_n}(S_{X_n})$ to a morphism in $(\cL_0)_X$. But since $L$ is $\cO$-monoidal, such a family of morphism is carried to the image under $L_X$ of their product in $\cC_X$. Since $\cL$ is compatible with the monoidal structure, that product is in $\cL_X$ and hence the morphisms are carried to a morphism in $L_X(\cL_X) \subseteq (\cL_0)_X$.  \end{proof}

We now show that an $\infty$-category of functors automatically inherits a factorization system from one on the target.

\begin{lemma}
\label{lem:fs-and-functor-cat}
Let $\cC$ be an $\infty$-category equipped with a factorization system $(\cL, \cR)$, and let $\cI$ be a small $\infty$-category.
\begin{enumerate}

\item\label{item-lem:fs-and-functor-cat-the-fs}

The $\infty$-category $\Fun(\cI, \cC)$ admits a factorization system $(\cL^\cI,\cR^\cI)$, in which (as the exponential notation suggests) a natural transformation between functors lies in $\cL^\cI$ (resp.\! $\cR^\cI$) if and only if its components all lie in $\cL$ (resp.\! $\cR$).

\item\label{item-lem:fs-and-functor-cat-generators}

If $\cC$ is presentable and $(\cL,\cR)$ is of small generation, then $\Fun(\cI,\cC)$ is presentable and $(\cL^\cI,\cR^\cI)$ is of small generation.

\item\label{item-lem:fs-and-functor-cat-compatible} If $\cO$ is a small operad, $I$ is $\cO$-monoidal,  $\cC$ is presentably $\cO$-monoidal and $(\cL, \cR)$ is compatible with the $\cO$-monoidal structure on $\cC$, then $(\cL^I, \cR^I)$ is compatible with the Day convolution $\cO$-monoidal structure on $\Fun(I, \cC)$.
\end{enumerate}
\end{lemma}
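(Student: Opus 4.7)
For part \eqref{item-lem:fs-and-functor-cat-the-fs}, I would verify the three axioms of Definition \ref{defn-fs} for $(\cL^\cI, \cR^\cI)$ directly, pointwise. Retract-stability of both classes is immediate since $\cL$ and $\cR$ are retract-stable in $\cC$ and retracts in a functor category are formed pointwise. For orthogonality $\cL^\cI \perp \cR^\cI$: given a lifting square between natural transformations in $\Fun(\cI,\cC)$, the pointwise lifts exist and are unique by $\cL \perp \cR$; these assemble into a natural transformation by uniqueness applied to each morphism $i \ra j$ in $\cI$. For factorization, the cleanest route is to invoke \cite[Prop. 5.2.8.17]{HTT}, which exhibits the full subcategory $\Fun([2],\cC)^{(\cL,\cR)} \subseteq \Fun([2],\cC)$ spanned by $\cL$-followed-by-$\cR$ diagrams as equivalent to $\Fun([1],\cC)$ via the middle-object-forgetting functor. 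Applying $\Fun(\cI,-)$ and using the equivalence $\Fun(\cI,\Fun([n],\cC)) \simeq \Fun([n], \Fun(\cI,\cC))$ gives functoriality of factorization at no extra cost. Alternatively, one can directly define $\Fact(\alpha)(i)$ as the factorization of $\alpha_i$ and build $\Fact(\alpha)(i \ra j)$ using the unique lift in the square with $\cL$-leg on the left and $\cR$-leg on the right.

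For part \eqref{item-lem:fs-and-functor-cat-generators}, presentability of $\Fun(\cI,\cC)$ is standard. For the generating set, fix a small generating set $S$ of $\cL$ and let $y^i \coloneqq \hom_\cI(i,-)\colon \cI \ra \Spaces$ be the corepresentable at $i \in \cI$. Since $\cC$ is presentable it is tensored over $\Spaces$, so for each $s\colon a \ra b$ in $S$ and $i \in \cI$ we obtain a morphism $s \cdot y^i \colon a \otimes y^i \ra b \otimes y^i$ in $\Fun(\cI,\cC)$. The Yoneda-adjunction equivalence
\[ \hom_{\Fun(\cI,\cC)}(c \otimes y^i , F) \simeq \hom_\cC(c, F(i)) \]
shows that lifting against $s \cdot y^i$ reduces to lifting against $s$ at the object $i$. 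Hence $F \ra G$ lies in $(\{s \cdot y^i\}_{s,i})^\perp$ iff each $F(i) \ra G(i)$ lies in $S^\perp = \cR$, i.e. iff $F \ra G \in \cR^\cI$. Since the space of isomorphism classes of objects of the small $\infty$-category $\cI$ is small, $\{s \cdot y^i\}_{s,i}$ is a small generating set.

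For part \eqref{item-lem:fs-and-functor-cat-compatible}, by \cref{lem:presentablecompatible} it suffices to check that for every multi-operation $(X_1,\ldots,X_n) \ra X$ in $\cO$, the induced Day-convolution functor $\prod_{k} \Fun(\cI_{X_k},\cC_{X_k}) \ra \Fun(\cI_X,\cC_X)$ carries products of generators $\{s_k \cdot y^{i_k}\}_{k}$ into the left class $\cL^{\cI_X}$ of $\Fun(\cI_X,\cC_X)$. The essential property of Day convolution is that it commutes with colimits in each variable and carries representables to representables along the structure multifunctor $\cI_{X_1} \times \cdots \times \cI_{X_n} \ra \cI_X$. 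Combined with the compatibility of tensoring-with-a-space with the $\cO$-monoidal product in $\cC$, this yields a natural identification
\[ (s_1 \cdot y^{i_1}) \otimes^{\Day} \cdots \otimes^{\Day} (s_n \cdot y^{i_n}) \;\simeq\; (s_1 \otimes \cdots \otimes s_n)\cdot y^{i_1 \otimes \cdots \otimes i_n}. \]
The left factor lies in $\cL_X$ by the compatibility of $(\cL,\cR)$ with the $\cO$-monoidal structure on $\cC$, and the right factor is a corepresentable in $\cI_X$, so the whole morphism is a generator of $\cL^{\cI_X}$.

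The main obstacle I expect is pinning down the identification of Day convolution on generators needed in part \eqref{item-lem:fs-and-functor-cat-compatible}. Part \eqref{item-lem:fs-and-functor-cat-the-fs} is essentially formal, and part \eqref{item-lem:fs-and-functor-cat-generators} follows cleanly from Yoneda. The third part requires one to bootstrap from the universal property of Day convolution (coend/left-Kan-extension presentation) to the explicit computation on representables and then to tensor products of morphisms $s_k \cdot y^{i_k}$, taking care that tensoring with a space commutes with the $\cO$-monoidal structure on $\cC$ in the appropriate multi-variable sense.
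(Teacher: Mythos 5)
Your parts (1) and (2) are essentially the paper's argument. For (1) the paper simply cites \cite[Cor. 5.2.8.18]{HTT}, which is precisely the statement you are re-deriving from \cite[Prop. 5.2.8.17]{HTT} by applying $\Fun(\cI,-)$ (your first, "assemble the pointwise lifts" sketch is the informal version; be aware that in the $\infty$-categorical setting the assembly requires the contractibility of a limit of lift-spaces, which is exactly what the cited results package). For (2) your generators $s\cdot y^i$ coincide with the paper's $i_!(s)$, since $i_!(c)\simeq c\otimes\hom_\cI(i,-)$, and your Yoneda identification is the $i_!\dashv i^*$ adjunction; the remaining step (right orthogonal of the generators equals $\cR^\cI$, then invoke \cref{prop:fact-system-of-small-generation-on-presentable}) is identical.

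For (3) you genuinely diverge from the paper, and your route proves a slightly weaker statement. You reduce to generators via \cref{lem:presentablecompatible} and compute the Day convolution of tensored corepresentables; this is correct (the formula $(s_1\cdot y^{i_1})\otimes^{\mathrm{Day}}\cdots\otimes^{\mathrm{Day}}(s_n\cdot y^{i_n})\simeq(s_1\otimes\cdots\otimes s_n)\cdot y^{\mu(i_1,\ldots,i_n)}$ does hold, by cocontinuity of the operations in each variable and the fact that left Kan extension carries corepresentables to corepresentables), but it requires $(\cL,\cR)$ to be of small generation, both to have generators at all and to apply \cref{lem:presentablecompatible}. Part (3) as stated assumes only compatibility, not small generation. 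The paper's proof avoids this: it writes the Day convolution functor as a (pointwise) left Kan extension of $\otimes\circ(F_1\times\cdots\times F_n)$ along $I_{X_1}\times\cdots\times I_{X_n}\to I_X$, so the value of the convolved transformation at each object is a colimit, in the arrow category, of maps of the form $(f_1)_{j_1}\otimes\cdots\otimes(f_n)_{j_n}\in\cL$, and left classes of factorization systems are closed under colimits in the arrow category. If you keep your approach, either add the small-generation hypothesis explicitly or note that the colimit-closure argument handles the general case; also note that your final step should conclude that the convolution of generators lies in $\cL^{\cI_X}$ (componentwise $t\otimes K$ with $t\in\cL_X$ and $K$ a space, hence in $\cL_X$ by colimit-closure), not that it is itself a generator.
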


\begin{proof}
Part \eqref{item-lem:fs-and-functor-cat-the-fs} is a restatement of \cite[Cor. 5.2.8.18]{HTT}.
To prove part \eqref{item-lem:fs-and-functor-cat-generators}, assume that $\cC$ is presentable and let $S$ be a set of morphisms in $\cC$ that generates $\cL$. We note first that $\Fun(\cI,\cC)$ is presentable by \cite[Prop. 5.5.3.6]{HTT}. Now, for each functor $\pt \xra{i} \cI$ (selecting an object of $\cI$) we obtain an adjunction
 \[
\begin{tikzcd}[column sep=1.5cm]
\cC
\arrow[yshift=0.9ex]{r}{i_!}
\arrow[leftarrow, yshift=-0.9ex]{r}[yshift=-0.2ex]{\bot}[swap]{i^*}
&
\Fun(\cI,\cC)
\end{tikzcd}
~.
\]
It follows that $\cR^\cI$ is precisely the right orthogonal to the (small) space of morphisms
\[
S'
\coloneqq
\bigsqcup_{i \in \iota_0 \cI}
\bigsqcup_{f \in S}
i_!(f)
\]
in $\Fun(\cI,\cC)$. From here, \Cref{prop:fact-system-of-small-generation-on-presentable} implies that $(\cL^\cI,\cR^\cI)$ is generated by $S'$ (and in particular that $\cL^\cI$ is the smallest saturated class of morphisms containing $S'$).

For part~\eqref{item-lem:fs-and-functor-cat-compatible}, given an n-ary operation $(X_1, \ldots, X_n) \to X$ in $\cO$, the induced functor $\Fun(I_{X_1}, \cC_{X_1}) \times \cdots \times \Fun(I_{X_n}, \cC_{X_n}) \to \Fun(I_X, \cC_X)$ is computed as the left Kan extension of $I_{X_1} \times \cdots \times I_{X_n} \to \cC_{X_1} \times \cdots \times \cC_{X_n} \to \cC_X$ against $I_{X_1}\times \cdots \times I_{X_n} \to I_X$. As the left class of a factorization system, $\cL$ is closed under colimits in $\cC$. Therefore, the image of natural transformations  $f_i \in \Fun(I_{X_i}, \cC_{X_i})$ which are componentwise in $\cL$ will again be componentwise in $\cL$.
\end{proof}

A simple example of a factorization system arises from a Cartesian fibration $\cE \xra{p} \cB$: the total $\infty$-category $\cE$ admits a factorization system $(\cL,\cR)$ in which $\cL = p^{-1}(\cB^\simeq)$ and $\cR$ consists of the $p$-Cartesian morphisms. This can be generalized as follows.

\begin{lemma}
\label{lem:fs-and-Cart-fibn}
Fix an $\infty$-category $\cB$ and a functor $\cB^\op \xra{F} \hat{\Cat}_\infty^{\fs, \cR}$ (recall \Cref{defn-fs}). For each $b \in \cB$, let us write $(\cL_b,\cR_b)$ for the given factorization system on $F(b)$. Moreover, let us write $\cE \xra{p} \cB$ for the Cartesian fibration associated to $F$.
\begin{enumerate}

\item\label{item:fs-and-Cart-fibn-the-fs}

The $\infty$-category $\cE$ admits a factorization system $(\cL,\cR)$, described as follows.

\begin{enumerate}

\item A morphism $e \xra{\alpha} f$ lies in $\cL$ if and only if the morphism $p(e) \xra{p(\alpha)} p(f)$ in $\cB$ is an equivalence and moreover the morphism $e \ra p(\alpha)^*(f)$ in $\cE_{p(e)} \simeq F(p(e))$ lies in $\cL_{p(e)}$.

\item A morphism $e \xra{\alpha} f$ lies in $\cR$ if and only if the morphism $e \ra p(\alpha)^*(f)$ lies in $\cR_{p(e)}$.

\end{enumerate}

\item\label{item:fs-and-Cart-fibn-small-generation}

Suppose that $\cB$ is small and that $F$ factors through the subcategory $\Pr^{R,\fs,\cR} \subset \hat{\Cat}_\infty^{\fs,\cR}$ (recall \Cref{def:small-generation-and-Pr-fs}). Then, the factorization system of part \eqref{item:fs-and-Cart-fibn-the-fs} is also of small generation. More specifically, if for each $b \in \cB$ the set $S_b$ generates the class $\cL_b$, then the set $S \coloneqq \bigsqcup_{b \in \cB} S_b$ generates the class $\cL$ (considering each $S_b$ as defining a set of morphisms in the fiber $\cE_b \simeq F(b)$).

\end{enumerate}
\end{lemma}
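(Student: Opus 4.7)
The plan is to verify directly the three conditions of \cref{defn-fs} for the asserted pair $(\cL, \cR)$, and then to deduce part (2) by identifying $\cR$ as $S^\perp$ for the asserted small generating set $S = \bigsqcup_{b \in \cB} S_b$.

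For the existence of factorizations, I would use the canonical decomposition of any morphism $\alpha \colon e \to f$ in $\cE$ afforded by the Cartesian structure: write $\alpha = \alpha_c \circ \alpha_v$, with $\alpha_c \colon p(\alpha)^*(f) \to f$ being $p$-Cartesian and $\alpha_v \colon e \to p(\alpha)^*(f)$ a morphism in the fiber $\cE_{p(e)} \simeq F(p(e))$. Factoring $\alpha_v = r' \circ l'$ via $(\cL_{p(e)}, \cR_{p(e)})$ then yields a factorization $e \xra{l'} g \xra{\alpha_c \circ r'} f$: the first map lies in $\cL$ by construction, and the uniqueness of the Cartesian/vertical decomposition identifies the vertical component of $\alpha_c \circ r'$ as $r' \in \cR_{p(e)}$, placing it in $\cR$. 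Retract stability for both classes is immediate, as the defining conditions (``$p(-)$ is an equivalence'' and ``the vertical component lies in the appropriate fiber class'') are each closed under retracts in $\Fun([1], \cE)$.

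The principal obstacle is the orthogonality relation $\cL \perp \cR$, which I would handle by reducing any lifting problem to one living inside a single fiber of $p$. Given a square with $l \colon a \to b$ in $\cL$ and $r \colon c \to d$ in $\cR$, I would first decompose $l = l_c \circ l'$; since $l_c$ is Cartesian over the equivalence $p(l)$, it is itself an equivalence in $\cE$, and the space of dashed lifts of the original square is identified with that of the modified square whose left vertical is the fiber morphism $l' \in \cL_{p(a)}$. Next, decomposing $r = r_c \circ r_v$ and repeatedly applying the universal property of Cartesian morphisms on the right-hand side, the resulting lifting problem becomes one of the form
\[
\begin{tikzcd}
a \ar[r] \ar[d, "l'"'] & q^*(c) \ar[d, "q^*(r_v)"] \\
p(l)^*(b) \ar[r] & q^*(p(r)^*(d))
\end{tikzcd}
\]
inside the single fiber $\cE_{p(a)}$, where $q \colon p(a) \to p(c)$ is the induced map in $\cB$. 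By the hypothesis that $F$ factors through $\hat{\Cat}_\infty^{\fs, \cR}$, the monodromy $q^* = F(q)$ preserves the right class, hence $q^*(r_v) \in \cR_{p(a)}$, and fiberwise orthogonality supplies the unique lift.

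For part (2), each generator in $S = \bigsqcup_{b \in \cB} S_b$ evidently lies in $\cL$, so $\cR = \cL^\perp \subseteq S^\perp$. The reverse inclusion follows from a specialisation of the fiber reduction above: given $r \in S^\perp$ and any $s \in S_{p(c)}$, the morphism $s$ has $p(s) = \id$, so no nontrivial monodromy is invoked and the reduction shows that the vertical component $r_v$ of $r$ is right-orthogonal to $S_{p(c)}$, hence lies in $\cR_{p(c)}$; this places $r \in \cR$. Presentability of $\cE$ in this setting follows from standard results on Grothendieck constructions, using that $\cB$ is small, each $F(b)$ is presentable, and the monodromy functors are accessible because $F$ factors through $\Pr^{R, \fs, \cR}$. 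Then \cref{prop:fact-system-of-small-generation-on-presentable} applies and exhibits $(\cL, \cR)$ as generated by $S$.
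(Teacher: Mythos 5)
Your proposal is essentially correct and takes the same approach that the paper implicitly has in mind: the paper dismisses part (1) as ``straightforward'' and then for part (2) checks $\cR = S^\perp$ via the two inclusions; you supply the details for part (1) (Cartesian/vertical decomposition, reduction of the lifting problem to a single fiber via the monodromy $q^* = F(q)$, using that $F$ lands in $\hat{\Cat}_\infty^{\fs,\cR}$ so that $q^*(r_v) \in \cR_{p(a)}$) and then run the same two-inclusion argument for part (2), with the minor variation that you deduce $\cR \subseteq S^\perp$ from $S \subseteq \cL$ and $\cR = \cL^\perp$, whereas the paper deduces it directly from monodromy preservation of the right class.

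One point deserves more care than you give it: you assert that retract stability is ``immediate, as the defining conditions \ldots are each closed under retracts.'' This is genuinely unclear for $\cL$. A retract $\alpha'$ of $\alpha \in \cL$ lives over a \emph{retract} $p(e')$ of $p(e)$ in $\cB$, not over the same object; to compare the vertical component $\alpha_v'$ in the fiber $\cE_{p(e')}$ with $\alpha_v$ in $\cE_{p(e)}$ you would transport along the monodromy functor $p(i_0)^*$ — but $F$ is only assumed to preserve the \emph{right} classes, not the left classes, so this transport does not immediately land in $\cL_{p(e')}$. The cleanest fix is to verify directly that $\cL = {}^{\perp}\cR$ and $\cR = \cL^{\perp}$ from the orthogonality and factorization conditions you have already established (for $\cR = \cL^\perp$ one reduces to the fiber exactly as in your orthogonality argument; for $\cL = {}^\perp\cR$ one uses the fact that the Cartesian factor $l_c$ is an equivalence), and then invoke the standard fact that orthogonal complements are automatically closed under retracts. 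The rest of your argument is sound.
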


\begin{proof}
Part \eqref{item:fs-and-Cart-fibn-the-fs} is straightforward. Thereafter, for part \eqref{item:fs-and-Cart-fibn-small-generation} it suffices to show that $\cR = S^\bot$ (so that $(\cL,\cR)$ is indeed the factorization system generated by $S$ via \cref{prop:fact-system-of-small-generation-on-presentable}). The containment $\cR \subseteq S^\bot$ follows from the fact that the Cartesian monodromy functors preserve the right classes, while the containment $\cR \supseteq S^\bot$ follows from the explicit description of $\cR$.
\end{proof}

We now provide sufficient conditions for descending a factorization system through a reflective localization.

\begin{lemma}
\label{lem:fs-on-reflective-loc}
Suppose that
\[
\begin{tikzcd}[column sep=1.5cm]
\cC
\arrow[yshift=0.9ex]{r}{L}
\arrow[hookleftarrow, yshift=-0.9ex]{r}[yshift=-0.2ex]{\bot}[swap]{R}
&
\cD
\end{tikzcd}
\]
is a reflective localization and that $(\cL,\cR)$ is a factorization system on $\cC$. Suppose further that $L(\cL)$ is stable under retracts and that $RL(\cR) \subseteq \cR$.
\begin{enumerate}

\item\label{item-lem:fs-on-reflective-loc-the-fs}

The pair $(L(\cL),L(\cR))$ forms a factorization system on $\cD$, in which the factorization of a morphism $d \xra{f} d'$ is given by the lower composite in the commutative diagram
\begin{equation}
\label{eq:factorizn-in-fs-on-reflective-localization}
\begin{tikzcd}
d
\arrow{rr}{f}
&
&
d'
\\
LR(d)
\arrow{u}[sloped, anchor=south]{\sim}
\arrow{rr}{LR(f)}
\arrow{rd}
&
&
LR(d')
\arrow{u}[sloped, anchor=north]{\sim}
\\
&
L(\Fact(R(f))
\arrow{ru}
\end{tikzcd}~.
\end{equation}
Moreover, considering $\cD$ as a full subcategory of $\cC$ (via $R$), the right class $L(\cR)$ is intersected from $\cR$ (i.e.\! we have $RL(\cR) = \cR \cap R(\cD)$).

\item\label{item-lem:fs-on-reflective-loc-generation}

Suppose further that $\cC$ and $\cD$ are presentable and that $(\cL,\cR)$ is generated by a set $S$ of morphisms in $\cC$. Then, $(L(\cL),L(\cR))$ is generated by the set $L(S)$ of morphisms in $\cD$.

\item\label{item-lem:fs-on-reflective-loc-monoidality}

Suppose that $\cC$ is (symmetric) monoidal compatible with both the reflective localization (recall \Cref{subsubsection:loczns-of-O-monoidal-cats}) and with the factorization system (recall \Cref{def:fs-compatible-with-O-monoidal-str}). Then, the induced (symmetric) monoidal structure on $\cD$ is compatible with the factorization system $(L(\cL),L(\cR))$.

\end{enumerate}
\end{lemma}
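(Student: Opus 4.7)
The plan is to establish the three parts in order, leaning throughout on the adjunction identities $L R \simeq \id_\cD$ (from full faithfulness of $R$) and the standard transfer of lifting problems across an adjunction (\Cref{obs:adjunctions-left-right}).

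For part \eqref{item-lem:fs-on-reflective-loc-the-fs}, I would first check orthogonality. Given $l \in \cL$ and $r \in \cR$, any commuting square in $\cD$ between $L(l)$ and $L(r)$ transposes under the adjunction to a commuting square in $\cC$ between $l$ and $R L(r)$, and full faithfulness of $R$ identifies the two spaces of lifts. Since $RL(\cR) \subseteq \cR$ by hypothesis, the lifting space is contractible by orthogonality in $\cC$, giving $L(\cL) \perp L(\cR)$. Next, for the factorization, I would take a morphism $f \colon d \to d'$ in $\cD$, form the factorization $R(d) \xra{l} e \xra{r} R(d')$ of $R(f)$ with $l \in \cL, r \in \cR$ in $\cC$, apply $L$, and use $LR \simeq \id_\cD$ to identify the result with diagram \eqref{eq:factorizn-in-fs-on-reflective-localization}. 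Finally, for retract stability: $L(\cL)$ is stable under retracts by hypothesis. For $L(\cR)$, I would first establish the claim $RL(\cR) = \cR \cap R(\cD)$: the inclusion $\subseteq$ is direct from $RL(\cR) \subseteq \cR$ and $RL(\cR) \subseteq R(\cD)$, while for $\supseteq$, any $r \in \cR \cap R(\cD)$ satisfies $r \simeq RL(r)$ (since $R$ is fully faithful), so $r \in RL(\cR)$. Then a retract of $L(r) \in L(\cR)$ in $\cD$ transports under $R$ to a retract of $RL(r) \in \cR$, which lies in $\cR \cap R(\cD) = RL(\cR)$ by retract stability of $\cR$, and applying $L$ shows the original retract lies in $L(\cR)$.

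For part \eqref{item-lem:fs-on-reflective-loc-generation}, I want to show $L(S)^\bot = L(\cR)$. The containment $L(\cR) \subseteq L(S)^\bot$ is part \eqref{item-lem:fs-on-reflective-loc-the-fs}. Conversely, given $m \in L(S)^\bot$, the adjunction interchange and full faithfulness of $R$ show $R(m) \in S^\bot = \cR$. Since $R(m) \in R(\cD)$ as well, $R(m) \in \cR \cap R(\cD) = RL(\cR)$ by the identification above, and applying $L$ gives $m \simeq LR(m) \in L(\cR)$. \Cref{prop:fact-system-of-small-generation-on-presentable} then identifies $(L(\cL), L(\cR))$ as the factorization system generated by $L(S)$.

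For part \eqref{item-lem:fs-on-reflective-loc-monoidality}, I would verify the compatibility condition of \Cref{def:fs-compatible-with-O-monoidal-str} directly, using the key observation (recalled in \Cref{subsubsection:loczns-of-O-monoidal-cats}) that $L \colon \cC \to \cD$ is canonically (resp.\! symmetric) monoidal. Given an $n$-ary operation $(X_1, \ldots, X_n) \to X$ in $\cO$ and morphisms $L(l_i) \in L(\cL_{X_i})$, monoidality of $L$ yields $L(l_1) \otimes^\cD \cdots \otimes^\cD L(l_n) \simeq L(l_1 \otimes^\cC \cdots \otimes^\cC l_n)$; compatibility of $(\cL, \cR)$ with the $\cO$-monoidal structure on $\cC$ places $l_1 \otimes^\cC \cdots \otimes^\cC l_n$ in $\cL_X$, so its image under $L$ lies in $L(\cL_X)$. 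None of the steps presents a substantive obstacle; the slight technical point is the identification $RL(\cR) = \cR \cap R(\cD)$, which is what makes retract stability for $L(\cR)$ and the generation argument both go through cleanly.
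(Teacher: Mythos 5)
Your proof is correct and follows essentially the same approach as the paper's: orthogonality via transposition across the adjunction using $RL(\cR) \subseteq \cR$, factorization by applying $L$ to the factorization of $R(f)$, retract stability of $L(\cR)$ by transporting to $\cC$ and using the identification $RL(\cR) = \cR \cap R(\cD)$, generation by showing $L(S)^\bot = L(\cR)$, and monoidal compatibility via monoidality of $L$. The only cosmetic difference is that the paper establishes retract stability of $L(\cR)$ a bit more directly (from $R(g) \in \cR$ one immediately gets $g \simeq LR(g) \in L(\cR)$ without routing through the set equality), but your route is equally valid.
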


\begin{warning}
Although the right adjoint to a reflective localization is (by definition) the inclusion of a full subcategory, the factorization system given by \Cref{lem:fs-on-reflective-loc}.\eqref{item-lem:fs-on-reflective-loc-the-fs} is generally \textit{not} the same as that of \Cref{obs:restricted-fs-on-subcat}. More specifically, although its right class is simply the restriction of the larger right class, its left class is not generally the restriction of the larger left class: using the notation of \Cref{lem:fs-on-reflective-loc}, although we do have $RL(\cR) = \cR \cap R(\cD)$, in general $RL(\cL)$ and $\cL \cap R(\cD)$ are distinct. Indeed, by \Cref{obs:restricted-fs-on-subcat}, these factorization systems coincide if and only if for every morphism $f$ in $\cD$ the object $\Fact_{(\cL,\cR)}(R(f)) \in \cC$ lies in the image of $R$.\footnote{Notably, we do \textit{not} generally have this coincidence in a case of primary interest for us, namely the reflective localization from categorical $\VV$-algebras to $\VV$-enriched $\infty$-categories (see \Cref{thm:fs_and_enriched_cat} and \Cref{cor:fs-and-cat-alg}). Rather, given a morphism of $\VV$-enriched $\infty$-categories, if we consider it as a morphism of categorical $\VV$-algebras then its factorization will have the same underlying space as the source, but this will generally \textit{not} be univalent. (In this case, $RL(\cL)$ consists of the surjective functors between $\VV$-enriched $\infty$-categories that are homwise in $\cL$, whereas $\cL \cap R(\cD)$ consists of the $\iota_0$-equivalences between $\VV$-enriched $\infty$-categories that are homwise in $\cL$.)}
\end{warning}

\begin{proof}[Proof of \Cref{lem:fs-on-reflective-loc}]
We begin with part \eqref{item-lem:fs-on-reflective-loc-the-fs}.

Note first that in diagram \eqref{eq:factorizn-in-fs-on-reflective-localization}, the lower diagonal morphisms respectively lie in $L(\cL)$ and $L(\cR)$, so this is indeed a factorization of the desired type.

Next, $L(\cL)$ is stable under retracts by assumption. To see that $L(\cR)$ is also stable under retracts, consider a retract $g$ in $\Fun([1],\cD)$ of some $L(f) \in L(\cR)$. Applying $R$, we find that $R(g)$ is a retract in $\Fun([1],\cC)$ of $RL(f) \in RL(\cR)$. Since by assumption $RL(\cR) \subseteq \cR$, it follows that $RL(f) \in \cR$, and hence $R(g) \in \cR$ since $\cR$ is stable under retracts. It follows that $g \simeq LR(g) \in L(\cR)$, as desired.

We now verify the orthogonality relation $L(\cL) \bot L(\cR)$. For this, given any $f \in \cL$ and any $g \in \cR$, we must show that $L(f) \bot L(g)$. By adjunction, this is equivalent to showing that $f \bot RL(g)$. But by assumption we have $RL(g) \in RL(\cR) \subseteq \cR$, and so the claim follows from the fact that $\cL \bot \cR$.

We now verify both containments that together constitute the claim that $RL(\cR) = \cR \cap R(\cD)$. First of all, by assumption we have $RL(\cR) \subseteq \cR$, and moreover clearly $L(\cR) \subseteq \cD$ and hence $RL(\cR) \subseteq R(\cD)$. So indeed, we have $RL(\cR) \subseteq \cR \cap R(\cD)$. In the other direction, consider an arbitrary element $R(f) \in \cR \cap R(\cD)$. In particular we have $R(f) \in \cR$, so $LR(f) \in L(\cR)$, so $R(f) \simeq RLR(f) \in RL(\cR)$. So indeed, we have $RL(\cR) \supseteq \cR \cap R(\cD)$.

We now prove part \eqref{item-lem:fs-on-reflective-loc-generation}. By \Cref{prop:fact-system-of-small-generation-on-presentable}, it suffices to show that $L(\cR) = L(S)^\bot$. To verify the containment $L(\cR) \subseteq L(S)^\bot$, it is equivalent by adjunction to check that $RL(\cR) \subseteq S^\bot$, and this follows from the fact that $RL(\cR) \subseteq \cR = S^\bot$. To verify the containment $L(\cR) \supseteq L(S)^\bot$, we observe that for any $f \in L(S)^\bot$, by adjunction we have $R(f) \in S^\bot = \cR$, so indeed $f \simeq LR(f) \in L(\cR)$.

We conclude by proving part \eqref{item-lem:fs-on-reflective-loc-monoidality}. Note that it suffices to prove the claim in the monoidal case. And here, the claim follows from the observation that
\[
L(\cL) \otimes^\cD L(\cL)
\coloneqq
L(RL(\cL) \otimes^\cC RL(\cL))
\simeq
L(\cL \otimes^\cC \cL)
\subseteq
L(\cL)
~,
\]
in which the equivalence and the containment respectively follow from the compatibilities of the reflective localization with the monoidal structure $\otimes^\cC$ and with the factorization system $(\cL,\cR)$.
\end{proof}

We now turn to our final auxiliary result, which gives sufficient conditions for inducing a factorization system through a monadic adjunction.

\begin{lemma}
\label{lem:fs-and-monadic}
Fix a monadic adjunction
    \[
        \begin{tikzcd}[column sep=1.5cm]
        \cC
        \arrow[yshift=0.9ex]{r}{F}
        \arrow[leftarrow, yshift=-0.9ex]{r}[yshift=-0.2ex]{\bot}[swap]{U}
        &
        \cD
        &[-1.7cm]
        \coloneqq {}_T \Mod(\cC)
        \end{tikzcd}
    \]
between presentable $\infty$-categories (where $T \coloneqq UF$ denotes the underlying monad). Suppose that $(\cL,\cR)$ is a factorization system on $\cC$ of small generation, and suppose further that $T$ commutes with geometric realizations and preserves $\cL$. Then, $\cD$ admits a factorization system $(\cL',\cR')$ of small generation, where $\cL' = U^{-1}(L)$ and $\cR' = U^{-1}(\cR)$.

Furthermore, assume that $\cC$, $\cD$ are presentably $\cO$-monoidal for a small $\infty$-operad $\cO$, and the left adjoint $\cC \to \cD$ is $\cO$-monoidal. If the $\cO$-monoidal structure on $\cC$ is compatible with the factorization system, then the $\cO$-monoidal structure on $\cD$ is compatible with the induced factorization system on $\cD$.
\end{lemma}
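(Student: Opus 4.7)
My plan is to apply Proposition~\ref{prop:fact-system-of-small-generation-on-presentable} to the image $F(S) \subseteq \Fun([1], \cD)$ of a small generating set $S$ for $\cL$, producing a factorization system $(\overline{F(S)}, F(S)^\perp)$ on the presentable $\infty$-category $\cD$, and then identify this with $(\cL', \cR')$. The adjunction $F \dashv U$ immediately identifies the right class: a morphism $g$ in $\cD$ lies in $F(S)^\perp$ if and only if $U(g) \in S^\perp = \cR$, so $F(S)^\perp = U^{-1}(\cR) = \cR'$. The remaining task is to identify the left class $\overline{F(S)}$ with $\cL' = U^{-1}(\cL)$.

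I will prove this by two containments. For $\overline{F(S)} \subseteq \cL'$, I plan to verify that $\cL' = U^{-1}(\cL)$ contains $F(S)$ and is saturated in the sense of Definition~\ref{def:saturated}. Containment of $F(S)$ is immediate from the assumption that $T = UF$ preserves $\cL$, since $U(F(s)) = T(s) \in \cL$. Saturation demands closure under composition, retracts, small colimits in $\Fun([1], \cD)$, and cobase change; the first two are automatic from the analogous properties of $\cL$ in $\cC$ together with the fact that $U$ preserves composition and retracts. For the latter two, although $U$ does not preserve arbitrary colimits, $T$'s preservation of geometric realizations (combined with accessibility of $T$ from presentability of $\cD$) implies that $U$ preserves sifted colimits. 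Together with the bar-resolution description of colimits in $\cD$ as geometric realizations of diagrams of free $T$-algebras, this should let me reduce saturation of $U^{-1}(\cL)$ in $\cD$ to saturation of $\cL$ in $\cC$, crucially using again that $T$ preserves $\cL$. For the reverse containment $\cL' \subseteq \overline{F(S)}$, given $f \in \cL'$, I will factor $f = r \circ l$ via the system $(\overline{F(S)}, \cR')$: by the previous containment $U(l) \in \cL$, while $U(r) \in \cR$ from $r \in \cR'$, so $U(f) = U(r) \circ U(l)$ becomes a factorization of $U(f) \in \cL$ in $\cC$. Uniqueness of factorizations will force $U(r)$, and hence $r$ (by conservativity of $U$), to be an equivalence, so $f \simeq l \in \overline{F(S)}$.

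For the $\cO$-monoidal compatibility, I will apply Lemma~\ref{lem:presentablecompatible}: an $n$-ary tensor of generators $F(s_i)$ (with $s_i \in S_{X_i}$) lies in $\cL'$ because $F$ is $\cO$-monoidal, whence the tensor product equals $F(s_1 \otimes \cdots \otimes s_n)$, and compatibility of $(\cL, \cR)$ with $\cC$'s $\cO$-monoidal structure gives $s_1 \otimes \cdots \otimes s_n \in \cL$, whose $F$-image then lies in $\cL'$ by the first containment above. The principal obstacle will be the saturation of $U^{-1}(\cL)$: while the $1$-categorical analogue is classical and relatively clean (relying on $U$ creating reflexive coequalizers), the $\infty$-categorical case requires carefully managing how the monadic bar-resolution expresses colimits in $\cD$ in terms of the functor $U$, and orchestrating $T$'s dual preservation properties (of geometric realizations and of $\cL$) so that the resulting colimit comparisons in $\cC$ indeed land in $\cL$.
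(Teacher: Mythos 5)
Your proposal is correct, and its core — generating a factorization system on $\cD$ from $F(S)$ via \cref{prop:fact-system-of-small-generation-on-presentable}, identifying the right class by adjunction, and proving $\overline{F(S)} \subseteq U^{-1}(\cL)$ by showing $U^{-1}(\cL)$ contains $F(S)$ (from $T(\cL)\subseteq\cL$) and is saturated via the monadic bar resolution and preservation of geometric realizations — is exactly the paper's route; the ingredients you flag as the crux are precisely the ones the paper orchestrates. You diverge in two sub-steps, both validly. For the reverse containment $U^{-1}(\cL)\subseteq\overline{F(S)}$, you factor $f$ through the generated system and use uniqueness of $(\cL,\cR)$-factorizations of $U(f)\in\cL$ together with conservativity of the monadic $U$ to force the right-hand factor to be an equivalence; the paper instead runs the bar resolution a second time, writing $f\simeq|FT^\bullet U(f)|$ with levels in $F(\cL)$ to show $U^{-1}(\cL)$ is the \emph{smallest} saturated class containing $F(S)$. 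Your retract-style argument is shorter and avoids the second colimit computation; the paper's gives slightly more (an explicit presentation of every morphism of $\cL'$ as a realization of free morphisms). For the $\cO$-monoidal compatibility, you reduce to generators via \cref{lem:presentablecompatible} and use $\cO$-monoidality of $F$ so that a tensor of generators is $F(s_1\otimes\cdots\otimes s_n)$ with $s_1\otimes\cdots\otimes s_n\in\cL$, whereas the paper proves compatibility for \emph{arbitrary} tuples in $\cL'$ directly, again via bar resolutions and siftedness of $\Delta^{\op}$; your reduction is cleaner and is the same style the paper itself uses in \cref{lem:restricted-fs-on-subcat}. One small imprecision: from $T$ preserving geometric realizations you may only conclude that $U$ preserves geometric realizations (not all sifted colimits, which would also require preservation of filtered colimits); this is harmless, since geometric realizations are all your bar-resolution argument needs.
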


\begin{proof}
We begin by fixing a small set $S$ of morphisms in $\cC$ that generates $\cL$. By \Cref{prop:fact-system-of-small-generation-on-presentable}, we obtain a factorization system $(\cL',\cR')$ on $\cD$ generated by its image $F(S)$. So, it remains to show that $\cL' = U^{-1}(L)$ and that $\cR' = U^{-1}(R)$.

We first show that $\cR' = U^{-1}(R)$. For this, note that by definition $\cR' = F(S)^\perp$. Hence, it suffices to show that a morphism lies in $F(S)^\perp$ precisely if its image under $U$ lies in $\cR$, which follows from the adjunction $F \adj U$ (and the fact that $\cR = S^\perp$).

We now show that $\cL' = U^{-1}(\cL)$. For this, let us write $\cL'' \coloneqq U^{-1}(\cL)$, so that our goal is to show that $\cL' = \cL''$. Since $\cL' = \overline{F(S)}$, it is equivalent to show that $\overline{F(S)} = \cL''$. In other words, it suffices to verify that $\cL''$ is the smallest saturated class of morphisms in $\cD$ that contains $F(S)$. 

We deduce this in steps. First of all, the fact that $\cL''$ contains $F(S)$ follows from the assumption that the monad $T$ preserves $\cL$ and the fact that $\cL$ contains $S$.

We now show that $\cL''$ is saturated by verifying the conditions of \Cref{def:saturated}.

Condition \eqref{item-def:saturated-wide-subcat} is clear: $\cL''$ defines a wide subcategory of $\cD$.

We now verify condition \eqref{item-def:saturated-colimits}, i.e.\! we show that the full subcategory $\cL'' \subseteq \Fun([1],\cD)$ is closed under small colimits. For this, fix a small $\infty$-category $\cI$ as well as a functor $\cI \xra{X} \Fun([1],\cD)$ that factors through $\cL''$. We wish to show that the colimit $\colim_\cI(X)$ (computed in $\Fun([1],\cD)$) also lies in $\cL''$, i.e.\! that $U(\colim_\cI(X)) \in \Fun([1],\cC)$ lies in $\cL$. Now, since the adjunction $F \adj U$ is monadic, every object $D \in \cD$ admits a functorial \textit{bar resolution}: it is the geometric realization of the levelwise free simplicial object $FT^\bullet U(D) \in \Fun(\Deltaop, \cD)$ \cite[Prop. 4.7.3.14]{HA}. Using this, we may compute $\colim_\cI(X)$ as the colimit of the functor
\[ \begin{tikzcd}[row sep=0cm]
\cI \times \Deltaop
\arrow{r}{X'}
&
\Fun([1],\cD)
\\
\rotatebox{90}{$\in$}
&
\rotatebox{90}{$\in$}
\\
(i,[n])
\arrow[maps to]{r}
&
FT^nU(X(i))
\end{tikzcd}
~.
\]
Namely, we obtain the string of equivalences
\begin{align*}
U(\colim_\cI(X))
& \simeq
U(\colim_{\cI \times \Deltaop}(X'))
\\
& \simeq
U( \colim_{[n] \in \Deltaop} ( \colim_{i \in \cI} ( X'(i,[n]))))
\\
& \simeq
\colim_{[n] \in \Deltaop} U(\colim_{i \in \cI} ( X'(i,[n])))
\\
& \eqqcolon
\colim_{[n] \in \Deltaop} U(\colim_{i \in \cI}(FT^nU(X(i))))
\\
& \simeq
\colim_{[n] \in \Deltaop} UF(\colim_{i \in \cI}(T^nU(X(i))))
\\
& \eqqcolon
\colim_{[n] \in \Deltaop} T(\colim_{i \in \cI}(T^n U(X(i))))
~,
\end{align*}
in which the third equivalence follows from the fact that $U$ commutes with geometric realizations since $T$ does by \cite[Cor. 4.2.3.5]{HA}. Now, by definition of $\cL'' \coloneqq U^{-1}(\cL)$, for each object $i \in \cI$ the object $U(X(i)) \in \Fun([1],\cC)$ lies in $\cL$. Using repeatedly both the fact that $T$ preserves $\cL$ and that $\cL \subseteq \Fun([1],\cC)$ is closed under colimits, we find that $U(\colim_\cI(X)) \in \Fun([1],\cC)$ lies in $\cL$, as desired. So indeed, $\cL'' \subseteq \Fun([1],\cD)$ is closed under colimits.

The verification of condition \eqref{item-def:saturated-cobase-change} (that $\cL'' \subseteq \Fun([1],\cD)$ is stable under cobase change) follows from an essentially identical argument (inasmuch as it involves the computation of a colimit (specifically a pushout) in $\Fun([1],\cD)$). So indeed, the class $\cL''$ of morphisms in $\cD$ is saturated.

In order to conclude that $\cL'' = \overline{F(S)}$, it therefore remains to show that any saturated class $\cL'''$ of morphisms in $\cD$ that contains $F(S)$ also contains $\cL''$. Since $F$ preserves colimits, certainly $F(\cL) \subseteq \cL'''$. From here, to show the containment $\cL'' \subseteq \cL'''$, choose any $f \in \cL'' \coloneqq U^{-1}(\cL)$. Recall that the aforementioned bar resolution yields an equivalence $|FT^\bullet U(f)| \simeq f$. Note that $U(f) \in \cL$, and since $T$ preserves $\cL$ then $T^nU(f) \in \cL$, and so all values of the simplicial object $FT^\bullet U(f)$ lie in $\F(\cL) \subseteq \Fun([1],\cD)$. Hence, its geometric realization -- namely, $f$ -- must lie in $\cL'''$. So indeed, $\cL''$ is the smallest saturated class of morphisms in $\cD$ containing $F(S)$.

It remains to prove the compatibility with $\cO$-monoidal structure.  Given an operation $(a_1, ..., a_m) \to b$ in $\cO$, we want to show that the induced functor $\mu \colon \cD_{a_1} \times \cdots \times \cD_{a_m} \to \cD_b$ carries $\cL'_{a_1} \times \cdots \times \cL'_{a_m}$ to $\cL'_b$. Explicitly, given morphisms $X_i  \colon [1] \to \cD_{a_i}$ in $\cL'_{a_i}$, we would like to show that $U(\mu(X_1, \cdots, X_m)) \in \cL$. As above, the bar resolution gives us a simplicial object $X'_i \colon \Deltaop \to \Fun([1], \cD_{a_i})$ for each $1 \leq i \leq n$, with $X'_i([n]) = FT^nU(X_i)$.

We have a string of equivalences:
\begin{equation}\label{eq:string-of-equivalence}
\begin{aligned}
U(\mu(X_1, \cdots, X_n))
& \simeq
U(\mu( \colim_{[n_1] \in \Deltaop} X'_1([n_1]), \cdots,  \colim_{[n_m] \in \Deltaop} X'_m([n_m])))
\\
& \simeq
U(\colim_{([n_1], \cdots, [n_m]) \in {(\Deltaop)}^m} \mu(X'_1([n_1]), \cdots,  X'_m([n_m])))\\
& \simeq 
U(\colim_{[n]\in \Deltaop} \mu(X'_1([n]), \cdots, X'_m([n])))\\
& \simeq 
\colim_{[n]\in \Deltaop} U(\mu(X'_1([n]), \cdots, X'_m([n])))\\ 
& \simeq 
\colim_{[n]\in \Deltaop} U(\mu(FT^nU(X_1), \cdots, FT^nU(X_m)))\\ 
& \simeq 
\colim_{[n]\in \Deltaop} UF(\mu(T^nU(X_1), \cdots, T^nU(X_m)))\\ 
& \simeq 
\colim_{[n]\in \Deltaop} T(\mu(T^nU(X_1), \cdots, T^nU(X_m)))
~,
\end{aligned}
\end{equation}
in which the third line uses the fact that the diagonal $\Deltaop$ in ${(\Deltaop)}^n$ is cofinal, which is equivalent to the statement that $\Deltaop$ is sifted \cite[Def. 5.5.8.1, Lem. 5.5.8.4]{HA}. For  $1 \leq i \leq n$,  $U(X_i) \in \cL$ by asssumption. It follows that $T^nU(X_i)$ is also in $\cL$. Since the $\cO$-monoidal structure on $\cC$ is compatible with the factorization system and $T$ preserves $\cL$, we see that $T(\mu(T^nU(X_1), \cdots, T^nU(X_m)))$ is in $\cL$. The result now follows from \eqref{eq:string-of-equivalence} and 
the fact that $\cL$ is closed under colimits.
\end{proof}

\subsection{Factorization systems for algebras over \texorpdfstring{$\infty$}{infinity}-operads}
\label{subsec:fs-for-algebras-over-opds}

We now prove the first main theorem of this appendix, which gives factorization systems for algebras over $\infty$-operads.

\begin{theorem}\label{thm:fs-and-alg}
Fix a small $\infty$-operad $\cO$ such that $\underline{\cO} \simeq \pt$ and a presentably $\cO$-monoidal $\infty$-category $\cC$ equipped with a compatible factorization system $(\cL,\cR)$ of small generation.
\begin{enumerate}

\item\label{item:fs-and-alg-single-cat-of-algebras}

For any $\cA \in \Op_{/\cO}$, the presentable $\infty$-category $\Alg_{\cA/\cO}(\cC)$ admits a factorization system $(\cL_\cA,\cR_\cA)$ of small generation with $\cL_\cA = U^{-1}(\cL^{\underline{\cA}})$ and $\cR_\cA = U^{-1}(\cR^{\underline{\cA}})$, where we write $\Alg_{\cA/\cO}(\cC) \xra{U} \Fun(\underline{\cA},\underline{\cC})$ for the forgetful functor and (as the exponential notation suggests (and as in \Cref{lem:fs-and-functor-cat})) a morphism in $\Fun(\underline{\cA},\underline{\cC})$ lies in $\cL^{\underline{\cA}}$ (resp.\! $\cR^{\underline{\cA}}$) if and only if its components all lie in $\cL$ (resp.\! $\cR$).

\item\label{item:fs-and-alg-adjts-preserve-classes}

Fix a morphism $\cA \to \cB$ in $\Op_{/\cO}$. Then, in the adjunction
\[
\begin{tikzcd}[column sep=1.5cm]
\Alg_{\cA / \cO}(\cC)
\arrow[yshift=0.9ex]{r}{F_\cA^\cB}
\arrow[leftarrow, yshift=-0.9ex]{r}[yshift=-0.2ex]{\bot}[swap]{U_\cA^\cB}
&
\Alg_{\cB / \cO}(\cC)
\end{tikzcd}
\]
we have $F_\cA^\cB(\cL_\cA) \subseteq \cL_\cB$ and $U_\cA^\cB(\cR_\cB) \subseteq \cR_\cA$ (using the notation of part \eqref{item:fs-and-alg-single-cat-of-algebras}). In particular, the factorization systems of part \eqref{item:fs-and-alg-single-cat-of-algebras} determine a lift
\begin{equation}
\label{diagram:lift-OpoverOop-to-PrRfsR}
\begin{tikzcd}[column sep=1.5cm]
&
{\Pr}^{R,\fs,\cR}
\arrow{d}{\fgt}
\\
(\Op_{/\cO})^\op
\arrow[dashed]{ru}
\arrow{r}[swap]{\Alg_{(-)/\cO}(\cC)}
&
{\Pr}^R
\end{tikzcd}
\end{equation}
through the indicated forgetful functor.

\item\label{item:fs-on-big-alg}

The total $\infty$-category of the Cartesian unstraightening of the horizontal functor in diagram \eqref{diagram:lift-OpoverOop-to-PrRfsR} admits a factorization system $(\cL_\Alg,\cR_\Alg)$ of small generation, described as follows: an arbitrary morphism $(A \in \Alg_{\cA/\cO}(\cC)) \xra{\tilde{\alpha}} (B \in \Alg_{\cB/\cO}(\cC))$ therein is specified by its image $\cA \xra{\alpha} \cB$ in $\Op_{/\cO}$ along with a morphism $A \ra \alpha^* B$ in $\Alg_{\cA/\cO}(\cC)$, and
\begin{enumerate}

\item it lies in $\cL_\Alg$ if and only if $\alpha$ is an equivalence and moreover for every color $X \in \underline{\cA}$ the morphism $A_X \ra (\alpha^* B)_X$ in $\underline{\cC}$ lies in $\cL$, and

\item it lies in $\cR_\Alg$ if and only if for every color $X \in \underline{\cA}$ the morphism $A_X \ra (\alpha^* B)_X$ in $\underline{\cC}$ lies in $\cR$.
\end{enumerate}

\item\label{item:fs-and-alg-symmetric-monoidal-compatibility}
In the case that $\cO = \EE_\infty$ and  $\cC$ is a presentably symmetric monoidal $\infty$-category with a compatible factorization system, $\Alg_{\cA}(\cC)$ has a canonical symmetric monoidal structure \footnote{Which may not be presentably symmetric monoidal, see \cref{warning:not-presentably-symmetric-monoidal}.} (see \cref{subsubsec:symmetric-monoidal-structure-on-algebras}). The factorization system $(\cL_\cA, \cR_\cA)$ defined above is compatible with the symmetric monoidal structure.
\end{enumerate}
\end{theorem}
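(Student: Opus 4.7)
The plan is to derive part (1) from \Cref{lem:fs-and-monadic} applied to the monadic adjunction
\[
\begin{tikzcd}[column sep=1.5cm]
\Fun(\underline{\cA}, \underline{\cC})
\arrow[yshift=0.9ex]{r}{F}
\arrow[leftarrow, yshift=-0.9ex]{r}[yshift=-0.2ex]{\bot}[swap]{U}
&
\Alg_{\cA/\cO}(\cC),
\end{tikzcd}
\]
where $U$ is the forgetful functor (monadic by a Barr--Beck-type argument, using that $\Alg_{\cA/\cO}(\cC)$ is presentable and $U$ is conservative and commutes with sifted colimits). The hypothesis $\underline{\cO} \simeq \pt$ ensures that $\underline{\cA} \to \underline{\cO} \simeq \pt$ makes $\Fun(\underline{\cA},\underline{\cC})$ an honest functor category with a well-defined pointwise factorization system $(\cL^{\underline{\cA}}, \cR^{\underline{\cA}})$ of small generation, supplied by \Cref{lem:fs-and-functor-cat}. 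To invoke \Cref{lem:fs-and-monadic}, we need to verify that the induced monad $T = UF$ commutes with geometric realizations and preserves $\cL^{\underline{\cA}}$.

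The key step --- and the main obstacle --- is preservation of the left class by $T$. Commutation with geometric realizations follows from the fact that $U$ commutes with sifted colimits and $F$ is a left adjoint. For preservation of $\cL^{\underline{\cA}}$, we use the explicit description of $T$ at a functor $X \colon \underline{\cA} \to \underline{\cC}$ as a colimit of expressions built from iterated tensor products of components of $X$ indexed over the operadic structure of $\cA$: pointwise, $(TX)_Y$ is a colimit of objects of the form $X_{Y_1} \otimes \cdots \otimes X_{Y_n}$ indexed over multimorphisms $(Y_1, \ldots, Y_n) \to Y$ in $\cA$. Since the $\cO$-monoidal structure on $\cC$ is compatible with $(\cL, \cR)$, each tensor product functor $\underline{\cC}^{\times n} \to \underline{\cC}$ preserves the left class (componentwise), and since $\cL$ is saturated it is closed under small colimits; hence $T$ preserves $\cL^{\underline{\cA}}$ pointwise. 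This yields the factorization system $(\cL_\cA, \cR_\cA)$ of part (1). For the compatibility of $(\cL_\cA, \cR_\cA)$ with the monoidal structure on $\Fun(\underline{\cA}, \underline{\cC})$ induced by $\cA$, one also invokes the second half of \Cref{lem:fs-and-monadic}.

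For part (2), given $\alpha \colon \cA \to \cB$ in $\Op_{/\cO}$, the restriction functor $U_\cA^\cB = \alpha^*$ clearly preserves the right class since the pullback $(\alpha^* B)_X = B_{\alpha(X)}$ is computed pointwise. By adjunction, the left adjoint $F_\cA^\cB$ preserves the left class, producing the lift in diagram \eqref{diagram:lift-OpoverOop-to-PrRfsR}. Part (3) then follows by applying \Cref{lem:fs-and-Cart-fibn} to this lift, yielding a factorization system of small generation on the total $\infty$-category of the Cartesian unstraightening with the explicit description recorded in the statement.

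Finally, for part (4), in the symmetric monoidal case $\cO = \EE_\infty$, both the left and right classes $\cL_\cA, \cR_\cA$ of $\Alg_\cA(\cC)$ are pulled back from the pointwise classes on $\Fun(\underline{\cA}, \underline{\cC})$, and the symmetric monoidal structure on $\Alg_\cA(\cC)$ is also computed pointwise (i.e., the forgetful functor is symmetric monoidal on underlying objects). Thus compatibility of $(\cL_\cA, \cR_\cA)$ with the tensor product on $\Alg_\cA(\cC)$ is immediate from compatibility of $(\cL, \cR)$ with the tensor product on $\cC$, together with the fact that the Day-type tensor on $\Fun(\underline{\cA}, \underline{\cC})$ preserves the pointwise left class by \Cref{lem:fs-and-functor-cat}.\eqref{item-lem:fs-and-functor-cat-compatible}.
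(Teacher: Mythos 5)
Your proposal is correct and takes essentially the same route as the paper: part (1) via the monadic free--forgetful adjunction over $\Fun(\underline{\cA},\underline{\cC})$ together with \Cref{lem:fs-and-functor-cat} and \Cref{lem:fs-and-monadic} (where the paper verifies that $T$ preserves $\cL^{\underline{\cA}}$ by citing the operadic left Kan extension description of free algebras, which is exactly the colimit-of-tensor-products formula you spell out), part (2) from the pointwise nature of restriction, part (3) from \Cref{lem:fs-and-Cart-fibn}, and part (4) from symmetric monoidality of the forgetful functor. One cosmetic slip: in part (4) the relevant tensor product on $\Fun(\underline{\cA},\underline{\cC}) \simeq \Alg_{\cA_\Triv}(\cC)$ is the \emph{pointwise} one rather than a Day-type convolution, so the appeal to \Cref{lem:fs-and-functor-cat}.\eqref{item-lem:fs-and-functor-cat-compatible} should simply be compatibility of $(\cL,\cR)$ with $\otimes_{\cC}$ applied componentwise --- the conclusion is unaffected.
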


\begin{proof}
We begin with part \eqref{item:fs-and-alg-single-cat-of-algebras}.

First of all, observe the equivalences and the adjunction
\begin{equation}
\label{adj:sections-on-cats-of-colors-vs-relative-algebras}
\begin{tikzcd}[column sep=2cm]
\Fun(\underline{\cA}, \underline{\cC} )
\simeq
\Fun_{/\underline{\cO}}(\underline{\cA} , \underline{\cC} )
\simeq
&[-2.2cm]
\Alg_{\cA_\Triv / \cO}(\cC)
\arrow[yshift=0.9ex]{r}{F \coloneqq F_{\cA_\Triv}^\cA}
\arrow[leftarrow, yshift=-0.9ex]{r}[yshift=-0.2ex]{\bot}[swap]{U \coloneqq U_{\cA_\Triv}^\cA}
&
\Alg_{\cA/\cO}(\cC).
\end{tikzcd}
~
\end{equation}
\Cref{lem:fs-and-functor-cat} furnishes the factorization system $(\cL^{\underline{\cA}} , \cR^{\underline{\cA}})$ of small generation on $\Fun(\underline{\cA}, \underline{\cC})$. Hence, we prove part \eqref{item:fs-and-alg-single-cat-of-algebras} by applying \Cref{lem:fs-and-monadic}, whose hypotheses it remains to show are satisfied.

We first show that the adjunction \eqref{adj:sections-on-cats-of-colors-vs-relative-algebras} is monadic and that its underlying monad preserves geometric realizations. For monadicity, by \cite[Thm. 4.7.0.3]{HA} it suffices to show that $U$ is conservative and preserves sifted colimits. The former follows from \cite[Lem. 3.2.2.6]{HA}, while the latter follows from \cite[Prop. 3.2.3.1]{HA}. Of course, $F$ preserves geometric realizations (being a left adjoint), and so the monad $T \coloneqq UF$ preserves geometric realizations as well.

We now claim that this monad $T$ preserves $\cL^{\underline{\cA}}$. This follows from the explicit description of the free algebra functor as an operadic left Kan extension (see particularly \cite[Props. 3.1.1.15, 3.1.1.16, and 3.1.1.20]{HA}). So indeed, the hypotheses of \Cref{lem:fs-and-monadic} are satisfied, and we obtain a factorization system $(\cL_\cA , \cR_\cA)$ of small generation on $\Alg_{\cA/\cO}(\cC)$ as asserted.

For part \eqref{item:fs-and-alg-adjts-preserve-classes}, it suffices to note that the containment $U_\cA^\cB(\cR_\cB) \subseteq \cR_\cA$ follows directly from the commutative square
\[ \begin{tikzcd}
\cA_\Triv
\arrow{r}
\arrow{d}
&
\cB_\Triv
\arrow{d}
\\
\cA
\arrow{r}
&
\cB
\end{tikzcd} \]
in $\Op_{/\cO}$.

With parts \eqref{item:fs-and-alg-single-cat-of-algebras} and \eqref{item:fs-and-alg-adjts-preserve-classes} in hand, part \eqref{item:fs-on-big-alg} follows from \Cref{lem:fs-and-Cart-fibn}.

Lastly, part \eqref{item:fs-and-alg-symmetric-monoidal-compatibility} follows from part \eqref{item:fs-and-alg-single-cat-of-algebras} and the fact that the forgetful functor $U \colon \Alg_{\cA}(\cC) \to \Alg_{\cA_{\Triv}}(\cC) = \Fun(\underline{\cA}, \underline{\cC})$ is symmetric monoidal (see \cref{subsubsec:symmetric-monoidal-structure-on-algebras}).
\end{proof}

\subsection{Factorization systems for enriched \texorpdfstring{$\infty$}{infinity}-categories}
\label{subsec:fs-for-Vcats}

We now prove the second main theorem of the appendix, which gives factorization systems for enriched $\infty$-categories (using those for algebras over $\infty$-operads).

\begin{theorem}\label{thm:fs_and_enriched_cat}
Let $\VV$ be a presentably monoidal $\infty$-category equipped with a compatible factorization system $(\cL, \cR)$.
\begin{enumerate}

\item\label{thm-item:fs_and_enriched_cat_the_fs}

The $\infty$-category $\Cat[\VV]$ of $\VV$-enriched $\infty$-categories admits a factorization system $(\cL_\Cat,\cR_\Cat)$, described as follows.
    \begin{enumerate}

    \item A morphism lies in $\cL_\Cat$ if and only if it is surjective on objects (i.e. $\iota_0$-surjective) and lies in $\cL$ homwise.

    \item A morphism lies in $\cR_\Cat$ if and only if it lies in $\cR$ homwise.

    \end{enumerate}

\item\label{thm-item:fs_and_enriched_cat_generators}

If $S$ is a set of generators for $\cL$, then the localization of $\Sigma[S]$ is a set of generators for $\cL_\Cat$.

\item\label{thm-item:fs_and_enriched_cat_symmetric_monoidal}

If $ \VV$ is symmetric monoidal, then this factorization system is compatible with the resulting symmetric monoidal structure on $\Cat[\VV]$.

\end{enumerate}
\end{theorem}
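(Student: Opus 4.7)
The plan is to produce the factorization system on $\Cat[\VV]$ by first constructing one on the auxiliary $\infty$-category $\AlgCat[\VV]$ of categorical $\VV$-algebras via Theorem~\ref{thm:fs-and-alg}, and then descending through the univalent completion reflective localization $L \colon \AlgCat[\VV] \rightleftarrows \Cat[\VV] \colon R$ using Lemma~\ref{lem:fs-on-reflective-loc}.

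For the first stage, the functor $\iota_0 \colon \AlgCat[\VV] \to \Spaces$ is a Cartesian fibration whose fiber over $X \in \Spaces$ is the $\infty$-category $\AlgCat[\VV]_X$ of categorical algebras with fixed space of objects $X$. Following Gepner--Haugseng, this fiber is equivalent to $\Alg_{\cA_X/\EE_1}(\VV)$ for a planar $\infty$-operad $\cA_X \in \Op_{/\EE_1}$ encoding composition indexed by $X \times X$. First I would apply Theorem~\ref{thm:fs-and-alg}.\eqref{item:fs-and-alg-single-cat-of-algebras} fiberwise (with $\cO = \EE_1$ and $\cC = \VV$) to produce factorization systems on the fibers whose right classes consist of morphisms that are homwise in $\cR$. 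Since pullback of hom-objects along maps of spaces preserves homwise-$\cR$, Lemma~\ref{lem:fs-and-Cart-fibn} then assembles these into a factorization system $(\tilde\cL, \tilde\cR)$ of small generation on $\AlgCat[\VV]$: the right class consists of morphisms that are equivalences on spaces of objects and homwise in $\cR$, while the left class consists of morphisms that are $\iota_0$-surjective and homwise in $\cL$.

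For the second stage, I would verify the two hypotheses of Lemma~\ref{lem:fs-on-reflective-loc}: that $L(\tilde\cL)$ is stable under retracts, and that $RL(\tilde\cR) \subseteq \tilde\cR$. The latter holds because univalent completion does not alter hom-objects, so homwise-$\cR$ persists under $RL$. For the former, I would identify $L(\tilde\cL) \subseteq \Cat[\VV]$ concretely as the class of morphisms that are essentially surjective and homwise in $\cL$: $\iota_0$-surjectivity in $\AlgCat[\VV]$ becomes essential surjectivity in $\Cat[\VV]$ since the canonical map $\iota_0\cC \to \cC^\simeq$ is $(-1)$-connected, and homwise-$\cL$ is preserved because $L$ does not alter hom-objects. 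This class is clearly retract-closed. Lemma~\ref{lem:fs-on-reflective-loc}.\eqref{item-lem:fs-on-reflective-loc-the-fs} then delivers the factorization system $(L(\tilde\cL), L(\tilde\cR))$ on $\Cat[\VV]$ with the description claimed in (1).

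Part (2) will follow from Lemma~\ref{lem:fs-on-reflective-loc}.\eqref{item-lem:fs-on-reflective-loc-generation} combined with the generators for $\tilde\cL$ obtained in the first stage via Theorem~\ref{thm:fs-and-alg} and Lemma~\ref{lem:fs-and-functor-cat}, which reduce to the suspensions $\Sigma[s] \in \AlgCat[\VV]$ for $s \in S$; their images in $\Cat[\VV]$ are precisely $\Sigma[S]$. Part (3) will follow by combining Theorem~\ref{thm:fs-and-alg}.\eqref{item:fs-and-alg-symmetric-monoidal-compatibility}, which yields compatibility on $\AlgCat[\VV]$, with Lemma~\ref{lem:fs-on-reflective-loc}.\eqref{item-lem:fs-on-reflective-loc-monoidality} applied to the symmetric monoidal univalent completion recalled in the excerpt. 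The main obstacle I anticipate is the precise identification of $L(\tilde\cL)$ in the second stage, since (as warned in the excerpt) the left class produced by descending across a reflective localization can differ from the naive homwise description; the key inputs will be that univalent completion preserves hom-objects wholesale and is $(-1)$-connected on object spaces, which are exactly the two features needed to match the two components of the left-class description.
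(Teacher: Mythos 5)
Your overall strategy matches the paper's: first put a factorization system on $\AlgCat[\VV]$ using \cref{thm:fs-and-alg} together with \cref{lem:fs-and-Cart-fibn}, then descend through the univalent-completion reflective localization via \cref{lem:fs-on-reflective-loc}. However, your first stage misidentifies the classes, and this is a genuine gap rather than a typo. \cref{lem:fs-and-Cart-fibn} attaches the base-space condition to the \emph{left} class and requires it to be an \emph{equivalence}: applied to the Cartesian fibration $\iota_0\colon \AlgCat[\VV]\to\Spaces$, it produces $\cL_{\AlgCat}=\{\iota_0\text{-equivalences that are homwise in }\cL\}$ and $\cR_{\AlgCat}=\{\text{homwise-}\cR\text{ morphisms}\}$ with no condition at all on the map of object spaces (this is exactly \cref{cor:fs-and-cat-alg}). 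Your proposed pair $(\tilde\cL,\tilde\cR)$ --- left class $\iota_0$-surjective and homwise $\cL$, right class $\iota_0$-\emph{equivalence} and homwise $\cR$ --- is not what that lemma gives, and it is not a factorization system on $\AlgCat[\VV]$ at all: any composite of a $\tilde\cL$-morphism with a $\tilde\cR$-morphism is surjective on objects, so a morphism that is not $\iota_0$-surjective admits no factorization. Moreover, even granting such a system, descending it would give a right class of $\iota_0$-surjective homwise-$\cR$ morphisms on $\Cat[\VV]$, which is not the class $\cR_\Cat$ in the statement (which imposes no object-space condition); and the hypothesis $RL(\tilde\cR)\subseteq\tilde\cR$ of \cref{lem:fs-on-reflective-loc} would require univalent completion to preserve $\iota_0$-equivalences among homwise-$\cR$ morphisms, which you do not address and which fails in general.

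The correct bookkeeping is the one in \cref{cor:fs-and-cat-alg}: at the level of categorical algebras the left class consists of $\iota_0$-\emph{equivalences} that are homwise in $\cL$, and surjectivity on objects only appears after localization, precisely because the morphisms of $\Cat[\VV]$ that are localizations of $\iota_0$-equivalences are exactly the $\iota_0$-surjections. With that correction your second stage is essentially the paper's argument (both hypotheses of \cref{lem:fs-on-reflective-loc} are checked as you indicate, and parts (2) and (3) follow from its remaining clauses). One further small point: for part (3) you invoke \cref{thm:fs-and-alg}.\eqref{item:fs-and-alg-symmetric-monoidal-compatibility}, but that statement concerns the pointwise symmetric monoidal structure on $\cO$-algebras for fixed $\cO=\EE_\infty$, whereas the tensor product on $\AlgCat[\VV]$ multiplies object spaces and tensors hom-objects across \emph{different} base operads $\codisc(X)$; the compatibility there is instead checked directly from the compatibility of $(\cL,\cR)$ with the monoidal structure of $\VV$, as in \cref{cor:fs-and-cat-alg}.\eqref{cor-item:fs-and-cat-alg-symm-mon}.
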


\begin{remark}
\Cref{thm:fs_and_enriched_cat} generalizes the (fully faithful, essentially surjective) factorization system on enriched $\infty$-categories established by Haugseng in the recent work \cite{haugseng2023tensor} (without any presentability assumptions).
\end{remark}

Before we prove \Cref{thm:fs_and_enriched_cat}, let us first consider factorization systems on categorical algebras:
\begin{lemma}
\label{cor:fs-and-cat-alg}
Fix a presentably monoidal $\infty$-category $\VV$ equipped with a compatible factorization system $(\cL,\cR)$.
\begin{enumerate}

\item
\label{cor-item:fs-and-cat-alg-the-fs}

The $\infty$-category $\AlgCat[\VV]$ of categorical $\VV$-algebras admits a factorization system $(\cL_{\AlgCat},\cR_{\AlgCat})$, described as follows.
\begin{enumerate}

\item A morphism lies in $\cL_{\AlgCat}$ if and only if it is an $\iota_0$-equivalence and it lies in $\cL$ homwise.

\item A morphism lies in $\cR_{\AlgCat}$ if and only if it lies in $\cR$ homwise.

\end{enumerate}

\item
\label{cor-item:fs-and-cat-alg-generators}

If $S$ is a set of generators for $\cL$, then $\Sigma[S] \coloneqq \{ \Sigma(s)\}_{s \in S}$ is a set of generators for $\cL_{\AlgCat}$.

\item
\label{cor-item:fs-and-cat-alg-symm-mon}

If $\VV$ is symmetric monoidal, then the factorization system $(\cL_{\AlgCat},\cR_{\AlgCat})$ is compatible with the resulting symmetric monoidal structure on $\AlgCat[\VV]$.

\end{enumerate}
\end{lemma}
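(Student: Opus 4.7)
The plan is to exhibit the factorization system on $\AlgCat[\VV]$ by assembling fiberwise factorization systems along the Cartesian fibration $\iota_0\colon \AlgCat[\VV] \to \Spaces$. Following the setup of~\cite{GH13}, for each $X \in \Spaces$ the fiber is equivalent to $\Alg_{\Assoc_X/\Assoc}(\VV)$ for a certain $X$-indexed associative $\infty$-operad $\Assoc_X \in \Op_{/\Assoc}$ whose underlying space of colors is $X \times X$, and the forgetful functor $\AlgCat[\VV]_X \to \Fun(X \times X, \underline{\VV})$ extracts the hom-objects. Since $\underline{\Assoc} \simeq \pt$ and $\VV$ is presentably monoidal with a compatible factorization system of small generation, \Cref{thm:fs-and-alg}.\eqref{item:fs-and-alg-single-cat-of-algebras} endows each fiber $\AlgCat[\VV]_X$ with a factorization system of small generation whose right class consists of the homwise-$\cR$ morphisms.

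Next, for any morphism of spaces $X \to Y$, the induced morphism $\Assoc_X \to \Assoc_Y$ in $\Op_{/\Assoc}$ gives an adjunction on algebra categories whose right adjoint (which implements the Cartesian monodromy of $\iota_0$) preserves the right class, by \Cref{thm:fs-and-alg}.\eqref{item:fs-and-alg-adjts-preserve-classes}. This packages the fiberwise data into a functor $\Spaces^{\op} \to \hatCat_\infty^{\fs,\cR}$ whose Cartesian unstraightening is $\iota_0$, so I would then invoke \Cref{lem:fs-and-Cart-fibn}.\eqref{item:fs-and-Cart-fibn-the-fs} to conclude that $(\cL_{\AlgCat},\cR_{\AlgCat})$ as described in part~\eqref{cor-item:fs-and-cat-alg-the-fs} is indeed a factorization system on $\AlgCat[\VV]$.

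For part~\eqref{cor-item:fs-and-cat-alg-generators}, the small generation clause of \Cref{lem:fs-and-Cart-fibn}.\eqref{item:fs-and-Cart-fibn-small-generation} reduces the problem to producing small generators for each $\AlgCat[\VV]_X$. Inspection of the proof of \Cref{thm:fs-and-alg}.\eqref{item:fs-and-alg-single-cat-of-algebras} shows the left class is generated by the images under the free functor $\Fun(X \times X, \VV) \to \Alg_{\Assoc_X/\Assoc}(\VV)$ of an elementwise generating set. Concentrating a single morphism $s\colon v \to w$ in $S$ at a pair $(x_0,x_1)$ of points in $X$ and applying the free functor, one recovers (via the universal property of $\Sigma$ in~\cite{GH13}) precisely the morphism $\Sigma[s]\colon \Sigma[v] \to \Sigma[w]$; combining over all $X$ and all pairs of points, the entire set $\Sigma[S]$ suffices to generate $\cL_{\AlgCat}$.

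Finally, part~\eqref{cor-item:fs-and-cat-alg-symm-mon} follows by unpacking the symmetric monoidal structure on $\AlgCat[\VV]$: given $\cC_i \to \cD_i \in \cL_{\AlgCat}$ for $i = 1,2$, the induced map $\cC_1 \otimes \cC_2 \to \cD_1 \otimes \cD_2$ is an $\iota_0$-equivalence because $\iota_0(\cC_i \otimes \cD_i) \simeq \iota_0 \cC_i \times \iota_0 \cD_i$ and products of equivalences are equivalences, and it is homwise in $\cL$ because $\eHom_{\cC_1 \otimes \cC_2}((c,d),(c',d')) \simeq \eHom_{\cC_1}(c,c') \otimes \eHom_{\cC_2}(d,d')$ reduces this to the compatibility of $(\cL,\cR)$ with the monoidal structure of $\VV$. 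The main conceptual obstacle will be to rigorously extract the fiber description $\AlgCat[\VV]_X \simeq \Alg_{\Assoc_X/\Assoc}(\VV)$ and to match up the resulting free-functor generators with $\Sigma[s]$; the rest reduces to applications of our general factorization-system machinery.
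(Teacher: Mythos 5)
Your argument for parts~\eqref{cor-item:fs-and-cat-alg-the-fs} and~\eqref{cor-item:fs-and-cat-alg-symm-mon} is essentially the paper's: the paper also exhibits $\iota_0 \colon \AlgCat[\VV] \to \Spaces$ as a Cartesian fibration whose fibers are $\Alg_{\codisc(X)/\EE_1}(\VV)$ and invokes \Cref{thm:fs-and-alg} together with \Cref{lem:fs-and-Cart-fibn} (you simply unroll part~\eqref{item:fs-on-big-alg} of \Cref{thm:fs-and-alg} into its constituent parts~\eqref{item:fs-and-alg-single-cat-of-algebras}--\eqref{item:fs-and-alg-adjts-preserve-classes}), and your argument for the monoidal compatibility is the intended one.

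For part~\eqref{cor-item:fs-and-cat-alg-generators}, however, your route has a genuine gap. You invoke \Cref{lem:fs-and-Cart-fibn}.\eqref{item:fs-and-Cart-fibn-small-generation}, but that clause explicitly assumes the base $\cB$ is \emph{small}; here the base is $\Spaces$, which is not. The disjoint union $\bigsqcup_{X \in \Spaces}\bigsqcup_{(x_0,x_1)} S_{X}$ of fiberwise generators that the lemma produces is therefore not a small set, so the lemma does not apply as stated, and a further argument is needed to collapse it to the small set $\Sigma[S]$. The paper avoids this entirely with a one-line direct check: by the universal property of $\Sigma[-]$, a morphism $\Sigma[v] \to \cC$ is exactly a pair of objects $c,d \in \cC$ together with a morphism $v \to \eHom_\cC(c,d)$ in $\VV$, so right orthogonality of $F\colon \cC \to \cD$ to $\Sigma[s]$ for $s\colon v \to w$ in $S$ unwinds to the condition that $\eHom_\cC(c,d) \to \eHom_\cD(Fc,Fd)$ is right orthogonal to $s$ for all $c,d$. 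Hence $(\Sigma[S])^\bot = \cR_{\AlgCat}$, and $\Sigma[S]$ generates by \Cref{prop:fact-system-of-small-generation-on-presentable}. I'd recommend replacing your free-functor/fiberwise-generator argument with this direct computation.
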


\begin{proof}
By definition, the Cartesian fibration $\AlgCat[\VV] \xra{\iota_0} \Spaces$ is the unstraightening of a composite functor
\[
\Spaces^{\op}
\xra{\codisc}
(\Op_{/\EE_1})^{\op}
\xra{\Alg_{(-)/\EE_1}(\VV)}
{\Pr}^R
~.\footnote{See \cite[Def. 4.3.1]{GH13}, and note that that nonsymmetric (a.k.a.\! planar) $\infty$-operads are equivalent to $\infty$-operads over $\EE_1$ by \cite[Thm. 4.1.3.14]{HA}.}
\]
For a space $X \in \Spaces$, its corresponding $\infty$-operad $\codisc(X) \in \Op_{/\EE_1}$ has space of colors given by pairs of points $x,y \in X$ (up to a symmetrization (i.e.\! the quotient by the $\fS_2$-action) coming from \cite[Thm. 4.1.3.14]{HA}), and a categorical $\VV$-algebra $\cC$ with space of objects $X$ assigns to these the hom-object $\hom_\cC(x,y) \in \VV$. Hence, checking conditions on morphisms in $\underline{\VV}$ colorwise over $\codisc(X)$ indeed corresponds to checking conditions on morphisms homwise, and thereafter part \eqref{cor-item:fs-and-cat-alg-the-fs} follows by combining \Cref{thm:fs-and-alg}.\eqref{item:fs-on-big-alg} and \Cref{lem:fs-and-Cart-fibn}.

Thereafter, part \eqref{cor-item:fs-and-cat-alg-generators} follows from the observation that $(\Sigma[S])^\bot = \cR_{\AlgCat}$, which is immediate from the universal property of $\Sigma[-]$.

Lastly, part \eqref{cor-item:fs-and-cat-alg-symm-mon} follows from the assumption that $(\cL,\cR)$ is compatible with the monoidal structure of $\VV$.
\end{proof}

\begin{proof}[Proof of \Cref{thm:fs_and_enriched_cat}]
Given the factorization system on $\AlgCat[\VV]$ of \Cref{cor:fs-and-cat-alg}, we wish to apply \Cref{lem:fs-on-reflective-loc} to the reflective localization
\[
\begin{tikzcd}[column sep=1.5cm]
\AlgCat[\VV]
\arrow[yshift=0.9ex]{r}{L}
\arrow[hookleftarrow, yshift=-0.9ex]{r}[yshift=-0.2ex]{\bot}[swap]{U}
&
\Cat[\VV]
\end{tikzcd}
~.
\]
We note preliminarily that the morphisms in $\Cat[\VV]$ that are localizations of $\iota_0$-equivalences in $\AlgCat[\VV]$ are precisely the $\iota_0$-surjections.

We first show that the hypotheses of \Cref{lem:fs-on-reflective-loc} are satisfied. To show that $RL(\cR_{\AlgCat}) \subseteq \cR_{\AlgCat}$, we simply observe that if a morphism $F$ in $\AlgCat[\VV]$ is homwise in $\cR$ then so is its localization $L(F)$ and hence so is $RL(F)$. To show that $L(\cL_{\AlgCat})$ is stable under retracts, it suffices to observe that $\cL$ is stable under retracts (by definition of a factorization system) and that surjections in $\Spaces$ are stable under retracts (since surjections in $\Set$ are).

From here, the three parts of \Cref{lem:fs-on-reflective-loc} respectively imply the three parts of the present result.
\end{proof}

\renewcommand*{\bibfont}{\small}
\setlength{\bibitemsep}{0pt}
\raggedright
\printbibliography
\end{document}